\documentclass[a4paper,12pt,fleqn]{smfart}

\usepackage{ucs}
\usepackage[latin1]{inputenc}

\usepackage[francais,english]{babel} \AutoSpaceBeforeFDP
\usepackage[T1]{fontenc}
\usepackage{amstext,amssymb,amsthm}
\usepackage{smfthm}
\usepackage{graphicx}
\usepackage{color}
\usepackage{epsfig}
\usepackage[all]{xy}
\usepackage{enumerate}
\usepackage{version}
\usepackage{wasysym}
\usepackage{marvosym}

\usepackage{geometry}
\geometry{ hmargin=2.5cm, vmargin=2.5cm }

\definecolor{darkred}{rgb}{0.9,0.,.2}
\definecolor{darkblue}{rgb}{0.,0.,.6}
\definecolor{darkgreen}{rgb}{0.,.6,0.1}
\usepackage[colorlinks=true,urlcolor=darkblue,linkcolor=darkred,citecolor=darkgreen]{hyperref}
\usepackage{subfig}


\newcommand{\N}{\mathbb{N}}
\newcommand{\Z}{\mathbb{Z}}

\newcommand{\R}{\mathbb{R}}
\newcommand{\C}{\mathcal{C}} 

\newcommand{\A}{\mathbb{A}}

\renewcommand{\ss}{\mathrm{SL_{n+1}(\mathbb{R})}}

\newcommand{\SO}{\mathrm{SO_{n,1}(\mathbb{R})}}

\newcommand*{\so}[1]{\mathrm{SO}_{#1,1}(\mathbb{R})}
\newcommand*{\s}[1]{\mathrm{SL}_{#1}(\mathbb{R})}

\newcommand{\LG}{\Lambda_{\Gamma}}
\newcommand{\G}{\Gamma}
\newcommand{\g}{\gamma}
\renewcommand{\L}{\Lambda}

\renewcommand{\C}{\mathcal{C}}
\newcommand{\U}{\mathcal{U}}
\newcommand{\F}{\mathcal{F}}

\renewcommand{\H}{\mathcal{H}}

\newcommand{\Nn}{\mathcal{N}}
\newcommand{\V}{\mathcal{V}}

\newcommand{\E}{\mathcal{E}}

\renewcommand{\O}{\Omega}
\newcommand{\Og}{\mathcal{O}_{\Gamma}}
\newcommand{\dO}{\partial \Omega}
\renewcommand{\d}{d_{\Omega}}

\renewcommand{\S}{\mathbb{S}^n}
\renewcommand{\P}{\mathcal{P}}
\newcommand{\D}{\mathcal{D}}
\renewcommand{\A}{\mathbb{A}}

\newcommand{\PP}{\mathbb{P}}


\newcommand{\Quo}{\Omega/\!\raisebox{-.90ex}{\ensuremath{\Gamma}}}
\newcommand*{\Quotient}[2]{\ensuremath{#1/\!\raisebox{-.90ex}{\ensuremath{#2}}}}

\newcommand{\Qug}{\Og /\!\raisebox{-.65ex}{\ensuremath{\Gamma}}}


\newcommand{\Aut}{\textrm{Aut}}

\newcommand{\Vol}{\textrm{Vol}}

\newcommand{\Cone}{\textrm{C\^one}}
\newcommand{\Fix}{\textrm{Fix}}
\newcommand{\Stab}{\textrm{Stab}}

\newcommand{\Isom}{\textrm{Isom}}



\newcommand{\PGL}{\mathrm{PGL}}
\newcommand{\SL}{\mathrm{SL}}
\newcommand{\Hc}{\mathcal{H}}
\newcommand{\Cc}{\mathcal{C}}
\newcommand{\Pc}{\mathcal{P}}
\newcommand{\Dc}{\mathcal{D}}
\newcommand{\Hb}{\mathbb{H}}
\newcommand{\Ec}{\mathcal{E}}
\newcommand{\Ab}{\mathbb{A}}

\renewcommand{\leq}{\leqslant}
\renewcommand{\geq}{\geqslant}

\newcommand{\triple}{\mathrm{v\hspace{-.9mm}v\hspace{-.9mm}v}}
\newcommand{\double}{\mathrm{v\hspace{-.9mm}v}}
\newcommand{\simple}{\mathrm{v}}
\usepackage{mathtools}
\usepackage{nicefrac}


\makeatletter
\def\namedlabel#1#2{\begingroup
	#2%
	\def\@currentlabel{#2}%
	\phantomsection\label{#1}\endgroup
}
\makeatother
\usepackage{tikz}
\usepackage{tikz-cd}
\usepackage{fp}
\usetikzlibrary{calc}
\usetikzlibrary{shapes,arrows}
\usetikzlibrary{fit,positioning}
\usetikzlibrary{patterns,decorations.pathreplacing}
\makeatletter
\def\calcLength(#1,#2)#3{%
	\pgfpointdiff{\pgfpointanchor{#1}{center}}%
	{\pgfpointanchor{#2}{center}}%
	\pgf@xa=\pgf@x%
	\pgf@ya=\pgf@y%
	\FPeval\@temp@a{\pgfmath@tonumber{\pgf@xa}}%
	\FPeval\@temp@b{\pgfmath@tonumber{\pgf@ya}}%
	\FPeval\@temp@sum{(\@temp@a*\@temp@a+\@temp@b*\@temp@b)}%
	\FProot{\FPMathLen}{\@temp@sum}{2}%
	\FPround\FPMathLen\FPMathLen5\relax
	\global\expandafter\edef\csname #3\endcsname{\FPMathLen}
}
\makeatother


\theoremstyle{plain}
\newtheorem{qu}{Question}
\newtheorem*{fait}{Fait}
\newtheorem{theorem}{Theorem}[subsection]
\newtheorem{lemma}[theorem]{Lemma}
\newtheorem{fact}[theorem]{Fact}
\newtheorem{corollary}[theorem]{Corollary}
\newtheorem{proposition_english}[theorem]{Proposition}

\theoremstyle{definition}

\newtheorem{definition_english}[theorem]{Definition}

\theoremstyle{remark}
\newtheorem*{nota}{Notations}
\newtheorem{remark}[theorem]{Remark}



\setcounter{tocdepth}{2}
\setcounter{secnumdepth}{3}

\title{Finitude g\'eom\'etrique en g\'eom\'etrie de Hilbert}

\author{
	Pierre-Louis Blayac
}
\address{Université de Strasbourg, IRMA, Strasbourg, France}
\email{blayac@unistra.fr}

\author{
	Mickaël Crampon
}
\author{
	Ludovic Marquis
}
\address{Université de Rennes, CNRS, IRMAR - UMR 6625, F-35000 Rennes, France}
\email{ludovic.marquis@univ-rennes.fr}

 \dedicatory{
Avec un appendice \'ecrit avec 
	Constantin Vernicos
\\
And an Erratum/addendum \ref{errata_part} written in english by 
	Pierre-Louis Blayac
 and
 	Ludovic Marquis
concatenated here as an independent paper
 }

\date{} 

\address{\newline{}}


%

\begin{document}
\frontmatter





\begin{abstract}
On \'etudie la notion de finitude g\'eom\'etrique pour certaines g\'eom\'etries de Hilbert d\'efinies par un ouvert strictement convexe \`a bord de classe $\C^1$.\\
La d\'efinition dans le cadre des espaces Gromov-hyperboliques fait intervenir l'action du groupe discret consid\'er\'e sur le bord de l'espace. On montre, en construisant explicitement un contre-exemple, que cette d\'efinition doit \^etre renforc\'ee pour obtenir des d\'efinitions \'equivalentes en termes de la g\'eom\'etrie de l'orbifold quotient, similaires \`a celles obtenues par Brian Bowditch \cite{MR1218098} en g\'eom\'etrie hyperbolique.\\

\end{abstract}

\begin{altabstract}
We study the notion of geometrical finiteness for those Hilbert geometries defined by strictly convex sets with $\C^1$ boundary.\\
In Gromov-hyperbolic spaces, geometrical finiteness is defined by a property of the group action on the boundary of the space. We show by constructing an explicit counter-example that this definition has to be strenghtened in order to get equivalent characterizations in terms of the geometry of the quotient orbifold, similar to those obtained by Brian Bowditch \cite{MR1218098} in hyperbolic geometry.
\end{altabstract}

\maketitle

\scriptsize
\tableofcontents
\normalsize
\frontmatter

\section{Introduction}

La notion de finitude g\'eom\'etrique a suscit\'e l'int\'er\^et de nombreux g\'eom\`etres dans l'\'etude des groupes klein\'eens de dimension $3$. On peut notamment citer Leon Greenberg \cite{MR0200446}, Lars Ahlfors \cite{MR0194970}, Albert Marden \cite{MR0217287,MR0349992}, Alan Beardon and Bernard Maskit \cite{MR0333164}, William Thurston \cite{CoursdeThurston}. Six d\'efinitions \'equivalentes avaient alors \'et\'e introduites, parmi lesquelles subsistent seulement cinq en dimension sup\'erieure. Il revient \`a Brian Bowditch, dans une \'etude tr\`es d\'etaill\'ee \cite{MR1218098}, d'avoir effectu\'e cette extension \`a la dimension quelconque. Dans ce travail, Bowditch discute \'egalement de façon tr\`es compl\`ete le probl\`eme essentiel de l'existence d'un domaine fondamental ayant un nombre fini de faces: il s'agit de la sixi\`eme d\'efinition de finitude g\'eom\'etrique, qui n'est plus \'equivalente aux autres en dimension sup\'erieure ou \'egale à 4.\\

Dans \cite{MR1317633}, Bowditch \'etend ces consid\'erations \`a la courbure n\'egative pinc\'ee. Dans ce texte, nous nous proposons d'\'etudier, comme promis dans \cite{Crampon:2011fk}, ce qu'il se passe dans le cadre des g\'eom\'etries de Hilbert. \\

Les g\'eom\'etries de Hilbert peuvent \^etre vues comme des g\'en\'eralisations de la g\'eom\'etrie hyperbolique, dont la d\'efinition se base sur le mod\`ele de Beltrami-Klein: il s'agit d'un espace m\'etrique $(\O,\d)$, o\`u $\O$ est un ouvert proprement convexe de l'espace projectif $\PP^n$ et $\d$ la distance d\'efinie par

$$
\begin{array}{ccc}
d_{\O}(x,y) = \frac{1}{2}\ln \big([p:x:y:q]\big) & \textrm{et} &
d_{\O}(x,x)=0,\ x,y\in\O,\ x\not=y.
\end{array}
$$
Ici, $p$ et $q$ sont les points d'intersection de la droite $(xy)$ et du bord $\partial \O$ de $\O$ tels que $x$ soit entre $p$ et $y$, et $y$ soit entre $x$ et $q$ (voir figure \ref{dis}). Par proprement convexe, nous voulons dire que l'adh\'erence de $\O$ dans $\PP^n$ \'evite au moins un hyperplan projectif; de fa\c con \'equivalente, il existe une carte affine dans laquelle $\O$ appara\^it comme un ouvert convexe relativement compact.\\

\begin{center}
\begin{figure}[h!]
  \centering
\includegraphics[width=5cm]{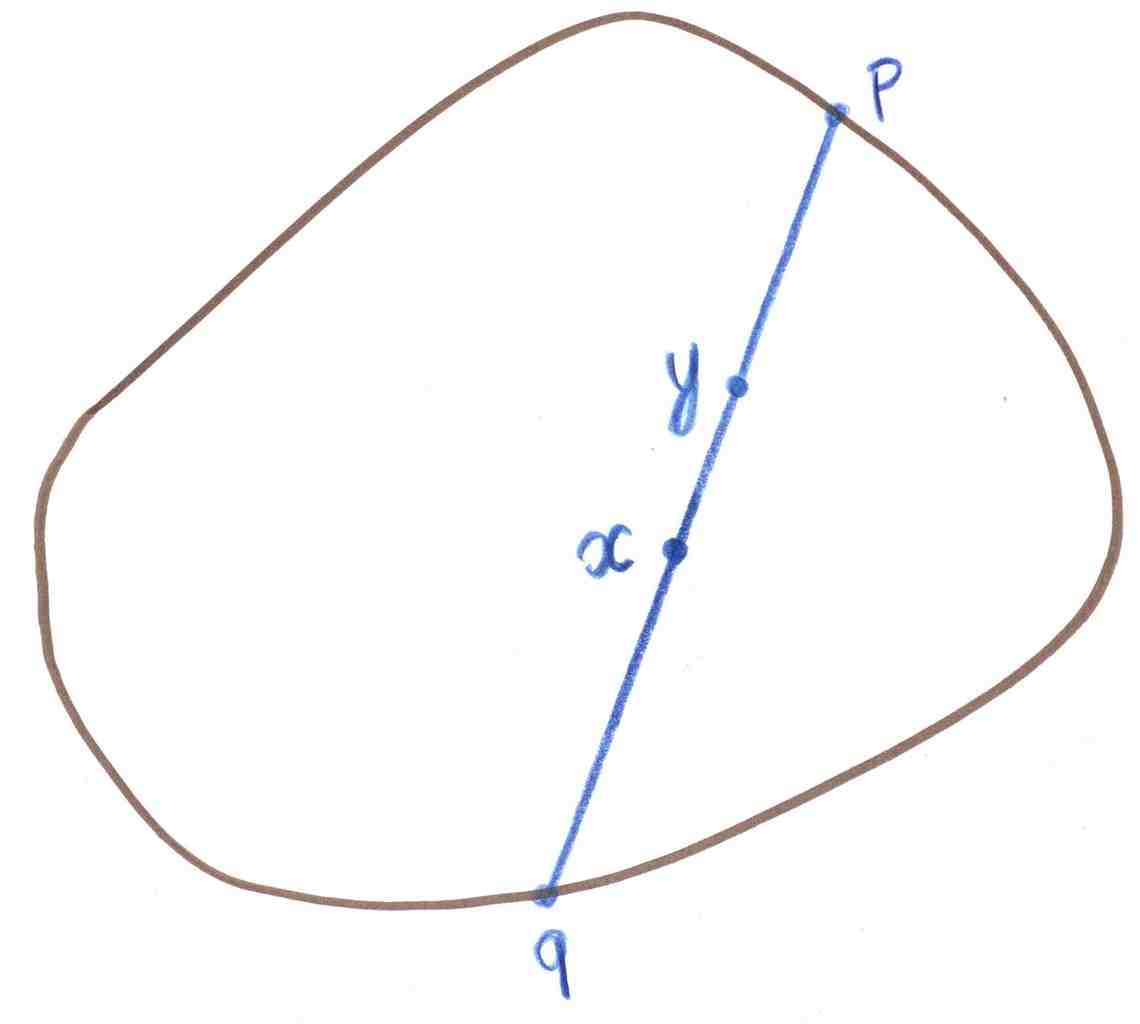}
\caption{La distance de Hilbert}
\label{dis}
\end{figure}
\end{center}

Ces g\'eom\'etries furent introduites par Hilbert comme exemples d'espaces dans lesquelles les droites sont des g\'eod\'esiques. Ce qui nous int\'eresse ici, c'est l'\'etude des quotients d'une g\'eom\'etrie de Hilbert donn\'ee. Il faut noter tout de suite que le groupe des transformations projectives pr\'eservant le convexe $\O$, que nous noterons $\Aut(\O)$, est un sous-groupe du groupe d'isom\'etries $\Isom(\O,\d)$. On ne sait pas en g\'en\'eral dire si ces deux groupes co\"incident. Par contre, il n'est pas difficile de voir que c'est le cas d\`es que $\O$ est strictement convexe: c'est une cons\'equence du fait que les droites sont alors les seule g\'eod\'esiques. Sinon, les seuls cas compris sont ceux des poly\`edres: si $\O$ est un poly\`edre, $\Aut(\O)=\Isom(\O,\d)$ si et seulement si $\O$ n'est pas un simplexe; lorsque $\O$ est un simplexe, $\Aut(\O)$ est d'indice $2$ dans $\Isom(\O,\d)$.\\

Les g\'eom\'etries de Hilbert ont connus un regain d'int\'er\^et dans les, disons, deux derni\`eres d\'ecennies. Pour ce qui va nous concerner ici, il convient de citer dans les r\^oles principaux William Goldman et Yves Benoist. L'article \cite{MR1053346} de Goldman de 1990 est consacr\'e aux surfaces compactes projectives convexes, autrement dit aux quotients compacts d'une g\'eom\'etrie de Hilbert plane. Yves Benoist s'est lui int\'eress\'e \`a la situation bien plus g\'en\'erale d'un sous-groupe discret de $\ss$ pr\'eservant un ouvert proprement convexe de $\PP^n$ \cite{MR1767272}; il a ensuite clarifi\'e, dans sa s\'erie d'articles sur les convexes \emph{divisibles}\footnote{Un ouvert proprement convexe est dit \emph{divisible} lorsqu'il existe un sous-groupe discret de $\Aut(\O)$ tel que $\Quo$ soit compact. On dit alors que le groupe $\G$ divise le convexe $\O$.} \cite{MR2094116,MR2010735,MR2195260,MR2218481}, le cas des quotients compacts d'une g\'eom\'etrie $(\O,\d)$ par un sous-groupe discret de $\
Aut(\O)$. Dans les deux cas, notons que les auteurs restent tributaires de travaux des ann\'ees 60, notamment ceux de Benz\'ecri, Kac, Koszul et Vinberg \cite{MR0124005,MR0239529,MR0208470}.\\

Parmi les convexes divisibles, l'ellipso\"ide, qui d\'efinit une g\'eom\'etrie hyperbolique, est un cas bien \`a part. En fait, un th\'eor\`eme d'\'Edith Soci\'e-M\'ethou affirme que, d\`es que le bord du convexe $\O$ est de classe $\C^2$ \`a hessien d\'efini positif, le groupe d'isom\'etries de $(\O,\d)$ est compact, sauf si, bien s\^ur, c'est un ellipso\"ide \cite{MR1981171}. Un des accomplissements des auteurs pr\'ec\'edents est bien d'avoir montr\'e qu'il existe malgr\'e tout de nombreux autres convexes divisibles. Le premier exemple avait \'et\'e donn\'e par Kac et Vinberg dans les ann\'ees 60 \cite{MR0208470}. En dimension $2$, le r\'esultat de Goldman est quantitatif: l'espace des structures projectives convexes non \'equivalentes sur une surface de genre $g\geqslant 2$ est hom\'eomorphe \`a $\R^{16g-16}$, alors que l'espace des structures hyperboliques non \'equivalentes est lui hom\'eomorphe \`a $\R^{6g-6}$. En dimension plus grande, on ne dispose que de th\'eor\`emes d'existence: d'une part, il est 
possible dans certains cas, par des techniques de pliage, de d\'eformer contin\^ument une structure hyperbolique en une structure projective convexe; d'autre part, il existe des exemples de quotients exotiques \cite{MR2295544,MR2350468}, c'est-\`a-dire de vari\'et\'es compactes projectives strictement convexes, qui n'admettent pas de structure hyperbolique. L'\'etude quantitative de la dimension $2$ et la construction d'exemples par pliage de structures hyperboliques ont \'et\'e g\'en\'eralis\'ees au cas du volume fini par le second auteur \cite{MR2740643,Marquis:2010fk}.\\

Jusque-l\`a, sans le savoir, nous n'avons parl\'e que de situations dans lesquelles l'ouvert convexe est strictement convexe. Rappelons le r\'esultat suivant:
\begin{theo}[Benoist \cite{MR2094116}]\label{benoistmain}
Soit $\O$ un convexe divisible, divis\'e par $\G \leqslant \Aut(\O)$. Les propositions suivantes sont \'equivalentes.
\begin{enumerate}[(i)]
 \item L'ouvert $\O$ est strictement convexe;
 \item Le bord $\dO$ est de classe $\C^1$;
 \item L'espace m\'etrique $(\O,\d)$ est Gromov-hyperbolique;
 \item Le groupe $\G$ est Gromov-hyperbolique.
\end{enumerate}
\end{theo}
Ce th\'eor\`eme a \'et\'e \'etendu dans la pr\'epublication \cite{Cooper:2011fk} au cas des convexes \emph{quasi-divisibles}, c'est-\`a-dire ayant un quotient de volume fini. Cette claire dichotomie ne peut plus exister pour des quotients plus g\'en\'eraux et nous allons voir pourquoi.\\

Dans \cite{MR1767272}, Benoist explique que, sous des hypoth\`eses minimales, si un sous-groupe discret $\G$ de $\ss$ pr\'eserve un ouvert proprement convexe de $\PP^n$, alors il pr\'eserve un convexe minimal $\O_{min}$ et un maximal $\O_{max}$: tout ouvert proprement convexe pr\'eserv\'e par $\G$ est coinc\'e entre $\O_{min}$ et $\O_{max}$. Le convexe $\O_{min}$ n'est rien d'autre que l'int\'erieur de l'enveloppe convexe $C(\LG)$ de l'ensemble limite $\LG$ de $\G$, d\'efini comme l'adh\'erence des points attractifs des \'el\'ements proximaux de $\G$. Le convexe $\O_{max}$ se d\'eduit par dualit\'e du convexe minimal de l'action duale de $\G$.\\
Prenons l'exemple simple d'un sous-groupe discret $\G$ d'isom\'etries du plan hyperbolique $\E=\mathbb{H}^2$. Dans ce cas, l'ensemble limite peut \^etre d\'efini dynamiquement comme l'ensemble $\LG = \overline{\G.o}\smallsetminus \G.o \subset \partial\E$, o\`u $o$ est un point quelconque de $\E$. Lorsque l'action du groupe $\G$ est cocompacte ou de volume fini, l'ensemble limite est pr\'ecis\'ement $\partial\E$ tout entier. Cette propri\'et\'e va en fait rester vraie pour une g\'eom\'etrie de Hilbert d\'efinie par un ouvert strictement convexe $\O$: on aura ainsi $\O_{min}=\O_{max}=\O$, autrement dit que $\G$ ne pr\'eserve pas d'autre ouvert proprement convexe que $\O$. C'est la propri\'et\'e essentielle, que l'on va perdre pour un quotient g\'en\'eral, qui permet d'obtenir le th\'eor\`eme \ref{benoistmain}.\\

\begin{center}
\begin{figure}[h!]\label{trompet}
  \centering
\includegraphics[width=7cm]{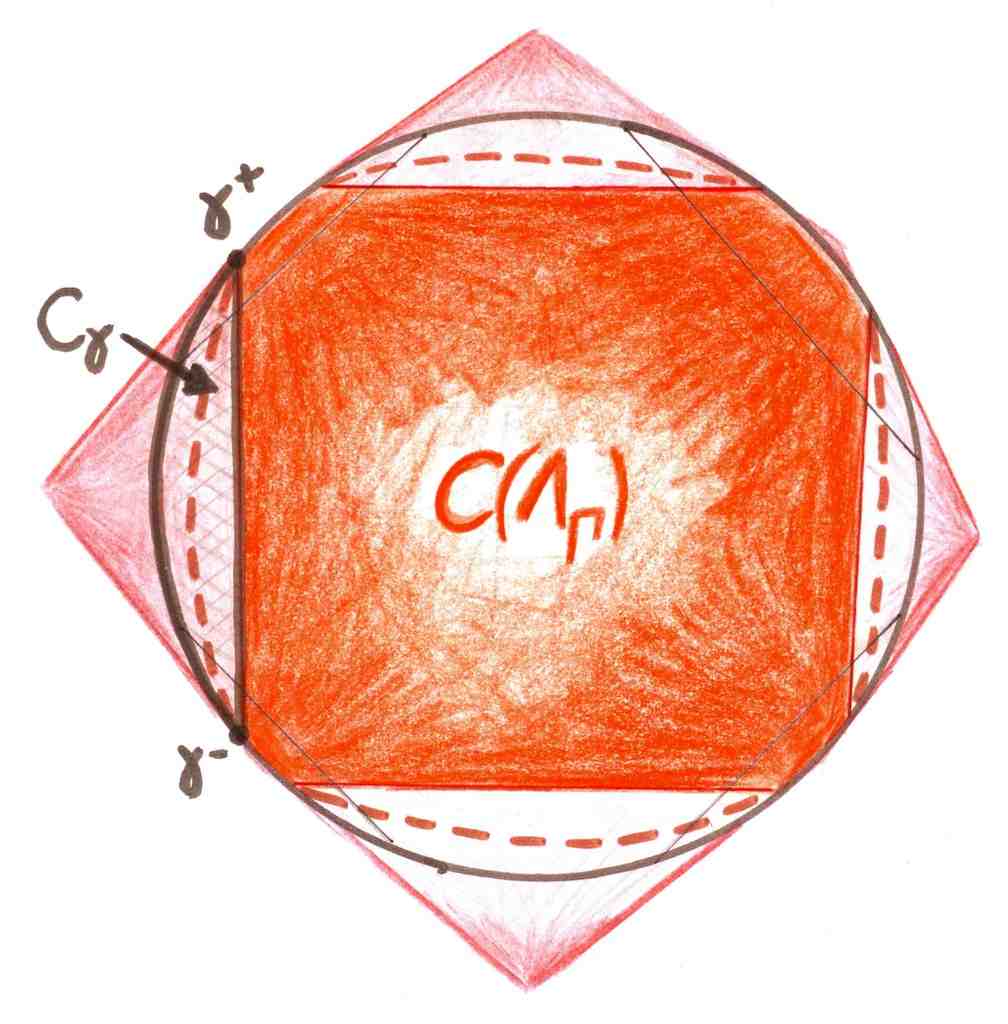}\ \includegraphics[width=7cm]{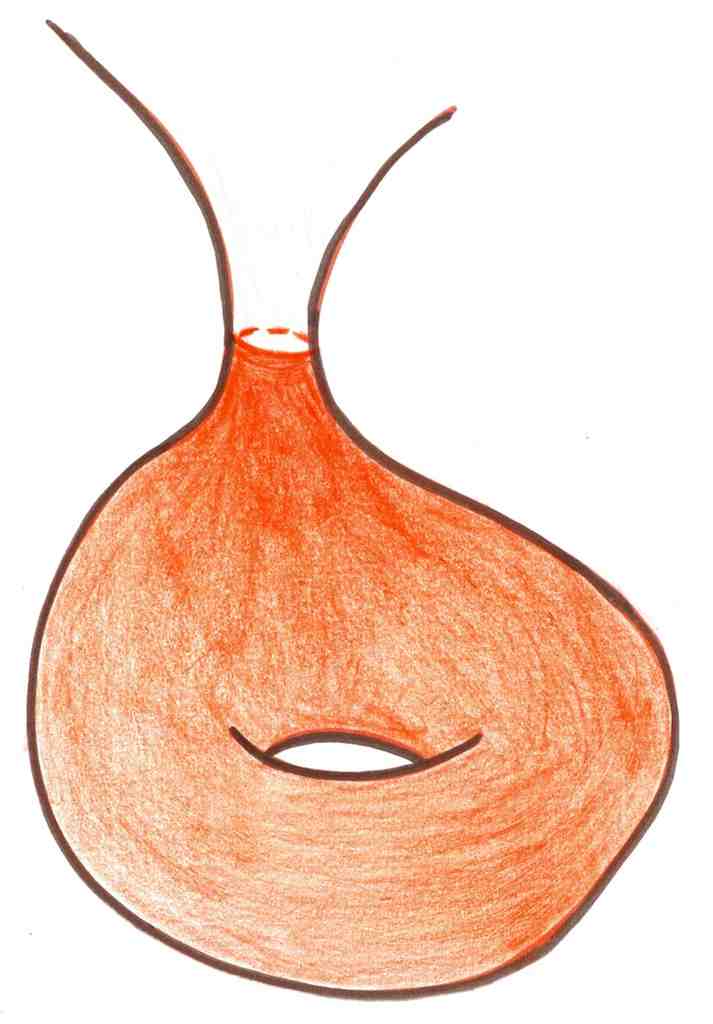}
\caption{Convexe-cocompact}

\end{figure}
\end{center}

En effet, consid\'erons maintenant un sous-groupe convexe cocompact $\G$, dont, disons, le quotient $\E/\G$ est une surface de genre $1$ avec une pointe (qui ressemble \`a une trompette). Le groupe fondamental de la pointe, isomorphe \`a $\Z$, est repr\'esent\'e par un \'el\'ement hyperbolique $\g\in\G$ qui correspond \`a la g\'eod\'esique ferm\'ee \`a la base de la trompette. L'ensemble $C(\LG)$ est un ouvert convexe qui n'est ni strictement convexe ni \`a bord $\C^1$. L'ensemble $\partial\E\smallsetminus C(\LG)$ est exactement l'orbite sous $\G$ de l'ouvert convexe $C_{\g}$ d\'elimit\'e par l'axe $(\g^+\g^-)$ de $\g$ et l'arc de cercle dans $\partial\E$ reliant $\g^+$ \`a $\g^-$. Il n'est pas difficile de voir qu'on peut modifier la partie circulaire du bord de $C_{\g}$ par une courbe $\g$-invariante de telle fa\c con que le convexe que cette courbe d\'efinit avec l'axe de $\g$ aient les propri\'et\'es que l'on veut: strictement convexe mais pas \`a bord $\C^1$, \`a bord $\C^1$ mais pas strictement convexe. 
En copiant cette ``pi\`ece'' via $\G$, on peut ainsi voir que le groupe $\G$ agit sur des ouverts convexes aux caract\'eristiques bien diff\'erentes, et qu'ainsi on ne peut esp\'erer un r\'esultat du type du th\'eor\`eme \ref{benoistmain}. Ce qu'il est raisonnable de se demander toutefois, ce serait:

\begin{qu}\label{quest}
Soit $\G$ un sous-groupe discret (irr\'eductible) de $\ss$. A t-on \'equivalence entre les points suivants?
\begin{enumerate}[(i)]
 \item $\G$ pr\'eserve un ouvert strictement convexe;
 \item $\G$ pr\'eserve un ouvert convexe \`a bord $\C^1$;
 \item $\G$ pr\'eserve un ouvert strictement convexe \`a bord $\C^1$.
\end{enumerate}
\end{qu}

C'est charm\'e par cette id\'ee que nous allons dans cet article ne consid\'erer que des g\'eom\'etries de Hilbert d\'efinies par un ouvert strictement convexe \`a bord $\C^1$. Il est plus agr\'eable de travailler avec une telle g\'eom\'etrie, plus proche de la g\'eom\'etrie hyperbolique, et une r\'eponse affirmative \`a la question pr\'ec\'edente permettrait, pour des probl\`emes ne d\'ependant que du groupe $\G$, de se ramener \`a un tel convexe. De plus, ce sont des hypoth\`eses essentielles pour les r\'esultats g\'eom\'etriques de cet article, ainsi que pour l'article \cite{Mickael:2012fk}  \`a venir, dans lequel nous \'etudierons la dynamique du flot g\'eod\'esique sur certains quotients g\'eom\'etriquement finis.\\

M\^eme si on r\'epondait affirmativement \`a la question \ref{quest}, cela laisserait de c\^ot\'e les cas o\`u les deux propri\'et\'es, stricte convexit\'e et bord $\C^1$, tombent en d\'efaut. En fait, lorsque le convexe n'est ni strictement convexe ni \`a bord $\C^1$, le sentiment tr\`es na\"if est que la g\'eom\'etrie de Hilbert qu'il d\'efinit a plus \`a voir avec la g\'eom\'etrie riemanienne de courbure n\'egative \emph{ou nulle} qu'avec la g\'eom\'etrie hyperbolique. Or, en g\'eom\'etrie riemanienne de courbure n\'egative ou nulle, les situations peuvent \^etre tr\`es diverses et c'est chose peu ais\'ee voire vaine que de se mettre d'accord sur une notion de finitude g\'eom\'etrique.\\

\subsection{Pr\'esentation des r\'esultats}

Il nous a fallu un certain temps avant de trouver la bonne d\'efinition de la notion de finitude g\'eom\'etrique. Parmi les d\'efinitions \'equivalentes de Bowditch, celle qui semblait la plus simple et directe \`a adapter \'etait la suivante: l'action d'un sous-groupe discret $\G$ sur $\O$ est g\'eom\'etriquement finie si tout point de l'ensemble limite est soit conique soit parabolique born\'e. C'est d'ailleurs la d\'efinition que l'on retrouve dans les travaux post\'erieurs de Bowditch et Yaman qui concernent des espaces plus g\'en\'eraux que les vari\'et\'es riemanniennes de courbure n\'egative.\\
Malheureusement, nous n'arrivions par \`a montrer que les autres d\'efinitions de Bowditch, qui font intervenir plus directement la g\'eom\'etrie du quotient, \'etaient \'equivalentes \`a la pr\'ec\'edente. Nous n'y parvenions qu'en faisant une hypoth\`ese suppl\'ementaire sur les points paraboliques, que l'on devait supposer \emph{uniform\'ement born\'es} (d\'efinition \ref{def_uni_born}).
%

Le r\'esultat est alors le suivant (on se reportera au texte pour les d\'efinitions, parties \ref{par_geo_fini} et \ref{section_decomposition} essentiellement; elles sont similaires \`a celles qu'on trouve en g\'eom\'etrie hyperbolique).

\begin{theo}[Th\'eorème \ref{theo_geo_finie}]\label{main1}
Soient $\O$ un ouvert proprement convexe, strictement convexe et \`a bord $\C^1$, $\G$ un sous-groupe discret de $\Aut(\O)$, et $M=\Quo$ le quotient correspondant. Les propositions suivantes sont \'equivalentes:
\begin{enumerate}
\item[(GF)] Tout point de $\LG$ est soit un point limite conique soit un point parabolique \underline{uniform\'ement} born\'e;
\item[(TF)] Le quotient $\Quotient{\Og}{\G}$ est une orbifold à bord qui est l'union d'un compact et d'un nombre fini de projections de r\'egions paraboliques standards disjointes;
\item[(PEC)] La partie \'epaisse du c\oe ur convexe de $M$ est compacte;
\item[(PNC)] La partie non cuspidale du c\oe ur convexe de $M$ est compacte;
\item[(VF)] Le $1$-voisinage du c\oe ur convexe de $\Quo$ est de volume fini et le groupe $\G$ est de type fini.
\end{enumerate}
En particulier, un tel quotient est \emph{sage} (c'est-à-dire est l'int\'erieur d'une orbifold compacte à bord) et par suite le groupe $\G$ est de pr\'esentation finie.
\end{theo}

Nous avons longtemps pens\'e que cet \'ecart \'etait d\^u \`a une d\'efaillance de notre part, et qu'on devrait pouvoir enlever l'hypoth\`ese d'uniformit\'e. En fait, il s'av\`ere que non:

\begin{prop}[Proposition \ref{contrex_geo_finie}]\label{contreex}
Il existe un ouvert proprement convexe $\O \subset \PP^4$, strictement convexe \`a bord $\C^1$, qui admet une action d'un sous-groupe $\G$ de $\Aut(\O)$ telle que tout point de $\LG$ est soit conique soit parabolique born\'e, mais pas uniform\'ement born\'e.
\end{prop}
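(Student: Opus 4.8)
The plan is to build the whole example around one carefully chosen parabolic subgroup whose cusp is cocompact on the boundary but geometrically anisotropic inside the convex set. First I would fix a two-step nilpotent (Heisenberg-type) unipotent subgroup $N$ of $\s{5}$ stabilising a point $p\in\PP^4$ together with a hyperplane $H_p$ through $p$, using the flag $(p,H_p)$ to pin down $N$ as a unipotent radical. Its Lie algebra carries the standard grading, the horizontal part (dimension $2$) having weight $1$ and the centre (dimension $1$) weight $2$ under the one-parameter group of homotheties centred at $p$. I would then take a cocompact lattice $P$ in $N$, so that $P$ acts cocompactly on a $3$-dimensional horosphere based at $p$; this is the datum that will later force $p$ to be a bounded parabolic point, while the gap between the two weights is what will break uniformity.

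Next I would produce a properly convex open set $\O\subset\PP^4$, strictly convex with $\C^1$ boundary, invariant under $P$ and with $p\in\dO$. The idea is to prescribe $\dO$ near $p$ as the graph, in a suitable affine chart, of a $P$-invariant convex function whose sublevel sets play the role of horoballs based at $p$, and to glue this local model to a fixed round piece away from $p$, smoothing the transition. The crucial freedom is that, unlike in the complex-hyperbolic symmetric model, I get to choose this boundary function; I would choose it so that the horoballs stay \emph{too thick} along the central (weight $2$) direction relative to the horizontal ones, deliberately mismatching the nilpotent scaling. The delicate point is to keep strict convexity and $\C^1$ regularity after this gluing and smoothing under the non-semisimple action of $N$: one must check that the prescribed hypersurface remains strictly convex and $\C^1$, is genuinely $P$-invariant, and that no segment or singular support point is created.

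Then I would enlarge $P$ to a non-elementary discrete group $\G\leqslant\Aut(\O)$ by adjoining finitely many loxodromic elements through a ping-pong (Klein-combination) argument adapted to the convex set, arranging that $\G$ is, up to finite index, a combination in which $P$ appears as a peripheral vertex group. With this structure in hand I would determine $\LG$: the ping-pong dynamics force every limit point outside the $\G$-orbit of $p$ to be conical, while $p$ is parabolic with $\Stab_p(\G)=P$ up to finite index. Boundedness of $p$, namely cocompactness of the $P$-action on $\LG\smallsetminus\{p\}$, would follow because the combination confines the rest of the limit set near $p$ to a single $P$-cocompact piece of $\dO$, where one invokes the cocompactness of the $N$-action on the horosphere for the lattice $P$.

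Finally, to see that $p$ is \emph{not} uniformly bounded in the sense of definition \ref{def_uni_born}, I would exploit the anisotropy built into the second step: pushing along the central direction of $N$, I would exhibit a sequence of points in a horoball at $p$ that escapes to infinity in the quotient yet stays at bounded distance from $C(\LG)$, i.e. remains in the thick part, precisely because the weight-$2$ direction prevents the horoballs from thinning out uniformly. This shows the image of the cusp is not a standard parabolic region and that the uniform bound fails, while boundedness of $p$ established above is untouched. The hard part will be the second step: producing a single $\O$ that is simultaneously properly and strictly convex, $\C^1$, $P$-invariant, and carries exactly the anisotropic horoball geometry needed at the end, without the gluing creating extra limit points or destroying regularity. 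Controlling convexity and $\C^1$-smoothness under the unipotent, non-diagonalisable action of $N$, rather than under the orthogonal action available in the hyperbolic case, is where the real technical work lies.
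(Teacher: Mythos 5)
Your construction collapses at the very first step, and the obstruction is one of the main theorems of the paper itself. Suppose a lattice $P$ of a three-dimensional Heisenberg group $N\subset\s{5}$ were a parabolic subgroup of $\Aut(\O)$ fixing $p\in\dO$, for some $\O\subset\PP^4$ properly convex, strictly convex and with $\C^1$ boundary. Such a $P$ is torsion-free and has cohomological dimension $3$ (it is the fundamental group of the closed Heisenberg nilmanifold), and by lemma \ref{lemm_gr_alg} it acts freely and properly on the contractible $3$-manifold $\A^{3}_p\cong\dO\smallsetminus\{p\}$; by the remark on cohomological dimension following the definition of rank, this action is then automatically \emph{cocompact}, so $p$ would be a parabolic point of maximal rank. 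Theorem \ref{lemmededieu} now forces $P$ to be conjugate in $\s{5}$ to a parabolic subgroup of $\so{4}$, hence virtually $\Z^3$ by Bieberbach (theorem \ref{bieberbach}) --- contradicting the fact that a Heisenberg lattice is nilpotent of class $2$ and not virtually abelian. Equivalently, by Mal'cev (theorem \ref{rag}) the Zariski closure of $P$ is $N$, lemma \ref{lemm_gr_alg} makes $N$ act simply transitively on $\A^3_p$ (free, proper, equal dimensions), and lemma \ref{uni} then says $\O$ is an ellipsoid, whose parabolic subgroups are abelian. So \emph{no} strictly convex $\C^1$ domain of $\PP^4$ admits your group $P$ as a cusp group: no amount of care in the gluing and smoothing of the ``anisotropic horoballs'' can succeed, and everything downstream (ping-pong, identification of $\LG$, the non-uniformity argument) is moot.

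The failure is structural and worth internalizing: any bounded parabolic point whose stabilizer is cocompact on a whole codimension-one horosphere is automatically \emph{uniformly} bounded (theorem \ref{lemmededieu} together with proposition \ref{soborne}), so the counterexample can never be built around a maximal-rank cusp, anisotropic or not. One needs the opposite regime: a parabolic of \emph{small} rank inside a large-dimensional boundary, whose unipotent part has a Jordan block too big to be conjugated into $\SO$. That is exactly the paper's construction: take the irreducible representation $\rho_4$ of $\s{2}$ on $\R^5$, whose limit set is the Veronese curve; the invariant properly convex open sets form a one-parameter family $\O_r$, strictly convex with $\C^1$ boundary for $0<r<\infty$ (lemma \ref{dim4}, proved via transitivity of $\s{2}$ on $\partial\O_r\smallsetminus\Lambda_{\s{2}}$); then let $\G$ be the image under $\rho_4$ of a non-uniform lattice of $\s{2}$. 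Each parabolic stabilizer is infinite cyclic, generated by a single Jordan block of size $5$; it acts cocompactly on $\LG\smallsetminus\{p\}$ (a circle minus a point), so $p$ is a bounded parabolic point, yet the convex hull of $\LG\smallsetminus\{p\}$ in $\A^3_p$ is all of $\A^3_p$, on which $\Z$ cannot act cocompactly --- whence the failure of uniform boundedness (see also proposition \ref{bilan_geo}: a size-$5$ Jordan block is not conjugate into a parabolic of $\so{4}$). Your guiding intuition --- a mismatch between what the cusp group sweeps out cocompactly and the convex hull of the limit set --- is the right mechanism, but it can only be realized with a low-rank parabolic carrying an oversized Jordan block, never with a Heisenberg cusp.
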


C'est ce qui nous a amen\'e \`a introduire les d\'efinitions suivantes de finitude g\'eom\'etrique, qui respectent les terminologies introduites jusque-l\`a (voir partie \ref{par_geo_fini}):

\begin{defi}
Soient $\O$ un ouvert proprement convexe, strictement convexe et \`a bord $\C^1$ et $\G$ un sous-groupe discret de $\Aut(\O)$.
\begin{itemize}
 \item L'action de $\G$ est dite g\'eom\'etriquement finie \underline{sur $\dO$} si tout point de l'ensemble limite est soit conique soit parabolique born\'e.
 \item L'action de $\G$ est dite g\'eom\'etriquement finie \underline{sur $\O$} si tout point de l'ensemble limite est soit conique soit parabolique \emph{uniform\'ement} born\'e. On dira aussi dans ce cas que le quotient $\Quo$ est g\'eom\'etriquement fini.
\end{itemize}
\end{defi}

Parmi les actions g\'eom\'etriquement finies, on distingue celles qui sont de covolume fini, et qui avaient d\'ej\`a \'et\'e \'etudi\'ees, notamment de fa\c con compl\`ete en dimension $2$, par le second auteur:

\begin{coro}[Corollaire \ref{theo_vol_fini}]
Soient $\O$ un ouvert proprement convexe de $\PP^n$, strictement convexe et \`a bord $\C^1$ et $\G$ un sous-groupe discret de $\Aut(\O)$. Les propositions suivantes sont \'equivalentes:
\begin{itemize}
 \item l'action de $\G$ sur $\O$ est de covolume fini;
 \item l'action de $\G$ sur $\O$ est g\'eom\'etriquement finie et $\LG=\dO$;
 \item l'action de $\G$ sur $\dO$ est g\'eom\'etriquement finie et $\LG=\dO$.
\end{itemize}
\end{coro}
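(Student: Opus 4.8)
The plan is to read this statement as a corollary of Theorem~\ref{main1}, the whole content being two observations that hold as soon as $\LG=\dO$. The first is that the convex hull $C(\LG)$ is then all of $\O$ (the convex hull of the entire boundary of a proper convex set is the set itself), so that $\Og=\O$ and the convex core $\Qug$ coincides with the full quotient $M=\Quo$; in particular its $1$-voisinage, computed inside $M$, is $M$ entire. The second is that, under the same hypothesis, the two boundedness conditions on a parabolic point $p$ coincide: boundedness asks that $\Stab_{\Gamma}(p)$ act cocompactly on the piece of limit set $\LG\smallsetminus\{p\}$, whereas uniform boundedness (d\'efinition~\ref{def_uni_born}) controls the whole $\dO\smallsetminus\{p\}$, and these two sets agree when $\LG=\dO$. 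Hence, under $\LG=\dO$, the action is g\'eom\'etriquement finie sur $\dO$ exactly when it is g\'eom\'etriquement finie sur $\O$, which already gives the equivalence of the second and third items.

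It remains to connect these with finite covolume. For ``g\'eom\'etriquement finie sur $\O$ et $\LG=\dO$ $\Rightarrow$ covolume fini'', I would invoke Theorem~\ref{main1} in the form (GF)$\Rightarrow$(VF): the $1$-voisinage of the convex core has finite volume. By the first observation this neighbourhood \emph{is} $M$, so $\Vol(M)<\infty$, i.e.\ the action is de covolume fini. For the converse ``covolume fini $\Rightarrow$ g\'eom\'etriquement finie sur $\O$ et $\LG=\dO$'', I would first prove that a finite-covolume action satisfies $\LG=\dO$ — the analogue of the classical hyperbolic fact recalled in the introduction. Granting this, the convex core is $M$, its $1$-voisinage is again $M$ and automatically has finite volume; together with finite generation of $\G$ (which for a finite-covolume action I would extract from the thick-thin decomposition, or cite from the second author's earlier work) this gives condition (VF), whence (GF) by Theorem~\ref{main1}, and with $\LG=\dO$ this is exactly the second item.

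The main obstacle is the lemma $\text{covolume fini}\Rightarrow\LG=\dO$. I expect to argue by contraposition: if $\xi\in\dO\smallsetminus\LG$, then $\dO\smallsetminus\LG$ is a nonempty open set, the region $\O\smallsetminus\Og$ that it bounds projects to a flaring end of $M$, and one estimates its Hilbert volume to be infinite — exactly as a hyperbolic funnel, the ``trompette'' of the introduction, has infinite volume — contradicting finiteness of the covolume. Carrying out this volume estimate in the strictly convex $\C^1$ setting, and confirming that finite covolume forces $\G$ to be of type fini, are the only genuinely non-formal steps; everything else is a direct transcription through Theorem~\ref{main1} and the definitional coincidence recorded above.
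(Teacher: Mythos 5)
Your forward implication and the equivalence of the last two items coincide with the paper's own proof of Corollaire \ref{theo_vol_fini}: when $\LG=\dO$, the convex core of $M$ is all of $M$, a bounded parabolic point is automatically uniformly bounded, and (GF)$\Rightarrow$(VF) of Th\'eor\`eme \ref{theo_geo_finie} gives finite covolume. (One precision: the two boundedness conditions do not bear on ``the same set''; uniform boundedness concerns $\overline{\mathcal{D}_p(C(\LG\smallsetminus\{p\}))}\subset\A^{n-1}_p$, and the equivalence uses the $\Stab_{\G}(p)$-equivariant homeomorphism $q\mapsto(pq)$ between $\dO\smallsetminus\{p\}$ and $\A^{n-1}_p$.) For the converse you take a genuinely different route. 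The paper never proves ``covolume fini $\Rightarrow\LG=\dO$'' by a volume estimate: it assumes $\G$ de type fini --- a hypothesis present in the statement of Corollaire \ref{theo_vol_fini}, though silently dropped in the introduction's version --- reduces to $\G$ sans torsion by Selberg's lemma, applies Lemme \ref{volfini_impli_compact} to the thick part of the \emph{whole} quotient (closed and of finite volume, hence compact), and then runs the geodesic-ray dichotomy of (PNC)$\Rightarrow$(GF): every point of $\dO$ is a conical limit point or a bounded parabolic point of maximal rank. Since such points lie in $\LG$, this yields $\LG=\dO$ and geometric finiteness in one stroke, with no funnel estimate. Your funnel step is nevertheless viable with the paper's own tools: for $\xi\in\dO\smallsetminus\LG$, Lemme \ref{dom_disc} gives a compact neighbourhood $K$ of $\xi$ in $\Og$ meeting only finitely many of its $\G$-translates; disjoint Hilbert balls of fixed radius $r$ centred along a ray ending at $\xi$ concentrate at $\xi$ (Proposition \ref{non_stric_conv1} and strict convexity), hence eventually lie in $K\cap\O$, and each has volume at least $v_n(r)$ (Lemme \ref{Min_vol_boul}); so $K\cap\O$ has infinite volume and projects to $M$ with bounded multiplicity, contradicting finite covolume. (Note also that the set you call $\O\smallsetminus\Og$ is empty, since $\O\subset\Og$; you mean the part of $\O$ cut off towards $\dO\smallsetminus\LG$.)

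The genuine gap is the finite generation of $\G$. You need it to have (VF), and you propose to ``extract [it] from the thick-thin decomposition, or cite [it] from the second author's earlier work''. Neither works as stated: the citation exists only in dimension $2$ \cite{Marquis:2009kq}, and the thick--thin route is circular in the presence of torsion, because the paper's only access to compactness of the thick part (Lemme \ref{volfini_impli_compact}) requires $\G$ sans torsion, which is obtained from Selberg's lemma, which itself requires $\G$ de type fini. This is exactly why Corollaire \ref{theo_vol_fini} carries the hypothesis ``de type fini''. So your proof of (covolume fini)$\Rightarrow$(GF) is incomplete at this point: either add that hypothesis, as the paper does, or supply a new argument excluding unbounded torsion for finite-covolume actions --- an argument the paper does not provide either.
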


D'autres r\'esultats apparaissent au fil du texte. Une grande partie de notre travail a consist\'e \`a comprendre les bouts d'un quotient g\'eom\'etriquement fini, autrement dit les sous-groupes paraboliques qui apparaissent; c'est le

\begin{theo}[Corollaire \ref{para_cas_gene}]\label{para}
Soient $\O$ un ouvert proprement convexe de $\PP^n$, strictement convexe et \`a bord $\C^1$ et $\G$ un sous-groupe discret de $\Aut(\O)$. Si $p$ est un point parabolique uniform\'ement born\'e de $\LG$ de stabilisateur $\P=Stab_{\G}(p)$, alors le groupe $\P$ est conjugu\'e dans $\ss$ \`a un sous-groupe parabolique de $\SO$. En particulier, le groupe $\P$ est virtuellement isomorphe \`a $\Z^{d}$, o\`u $1 \leqslant d \leqslant n-1$ est sa dimension cohomologique virtuelle.
\end{theo}

Ce r\'esultat permet d'adapter une d\'emonstration de Benoist dans \cite{MR1767272} pour obtenir le th\'eorème suivant, qu'on trouve dans \cite{MR1767272} dans le cas o\`u l'action du groupe est cocompacte:

\begin{theo}[Th\'eorème \ref{zariski-dense}]\label{zaza}
Soit $\G$ un sous-groupe discret \emph{irr\'eductible} de $\Aut(\O)$. Si  $\G$ contient un sous-groupe parabolique uniform\'ement born\'e de dimension cohomologique $n-1$ ou $n-2$, alors l'adh\'erence de Zariski de $\G$ est soit $\ss$ soit conjugu\'ee à $\SO$.
\end{theo}

Ce th\'eor\`eme tombe en d\'efaut d\`es que l'ensemble limite ne contient pas de points paraboliques, ou que ceux-ci ne sont pas uniform\'ement born\'es. Ces contre-exemples sont directement reli\'es \`a celui que l'on construit dans la proposition \ref{contreex}.\\

On se rappelle que dans le th\'eor\`eme \ref{benoistmain} de Benoist, l'existence d'un quotient compact pour un ouvert strictement strictement convexe implique que la g\'eom\'etrie de Hilbert qu'il d\'efinit \'etait Gromov-hyperbolique, tout comme le groupe cocompact impliqu\'e. Voici le pendant de ce r\'esultat dans notre cas:

\begin{theo}[Theorème \ref{eqgromovhyp}]\label{gro}
Soient $\O$ un ouvert proprement convexe de $\PP^n$, strictement convexe et \`a bord $\C^1$ et $\G$ un sous-groupe discret de $\Aut(\O)$. Si l'action de $\G$ sur $\O$ est g\'eom\'etriquement finie, alors l'espace m\'etrique $(C(\LG),\d)$ est Gromov-hyperbolique et le groupe $\G$ est relativement hyperbolique par rapport \`a ses sous-groupes paraboliques maximaux.
\end{theo}

Remarquons bien s\^ur que l'espace m\'etrique  $(C(\LG),d_{C(\LG)})$ n'est pas en g\'en\'eral Gromov-hyperbolique. En fait, ce sera le cas seulement lorsque $\LG=\dO$:

\begin{coro}[Corollaire \ref{ghyp_vol_fini}]
Soit $\O$ un ouvert proprement convexe de $\PP^n$, strictement convexe et \`a bord $\C^1$. Si $\O$ admet une action de covolume fini, alors l'espace m\'etrique $(\O,\d)$ est Gromov-hyperbolique.
\end{coro}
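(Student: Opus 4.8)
The plan is to read this corollary off directly from the two earlier results: the finite-covolume characterization (Corollary~\ref{theo_vol_fini}) and the Gromov-hyperbolicity theorem for geometrically finite actions (Theorem~\ref{gro}). The whole content reduces to the observation that, in the finite-covolume regime, the convex core $C(\LG)$ coincides with $\O$ itself, so that the restriction of $\d$ to the core is nothing but $\d$.

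First I would apply Corollary~\ref{theo_vol_fini}. Let $\G \leqslant \Aut(\O)$ act on $\O$ with finite covolume. By the stated equivalence, the action of $\G$ on $\O$ is then geometrically finite (every limit point is conical or uniformly bounded parabolic) and the limit set fills the whole boundary, $\LG = \dO$.

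Next I would identify the core. Since $\O$ is strictly convex, every point of $\dO$ is an extreme point of $\overline{\O}$, so the convex hull of $\LG = \dO$ is all of $\overline{\O}$; its interior, which is $C(\LG)$, is therefore $\O$. This is precisely the degenerate feature of the finite-covolume case: in general $C(\LG)$ is a proper convex subset of $\O$, typically neither strictly convex nor $\C^1$, and then the metric $\d$ restricted to $C(\LG)$ differs from the intrinsic Hilbert metric $d_{C(\LG)}$ of the convex set $C(\LG)$; only the former is governed by Theorem~\ref{gro}. Here the two agree, both being equal to $\d$.

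Finally I would feed the geometrically finite action obtained in the first step into Theorem~\ref{gro}, which asserts that $(C(\LG), \d)$ is Gromov-hyperbolic. Substituting $C(\LG) = \O$ yields exactly that $(\O, \d)$ is Gromov-hyperbolic, as desired. I anticipate no genuine obstacle, since both hard inputs are already in hand; the only point meriting care — and the reason, underlined by the preceding remark, that the hypothesis $\LG = \dO$ cannot be dropped — is that Gromov-hyperbolicity is a statement about the ambient metric $\d$, and it transfers to the whole of $(\O, \d)$ precisely when the core exhausts $\O$.
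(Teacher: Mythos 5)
Your proof is correct and is exactly the paper's (implicit) argument: the paper also deduces Corollaire~\ref{ghyp_vol_fini} by combining Corollaire~\ref{theo_vol_fini} (finite covolume implies a geometrically finite action with $\LG=\dO$) with Th\'eor\`eme~\ref{eqgromovhyp}, using that strict convexity forces $C(\LG)=\O$ when $\LG=\dO$. Your remark that the theorem concerns the ambient metric $\d$ restricted to $C(\LG)$, so that the conclusion transfers to $(\O,\d)$ precisely because the core exhausts $\O$, is the same point the paper emphasizes just before stating Th\'eor\`eme~\ref{eqgromovhyp}.
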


En ce qui concerne la recherche d'une action g\'eom\'etriquement finie sur $\dO$ qui ne le serait pas sur $\O$, il convient de noter tout de suite les restrictions suivantes, qui donnent des informations sur le type d'espaces et de groupes que l'on obtient dans de tels exemples:

\begin{theo}[Proposition \ref{bilan_geo}]
Soient $\O$ un ouvert proprement convexe de $\PP^n$, strictement convexe et \`a bord $\C^1$ et $\G$ un sous-groupe discret de $\Aut(\O)$. Les propositions suivantes sont \'equivalentes:
\begin{enumerate}[(i)]
 \item l'action de $\G$ sur $\O$ est g\'eom\'etriquement finie;
 \item l'action de $\G$ sur $\dO$ est g\'eom\'etriquement finie et les sous-groupes paraboliques de $\G$ sont conjugu\'es à des sous-groupes paraboliques de $\SO$;
 \item l'action de $\G$ sur $\dO$ est g\'eom\'etriquement finie et l'espace m\'etrique $(C(\LG),\d)$ est Gromov-hyperbolique.
\end{enumerate}
En particulier, si $n=2$ ou $3$, l'action de $\G$ est g\'eom\'etriquement finie sur $\O$ si et seulement si elle l'est sur $\dO$.
\end{theo}

Disons quelques mots sur l'exemple que nous donnons pour affirmer la proposition \ref{contreex}. Il s'agit de consid\'erer la repr\'esentation sph\'erique de $\s2$ dans $\R^5$, dont l'ensemble limite dans $\PP^4$ est la courbe Veronese. Nous prouvons que cette action de $\s2$ sur $\PP^4$ pr\'eserve une famille $\{\O_r\}_{r\in[0,+\infty]}$ d'ouverts proprement convexes qui, hormis $\O_0$ et $\O_{\infty}$, sont tous strictement convexes \`a bord $\C^1$. Tous nos exemples proviennent alors des images par cette repr\'esentation de r\'eseaux de $\s2$.

\subsection{Autres travaux sur le sujet}

Personne encore ne s'\'etait encore int\'eress\'e \`a la notion de finitude g\'eom\'etrique en g\'eom\'etrie de Hilbert mais d'autres travaux ont \'et\'e propos\'es r\'ecemment sur les quotients non compacts de g\'eom\'etries de Hilbert. Nous faisons principalement r\'ef\'erence \`a l'article de Suhyoung Choi \cite{Choi:2010fk} et celui de Daryl Cooper, Darren Long et Stephan Tillmann \cite{Cooper:2011fk}.\\

Choi a une approche diff\'erente qui consiste \`a partir de la vari\'et\'e ou de l'orbifold et de chercher ce qu'implique l'existence d'une structure projective (strictement) convexe. Il s'int\'eresse \'egalement \`a l'espace des modules de telles structures et il n'est pas clair que les r\'esultats obtenus puissent s'appliquer directement dans les cas consid\'er\'es ici; il semble que cela reste d\'ependant de la question \ref{quest}.\\

Dans \cite{Cooper:2011fk}, les auteurs font des hypoth\`eses moins restrictives sur l'ouvert convexe consid\'er\'e. C'est ainsi, par exemple, qu'en s'affranchissant de l'hypoth\`ese de r\'egularit\'e $\C^1$, ils peuvent donner la version correspondante du th\'eor\`eme \ref{benoistmain} de Benoist dans le cas d'une action de covolume fini. Cela aurait ici compliqu\'e et d\'evi\'e le propos de prendre en compte des cas plus g\'en\'eraux, et nous l'avons gliss\'e, encore une fois, dans la question \ref{quest}. Notons que notre travail pr\'esente quelques points communs, ce qui n'est pas \'etonnant, avec \cite{Cooper:2011fk}. En particulier, le lemme \ref{uni}  appara\^it aussi dans \cite{Cooper:2011fk}, encore une fois avec l'hypoth\`ese  de r\'egularit\'e en moins. Des \'el\'ements de la partie 6 y sont aussi pr\'esents.\\

\subsection{Plan de l'article}

Terminons cette introduction en expliquant o\`u l'on trouvera quoi.
Apr\`es des rappels de g\'eom\'etrie de Hilbert, nous classifions et d\'ecrivons dans la section 3 les automorphismes d'une g\'eom\'etrie de Hilbert d\'efinie par un ouvert strictement convexe (et \`a bord $\C^1$). C'est la classification classique, selon la distance de translation, entre isom\'etrie hyperbolique, parabolique et elliptique, qu'on trouve en g\'eom\'etrie hyperbolique ou de courbure n\'egative.\\
La quatri\`eme partie s'int\'eresse au bord $\dO$, aux points de l'ensemble limite $\LG$, et \`a l'action du groupe $\G$ sur son ensemble limite et son domaine de discontinuit\'e $\dO\smallsetminus\LG$. L'ouvert convexe $\O$ est \`a partir d'ici suppos\'e strictement convexe \`a bord $\C^1$ mais le groupe $\G$ est un sous-groupe discret quelconque de $\Aut(\O)$.\\
On trouve les d\'efinitions de finitude g\'eom\'etrique dans la partie 5, dans laquelle on justifie notre terminologie en faisant r\'ef\'erence \`a celle qui a \'et\'e employ\'ee par d'autres auteurs.\\
Dans la section 6, nous rappelons le lemme de Margulis, que nous avons prouv\'e dans \cite{Crampon:2011fk}, et en tirons les cons\'equences sur la g\'eom\'etrie d'un quotient $\Quo$ d'une g\'eom\'etrie de Hilbert.\\
La section 7 \'etudie plus en d\'etail les groupes paraboliques. C'est avec la section suivante le c\oe ur de ce travail. On y prouve les th\'eor\`emes \ref{para} et \ref{zaza}, ainsi que d'autres r\'esultats concernant l'action des groupes paraboliques qui nous seront utiles ensuite.\\
La section 8 est consacr\'ee \`a la d\'emonstration du th\'eor\`eme \ref{main1}. On y trouve aussi les descriptions des actions convexes-cocompactes et de covolume fini.\\
Nous d\'emontrons le th\'eor\`eme \ref{gro}, qui concerne les propri\'et\'es d'hyperbolicit\'e m\'etrique, dans la section 9. La derni\`ere section permet elle de faire la distinction entre les deux notions de finitude g\'eom\'etrique que nous avons introduites, sur $\O$ et $\dO$. Nous montrons qu'elles sont en fait \'equivalentes en dimension 2 et 3, puis construisons un exemple, en dimension 4, d'une action g\'eom\'etriquement finie sur $\dO$ mais pas sur $\O$.\\
En annexe, nous d\'emontrons un petit r\'esultat concernant le volume des pics, que nous esp\'erons pouvoir g\'en\'eraliser dans un prochain travail; il s'agit d'un travail en commun avec Constantin Vernicos.

\subsection*{Remerciements}

Profitons-en donc pour remercier Constantin pour son aide et son int\'er\^et. Nous tenons \'egalement \`a remercier Yves Benoist, Serge Cantat, Fran\c coise Dal'bo et Patrick Foulon dont les discussions et les connaissances ne sont pas pour rien dans ce travail.\\
Le premier auteur est financ\'e par le programme FONDECYT N$^\circ$ 3120071 de la CONICYT (Chile).


\mainmatter

\section{G\'eom\'etrie de Hilbert}\label{geo_hilbert}

\par{
Cette partie constitue une introduction très rapide à la g\'eom\'etrie de
Hilbert. Pour une introduction plus complète, on pourra lire \cite{MR2270228,MR1238518} ou les livres \cite{MR0075623,MR0054980}.
}

\subsection{Distance et volume}
\par{
Une \emph{carte affine} $A$ de $\PP^n$ est le compl\'ementaire d'un hyperplan projectif. Une carte affine possède une structure naturelle d'espace affine. Un ouvert $\O$ de $\PP^n$ diff\'erent de $\PP^n$ est \emph{convexe} lorsqu'il est inclus dans une carte affine et qu'il est convexe dans cette carte. Un ouvert convexe $\O$ de $\PP^n$ est dit \emph{proprement convexe} lorsqu'il existe une carte affine contenant son adh\'erence $\overline{\O}$. Autrement dit, un ouvert convexe est proprement convexe lorsqu'il ne contient pas de droite affine. Un ouvert proprement convexe $\O$ de $\PP^n$ est dit \emph{strictement convexe} lorsque son bord $\partial \O$ ne contient pas de segment non trivial. 
}
\\
\label{base}

Hilbert a introduit sur un ouvert proprement convexe $\O$ de $\PP^n$ la distance qui porte aujourd'hui son nom. Pour $x \neq y \in \O$, on note $p,q$ les points d'intersection de la droite $(xy)$ et du bord $\partial \O$ de $\O$, de telle fa\c con que $x$ soit entre $p$ et $y$, et $y$ entre $x$ et $q$ (voir figure \ref{dist}). On pose

$$
\begin{array}{ccc}
d_{\O}(x,y) = \displaystyle\frac{1}{2}\ln \big([p:x:y:q]\big) = \displaystyle\frac{1}{2}\ln \bigg(\frac{|py|\cdot |
qx|}{|px| \cdot |qy|} \bigg) & \textrm{et} &
d_{\O}(x,x)=0,
\end{array}
$$
o\`u
\begin{enumerate}
\item la quantit\'e $[p:x:y:q]$ d\'esigne le birapport des points $p,x,y,q$;
\item $| \cdot |$ est une norme euclidienne quelconque sur une carte affine $A$ qui contient l'adh\'erence $\overline{\O}$ de $\O$.
\end{enumerate}

Le birapport \'etant une notion projective, il est clair que $d_{\O}$ ne d\'epend ni du choix de $A$, ni du choix de la norme euclidienne sur $A$.

\begin{center}
\begin{figure}[h!]
  \centering
\includegraphics[width=6cm]{hilbertdistance.jpg}\ \includegraphics[width=7cm]{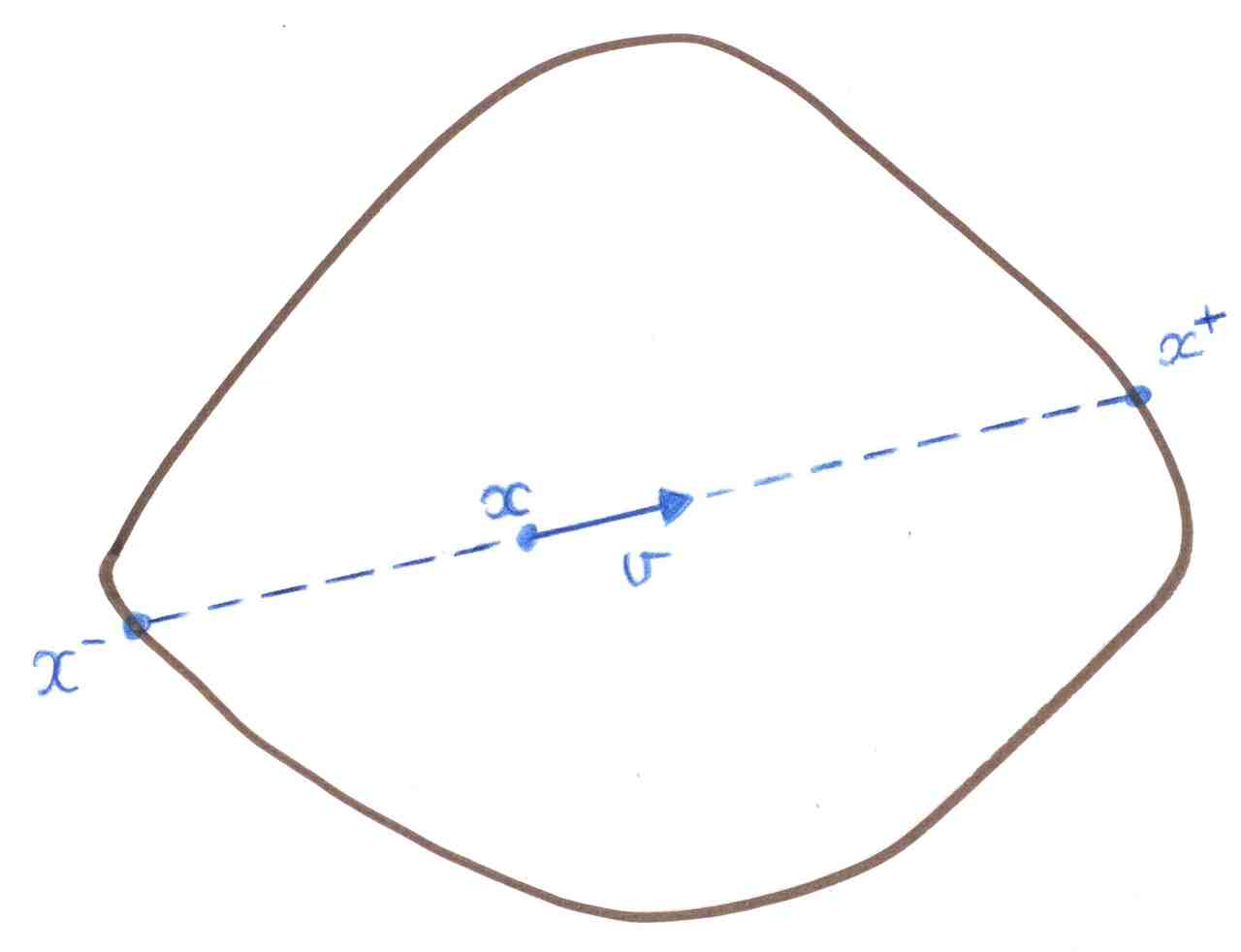}
\caption{La distance de Hilbert et la norme de Finsler}
\label{dist}
\end{figure}
\end{center}

\begin{fait}
Soit $\O$ un ouvert proprement convexe de $\PP^n$.
\begin{enumerate}
\item $d_{\O}$ est une distance sur $\O$;
\item $(\O,d_{\O})$ est un espace m\'etrique complet;
\item La topologie induite par $d_{\O}$ coïncide avec celle induite par $\PP^n$;
\item Le groupe $\Aut(\O)$ des transformations projectives de $\ss$ qui pr\'eservent $\O$ est un sous-groupe ferm\'e de $\ss$ qui agit par isom\'etries sur $(\O,d_{\O})$. Il agit donc proprement sur $\O$.
\end{enumerate}
\end{fait}

La distance de Hilbert $d_{\O}$ est induite par une structure finsl\'erienne sur l'ouvert $\O$. On choisit une carte affine $A$ et une m\'etrique euclidienne $|\cdot|$ sur $A$ pour lesquelles $\O$ appara\^it comme un ouvert convexe born\'e. On identifie le fibr\'e tangent $T \O$ de $\O$ à $\O \times A$. Soient $x \in \O$ et $v \in A$, on note $x^+=x^+(x,v)$ (resp. $x^-$) le point d'intersection de la demi-droite d\'efinie par $x$ et $v$ (resp $-v$) avec $\partial \O$ (voir figure \ref{dist}). On pose
$$F(x,v) = \frac{|v|}{2}\Big(\frac{1}{|xx^-|} + \frac{1}{| xx^+|} \Big),$$
quantit\'e ind\'ependante du choix de $A$ et de $|\cdot|$, puisqu'on ne consid\`ere que des rapports de longueurs.


\begin{fait}
Soient $\O$ un ouvert proprement convexe de $\PP^n$ et $A$ une carte affine qui contient $\overline{\O}$. La distance induite par la m\'etrique finsl\'erienne $F$ est la distance $d_{\O}$. Autrement dit on a les formules suivantes:
\begin{itemize}
\item $\displaystyle{F(x,v) = \left. \frac{d}{dt}\right| _{t=0} d_{\O}(x,x+tv)}$, pour $v \in A$;
\item $d_{\O}(x,y) = \inf \displaystyle\int_0^1 F(\dot\sigma(t))\ dt$, où l'infimum est pris sur les chemins $\sigma$ de classe $\C^1$ tel que $\sigma(0)=x$ et $\sigma(1)=y$.
\end{itemize}
\end{fait}

\par{
Il y a plusieurs mani\`eres naturelles. de construire un volume pour une g\'eom\'etrie de Finsler, la d\'efinition riemannienne acceptant plusieurs g\'en\'eralisations. Nous travaillerons avec le volume de Busemann, not\'e $\textrm{Vol}_{\O}$.\\
Pour le construire, on se donne une carte affine $A$ et une m\'etrique euclidienne $|\cdot|$ sur $A$ pour lesquelles $\O$ appara\^it comme un ouvert convexe born\'e. On note $B(T_x\O)= \{ v \in T_x \O \, | \, F(x,v) < 1\}$ la boule de rayon $1$ de l'espace tangent à $\O$ en $x$, $\textrm{Vol}$ la mesure de Lebesgue sur $A$ associ\'ee \`a $|\cdot|$ et $v_n=\textrm{Vol}(\{ v \in A \, | \, |v| < 1 \})$ le volume de la boule unit\'e euclidienne en dimension $n$.

\par{
Pour tout bor\'elien $\mathcal{A} \subset \O \subset A$, on pose:
$$\textrm{Vol}_{\O} (\mathcal{A})= \int_{\mathcal{A}} \frac{v_n}{\textrm{Vol}(B(T_x\O))}\ d\Vol(x)$$
L\`a encore, la mesure $\textrm{Vol}_{\O}$ est ind\'ependante du choix de $A$ et de $|\cdot|$. En particulier, elle est pr\'eserv\'ee par le groupe $\Aut(\O)$.
}

La proposition suivante permet de comparer deux g\'eom\'etries de Hilbert entre elles.

\begin{prop}\label{compa}
Soient $\O_1$ et $\O_2$ deux ouverts proprement convexes de $\PP^n$ tels que $\O_1 \subset \O_2$.
\begin{itemize}
\item Les m\'etriques finsl\'eriennes $F_1$ et $F_2$ de $\O_1$ et $\O_2$ v\'erifient: $F_2(w) \leqslant F_1(w)$, $w \in T \O_1 \subset T \O_2$, l'\'egalit\'e ayant lieu si et seulement si $x^+_{\O_1}(w)=x^+_{\O_2}(w)$ et $x^-_{\O_1}(w)=x^-_{\O_2}(w)$.
\item Pour tous $x,y \in \O_1$, on a $d_{\O_2}(x,y) \leqslant d_{\O_1}(x,y)$.
\item Les boules m\'etriques v\'erifient, pour tout $x \in \O_1$ et $r>0$, $B_{\O_1}(x,r) \subset B_{\O_2}(x,r)$, avec \'egalit\'e si et seulement si $\O_1=\O_2$. De m\^eme, $B(T_x\O_1) \subset B(T_x\O_2)$.
\item Pour tout bor\'elien $\mathcal{A}$ de $\O_1$, on a $\textrm{Vol}_{\O_2}(\mathcal{A}) \leqslant \textrm{Vol}_{\O_1}(\mathcal{A})$.
\end{itemize}
\end{prop}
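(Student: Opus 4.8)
Le plan est de tout faire d\'ecouler du premier point, la comparaison des m\'etriques finsl\'eriennes, dont les trois autres sont des cons\'equences presque imm\'ediates. Comme $F$ et $\Vol_{\O}$ ne d\'ependent ni de la carte affine ni de la norme euclidienne choisies, je commencerais par fixer une unique carte affine $A$ contenant $\overline{\O_2}$ (donc aussi $\overline{\O_1}$) et une seule norme $|\cdot|$, de fa\c con \`a calculer les deux g\'eom\'etries avec les m\^emes r\'ef\'erences. Pour $w=(x,v)\in T\O_1$, les demi-droites issues de $x$ dirig\'ees par $\pm v$ rencontrent d'abord $\partial\O_1$ puis $\partial\O_2$ puisque $\O_1\subset\O_2$, d'o\`u $|xx^+_{\O_1}|\leqslant|xx^+_{\O_2}|$ et $|xx^-_{\O_1}|\leqslant|xx^-_{\O_2}|$. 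En rempla\c cant ces longueurs dans la formule d\'efinissant $F(x,v)=\frac{|v|}{2}\big(\frac{1}{|xx^-|}+\frac{1}{|xx^+|}\big)$ on obtient $F_2(w)\leqslant F_1(w)$. Une somme de deux quantit\'es d\'ecroissantes \'etant \'egale si et seulement si chaque terme l'est, l'\'egalit\'e a lieu exactement lorsque $x^+_{\O_1}(w)=x^+_{\O_2}(w)$ et $x^-_{\O_1}(w)=x^-_{\O_2}(w)$, chacun de ces points \'etant caract\'eris\'e par sa distance \`a $x$ sur la demi-droite consid\'er\'ee.

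Le deuxi\`eme point s'obtient en appliquant la formule int\'egrale $d_{\O}(x,y)=\inf\int_0^1 F(\dot\sigma)\,dt$: tout chemin de classe $\C^1$ de $x$ \`a $y$ trac\'e dans $\O_1$ l'est aussi dans $\O_2$, et pour un tel chemin le premier point donne $\int F_2(\dot\sigma)\leqslant\int F_1(\dot\sigma)$; l'infimum d\'efinissant $d_{\O_2}(x,y)$ portant de surcro\^it sur une famille de chemins plus large, on conclut $d_{\O_2}(x,y)\leqslant d_{\O_1}(x,y)$. L'inclusion $B_{\O_1}(x,r)\subset B_{\O_2}(x,r)$ en r\'esulte aussit\^ot, et $B(T_x\O_1)\subset B(T_x\O_2)$ d\'ecoule directement du premier point. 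Enfin, avec la carte et la norme communes, cette derni\`ere inclusion donne $\Vol(B(T_x\O_1))\leqslant\Vol(B(T_x\O_2))$, donc $\frac{v_n}{\Vol(B(T_x\O_2))}\leqslant\frac{v_n}{\Vol(B(T_x\O_1))}$ en tout point $x$; l'int\'egration sur $\mathcal{A}$ fournit $\Vol_{\O_2}(\mathcal{A})\leqslant\Vol_{\O_1}(\mathcal{A})$.

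Le point d\'elicat est le cas d'\'egalit\'e pour les boules m\'etriques, o\`u il faut convertir une in\'egalit\'e finsl\'erienne stricte en une seule direction en une inclusion stricte de boules. Je proc\'ederais ainsi: supposons $\O_1\subsetneq\O_2$. Ces convexes \'etant ouverts, $\O_2\smallsetminus\overline{\O_1}$ est non vide (sinon $\O_2$, ouvert, serait contenu dans l'int\'erieur de $\overline{\O_1}$, \`a savoir $\O_1$). Pour $x\in\O_1$ fix\'e, le segment reliant $x$ \`a un point de $\O_2\smallsetminus\overline{\O_1}$ traverse $\partial\O_1$ en un point $z\in\partial\O_1\cap\O_2$; dans la direction $v=z-x$ on a $x^+_{\O_1}(w)=z\neq x^+_{\O_2}(w)$, d'o\`u $F_1(x,v)>F_2(x,v)$ par le cas d'\'egalit\'e du premier point, in\'egalit\'e stricte maintenue par continuit\'e le long du segment. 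Ce segment \'etant une g\'eod\'esique pour les deux g\'eom\'etries, l'int\'egration de $F_1$ et $F_2$ donne $d_{\O_1}(x,y_t)>d_{\O_2}(x,y_t)$ pour les points $y_t$ du segment, $t>0$.

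Comme $d_{\O_1}(x,y)$ tend vers l'infini lorsque $y$ approche $z\in\partial\O_1$, on peut choisir $y_t$ sur ce segment avec $d_{\O_1}(x,y_t)=r$; alors $d_{\O_2}(x,y_t)<r$, ce qui exhibe un point de $B_{\O_2}(x,r)\smallsetminus B_{\O_1}(x,r)$ et prouve l'inclusion stricte pour tous $x$ et $r>0$. La r\'eciproque \'etant \'evidente, on obtient l'\'equivalence annonc\'ee. L'essentiel du travail r\'eside donc dans cet argument g\'eom\'etrique local (trouver un point de $\partial\O_1$ int\'erieur \`a $\O_2$ et int\'egrer le long de la g\'eod\'esique correspondante), le reste n'\'etant qu'une succession de monotonies directes.
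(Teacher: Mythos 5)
The paper states Proposition \ref{compa} without any proof (it is given as a standard comparison fact), so there is no argument of the authors to compare against; your proof is correct and is the natural one they surely had in mind, deducing everything from the pointwise Finsler comparison $F_2\leqslant F_1$. The only genuinely delicate point, the strict inclusion $B_{\O_1}(x,r)\subsetneq B_{\O_2}(x,r)$ when $\O_1\subsetneq\O_2$, is handled soundly: producing $z\in\partial\O_1\cap\O_2$, noting $F_1>F_2$ in the direction of $z$ (strictly near $x$ by continuity, non-strictly everywhere), and integrating along the segment $[x,z)$, on which both Hilbert distances are computed by integrating the respective Finsler norms, does yield $d_{\O_1}(x,y_t)>d_{\O_2}(x,y_t)$ and hence the strict inclusion.
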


\subsection{Fonctions de Busemann et horosphères}\label{busemannhoro}

Nous supposons dans ce paragraphe que l'ouvert proprement convexe $\O$ de $\PP^n$ est strictement convexe et \`a bord $\C^1$. Dans ce cadre, il est possible de d\'efinir les fonctions de Busemann et les horosphères de la m\^eme mani\`ere qu'en g\'eom\'etrie hyperbolique, et nous ne donnerons pas de d\'etails.\\

Pour $\xi\in\dO$ et $x\in \O$, notons $c_{x,\xi}: [0,+\infty)\longrightarrow\O$ la g\'eod\'esique issue de $x$ et d'extr\'emit\'e $\xi$, soit $c_{x,\xi}(0)=x$ et $c_{x,\xi}(+\infty)=\xi$. La \emph{fonction de Busemann} bas\'ee en $\xi\in\dO$ $b_{\xi}(.,.):\O\times\O\longrightarrow\R$ est d\'efinie par:
$$b_{\xi}(x,y) = \lim_{t\to+\infty} \d(y,c_{x,\xi}(t)) - t = \lim_{z\to\xi} \d(y,z) - \d(x,z),\ x,y\in\O.$$
L'existence de ces limites est due aux hypoth\`eses de r\'egularit\'e faites sur $\O$. Les fonctions de Busemann sont de classe $\C^1$.\\

\emph{L'horosph\`ere} bas\'ee en $\xi\in\dO$ et passant par $x\in\O$ est l'ensemble
$$\H_{\xi}(x) = \{y\in\O \ | \ b_{\xi}(x,y) = 0\}.$$
\emph{L'horoboule} bas\'ee en $\xi\in\dO$ et passant par $x\in\O$ est l'ensemble
$$H_{\xi}(x) = \{y\in\O \ | \ b_{\xi}(x,y) < 0\}.$$
L'horoboule bas\'ee en $\xi\in\dO$ et passant par $x\in\O$ est un ouvert strictement convexe de $\O$, dont le bord est l'horosph\`ere correspondante, qui est elle une sous-vari\'et\'e de classe $\C^1$ de $\O$.\\
Dans une carte affine $A$ dans laquelle $\O$ appara\^it comme un ouvert convexe relativement compact, on peut, en identifiant $T\O$ avec $\O \times A$, construire g\'eom\'etriquement l'espace tangent \`a $\H_{\xi}(x)$ en $x$: c'est le sous-espace affine contenant $x$ et l'intersection $T_{\xi}\dO\cap T_{\eta}\dO$ des espaces tangents \`a $\dO$ en $\xi$ et $\eta=(x\xi)\cap\dO\smallsetminus\{\xi\}$.\\
On peut voir que que l'horoboule et l'horosph\`ere bas\'ees en $\xi\in\dO$ et passant par $x\in\O$ sont les limites des boules et des sphères m\'etriques centr\'ees au point $z\in\O$ et passant par $x$ lorsque $z$ tend vers $\xi$.

\subsection{Dualit\'e}\label{def_dualite}

\`A l'ouvert proprement convexe $\O$ de $\PP^n$ est associ\'e l'ouvert proprement convexe dual $\O^*$: on considère un des deux cônes $C\subset \R^{n+1}$ au-dessus de $\O$, et son dual
$$C^* = \{ f\in (\R^{n+1})^*,\ \forall x\in C,\ f(x)>0\}.$$
Le convexe $\O^*$ est par d\'efinition la trace de $C^*$ dans $\PP((\R^{n+1})^*)$.\\

Le bord de $\dO^*$ est facile à comprendre, car il s'identifie à l'ensemble des hyperplans tangent à $\O$. En effet, un hyperplan tangent $T_x$ à $\dO$ en $x$ est la trace d'un hyperplan $H_x$ de $\R^{n+1}$. L'ensemble des formes lin\'eaires dont le noyau est $H_x$ forme une droite de $(\R^{n+1})^*$, dont la trace $x^*$ dans $\PP((\R^{n+1})^*)$ est dans $\dO^*$. Il n'est pas dur de voir qu'on obtient ainsi tout le bord $\dO^*$.\\
Cette remarque permet de voir que le dual d'un ouvert strictement convexe a un bord de classe $C^1$, et inversement. En particulier, lorsque $\O$ est strictement convexe et que son bord est de classe $\C^1$, ce qui est le cas que nous \'etudierons, on obtient une involution continue $x\longmapsto x^*$ entre les bords de $\O$ et $\O^*$.\\

\'Etant donn\'e un sous-groupe discret $\G$ de $\Aut(\O)$, on en d\'eduit aussi une action de $\G$ sur le convexe dual $\O^*$: pour $f\in C^*$ et $\g\in\G$,
$$(\g\cdot f)(x) = f(\g^{-1}x),\ x\in C.$$
Le sous-groupe discret de $\Aut(\O^*)$ ainsi obtenu sera not\'e $\G^*$. Bien entendu, on a $(\O^*)^*=\O$ et $(\G^*)^*=\G$.\\

\subsection{Le th\'eorème de Benz\'ecri}

\par{
On d\'efinit l'espace $X^{\bullet}$ des convexes marqu\'es comme l'ensemble suivant:
$$X^{\bullet} = \{  (\O,x) \,\mid\, \O \textrm{ est un ouvert proprement convexe de } \PP^n \textrm{ et } x \in \O \}$$
muni de la topologie de Hausdorff h\'erit\'ee par la distance canonique sur $\PP^n$.}

Le th\'eor\`eme suivant a dej\`a prouv\'e maintes fois son utilit\'e.

\begin{theo}[Jean-Paul Benz\'ecri \cite{MR0124005}]\label{theo_ben}
$\,$
L'action de $\ss$ sur $X^{\bullet}$ est propre et cocompacte.
\end{theo}

On pourra trouver une preuve de ce th\'eorème aussi dans les notes de cours de William Goldman \cite{NoteGoldman}.


\section{Classification des automorphismes}


\subsection{Le th\'eorème de classification}

\begin{defi}
Soient $\O$ un ouvert proprement convexe et $\g\in\Aut(\O)$. On appelle \emph{distance de translation de $\g$} la quantit\'e $\tau(\g) = \displaystyle{\inf_{x \in \O} d_{\O}(x, \g \cdot x)}$.
\end{defi}

\begin{defi}
Soient $\O$ un ouvert proprement convexe et $\g\in\Aut(\O)$. On dira que $\g$ est:
\begin{enumerate}
\item \emph{hyperbolique} lorsque $\tau(\g) > 0$ et cet infimum est atteint;

\item \emph{quasi-hyperbolique}  lorsque $\tau(\g) > 0$ et cet infimum n'est pas atteint;

\item \emph{elliptique}  lorsque $\tau(\g) = 0$  et cet infimum est atteint, autrement dit $\g$ fixe un point de $\O$;

 \item \emph{parabolique}  lorsque $\tau(\g) = 0$ et cet infimum n'est pas atteint.
\end{enumerate}
\end{defi}

\begin{theo}\label{classi_dym_1}
Soient $\O$ un ouvert strictement convexe et à bord $\C^1$ de $\PP^n$ et $\g \in \Aut(\O)$. On est dans l'un des trois cas exclusifs suivants:
\begin{enumerate}
\item L'automorphisme $\g$ est elliptique.
\item L'automorphisme $\g$ est hyperbolique. Il a exactement deux points fixes $p^+,p^- \in \partial \O$, l'un r\'epulsif et l'autre attractif: la suite $(\g^n)_{n \in \N}$ converge uniform\'ement sur les compacts de $\overline{\O}\smallsetminus \{ p^-\}$ vers $p^+$, et la suite $(\g^{-n})_{n \in \N}$ converge uniform\'ement sur les compacts de $\overline{\O}\smallsetminus \{ p^+\}$ vers $p^-$.
\item L'automorphisme $\g$ est parabolique. Il a exactement un point fixe $p \in \partial \O$ et pr\'eserve toute horosphère bas\'ee en $p$. De plus, la famille $(\g^n)_{n \in \Z}$ converge uniform\'ement sur les compacts de $\overline{\O}\smallsetminus \{ p\}$ vers $p$. Mais $p$ n'est pas un point attractif au sens de la remarque ci-dessous.
\end{enumerate}
En particulier, l'automorphisme $\g$ n'est pas quasi-hyperbolique.
\end{theo}

\begin{center}
\begin{figure}[h!]
  \centering
\includegraphics[width=7cm]{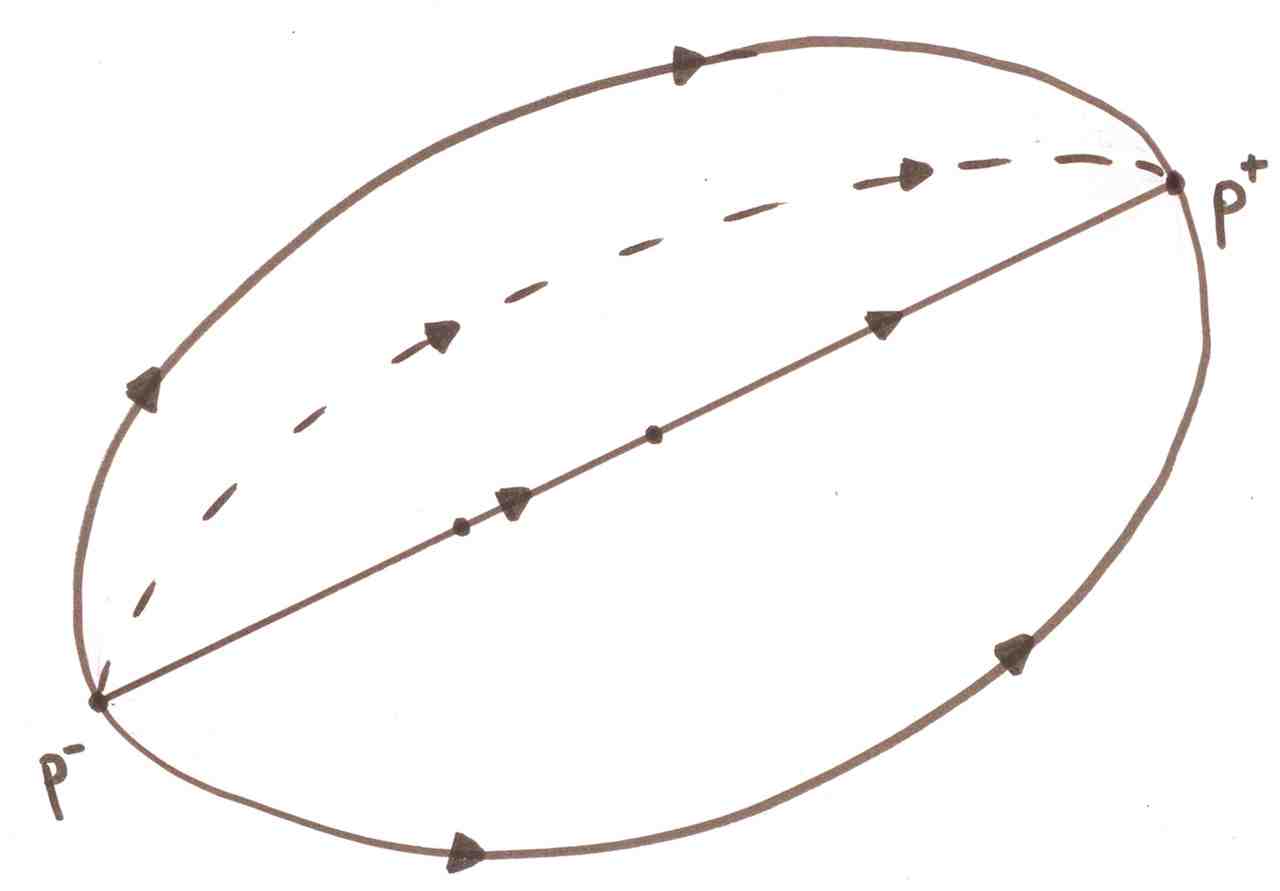}\ \ \includegraphics[width=7cm]{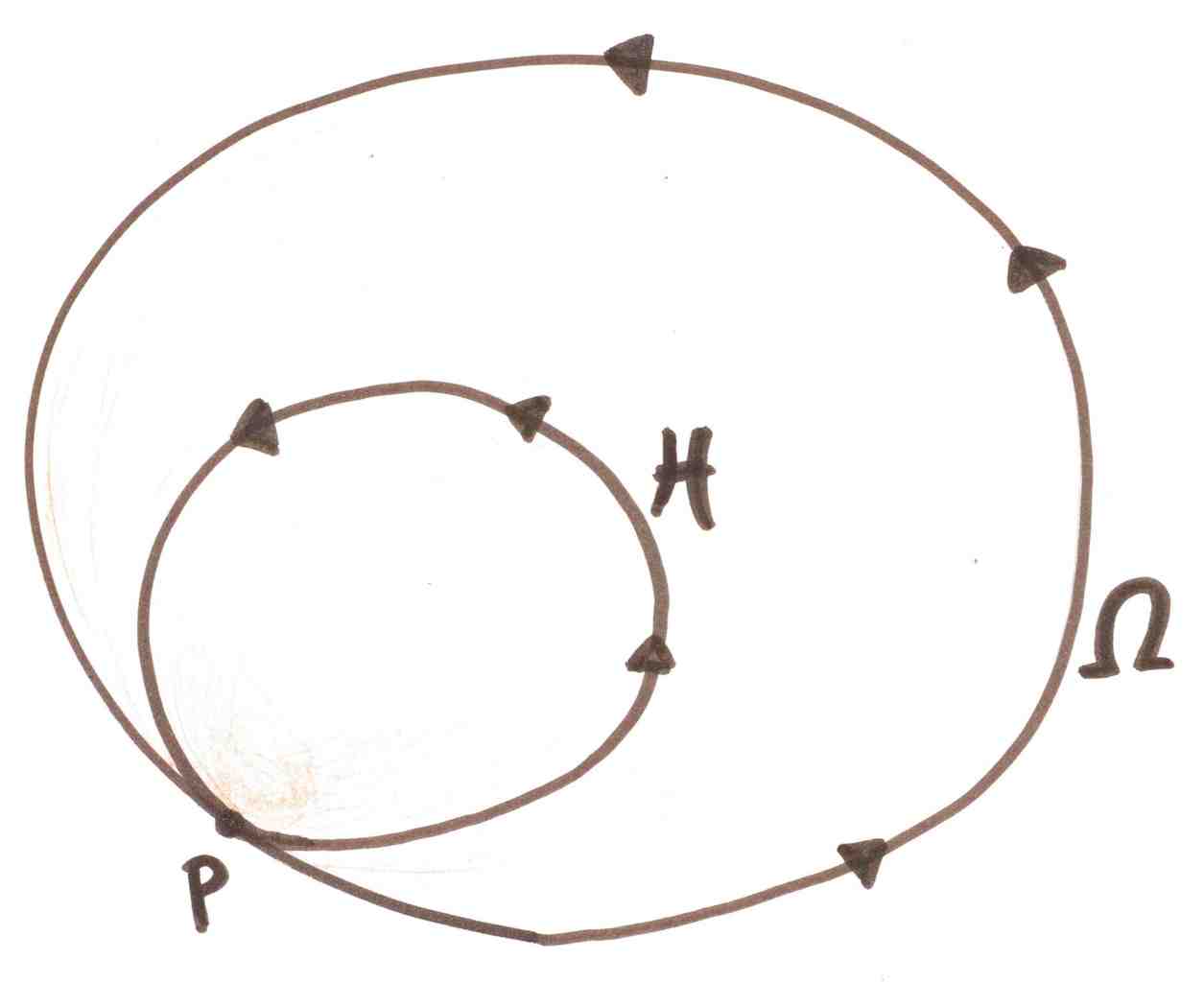}
\caption{Isom\'etries hyperbolique et parabolique}

\end{figure}
\end{center}

\begin{rema}
Un point $x$ est dit \emph{attractif} pour un hom\'eomorphisme $\g$ lorsqu'il existe un voisinage $\U$ de $x$ tel que $(\g^n(\U))_{n\in \N}$ converge vers le singleton $\{ x \}$ en d\'ecroissant. Un point est \emph{r\'epulsif} pour un hom\'eomorphisme $\g$ s'il est attractif pour $\g^{-1}$.
\end{rema}

\subsection{Petites dimensions}
\paragraph*{Dimension 1}

Le lemme suivant est un exercice laiss\'e au lecteur.

\begin{lemm}\label{lem_auto_1}
Tout ouvert proprement convexe de $\PP^1$ est projectivement \'equivalent à $\O_0 = \R_+^*$. De plus, $\Aut(\O_0) = \R_+^*$ via l'action de $\R_+^*$ sur lui-même par homoth\'etie.
\end{lemm}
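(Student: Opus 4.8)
Le plan est de traiter s\'epar\'ement les deux assertions en passant syst\'ematiquement aux c\^ones de $\R^2$ au-dessus de $\PP^1=\PP(\R^2)$. Pour la premi\`ere, on rel\`eve $\O$ en un c\^one convexe ouvert $C\subset\R^2$; dire que $\O$ est proprement convexe revient \`a dire que $C$ ne contient aucune droite, c'est-\`a-dire que $C$ est un secteur angulaire ouvert d'ouverture strictement inf\'erieure \`a $\pi$. Ses deux ar\^etes sont port\'ees par deux vecteurs lin\'eairement ind\'ependants $u,v$. On choisit alors $g\in\mathrm{GL}_2(\R)$ envoyant $u$ sur $e_1$ et $v$ sur $e_2$: l'image $g(C)$ est le quadrant ouvert $\{x>0,\ y>0\}$, dont la trace dans $\PP^1$ est pr\'ecis\'ement $\O_0=\R_+^*$ dans la carte $t\mapsto[1:t]$. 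L'application projective induite par $g$ envoie donc $\O$ sur $\O_0$, qui sont ainsi projectivement \'equivalents.

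Pour la seconde assertion, on remarque que la pr\'eimage de $\O_0$ dans $\R^2\smallsetminus\{0\}$ est la r\'eunion $C\cup(-C)$ des premier et troisi\`eme quadrants ouverts. Un \'el\'ement $g\in\Aut(\O_0)\leqslant\s2$ pr\'eserve cet ensemble, donc pr\'eserve la paire d'axes de coordonn\'ees (le bord du c\^one) et permute ces deux axes. Si $g$ fixe chaque axe, il est diagonal, de coefficients diagonaux $a$ et $d$ avec $ad=1$; comme $a$ et $d$ sont alors de m\^eme signe, $g$ pr\'eserve bien $C\cup(-C)$ et agit sur la carte par l'homoth\'etie $t\mapsto (d/a)\,t$, de rapport $d/a=d^2\in\R_+^*$ arbitraire. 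On obtient ainsi exactement les homoth\'eties positives, soit $\R_+^*$, chacune une seule fois puisque $\mathrm{diag}(a,d)$ et $\mathrm{diag}(-a,-d)$ induisent la m\^eme transformation projective.

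La difficult\'e principale, et c'est l\`a que l'hypoth\`ese $\det=1$ est essentielle, est d'\'eliminer le cas o\`u $g$ \'echange les deux axes. Un tel $g$ serait antidiagonal, de coefficients $b$ en position $(1,2)$ et $c$ en position $(2,1)$, avec $-bc=1$, donc $b$ et $c$ de signes oppos\'es; mais alors $g$ envoie le premier quadrant dans le deuxi\`eme ou le quatri\`eme, dont aucun n'est contenu dans $C\cup(-C)$, de sorte que $g$ ne pr\'eserve pas $\O_0$. Autrement dit, l'inversion $t\mapsto c/t$, qui pr\'eserve pourtant $\R_+^*$ vu comme partie de $\PP^1$, n'est repr\'esent\'ee que par des matrices de d\'eterminant n\'egatif et n'appartient donc pas \`a $\Aut(\O_0)\subset\s2$. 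C'est la seule subtilit\'e: une fois ce cas exclu, on conclut que $\Aut(\O_0)=\R_+^*$ agissant par homoth\'eties.
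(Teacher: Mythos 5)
Le texte de l'article ne contient aucune d\'emonstration de ce lemme : il est explicitement \guillemotleft{} laiss\'e en exercice au lecteur \guillemotright, il n'y a donc rien \`a comparer. Votre preuve est correcte et compl\`ete : le rel\`evement en c\^one de $\R^2$, la r\'eduction au quadrant par choix d'une base adapt\'ee aux ar\^etes, puis l'analyse des \'el\'ements diagonaux et antidiagonaux sont exacts. Vous identifiez de plus correctement la seule vraie subtilit\'e de l'\'enonc\'e : avec la convention de l'article, $\Aut(\O_0)$ est form\'e d'\'el\'ements de $\s{2}$, et la condition de d\'eterminant $1$ exclut pr\'ecis\'ement les inversions $t\mapsto c/t$ (qui pr\'eservent pourtant $\R_+^*$ dans $\PP^1$ mais ne proviennent que de matrices de d\'eterminant n\'egatif), ce qui donne bien $\Aut(\O_0)=\R_+^*$ agissant par homoth\'eties.
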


\begin{rema}
L'action par homoth\'etie $\g$ de rapport $\lambda$ sur $\R_+^*$ est une action par translation de force $\ln(\lambda)$, c'est-à-dire $\forall x \in \R_+^*$, $d_{\R_+^*}(x, \g \cdot x)= \ln(\lambda)$.
\end{rema}

\paragraph*{Dimension 2}

On pourra trouver dans \cite{MR1293655,Marquis:2009kq} une classification complète des automorphismes des ouverts proprement convexes de $\PP^2$. On ne donne ici que le lemme n\'ecessaire pour le th\'eorème \ref{classi_dym_1}.

\begin{lemm}[Proposition 2.9 de \cite{Marquis:2009kq}]\label{lem_auto_2}		
Soit $\O$ un ouvert proprement convexe de $\PP^2$. S'il existe un automorphisme $\g \in \Aut(\O)$  qui possède trois points fixes distincts sur $\partial \O$ alors le bord $\partial \O$ de l'ouvert $\O$ contient deux segments distincts et non triviaux. 
\end{lemm}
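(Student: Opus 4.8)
\emph{The plan} is to diagonalise $\g$ and to locate the required segments among the three sides of the triangle formed by its fixed points. Since the conclusion concerns only $\O$, I may freely replace $\g$ by a power, and I assume $\g\neq\mathrm{id}$. Having three distinct fixed points in $\PP^2$ forces all eigenvalues of $\g$ to be real (a non-real conjugate pair of eigenvalues would leave at most one real eigendirection, hence at most one fixed point). Moreover $\g$ can be neither a transvection nor an involution: a transvection moves every point off its fixed line along a full affine line, so an invariant open set would contain a line and fail to be properly convex; and an involution preserving $\O$ is a central symmetry about its centre $c$, which forces $c\in\O$ and, since a properly convex centrally symmetric set is bounded, $\g$ then has no fixed point on $\dO$ at all, against the hypothesis. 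Hence $\g$ is diagonalisable, and two cases remain according to whether its eigenvalues are pairwise distinct. Throughout I use the elementary dichotomy: a projective line $D$ meets $\overline{\O}$ in a segment whose relative interior is either entirely contained in $\O$ (a crossing chord, so $D\cap\dO$ is two points) or entirely contained in $\dO$ (so $D$ supports $\O$); this follows because the segment from an interior point to a point of $\overline{\O}$ lies, except for its endpoint, in $\O$.

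First suppose the eigenvalues $\l_1,\l_2,\l_3$ are pairwise distinct, with fixed points $e_1,e_2,e_3\in\dO$ in general position. The three sides $[e_i,e_j]$ lie in $\overline{\O}$, so by the dichotomy each is a crossing chord or a boundary segment. I claim that at each vertex $e_i$ at most one of the two incident sides is a crossing chord. In the affine chart centred at $e_i$ the map $\g$ is linear, equal to $\mathrm{diag}(\l_j/\l_i,\l_k/\l_i)$, whose two eigendirections are exactly the directions of the sides $[e_i,e_j]$ and $[e_i,e_k]$, with distinct eigenvalues since $\l_j\neq\l_k$. The tangent cone $K$ of $\O$ at $e_i$ is a proper convex cone invariant under this map, and a side is a crossing chord if and only if its direction lies in the interior of $K$. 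Now an invariant proper convex cone has its extreme rays among the (four) coordinate half-axes, since any non-eigendirection is moved off itself and so cannot bound an invariant cone; thus $K$ is a quadrant or an axis-bounded half-plane, and none of these contains both positive coordinate half-axes in its interior. This proves the claim.

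The claim yields the theorem by counting: if $X$ of the three sides are crossing chords, then summing over the three vertices the number of incident crossing chords gives $2X\leqslant 3$, so $X\leqslant 1$; hence at least two sides are boundary segments, and as they lie on the three distinct lines $(e_ie_j)$ they are two distinct nontrivial segments of $\dO$. In the remaining case $\g$ has a repeated eigenvalue, $\g=\mathrm{diag}(\l,\l,\mu)$ with $\mu\neq\l$, fixing a line $L$ pointwise and a point $c\notin L$. In the chart sending $L$ to infinity, $\g$ acts as a homothety of ratio $\l/\mu\neq 1$ centred at $c$ (after replacing $\g$ by $\g^2$ the ratio is positive; the excluded value $\l/\mu=-1$ is the involution, already ruled out). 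For $w\in\O$ the orbit $(\l/\mu)^n w$ forces $c\in\overline{\O}$ and places the whole ray from $c$ through $w$ inside $\O$, so $\O$ is an open convex cone with apex $c$; proper convexity makes it an angular sector of opening in $(0,\pi)$, whose two bounding rays are two distinct nontrivial segments of $\dO$.

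The crux — and the step I expect to cost the most care — is the cone lemma invoked at each vertex: a proper convex cone invariant under a diagonalisable linear map with two distinct real eigenvalues has its extreme rays among the coordinate half-axes, because every non-eigendirection is moved strictly (towards the dominant axis) and so cannot bound an invariant cone. Granting this, the classification into quadrants and axis-bounded half-planes is immediate, and the trivial remark that none of them has both positive half-axes in its interior closes the local argument. A little extra bookkeeping is needed when a ratio $\l_j/\l_i$ is negative, where invariance forces the two opposite half-axes to be simultaneously boundary or interior, which only strengthens the conclusion. This local analysis at the fixed points, combined with the global count $2X\leqslant 3$ and the homothety picture in the degenerate case, is the whole engine of the proof; everything else is routine manipulation of the dichotomy and of convex cones.
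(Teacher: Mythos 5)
The paper does not actually prove this lemma — it quotes it as Proposition 2.9 of \cite{Marquis:2009kq} — so your proposal can only be judged on its own terms. Its architecture (the chord/boundary-segment dichotomy, tangent cones at the fixed points, the parity count $2X\leqslant 3$, and the homothety picture for a repeated eigenvalue) is viable, but two of the steps it rests on are false as written, and both need genuine repairs rather than routine bookkeeping.

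First, the involution exclusion is wrong. A projective involution has fixed set $\{c\}\cup L$ (a point and a line), and nothing forces $c\in\O$: the unit disc preserved by the reflection $(a,b)\mapsto(-a,b)$ is a counterexample to your claim, since there $c$ is the point at infinity of the horizontal direction, $c\notin\overline{\O}$, and the involution has exactly \emph{two} fixed points on $\dO$. So "no fixed point on $\dO$ at all" is false. What you need — that an involution never has \emph{three} fixed points on $\dO$ — is true, but requires an argument: for $x\in\O$ the invariant line $(cx)$ meets $\O$ in an arc invariant under the induced involution of $\PP^1$, and such an arc must contain one of the two fixed points $c$ or $(cx)\cap L$; one deduces that either $c\in\O$ and $\overline{\O}\cap L=\varnothing$ (no boundary fixed point), or $c\notin\overline{\O}$ and $L$ crosses $\O$ (exactly two boundary fixed points, the endpoints of the chord $L\cap\overline{\O}$). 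You cannot skip this, because your treatment of the repeated-eigenvalue case replaces $\g$ by $\g^2$, which kills involutions.

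Second, and more seriously, the cone lemma you yourself single out as the crux is false. Take $T=\mathrm{diag}(a,-a)$ with $a>0$, i.e.\ the resonant case $\l_j=-\l_k$, which is compatible with three distinct eigenvalues of product one (e.g.\ $(\l_i,\l_j,\l_k)=(-\tfrac14,2,-2)$). Then $T$ preserves the proper convex cone $\{u\geqslant|v|\}$, whose extreme rays are \emph{not} coordinate half-axes; your justification ("every non-eigendirection is moved strictly towards the dominant axis") collapses precisely here, because there is no dominant axis — $T$ acts on directions as a reflection, and whole pairs of non-eigendirections are swapped rather than attracted. Your remark on negative ratios (opposite half-axes simultaneously interior or boundary) does not cover this case. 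The per-vertex conclusion you want (at most one incident side is a crossing chord) does survive, but for a reason you have not supplied: in the resonant case every invariant cone is of the form $\{\pm u\geqslant c|v|\}$, symmetric about the fixed eigen-axis, and its interior contains no direction along the transverse axis; alternatively one can show the resonant case never occurs, since $\g^2$ then has a repeated eigenvalue, so by your own last paragraph $\O$ would be an angular sector, and $\g$ would have to preserve the proper sub-arc $\overline{\O}\cap(e_je_k)$ while acting on that line as an involution fixing $e_j,e_k$, which is impossible. Either repair is short, but without one the counting argument rests on a false statement at (possibly) one of the three vertices.
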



\subsection{Quelques lemmes}

Sur l'adh\'erence $\overline{\O}$ de tout ouvert proprement convexe $\O$, on peut introduire la relation d'\'equivalence suivante:

\begin{center}\vspace*{.5cm}
\begin{minipage}[center]{10cm}
\begin{itemize}
\item[] $x \sim_{\O} y$
\item[$\Leftrightarrow$]  le  segment $[x,y]$ peut se prolonger strictement\\ à ses deux extr\'emit\'es et rester dans $\overline{\O}$
\item[$\Leftrightarrow$]  les points $x$ et $y$ sont dans la même facette de $\overline{\O}$.
\end{itemize}
\end{minipage}
\vspace*{.5cm}
\end{center}

\par{
On appelle ainsi \emph{facette} les classes de cette relation d'\'equivalence. Le \emph{support} d'une facette est l'espace projectif qu'elle engendre. On remarquera que les facettes de $\overline{\O}$ sont des ouverts proprement convexes de leur support. Lorsqu'une facette est un singleton $\{p\}$, le point $p$ est dit \emph{extr\'emal}.
}
 
\begin{lemm}\label{force_strict}
Soit $\O$ un ouvert proprement convexe. Soient $(x_n)_{n \in \N}$ et $(y_n)_{n \in \N}$ deux suites de points de $\O$ telles que:
\begin{enumerate}
\item la suite $(x_n)_{n \in \N}$ converge vers un point $x_{\infty} \in \partial \O$;
\item la suite $(d_{\O}(x_n, y_n))_{n \in \N}$ est major\'ee;
\item le point $x_{\infty}$ est un point extr\'emal de $\O$.
\end{enumerate}
Alors, la suite $(y_n)_{n \in \N}$ converge vers le point $x_{\infty} \in \partial \O$.
\end{lemm}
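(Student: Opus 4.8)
The plan is to argue by contradiction, exploiting the compactness of $\overline{\O}$ together with the explicit cross-ratio formula for $d_{\O}$. Suppose $(y_n)_{n\in\N}$ does not converge to $x_{\infty}$. Since $\overline{\O}$ is compact (it lies as a relatively compact convex set in an affine chart), after passing to a subsequence I may assume that $y_n \to y_{\infty} \in \overline{\O}$ with $y_{\infty} \neq x_{\infty}$. For each $n$, let $p_n, q_n \in \partial\O$ be the endpoints of the chord carried by the line $(x_n y_n)$, ordered so that $x_n$ lies between $p_n$ and $y_n$, and $y_n$ between $x_n$ and $q_n$, as in the definition of $d_{\O}$. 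Passing to a further subsequence, I would assume $p_n \to p_{\infty}$ and $q_n \to q_{\infty}$ in $\partial\O$. Because $x_n \to x_{\infty}$ and $y_n \to y_{\infty}$ are distinct, the lines $(x_n y_n)$ converge to the line $L = (x_{\infty} y_{\infty})$, so $p_{\infty}, q_{\infty} \in L \cap \partial\O$, and the (weak) ordering $p_{\infty}, x_{\infty}, y_{\infty}, q_{\infty}$ along $L$ is preserved in the limit.

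The decisive step is to locate $x_{\infty}$ on the chord $L \cap \overline{\O} = [a,b]$, and this is where hypothesis (3) enters. For an open convex set, the relative interior of a chord joining two boundary points is either entirely contained in $\O$ or entirely contained in $\partial\O$: if one of its points were interior, convexity would drag the whole open segment into $\O$. In the first case $x_{\infty}$, lying on $\partial\O$, must be an endpoint $a$ or $b$; in the second case the interior of $[a,b]$ consists of non-extremal boundary points, so extremality of $x_{\infty}$ again forces it to be an endpoint. Since $x_{\infty}$ precedes $y_{\infty}\neq x_{\infty}$ in the ordering along $L$, it is the endpoint on the $p$-side, and therefore $p_{\infty}$, which precedes $x_{\infty}$, must satisfy $p_{\infty} = x_{\infty}$. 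In particular $|p_n x_n| \to 0$, while $|p_n y_n| \to |x_{\infty} y_{\infty}| > 0$ and $|q_n x_n| \to |q_{\infty} x_{\infty}| > 0$ (the latter because the chord $[a,b]$ is non-trivial, $x_{\infty}$ and $q_{\infty}$ being distinct endpoints).

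Feeding these limits into
$$d_{\O}(x_n, y_n) = \frac{1}{2}\ln\left(\frac{|p_n y_n|\cdot|q_n x_n|}{|p_n x_n| \cdot |q_n y_n|}\right),$$
I would observe that the numerator stays bounded away from $0$ whereas the denominator tends to $0$, since $|p_n x_n| \to 0$ and $|q_n y_n|$ remains bounded above by the diameter of $\O$. Hence the argument of the logarithm tends to $+\infty$ and $d_{\O}(x_n, y_n) \to +\infty$, contradicting hypothesis (2). Consequently every convergent subsequence of $(y_n)_{n\in\N}$ has limit $x_{\infty}$, and by compactness of $\overline{\O}$ the whole sequence converges to $x_{\infty}$.

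I expect the main obstacle to be the careful justification of the chord dichotomy and the resulting identification $p_{\infty} = x_{\infty}$: one must treat cleanly the two ways a chord can meet $\partial\O$ and check that the ordering of $p_n, x_n, y_n, q_n$ genuinely passes to the limit. The cross-ratio estimate is then routine; the only point to watch is that $|q_n y_n|$ cannot blow up and compensate the vanishing of $|p_n x_n|$, but since it is bounded above by the diameter, the denominator does tend to $0$ and no compensation is possible (if anything, $y_{\infty} = q_{\infty}$ only reinforces the blow-up).
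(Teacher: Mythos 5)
Your proof is correct, but it follows a genuinely different route from the paper's. The paper disposes of this lemma in one line: it is quoted as a direct consequence of Proposition \ref{non_stric_conv1} (whose proof is in \cite{Marquis:2010fk}), which says that if $x_n\to x_{\infty}\in\dO$ then the metric balls $B^{\O}_{x_n}(R)$ converge in Hausdorff distance to the $R$-ball of the facet $S$ of $\overline{\O}$ containing $x_{\infty}$; when $x_{\infty}$ is extremal, $S=\{x_{\infty}\}$, so the points $y_n\in B^{\O}_{x_n}(R)$ are squeezed onto $x_{\infty}$. You instead give a self-contained contradiction argument from the cross-ratio formula, and the delicate steps are handled correctly: the chord dichotomy (the open chord of $L=(x_{\infty}y_{\infty})$ lies either entirely in $\O$ or entirely in $\dO$), the fact that extremality forces $x_{\infty}$ to be the endpoint of that chord on the $p$-side, whence $p_{\infty}=x_{\infty}$ and $|p_nx_n|\to 0$, and the boundedness of $|q_ny_n|$, so that the cross-ratio blows up and contradicts hypothesis (2). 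One cosmetic slip: when the chord lies in $\dO$, the point $q_{\infty}$ need not be an endpoint of it, as your parenthetical claims; but the conclusion $|q_{\infty}x_{\infty}|>0$ still holds for the reason that actually matters, namely that $q_{\infty}$ lies weakly beyond $y_{\infty}\neq x_{\infty}$ in the order along $L$. What the paper's route buys is a stronger, reusable statement --- Proposition \ref{non_stric_conv1} describes the degeneration of balls at arbitrary (not necessarily extremal) boundary points and is invoked again, e.g.\ in the proof of Proposition \ref{prop_pre_action_conv} --- whereas what yours buys is an elementary argument independent of the external reference \cite{Marquis:2010fk}.
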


\begin{proof}
C'est une cons\'equence de la proposition suivante \ref{non_stric_conv1} qui est d\'emontr\'e dans \cite{Marquis:2010fk}.
\end{proof}

\begin{prop}\label{non_stric_conv1}
Soient $\O$ un ouvert proprement convexe de $\PP^n$ et $x_{\infty}$ un point de $\dO$. On note $S$ la facette de $\overline{\O}$ contenant $x_{\infty}$ et $E$ son support.

Pour toute suite de points $(x_n)$ de $\O$ et tout r\'eel $R>0$, si la suite $(x_n)$ tend vers $x_{\infty}$ alors la suite $(B^{\O}_{x_n}(R))$ converge vers la boule $B^{S}_{x_{\infty}}(R)$ pour la distance de Hausdorff induite par la distance canonique $d_{can}$ de $\PP^n$ (voir figure \ref{bouboule}).	
\end{prop}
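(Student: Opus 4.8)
The plan is to fix an affine chart in which $\overline{\O}$ is compact and to choose affine coordinates $(u,w)\in\R^{d}\times\R^{n-d}$ adapted to the facet, so that the support $E$ becomes $\{w=0\}$, $x_{\infty}=(0,0)$, and $\O$ sits on one side of $E$ with $\overline{\O}\cap E=\overline{S}$; this is possible because $S$ being open in $E$ and contained in $\dO$ forces $E$ to be a supporting subspace, i.e. $E\cap\O=\emptyset$. Writing $x_n=(u_n,w_n)$ with $w_n\to 0$, the statement reduces to two facts: (A) every point of $B^{\O}_{x_n}(R)$ has its $w$-coordinate tending to $0$, i.e. the balls collapse onto $E$; and (B) for points approaching the facet from inside $\O$, the Hilbert distance $d_{\O}$ converges to the Hilbert distance $d_S$ of the facet. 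Granting these, Hausdorff convergence follows from two inclusions: I check that every point of the target $B^{S}_{x_{\infty}}(R)$ is a limit of ball points, and that every accumulation point of ball points lies in $\overline{B^{S}_{x_{\infty}}(R)}$.

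The engine for both inclusions is the cross-ratio estimate (B). Let $a_n,b_n\in\O$ with $a_n\to a_\infty$ and $b_n\to b_\infty$, where $a_\infty\neq b_\infty$ both lie in $E$. The chord $(a_nb_n)$ converges to the chord of $E$ through $a_\infty,b_\infty$, which is a genuine chord of the facet because $x_{\infty}$, and hence the nearby points, sit in the relative interior of $S$; consequently the two exit points $p_n,q_n\in\dO$ of $(a_nb_n)$ converge to the exit points $p_\infty,q_\infty\in\partial S$ of the limiting chord inside $E$. By continuity of the cross-ratio this gives $d_{\O}(a_n,b_n)=\frac{1}{2}\ln[p_n:a_n:b_n:q_n]\to\frac{1}{2}\ln[p_\infty:a_\infty:b_\infty:q_\infty]=d_S(a_\infty,b_\infty)$, with the convention that the limit is $+\infty$ when $b_\infty\in\partial S$. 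The first inclusion is now immediate: given $z=(u_*,0)\in B^{S}_{x_{\infty}}(R)$ one has $u_*\in S$, so the lift $z_n=(u_*,w_n)$ lies in $\O$ for $n$ large (the slice $\O\cap\{w=w_n\}$ tends to $S$), $z_n\to z$, and $d_{\O}(x_n,z_n)\to d_S(x_{\infty},z)<R$ by (B), whence $z_n\in B^{\O}_{x_n}(R)$ eventually.

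For the second inclusion I first establish the collapse (A). Projecting linearly onto the transverse factor, $(u,w)\mapsto w$, produces a properly convex image $\pi(\O)\subset\R^{n-d}$ whose boundary point $0=\lim\pi(x_n)$ is \emph{extremal}, since the whole face $\overline{S}$ collapses onto it; as such a linear projection is $1$-Lipschitz for the Hilbert metrics, $d_{\pi(\O)}(\pi x_n,\pi y_n)\le d_{\O}(x_n,y_n)\le R$ for every $y_n\in B^{\O}_{x_n}(R)$. Since $\pi x_n\to 0$ with $0$ extremal, the backward exit point of the chord $(\pi x_n\,\pi y_n)$ collapses onto $0$, so the cross-ratio would blow up unless $\pi y_n\to 0$; hence $w(y_n)\to 0$ and any subsequential limit $y_\infty$ of ball points lies on $E$. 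Fact (B) then gives $d_S(x_{\infty},y_\infty)=\lim d_{\O}(x_n,y_n)\le R$ (in particular $y_\infty\in S$), unless $y_\infty=x_{\infty}$, which is trivially in the ball. The main obstacle is the exit-point continuity used in (B): one must control the endpoints of a chord that degenerates into the lower-dimensional supporting subspace $E$, which is exactly where the relative-interior position of $x_{\infty}$ and the proper convexity of $S$ inside $E$ enter. A robust alternative to organizing these limits by hand is to renormalize via Benz\'ecri's theorem \ref{theo_ben}, bringing $(\O,x_n)$ into a compact part of $X^{\bullet}$ and reading off the limiting convex body; the same degeneration analysis then identifies the limit of the normalizing maps with the projection onto $E$.
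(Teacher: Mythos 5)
You should know at the outset that the paper contains no proof of this proposition: it is imported from \cite{Marquis:2010fk} (see the one-line proof of Lemma \ref{force_strict}), so there is no internal argument to compare yours with, and I can only judge your proof on its own terms. It has two genuine gaps, one under each inclusion, and the second one is not repairable.

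Your engine (B) is false as stated. The step ``consequently the two exit points $p_n,q_n$ of $(a_nb_n)$ converge to the exit points of the limiting chord inside $E$'' does not follow from convergence of the lines: a chord whose endpoints converge into $S$ may exit $\O$ \emph{through the face $\overline{S}$ itself}, at a point interior to the limiting chord. Concretely, let $\overline{\O}\subset\R^3$ be the pyramid over a square, i.e. the convex hull of $[-1,1]^2\times\{0\}$ and $(0,0,1)$, so that $S$ is the open square, and take $a_n=(0,0,\epsilon_n)$, $b_n=(\tfrac12,0,2\epsilon_n)$. For every $n$ the chord through $a_n,b_n$ ends at $(-\tfrac12,0,0)\in S$ on one side, and one computes $d_{\O}(a_n,b_n)\to\ln 2$ while $d_S(a_\infty,b_\infty)=\tfrac12\ln 3$. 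Only the inequality $\liminf_n d_{\O}(a_n,b_n)\geqslant d_S(a_\infty,b_\infty)$ survives (accumulation points of exit points lie on the \emph{closed} limiting chord, and shrinking a chord increases the cross-ratio); since that inequality is all your second inclusion actually needs, that half of the argument can be repaired.

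The first inclusion is where the real problem lies. It rests entirely on the parenthetical claim that the slices $\O\cap\{w=w_n\}$ tend to $S$, so that the horizontal lifts $(u_*,w_n)$ lie in $\O$; this fails as soon as $E$ has codimension $\geqslant 2$. Take $\O=\{(u,w_1,w_2)\ :\ |u|+w_1^2/w_2<1,\ 0<w_2<1\}\subset\R^3$ (convex because $w_1^2/w_2$ is convex on $\{w_2>0\}$); the facet of the origin is $S=(-1,1)\times\{(0,0)\}$, yet for $x_n=(0,\tfrac1n,\tfrac2{n^2})$ the slice $\O\cap\{w=w_n\}$ equals $(-\tfrac12,\tfrac12)\times\{w_n\}$ for every $n$, so your lifts are simply not in $\O$. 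Worse, in this example \emph{no} choice of lifts can achieve the facet distance: the linear maps $g_\lambda(u,w_1,w_2)=(u,\lambda w_1,\lambda^2 w_2)$ preserve $\O_\infty:=\{|u|+w_1^2/w_2<1\}$ and satisfy $x_n=g_{1/n}x_0$ with $x_0=(0,1,2)$, hence $B^{\O}_{x_n}(R)\subset g_{1/n}\overline{B}^{\O_\infty}_{x_0}(R)$; and along any chord of $\O_\infty$ through $x_0$ the coordinates $w_1,w_2$ are affine with $w_1=1$, $w_2=2$ at $x_0$, while the endpoints satisfy $w_1^2\leqslant w_2(1-|u|)$ and $|u|\leqslant 1$. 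If both endpoints had $1-|u|\leqslant\delta$, then at the endpoint where $|w_1|\geqslant 1$ (convexity of $|w_1|$ forces one) we get $w_2\geqslant 1/\delta$, and affinity of $w_2$ then forces the opposite endpoint to satisfy $|u|\leqslant 2\delta/(1-2\delta)$ --- a contradiction for small $\delta$. So $d_{\O_\infty}(x_0,y)\geqslant\tfrac12\ln\frac{1+u_*}{1-u_*}+\delta_0(u_*)$ with $\delta_0(u_*)>0$ for every $y$ in the fiber $\{u=u_*\}$, and the balls $B^{\O}_{x_n}(R)$ stay at Hausdorff distance bounded below from $B^S_{x_\infty}(R)$. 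In other words the ``lower'' inclusion is not merely unproven by your method (or by the Benz\'ecri renormalization you gesture at, which in this example produces a limit body strictly thinner than the cone over $S$): for this $\O$ and this sequence it fails, so any correct treatment must rely on the precise formulation and hypotheses in \cite{Marquis:2010fk}. The robust part of the statement is the ``upper'' half --- accumulation points of $B^{\O}_{x_n}(R)$ lie in the closed ball $\overline{B}^S_{x_\infty}(R)$ --- which your cross-ratio arguments do essentially reach, and which is also the only half this paper ever uses.
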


\begin{center}
\begin{figure}[h!]
  \centering
\includegraphics[width=6cm]{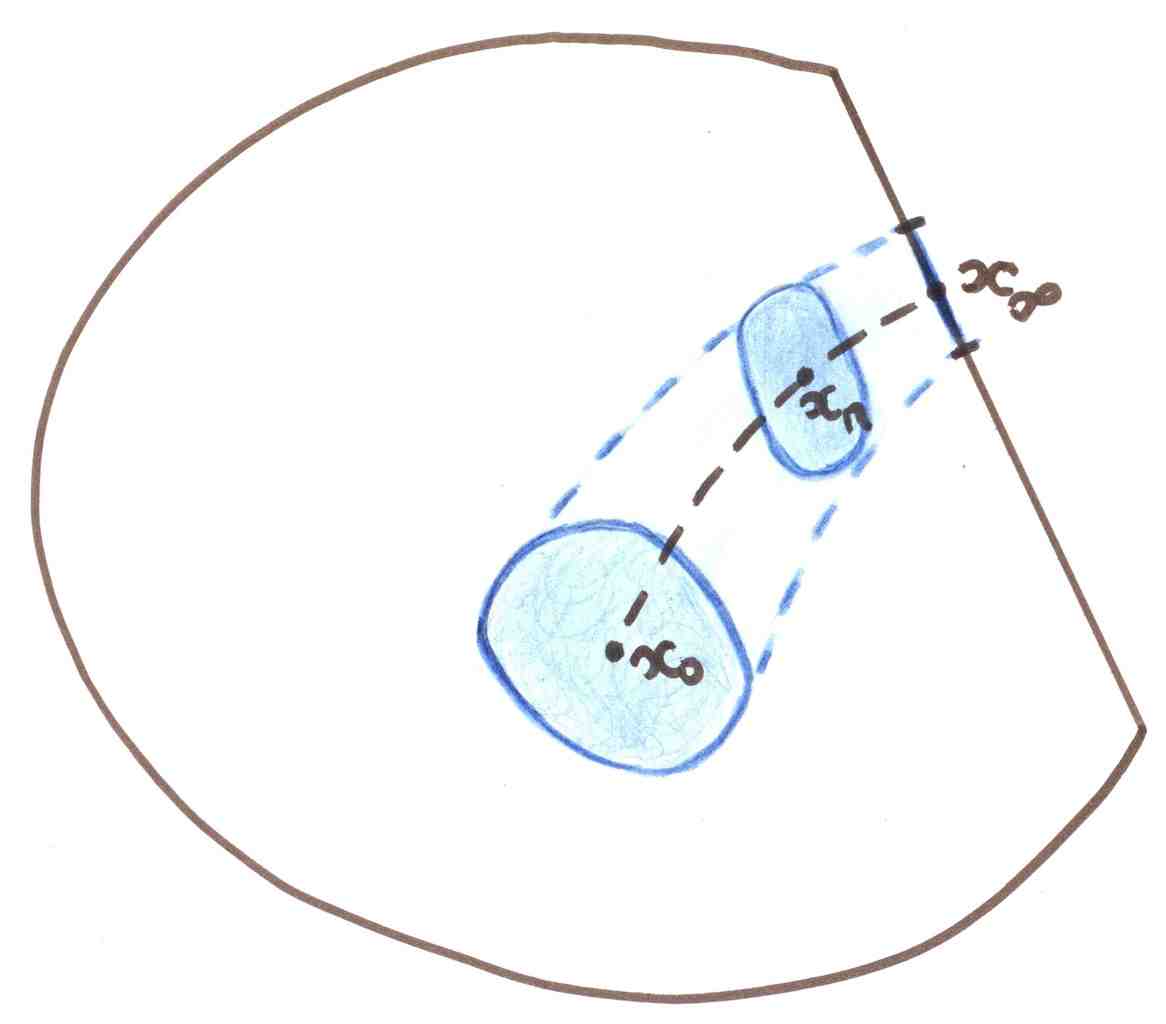}
\caption{D\'eg\'en\'erescence des boules\label{bouboule}}
\end{figure}
\end{center}

\begin{lemm}\label{conv_uni}
Soient $\O$ un ouvert proprement convexe et $\g \in \Aut(\O)$. S'il existe un point $x \in \O$ tel que la suite $(\g^n \cdot x)_{n \in \N}$ converge vers un point extr\'emal $p \in \partial \O$, alors la suite $(\g^n)_{n \in \N}$ converge uniform\'ement sur les compacts de $\O$ vers $p$.
\end{lemm}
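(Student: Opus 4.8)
The plan is to exploit two facts recorded above: every element of $\Aut(\O)$ acts by isometries of $(\O,\d)$, and, by Proposition \ref{non_stric_conv1}, metric balls degenerate in a controlled way as their center tends to a boundary point. The guiding observation is that uniform convergence of $(\g^n)$ to $p$ on a compact set $K$ amounts to saying that the images $\g^n(K)$ are eventually contained in any prescribed neighborhood of $p$, and that each $\g^n(K)$ lies inside a metric ball whose center $\g^n\cdot x$ tends to the extremal point $p$.

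First I would fix a compact set $K\subset\O$ and put $R=\sup_{y\in K}\d(x,y)$. As $K$ is compact and $\d$ induces the topology of $\PP^n$, the quantity $R$ is finite, so $K$ is contained in the ball $B^{\O}_{x}(R)$. Since $\g^n\in\Aut(\O)$ is an isometry, it maps this ball onto the ball of the same radius centered at $\g^n\cdot x$; hence
$$\g^n(K)\subset B^{\O}_{\g^n\cdot x}(R)\qquad\text{for every }n\in\N.$$

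Next I would use the hypothesis that $(\g^n\cdot x)_{n\in\N}$ converges to $p$ together with the extremality of $p$: the facette $S$ of $\overline{\O}$ containing $p$ is then the singleton $\{p\}$. Applying Proposition \ref{non_stric_conv1} with $x_n=\g^n\cdot x$ and $x_{\infty}=p$, the balls $B^{\O}_{\g^n\cdot x}(R)$ converge, for the Hausdorff distance induced by $d_{can}$, to the limiting ball $B^{S}_{p}(R)$; but since $S=\{p\}$ has support $p$, this limit is simply $\{p\}$. Therefore, for any neighborhood $\U$ of $p$ in $\PP^n$ we get $B^{\O}_{\g^n\cdot x}(R)\subset\U$ for all $n$ large enough, whence $\g^n(K)\subset\U$. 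As $K$ and $\U$ were arbitrary, this is exactly the announced uniform convergence of $(\g^n)_{n\in\N}$ to $p$ on the compact subsets of $\O$.

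The only genuinely delicate ingredient is the degeneration of the balls, which is precisely the content of Proposition \ref{non_stric_conv1}; the hypothesis that $p$ be extremal serves only to collapse the limiting facette to a single point, so that the limiting ball reduces to $\{p\}$ rather than to a positive-dimensional Hilbert ball. The two remaining points — that a compact subset of $\O$ has finite $\d$-distance to $x$, and that an isometry sends a ball onto a ball of the same radius — are routine. One could instead derive pointwise convergence $\g^n\cdot y\to p$ for each $y\in\O$ directly from Lemma \ref{force_strict}, applied to $x_n=\g^n\cdot x$ and $y_n=\g^n\cdot y$ whose distance $\d(x_n,y_n)=\d(x,y)$ is constant, but the ball argument has the advantage of delivering uniformity in one stroke.
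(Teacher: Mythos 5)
Your proof is correct, and it takes a slightly different route from the paper's. Both arguments rest on the same two ingredients -- Proposition \ref{non_stric_conv1} and the fact that elements of $\Aut(\O)$ are isometries of $(\O,\d)$ -- but they are organized differently. The paper first proves pointwise convergence by applying Lemma \ref{force_strict} (itself a consequence of Proposition \ref{non_stric_conv1}) to the pairs $x_n=\g^n\cdot x$, $y_n=\g^n\cdot y$, whose mutual distance is constant, and then upgrades to uniform convergence on compacts by invoking the equicontinuity of the family of isometries $(\g^n)_{n\in\N}$. You instead trap the whole image $\g^n(K)$ inside the metric ball $B^{\O}_{\g^n\cdot x}(R)$ and let Proposition \ref{non_stric_conv1} collapse these balls to $\{p\}$ in the Hausdorff sense (extremality of $p$ forcing the limiting facette, hence the limiting ball, to be the singleton $\{p\}$), which delivers uniformity in a single stroke with no appeal to equicontinuity. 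Both are sound; your version trades the soft Arzel\`a--Ascoli-type step for a direct geometric containment, making the uniformity completely explicit, while the paper's version cleanly isolates the pointwise statement, which is exactly what Lemma \ref{force_strict} records and what is reused elsewhere (e.g.\ in Lemma \ref{accu_fixe}).
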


\begin{proof}
Commençons par montrer que la suite $(\g^n)_{n \in \N}$ converge simplement vers $p$ sur $\O$. Soit $y\in \O$. Il suffit d'appliquer le lemme \ref{force_strict} pr\'ec\'edent aux suites $x_n = \g^n \cdot x$ et $y_n =  \g^n \cdot y$. La suite $(d_{\O}(x_n, y_n))_{n \in \N}$ est bien major\'ee puisque, $\g$ \'etant une isom\'etrie, elle est constante \'egale à $d_{\O}(x,y)$.\\
On obtient la convergence uniforme sur les compacts pour la même raison. En effet, comme les $(\g^n)_{n \in \N}$ sont des isom\'etries, elles forment en particulier une famille \'equicontinue d'applications.
\end{proof}

\begin{lemm}\label{accu_fixe}
Soit $\O$ un ouvert proprement convexe. Tout point d'accumulation dans $\partial \O$ de la suite $(\g^n \cdot x)_{n \in \N}$ qui est un point extr\'emal de $\O$ est un point fixe de $\g$.
\end{lemm}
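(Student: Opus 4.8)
The plan is to combine the continuity of the projective transformation $\g$ with the rigidity statement of Lemma~\ref{force_strict}. Write $p$ for the extremal accumulation point and extract a subsequence such that $\g^{n_k}\cdot x \to p$ in $\overline{\O}$. Since $\g$ acts continuously on $\PP^n$ and preserves $\overline{\O}$, applying $\g$ to this convergence immediately gives $\g^{n_k+1}\cdot x = \g\cdot(\g^{n_k}\cdot x) \to \g(p)$.

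The key step is then to show that the shifted sequence $(\g^{n_k+1}\cdot x)_k$ \emph{also} converges to $p$; uniqueness of limits will then force $\g(p)=p$. To this end I would apply Lemma~\ref{force_strict} with $x_n = \g^{n_k}\cdot x$ and $y_n = \g^{n_k+1}\cdot x$. The first hypothesis holds since $x_n \to p \in \dO$, and the third holds by assumption since $p$ is extremal. For the second hypothesis, note that $\g$ is an isometry of $(\O,\d)$, so
\[
\d(x_n, y_n) = \d\big(\g^{n_k}\cdot x,\ \g^{n_k+1}\cdot x\big) = \d(x, \g\cdot x),
\]
which is a constant independent of $k$, hence bounded. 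Lemma~\ref{force_strict} then yields $y_n = \g^{n_k+1}\cdot x \to p$.

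Comparing the two limits of the sequence $(\g^{n_k+1}\cdot x)_k$ gives $\g(p) = p$, as desired. There is no serious obstacle here: the only point requiring care is the idea of tracking the \emph{shift} of the orbit by one step and using the $\g$-invariance of the Hilbert distance to supply the bounded-distance hypothesis of Lemma~\ref{force_strict}; the extremality of $p$ is precisely the ingredient that makes that lemma applicable, which is why it appears in the statement.
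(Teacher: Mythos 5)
Your proof is correct and follows essentially the same route as the paper: extract a subsequence converging to the extremal point $p$, apply Lemma~\ref{force_strict} to the shifted orbit (the bounded-distance hypothesis coming from $\d(\g^{n}x,\g^{n+1}x)=\d(x,\g x)$ since $\g$ is an isometry), and conclude $\g(p)=p$ by continuity of $\g$ on $\overline{\O}$. No gap to report.
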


\begin{proof}
Soit $p$ un point d'accumulation de la suite $(\g^n \cdot x)_{n \in \N}$ qui est dans $\partial \O$. Il existe une extractrice $(n_i)_{i \in \N}$ tel que $\lim_{ i \to \infty} \g^{n_i} \cdot x= p$. Le lemme \ref{force_strict} montre que la suite $(\g^{1+n_i} \cdot x)$ converge vers $p$ car $p$ est extr\'emal. L'application $\g$ est continue sur $\PP^n  \supset \overline{\O}$, il vient que $\g(p) = p$.
\end{proof}

\subsection{D\'emonstration du th\'eorème de classification \ref{classi_dym_1}}

Nous aurons besoin de la proposition suivante:

\begin{prop}[Lemme 3.2 de \cite{MR2195260}]\label{brouwer}
Si un \'el\'ement $\g \in \ss$ pr\'eserve un ouvert proprement convexe alors le rayon spectral $\rho(\g)$ (c'est-\`a-dire le module de la plus grande valeur propre de $\g$) est une valeur propre dont la droite propre appartient à $\overline{\O}$. En particulier, tout automorphisme d'un ouvert proprement convexe possède un point fixe dans $\overline{\O}$.
\end{prop}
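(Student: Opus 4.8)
The plan is to lift everything to the cone and reduce the statement to a Perron--Frobenius (Krein--Rutman) type theorem for cone-preserving endomorphisms. First I would fix an affine chart in which $\O$ is relatively compact and choose one of the two cones $C \subset \R^{n+1}$ lying over $\O$; its closure $\overline{C}$ is then a closed convex cone which is salient ($\overline{C}\cap(-\overline{C})=\{0\}$, because $\O$ is properly convex) and has nonempty interior $C$. Since $\g$ preserves $\O$, the cone $\g(\overline{C})$ again lies over $\O$, hence equals $\overline{C}$ or $-\overline{C}$; replacing the linear representative $\g$ by $-\g$ if necessary (this does not change the projective transformation) I may assume $\g(\overline{C})=\overline{C}$. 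The whole statement then follows from the claim: \emph{a linear endomorphism preserving a salient closed convex cone with nonempty interior admits $\rho(\g)$ as an eigenvalue, with an eigenvector lying in the cone.}

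To prove the claim I would use the dual cone $\overline{C}^{*}=\{\phi : \phi\geqslant 0 \text{ on } \overline{C}\}$, which is again salient with nonempty interior. Choose $\phi$ in the interior of $\overline{C}^{*}$, so that $\phi>0$ on $\overline{C}\smallsetminus\{0\}$; then the cross-section $\Sigma=\{v\in\overline{C} : \phi(v)=1\}$ is compact convex (compactness uses both salience and strict positivity of $\phi$). The normalized map $v\mapsto \g v/\phi(\g v)$ is a well-defined continuous self-map of $\Sigma$, so Brouwer's fixed point theorem produces $v_{0}\in\Sigma$ with $\g v_{0}=\mu_{0}v_{0}$, i.e. an eigenvector in $\overline{C}$ with eigenvalue $\mu_{0}=\phi(\g v_{0})>0$. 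This already yields the ``in particular'' part: the eigenline $[v_{0}]$ is a point of $\overline{\O}$ fixed by $\g$.

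It remains to identify the cone-eigenvalue with the spectral radius, and this is the real content. I would use the Collatz--Wielandt quantity $r(v)=\sup\{\alpha\geqslant 0 : \g v-\alpha v\in\overline{C}\}$ for $v\in\overline{C}\smallsetminus\{0\}$ and show $\sup_{v} r(v)=\rho(\g)$, the supremum being attained at an eigenvector in $\overline{C}$. The easy inequality $r(v)\leqslant\rho(\g)$ comes from iterating: if $\g v-\alpha v\in\overline{C}$ then by $\g$-invariance $\g^{n}v-\alpha^{n}v\in\overline{C}$ for every $n$, so $\phi(\g^{n}v)\geqslant\alpha^{n}\phi(v)$ with $\phi(v)>0$, whence $\rho(\g)=\lim_{n}\|\g^{n}\|^{1/n}\geqslant\alpha$. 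The hard part is the reverse inequality together with attainment, i.e. showing that the eigenvector supplied by positivity realizes exactly $\rho(\g)$ and not some smaller positive eigenvalue. This is precisely where the hypothesis that $\overline{C}$ is a genuine proper cone is indispensable: it forces a real positive dominant eigenvalue and prevents complex eigenvalues (or larger Jordan blocks) of the same modulus from carrying the full growth rate, since the dynamics on a dominant plane with eigenvalues $\rho e^{\pm i\theta}$, $\theta\neq 0$, leaves no proper cone invariant. Concretely I would establish it by the standard approximation argument of Perron--Frobenius theory (perturb $\g$ inside the cone-preserving maps to a strictly positive, hence primitive, operator, apply the elementary Perron theorem, and pass to the limit), or simply invoke the Krein--Rutman theorem.

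Finally, once $\rho(\g)$ is known to be an eigenvalue with eigenvector $w\in\overline{C}$, its eigenline projects to a point $[w]\in\overline{\O}$ fixed by $\g$, which proves both assertions. The single delicate step, and the one I would spend the most care on, is the attainment claim $\sup_{v}r(v)=\rho(\g)$; everything else is formal once the passage to the cone is set up.
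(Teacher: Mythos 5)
The paper itself contains no proof of this proposition: it is quoted as Lemme 3.2 of Benoist \cite{MR2195260}, and only its final clause (existence of a fixed point in $\overline{\O}$) is ever used, in the proof of Theorem \ref{classi_dym_1}. So your argument must be judged on its own terms, and it is essentially the classical finite-dimensional Krein--Rutman (Perron--Frobenius for cones) proof, which is a perfectly legitimate route. The reduction to the cone is correct; the cross-section $\Sigma$ is indeed compact (this uses, as you say, both salience of $\overline{C}$ and the choice of $\phi$ in the interior of the dual cone), the normalized map is well defined because $\g$ is invertible, and Brouwer produces an eigenvector in $\overline{C}$. You are also right that this alone does not prove the statement: Brouwer may return a fixed point whose eigenvalue is not $\rho(\g)$ (already for $\mathrm{diag}(1,2)$ acting on the positive quadrant), so the identification with the spectral radius is the real content. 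Both completions you propose are sound and non-circular. For the perturbation route: with $u_0\in \mathrm{int}(\overline{C})$ and $\phi$ interior to the dual cone, the map $\g_\varepsilon=\g+\varepsilon\,\phi(\cdot)\,u_0$ sends $\overline{C}\smallsetminus\{0\}$ into $\mathrm{int}(\overline{C})$; its Brouwer eigenvector $u_\varepsilon$ is then interior, the sandwich $u_\varepsilon\pm t w\in\overline{C}$ (small $t$, arbitrary unit $w$) together with a basis of functionals in the dual cone gives $\|\g_\varepsilon^n\|\leqslant K\mu_\varepsilon^n$, hence $\rho(\g_\varepsilon)\leqslant\mu_\varepsilon$, while your Collatz--Wielandt bound gives the reverse; letting $\varepsilon\to 0$ and using compactness of $\Sigma$ plus continuity of eigenvalues concludes. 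Alternatively, Krein--Rutman applies verbatim: in finite dimension every operator is compact, a cone with nonempty interior is total, and $\rho(\g)>0$ since $\det\g=1$.

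One caveat you should make explicit. Replacing $\g$ by $-\g$ changes the sign of every eigenvalue, so when the original $\g\in\ss$ sends $C$ to $-C$, what your argument actually proves is that $-\rho(\g)$ (not $\rho(\g)$) is an eigenvalue of $\g$ with eigenline in $\overline{\O}$. This is not really a defect of your proof but of the literal statement: for $n+1$ even, $\g=-\mathrm{Id}\in\ss$ preserves any $\O$, has spectral radius $1$, and admits only $-1$ as eigenvalue. The proposition should therefore be read for the lift of $\g$ preserving the cone (equivalently: $\pm\rho(\g)$ is an eigenvalue with eigenline in $\overline{\O}$); the fixed-point consequence in $\overline{\O}$ --- the only part the paper uses --- is unaffected either way. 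A sentence acknowledging this would make your write-up airtight.
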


\begin{proof}[D\'emonstration du th\'eorème \ref{classi_dym_1}]
\par{
D'après la proposition \ref{brouwer}, l'hom\'eomorphisme $\g:\overline{\O} \rightarrow \overline{\O}$ possède un point fixe dans $\overline{\O}$. S'il existe un point $x \in \O$ fix\'e par $\g$, alors $\g$ est elliptique et il n'y a rien à montrer. On peut donc supposer que tout point fixe de $\g$ est dans $\partial \O$. Nous allons à pr\'esent distinguer 3 cas.
} 
 \begin{enumerate}
\item Il existe au moins trois points distincts $x,y,z \in \partial \O$ fix\'es par $\g$.  
\item  Il existe exactement deux points distincts $x,y \in \partial \O$ fix\'es par $\g$.
\item L'automorphisme $\g$ fixe un et un seul point de $\partial \O$.
\end{enumerate}
\par{
Commençons par montrer que le premier cas est exclu. Les points $x,y,z$ ne sont pas align\'es car le convexe $\O$ est strictement convexe. Le plan projectif $P$ engendr\'e par les points $x,y,z$ est pr\'eserv\'e par $\g$, tout comme l'ouvert proprement convexe $P \cap \O$ de $P$. Comme $P$ est un espace projectif de dimension 2, le lemme \ref{lem_auto_2} montre que le bord du convexe $P \cap \O$ contient un segment non trivial. Par cons\'equent, $\O$ n'est pas strictement convexe, ce qui contredit l'hypoth\`ese.
} 
\\
\par{
Si on est dans le second cas alors le segment ouvert $s=]x,y[$ de $\overline{\O}$ est pr\'eserv\'e par $\g$ et inclus dans $\O$ puisque $\O$ est strictement convexe. Le lemme \ref{lem_auto_1} montre que l'\'el\'ement $\g$ agit comme une translation sur $s$ et que l'un des points $x,y$ est attractif pour l'action de $\g$ sur $s$ et l'autre est r\'epulsif. On note $p^+$ l'attractif et $p^-$ le r\'epulsif. Le lemme \ref{conv_uni} montre que $(\g^n)_{n \in \N}$ converge uniform\'ement sur les compacts de $\O$ vers $p^+$, et la suite $(\g^{-n})_{n \in \N}$ converge uniform\'ement sur les compacts de $\O$ vers $p^-$. Montrons la convergence sur les compacts de $\overline{\O}\smallsetminus \{ p^-\}$. On se donne un compact $K$ de $\overline{\O}\smallsetminus \{ p^-\}$ et on choisit un hyperplan $H$ de $\O$ qui s\'epare $p^-$ de $K$. Les convexes $\g^n(H)\cap \O$ convergent vers $p^+$ et donc $K$ aussi. On procède de la même façon avec $p^-$.\\
Il nous reste à montrer qu'un tel \'el\'ement est hyperbolique. Pour cela, on va montrer que pour tout $b \in \O \smallsetminus s$, et pour tout point $a \in s$, $d_{\O}(b, \g  b) > d_{\O}(a, \g  a)$. Sur le segment ouvert $s$, l'\'el\'ement $\g$ agit comme une translation, la quantit\'e $d_{\O}(a, \g  a)$ ne d\'epend donc pas du point $a \in s$. On note $H^+$ (resp. $H^-$) l'hyperplan tangent à $\partial \O$ en $p^+$ (resp. $p^-$). Soit $H_b$ l'hyperplan passant par $H^+ \cap H^-$ et le point $b$. On note $a$ l'unique point de l'intersection $H_b \cap s$. La distance de Hilbert est d\'efinie \`a l'aide de birapports et par cons\'equent on a: $d_{\O}(a, \g(a)) = \frac{1}{2}\ln([H^-:H_b:\g(H_b):H^+])$. De plus, comme l'ouvert $\O$ est strictement convexe, on a $[H^-:H_b:\g(H_b):H^+] < [q_b : b : \g(b) : p_b ]$, où $q_b,p_b$ sont les points d'intersections de la droite $(b \,\g(b))$ avec $\partial \O$, tels que $\g(b)$ soit entre $b$ et $p_b$ (voir figure \ref{classi}). L'infimum de la distance de translation de $\g$ est donc 
atteint par tout point de $s$ et seulement par les points de $s$. En particulier, l'automorphisme $\g$ n'est pas quasi-hyperbolique.
}

\begin{center}
\begin{figure}[h!]
  \centering
\includegraphics[width=9cm]{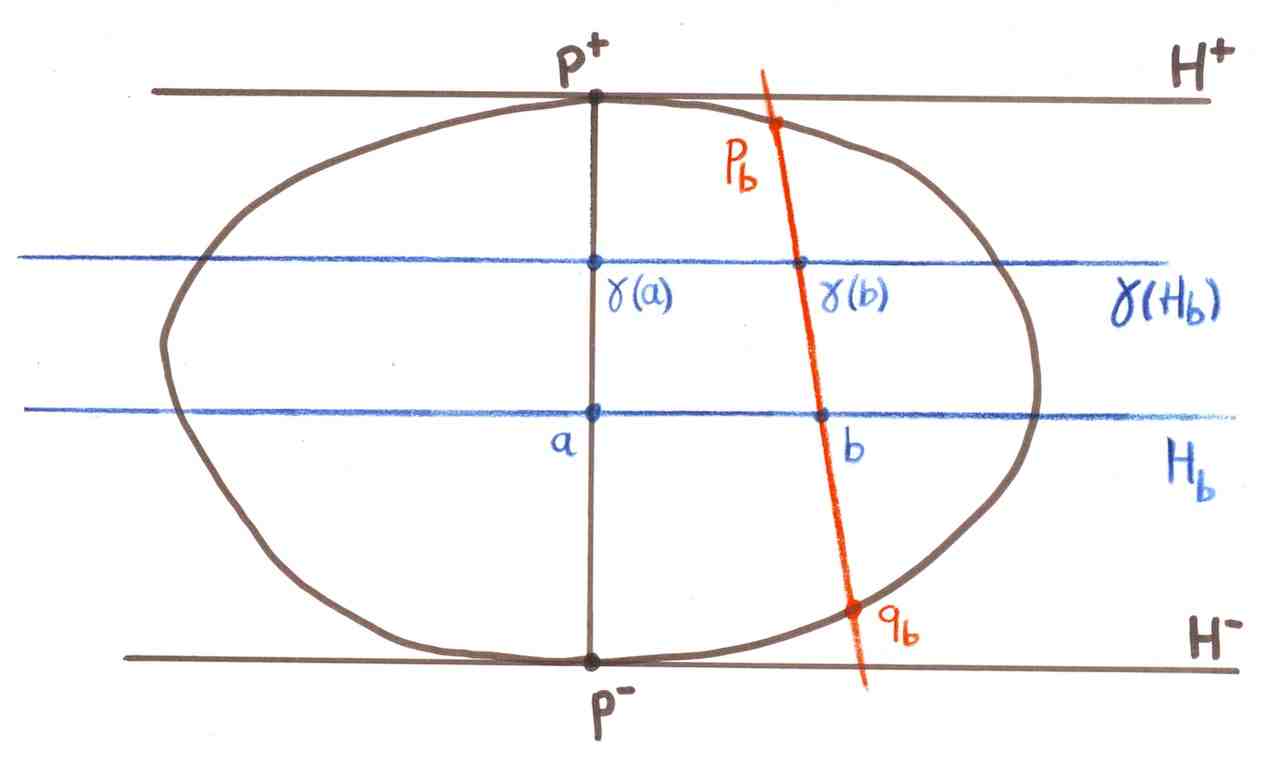}
\caption{L'automorphisme $\g$ est hyperbolique
\label{classi}
}
\end{figure}
\end{center}

\par{
Enfin, si l'automorphisme $\g$ fixe un et un seul point $p$ de $\partial \O$, le lemme \ref{accu_fixe} montre que pour tout point $x \in \O$, le seul point d'accumulation de la bi-suite $(\g^n  x)_{n \in \Z}$ est l'unique point fixe de $\g$. Par cons\'equent d'après le lemme \ref{conv_uni}, la bi-suite $(\g^n  x)_{n \in \Z}$ converge vers $p$ uniform\'ement sur les compacts de $\O$. Un raisonnement analogue au pr\'ec\'edent montre que la convergence a lieu sur les compacts de $\overline{\O}\smallsetminus \{ p\}$.\\
Montrons maintenant que $\tau(\g) = 0$. Pour cela, on se donne un point $x \in \O$ et une suite $(x_n)_{n \in \N}$ de points de la demi-droite $[xp[$ qui converge vers $p$. La suite de points $(\g  x_n)_{n \in \N}$ est donc sur la demi-droite $[\g x\ p[$ et converge vers $p$. La suite des droites $(x_n\ \g x_n)$ converge vers la droite intersection du plan projectif engendr\'e par $p,x,\g x$ et de l'hyperplan tangent à $\O$ en $p$. Comme le bord du convexe est de classe $\C^1$, on en conclut que $d_{\O}(x_n,\g  x_n)$ tend vers $0$.

\begin{center}
\begin{figure}[h!]
  \centering
\includegraphics[width=7cm]{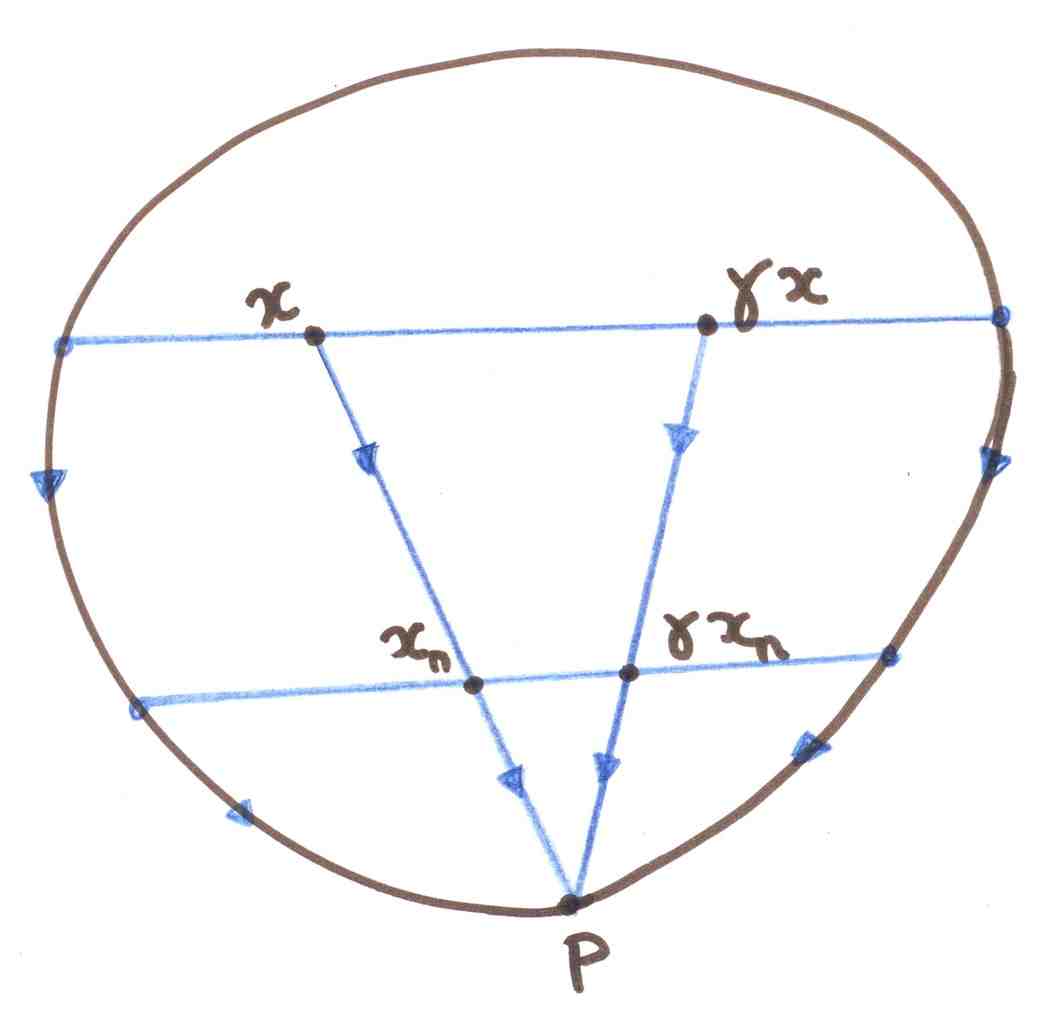}
\caption{La distance de translation est nulle}
\end{figure}
\end{center}

Il reste \`a montrer que $\g$ pr\'eserve toute horosphère bas\'ee en $p$. Voyons d'abord que les fonctions de Busemann bas\'ees en $p$ sont invariantes par $\g$: pour tous $o,x\in\O$,
$$\begin{array}{rl}
b_p(\g o,\g x) = \displaystyle\lim_{z\to p} \d(\g o,z) - \d(\g x,z) & = \displaystyle\lim_{z \to p} \d(\g o,\g z) - \d(\g x,\g z)\\
& =  \displaystyle\lim_{z\to p} \d(o,z) - \d(x,z)\\
&= b_p(o,z),
  \end{array}
$$
puisque, si $z$ tend vers $p$, $\g z$ \'egalement. Ainsi, pour tout $x\in\O$,
$$\H_p(\g x)=\{y\in\O,\ b_p(\g x,y) = 0\} = \{y\in\O,\ b_p(x,\g^{-1}y) = 0\} = \g H_p(x);$$
autrement dit, $\g$ pr\'eserve l'ensemble des horosph\`eres bas\'ees en $p$. Maintenant, pour tous $x,y \in \O$, on a
$$b_p(x,\g x) = b_p(x,y) + b_p(y,\g y) + b_p(\g y,\g x) =  b_p(y,\g y):= a \in\R.$$
Or, $|b_p(x,gx)|\leqslant \d(x,gx)$, ce qui implique que pour tout $x\in\O$, $\d(x,\g x) \geqslant |a|$. De $\tau(g)=0$, on d\'eduit que $a=0$, c'est-\`a-dire que $\g x\in \H_p(x)$.
} 
\end{proof}

En fait, la classification du th\'eor\`eme \ref{classi_dym_1} reste valable lorsque l'ouvert est seulement suppos\'e strictement convexe. Pour montrer que la distance de translation d'un automorphisme parabolique $\g$ est nulle, on utilise alors le lemme suivant, d\^u \`a McMullen, et le fait que le rayon spectral de $\g$ est n\'ecessairement $1$ (sinon, $\g$ aurait plus d'un point fixe).\\
Pour des r\'esultats plus g\'en\'eraux, on pourra consulter \cite{Cooper:2011fk}.

\begin{lemm}[Curtis McMullen, Th\'eorème 2.1 de \cite{MR1953192}]
Soient $\O$ un ouvert proprement convexe de $\PP^n$ et $\g\in\Aut(\O)$. On a
$$\frac{1}{2}\ln\bigg(\max\Big(\rho(\g), \rho(\g^{-1}),\rho(\g)\rho(\g^{-1})\Big)\bigg) \leqslant \tau(\g)\leqslant \ln\bigg(\max\Big(\rho(\g), \rho(\g^{-1})\Big)\bigg)$$
En particulier, si $\rho(\g)= \rho(\g^{-1})$ alors $\tau(\g) = \ln(\rho(\g))$; et si $\rho(\g)=1$ alors $\tau(\g)=0$.
\end{lemm}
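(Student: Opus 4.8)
The plan is to lift everything to the cone and work with the Birkhoff description of the Hilbert metric. Fix a cone $C\subset\R^{n+1}$ over $\O$ and regard $\g$ as a linear map preserving $C$. For $x,y$ in the interior of $C$ set $M(x/y)=\inf\{t>0 : ty-x\in\overline C\}$ and $m(x/y)=\sup\{t>0 : x-ty\in\overline C\}$; then $\d([x],[y])=\frac12\ln\frac{M(x/y)}{m(x/y)}$, and since $m(x/y)=M(y/x)^{-1}$ this can be rewritten as
$$\d([x],[y])=\tfrac12\ln\big(M(x/y)\,M(y/x)\big).$$
Using $\overline C=(C^*)^*$ one also has the dual formula $M(x/y)=\sup_{\phi\in\overline{C^*}\smallsetminus\{0\}}\frac{\phi(x)}{\phi(y)}$, valid because $\phi(y)>0$ for interior $y$. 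The whole problem then reduces to estimating $M(x/\g x)$ and $M(\g x/x)$.

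For the lower bound I would produce two positive eigenfunctionals. The dual actions $\phi\mapsto\phi\circ\g$ and $\phi\mapsto\phi\circ\g^{-1}$ both preserve $C^*$, the cone over the properly convex set $\O^*$, and their spectral radii are $\rho(\g)$ and $\rho(\g^{-1})$. Applying Proposition \ref{brouwer} to each yields $\phi_+,\psi\in\overline{C^*}\smallsetminus\{0\}$ with $\phi_+\circ\g=\rho(\g)\,\phi_+$ and $\psi\circ\g=\rho(\g^{-1})^{-1}\psi$. For any interior $x$ we have $\phi_+(x),\psi(x)>0$, so the dual formula gives $M(\g x/x)\geq\phi_+(\g x)/\phi_+(x)=\rho(\g)$ and $M(x/\g x)\geq\psi(x)/\psi(\g x)=\rho(\g^{-1})$. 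Multiplying, $\d([x],[\g x])\geq\frac12\ln(\rho(\g)\rho(\g^{-1}))$ for every $x$, whence $\tau(\g)\geq\frac12\ln(\rho(\g)\rho(\g^{-1}))$; since $\rho(\g),\rho(\g^{-1})\geq1$ for $\g\in\ss$, this quantity dominates $\frac12\ln\max(\rho(\g),\rho(\g^{-1}),\rho(\g)\rho(\g^{-1}))$, which is the left-hand inequality.

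For the upper bound I would exhibit an almost-invariant configuration. By Proposition \ref{brouwer} the top eigenvector $v_+$ of $\g$ (eigenvalue $\rho(\g)$) and the top eigenvector $v_-$ of $\g^{-1}$ (so $\g v_-=\rho(\g^{-1})^{-1}v_-$) lie in $\overline C$, and they span a $\g$-invariant plane $P$. When the open segment $]v_+,v_-[$ lies in $\O$ — automatic, for example, when $\O$ is strictly convex and $v_+\neq v_-$ — the line $\PP(P)$ meets $\O$ in an invariant interval on which $\g$ acts, by Lemma \ref{lem_auto_1}, as a translation of length $\frac12\ln(\rho(\g)\rho(\g^{-1}))$, half the logarithm of the eigenvalue ratio $\rho(\g)\rho(\g^{-1})$ (as computed from the cross-ratio). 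By the comparison Proposition \ref{compa}, $\d\leq d_{]v_+,v_-[}$ on that interval, so $\tau(\g)\leq\frac12\ln(\rho(\g)\rho(\g^{-1}))\leq\ln\max(\rho(\g),\rho(\g^{-1}))$, the right-hand inequality (in fact with equality to the lower bound).

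The hard part is the degenerate case, in which this interval collapses: either $[v_+,v_-]\subset\partial\O$, or $v_+=v_-$ — the parabolic situation, which is exactly the case needed to conclude $\tau(\g)=0$ when $\rho(\g)=1$. Here I would instead let $x$ tend to $v_+$ from inside $\O$ and estimate directly. Since $\phi(\g x)/\phi(x)\to\rho(\g)$ for every $\phi\in\overline{C^*}$ with $\phi(v_+)>0$, the only obstruction to $M(\g x/x)\to\rho(\g)$ and $M(x/\g x)\to\rho(\g)^{-1}$ comes from the supporting functionals of $C$ at $v_+$, namely the $\phi\in\overline{C^*}$ with $\phi(v_+)=0$; these form a face $F^*$ of $\overline{C^*}$ left invariant by $\phi\mapsto\phi\circ\g$, because $\g v_+=\rho(\g)v_+$. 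Bounding $\sup_{\phi\in F^*}\phi(\g x)/\phi(x)$ as $x\to v_+$ — which, by approaching $v_+$ along the dominant eigendirection inside $F^*$, stays below $\rho(\g)$ — is the crux and the step requiring the most care. Granting it, $\tau(\g)\leq\ln\max(\rho(\g),\rho(\g^{-1}))$ in every case, and the two stated consequences are immediate: $\rho(\g)=\rho(\g^{-1})$ makes both bounds equal to $\ln\rho(\g)$, while $\rho(\g)=1$ forces $\rho(\g^{-1})=1$ (the moduli of the eigenvalues multiply to $1$), whence $\tau(\g)=0$.
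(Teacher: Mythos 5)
First, a point of comparison: the paper does not prove this lemma at all --- it is quoted from McMullen's article --- so your proposal stands or falls on its own. The parts that stand are genuinely good: the cone formalism with $\d([x],[y])=\tfrac12\ln\big(M(x/y)M(y/x)\big)$, the dual expression of $M$, and the lower bound via the two eigenfunctionals $\phi_+,\psi\in\overline{C^*}\smallsetminus\{0\}$ of the transposed actions (obtained from Proposition \ref{brouwer} applied to $\O^*$) are correct and complete, and do give $\tau(\g)\geqslant\tfrac12\ln(\rho(\g)\rho(\g^{-1}))$, which is the left inequality since $\rho(\g),\rho(\g^{-1})\geqslant1$ for $\g\in\ss$. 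The upper bound is also correct in the biproximal case $]v_+,v_-[\,\subset\O$, where the chord $\PP(P)\cap\O$ is exactly $]v_+,v_-[$ and $\g$ translates along it by $\tfrac12\ln(\rho(\g)\rho(\g^{-1}))$.

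The gap is the degenerate case, which you explicitly grant rather than prove, and it is not a marginal case: it contains every parabolic element ($v_+=v_-$), i.e.\ precisely the situation in which the paper invokes the lemma (to get $\tau(\g)=0$ from $\rho(\g)=1$ when $\O$ is strictly convex but not $\C^1$). Worse, the targets you set for that case are inconsistent: your own functional $\psi$ gives $M(x/\g x)\geqslant\psi(x)/\psi(\g x)=\rho(\g^{-1})$ for \emph{every} interior $x$, so $M(x/\g x)\to\rho(\g)^{-1}$ is impossible whenever $\rho(\g)\rho(\g^{-1})>1$ (e.g.\ $\O$ a triangle and $\g$ diagonal, where $[v_+,v_-]\subset\dO$). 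Note that $\psi$ lies in your face $F^*$ in exactly that situation (evaluating $\psi\circ\g=\rho(\g^{-1})^{-1}\psi$ at $v_+$ forces $\psi(v_+)=0$ unless $\rho(\g)\rho(\g^{-1})=1$), so the obstruction you propose to control by a clever approach $x\to v_+$ does not vanish along any path; and if your two limits did hold simultaneously, the product $M(\g x/x)M(x/\g x)$ would tend to $1$, proving $\tau(\g)=0$ for every automorphism, which is absurd. The clean repair --- essentially McMullen's actual argument --- avoids boundary limits altogether: for $\mu>\max(\rho(\g),\rho(\g^{-1}))$ and $x_0\in C$, set
$$y=\sum_{k\in\Z}\mu^{-|k|}\g^k x_0,$$
which converges and lies in $C$; a term-by-term computation gives $\mu y-\g y=(\mu^2-1)\sum_{k\leqslant0}\mu^{k-1}\g^k x_0\in\overline{C}$ and symmetrically $\mu y-\g^{-1}y\in\overline{C}$, whence $M(\g y/y)\leqslant\mu$ and $M(y/\g y)=M(\g^{-1}y/y)\leqslant\mu$, so $\d(y,\g y)\leqslant\ln\mu$; letting $\mu$ decrease to $\max(\rho(\g),\rho(\g^{-1}))$ yields the right-hand inequality in all cases, parabolic included, after which your two stated consequences follow as you say.
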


\vspace*{.5cm}
{\bf \large Dans tout ce qui suit, sauf mention explicite, $\O$  d\'esignera un ouvert proprement convexe, strictement convexe et à bord $\C^1$.}
\vspace*{.5cm}

\subsection{Sous-groupes nilpotents discrets de $\Aut(\O)$}
\paragraph*{Points fixes et discr\'etude}

\begin{prop}\label{point_fixe_dyn}
Soient $\g $ et $\delta$ deux \'el\'ements non elliptiques de $\Aut(\O)$ qui engendrent un sous-groupe discret de $\Aut(\O)$. Supposons que $\g$ et $\delta$ fixent un même point $x \in \partial \O$. 
\begin{enumerate}
\item Si $\g$ est parabolique, alors $\delta$ est parabolique.
\item Si $\g$ est hyperbolique, alors $\delta$ est hyperbolique et il existe $k,l \in \Z$ tel que $\g^k =\delta^l$.
\end{enumerate}
\end{prop}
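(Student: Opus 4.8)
The plan is to reduce everything to the trichotomy of Théorème~\ref{classi_dym_1} (a non-elliptic automorphism is either hyperbolic or parabolic) and to a single mechanism: conjugate $\g$ by the powers of $\delta$ and confront the resulting sequence with the discreteness of $\G=\langle\g,\delta\rangle$ and the properness of the action of $\Aut(\O)$ on $\O$. The two claims are essentially dual. Indeed, once (1) is proved, part~(2) begins by applying (1) to the pair $(\delta,\g)$: if $\delta$ were parabolic then $\g$ would be parabolic, contrary to hypothesis, so $\delta$ is hyperbolic. Thus the whole proposition rests on proving (1) and, in the hyperbolic case, extracting a relation $\g^{k}=\delta^{l}$.

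\emph{The engine.} Whenever the argument is run, $\delta$ is hyperbolic (in~(1) this is the case we wish to exclude, in~(2) it is established), and replacing $\delta$ by $\delta^{-1}$ I may assume that $x$ is its attractive fixed point, so that its axis is $r=\,]x,p^-[$ with $p^-$ repulsive. Fix $o\in\O$ and a point $o'\in r$, and set $\alpha_n=\delta^{-n}\g\delta^n$. Since $\delta$ is an isometry preserving $r$, one has $\d(o,\alpha_n o)=\d(\delta^n o,\g\delta^n o)$, while $\d(\delta^n o,\delta^n o')=\d(o,o')$ is constant and $\delta^n o'\to x$ along $r$; hence $o_n=\delta^n o$ approaches $x$ within bounded distance of the ray $r$. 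By the triangle inequality $\d(o_n,\g o_n)\le 2\,\d(o,o')+\d(\delta^n o',\g\delta^n o')$, so it suffices to bound $\d(r(t),\g r(t))$ along $r$. As $\g$ fixes $x=r(+\infty)$, the curve $\g r$ is a geodesic ray with the same endpoint. The decisive point, and the main obstacle, is that strict convexity together with $\C^1$ regularity forces two geodesic rays with a common endpoint to be asymptotic, so that $\d(r(t),\g r(t))$ stays bounded; this is exactly the kind of control underlying the Busemann functions of~\S\ref{busemannhoro} and the Margulis lemma of~\cite{Crampon:2011fk}. Therefore $\sup_n\d(o,\alpha_n o)<\infty$.

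\emph{A relation from discreteness.} Properness of the action of the discrete group $\G$ makes $\{\gamma\in\G\mid\d(o,\gamma o)\le R\}$ finite for each $R$; applied to $R=\sup_n\d(o,\alpha_n o)$, the sequence $(\alpha_n)$ takes only finitely many values, so $\alpha_n=\alpha_m$ for some $n\neq m$. This means that $\delta^{k}$ commutes with $\g$, where $k=m-n\neq0$. For part~(1), assume $\g$ parabolic and, for contradiction, $\delta$ hyperbolic. Then $\delta^{k}$ is again hyperbolic and $\Fix(\delta^{k})=\{x,p^-\}$ is a two-point set preserved by $\g$; since $\g$ fixes $x$, it would also fix $p^-\neq x$, contradicting that a parabolic automorphism has a single fixed point on $\dO$ (Théorème~\ref{classi_dym_1}). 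Hence $\delta$ is parabolic, which proves~(1).

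\emph{Finishing part~(2).} Now $\g$ and $\delta$ are both hyperbolic and the engine produces $\delta^{k}$ commuting with $\g$. The fixed-point argument just used shows that $\g$ fixes both fixed points of $\delta^{k}$, so $\g$ and $\delta$ share the same pair $\{x,y\}$ and both preserve the segment $]x,y[\subset\O$, on which, by Lemme~\ref{lem_auto_1}, each acts as a translation. Restriction to this geodesic line defines a homomorphism $\rho\colon\langle\g,\delta^{k}\rangle\to\R$; a sequence of elements whose translation lengths tend to $0$ would move a basepoint of $]x,y[$ by arbitrarily small positive amounts, which properness forbids, so $\rho$ has discrete, hence cyclic, image and the translation lengths $\rho(\g)$ and $\rho(\delta^{k})$ are commensurable. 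Choosing integers $a\neq0$ and $b$ with $a\,\rho(\g)=b\,\rho(\delta^{k})$, the element $\eta=\g^{a}\delta^{-bk}$ satisfies $\rho(\eta)=0$, hence fixes $]x,y[$ pointwise; it is therefore elliptic, and, lying in a discrete group, of finite order $m$. Since $\langle\g,\delta^{k}\rangle$ is abelian, $\mathrm{id}=\eta^{m}=\g^{am}\delta^{-bkm}$, that is $\g^{am}=\delta^{bkm}$, which is the asserted relation $\g^{k'}=\delta^{l'}$ with $k'=am\neq0$.
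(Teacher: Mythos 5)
Your strategy is genuinely different from the paper's, and most of it is sound (your endgame for (2) -- the restriction homomorphism to the common axis, discreteness of its image in $\R$, the finite-order kernel, and the commutativity of $\langle\g,\delta^{k}\rangle$ -- is actually more careful than the paper's one-line ``$\Aut(]x,y[)\simeq\R$'' conclusion). But there is a genuine gap at exactly the step you yourself call ``the decisive point'': the claim that $\sup_{t}\d(r(t),\g r(t))<\infty$ whenever $\g\in\Aut(\O)$ fixes $x=r(+\infty)$. This is asserted, not proved, and the tools you invoke do not give it. The existence of Busemann functions (section \ref{busemannhoro}) yields only the \emph{lower} bound $\d(r(t),\g r(t))\geqslant |b_{x}(r(t),\g r(t))|=|b_{x}(r(0),\g r(0))|$, a constant; what your engine needs is the \emph{upper} bound, i.e.\ that the horospherical width between two rays with the same endpoint stays bounded, and the Margulis lemma of \cite{Crampon:2011fk} is about displacements of group elements, not about asymptotic geodesics. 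The fact is true for $\O$ strictly convex with $\C^{1}$ boundary, but it requires a real argument -- e.g.\ pass to the $2$-plane $P$ spanned by $r$ and $\g r$, use $\d\leqslant d_{\O\cap P}$, and control the degeneration of the cross-ratio along the chord through $r(t)$ and $\g r(t)$ using the $\C^{1}$ hypothesis. The paper carries out such a computation only for \emph{parabolic} $\g$ (in the proof of Th\'eor\`eme \ref{classi_dym_1}, and again in point \ref{partie_fine_cusp} of Lemme \ref{decomposition_partie_fine}), which would cover your part (1); but in part (2) you run the engine with $\g$ \emph{hyperbolic}, a case treated nowhere in the paper, where moreover $\d(r(t),\g r(t))$ does not tend to $0$ but to $\tau(\g)>0$, so the parabolic computation does not transfer verbatim and must be redone.

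For comparison, the paper's proof is organized in the reverse order (hyperbolic case first, parabolic deduced from it) precisely so as to avoid all metric asymptotics: assuming $\g$ hyperbolic with attractive point $x$, repulsive point $y$, and $\delta(y)\neq y$, it sets $\g'=\delta\g\delta^{-1}$ and observes that for $z\in\,]x,y[$ the ultimately distinct elements $\g'^{-n}\g^{n}$ move $z$ into a set accumulating \emph{inside} $\O$, contradicting discreteness. This is pure projective dynamics: $\g$ and $\g'$ have the same spectrum and the same attracting fixed point, so the contraction of $\g^{n}$ toward $x$ is exactly undone by $\g'^{-n}$; no control of asymptotic rays is needed. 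If you supply a proof of your asymptotic-rays lemma, your argument becomes complete and even yields a stronger intermediate output -- an honest commutation relation $\delta^{k}\g=\g\delta^{k}$ -- which is the standard Margulis-style mechanism in pinched negative curvature; as written, however, the proof delegates its hardest step to references that do not contain it.
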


\begin{proof}
Supposons pour commencer $\g$ hyperbolique. On peut supposer que le point fixe attractif de $\g$ est $x$, et on appelle $y$ son point r\'epulsif. On veut montrer que $\delta$ est hyperbolique et fixe le point $y$.\\
Si l'\'el\'ement $\delta$ ne fixe pas $y$, l'\'el\'ement  $\g' = \delta \g \delta^{-1}$ est hyperbolique, fixe le point $x$ et le point $\delta(y) \neq y$. Il pr\'eserve donc le segment $[x,\delta(y)]$. Or, si $z\in ]x,y[$, la famille de points (ultimement) distincts $\g'^{-n}\g^n \cdot z$ s'accumule dans $\O$, ce qui contredit la discr\'etude de l'action de $\G$ sur $\O$.\\
Ainsi, si $\g$ est hyperbolique et si $\delta$ fixe $x$ alors $\delta$ fixe aussi le point $y$. Par suite, $\delta$ est hyperbolique grâce au th\'eorème \ref{classi_dym_1}. Le groupe engendr\'e par $\g$ et $\delta$ agit proprement sur le segment $]x,y[ \subset \O$. Or, le groupe $\Aut(]x,y[)$ est isomorphe à $\R$; il existe donc des entiers $k,l \in \Z$ tel que $\g^k =\delta^l$.\\

Enfin, si $\g$ est parabolique et si $\delta$ fixe $x$ alors on vient de voir que $\delta$ ne peut être hyperbolique. Il est donc parabolique via le th\'eorème \ref{classi_dym_1}.

\end{proof}

\paragraph*{Points fixes et groupes libres}

Un simple argument de ping-pong donne la

\begin{prop}\label{ping-pong}
Soient $\g$ et $\delta$ deux \'el\'ements non elliptiques de $\Aut(\O)$ dont les points fixes sont deux \`a deux disjoints. Supposons que $\Fix(\g)$ et $\Fix(\delta)$ sont deux ensembles disjoints. Le groupe engendr\'e par les \'el\'ements $\g$ et $\delta$ est un sous-groupe discret de $\Aut(\O)$ qui contient un groupe libre à deux g\'en\'erateurs.
\end{prop}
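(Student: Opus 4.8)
The plan is to run a table-tennis (ping-pong) argument on the compactification $\overline{\O}$, using the dynamical classification of Theorem \ref{classi_dym_1} to produce attracting and repelling neighbourhoods, then to pass to suitable powers of $\g$ and $\delta$ so that the tables are genuinely stable.

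First I would record the dynamics. Since $\g$ and $\delta$ are non-elliptic, Theorem \ref{classi_dym_1} says each is either hyperbolic or parabolic, so $\Fix(\g)$ and $\Fix(\delta)$ are finite subsets of $\dO$ (two points in the hyperbolic case, a single point in the parabolic case). In every case the sequences $(\g^n)_{n\in\N}$ and $(\g^{-n})_{n\in\N}$ converge, uniformly on the compact subsets of $\overline{\O}$ avoiding the relevant fixed point, to a point of $\Fix(\g)$, and likewise for $\delta$. Because $\Fix(\g)\cap\Fix(\delta)=\emptyset$ and $\Lambda:=\Fix(\g)\cup\Fix(\delta)$ is finite, I can choose pairwise disjoint open neighbourhoods in $\overline{\O}$ of the points of $\Lambda$; write $V_{\g}$ for the union of those surrounding $\Fix(\g)$ and $V_{\delta}$ for the union of those surrounding $\Fix(\delta)$, so that $V_{\g}\cap V_{\delta}=\emptyset$.

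Next I would set up the tables. Using the uniform convergence above I would fix an integer $N$ so large that, for every $m\neq 0$, one has $\g^{Nm}(V_{\delta})\subset V_{\g}$ and $\delta^{Nm}(V_{\g})\subset V_{\delta}$. The mechanism is that once $N$ is large each single power $\g^{\pm N}$ already carries the compact set $\overline{\O}\setminus V_{\g}$ into $V_{\g}$ and maps $V_{\g}$ into itself, the neighbourhoods nesting down onto $\Fix(\g)$; hence one single $N$ works simultaneously for all exponents $m$ and both signs, and symmetrically for $\delta$. Fixing a base point $x_{0}\in\O$ outside $V_{\g}\cup V_{\delta}$, the table-tennis lemma applies to the infinite cyclic groups $\langle\g^{N}\rangle$ and $\langle\delta^{N}\rangle$ with tables $V_{\g},V_{\delta}$: every nontrivial reduced word in $\g^{N},\delta^{N}$ carries $x_{0}$ into $V_{\g}\cup V_{\delta}$ and is in particular nontrivial. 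Thus $\langle\g^{N},\delta^{N}\rangle\cong\langle\g^{N}\rangle*\langle\delta^{N}\rangle$ is free of rank two and sits inside $\langle\g,\delta\rangle$, which yields the free subgroup asserted.

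Finally I would deduce discreteness. The nesting $\g^{\pm N}(V_{\g})\subset V_{\g}$ and its analogue for $\delta$ force the images of $x_{0}$ under longer and longer reduced words to be trapped in smaller and smaller pieces shrinking onto $\Lambda\subset\dO$; so $w\cdot x_{0}$ leaves every compact subset of $\O$ as the length of the reduced word $w$ grows, and the orbit of $x_{0}$ is a discrete, locally finite subset of $\O$. Since $\Aut(\O)$ acts properly on $(\O,\d)$ (the Fait of \S\ref{geo_hilbert}), local finiteness of this orbit forces the group to be discrete, with $\overline{\O}\setminus(V_{\g}\cup V_{\delta})$ playing the role of a Schottky fundamental domain. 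I expect the main obstacle to lie precisely in the table construction — verifying the contraction inclusions $\g^{Nm}(V_{\delta})\subset V_{\g}$ for all $m\neq 0$ at once — since that is where one must upgrade the uniform-on-compacts convergence of Theorem \ref{classi_dym_1} into a single $N$ valid for both signs and every power, and where the hyperbolic (two fixed points) and parabolic (one fixed point) cases must be handled on the same footing.
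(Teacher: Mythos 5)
The paper offers no argument for Proposition \ref{ping-pong} beyond the phrase ``un simple argument de ping-pong'', so your strategy --- the classification of Theorem \ref{classi_dym_1}, disjoint tables around the two fixed sets, passage to powers $\g^{N},\delta^{N}$, then the ping-pong lemma --- is exactly the intended one. However, the step you yourself single out as the crux is handled incorrectly. You ask for $N$ such that $\g^{\pm N}$ carries $\overline{\O}\smallsetminus V_{\g}$ into $V_{\g}$ \emph{and} maps $V_{\g}$ into itself; these two inclusions are incompatible, since together they would give $\g^{\pm N}(\overline{\O})\subset V_{\g}$, which is absurd because $\g^{N}$ is a homeomorphism of $\overline{\O}$. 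Concretely, if $\g$ is hyperbolic and $V_{\g}=U^{+}\cup U^{-}$ with $U^{\pm}$ neighbourhoods of the attracting and repelling points, then $\g^{N}(U^{-})\supset\overline{\O}\smallsetminus U^{+}$: a repelling neighbourhood is never forward invariant. Fortunately no invariance is needed. Choose $V_{\g},V_{\delta}$ with disjoint closures and with $\overline{V_{\delta}}\cap\Fix(\g)=\varnothing$, $\overline{V_{\g}}\cap\Fix(\delta)=\varnothing$. By Theorem \ref{classi_dym_1}, $\g^{k}\to p^{+}_{\g}$ uniformly on $\overline{V_{\delta}}$ as $k\to+\infty$ and $\g^{k}\to p^{-}_{\g}$ as $k\to-\infty$ (a single fixed point in the parabolic case), so there is $N$ with $\g^{k}(V_{\delta})\subset V_{\g}$ for \emph{every} $|k|\geqslant N$, and symmetrically for $\delta$; since $|Nm|\geqslant N$ whenever $m\neq0$, these are precisely the hypotheses of the ping-pong lemma for $\langle\g^{N}\rangle$ and $\langle\delta^{N}\rangle$. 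So this part is easily repaired, but the mechanism you propose is false, not merely unverified.

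The second problem cannot be repaired. Your final paragraph only controls $w\cdot x_{0}$ for reduced words $w$ in $\g^{N},\delta^{N}$; at best it proves that the free group $\langle\g^{N},\delta^{N}\rangle$ is discrete, and discreteness of a subgroup does not imply discreteness of an overgroup (here of infinite index in general). In fact $\langle\g,\delta\rangle$ need not be discrete at all: take for $\O$ an ellipse, so that $(\O,\d)$ is the hyperbolic plane, and for $\g,\delta$ two hyperbolic isometries whose axes cross at a point $x$ and whose translation lengths are smaller than the constant $\varepsilon_{2}$ of Lemma \ref{lem_mar}; their fixed sets are disjoint, and $\d(x,\g x)$, $\d(x,\delta x)$ equal the translation lengths, so if $\langle\g,\delta\rangle$ were discrete, Lemma \ref{lem_mar} would make it virtually nilpotent, contradicting the rank-two free subgroup that ping-pong provides. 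Thus the discreteness assertion in the statement is itself an overstatement: what ping-pong actually proves --- and the only thing the paper uses later, e.g.\ in the corollary on nilpotent subgroups, where the ambient group is already assumed discrete --- is that $\langle\g,\delta\rangle$ \emph{contains} a discrete free subgroup of rank two, namely $\langle\g^{N},\delta^{N}\rangle$. Your write-up claims the full statement while (after the repair above) proving only this weaker, correct one; you should either prove only the weaker statement or explicitly flag the discrepancy.
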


\paragraph*{Les sous-groupes nilpotents discrets de $\Aut(\O)$}

\begin{coro}
Soit $\G$ un sous-groupe discret, nilpotent, infini, et sans torsion de $\Aut(\O)$. Alors
\begin{enumerate}
\item soit tous les \'el\'ements de $\G\smallsetminus\{Id\}$ sont hyperboliques et $\G$ est isomorphe à $\Z$;
\item soit tous les \'el\'ements de $\G\smallsetminus\{Id\}$ sont paraboliques.
\end{enumerate}
\end{coro}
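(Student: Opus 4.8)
The plan is to first eliminate elliptic elements, then exploit nilpotency through the center, and finally transfer the dynamical type of a central element to the whole group via Proposition \ref{point_fixe_dyn}.

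First I would observe that no element of $\G\smallsetminus\{Id\}$ is elliptic. Indeed, an elliptic $\g$ fixes some $x\in\O$, so $\langle\g\rangle$ lies in the stabilizer $\Stab_{\Aut(\O)}(x)$; since $\Aut(\O)$ acts properly on $\O$ this stabilizer is compact, and a discrete subgroup of a compact group is finite. As $\G$ is torsion-free this forces $\g=Id$. Hence, by Theorem \ref{classi_dym_1}, every non-trivial element of $\G$ is either hyperbolic or parabolic. Because $\G$ is nilpotent and infinite, its center is non-trivial, so I would fix once and for all a central element $\g\in Z(\G)\smallsetminus\{Id\}$ and split into cases according to its type.

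Suppose $\g$ is hyperbolic, with fixed-point set $\{p^+,p^-\}\subset\dO$. For any $\delta\in\G$, centrality gives $\delta\g\delta^{-1}=\g$, so $\delta$ maps $\{p^+,p^-\}$ onto itself. Either $\delta$ fixes both points or it exchanges them; in the latter case $\delta$ would preserve the open axis $s=\,]p^-,p^+[\,\subset\O$ while reversing its orientation, hence would act on the line $s\cong\R$ as an orientation-reversing isometry and acquire a fixed point inside $\O$, i.e.\ be elliptic --- impossible unless $\delta=Id$. Thus every $\delta$ fixes $p^+$, a common fixed point with the hyperbolic $\g$, and Proposition \ref{point_fixe_dyn}(2) shows $\delta$ is hyperbolic. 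To get $\G\cong\Z$, note that all elements of $\G$ fix $p^\pm$, hence act on $s$ as translations (Lemma \ref{lem_auto_1}), which yields a homomorphism $\tau\colon\G\to\Aut(s)\cong\R$ recording signed translation length. Its kernel consists of elements fixing points of $s\subset\O$, hence elliptic, hence trivial, so $\tau$ is injective; and since $\G$ acts properly on the $\G$-invariant line $s$, the image $\tau(\G)$ is a discrete subgroup of $\R$. An infinite discrete subgroup of $\R$ is isomorphic to $\Z$, whence $\G\cong\Z$.

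Suppose instead $\g$ is parabolic, with unique fixed point $p\in\dO$. For every $\delta\in\G$, centrality again forces $\delta$ to preserve the fixed-point set $\{p\}$, so $\delta(p)=p$; as $\delta$ is non-elliptic and shares $p$ with the parabolic $\g$, Proposition \ref{point_fixe_dyn}(1) shows $\delta$ is parabolic, giving the second alternative. Since the two cases are exclusive and exhaustive, this proves the statement. I expect the only genuine subtlety to lie in the hyperbolic case: centrality controls the \emph{set} of fixed points but not each point individually, so one must carefully rule out an orientation-reversing element (via the torsion-freeness argument above), and one must convert the proper action on the axis into discreteness of $\tau(\G)$ in $\R$ in order to pin down $\G\cong\Z$.
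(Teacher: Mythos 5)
Your proof is correct, but it reaches the key ``common fixed point'' step by a genuinely different route than the paper. The paper invokes Proposition \ref{ping-pong}: since a nilpotent group contains no free subgroup of rank two, any two non-trivial elements of $\G$ must have intersecting fixed-point sets, and Proposition \ref{point_fixe_dyn} then propagates the type (hyperbolic or parabolic) from one element to all the others. You instead exploit nilpotency through the non-trivial center: a central element $\g$ forces every $\delta\in\G$ to preserve $\Fix(\g)$, and the same Proposition \ref{point_fixe_dyn} finishes the argument. Your route is more self-contained --- it bypasses the ping-pong proposition entirely --- but it requires two extra verifications that the paper's route gets for free: you must rule out elliptic elements explicitly (which you do correctly via properness, compact stabilizers and torsion-freeness; the paper leaves this implicit), and in the hyperbolic case you must exclude an element swapping $p^+$ and $p^-$, since centrality only controls the fixed-point \emph{set} (your orientation-reversal argument handles this; in the paper the issue never arises because the shared fixed point is given, and the swap is excluded inside the proof of Proposition \ref{point_fixe_dyn} itself). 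One thing the paper's approach buys that yours does not: the ping-pong argument applies verbatim to any group with no non-abelian free subgroup, which is exactly what lets the authors extend the corollary to \emph{virtually} nilpotent groups immediately afterwards, whereas the center argument is tied to genuine nilpotency. Your identification of $\G\cong\Z$ via the translation homomorphism $\tau:\G\to\R$ with discrete image is also sound and is essentially the argument hidden in the paper's proof of Proposition \ref{point_fixe_dyn}(2).
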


\begin{proof}
La proposition \ref{ping-pong} montre que tous les \'el\'ements de $\G$ doivent avoir un point fixe commun, sinon le groupe $\G$ contiendrait un groupe libre non ab\'elien et ne serait donc pas nilpotent. La proposition \ref{point_fixe_dyn} montre qu'alors les \'el\'ements de $\G$ (diff\'erents de l'identit\'e) sont tous hyperboliques ou bien tous paraboliques. De plus, s'ils sont tous hyperboliques, le deuxième point de la proposition \ref{point_fixe_dyn} montre que $\G$ est isomorphe à $\Z$.
\end{proof}

On dira par la suite qu'un sous-groupe discret de $\Aut(\O)$ est
\begin{itemize}
 \item \emph{elliptique} si tous ses \'el\'ements sont elliptiques et fixent le même point;
 \item \emph{parabolique} s'il contient un sous-groupe d'indice fini dont tous les \'el\'ements sont paraboliques et fixent le même point;
 \item \emph{hyperbolique} s'il contient un sous-groupe d'indice fini engendr\'e par un \'el\'ement hyperbolique.
\end{itemize}
Le corollaire pr\'ec\'edent montre qu'un sous-groupe discret de $\Aut(\O)$, qui est virtuellement nilpotent et infini, est soit parabolique, soit hyperbolique.

On remarquera qu'un sous-groupe parabolique contient n\'ecessairement uniquement des \'el\'ements paraboliques alors qu'un sous-groupe hyperbolique peut contenir des \'el\'ements elliptiques d'ordre 2 qui \'echange les deux points fixes des \'el\'ements hyperboliques du groupe en question.


\section{Les notions classiques vues dans le monde projectif}

Le but de cette partie est de rappeler les d\'efinitions d'ensemble limite, de domaine de discontinuit\'e, d'action de convergence et de domaine fondamental; cela nous permettra de montrer dans le cadre des g\'eom\'etries de Hilbert des propositions bien connues de g\'eom\'etrie hyperbolique.

\subsection{Ensemble limite et domaine de discontinuit\'e}

Comme en g\'eom\'etrie hyperbolique, on peut d\'efinir l'ensemble limite et le domaine de discontinuit\'e d'un sous-groupe discret de $\Aut(\O)$ de la façon suivante.

\begin{defi}\label{def_ens_lim}
Soit $\G$ un sous-groupe discret de $\Aut(\O)$ et $x\in\O$. L'ensemble limite $\LG$ de $\G$ est le sous-ensemble de $\dO$ suivant:
$$\LG = \overline{\G\cdot x} \smallsetminus \G\cdot x,$$
o\`u $x$ est un point quelconque de $\O$.
\emph{Le domaine de discontinuit\'e} $\Og$ de $\G$ est le compl\'ementaire de l'ensemble limite de $\G$ dans $\O$.
\end{defi}

\begin{center}
\begin{figure}[h!]
  \centering
\includegraphics[width=7cm]{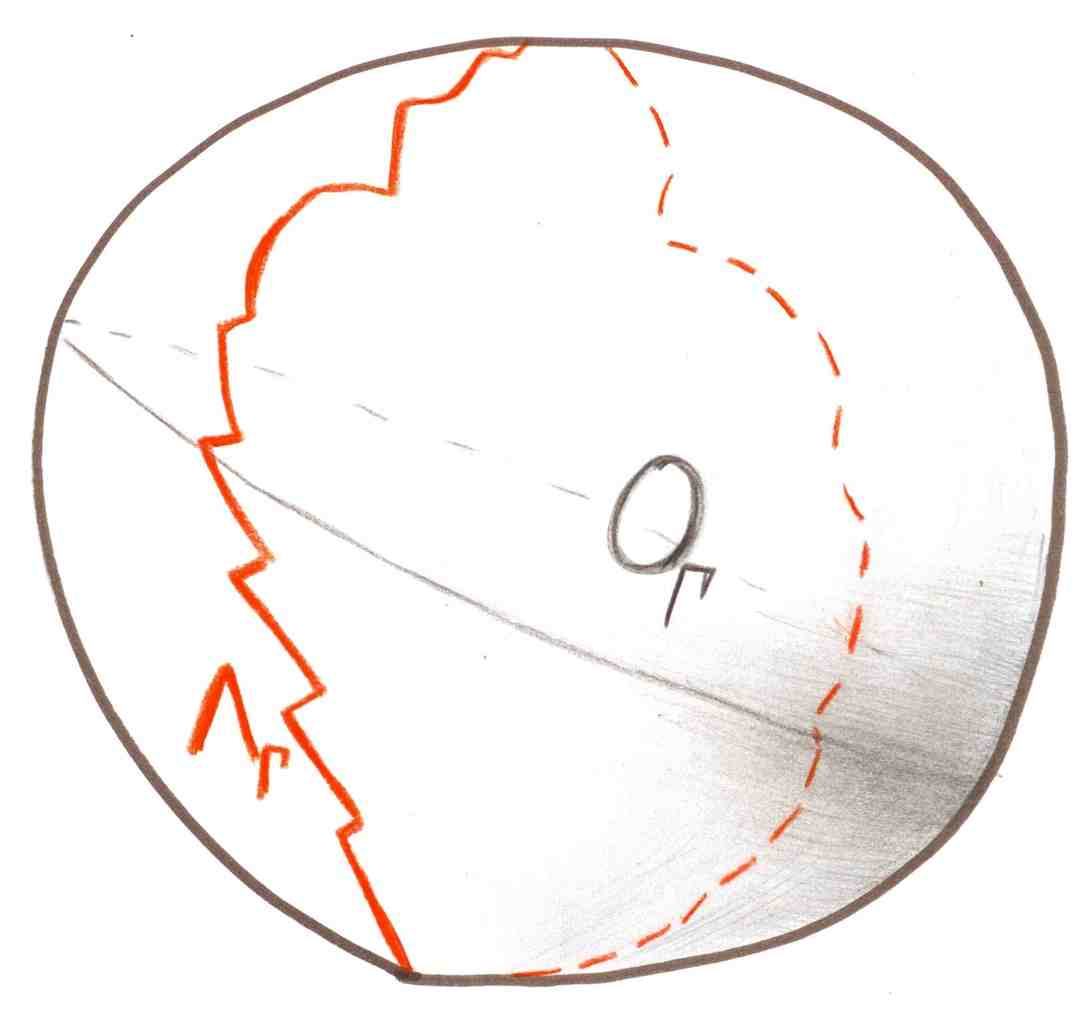}
\caption{Ensemble limite et domaine de discontinuit\'e}
\end{figure}
\end{center}

L'ensemble limite $\LG$, s'il n'est pas infini, est vide ou consiste en 1 ou 2 points, auxquels cas $\G$ est respectivement elliptique, parabolique ou hyperbolique. On dit que $\G$ est \emph{non \'el\'ementaire} si $\LG$ est infini. Dans ce dernier cas, l'ensemble limite $\LG$ est le plus petit ferm\'e $\G$-invariant non vide de $\dO$. Ainsi, $\LG$ est l'adh\'erence des points fixes des \'el\'ements hyperboliques de $\G$. Le lemme suivant d\'ecrit grossièrement l'ensemble limite.

\begin{lemm}\label{bord_dom_fond}
Soit $\G$ un sous-groupe discret non \'el\'ementaire de $\Aut(\O)$. L'ensemble limite $\Lambda_{\G}$ est un compact parfait. De plus, si $\Lambda_{\G} \neq \partial \O$ alors $\Lambda_{\G}$ est d'int\'erieur vide.
\end{lemm}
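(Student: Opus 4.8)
Le plan est de traiter les trois assertions s\'epar\'ement, en s'appuyant sur les deux faits rappel\'es ci-dessus pour un groupe non \'el\'ementaire~: l'ensemble $\LG$ est le plus petit ferm\'e $\G$-invariant non vide de $\dO$ (minimalit\'e), et c'est l'adh\'erence des points fixes des \'el\'ements hyperboliques de $\G$. Commen\c cons par la compacit\'e. Comme $\Aut(\O)$ agit proprement sur $\O$, l'orbite $\G\cdot x$ est discr\`ete et ferm\'ee dans $\O$, de sorte que ses points d'accumulation sont tous situ\'es sur $\dO$~; on a donc $\LG = \overline{\G\cdot x}\cap\dO$, qui est ferm\'e dans le compact $\dO$, et par suite compact.

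Pour la perfection, je montrerais que $\LG$ n'a aucun point isol\'e. Supposons au contraire qu'un point $\xi\in\LG$ soit isol\'e. Par minimalit\'e, $\overline{\G\cdot\xi}=\LG$. Chaque \'el\'ement de $\G$ agissant sur $\dO$ comme un hom\'eomorphisme pr\'eservant $\LG$, tout point $\g\cdot\xi$ est lui aussi isol\'e dans $\LG$~; ainsi $\G\cdot\xi$ est ouvert dans $\LG$, et son compl\'ementaire $L'=\LG\smallsetminus\G\cdot\xi$ est un ferm\'e $\G$-invariant de $\dO$. Si $L'=\varnothing$, alors $\LG=\G\cdot\xi$ est \`a la fois compact et discret, donc fini, ce qui contredit l'hypoth\`ese non \'el\'ementaire. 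Si $L'\neq\varnothing$, la minimalit\'e force $L'=\LG$, en contradiction avec $\xi\in\G\cdot\xi$. Dans les deux cas on aboutit \`a une absurdit\'e, donc $\LG$ est parfait.

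Pour la derni\`ere assertion, je raisonnerais par contrapos\'ee~: supposons que $\LG$ soit d'int\'erieur non vide dans $\dO$ et notons $U$ cet int\'erieur, ouvert non vide contenu dans $\LG$. Comme $U$ est une partie non vide et relativement ouverte de $\LG$, la densit\'e des points fixes attractifs des \'el\'ements hyperboliques dans $\LG$ fournit un \'el\'ement hyperbolique $\g_0\in\G$ dont le point fixe attractif $p^+$ appartient \`a $U$~; soit $p^-$ son point fixe r\'epulsif. D'apr\`es le th\'eor\`eme \ref{classi_dym_1}, pour tout $\eta\in\dO\smallsetminus\{p^-\}$ la suite $(\g_0^n\cdot\eta)$ converge vers $p^+$~; comme $U$ est un voisinage de $p^+$, on a $\g_0^n\cdot\eta\in U$ pour $n$ assez grand, d'o\`u $\eta\in\g_0^{-n}(U)\subset\LG$ par $\G$-invariance de $\LG$. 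Ainsi $\dO\smallsetminus\{p^-\}\subset\LG$, et puisque $p^-\in\LG$ on conclut $\LG=\dO$. Par contrapos\'ee, si $\LG\neq\dO$ alors $\LG$ est d'int\'erieur vide.

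L'obstacle principal n'est pas de nature technique mais tient \`a bien identifier les ingr\'edients essentiels~: la dichotomie dynamique du th\'eor\`eme \ref{classi_dym_1} (existence d'exactement deux points fixes pour un \'el\'ement hyperbolique, avec convergence uniforme sur les compacts de $\overline{\O}\smallsetminus\{p^-\}$ vers $p^+$), qui repose de mani\`ere cruciale sur la stricte convexit\'e et sur la r\'egularit\'e $\C^1$ du bord, conjugu\'ee \`a la minimalit\'e de $\LG$ et \`a la densit\'e des points fixes hyperboliques. Les points \`a r\'ediger avec le plus de soin sont la fermeture et la $\G$-invariance de $L'$ dans l'argument de perfection, ainsi que l'emploi correct de la convergence $\g_0^n\cdot\eta\to p^+$ dans l'argument d'int\'erieur vide.
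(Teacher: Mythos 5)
Your proof is correct, but it swaps the paper's tools between the two halves of the statement. For perfectness, the paper argues via the density of hyperbolic fixed points: an isolated point $x$ of $\LG$ would itself be the attractive fixed point of some hyperbolic $\g$, and iterating $\g$ on a limit point not fixed by $\g$ (which exists by non-elementarity) produces points of $\LG$ distinct from $x$ converging to $x$, a contradiction. Your argument instead uses only the minimality of $\LG$ together with the purely topological observation that the orbit of an isolated point is relatively open, so that its complement $L'$ is a closed $\G$-invariant proper subset of $\LG$ -- empty or nonempty, each case is absurd. For the empty-interior statement the roles are reversed: the paper gives a one-line minimality argument (the set $\overline{\Og\cap\dO}=\overline{\dO\smallsetminus\LG}$ is closed and $\G$-invariant, hence contains $\LG$ when nonempty, i.e. $\dO\smallsetminus\LG$ is dense), whereas you invoke the north-south dynamics of Theorem \ref{classi_dym_1} to push every point of $\dO\smallsetminus\{p^-\}$ into the invariant set $\LG$. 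Both routes are sound, and both rest on the same two facts stated just before the lemma. Your perfectness argument is the more robust one: it works for any minimal action by homeomorphisms and never uses the classification of automorphisms; conversely the paper's empty-interior argument is the more economical one, avoiding dynamics altogether, while yours depends on strict convexity and $\C^1$ regularity through Theorem \ref{classi_dym_1}, as you yourself note. One cosmetic point: the fact recorded in the paper is the density in $\LG$ of fixed points of hyperbolic elements, not specifically of \emph{attractive} ones; this is the same set, since the repulsive point of $\g$ is the attractive point of $\g^{-1}$, but that half-sentence should appear when you pick $\g_0$ with $p^+\in U$.
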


\begin{proof}
On commence par montrer par l'absurde que $\Lambda_{\G}$ est un compact parfait. Puisque $\LG$ est l'adh\'erence des points fixes des \'el\'ements hyperboliques de $\G$, s'il existe un point isol\'e $x \in \Lambda_{\G}$ alors le point $x$ est fix\'e par un \'el\'ement hyperbolique $\g$. On peut supposer que $x$ est point fixe attractif de $\g$. Comme $\G$ n'est pas \'el\'ementaire, il existe un point $y \in \Lambda_{\G}$ qui n'est pas fix\'e par $\g$. La suite $(\g^n \cdot y)_{n \in \N}$ converge donc vers le point $x$ lorsque $n$ tend vers l'infini et tous les points de cette suite sont diff\'erents de $x$. Ce qui contredit le fait que le point $x$ est isol\'e.

Montrons à pr\'esent le deuxième point. Le ferm\'e $\overline{\Og\cap \dO}$ est un ferm\'e $\G$-invariant donc il contient $\LG$, qui est le plus petit ferm\'e $\G$-invariant. Par cons\'equent, $\overline{\Og\cap \dO} = \overline{\dO}$, autrement dit $\Og\cap\dO$ est dense. Autrement dit encore, $\Lambda_{\G}$ est d'int\'erieur vide.
\end{proof}

\begin{rema} 
Il est possible de d\'efinir l'ensemble limite d'un sous-groupe de $\ss$ agissant sur $\PP^n$ dans des cas plus g\'en\'eraux. On pourra se r\'ef\'erer aux travaux de Benoist \cite{MR1767272},  Yves Guivarc'h \cite{MR1074315} ou Guivarc'h et Jean-Pierre Conze \cite{MR1803464}. En fait, il suffit que le groupe soit irr\'eductible et proximal.
\end{rema}

\begin{defi}
Soit $\Gamma$ un sous-groupe de $\ss$. On dira que $\G$ est \emph{irr\'eductible} lorsque les seuls sous-espaces vectoriels de $\R^{n+1}$ invariants par $\G$ sont $\{0 \}$ et $\R^{n+1}$. On dira que $\G$ est \emph{fortement irr\'eductible} si tous ses sous-groupes d'indice fini sont irr\'eductibles, autrement dit si $\G$ ne pr\'eserve pas une union finie de sous-espaces vectoriels non triviaux.
\end{defi}

\par{
Lorsque $\G$ est un sous-groupe discret de $\Aut(\O)$, $\G$ est irr\'eductible si et seulement si l'int\'erieur $C(\LG)$ de l'enveloppe convexe de son ensemble limite $\LG$ est non vide. Dans ce cas, $C(\LG)$ est le plus petit ouvert convexe de $\PP^n$ pr\'eserv\'e par $\G$. En fait, il n'est pas difficile de voir qu'alors $\G$ est fortement irr\'eductible. En effet, si $G$ est un sous-groupe d'indice fini de $\G$ alors pour tout \'el\'ement hyperbolique $h$ de $\G$, il existe un entier $n \geqslant 1$ tel que $h^n\in G$, et donc  $\Lambda_{G}=\LG$.
}

\subsection{Action de $\G$ sur son domaine de discontinuit\'e}

Le but de cette partie est de montrer le lemme suivant.

\begin{lemm}\label{dom_disc}
Soit $\G$ un sous-groupe discret de $\Aut(\O)$. Le groupe $\G$ agit proprement discontin\^ument sur $\Og$.
\end{lemm}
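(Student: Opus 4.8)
L'idée est de raisonner par l'absurde à l'aide d'un argument de suites, en ramenant l'énoncé à une propriété de convergence de l'action sur $\overline{\O}$. L'action de $\Aut(\O)$ sur $\O$ étant déjà propre (fait rappelé au paragraphe \ref{base}), il suffit de traiter les compacts de $\Og=\overline{\O}\smallsetminus\LG$ qui rencontrent le bord. Supposons donc l'action non proprement discontinue: il existe un compact $K\subset\Og$, une suite d'éléments distincts $\g_n\in\G$ et des points $z_n\in K$ avec $\g_n z_n\in K$. Quitte à extraire, on a $z_n\to z\in K$ et $\g_n z_n\to w\in K$, avec en particulier $z,w\notin\LG$.

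Fixons un point intérieur $o\in\O$. Comme l'action sur $\O$ est propre, l'orbite $\G\cdot o$ est discrète et ne s'accumule que sur $\LG$; les $\g_n$ étant distincts, on peut (quitte à extraire) supposer $\g_n\cdot o\to\xi_+$ et $\g_n^{-1}\cdot o\to\xi_-$ avec $\xi_\pm\in\dO$, et par la définition \ref{def_ens_lim} de l'ensemble limite, $\xi_\pm\in\LG$.

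Le c\oe ur de la preuve consiste à établir la dynamique de convergence: $\g_n\to\xi_+$ uniformément sur les compacts de $\overline{\O}\smallsetminus\{\xi_-\}$. Pour un point intérieur $x\in\O$, cela découle du lemme \ref{force_strict}: le point $\xi_+$ est extrémal (par stricte convexité) et $\d(\g_n o,\g_n x)=\d(o,x)$ reste bornée, d'où $\g_n x\to\xi_+$. Le point délicat -- et c'est là l'obstacle principal -- est le cas des points du bord, où la distance n'est plus définie et où la symétrie apparente entre $\xi_+$ et $\xi_-$ doit être brisée. Je compte le résoudre par la structure linéaire proximale de la suite: on relève $\g_n$ en $g_n\in\ss$ préservant un cône au-dessus de $\O$, puis on normalise (quitte à extraire, à l'aide du théorème \ref{theo_ben}) de sorte que $g_n/\|g_n\|\to\pi\neq 0$. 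La stricte convexité force $\mathrm{rg}\,\pi=1$: sinon $\PP(\mathrm{im}\,\pi)\cap\overline{\O}$, qui est contenu dans $\LG\subset\dO$ en tant qu'ensemble de valeurs d'adhérence de l'orbite, contiendrait un segment non trivial, contredisant la stricte convexité. La régularité $\C^1$ identifie alors $\PP(\ker\pi)$ à l'hyperplan tangent à $\dO$ en $\xi_-$, lequel, à nouveau par stricte convexité, ne rencontre $\overline{\O}$ qu'au point $\xi_-$. On en déduit $\g_n x\to\xi_+$ pour tout $x\in\overline{\O}\smallsetminus\{\xi_-\}$, uniformément sur les compacts.

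Il ne reste plus qu'à conclure: le compact $K\subset\overline{\O}\smallsetminus\LG$ évite $\xi_+$ et $\xi_-$ (tous deux dans $\LG$), donc $K$ est un compact de $\overline{\O}\smallsetminus\{\xi_-\}$; par convergence uniforme, $\g_n K$ finit par entrer dans tout voisinage de $\xi_+$, et en choisissant un tel voisinage disjoint de $K$ (possible car $\xi_+\notin K$), on obtient $\g_n K\cap K=\varnothing$ pour $n$ assez grand, ce qui contredit $\g_n z_n\in K$. L'obstacle essentiel réside donc dans le passage au bord au sein de la dynamique de convergence, où les deux hypothèses -- stricte convexité et bord $\C^1$ -- conjuguées à la discrétude de $\G$ interviennent de façon cruciale pour fixer la direction attractive $\xi_+$ et l'hyperplan répulsif $T_{\xi_-}\dO$.
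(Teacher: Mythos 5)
Votre d\'emonstration est correcte et suit pour l'essentiel la m\^eme d\'emarche que celle du texte : votre normalisation $g_n/\|g_n\|\to\pi$ est exactement la compactification de $\mathrm{PGL}_{n+1}(\R)$ dans $\PP(\mathrm{End}(\R^{n+1}))$ exploit\'ee par la proposition \ref{prop_pre_action_conv} et le th\'eor\`eme \ref{action_conv} (action de convergence), le fait que $\PP(\ker\pi)$ soit un hyperplan tangent ne rencontrant $\overline{\O}$ qu'au seul point $\xi_-$ --- que vous affirmez via la r\'egularit\'e $\C^1$ --- y \'etant \'etabli par dualit\'e (stricte convexit\'e de $\O^*$). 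La conclusion --- pousser $K$ dans un voisinage de $\xi_+$ disjoint de $K$ --- est identique \`a celle du texte.
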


\paragraph*{Compactification du groupe des transformations projectives de $\PP^n$}

Le groupe $\textrm{PGL}_{n+1}(\R)$ est un ouvert dense de l'espace projectif $\PP(\textrm{End}(\R^{n+1}))$, où $\textrm{End}(\R^{n+1})$ d\'esigne l'espace vectoriel des endomorphismes de $\R^{n+1}$. Ce dernier nous fournit donc une compactification de $\textrm{PGL}_{n+1}(\R)$ en tant qu'espace topologique. On rappelle qu'un \'el\'ement $\g$ de $\PP(\textrm{End}(\R^{n+1}))$ d\'efinit une application de  $\PP^{n} \smallsetminus N(\g)$ vers $\PP^{n} $, où $N(\g)$ est le projectivis\'e du noyau de n'importe quel relev\'e de $\g$ à  $\textrm{End}(\R^{n+1})$.

De plus, la proposition suivante permet de d\'ecrire cette compactification.

\begin{prop}\label{prop_gold_compactification}
Soient $(\g_n)_{n \in \N}$ une suite d'\'el\'ements du groupe $\textrm{PGL}_{n+1}(\R)$ et $\g_{\infty}$ un \'el\'ement de $\PP(\textrm{End}(\R^{n+1}))$. La suite $(\g_n)_{n \in \N}$ converge vers $\g_{\infty}$ dans $\PP(\textrm{End}(\R^{n+1}))$ si et seulement si la suite $(\g_n)_{n \in \N}$ converge vers $\g_{\infty}$ sur tout compact de  $\PP^{n} \smallsetminus N(\g_{\infty})$.
\end{prop}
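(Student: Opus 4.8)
The plan is to fix matrix representatives and treat the two implications separately, the direct one by an elementary estimate and the converse by a compactness argument whose only delicate point is the matching of kernels. Recall that convergence of $(\g_n)$ to $\g_\infty$ in $\PP(\textrm{End}(\R^{n+1}))$ means the following: choosing a representative $M_\infty$ of $\g_\infty$ and representatives $M_n$ of $\g_n$, all normalized to have norm $1$ in $\textrm{End}(\R^{n+1})$, one may adjust signs so that $M_n\to M_\infty$ in $\textrm{End}(\R^{n+1})$. For the direct implication I assume this convergence and fix a compact $K\subset \PP^n\smallsetminus N(\g_\infty)$. Lifting $K$ to the preimage $\widetilde K$ in the unit sphere $S^n$, compactness together with $x\notin\ker M_\infty$ gives $c:=\inf_{x\in\widetilde K}\|M_\infty x\|>0$. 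For $n$ large, $\|M_n-M_\infty\|<c/2$, so $M_n x\neq 0$ on $\widetilde K$, and the elementary bound $\|u/\|u\|-v/\|v\|\|\leqslant 2\|u-v\|/\|u\|$ applied to $u=M_nx$, $v=M_\infty x$ shows that the projective distance between $[M_nx]$ and $[M_\infty x]$ is at most $(4/c)\,\|M_n-M_\infty\|$, which tends to $0$ uniformly on $\widetilde K$. This proves the convergence on every compact of $\PP^n\smallsetminus N(\g_\infty)$.

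For the converse I use that $\PP(\textrm{End}(\R^{n+1}))$ is compact: it suffices to show that every subsequential limit of $(\g_n)$ equals $\g_\infty$. Let $\g_{n_k}\to\g'=[M']$ in $\PP(\textrm{End}(\R^{n+1}))$, with $M_{n_k}\to M'$ in norm; by the direct implication already proved, $\g_{n_k}\to\g'$ uniformly on compacts of $\PP^n\smallsetminus N(\g')$. On the dense open set $D:=\PP^n\smallsetminus N(\g_\infty)$ the hypothesis gives $\g_{n_k}\to\g_\infty$. The goal is $\g'=\g_\infty$, and the key step is the inclusion $N(\g')\subseteq N(\g_\infty)$, i.e. $\ker M'\subseteq\ker M_\infty$.

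This inclusion is the \textbf{main obstacle}, and it is exactly where uniformity is indispensable: mere pointwise (or dense) agreement does \emph{not} force the kernels to coincide, since two non-proportional rank-one endomorphisms can induce the same projective map on a dense open set. Suppose $\ker M'\not\subseteq\ker M_\infty$, so there is $[x_*]\in N(\g')\cap D$, that is $M'x_*=0$ but $M_\infty x_*\neq0$. I restrict to a projective line $\PP(\Pi)$ with $\Pi=\textrm{span}(x_*,w)$, $M'w\neq0$. Since each $M_{n_k}$ is invertible it is injective on $\Pi$, so $\g_{n_k}|_{\PP(\Pi)}$ is a homeomorphism onto a projective line $L_k$. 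As $M_{n_k}|_\Pi\to M'|_\Pi$, which is rank one with kernel $[x_*]$, the direct implication applied \emph{inside} $\PP(\Pi)$ shows that $\g_{n_k}|_{\PP(\Pi)}$ collapses to the constant $[M'w]$ away from $[x_*]$. Picking $q_k\in L_k$ with $q_k\to q\neq[M'w]$ and setting $[y_k]=(\g_{n_k}|_{\PP(\Pi)})^{-1}(q_k)$ forces $[y_k]\to[x_*]$ (any other accumulation point would give $\g_{n_k}([y_k])\to[M'w]$ by the uniform collapse). But $[x_*]\in D$, so the $[y_k]$ eventually lie in a fixed compact neighbourhood of $[x_*]$ contained in $D$, on which uniform convergence to $\g_\infty$ forces $\g_{n_k}([y_k])\to\g_\infty([x_*])$ — contradicting $\g_{n_k}([y_k])=q_k\to q\neq[M'w]$. (If $\mathrm{rank}\,M'\geqslant2$ the contradiction comes even faster: running this comparison along two lines with $[M'w_1]\neq[M'w_2]$ would force $\g_\infty([x_*])$ to equal both values.) Hence $N(\g')\subseteq N(\g_\infty)$.

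It remains to upgrade this to $\g'=\g_\infty$. On $D$ we now have $[M']=\g_\infty=[M_\infty]$; since $x\mapsto M'x\wedge M_\infty x$ is polynomial and vanishes on the Zariski-dense set $D$, it vanishes identically, so $M'x$ and $M_\infty x$ are proportional for every $x$. Continuity of $[M']$ on $\PP^n\smallsetminus N(\g')$ and density of $D$ there give that the image of $\PP^n\smallsetminus N(\g')$ under $[M']$ lies in $\PP(\mathrm{image}\,M_\infty)$, whence $\mathrm{rank}\,M'\leqslant\mathrm{rank}\,M_\infty$; combined with $\ker M'\subseteq\ker M_\infty$ this yields $\ker M'=\ker M_\infty$. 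Finally $M'$ and $M_\infty$ induce injective maps on $\R^{n+1}/\ker M_\infty$ that are projectively equal, and an injective linear map is determined up to a scalar by the projective map it induces, so $M'$ is proportional to $M_\infty$, i.e. $\g'=\g_\infty$. By compactness the whole sequence $(\g_n)$ converges to $\g_\infty$ in $\PP(\textrm{End}(\R^{n+1}))$, which completes the proof.
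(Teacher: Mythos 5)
You should know at the outset that the paper does not actually prove this proposition: it states it and immediately refers to Goldman's notes \cite{NoteGoldman} for the proof, so there is no internal argument to compare against, and a self-contained proof like yours is genuinely useful. Your overall strategy is the natural one and is essentially sound: the elementary normalization estimate gives the direct implication; for the converse, compactness of $\PP(\textrm{End}(\R^{n+1}))$ reduces everything to identifying subsequential limits $\g'=[M']$, with the kernel inclusion $\ker M'\subseteq\ker M_\infty$ as the crux; and your endgame (vanishing of the polynomial map $x\mapsto M'x\wedge M_\infty x$ on a Zariski-dense set, the rank comparison giving $\ker M'=\ker M_\infty$, and the standard fact that projectively equal injective linear maps are proportional) is correct.

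There is, however, one step where the stated contradiction does not follow as written, precisely at the end of the kernel-inclusion argument. From $q_k\to q\neq[M'w]$ you correctly deduce $[y_k]\to[x_*]$, and then, using uniform convergence to $\g_\infty$ near $[x_*]\in D$, that $q_k=\g_{n_k}([y_k])\to\g_\infty([x_*])$. But this only yields $q=\g_\infty([x_*])$, which by itself contradicts nothing: you never excluded the possibility that the limit $q$ you picked is exactly $\g_\infty([x_*])$, and no relation forces $\g_\infty([x_*])$ to equal $[M'w]$ (note $\g'([x_*])$ is undefined, so agreement of $\g'$ and $\g_\infty$ on $D\smallsetminus N(\g')$ says nothing here). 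The repair is the idea you relegate to a parenthetical, and it must carry the main case as well: after a further extraction, the lines $L_k=\PP(M_{n_k}\Pi)$ converge to a projective line $L$, which contains both $[M'w]$ and $\g_\infty([x_*])$ (since $\g_{n_k}([w])$ and $\g_{n_k}([x_*])$ lie on $L_k$ and converge to these points), and every point of $L$ is a limit of points $q_k\in L_k$. Choosing $q\in L\smallsetminus\{[M'w],\,\g_\infty([x_*])\}$, which is possible since a projective line is infinite, your argument then gives $q=\g_\infty([x_*])$, the desired contradiction. Equivalently, run the argument for two sequences $q_k,q_k'$ with distinct limits $q\neq q'$, both different from $[M'w]$: it forces $q=\g_\infty([x_*])=q'$. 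With this one-line repair the proof is complete and correct.
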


Cette proposition et des d\'etails sur la compactification du groupe $\textrm{PGL}_{n+1}(\R)$ sont  donn\'es dans \cite{MR0124005} et aussi dans \cite{NoteGoldman}.

\paragraph*{Action de convergence}

\begin{defi}\label{def_action_conv}
Soit $\G$ un groupe agissant par hom\'eomorphisme sur un compact parfait $X$. L'action de $\G$ sur $X$ est \emph{une action de convergence} si, pour toute suite $(\g_n)_{n \in \N}$ de $\G$, il existe une sous-suite $(\g_{n_i})_{i \in \N}$ de $\G$ et deux points $a,b \in X$ tels que $(\g_{n_i})_{i \in \N} \in \G^{\N}$ converge uniform\'ement vers $b$ sur $X \smallsetminus \{ a\}$.
\end{defi}

\begin{prop}\label{prop_pre_action_conv}
Soient $\O$ un ouvert proprement convexe de $\PP^n$ et  $(\g_n)_{n \in \N}$ une suite d'un sous-groupe $\G$ de $\Aut(\O)$. On suppose que la  suite $(\g_n)_{n \in \N}$ converge vers $\g_{\infty}$ dans $\PP(\textrm{End}(\R^{n}))$ et que l'application $\g_{\infty}$ est singulière.\\
Alors les sous-espaces $\textrm{Im}(\g_{\infty})$ et $N(\g_{\infty})$ rencontrent $\overline{\O}$ mais ne rencontrent pas $\O$.\\
En particulier, si le convexe $\O$ est strictement convexe à bord $\C^1$ alors $\textrm{Im}(\g_{\infty})$ est r\'eduite à un point $z$ qui est inclus dans $\partial \O$ et $N(\g_{\infty})$ est un hyperplan dont l'intersection avec $\overline{\O}$ est r\'eduite à un point $x \in \partial \O$. De plus, le point $z$ est dans l'ensemble limite de $\G$.
\end{prop}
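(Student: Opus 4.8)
Le plan est de travailler avec des relev\'es lin\'eaires. On choisit un c\^one $C\subset\R^{n+1}$ au-dessus de $\O$ et, pour chaque $n$, un relev\'e $A_n$ de $\g_n$ normalis\'e par $\|A_n\|=1$ et pr\'eservant $C$; quitte \`a extraire, $A_n\to A_\infty$, relev\'e non nul de $\g_\infty$. De m\^eme les $\g_n^{-1}$ se rel\`event en $B_n\to B_\infty$, relev\'e d'un \'el\'ement $\psi_\infty$. Comme $\g_n\g_n^{-1}=\g_n^{-1}\g_n=\mathrm{id}$, on a $A_nB_n=B_nA_n=c_n\,\mathrm{Id}$ pour des scalaires $c_n$; en passant \`a la limite et en utilisant que $A_\infty$ est singuli\`ere ($\det A_\infty=0$), on obtient $c_\infty=0$, d'o\`u $A_\infty B_\infty=B_\infty A_\infty=0$. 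Il en r\'esulte les inclusions cl\'es $\textrm{Im}(\g_\infty)\subseteq N(\psi_\infty)$ et $\textrm{Im}(\psi_\infty)\subseteq N(\g_\infty)$. Par ailleurs, $A_n(\overline C)=\overline C$ donne \`a la limite $A_\infty(\overline C)\subseteq\overline C$; en appliquant $A_\infty$ \`a un point int\'erieur de $C$ situ\'e hors de $N(\g_\infty)$, on voit d\'ej\`a que $\textrm{Im}(\g_\infty)\cap\overline\O\neq\emptyset$, puis $\textrm{Im}(\psi_\infty)\cap\overline\O\neq\emptyset$, et donc $N(\g_\infty)\cap\overline\O\neq\emptyset$ via les inclusions pr\'ec\'edentes.

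Le c\oe ur de la preuve est de montrer que $\g_\infty$ est \emph{constante} sur l'ouvert $\O\smallsetminus N(\g_\infty)$. D'abord, pour tout $x\in\O\smallsetminus N(\g_\infty)$ la suite $\g_n x\to\g_\infty x$ reste dans $\O$; si la limite \'etait un point de $\O$, le fait que $\Aut(\O)$ agit proprement sur $\O$ forcerait $(\g_n)$ \`a \^etre d'adh\'erence compacte dans $\Aut(\O)$, donc $\g_\infty$ serait inversible, ce qui contredit sa singularit\'e (l'inclusion $\textrm{PGL}_{n+1}(\R)\hookrightarrow\PP(\textrm{End}(\R^{n+1}))$ \'etant continue et d'image s\'epar\'ee). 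Ainsi $\g_\infty x\in\dO$ pour tout tel $x$. Ensuite, \'etant donn\'e $x_1,x_2\in\O\smallsetminus N(\g_\infty)$, le segment $[x_1,x_2]\subset\O$ \'evite $N(\g_\infty)$ sauf en au plus un point, et $\g_\infty$ l'envoie sur un segment de la droite $(\g_\infty x_1\,\g_\infty x_2)$ dont tous les points, images de points int\'erieurs, appartiennent \`a $\dO$; la stricte convexit\'e interdisant tout segment non trivial dans $\dO$, on conclut $\g_\infty x_1=\g_\infty x_2$. Donc $\g_\infty\equiv z$ sur $\O\smallsetminus N(\g_\infty)$ pour un certain $z\in\dO$. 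C'est l'\'etape d\'elicate: c'est elle qui fait vraiment intervenir la stricte convexit\'e, et c'est elle qui permet de contourner le fait que $\g_\infty$ n'est pas d\'efinie sur $N(\g_\infty)$.

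On en d\'eduit la structure. Le c\^one ouvert au-dessus de $\O\smallsetminus N(\g_\infty)$ engendre $\R^{n+1}$ et $A_\infty$ y prend ses valeurs dans la droite $\R z$; donc $A_\infty$ est de rang $1$ et s'\'ecrit $A_\infty=z\otimes\phi$. Par suite $\textrm{Im}(\g_\infty)=\{z\}\subset\dO$, si bien que $\textrm{Im}(\g_\infty)\cap\O=\emptyset$, et $N(\g_\infty)=\PP(\ker\phi)$ est un hyperplan. Enfin, $A_\infty(\overline C)\subseteq\overline C$ s'\'ecrit $\phi(v)\,z\in\overline C$ pour tout $v\in\overline C$; en choisissant le relev\'e $z\in\overline C$ et puisque le c\^one $\overline C$ est saillant ($\overline C\cap(-\overline C)=\{0\}$), ceci force $\phi\geqslant 0$ sur $\overline C$. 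L'hyperplan $N(\g_\infty)=\{\phi=0\}$ est donc un hyperplan d'appui de $\O$: il ne rencontre pas $\O$, et son intersection avec $\overline\O$ est le convexe $\{[v]\in\overline\O:\phi(v)=0\}\subset\dO$, r\'eduit \`a un point $x$ par stricte convexit\'e. Grace aux inclusions $\textrm{Im}(\g_\infty)\subseteq N(\psi_\infty)$ et $\textrm{Im}(\psi_\infty)\subseteq N(\g_\infty)$, le m\^eme raisonnement appliqu\'e \`a $\psi_\infty$ donne les \'enonc\'es correspondants pour $N(\g_\infty)$ et pour $\textrm{Im}(\g_\infty)$.

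Il reste \`a voir que $z$ est dans l'ensemble limite: pour $x\in\O$ fix\'e, $z=\lim\g_n x$ appartient \`a $\overline{\G\cdot x}\smallsetminus\G\cdot x$, c'est-\`a-dire \`a $\LG$ par d\'efinition. Pour l'assertion g\'en\'erale (ouvert seulement proprement convexe, o\`u l'argument de segment ci-dessus tombe en d\'efaut), la partie ``rencontre $\overline\O$'' est imm\'ediate par monotonie du c\^one, tandis que la partie ``ne rencontre pas $\O$'' repose de mani\`ere essentielle sur la relation $A_\infty B_\infty=B_\infty A_\infty=0$ combin\'ee \`a la propret\'e de l'action; c'est l\`a, et dans l'absence de stricte convexit\'e, que se concentre la principale difficult\'e.
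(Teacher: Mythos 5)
Your proof of the second assertion (the strictly convex, $\C^1$ case) is correct, and it follows a genuinely different route from the paper's. The paper first treats $\textrm{Im}(\g_{\infty})$ for an arbitrary properly convex $\O$: properness of the action puts each limit $\g_{\infty}x=\lim\g_n x$ in $\dO$, and the degeneration of metric balls (proposition \ref{non_stric_conv1}) forces all these limits, for $x\in\O\smallsetminus N(\g_{\infty})$, into a \emph{single} facette $S$ of $\overline{\O}$, so that $\textrm{Im}(\g_{\infty})$ lies in the support of $S$, which misses $\O$; the statement for $N(\g_{\infty})$ is then obtained by duality (section \ref{def_dualite}), and the strictly convex $\C^1$ refinements are read off at the end. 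You instead work with normalized linear lifts, derive $A_{\infty}B_{\infty}=B_{\infty}A_{\infty}=0$, prove via the segment argument that $\g_{\infty}$ is constant on $\O\smallsetminus N(\g_{\infty})$, conclude that $A_{\infty}$ has rank one, and get the supporting-hyperplane property of $N(\g_{\infty})$ directly from $\phi\geqslant 0$ on $\overline{C}$. This buys something real: no appeal to duality and no need for proposition \ref{non_stric_conv1}, and your argument for $N(\g_{\infty})$ uses only strict convexity, not the $\C^1$ hypothesis. (One phrasing slip: when $[x_1,x_2]$ meets $N(\g_{\infty})$ at an interior point, $\g_{\infty}$ does not send it to a segment --- the restriction of $A_{\infty}$ to the corresponding $2$-plane then has rank one, so the punctured segment is sent to a single point; the conclusion $\g_{\infty}x_1=\g_{\infty}x_2$ holds all the more.)

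There is, however, a genuine gap for the first assertion, which concerns a merely properly convex $\O$. Your last paragraph concedes it rather than proves it: saying that the claim ``ne rencontrent pas $\O$'' \emph{repose sur} $A_{\infty}B_{\infty}=0$ and properness is not an argument, and those two ingredients alone do not suffice. Properness only tells you that each individual limit $\g_{\infty}x$, $x\in\O\smallsetminus N(\g_{\infty})$, lies in $\dO$; but $\textrm{Im}(\g_{\infty})$ is the projective span of these points, and without strict convexity a set of boundary points can perfectly well span a subspace cutting through $\O$ (think of two points on distinct edges of a square). The missing idea is exactly what proposition \ref{non_stric_conv1} supplies in the paper: all the limits lie in a \emph{common} facette $S\subset\dO$, and the support of such a facette is disjoint from $\O$. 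Alternatively, your linear set-up can be completed without facettes: for a subspace $V$, one has $V\cap C\neq\varnothing$ if and only if the annihilator of $V$ meets $\overline{C^*}$ only at $0$ (a separation statement for salient closed cones); applying your easy ``image meets the closed cone'' step to the transposed lifts ${}^tA_n$, ${}^tB_n$ acting on $C^*$, together with ${}^tA_{\infty}\,{}^tB_{\infty}=0$, then yields both $\textrm{Im}(A_{\infty})\cap C=\varnothing$ and $\ker(A_{\infty})\cap C=\varnothing$. Some such ingredient must be added; as written, the general case is not proved.
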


\begin{proof}
L'action du groupe $\Aut(\O)$ sur $\O$ est propre. Par cons\'equent, pour tout point $x\in \O$, tout point d'accumulation de la suite $(\g_n(x))_{n \in \N}$ est sur le bord $\partial \O$ de $\O$. Mieux, si un point $x_0 \in \O$ est tel que la suite $(\g_n(x_0))_{n \in \N}$ converge vers un point $y_{x_0} \in \partial \O$, la proposition \ref{non_stric_conv1} montre qu'il existe une facette $S$ de $\overline{\O}$ incluse dans $\partial \O$ contenant $y_{x_0}$ telle que, pour tout $x\in \O$, la suite $(\g_n(x))_{n \in \N}$ sous-converge vers un point $y_x \in S$.

Remarquons ensuite que, par construction de la compactification, l'ensemble $N(\g_{\infty})$ n'est pas vide et n'est pas $\PP^n$ tout entier. Il existe donc un point $x_0 \in \O$ tel que $x_0 \notin N(\g_{\infty})$. Le paragraphe pr\'ec\'edent montre qu'alors aucun point de $\O$ n'est dans $N(\g_{\infty})$ et qu'il existe une facette $S$ de $\overline{\O}$ incluse dans $\partial \O$ et telle que $\g_{\infty}(\O) \subset S$. Comme $\O$ est un ouvert de $\PP^n$, on a $\textrm{Im}(\g_{\infty}) \subset E$, où $E$ est le support de $S$. Ce qui montre le r\'esultat pour $\textrm{Im}(\g_{\infty})$.

Un raisonnement par dualit\'e permet de montrer le second point. Le noyau de $\g^*= ^t\hspace{-.1cm}\g^{-1}$ n'est rien d'autre que le dual de l'image de $\g$. On obtient ainsi le r\'esultat pour $N(\g_{\infty})$ en utilisant le convexe dual $\O^*$ de $\O$ d\'efini au paragraphe \ref{def_dualite}.

Les am\'eliorations dans le cas strictement convexe à bord $\C^1$ sont \'evidentes.
\end{proof}

\begin{theo}\label{action_conv}
Soient $\O$ un ouvert strictement convexe à bord $\C^1$ de $\PP^n$ et $\G$ un sous-groupe discret et irr\'eductible de $\Aut(\O)$. Les actions de $\G$ sur les compacts $\partial \O$ et $\overline{\O}$ sont des actions de convergence.
\end{theo}

\begin{proof}
La proposition \ref{prop_pre_action_conv} montre que tout point d'accumulation d'une suite $(\g_n)_{n \in \N}$ d'automorphismes de $\O$ qui n'est pas stationnaire est de la forme 

$$
\begin{array}{cccc}
b_a : & \overline{\O} & \rightarrow & \overline{\O}\\
         &       x              & \mapsto       &  \left\{ \begin{array}{ccc}
																			b  & $si$ & x\neq a\\
																			a  & $si$ & x=a      \\
         
         \end{array}
         \right.
\end{array}
$$
où le point $b$ est l'unique point de l'image du point d'accumulation $\g_{\infty}$ choisi de la suite $(\g_n)_{n \in \N}$, et le point $a$ est l'intersection du noyau $\g_{\infty}$ avec $\overline{\O}$.
La proposition \ref{prop_gold_compactification} montre que la suite $(\g_n)_{n \in \N}$ sous-converge uniform\'ement sur les compacts de $\overline{\O} \smallsetminus \{ a\}$ vers $b_a$. C'est ce qu'il fallait montrer dans les deux cas.
\end{proof}

\begin{proof}[D\'emonstration du lemme  \ref{dom_disc}]
Supposons que l'action de $\G$ sur $\Og = \overline{\O} \smallsetminus \Lambda_{\G}$ ne soit pas proprement discontinue. Il existe donc un compact $K$ et une suite d'automorphismes $(\g_n)_{n \in \N}$  tel que $\g_n(K) \cap K \neq \varnothing$ pour tout $n \in \N$.

L'action de $\G$ sur $\overline{\O}$ est de convergence (th\'eorème \ref{action_conv}), il existe donc deux points $a$ et $b$ tels que la suite $(\g_n)_{n \in \N}$ sous-converge vers $b$ uniform\'ement sur les compacts de $\overline{\O} \smallsetminus \{ a \}$. De plus, le point $b$ est un point de $\Lambda_{\G}$. Par cons\'equent, il existe un voisinage $U$ de $b$ dans $\overline{\O}$ tel que $U \cap K = \varnothing$.

D'un autre côt\'e, si $n$ est assez grand, on a $\g_n(K) \subset U$, ce qui contredit le fait que $\g_n(K) \cap K \neq \varnothing$ pour tout $n \in \N$.
\end{proof}

\subsection{Domaines fondamentaux}

Le th\'eorème de Dirichlet possède un analogue dans le monde projectif convexe. Rappelons qu'un \emph{domaine fondamental} pour l'action d'un groupe discret $\G$ sur un espace topologique $X$ est un ferm\'e d'int\'erieur non vide $D$ de $X$ tel que $\G\cdot D=X$ et $\g\cdot\mathring{D}\cap\g'\cdot\mathring{D}=\varnothing$ si et seulement si $\g$ et $\g'$ sont deux \'el\'ements distincts de $\G$. Un domaine fondamental est dit \emph{localement fini} si tout compact de $X$ ne rencontre qu'un nombre fini de translat\'es de $D$ par $\G$.

\begin{theo}[Jaejeong Lee, \cite{MR2712298}]\label{lee}
Soient $\O$ un ouvert proprement convexe et $\Gamma$ un sous-groupe discret de $\Aut(\O)$. Il existe un domaine fondamental convexe et localement fini pour l'action de $\G$ sur $\O$.
\end{theo}

On pourra trouver une courte d\'emonstration de ce th\'eorème dans \cite{Marquis:2009kq}.


\section{Action g\'eom\'etriquement finie sur $\LG$ et sur $\O$}\label{par_geo_fini}

\subsection{Action affine des sous-groupes paraboliques}

\begin{defi}
Soit $\O$ un ouvert convexe de $\PP^n$ (a priori non proprement convexe). Un sous-espace affine $\F$ inclus dans $\O$ est dit \emph{maximal} lorsqu'il n'existe pas de sous-espace affine de $\PP^n$ contenant strictement $\F$ et inclus dans $\O$.
\end{defi}

On note $\pi$ la projection naturelle $\R^{n+1}\smallsetminus \{0\} \rightarrow \PP^n$. Tout sous-espace affine $F$ de $\PP^n$ est la projection via $\pi$ d'un sous-espace affine $\tilde{F}$ de $\R^{n+1}$ qui ne contient pas l'origine de $\R^{n+1}$. Deux sous-espaces affines $\tilde{F}$ et $\tilde{F}'$ ont la m\^eme trace $\pi(\tilde{F})=\pi(\tilde{F}')$ si et seulement s'ils engendrent le m\^eme sous-espace vectoriel de $\R^{n+1}$.

On dira que deux sous-espaces affines de $\PP^n$ ont la \emph{même direction} lorsque les sous-espaces affines de $\R^{n+1}$ correspondant ont la même direction, c'est-à-dire la même partie lin\'eaire. La direction commune est un sous-espace vectoriel de $\R^{n+1}$, qui correspond à un sous-espace projectif de $\PP^n$. Par exemple, la direction d'une carte affine est pr\'ecis\'ement son hyperplan \`a l'infini.

\begin{rema}\label{decomp_conv}
Soit $\O$ un ouvert convexe de $\PP^n$. Deux sous-espaces affines maximaux inclus dans $\O$ ont la même direction $F_{max}$, qui est un sous-espace projectif de $\PP^n$. La projection de $\O$ dans l'espace projectif $\PP \bigg( \Quotient{\R^{n+1}}{\tilde{F}_{max}} \bigg)$ est un ouvert proprement convexe, où on a not\'e $\tilde{F}_{max}$ est un relev\'e de $F_{max}$ à $\R^{n+1}$.
\end{rema}

Si $p$ est un point et $A$ une partie de $\PP^n$, on note $\D_p(A)$ l'ensemble des droites concourantes en $p$ et rencontrant $A$. 

\begin{center}
\begin{figure}[h!]
  \centering
\includegraphics[width=10cm]{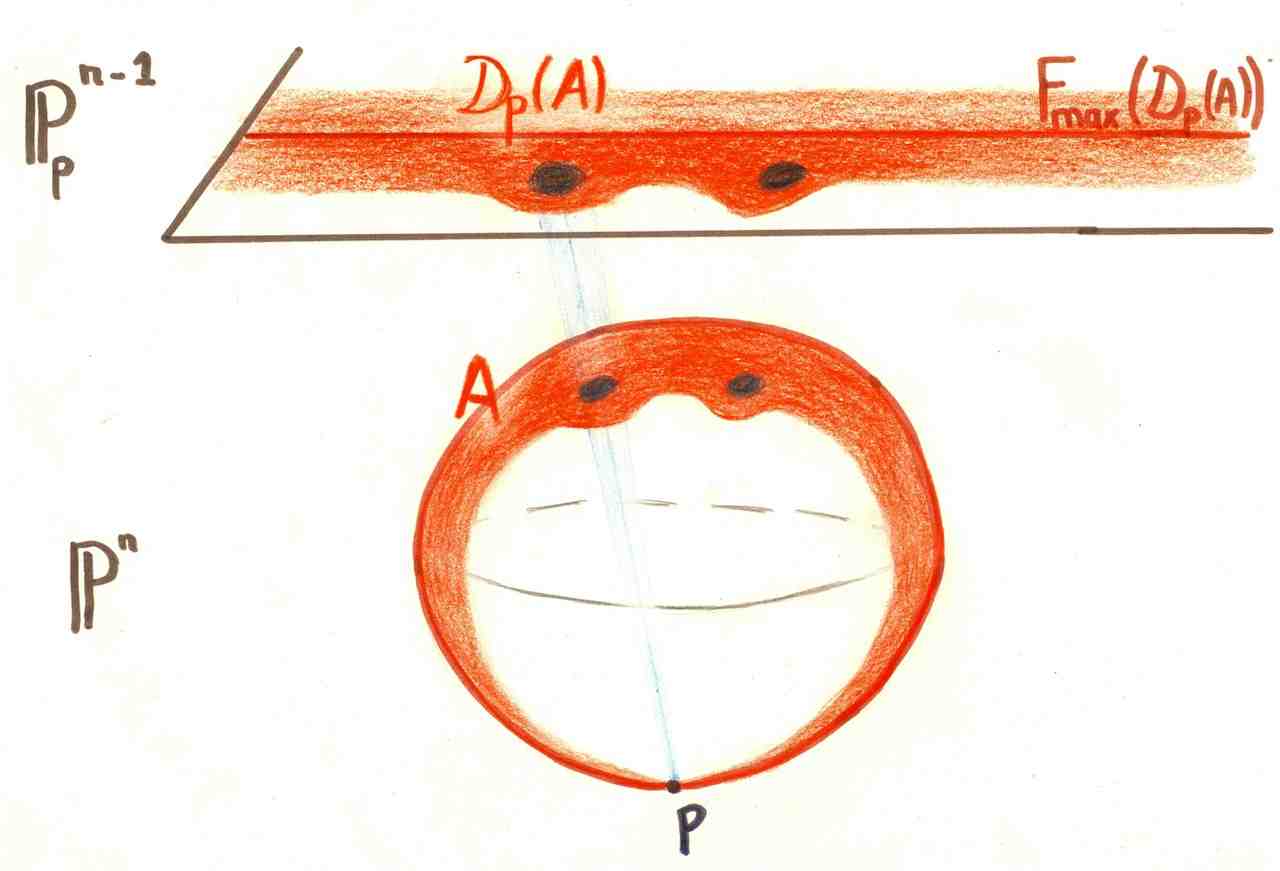}
\caption{}
\end{figure}
\end{center}

La proposition suivante est imm\'ediate.

\begin{prop}\label{tangent}
Soient $\O$ un ouvert \underline{proprement convexe} de $\PP^n$ et $p \in \partial \O$. L'ensemble $\D_p=\D_p(\O)$ des droites concourantes en $p$ et rencontrant $\O$ est un ouvert convexe de l'espace projectif $\PP^{n-1}_p= \PP\left(\Quotient{\R^{n+1}}{p}\right)$ des droites concourantes en $p$.\\
Un point $p \in \partial \O$ est un point de classe $\C^1$ de $\partial \O$ si et seulement si le convexe $\D_p(\O)$ est une carte affine $\A^{n-1}_p$ de $\PP^{n-1}_p$.
\end{prop}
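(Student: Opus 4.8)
The plan is to project everything from $p$ and reduce to elementary convex geometry. Fix an affine chart $A$ containing $\overline{\O}$, so that $\O$ becomes a bounded open convex subset of $A \cong \R^n$, and translate so that $p$ is the origin. The lines of $\PP^n$ through $p$ are then exactly the directions of $A$, i.e. the points of $\PP^{n-1}_p = \PP(\R^{n+1}/p)$, and the projection from $p$ identifies $\D_p(\O)$ with the image in $\PP^{n-1}_p$ of the radial cone $C_p = \{v \in \R^n : tv \in \O \text{ for some } t>0\}$. First I would record that $C_p$ is a nonempty open convex cone --- open because $\O$ is open, convex because $\O$ is convex, a cone by construction --- and that the line $\R v$ through $p$ meets $\O$ if and only if $v \in C_p$ or $-v \in C_p$; hence $\D_p(\O)$ is precisely the projectivization of $C_p$.

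Next I would use that $\O$, being properly convex, admits a supporting hyperplane at the boundary point $p$: there is a nonzero linear form $a$ with $\langle a,x\rangle < 0$ for all $x \in \O$, so that $\langle a,v\rangle < 0$ for every $v \in C_p$. Thus $C_p$ is a \emph{pointed} open convex cone contained in the open half-space $\{\langle a,\cdot\rangle < 0\}$; in particular $v \in C_p$ excludes $-v \in C_p$, so the projectivization is injective on $C_p$. Its image avoids the projective hyperplane $\{\langle a,\cdot\rangle = 0\}$, hence lies in the affine chart $\A = \{[v] : \langle a,v\rangle \neq 0\}$, where it is the bijective image of the open convex set $C_p$, and is therefore open and convex. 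This settles the first assertion.

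For the second assertion, the key observation is that a full affine chart $\{[v] : \langle b,v\rangle \neq 0\}$ is exactly the image of the open half-space $\{\langle b,\cdot\rangle < 0\}$, since $[v]=[-v]$. Pulling back under the projectivization, $\D_p(\O)$ equals a full affine chart if and only if $C_p \cup (-C_p) = \{\langle b,\cdot\rangle \neq 0\}$ for some $b$; as $C_p$ is a connected cone disjoint from $-C_p$, it must then fill exactly one of the two open half-spaces, i.e. $C_p = \{\langle b,\cdot\rangle < 0\}$. It remains to identify ``$C_p$ is an open half-space'' with ``$p$ is a $\C^1$ point'': the closed tangent cone $\overline{C_p}$ is then a closed half-space, whose polar normal cone is a single ray, which is exactly the statement that $\O$ admits a unique supporting hyperplane at $p$ --- the convex-geometric meaning of $\C^1$-smoothness of $\partial\O$ at $p$. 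I expect this last equivalence to be the only real obstacle: one must check both that the chart condition forces $C_p$ to be an entire half-space (and not merely a cone dense in one), and that ``half-space tangent cone'' is equivalent to uniqueness of the supporting hyperplane, i.e. to the $\C^1$ regularity of $\partial\O$ at $p$.
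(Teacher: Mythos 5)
The paper gives no proof of this proposition at all --- it is introduced with ``La proposition suivante est imm\'ediate'' --- so there is nothing to compare your argument against except the claim of immediacy, and your write-up is a correct filling-in of exactly that claim: the identification of $\D_p(\O)$ with the projectivization of the radial cone $C_p$, the supporting hyperplane at $p$ giving pointedness (hence injectivity of the projectivization and inclusion in an affine chart of $\PP^{n-1}_p$), and the connectedness argument forcing $C_p$ to be a full open half-space when $\D_p(\O)$ is a full chart, are all sound. The two points you flag at the end are standard facts of convex geometry and do close the remaining direction: an open convex set equals the interior of its closure, so if $\overline{C_p}$ is a closed half-space (equivalently, by bipolarity, the cone of supporting forms at $p$ is a single ray, i.e.\ the supporting hyperplane at $p$ is unique), then $C_p$ is itself the open half-space and not merely dense in it; and uniqueness of the supporting hyperplane at $p$ is precisely the convex-geometric meaning of ``$p$ est un point de classe $\C^1$ de $\partial\O$'' that the paper uses implicitly elsewhere, for instance in its treatment of duality, where boundary points of $\O^*$ are identified with tangent hyperplanes of $\O$. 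So there is no genuine gap; your proof is the natural argument the authors had in mind.
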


\begin{rema}
Si le point $p \in \partial \O$ n'est pas un point de classe $\C^1$ de $\partial \O$ alors les espaces affines maximaux inclus dans le convexe $\D_p(\O)$ ont la même direction (remarque \ref{decomp_conv}). Cette direction commune est pr\'ecis\'ement l'ensemble des directions dans lesquelles $\partial \O$ est de classe $\C^1$ en $p$.
\end{rema}
On rappelle que, sauf mention explicite, l'ouvert $\O$ est un ouvert proprement convexe strictement convexe à bord $\C^1$.

\begin{lemm}\label{lem_proj_aff}
Soit $\G$ un sous-groupe discret et sans torsion de $\Aut(\O)$, qui fixe un point $p$ de $\partial \O$. Il existe une repr\'esentation fidèle de $\G$ dans le groupe affine $\textrm{Aff}(\R^{n-1})$ des transformations affines de $\R^{n-1}$.
\end{lemm}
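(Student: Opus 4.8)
The plan is to build the affine representation directly from the geometry of the lines through $p$, and then to prove faithfulness by a strict-convexity argument, the torsion-freeness entering only at the very end. Since $\dO$ is of class $\C^1$, the point $p$ is a $\C^1$ point of $\dO$, so Proposition \ref{tangent} tells us that $\D_p(\O)$ is an affine chart $\A^{n-1}_p$ of the projective space $\PP^{n-1}_p=\PP\left(\Quotient{\R^{n+1}}{p}\right)$ of lines through $p$. As $\G$ fixes $p$, it fixes the line of $\R^{n+1}$ above $p$, hence acts linearly on the quotient $\Quotient{\R^{n+1}}{p}$ and therefore projectively on $\PP^{n-1}_p$. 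This action sends a line through $p$ meeting $\O$ to a line through $\g(p)=p$ meeting $\g(\O)=\O$, so it preserves $\D_p(\O)=\A^{n-1}_p$. A projective transformation preserving an affine chart preserves its hyperplane at infinity and is thus affine on that chart; we obtain a homomorphism $\rho\colon\G\to\Aff(\A^{n-1}_p)\cong\Aff(\R^{n-1})$.

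It remains to show that $\rho$ is injective. Let $\g\in\ker\rho$. Then $\rho(\g)$ fixes every point of the open dense chart $\A^{n-1}_p$, so $\g$ acts as the identity on all of $\PP^{n-1}_p$; equivalently, $\g$ preserves every line through $p$. Now fix any $q\in\dO\smallsetminus\{p\}$. By strict convexity the open segment $]p,q[$ lies in $\O$, so the line $(pq)$ meets $\O$ and, again by strict convexity, meets $\dO$ exactly in $\{p,q\}$. Since $\g$ preserves this line and preserves $\dO$ while fixing $p$, it must fix $q$. Hence $\g$ fixes $\dO$ pointwise. But $\dO$ spans $\PP^n$ (otherwise $\O$ would lie in a hyperplane) and, being strictly convex, meets each hyperplane in a nowhere-dense set, so it is not covered by finitely many hyperplanes and contains a projective frame. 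A projective transformation fixing a projective frame is trivial, so $\g$ induces the identity on $\PP^n$, i.e. its linear representative is a scalar matrix; in $\ss$ the only scalars are $\mathrm{Id}$ and, when $n$ is odd, $-\mathrm{Id}$.

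This last point is where the hypotheses enter, and it is the main subtlety to watch: the geometric reduction shows that \emph{any} kernel element fixes $\dO$ and is therefore scalar, but one must not overlook the possible scalar $-\mathrm{Id}$. Since $-\mathrm{Id}$ has order $2$ and $\G$ is torsion-free, we have $-\mathrm{Id}\notin\G$, whence $\ker\rho=\{\mathrm{Id}\}$ and $\rho$ is faithful. Note that discreteness plays no role in this particular statement: the essential ingredients are the $\C^1$ regularity, used to make the chart affine, and strict convexity, used for faithfulness, while torsion-freeness is needed only to discard $-\mathrm{Id}$.
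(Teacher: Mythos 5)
Your proof is correct and follows essentially the same route as the paper's: $\G$ acts projectively on the space $\PP^{n-1}_p$ of lines through $p$, this action is affine on the chart $\D_p(\O)=\A^{n-1}_p$ (equivalently, because the tangent hyperplane $T_p\dO$ is preserved), and faithfulness comes from the fact that a kernel element preserves every line through $p$ and hence fixes every point of $\dO$. You merely fill in details the paper leaves implicit, namely the strict-convexity step showing each $q\in\dO\smallsetminus\{p\}$ is fixed, and the exclusion of the scalar $-\mathrm{Id}$ (where torsion-freeness is used), which is exactly the role of that hypothesis.
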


\begin{proof}
Si le groupe $\G$ pr\'eserve le point $p$ alors il pr\'eserve l'ensemble des droites passant par $p$. Or, l'ensemble des droites passant par $p$ est un espace projectif $\PP^{n-1}_p$, trace de l'espace vectoriel quotient $\Quotient{\R^{n+1}}{p}$. Le groupe $\G$ agit projectivement sur cet espace projectif $\PP^{n-1}_p$. De plus, comme $p$ est un point $\C^1$ de $\dO$, le groupe $\G$ pr\'eserve l'hyperplan tangent $T_p\partial \O$ à $\partial \O$ en $p$; il agit donc par transformation affine sur l'espace affine $\A^{n-1}_p$ des droites passant par $p$ qui ne sont pas incluses dans $T_p\partial \O$, qui n'est rien d'autre que $\mathcal{D}_p(\O)$.\\
Cette repr\'esentation est fidèle: un \'el\'ement qui fixe toutes les droites issus de $p$, fixerait tous les points de $\partial \O$.
\end{proof}

\begin{nota}
Si $p$ est un point de $\dO$, on notera à pr\'esent $\A^{n-1}_p$ l'espace affine $\D_p(\O)$ des droites passant par $p$ qui ne sont pas contenues dans l'hyperplan tangent $T_p\partial \O$ à $\partial \O$ en $p$.\\
Si $C$ est une partie convexe de $\O$, on d\'esignera par $\overline{\mathcal{D}_p(C)}$ l'adh\'erence, \emph{dans $\A^{n-1}_p$}, de l'ensemble $\mathcal{D}_p(C)$ des droites concourantes en $p$ rencontrant $C$ . Remarquons que si $A$ est une partie de $\overline{O}$, alors $\overline{\mathcal{D}_p(C(A))}$ n'est rien d'autre que l'enveloppe convexe de $\mathcal{D}_p(A\setminus\{p\})$ dans $\A_p^{n-1}$.
\end{nota}

\begin{center}
\begin{figure}[h!]
  \centering
\includegraphics[width=10cm]{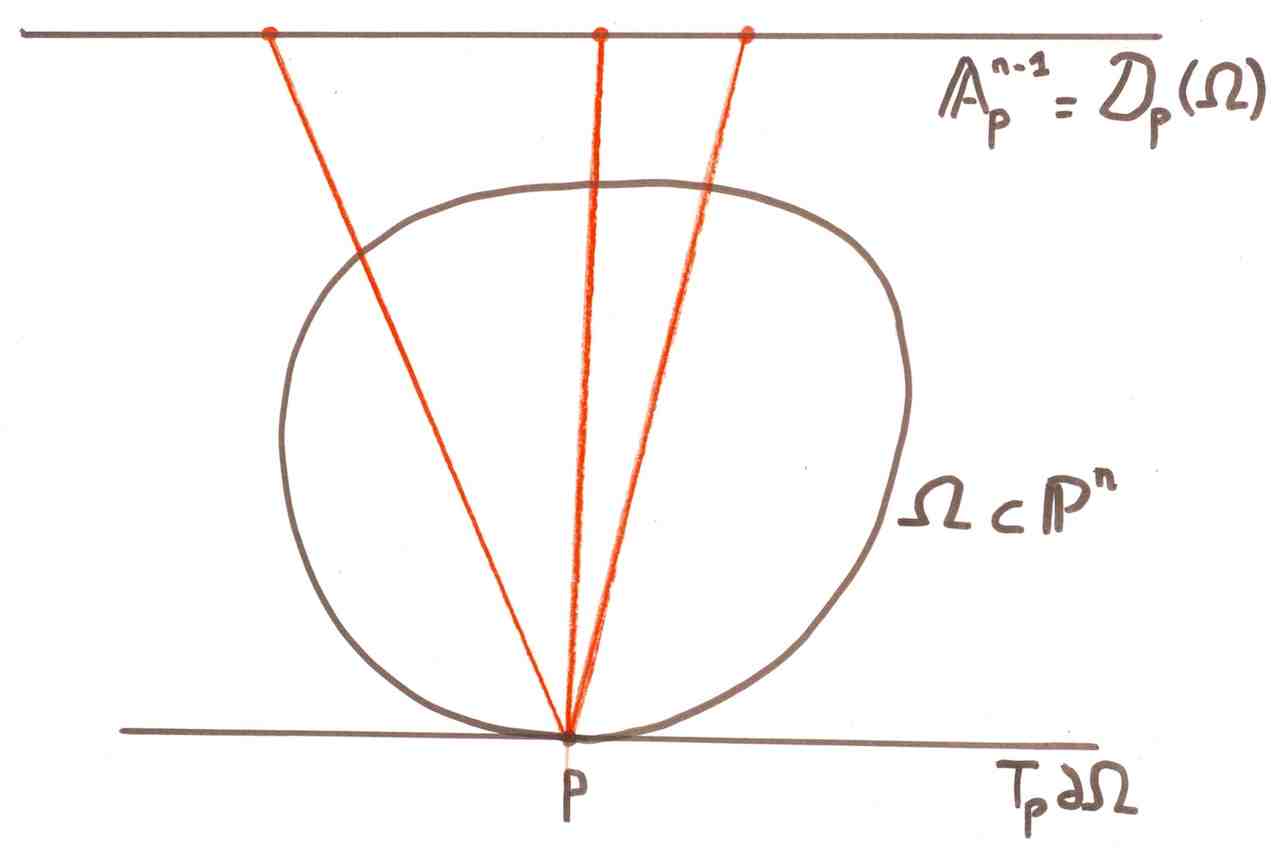}
\caption{}
\end{figure}
\end{center}

\subsection{Finitude g\'eom\'etrique}

Nous allons d\'efinir deux notions de finitude g\'eom\'etrique via la nature des points de l'ensemble limite $\LG$. Pour cela, on s'inspire des d\'efinitions donn\'ees en g\'eom\'etrie hyperbolique ou plus g\'en\'eralement pour les espaces m\'etriques hyperboliques, pour lesquels on dispose des m\^emes objets que dans le cas pr\'esent.

\subsubsection{Points paraboliques born\'es}

La d\'efinition suivante fait l'unanimit\'e pour l'action d'un groupe discret par isom\'etries sur un espace Gromov-hyperbolique. Nous l'adoptons ici.

\begin{defi}
Soit $\G$ un sous-groupe discret de $\Aut(\O)$. Un point $x \in \LG$ est \emph{un point parabolique born\'e}  si l'action du groupe $\Stab_{\G}(x)$ sur $\Lambda_{\G}\smallsetminus \{ x\}$ est cocompacte.\\
Le \emph{rang} d'un point parabolique born\'e $x \in \LG$ est la dimension cohomologique virtuelle du groupe $\Stab_{\G}(x)$. Le point parabolique $x \in \LG$ est dit \emph{de rang maximal} si son rang vaut $\dim \O - 1$, autrement dit si $\Stab_{\G}(x)$ agit de façon cocompacte sur $\dO \smallsetminus \{x\}$. 
\end{defi}

\begin{rema}
La dimension cohomologique d'un groupe discret $\G$ sans torsion est un entier $n_{\G}$ tel que, pour toute action libre et propre de $\G$ sur une vari\'et\'e contractible de dimension $n$, on a $n \geqslant n_{\G}$, avec \'egalit\'e si et seulement si l'action est cocompacte. Si le groupe $\G$ est virtuellement sans torsion, alors on peut montrer que tous ses sous-groupes d'indice fini sans torsion ont la même dimension cohomologique et on appelle ce nombre la dimension cohomologique virtuelle de $\G$. On pourra consulter \cite{MR0422504}.
\end{rema}

Remarquons que si $x$ est un point parabolique born\'e alors $\Stab_{\G}(x)$ est parabolique, c'est-à-dire qu\`a indice fini pr\`es, il est compos\'e uniquement d'\'el\'ements paraboliques qui fixent le même point.

\subsubsection{Points limites coniques}

En g\'eom\'etrie hyperbolique, on trouve la d\'efinition suivante, qui convient \`a notre cadre:

\begin{defi}\label{def_lim_con}
Soit $\G$ un sous-groupe discret de $\Aut(\O)$. On dit qu'un point $x \in \LG$ est un \emph{point limite conique} lorsqu'il existe une suite d'\'el\'ements $(\g_n)_{n \in \N}$ de $\G$, un point $x_0 \in \O$, une demi-droite $[x_1,x[$, et un r\'eel $C > 0$ tel que:
\begin{enumerate}
\item $\g_n \cdot x_0 \underset{ n \to \infty}{\to} x$
\item $d_{\O}(\g_n \cdot x_0, [x_1,x[) \leqslant C$
\end{enumerate}
\end{defi}

\begin{rema}\label{rem_lim_coni}
Un point $x \in \LG$ est un \emph{point limite conique} si et seulement si la projection d'une (et donc de toute) demi-droite terminant en $x$ sur $\Quo$ retourne une infinit\'e de fois dans un compact de $\Quo$.
\end{rema}

Cette d\'efinition de point conique ne convient pas \`a un espace m\'etrique $X$ Gromov-hyperbolique, et on en trouve une autre dans ce contexte: un point $x \in \partial X$ est un point limite conique pour l'action d'un groupe $\G$ sur $X$ lorsqu'il existe deux points distincts $a,b \in \partial X$ et une suite d'\'el\'ements $(\g_n)_{n \in \N}$ de $\G$ tel que $\g_n \cdot x \underset{n \to \infty}{\to} a$ et $\g_n \cdot y \underset{n \to \infty}{\to} b$ pour tout $y \neq x$.\\
Bien s\^ur, cette derni\`ere d\'efinition est \'equivalente \`a la pr\'ec\'edente lorsqu'on l'applique \`a la g\'eom\'etrie hyperbolique. L'avantage de cette dernière d\'efinition est sa nature purement topologique et non g\'eom\'etrique. Cela reste vrai dans notre cas, et cela nous permettra de montrer la proposition \ref{dual_geo_fini}:

\begin{lemm}\label{lem_con_fort=faible}
Soit $\G$ un sous-groupe discret de $\Aut(\O)$. Un point $x \in \LG$ est un point limite conique si et seulement s'il existe deux points $a$ et $b$ distincts de $\dO$ et une suite d'\'el\'ements $(\g_n)_{n \in \N}$ de $\G$ tels que
\begin{itemize}                                                                                                                                                                                        \item $\g_n x$ tend vers $a$;
\item pour tout $y\in\dO\smallsetminus\{x\}$, $\g_n y$ tend vers $b$.                                                                                                                                                                                         \end{itemize}
\end{lemm}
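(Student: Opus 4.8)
The plan is to translate both conditions into statements about the \emph{attracting} and \emph{exceptional} points of the sequences $(\g_n)$ and $(\g_n^{-1})$, using the description of the limits in $\PP(\mathrm{End}(\R^{n+1}))$ given by Propositions \ref{prop_pre_action_conv} and \ref{prop_gold_compactification}. Recall that, after extraction, a sequence $(\g_n)$ of $\Aut(\O)$ leaving every compact of $\Aut(\O)$ converges in $\PP(\mathrm{End}(\R^{n+1}))$ to a singular $\g_\infty$, and then converges uniformly on the compacts of $\overline{\O}\smallsetminus\{\alpha\}$ to a point $\beta\in\dO$, where $\alpha$ is the unique point of $\overline{\O}$ on the kernel hyperplane $N(\g_\infty)$ and $\beta=\mathrm{Im}(\g_\infty)$; I call $\alpha$ the exceptional point and $\beta$ the attracting point. (A compact $K\subset\overline{\O}\smallsetminus\{\alpha\}$ meets $N(\g_\infty)$ only in $N(\g_\infty)\cap\overline{\O}=\{\alpha\}$, hence avoids it, so Proposition \ref{prop_gold_compactification} indeed applies on interior points as well.) A repeatedly used elementary fact is that the exceptional point of $(\g_n^{-1})$ is the attracting point of $(\g_n)$: since $\g_n$ pushes every point $\neq\alpha$ towards $\beta$, the map $\g_n^{-1}$ separates points arbitrarily close to $\beta$, so it cannot converge to a constant near $\beta$.

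\medskip

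For the implication from the dynamical condition to conicality, assume $\g_n x\to a$ and $\g_n y\to b$ for every $y\in\dO\smallsetminus\{x\}$, with $a\neq b$. The exceptional point of $(\g_n)$ is then $x$ (off $x$ the sequence tends to the constant $b$, whereas $\g_n x\to a\neq b$), so $\g_n z\to b$ holds in fact for \emph{every} $z\in\overline{\O}\smallsetminus\{x\}$, in particular for interior points. Fix $o\in\O$ and let $\sigma=[o,x[$ be the geodesic ray towards $x$, a straight segment because $\O$ is strictly convex. As $\g_n$ acts projectively, $\g_n\sigma=[\g_n o,\g_n x[$, and since $\g_n o\to b$ and $\g_n x\to a$, these segments converge, for the Hausdorff distance on $\overline{\O}$, to the \emph{non-degenerate} chord $]b,a[$ (this is where $a\neq b$ is used). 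Pick any interior $m\in\,]a,b[$; there are points $\tau_n\in\g_n\sigma$ with $\tau_n\to m$, whence $\d(\tau_n,m)\to 0$ since the Hilbert and ambient topologies agree on $\O$. Writing $\tau_n=\g_n(\sigma(t_n))$ and using that $\g_n^{-1}$ is an isometry gives $\d(\sigma(t_n),\g_n^{-1}m)=\d(\tau_n,m)\to 0$, so $\d(\g_n^{-1}m,\sigma)\leqslant 1$ for $n$ large. Finally $\g_n^{-1}m\to x$: indeed $m\neq b$ is not the exceptional point of $(\g_n^{-1})$, and the attracting point of $(\g_n^{-1})$ equals $x$, because $\g_n^{-1}(\g_n x)=x$ while $\g_n x\to a\neq b$. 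Thus $(\g_n^{-1})$, the point $m$, the ray $\sigma$ and $C=1$ witness Définition \ref{def_lim_con}.

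\medskip

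For the converse, assume $x$ is conical, realised by $(\g_n)$, $x_0$, a ray $[x_1,x[$ and a constant $C$. Since $\g_n x_0\to x$ with $x_0$ interior, the attracting point of $(\g_n)$ is $x$, hence $x$ is the exceptional point of $(\g_n^{-1})$; after extraction, $\g_n^{-1}y\to b$ for every $y\neq x$ and some $b\in\dO$, while $\g_n^{-1}x\to a$ for some $a\in\dO$. It remains to prove $a\neq b$, and this is the only place where the geometric hypothesis intervenes. Extend the ray to the full chord $\ell=\,]w,x[$ carrying it; then $\d(\g_n x_0,\ell)\leqslant C$, and applying the isometry $\g_n^{-1}$ yields $\d(x_0,\g_n^{-1}\ell)\leqslant C$, where $\g_n^{-1}\ell=\,]\g_n^{-1}w,\g_n^{-1}x[$. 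If $a=b$, then both endpoints $\g_n^{-1}w\to b$ (as $w\neq x$) and $\g_n^{-1}x\to a$ would converge to the same boundary point, so by strict convexity the chords $\g_n^{-1}\ell$ would shrink to that point; every point on them would then tend to $\dO$, forcing $\d(x_0,\g_n^{-1}\ell)\to+\infty$ and contradicting the bound $C$. Hence $a\neq b$, which is exactly the dynamical condition for the sequence $(\g_n^{-1})$.

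\medskip

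I expect the genuinely delicate part to be the bookkeeping of attracting versus exceptional points of $(\g_n)$ and $(\g_n^{-1})$ together with the observation that the relevant limiting chords degenerate \emph{precisely} when the two boundary limits coincide: in one direction the non-degeneracy is guaranteed by the hypothesis $a\neq b$ and drives the tracking of the ray, while in the other it is the conclusion, obtained from the bounded distance to the geodesic by the shrinking-chord contradiction. Everything else reduces to the structure of the limits in $\PP(\mathrm{End}(\R^{n+1}))$ (Propositions \ref{prop_pre_action_conv} and \ref{prop_gold_compactification}), the fact that geodesics are projective segments, and the agreement of the Hilbert and ambient topologies on $\O$ together with properness of $\d(\cdot,x_0)$ near $\dO$.
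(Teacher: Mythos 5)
Your proof is correct and takes essentially the same route as the paper's: both directions pass to the inverse sequence and exploit the fact that, by strict convexity, staying at bounded Hilbert distance from a ray ending at $x$ is equivalent to the non-degeneracy of the limiting chord $]a,b[$, with Propositions \ref{prop_pre_action_conv} and \ref{prop_gold_compactification} supplying the locally uniform convergence off a single exceptional point. The only difference is organizational: the paper reads the pointwise limits directly off the chord geometry (chords through a fixed ball have a non-degenerate limit) and propagates them using extremality of $b$, whereas you set up the attracting/exceptional-point bookkeeping first and use the chord argument only in contrapositive form to force $a\neq b$ --- the same ingredients in a slightly different order, handled at least as rigorously (in particular your explicit justification that the hypothesis on boundary points upgrades to interior points is a step the paper leaves implicit).
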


\begin{proof}
Commençons par montrer que cette condition est suffisante. S'il existe deux points distincts $a,b \in \partial \O$ et une suite $(\delta_n)_{n \in \N}$ d'\'el\'ements de $\G$ tel que $\delta_n \cdot x \underset{n \to \infty}{\to} a$ et $\delta_n \cdot y \underset{n \to \infty}{\to} b$ pour tout $y \neq x$. On pose $\g_n = \delta_n^{-1}$ et on se donne $x_0 \in \O$.

La suite $(\g_n \cdot x_0)_{n \in \N} \to x$ car sinon la suite de termes $\delta_n(\g_n \cdot x_0) = x_0$ sous-convergerait vers $b$. Il faut à pr\'esent montrer que la quantit\'e suivante: $d_{\O}(\g_n \cdot x_0, [x_0 , x[ )$ est major\'ee ind\'ependamment de $n$. Mais, les automorphismes $\g_n$ sont des isom\'etries, on a donc  $d_{\O}(\g_n \cdot x_0, [x_0 , x[ ) = d_{\O}(x_0, \delta_n([x_0 , x[) ) \rightarrow  d_{\O}(x_0, ]b,a[ ) < \infty$ car $\delta_n \cdot x_0 \underset{n \to \infty}{\to} b$. La dernière in\'egalit\'e est stricte car $\O$ est strictement convexe.

Montrons à pr\'esent que cette condition est n\'ecessaire.

Il existe un point $x_0 \in \O$ et une suite $(\g_n)_{n \in \N}$ d'\'el\'ements de $\G$ tel que $\g_n \cdot x_0 \underset{n \to \infty}{\to} x$ et $d_{\O}(\g_n \cdot x_0, [x_0 , x[ )$ est major\'ee par une constante $C>0$ ind\'ependamment de $n$. On pose $\delta_n = \g_n^{-1}$, on note $D$ la droite passant par $x_0$ et $x$, enfin on note $q$ le point d'intersection de $D$ avec $\partial \O$ qui n'est pas $x$. Les droites $\delta_n(D)$ forment une famille de droites qui rencontre la boule ferm\'ee de centre $x_0$ et de rayon $C$. On peut donc supposer quitte à extraire que ces droites convergent vers une droite $(a b)$, où les points $a,b \in \partial \O$ et $a \neq b$. On en d\'eduit que $\delta_n \cdot x \underset{n \to \infty}{\to} a$ et  $\delta_n \cdot q \underset{n \to \infty}{\to} b$.

Il vient que pour tout point $y \in [x_0,x[$, on a  $\delta_n \cdot y \underset{n \to \infty}{\to} b$. Il n'est pas difficile d'en d\'eduire alors que pour tout $y \in \overline{\O}$, si $y\neq x$ alors $\delta_n \cdot y \underset{n \to \infty}{\to} b$ car le point $b$ est extr\'emal.
\end{proof}

\subsection{Action g\'eom\'etriquement finie sur $\O$ et $\dO$}

On trouve la d\'efinition suivante, que ce soit en g\'eom\'etrie hyperbolique ou pour un espace Gromov-hyperbolique:

\begin{defi}\label{geofini_ghyp}
Soient $X$  un espace Gromov-hyperbolique et $\G$ un sous-groupe discret d'isom\'etries de $X$. L'action de $\G$ sur $X$ est dite \emph{g\'eom\'etriquement finie} lorsque tout point de l'ensemble limite $\LG$ est un point limite conique ou un point parabolique born\'e.
\end{defi}

En d\'epit des ressemblances, il s'av\`ere que cette d\'efinition ne va pas convenir dans notre cadre. Bien s\^ur, elle convient lorsque la g\'eom\'etrie de Hilbert est Gromov-hyperbolique mais nos hypoth\`eses sur le convexe sont bien plus faibles.\
En g\'eom\'etrie hyperbolique, la finitude g\'eom\'etrique admet des d\'efinitions \'equivalentes de nature plus g\'eom\'etriques, qui justifient l'appelation g\'eom\'etriquement fini. Ces derni\`eres font sens dans notre contexte mais ne sont plus \'equivalentes \`a la pr\'ec\'edente, sinon \`a une version plus forte, qui demande plus aux points paraboliques born\'es. C'est ce que nous introduisons maintenant.

\begin{defi}\label{def_uni_born}
Soit $\G$ un sous-groupe discret de $\Aut(\O)$. Un point $x \in \LG$ est \emph{un point parabolique uniform\'ement born\'e} si l'action du groupe $\Stab_{\G}(x)$ sur $\mathcal{D}_p(\overline{C(\LG\smallsetminus \{x\})})$ est cocompacte.
\end{defi}

\begin{rema}
La notion de point parabolique uniform\'ement born\'e n'a aucun int\'erêt en g\'eom\'etrie hyperbolique, autrement dit dans le cas où $\O$ est un ellipsoïde: en effet, tout point parabolique born\'e est automatiquement uniform\'ement born\'e.\\
Pour voir cela, plaçons-nous dans le modèle du demi-espace de Poincar\'e et supposons que le point $\infty$ est un point parabolique born\'e pour un groupe discret $\G$ d'isom\'etries de l'espace hyperbolique $\mathbb{H}^n$. Le groupe $\Stab_{\G}(\infty)$ agit donc cocompactement sur $\LG \smallsetminus \{ \infty \}$. Le point important est que le groupe $\Stab_{\G}(\infty)$ agit par isom\'etrie euclidienne sur l'espace euclidien $\partial \mathbb{H}^n \smallsetminus \{ \infty \}$. Il existe donc un sous-espace $F$ de celui-ci pr\'eserv\'e par $\Stab_{\G}(\infty)$ sur lequel $\Stab_{\G}(\infty)$ agit cocompactement; de plus, tout sous-espace $F'$ pr\'eserv\'e par $\Stab_{\G}(\infty)$ sur lequel $\Stab_{\G}(\infty)$ agit cocompactement est parallèle à $F$. Ainsi, l'ensemble $\LG \smallsetminus \{ \infty \}$ est inclus dans un voisinage tubulaire de rayon fini de $F$. Comme ce voisinage est convexe, on obtient que le point $\infty$ est un point parabolique uniform\'ement born\'e de $\G$.
\end{rema}

On peut donner alors la d\'efinition suivante:

\begin{defi}\label{geofini_hilbert}
Soit $\G$ un sous-groupe discret de $\Aut(\O)$. L'action de $\G$ sur $\dO$ (resp. $\O$) est dite \emph{g\'eom\'etriquement finie} lorsque tout point de l'ensemble limite est un point limite conique ou un point parabolique born\'e (resp. uniform\'ement born\'e). On dira que le quotient $M=\Quo$ est \emph{g\'eom\'etriquement fini} lorsque l'action de \underline{$\G$ sur $\O$} est g\'eom\'etriquement finie.
\end{defi}

Ceci introduit \underline{deux} notions diff\'erentes a priori: la finitude g\'eom\'etrique de l'action de $\G$ \underline{sur $\dO$} et la finitude g\'eom\'etrique de l'action de $\G$ \underline{sur $\O$}. On verra que ces deux notions sont effectivement diff\'erentes et que cela n'a rien d'\'evident. On essaiera aussi de dire quand elles coïncident. C'est l'objet de la derni\`ere partie.\\

La d\'efinition ``traditionnelle'' de finitude g\'eom\'etrique est donc celle dont on pr\'ecise ici qu'elle est \underline{sur $\dO$}. Comme on le verra dans la partie \ref{mainsection}, celle qui porte sur $\O$ admet des d\'efinitions \'equivalentes concernant la g\'eom\'etrie du quotient $\Quo$. Lorsque l'action de $\G$ est g\'eom\'etriquement finie sur $\dO$ mais pas sur $\O$, le quotient $\Quo$ ne jouit par cons\'equent d'aucune de ces propri\'et\'es g\'eom\'etriques, et on ne saurait qualifier sa g\'eom\'etrie de finie. Nous esp\'erons ainsi justifier notre terminologie.

\subsection{Dualit\'e}

Si $\g\in\Aut(\O)$ est hyperbolique, les seuls hyperplans projectifs tangents à $\dO$ pr\'eserv\'es par $\g$ sont les hyperplans $T_{x_{\g}^+}\dO$ et $T_{x_{\g}^-}\dO$ tangents à $\dO$ en ses deux points fixes. L'\'el\'ement correspondant $\g^*\in\G^*$ est donc aussi hyperbolique, ses points fixes sont $(x_{\g}^+)^* = T_{x_{\g}^+}\dO$ et $(x_{\g}^-)^* = T_{x_{\g}^-}\dO$. De même, on voit que si $\g\in\Aut(\O)$ est un \'el\'ement parabolique fixant $p\in\dO$ alors son dual $\g^*\in\G^*$ est parabolique de point fixe $p^*$. Cela implique en particulier qu'\'etant donn\'e un sous-groupe discret $\G \subset \Aut(\O)$, l'application duale $x\longmapsto x^*$ de $\dO$ dans $\dO^*$ envoie $\LG$ sur $\Lambda_{\G^*}$.

\begin{prop}\label{dual_geo_fini}
Soit $\G$ un sous-groupe discret de $\Aut(\O)$. L'action de $\G$ sur $\dO$ est g\'eom\'etriquement finie si et seulement si l'action de $\G^*$ sur $\dO^*$ est g\'eom\'etriquement finie.
\end{prop}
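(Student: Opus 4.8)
Le plan est d'exploiter l'involution de dualit\'e $\sigma : \dO \to \dO^*$, $x \mapsto x^*$, qui est un hom\'eomorphisme d\`es que $\O$ est strictement convexe \`a bord $\C^1$ (paragraphe \ref{def_dualite}). Comme $(\O^*)^*=\O$ et $(\G^*)^*=\G$, il suffit de prouver une seule implication, disons que la finitude g\'eom\'etrique de $\G$ sur $\dO$ entra\^ine celle de $\G^*$ sur $\dO^*$; l'autre s'en d\'eduira par sym\'etrie. Le point cl\'e est que $\sigma$ est \'equivariante: pour $\g\in\Aut(\O)$ et $\xi\in\dO$, l'image de l'hyperplan tangent $T_\xi\dO$ par $\g$ est $T_{\g\xi}\dO$, ce qui se traduit par $\sigma(\g\cdot\xi)=\g^*\cdot\sigma(\xi)$. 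De plus, comme rappel\'e juste avant l'\'enonc\'e, $\sigma$ envoie $\LG$ sur $\Lambda_{\G^*}$, et l'application $\g\mapsto\g^*$ est un isomorphisme de $\G$ sur $\G^*$.

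Je traiterais d'abord les points coniques. L'avantage de la caract\'erisation topologique du lemme \ref{lem_con_fort=faible} est qu'elle ne fait intervenir que l'action de $\G$ sur $\dO$ et aucune donn\'ee m\'etrique: $x$ est conique s'il existe $a\neq b$ dans $\dO$ et une suite $(\g_n)$ de $\G$ avec $\g_n x\to a$ et $\g_n y\to b$ pour tout $y\neq x$. En appliquant l'hom\'eomorphisme $\sigma$ et l'\'equivariance, on obtient $\g_n^*\,\sigma(x)\to\sigma(a)$ et $\g_n^*\,\eta\to\sigma(b)$ pour tout $\eta\in\dO^*\smallsetminus\{\sigma(x)\}$ --- puisque $\sigma$ est bijective, $\eta=\sigma(y)$ parcourt bien $\dO^*\smallsetminus\{\sigma(x)\}$ --- avec $\sigma(a)\neq\sigma(b)$. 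C'est exactement la condition faisant de $x^*=\sigma(x)$ un point limite conique de $\G^*$. Ainsi $\sigma$ \'echange les points coniques de $\G$ et ceux de $\G^*$.

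Pour les points paraboliques born\'es, j'utiliserais que l'\'equivariance donne $\Stab_{\G^*}(x^*)=(\Stab_\G(x))^*$ via l'isomorphisme $\g\mapsto\g^*$. Comme $\sigma$ r\'ealise un hom\'eomorphisme de $\LG\smallsetminus\{x\}$ sur $\Lambda_{\G^*}\smallsetminus\{x^*\}$ qui conjugue l'action de $\Stab_\G(x)$ \`a celle de $\Stab_{\G^*}(x^*)$, la cocompacit\'e de la premi\`ere \'equivaut \`a celle de la seconde, cette propri\'et\'e \'etant pr\'eserv\'ee par conjugaison topologique. Donc $x$ est parabolique born\'e pour $\G$ si et seulement si $x^*$ l'est pour $\G^*$.

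Comme tout point de $\LG$ est conique ou parabolique born\'e, et que $\sigma$ est une bijection de $\LG$ sur $\Lambda_{\G^*}$ pr\'eservant chacune de ces deux propri\'et\'es, tout point de $\Lambda_{\G^*}$ est conique ou parabolique born\'e, d'o\`u la finitude g\'eom\'etrique de $\G^*$ sur $\dO^*$. La seule v\'eritable difficult\'e me semble \^etre la v\'erification soigneuse de l'\'equivariance de $\sigma$ et du fait que la conjugaison topologique pr\'eserve la cocompacit\'e; tout le reste devient formel une fois la caract\'erisation topologique des points coniques en main.
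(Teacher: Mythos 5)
Votre preuve est correcte et suit essentiellement la m\^eme d\'emarche que celle du papier : r\'eduction \`a une implication par sym\'etrie de la dualit\'e, traitement des points coniques via la caract\'erisation topologique du lemme \ref{lem_con_fort=faible} transport\'ee par l'involution $x\mapsto x^*$ (identification des points duaux aux hyperplans tangents), et traitement des points paraboliques born\'es via l'identit\'e $\Stab_{\G^*}(x^*)=(\Stab_{\G}(x))^*$ et le transport de la cocompacit\'e par l'hom\'eomorphisme \'equivariant entre $\LG\smallsetminus\{x\}$ et $\Lambda_{\G^*}\smallsetminus\{x^*\}$. Votre r\'edaction est simplement un peu plus explicite que celle du papier sur l'\'equivariance de la dualit\'e et sur le fait qu'une conjugaison topologique pr\'eserve la cocompacit\'e, mais il n'y a aucune diff\'erence de fond.
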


\begin{proof}
Bien sûr, il suffit de prouver une seule implication. Supposons donc que l'action de $\G$ sur $\dO$ est g\'eom\'etriquement finie. Il suffit de montrer que l'application $x\longmapsto x^*$ de $\dO$ dans $\dO^*$ envoie un point limite conique pour $\G$ sur un point limite conique pour $\G^*$ et un point parabolique born\'e pour $\G$ sur un point parabolique born\'e pour $\G^*$.\\

Soit donc $x\in\LG$ un point limite conique. Il existe donc, d'après le lemme \ref{lem_con_fort=faible}, deux points $a$ et $b$ distincts de $\dO$ et une suite d'\'el\'ements $(\g_n)_{n \in \N}$ de $\G$ tels que $\g_n x$ tend vers $a$ et pour tout $y\in\dO\smallsetminus\{x\}$, $\g_n y$ tend vers $b$. Le convexe $\O$ \'etant suppos\'e strictement convexe à bord $C^1$, cela implique la convergence de $\g_n x^*$ vers $a^*$ et de $\g_n y^*$ vers $b^*$ pour tout $y\not = x$, puisque ces points s'identifient aux plans tangents $T_{\g_n x}\dO$, $T_a\dO$, $T_{\g_n y}\dO$ et $T_b\dO$. Le point $x^*$ est donc un point limite conique.\\

Soit maintenant $x\in\LG$ un point parabolique born\'e. Le groupe $\Stab_{\G^*}(x^*)$ n'est rien d'autre que le groupe $(\Stab_{\G}(x))^*$. Or, $\Stab_{\G}(x)$ agit cocompactement sur $\LG\smallsetminus\{x\}$, donc sur $\{T_y\dO,\ y\in\LG\smallsetminus\{x\}\}$ qui s'identifie à $\Lambda_{\Gamma^*}\smallsetminus\{x^*\}$. Cela montre que $\Stab_{\G^*}(x^*)$ agit cocompactement sur $\Lambda_{\Gamma^*}\smallsetminus\{x^*\}$.
\end{proof}

\begin{rema}
Le corollaire \ref{fin_dual} montrera que l'action de $\G$ sur $\O$ est g\'eom\'etriquement finie si et seulement si l'action de $\G^*$ sur $\O^*$ l'est.
\end{rema}


\section{D\'ecomposition du quotient}\label{section_decomposition}
\subsection{Lemme de Zassenhaus-Kazhdan-Margulis}

Les auteurs ont montr\'e dans \cite{Crampon:2011fk} le lemme suivant qui est le premier pas vers la description des actions g\'eom\'etriquement finies.

\begin{lemm}\label{lem_mar}
En toute dimension $n$, il existe une constante $\varepsilon_n > 0$ tel que: pour tout ouvert proprement convexe $\O$ de $\PP^n$, et tout point $x \in \O$, tout groupe discret engendr\'e par des automorphismes $\g_1,...,\g_p \in \Aut(\O)$ qui v\'erifient $d_{\O}(x,\g_i \cdot x) \leqslant \varepsilon_n$ est virtuellement nilpotent.
\end{lemm}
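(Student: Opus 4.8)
The plan is to combine two classical ingredients: the Zassenhaus--Kazhdan--Margulis lemma for the Lie group $\ss$, and the Benz\'ecri compactness theorem \ref{theo_ben}. Recall that Zassenhaus provides a neighbourhood $\U$ of the identity in $\ss$ such that, for every discrete subgroup of $\ss$, the subgroup generated by its elements lying in $\U$ is nilpotent (indeed contained in a connected nilpotent Lie subgroup). The difficulty is that the smallness hypothesis here, $\d(x,\g_i\cdot x)\leqslant\varepsilon_n$, is metric and must be converted, uniformly over all $\O$ and all $x$, into the group-theoretic smallness needed to apply $\U$.

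First I would exploit the invariance of the statement under $\ss$. For $g\in\ss$, the map $g\colon(\O,\d)\to(g\O,d_{g\O})$ is an isometry, so $d_{g\O}(gx,g\g_i\cdot x)=\d(x,\g_i\cdot x)$ and $g\g_i\cdot x=(g\g_i g^{-1})(gx)$; moreover $\langle\g_1,\dots,\g_p\rangle$ is virtually nilpotent if and only if its conjugate $g\langle\g_1,\dots,\g_p\rangle g^{-1}$ is. Hence both hypothesis and conclusion depend only on the orbit of the marked convex set $(\O,x)\in X^{\bullet}$ under $\ss$. By Theorem \ref{theo_ben} this action is cocompact, so I fix once and for all a compact set $K\subset X^{\bullet}$ meeting every orbit and assume, after conjugating the $\g_i$, that $(\O,x)\in K$.

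Next I would establish the uniform estimate: there is $\varepsilon_n>0$, depending only on $K$, such that for every $(\O,x)\in K$ every $\g\in\Aut(\O)$ with $\d(x,\g\cdot x)\leqslant\varepsilon_n$ lies in a prescribed small neighbourhood of the (compact) stabiliser $\Stab_{\Aut(\O)}(x)$ inside $\ss$. This follows from the properness of the $\Aut(\O)$-action on $\O$ together with the compactness of $K$: were it to fail, a sequence $(\O_k,x_k)\in K$ and automorphisms $\g_k$ with $\d(x_k,\g_k\cdot x_k)\to 0$ staying at definite distance from the stabilisers would, after extracting a convergent subsequence $(\O_k,x_k)\to(\O_\infty,x_\infty)$, force the $\g_k$ to subconverge in $\ss$ to an element fixing $x_\infty$, i.e.\ lying in $\Stab_{\Aut(\O_\infty)}(x_\infty)$, a contradiction.

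Finally I would feed this into the Zassenhaus lemma. Choosing $\varepsilon_n$ small enough that the neighbourhood above is contained in $\Stab_{\Aut(\O)}(x)\cdot\U$, each generator $\g_i$ is a product of a compact (elliptic) part and a part lying in $\U$. Since $\langle\g_1,\dots,\g_p\rangle$ is discrete and its generators are confined to a fixed compact neighbourhood of the compact subgroup $\Stab_{\Aut(\O)}(x)$, a finiteness argument shows that the normal subgroup generated by the elements actually lying in $\U$ has finite index; by Zassenhaus this subgroup is nilpotent, whence $\langle\g_1,\dots,\g_p\rangle$ is virtually nilpotent. The main obstacle is precisely this last step: small displacement at $x$ only pins each $\g_i$ near the \emph{compact} stabiliser rather than near the identity, so one cannot apply Zassenhaus naively, and the heart of the proof is the interplay between this elliptic part and the Zassenhaus neighbourhood, which is what ultimately produces the word ``virtually''.
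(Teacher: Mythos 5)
You should first be aware that the paper contains no proof of Lemma \ref{lem_mar}: it is quoted from the authors' earlier article \cite{Crampon:2011fk}, which is entirely devoted to this statement. Your two ingredients --- the Zassenhaus lemma in $\ss$ and Benz\'ecri's theorem \ref{theo_ben} --- are exactly the two pillars of that reference, and your first reduction (hypothesis and conclusion are $\ss$-equivariant, so one may assume $(\O,x)$ lies in a compact set $K\subset X^{\bullet}$ meeting every orbit) is correct. The two subsequent steps, however, each contain a genuine gap. Your uniform estimate is not proved by the compactness argument you sketch, because stabilisers are not lower semicontinuous along a converging sequence of marked convex sets: if $(\O_k,x_k)\to(\O_\infty,x_\infty)$ and $\g_k\in\Aut(\O_k)$ have displacements tending to $0$, properness does give a subsequential limit $\g_\infty\in\Stab_{\Aut(\O_\infty)}(x_\infty)$, but this is the stabiliser for the \emph{limit} convex set, and it in no way contradicts $\g_k$ staying far from $\Stab_{\Aut(\O_k)}(x_k)$: the groups $\Aut(\O_k)$ may have trivial point stabilisers while $\O_\infty$ is an ellipsoid. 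In fact the estimate appears to be false as stated: take $h_\epsilon\in\so{3}$ a screw motion with rotation angle $\pi/2$ and translation length $\epsilon$, and let $\O_\epsilon$ be a $\langle h_\epsilon\rangle$-equivariant, $\C^2$-small perturbation of the ellipsoid supported away from the two fixed points of $h_\epsilon$, chosen asymmetric enough that $\Aut(\O_\epsilon)=\langle h_\epsilon\rangle$. Then $h_\epsilon$ displaces the centre $x$ by exactly $\epsilon$ (the axis and its endpoints are unperturbed), yet $h_\epsilon$ stays close to a quarter turn while $\Stab_{\Aut(\O_\epsilon)}(x)$ is trivial; and the marked convexes $(\O_\epsilon,x)$ all lie in a fixed compact subset of $X^{\bullet}$. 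What your limiting argument actually yields is only that $\g$ lies near the union $\bigcup_{(\O',x')\in K}\Stab_{\Aut(\O')}(x')$, a compact set of elliptic elements of $\ss$ which is not a group and whose elements need not preserve your $\O$.

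The second gap is the decisive one, and you partly acknowledge it. Even granting a factorisation $\g_i=s_iu_i$ with $s_i$ elliptic and $u_i\in\U$, the sentence ``a finiteness argument shows that the normal subgroup generated by the elements actually lying in $\U$ has finite index; by Zassenhaus this subgroup is nilpotent'' is both unproved and incorrectly stated. Zassenhaus applies to the subgroup generated by $\Gamma\cap\U$, not to its normal closure: conjugates of elements of $\U$ leave $\U$, and the normal closure of a nilpotent subgroup is in general not nilpotent. Moreover, finite index cannot follow from the mere fact that the \emph{generators} lie in a fixed compact set: the group $\Gamma$ itself contains elements of arbitrarily large displacement, hence is confined to no compact neighbourhood of $\Stab_{\Aut(\O)}(x)$, so one must propagate the smallness from the generators to normal forms for \emph{all} elements --- in the classical symmetric-space proof this is an induction on word length controlling rotational and translational parts simultaneously, and it leans on the fact that the rotational parts live in the single compact group $\Stab(x)$. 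This propagation is precisely the content of the Kazhdan--Margulis argument, and in the Hilbert setting it is further complicated by the first gap, since the elliptic parts $s_i$ do not lie in any compact subgroup attached to $(\O,x)$. In short, your proposal is the correct reduction, with the same strategy as \cite{Crampon:2011fk}, but the heart of the proof --- the passage from ``generators elliptic modulo $\U$'' to ``virtually nilpotent'' --- is missing, and supplying it uniformly over all Hilbert geometries is exactly what that reference does.
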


Une telle constante $\varepsilon_n$ sera appel\'e \emph{constante de Margulis}.

\subsection{D\'ecomposition du quotient}

$\,$\\
$\,$\\
{\bf
Dans toute la suite, on se fixe un r\'eel $\varepsilon>0$ qui est une constante de Margulis pour les ouverts proprement convexes de $\PP^n$. Tous les r\'esultats qui suivent sont ind\'ependants de ce choix.\\
}

On va introduire ici les d\'efinitions et notations que nous utiliserons par la suite. Soit $\G$ un sous-groupe discret de $\Aut(\O)$. Pour tout sous-groupe $G$ de $\G$, on note
\begin{itemize}
 \item pour $x\in\O$, $G_{\varepsilon}(x)$ le groupe engendr\'e par les \'el\'ements $\g \in G$ tels que $d_{\O}(x,\g \cdot x) < \varepsilon$ ;
 \item $\O_{\varepsilon}(G) = \{  x \in \O \,\mid \, G_{\varepsilon}(x) \textrm{ est infini} \}$;
 \item $\O^c_{\varepsilon}(G) = \{  x \in \O \,\mid \, G_{\varepsilon}(x) \textrm{ est infini et parabolique}\}$;
 \item $M_{\varepsilon}(G)=\Quotient{\O_{\varepsilon}(G)}{\G}$ et $M^c_{\varepsilon}(G)=\Quotient{\O^c_{\varepsilon}(G)}{\G}$ les projections de ces diff\'erents ensembles sur $M=\Quo$.

\end{itemize}

Dans le cas où $G$ est le groupe $\G$ tout entier, on abrègera ces notations en $\O_{\varepsilon}$, $\O^c_{\varepsilon}$, $M_{\varepsilon}$ et $M^c_{\varepsilon}$.\\
La partie $M_{\varepsilon}$ est la \emph{partie fine} de $M$. Dans le cas où $M$ est une vari\'et\'e, autrement dit quand $\G$ est sans torsion, c'est l'ouvert des points de $M$ dont le rayon d'injectivit\'e est strictement inf\'erieur à $\varepsilon$.\\
Le compl\'ementaire de $\O_{\varepsilon}$ dans $\O$ sera not\'e $\O^{\varepsilon}$ et sa projection sur $M$, $M^{\varepsilon}$. L'ensemble $M^{\varepsilon}$ est la \emph{partie \'epaisse} de $M$, compl\'ementaire de la partie fine dans $M$. Lorsque $M$ est une vari\'et\'e, c'est l'ensemble des points de $M$ dont le rayon d'injectivit\'e est sup\'erieur ou \'egal à $\varepsilon$.\\
L'ensemble $M^c_{\varepsilon}$ est la \emph{partie cuspidale} de $M$. Son compl\'ementaire dans $\O$ sera not\'e $\O^{nc}_{\varepsilon}$; sa projection $M^{nc}_{\varepsilon}$, compl\'ementaire de $M^c_{\varepsilon}$, est la \emph{partie non cuspidale} de $M$. Enfin, on appellera les composantes connexes de la partie cuspidale de $M^c_{\varepsilon}$ les \emph{cusps} de $M$.\\
Enfin, on d\'esignera par $C(\LG)$ l'enveloppe convexe de $\LG$ dans $\O$. Le \emph{c\oe ur convexe} de $M$ est le quotient est l'adh\'erence du $\Quotient{C(\LG)}{\G}$ dans $\Quo$, on le note $C(M)$.\\
On remarquera que $C(\LG)$ est un ouvert convexe de $\O$ et que $C(M)$ est un ferm\'e de $\Quo$.
Le lemme suivant donne une première description de ces diff\'erentes parties.

\begin{center}
\begin{figure}[h!]
  \centering
\includegraphics[width=10cm]{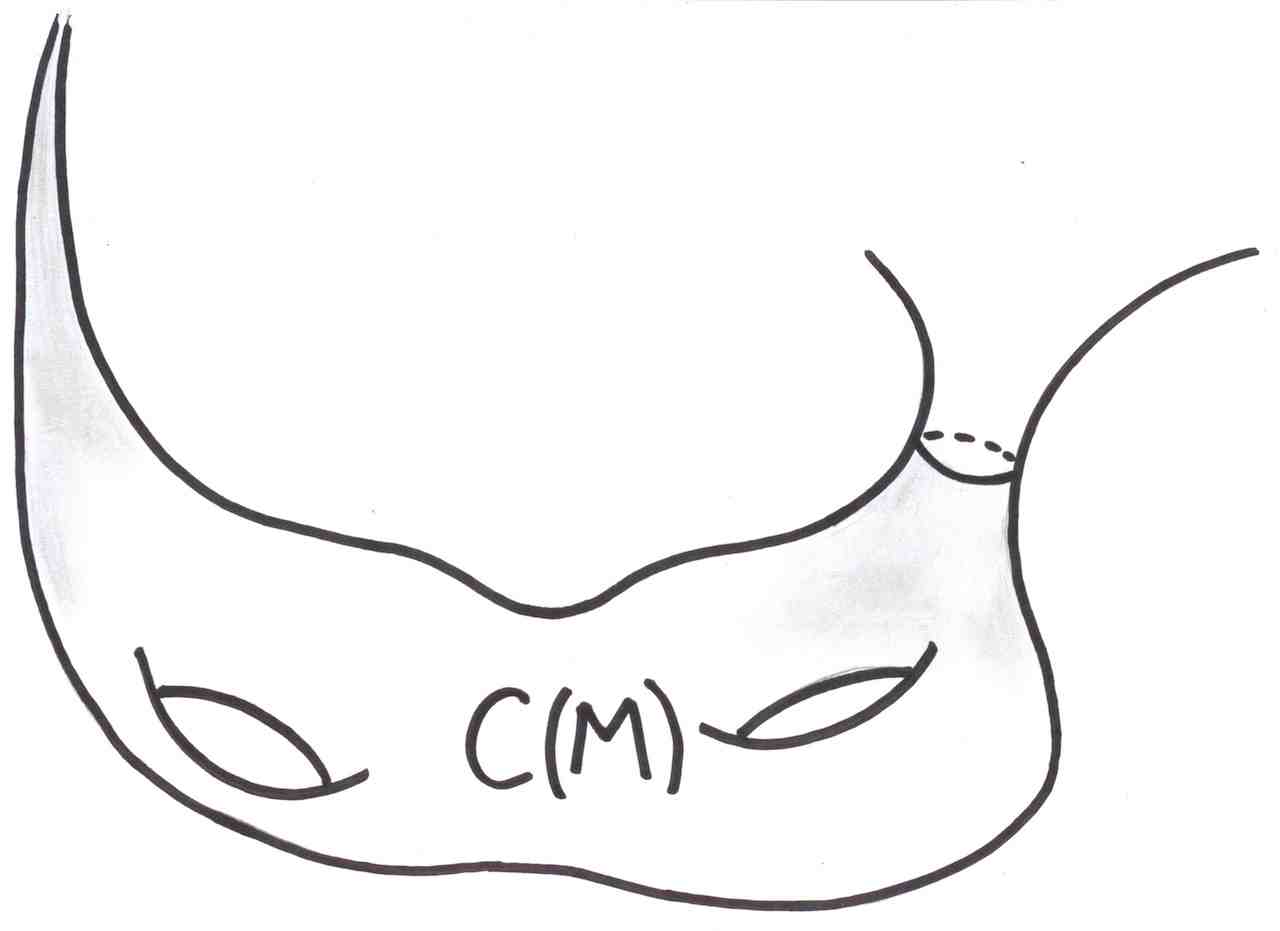}
\caption{Le c\oe ur convexe}
\end{figure}
\end{center}

\begin{center}
\begin{figure}[h!]
  \centering
\includegraphics[width=10cm]{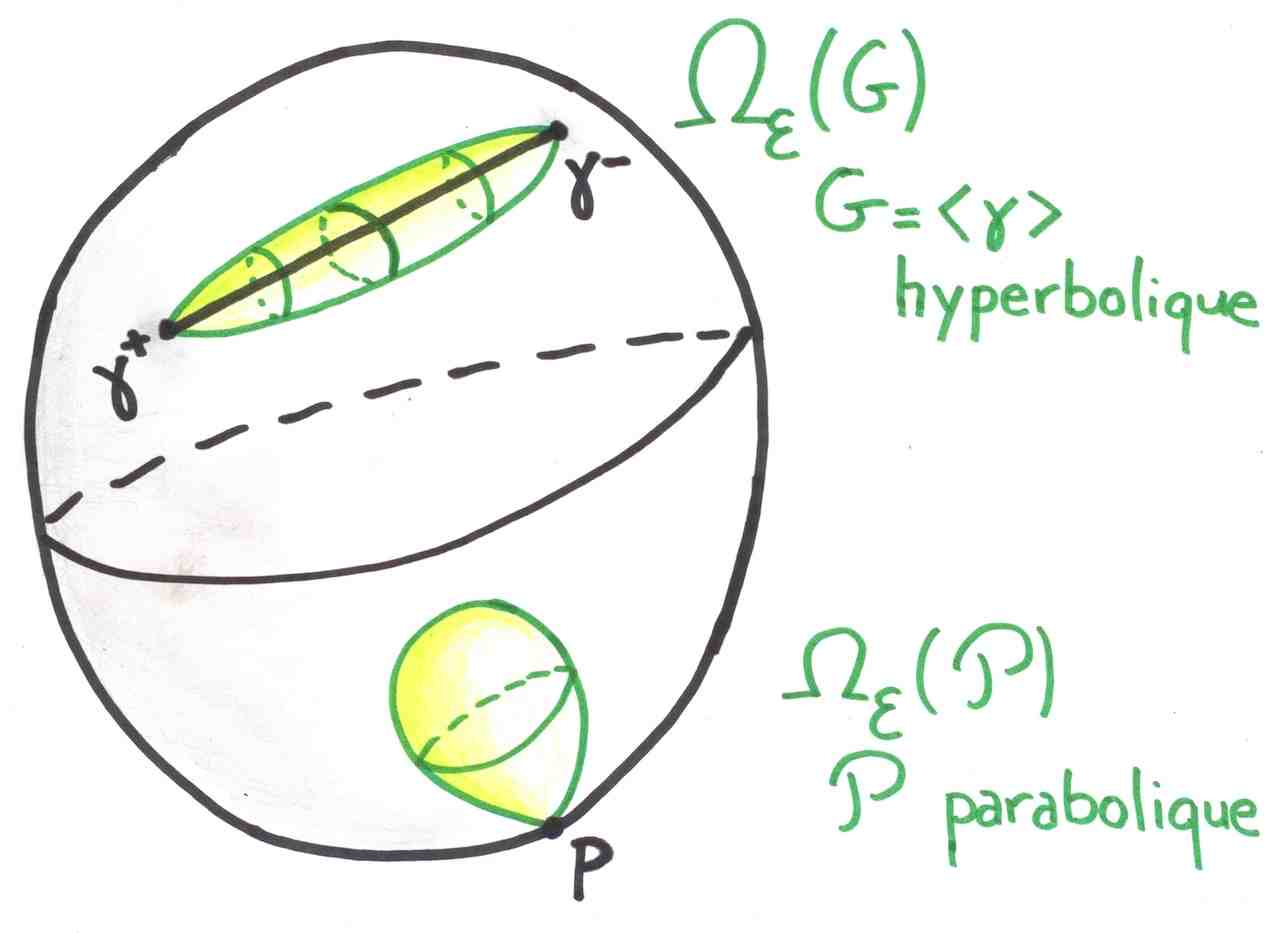} 
\caption{Parties fine et \'epaisse}
\end{figure}
\end{center}

\begin{lemm}\label{decomposition_partie_fine}
Soit $\G$ un sous-groupe discret de $\Aut(\O)$.
\begin{enumerate}
 \item La partie fine de $M$ est la r\'eunion disjointe des parties $M_{\varepsilon}(G)$ où $G$ parcourt les sous-groupes virtuellement nilpotents maximaux de $\G$, c'est-à-dire les sous-groupes hyperboliques et paraboliques maximaux de $\G$.
 \item Les parties $M_{\varepsilon}(G)$, où $G$ parcourt les sous-groupes virtuellement nilpotents maximaux de $\G$, sont connexes, et d'adh\'erences disjointes.
 \item Lorsque $G$ est un sous-groupe hyperbolique de $\G$, la partie $M_{\varepsilon}(G)$ est relativement compacte dans $M$.
 \item\label{partie_fine_cusp} Lorsque $G$ est un sous-groupe parabolique de $\G$ fixant $p\in\dO$, la partie $\O_{\varepsilon}(G)$ est \'etoil\'ee dans $\O$ en $p$, et $p$ est le seul point de $\dO$ adh\'erent à $\O_{\varepsilon}(G)$.
 \item La partie cuspidale est la r\'eunion disjointe des parties $M_{\varepsilon}(G)$, où $G$ parcourt les sous-groupes paraboliques maximaux de $\G$. \item La partie fine de la partie non cuspidale, c'est-à-dire $M^{nc}_{\varepsilon}  = M_{\varepsilon} \smallsetminus M^{c}_{\varepsilon}$, est la r\'eunion disjointe des parties $M_{\varepsilon}(G)$, où $G$ parcourt les sous-groupes hyperboliques maximaux de $\G$.
\end{enumerate}
\end{lemm}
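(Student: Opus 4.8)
The backbone of the proof is the Margulis lemma (Lemme \ref{lem_mar}) together with the classification of infinite discrete virtually nilpotent subgroups into hyperbolic and parabolic ones. The plan is to first establish the set-theoretic partition (points 1, 5, 6), then treat separately the geometry of a hyperbolic piece (point 3) and of a parabolic piece (point 4), and finally deduce connectedness and the separation of closures (point 2).

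For the partition, I would start from a point $x \in \O_\varepsilon$, i.e. with $\G_\varepsilon(x)$ infinite. By Lemme \ref{lem_mar}, $\G_\varepsilon(x)$ is virtually nilpotent, hence, being infinite, either hyperbolic or parabolic by the corollary classifying infinite virtually nilpotent subgroups of $\Aut(\O)$. The key point is that $\G_\varepsilon(x)$ is contained in a \emph{unique} maximal virtually nilpotent subgroup $G$: in the parabolic case $\G_\varepsilon(x)$ fixes a single point $p \in \dO$, and by Proposition \ref{point_fixe_dyn} together with the ping-pong argument (Proposition \ref{ping-pong}) any virtually nilpotent subgroup of $\G$ containing it must fix $p$ as well, so $G = \Stab_\G(p)$; in the hyperbolic case the fixed pair $\{p^+,p^-\}$ plays the same role. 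Since the generators of $\G_\varepsilon(x)$ already lie in $G$, we have $G_\varepsilon(x) = \G_\varepsilon(x)$, whence $x \in \O_\varepsilon(G)$, and the subsets $\O_\varepsilon(G)$ are pairwise disjoint in $\O$ as $G$ ranges over maximal virtually nilpotent subgroups. The equivariance $\g\,\O_\varepsilon(G) = \O_\varepsilon(\g G \g^{-1})$, immediate since the $\g$'s are isometries, shows that in $M$ these pieces are indexed by conjugacy classes and remain disjoint; points 5 and 6 follow by reading off, from the definition of $\O^c_\varepsilon$, that $x$ lands in the cuspidal part exactly when $G$ is parabolic.

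For a hyperbolic $G$, virtually $\Z$ and generated up to finite index by a hyperbolic $\g$ of axis $]p^-,p^+[$, the inequality $\d(x,\g^k x) \geqslant |k|\,\tau(\g)$ shows that $G_\varepsilon(x)$ can only be infinite when some nonzero power $\g^k$ with $|k| < \varepsilon/\tau(\g)$ moves $x$ by less than $\varepsilon$; hence $\O_\varepsilon(G)$ is a finite union of sublevel sets $\{x : \d(x,\g^k x) < \varepsilon\}$, nonempty only when $\tau(\g)<\varepsilon$, and in that case each contains the axis. Their union $\O_\varepsilon(G)$ is therefore a connected bounded-radius neighbourhood of the axis on which $G$ acts cocompactly, and since the normaliser $N_\G(G)$ must preserve $\{p^+,p^-\}$ and so contains $G$ with finite index (again by Proposition \ref{point_fixe_dyn}), the image $M_\varepsilon(G) = \O_\varepsilon(G)/N_\G(G)$ is compact. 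This gives point 3 and the connectedness half of point 2 in the hyperbolic case.

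The delicate part is the parabolic case (point 4), and I expect it to be the main obstacle. Let $G$ be parabolic fixing $p$. Star-shapedness at $p$ should come from the monotonicity of the displacement of a parabolic element along a geodesic ray pointing to $p$: if $g \in G$ is parabolic and $y$ lies on the segment $[x,p[$, one wants $\d(y,gy) \leqslant \d(x,gx)$, using that $g$ preserves every horosphere based at $p$ (Théorème \ref{classi_dym_1}) and that rays to $p$ contract under the motion towards $p$. Granting this, $G_\varepsilon(y) \supseteq G_\varepsilon(x)$ stays infinite along $[x,p[$, whence star-shapedness; connectedness of $\O_\varepsilon(G)$, and thus of $M_\varepsilon(G)$, is then automatic. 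That $p$ is the only boundary point adherent to $\O_\varepsilon(G)$ should follow from the fact that, as $y$ tends to any $q \in \dO \smallsetminus\{p\}$, a fixed parabolic $g$ has $gy \to gq \neq q$, so $\d(y,gy)\to\infty$ by strict convexity, since two sequences converging to distinct extremal boundary points are at unbounded Hilbert distance. The difficulty is that the element realising $G_\varepsilon(y)$ may vary with $y$, so a uniform version, obtained from discreteness and the Margulis lemma, is needed to rule out escape towards $q$. Finally, disjointness of closures in point 2 follows from the same uniform separation: were the closures of two pieces to meet at an interior point, the Margulis lemma applied at that point would force the two maximal virtually nilpotent subgroups into a common one, contradicting their distinctness.
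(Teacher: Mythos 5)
Your proposal is correct and follows essentially the same route as the paper's own proof: the Margulis lemma together with the hyperbolic/parabolic dichotomy yields the partition, its $\G$-equivariance and the disjointness of closures; the growth of the displacement away from the axis yields relative compactness of the hyperbolic pieces; and the monotonicity of the displacement along rays ending at $p$ yields star-shapedness of the parabolic pieces. The subtlety you flag in the parabolic case --- that the element realising the small displacement may vary as one approaches $q\in\dO\smallsetminus\{p\}$ --- is in fact also left untreated in the paper's proof (which only argues for a fixed $\g$); note that the natural repair is not the Margulis lemma but rather Lemme \ref{force_strict} combined with the convergence property of Th\'eor\`eme \ref{action_conv} (or with the fact that every element of a parabolic group fixing $p$ preserves the horoballs based at $p$), which rules out a sequence of points of $\O_{\varepsilon}(G)$ converging to such a $q$.
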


\begin{proof}
\begin{enumerate}

 \item Par d\'efinition, $M_{\varepsilon}(G) \subset M_{\varepsilon}$ pour tout sous-groupe $G$ de $\G$. Maintenant, si $x\in M_{\varepsilon}$, il existe un \'el\'ement non elliptique $\g\in\G$ tel que $d_{\O}(x,\g x)<\varepsilon$. Le groupe engendr\'e par $\g$ est nilpotent et infini, et donc $x\in M_{\varepsilon}(\langle \g \rangle)$. De plus, les parties $\overline{M_{\varepsilon}(G)}$ sont disjointes. En effet, s'il y avait un point $x$ qui \'etait à la fois dans $\overline{M_{\varepsilon}(G)}$ et dans $\overline{M_{\varepsilon}(G')}$, le groupe discret engendr\'e par $G$ et $G'$ serait nilpotent par le lemme de Margulis, contredisant le fait que $G$ et $G'$ sont maximaux.\\
 \item Soit $G$ un groupe virtuellement nilpotent maximal, que l'on peut supposer sans torsion. On va montrer que $M_{\varepsilon}(G)$ est ouvert et ferm\'e dans $M_{\varepsilon}$. L'ouverture de $M_{\varepsilon}(G)$ d\'ecoule de la d\'efinition. Pour la fermeture, consid\'erons une suite $(x_n)$ de points dans $M_{\varepsilon}(G)$ qui converge vers $x$ dans $M_{\varepsilon}$. Il existe ainsi un \'el\'ement non elliptique $\g\in\G$ tel que $d_{\O}(x,\g x)<\varepsilon$. Par continuit\'e, on a aussi $d_{\O}(x_n,\g x_n)<\varepsilon$ lorsque $n$ est assez grand, et ainsi le groupe engendr\'e par $G$ et $\g$ est nilpotent, d'après le lemme de Margulis. Comme $G$ est maximal, on a forc\'ement $\g\in G$ et donc $x\in M_{\varepsilon}(G)$, autrement dit $M_{\varepsilon}(G)$ est ferm\'e.\\
\item Soit $G$ le groupe nilpotent hyperbolique engendr\'e par l'\'el\'ement $\g$. Tout domaine fondamental convexe et ferm\'e $C$ pour l'action de $G$ sur $\O$ intersecte l'axe $a_{\g}$ de $\g$ en une partie compacte. Il est alors clair que $d_{\O}(x,\g x)\geqslant \varepsilon$ dès que $x$ est un point de $C$ dont la distance à l'axe de $\g$ est sup\'erieure à une certaine constante. Autrement dit, $\O_{\varepsilon}(G)\cap C$ est un voisinage relativement compact de $a_{\g}\cap C$, et donc $M_{\varepsilon}(G)$ est relativement compact dans $M$.\\
\item Soit $G$ un sous-groupe parabolique de $\G$ qui fixe le point $p\in\dO$. Prenons $x \in \dO\smallsetminus\{p\}$ et param\'etrons la g\'eod\'esique $(xp)$ par $r:\R \longrightarrow \R$, de telle façon que $r(-\infty)=x,\ r(+\infty)=p,\ r(t)\in(xp), t\in\R$. La convexit\'e de $\O$ montre que la fonction $f : t \longmapsto d_{\O}(r(t), \g r(t))$ est d\'ecroissante. La stricte convexit\'e entraîne que $f$ tend vers $+\infty$ en $-\infty$, et vers $0$ en $+\infty$. C'est exactement ce qu'on voulait montrer.\\
\item Cela d\'ecoule directement de la d\'efinition et du premier point.\\
\item La partie non cuspidale de $M$ est par d\'efinition r\'eunion de la partie \'epaisse et des parties fines non cuspidales. Ces dernières sont exactement les parties $M_{\varepsilon}(G)$, où $G$ parcourt les sous-groupes hyperboliques de $\G$; les points pr\'ec\'edents montrent que ces parties sont connexes, d'adh\'erences compactes et disjointes.
\end{enumerate}
\end{proof}

\begin{center}
\begin{figure}[h!]
  \centering
\includegraphics[width=10cm]{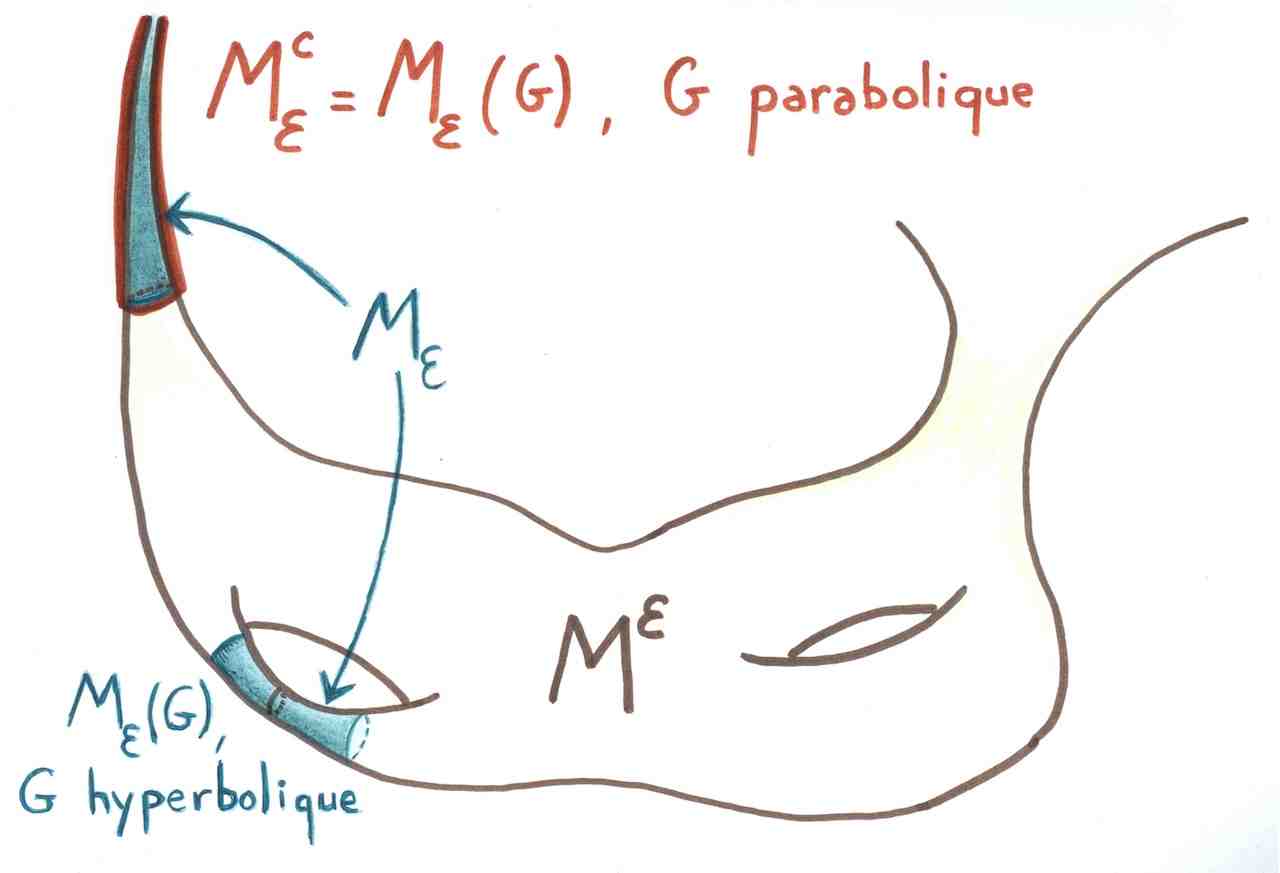}
\caption{D\'ecomposition du quotient}
\end{figure}
\end{center}


\section{Sur les sous-groupes paraboliques}

\subsection{Quelques r\'esultats pr\'eliminaires sur les groupes alg\'ebriques}

Nous allons avoir besoin de plusieurs r\'esultats et d\'efinitions sur les groupes alg\'ebriques lin\'eaires r\'eels; on pourra consulter le livre \cite{MR0396773}.

Soit $G$ un sous-groupe de $\ss$ Zariski-ferm\'e. Un \'el\'ement $g \in G$ est dit \emph{semi-simple} (resp. \emph{unipotent}) lorsque $g$ est diagonalisable sur $\mathbb{C}$ (resp. $(g-1)^{n+1} = 0$). On note $\mathcal{S}(G)$ (resp. $\U(G)$) l'ensemble des \'el\'ements semi-simples (resp. unipotents) de $G$.

L'ensemble $\U(G)$ est un ferm\'e de Zariski de $G$; par contre, l'ensemble $\mathcal{S}(G)$ ne l'est pas en g\'en\'eral.

\begin{prop}[Proposition 19.2 de \cite{MR0396773}]\label{decompo}

Soit $G$ un groupe alg\'ebrique r\'esoluble et connexe. Le groupe $G$ est nilpotent si et seulement si $\mathcal{S}(G)$ est un sous-groupe de $G$. Dans ce cas, l'ensemble $\mathcal{S}(G)$ est un ferm\'e de $G$ pour la topologie de Zariski, le groupe $\mathcal{S}(G)$ est ab\'elien et le groupe $G$ se d\'ecompose en le produit direct $G= \mathcal{S}(G) \times \U(G)$.
\end{prop}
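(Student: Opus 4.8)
The plan is to work over an algebraic closure (the real case treated in \cite{MR0396773} then follows by base change to $\mathbb{C}$, since semisimplicity and unipotence are geometric properties) and to lean on two classical pillars of the structure theory of a connected solvable group $G$. First, by the Lie--Kolchin theorem $G$ is trigonalizable, so the set $\U(G)$ of unipotent elements is a closed connected normal subgroup, which I will write $G_u := \U(G)$. Second, $G$ contains a maximal torus $T$, all maximal tori are conjugate, and there is a semidirect decomposition $G = T \ltimes G_u$. Granting these, I would reduce the entire statement to the chain of equivalences
\[
G \text{ nilpotent} \iff T \text{ central in } G \iff \mathcal{S}(G) \text{ is a subgroup},
\]
supplemented by the identity $\mathcal{S}(G) = T$ in the nilpotent case, from which Zariski-closedness, commutativity and the direct product $G = \mathcal{S}(G)\times\U(G)$ are immediate.

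First I would prove the forward implication. Assuming $G$ nilpotent, its Lie algebra $\mathfrak{g} = \mathfrak{t}\oplus\mathfrak{n}$, with $\mathfrak{n} = \mathrm{Lie}(G_u)$, is a nilpotent Lie algebra (we are in characteristic zero), so $\mathrm{ad}(H)$ is nilpotent for each $H\in\mathfrak{t}$. Decomposing $\mathfrak{n}$ into weight spaces for the adjoint action of $T$, a weight vector $X$ of weight $\alpha$ obeys $[H,X]=\alpha(H)X$, and nilpotency of $\mathrm{ad}(H)$ forces $\alpha\equiv 0$. Hence $T$ acts trivially on $\mathfrak{n}$ and, by connectedness, centralizes $G_u$, so $G = T\times G_u$ with $T$ central (and conversely such a product is nilpotent, $G_u$ being unipotent hence nilpotent and $T$ central abelian, which gives the reverse equivalence $T$ central $\Rightarrow$ $G$ nilpotent). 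A central torus is the unique maximal torus, and since every semisimple element lies in some maximal torus I would conclude $\mathcal{S}(G)=T$: a Zariski-closed abelian subgroup, with $G=\mathcal{S}(G)\times\U(G)$ dropping out.

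For the converse I would argue by contraposition. If $G$ is not nilpotent, the equivalence just established shows $T$ is not central, so some weight $\alpha$ of $T$ on $\mathfrak{n}$ is nonzero; exponentiating a weight vector yields a one-parameter unipotent subgroup $U_\alpha\cong\mathbb{G}_a$ normalized by $T$ through the nontrivial character $\alpha$. Within $\langle T,U_\alpha\rangle$ sits a copy of the ``$ax+b$'' group $\mathbb{G}_m\ltimes\mathbb{G}_a$, in which $(a,b)$ is semisimple precisely when $a\neq 1$ or $(a,b)=(1,0)$, and unipotent precisely when $a=1$. Taking $a\neq 1$ and $b\neq 0$, the product $(a,0)\cdot(a^{-1},b)=(1,ab)$ is a nontrivial unipotent element obtained from two semisimple ones, so $\mathcal{S}(G)$ is not stable under multiplication. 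Contraposing shows that $\mathcal{S}(G)$ being a subgroup forces $G$ nilpotent, and the first part then supplies the remaining conclusions.

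The hard part is not this equivalence but its two structural inputs, chiefly the decomposition $G=T\ltimes G_u$ together with conjugacy of maximal tori: these are the genuinely substantial theorems of Borel's theory of connected solvable groups, which I would simply invoke from \cite{MR0396773}. Once they are available the remaining work is the elementary weight computation in the Lie algebra and the explicit $ax+b$ example; the only point requiring a little care is the passage between $G$ and $\mathfrak{g}$, which is unproblematic in characteristic zero.
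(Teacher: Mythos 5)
The first thing to note is that the paper offers no proof to compare against: Proposition \ref{decompo} is quoted as Proposition 19.2 of Humphreys \cite{MR0396773} and used as a black box (indeed only the forward direction, nilpotent $\Rightarrow$ decomposition, is ever used, in Lemme \ref{lemm_gr_alg}). Your argument is therefore a reconstruction of the textbook result, and as such it is essentially correct over an algebraically closed field of characteristic zero: granting $G=T\ltimes G_u$, conjugacy of maximal tori, and the fact that every semisimple element lies in a maximal torus, your chain (nilpotent $\iff$ $T$ central $\iff$ $\mathcal{S}(G)$ a subgroup) is valid, and the steps you leave implicit (nilpotency of $G$ $\iff$ nilpotency of $\mathfrak{g}$, exponentiation of a weight vector to an algebraic one-parameter subgroup normalized by $T$, surjectivity of a nontrivial character of a torus) all hold in characteristic zero. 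Two remarks, though. First, this is heavier machinery than the usual proof, which needs only (i) every semisimple element of a connected nilpotent group is central and (ii) the commutative case $G=\mathcal{S}(G)\times\U(G)$; and the extra weight creates a sourcing problem: in Humphreys the structure theorem is Theorem 19.3, stated and proved \emph{after} Proposition 19.2, so ``invoking it from \cite{MR0396773}'' is circular as written. The route is not intrinsically circular — the semidirect decomposition can be established from the commutative case alone — but you would have to cite such a development rather than the very book whose Proposition 19.2 you are proving.

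Second, the one genuine gap: the group $G$ of the paper is a real algebraic group (a Zariski closure in $\ss$), and $\mathcal{S}(G)$ means its real semisimple elements. Your base-change remark covers the forward direction (Jordan decomposition is rational over the perfect field $\R$), but not the converse: your contrapositive produces \emph{complex} semisimple elements whose product fails to be semisimple, and this does not contradict $\mathcal{S}(G)$ being a subgroup of the real group. The repair is easy and in fact shortens your proof, bypassing both the weight computation and the $ax+b$ example: if $\mathcal{S}(G)$ is a subgroup and $s$ is semisimple, $u$ unipotent, then $(usu^{-1})\cdot s^{-1}$ lies in $\mathcal{S}(G)$, yet it equals $u\,(su^{-1}s^{-1})\in\U(G)$ because $\U(G)$ is normal; being both semisimple and unipotent it is trivial, so $s$ and $u$ commute. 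Thus every real semisimple element commutes with every real unipotent one; choosing a maximal torus $T$ defined over $\R$ (these exist over perfect fields) and using Zariski-density of $T(\R)$ and $G_u(\R)$, it follows that $T$ centralizes $G_u$, hence $G=T\times G_u$ is nilpotent. This argument works verbatim in either direction of base change and is, modulo notation, the textbook one.
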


\begin{prop}[Lemme 4.9 de \cite{CoursDeBenoist}]\label{adh_zar_mod}
Soit $\G$ un sous-groupe de $\ss$. Si toutes les valeurs propres de tous les \'el\'ements de $\G$ sont de module 1 alors toutes les valeurs propres de tous les \'el\'ements de l'adh\'erence de Zariski de $\G$ sont aussi de module 1.
\end{prop}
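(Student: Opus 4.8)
Le point d\'elicat est que l'ensemble des matrices dont toutes les valeurs propres sont de module $1$ n'est pas Zariski-ferm\'e (il est d\'efini par des in\'egalit\'es), de sorte qu'on ne peut pas raisonner directement sur cette condition. Le plan est de se ramener, via une suite de composition, au cas d'une action irr\'eductible, puis de montrer qu'une telle action est n\'ecessairement born\'ee.

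Notons $G$ l'adh\'erence de Zariski de $\G$. Je consid\'ererais une suite de composition $0 = V_0 \subset V_1 \subset \cdots \subset V_m = \mathbb{C}^{n+1}$ du $\G$-module $\mathbb{C}^{n+1}$, dont les quotients $W_j = V_j / V_{j-1}$ sont irr\'eductibles sur $\mathbb{C}$. Chaque $V_i$ \'etant $\G$-invariant, il est aussi $G$-invariant, puisque le stabilisateur d'un sous-espace est Zariski-ferm\'e et contient $\G$. Pour tout $g \in G$, le polyn\^ome caract\'eristique de $g$ sur $\mathbb{C}^{n+1}$ est le produit des polyn\^omes caract\'eristiques des applications induites $g_j$ sur les quotients $W_j$; les valeurs propres de $g$ sont donc la r\'eunion de celles des $g_j$, et il suffit de prouver que, pour chaque $j$, tous les \'el\'ements de l'image $G_j$ de $G$ dans $\mathrm{GL}(W_j)$ ont leurs valeurs propres de module $1$. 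Or l'image $\G_j$ de $\G$ est Zariski-dense dans $G_j$, agit irr\'eductiblement sur $W_j$, et tous ses \'el\'ements ont leurs valeurs propres de module $1$ (ce sont des valeurs propres d'\'el\'ements de $\G$). On est donc ramen\'e \`a l'\'enonc\'e dans le cas irr\'eductible.

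L'\'etape centrale est alors de montrer qu'un sous-groupe $H$ de $\mathrm{GL}_m(\mathbb{C})$ agissant irr\'eductiblement, et dont tous les \'el\'ements ont leurs valeurs propres de module $1$, est relativement compact. Pour cela j'invoquerais le th\'eor\`eme de Burnside: l'irr\'eductibilit\'e entra\^ine que $H$ engendre lin\'eairement toute l'alg\`ebre $\mathrm{Mat}_m(\mathbb{C})$. On choisit donc des \'el\'ements $\delta_1, \dots, \delta_{m^2} \in H$ formant une base de $\mathrm{Mat}_m(\mathbb{C})$. Comme la forme bilin\'eaire $(A,B) \mapsto \mathrm{tr}(AB)$ est non d\'eg\'en\'er\'ee, l'application lin\'eaire $A \mapsto (\mathrm{tr}(A\delta_i))_{1 \leqslant i \leqslant m^2}$ est un isomorphisme de $\mathrm{Mat}_m(\mathbb{C})$ sur $\mathbb{C}^{m^2}$. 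Pour tout $h \in H$ et tout $i$, on a $h\delta_i \in H$, donc $|\mathrm{tr}(h\delta_i)| \leqslant m$ puisque les $m$ valeurs propres de $h\delta_i$ sont de module $1$. L'image de $h$ reste ainsi dans un born\'e de $\mathbb{C}^{m^2}$ ind\'ependant de $h$; l'isomorphisme \'etant propre, $H$ est born\'e.

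Il ne reste qu'\`a conclure. Appliqu\'e \`a $H = \G_j$, le point pr\'ec\'edent montre que $\G_j$ est born\'e. Un sous-groupe born\'e de $\mathrm{GL}_m(\mathbb{C})$ pr\'eserve une forme hermitienne d\'efinie positive (obtenue en moyennant une forme quelconque sur l'adh\'erence topologique compacte de $\G_j$), donc est contenu dans un groupe unitaire $\mathrm{U}(W_j)$; ce dernier \'etant Zariski-ferm\'e et compact, son adh\'erence de Zariski $G_j$ y est encore contenue, et tous les \'el\'ements de $G_j$ ont leurs valeurs propres de module $1$. En recollant les facteurs de composition $W_j$, il s'ensuit que tout \'el\'ement de $G$ a ses valeurs propres de module $1$, ce qui est la conclusion cherch\'ee. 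Le point le plus substantiel de cet argument est le passage du caract\`ere born\'e de $\G_j$ \`a celui de son adh\'erence de Zariski, autrement dit le fait que l'adh\'erence de Zariski d'un groupe relativement compact reste relativement compacte.
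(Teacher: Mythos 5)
Le texte ne d\'emontre pas cette proposition~: elle est cit\'ee telle quelle (Lemme 4.9 de \cite{CoursDeBenoist}), et il n'y a donc pas de preuve interne \`a laquelle comparer la v\^otre~; seule la remarque qui la suit en \'eclaire le contenu. Sur le fond, votre strat\'egie --- suite de composition de $\mathbb{C}^{n+1}$, r\'eduction au cas irr\'eductible, argument de Burnside et de trace pour le caract\`ere born\'e, moyennisation d'une forme hermitienne --- est la voie standard pour ce type d'\'enonc\'e, et les \'etapes interm\'ediaires (factorisation du polyn\^ome caract\'eristique le long de la suite de composition, majoration $|\mathrm{tr}(h\delta_i)|\leqslant m$, bornitude via l'isomorphisme de trace, compacit\'e de l'adh\'erence topologique dans $\mathrm{GL}_m(\mathbb{C})$) sont correctes.

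Le point faible est pr\'ecis\'ement celui que vous d\'esignez comme ``le plus substantiel'', et votre justification, telle qu'elle est \'ecrite, ne tient pas. Vous affirmez que le groupe unitaire $\mathrm{U}(W_j)$ est ``Zariski-ferm\'e''. Comme tout votre argument se d\'eroule dans $\mathrm{GL}(W_j)\simeq \mathrm{GL}_m(\mathbb{C})$, la lecture naturelle est celle de la topologie de Zariski \emph{complexe}, et pour celle-ci l'affirmation est fausse~: $\mathrm{SU}_m$ est Zariski-dense dans $\mathrm{SL}_m(\mathbb{C})$ --- c'est le contre-exemple donn\'e dans la remarque qui suit la proposition dans le texte, et c'est la raison pour laquelle l'\'enonc\'e lui-m\^eme est faux sur $\mathbb{C}$. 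Avec cette lecture, l'identification de l'adh\'erence de Zariski de $\G_j$ avec l'image $G_j$ tombe aussi en d\'efaut (cette adh\'erence complexe peut \^etre strictement plus grosse). Votre preuve n'est donc valable qu'\`a condition de pr\'eciser que toutes les adh\'erences de Zariski sont prises au sens des groupes alg\'ebriques \emph{r\'eels}, c'est-\`a-dire en consid\'erant $\mathrm{GL}_m(\mathbb{C})$ comme groupe alg\'ebrique r\'eel (restriction de Weil)~: le groupe $\mathrm{U}(m)$, d\'efini par ${}^t\bar{A}A=I$, y est bien ferm\'e, les \'equations portant sur les parties r\'eelles et imaginaires des coefficients. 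Il faut alors v\'erifier la coh\'erence des op\'erations~: le stabilisateur dans $\ss$ d'un sous-espace complexe $V_i$ est r\'eel-Zariski-ferm\'e~; le morphisme $g\mapsto g_j$ est un morphisme de groupes alg\'ebriques r\'eels~; et l'image r\'eciproque de $\mathrm{U}(W_j)$ par ce morphisme est r\'eel-Zariski-ferm\'ee et contient $\G$, donc contient $G$, d'o\`u $G_j\subset \mathrm{U}(W_j)$ et la conclusion. Avec ces pr\'ecisions, votre d\'emonstration est compl\`ete~; sans elles, elle escamote exactement le ph\'enom\`ene propre \`a $\R$ --- la fermeture de Zariski des sous-groupes compacts --- qui, comme le souligne la remarque du texte, est ce qui rend l'\'enonc\'e vrai.
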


\begin{rema}
Il faut bien faire attention au fait que, dans l'\'enonc\'e pr\'ec\'edent, le corps de base est $\R$. Cette proposition est fausse sur un corps quelconque. Sur le corps des complexes, le groupe compact $\textrm{SU}_n$ est Zariski-dense dans le $\mathbb{C}$-groupe $\textrm{SL}_{n}(\mathbb{C})$; sur les corps $p$-adiques, le groupe compact $\textrm{SL}_n(\Z_p)$ est Zariski dense dans le $\mathbb{Q}_p$-groupe $\textrm{SL}_{n}(\mathbb{Q}_p)$. Pourtant, les valeurs propres des \'el\'ements de ces deux groupes sont toutes de modules 1. Le ph\'enomène exceptionnel qui explique cette proposition sur $\R$ est que le sous-groupe compact maximal $\textrm{SO}_n(\R)$ de $\textrm{SL}_n(\R)$ est Zariski-ferm\'e.
\end{rema}

\begin{theo}[Kostant-Rosenlicht (Th\'eorème 2 de \cite{MR0130878} ou appendice de \cite{MR0296077})]\label{kos_ros}
Soit $U$ un groupe alg\'ebrique unipotent agissant sur un espace affine. Toute orbite de $U$ est Zariski ferm\'e.
\end{theo}

\begin{theo}[Th\'eorème de Mal'cev (Th\'eorème 2.1 de \cite{MR0507234})]\label{rag}
Soit $U$ un sous-groupe Zariski ferm\'e de $\ss$. Si $U$ est unipotent, alors tout sous-groupe discret et Zariski-dense $\G$ de $U$ est un r\'eseau cocompact de $U$.
\end{theo}


\begin{lemm}\label{lemm_gr_alg}
Soit $\P$ un sous-groupe parabolique de $\Aut(\O)$ fixant un point $p$. On note $\Nn$ l'adh\'erence de Zariski de $\P$ et $\U$ le sous-groupe de $\Nn$ constitu\'e des \'el\'ements unipotents de $\Nn$.\\
Le quotient $\Quotient{\Nn}{\U}$ est compact, le groupe $\P$ est un r\'eseau cocompact de $\Nn$, l'action de $\Nn$ sur $\A^{n-1}_p$ est propre et l'action de $\U$ sur $\A^{n-1}_p$ est libre. En particulier, si l'action de $\P$ sur $\dO \smallsetminus \{p\}$ est cocompacte alors l'action de $\U$ sur $\A^{n-1}_p$ est simplement transitive.
\end{lemm}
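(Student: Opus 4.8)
The plan is to extract the algebraic structure of $\Nn$ from the eigenvalue constraint on $\P$, and then read off all five assertions. Since $\P$ is parabolic, a finite-index subgroup consists of parabolic elements fixing $p$; by the classification (Théorème \ref{classi_dym_1}) each such $\g$ has a single fixed point, so $\rho(\g)=\rho(\g^{-1})=1$, whence every element of $\P$ has all its eigenvalues of modulus $1$. Proposition \ref{adh_zar_mod} propagates this to the whole Zariski closure: every element of $\Nn$ has all its eigenvalues of modulus $1$. Because $\P$ is virtually nilpotent, it has a finite-index nilpotent subgroup $\P_0$ whose Zariski closure is nilpotent; since $\overline{\P_0}^{\mathrm{Zar}}$ has finite index in $\Nn$, the identity component $\Nn^0$ is nilpotent. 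Applying Proposition \ref{decompo} to the connected solvable group $\Nn^0$ yields a direct product $\Nn^0=\mathcal{S}(\Nn^0)\times\U$, where $\U=\U(\Nn^0)$ is the unipotent part and $\mathcal{S}(\Nn^0)$ is an abelian group of semisimple elements; being semisimple with modulus-$1$ eigenvalues and Zariski-closed, $\mathcal{S}(\Nn^0)$ is a compact torus. Any unipotent element lies in a connected one-parameter subgroup, so $\U(\Nn)=\U\subseteq\Nn^0$, and $\Nn/\U$ is compact, being finite-by-$\mathcal{S}(\Nn^0)$; this is the first assertion.

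For the lattice statement, let $q:\Nn^0\to\U\cong\Nn^0/\mathcal{S}(\Nn^0)$ be the projection along the compact factor and set $\P_0=\P\cap\Nn^0$, of finite index in $\P$ and Zariski-dense in $\Nn^0$. Since $\ker q=\mathcal{S}(\Nn^0)$ is compact and meets the discrete group $\P_0$ in a finite set, $q$ is proper and $q(\P_0)$ is discrete in $\U$; it is moreover Zariski-dense, being the image of a Zariski-dense subgroup under the surjective morphism $q$. Malcev's theorem (Théorème \ref{rag}) then shows $q(\P_0)$ is a cocompact lattice in the unipotent group $\U$. Fibering $\Nn^0/\P_0$ over $\U/q(\P_0)$ with compact fibre coming from $\mathcal{S}(\Nn^0)$ shows $\Nn^0/\P_0$ compact, hence $\Nn/\P$ is compact and $\P$ is a cocompact lattice in $\Nn$.

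The affine action of $\Nn$ on $\A^{n-1}_p$ is the one furnished by Lemme \ref{lem_proj_aff}. To prove it proper it suffices, as $\mathcal{S}(\Nn^0)$ is compact and $\Nn/\Nn^0$ finite, to show $\U$ acts properly. Here I invoke Kostant--Rosenlicht (Théorème \ref{kos_ros}): every $\U$-orbit in the affine space $\A^{n-1}_p$ is Zariski-closed, hence closed for the usual topology, and a unipotent algebraic group acting on an affine space with closed orbits acts properly. Properness gives the fourth assertion at once: stabilizers of a proper action are compact, and a compact subgroup of the unipotent group $\U$ is trivial, so $\U$ acts freely.

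Finally, assume $\P$ acts cocompactly on $\dO\smallsetminus\{p\}$. Through the homeomorphism $\xi\mapsto(p\xi)$ identifying $\dO\smallsetminus\{p\}$ with $\A^{n-1}_p$ (valid because $\O$ is strictly convex with $\C^1$ boundary), $\P$ acts cocompactly on $\A^{n-1}_p$; since $\P$ is cocompact in $\Nn$, this forces $\Nn$, and hence $\U$ modulo the compact $\mathcal{S}(\Nn^0)$, to act cocompactly. The action of $\U$ on $\A^{n-1}_p$ being free, proper and cocompact, the quotient $\U\backslash\A^{n-1}_p$ is a compact manifold; the fibration $\U\to\A^{n-1}_p\to\U\backslash\A^{n-1}_p$ with contractible total space and fibre shows this quotient is contractible, so it is a point. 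Hence $\U$ acts transitively, and being free, simply transitively. I expect the delicate point to be the properness of the $\U$-action: passing from Zariski-closedness of orbits to genuine properness, together with the reduction through the compact factor $\mathcal{S}(\Nn^0)$ and the finite quotient $\Nn/\Nn^0$, is where the argument needs care, the remaining assertions following formally once properness is secured.
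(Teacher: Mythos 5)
Your structural analysis of $\Nn$ (modulus-$1$ eigenvalues, the decomposition $\Nn^0=\mathcal{S}(\Nn^0)\times\U$ with $\mathcal{S}(\Nn^0)$ compact, compactness of $\Nn/\U$) is correct, and your lattice argument --- projecting $\P_0$ onto $\U$ along the compact factor and applying Mal'cev (th\'eor\`eme \ref{rag}) directly to $q(\P_0)$ --- is a clean alternative to the paper's d\'evissage through $[\P,\P]$ and the abelianization. But the gap is exactly where you suspected it: properness. The principle you invoke, ``a unipotent algebraic group acting on an affine space with (Zariski-)closed orbits acts properly'', is false. Take $\U_0=\left\{\begin{pmatrix}1&t\\0&1\end{pmatrix},\ t\in\R\right\}$ acting linearly on $\A^2$: every orbit is Zariski-closed (singletons on the invariant axis, affine lines elsewhere), yet the stabilizer of each point of the axis is all of $\U_0$, so the action is not proper. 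Nor can you repair this by adding freeness as a hypothesis: in your scheme freeness is deduced \emph{from} properness (compact stabilizers in a unipotent group), so that would be circular; and in any case free algebraic actions of unipotent groups on affine space --- whose orbits are automatically closed by Kostant--Rosenlicht --- can still fail to be proper (Winkelmann's free actions of the additive group on $\mathbb{C}^4$ with non-Hausdorff quotient). Since properness underpins your freeness claim and your final ``compact contractible quotient manifold is a point'' argument, the whole second half of the proposal rests on this unsound step.

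What properness actually requires here is geometric, not algebraic, input, and you already have all the pieces to supply it. Up to finite index, every element of $\P$ is parabolic fixing $p$, hence preserves each horosphere $\H_p(x)$ based at $p$ (th\'eor\`eme \ref{classi_dym_1}); since $\O$ is strictly convexe, the map $y\longmapsto(py)$ is a homeomorphism from $\H_p(x)$ onto $\A^{n-1}_p$, so the $\P$-action on $\A^{n-1}_p$ is conjugate to the $\P$-action on a horosphere, which is proper because $\P$ acts properly on $\O$. This is the properness the paper asserts for the discrete group $\P$; it is then promoted to $\Nn$ by the standard fact that if a closed cocompact subgroup acts properly then so does the ambient group --- and cocompactness of $\P$ in $\Nn$ is precisely what you proved. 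The paper uses Kostant--Rosenlicht (th\'eor\`eme \ref{kos_ros}) only afterwards, to pass from cocompactness to transitivity of the $\U$-action; with properness restored as above, your alternative endgame (free proper cocompact action, hence compact contractible quotient, hence a point) is a valid substitute for that step, and the rest of your proof goes through.
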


\begin{proof}
Le groupe $\P$ est virtuellement nilpotent; par cons\'equent, quitte à passer à un sous-groupe d'indice fini, on peut supposer que $\P$ est nilpotent et Zariski-connexe. L'adh\'erence de Zariski $\Nn$ de $\P$ est alors un sous-groupe nilpotent Zariski-ferm\'e de $\ss$. On note $\U$ l'ensemble des \'el\'ements unipotents de $\Nn$ et on note $K$ l'ensemble des \'el\'ements semi-simples de $\Nn$. La proposition \ref{decompo} montre que $_U$ et un groupe et que $\Nn$ est le produit direct de $\U$ et $K$, le groupe $K$ est ab\'elien.

La proposition \ref{adh_zar_mod} montre que toutes les valeurs propres des \'el\'ements de $K$ sont de module 1. Or, les \'el\'ements du groupe ab\'elien $K$ sont tous semi-simples par cons\'equent $K$ est compact.

Montrons à pr\'esent que le groupe discret $\P$ est un r\'eseau du groupe de Lie $\Nn$. Le groupe d\'eriv\'e $[\P,\P]$ de $\P$ est Zariski-dense dans le groupe unipotent $[\Nn,\Nn]=[\U,\U]$. Le th\'eorème \ref{rag} montre que le groupe $[\P,\P]$ est un r\'eseau cocompact de $[\Nn,\Nn]$. Consid\'erons les projections $\pi_1:\Nn \rightarrow \Quotient{\Nn}{[\Nn,\Nn]} = \Quotient{\U}{[\U,\U]} \times K$ et $\pi_2: \Quotient{\Nn}{[\Nn,\Nn]} \rightarrow \Quotient{\U}{[\U,\U]}$. Le quotient $\Quotient{\U}{[\U,\U]}$ est un groupe de Lie ab\'elien unipotent par cons\'equent, il est isomorphe à un espace vectoriel r\'eel. Le groupe $\pi_2 \circ\pi_1(\P)$ est Zariski-dense dans l'espace vectoriel $\Quotient{\U}{[\U,\U]}$, par suite $\pi_2 \circ\pi_1(\P)$ est un sous-groupe cocompact de $\Quotient{\U}{[\U,\U]}$. Il vient que $\pi_1(\P)$ est un sous-groupe cocompact de $\Quotient{\Nn}{[\Nn,\Nn]}$. Donc, $\P$ est un r\'eseau cocompact de $\Nn$.

Ensuite, consid\'erons l'action de $\P$ sur l'espace affine $\A^{n-1}_p$ des droites de $\PP^n$ passant par $p$ mais qui ne sont pas contenu dans l'hyperplan tangent à $\dO$ en $p$. Le groupe $\Nn$ agit aussi sur $\A^{n-1}_p$. L'action de $\P$ sur $\A^{n-1}_p$  est propre et $\P$ est un sous-groupe cocompact de $\Nn$ par suite $\Nn$ agit proprement sur $\A^{n-1}_p$.

Comme l'action de $\Nn$ sur $\A^{n-1}_p$ est propre le stabilisateur de tout point de $\A^{n-1}_p$ est compact. Mais le groupe $\U$ est unipotent et tout \'el\'ement d'un groupe compact est semi-simple. L'action de $\U$ sur $\A^{n-1}_p$  est donc libre.

Enfin, si l'action de $\P$ sur $\A^{n-1}_p$ est cocompact comme l'orbite de tout point de $\A^{n-1}_p$ sous l'action de $\U$ est Zariski-ferm\'e par le th\'eorème \ref{kos_ros}, l'action de $\Nn$ sur $\A^{n-1}_p$ est transitive et l'action de $\U$ sur $\A^{n-1}_p$ est simplement transitive.
\end{proof}

%
%
%

\subsection{Description des sous-groupes paraboliques uniform\'ement born\'es}

Dans cette partie, nous d\'ecrivons les sous-groupes paraboliques des sous-groupes discrets de $\Aut(\O)$ dont l'action est g\'eom\'etriquement finie sur $\O$. Ceux-ci sont en fait conjugu\'es dans $\ss$ à des sous-groupes paraboliques de $\SO$ et donc en particulier virtuellement ab\'eliens.\\

\subsubsection*{Un petit laïus sur les unipotents qui pr\'eservent un convexe}\label{laius}

\begin{defi}
Soit $\g\in \ss$ un \'el\'ement unipotent. On appelle \emph{degr\'e} de $\g$ le plus petit entier $k$ tel que $(\g-1)^k=0$.
\end{defi}

Soit $\g\in \ss$ un \'el\'ement unipotent qui pr\'eserve un ouvert proprement convexe quelconque. Benoist a remarqu\'e dans \cite{MR2218481} (lemme 2.3) que le degr\'e de $\g$ \'etait n\'ecessairement impair. L'argument est très cours, r\'ep\'etons-le pour faciliter la lecture. On regarde l'action de $\g$ sur la sphère projective $\S$, c'est-à-dire le revêtement à  deux feuillets de $\PP^n$. Un calcul explicite de $\g^n$ dans une base donnant une matrice de Jordan montre que, si $k$ est pair alors dans $\S$, on a $\underset{n\to +\infty}{\lim} \g\cdot x = - \underset{n\to -\infty}{\lim} \g\cdot x$ pour tout $x \in \S$ en dehors d'un hyperplan. Par cons\'equent, si $k$ est pair, $\g$ ne peut pr\'eserver d'ouvert proprement convexe.\\

De plus, si l'ouvert $\O$ est strictement convexe, alors il existe un unique bloc de Jordan de $\g$ de degr\'e maximal $k$ et tous les autres blocs de Jordan de $\g$ sont de degr\'e strictement inf\'erieur à $k$. C'est une cons\'equence du th\'eor\`eme \ref{classi_dym_1}. En effet, l'\'el\'element unipotent $\g$ est n\'ecessairement un \'el\'ement parabolique de $\Aut(\O)$; il possède donc un unique point fixe attractif, ce qui impose l'unicit\'e du bloc de degr\'e maximal.

On obtient ainsi que l'unique point fixe $p$ de $\g$ sur $\dO$ est l'image de $(\g-1)^{k-1}$. En effet cet espace est une droite de $\R^{n+1}$: c'est la droite engendr\'ee par le premier vecteur du bloc de Jordan de degr\'e $k$ de $\g$. En fait, il existe un hyperplan $H$ de $\PP^n$ tel que si $x \notin H$ alors $\g^n \cdot x \to p$ lorsque $n\to \pm \infty$.

On obtient aussi l'existence d'une droite attractive. L'image $D$ de $(\g-1)^{k-2}$ est un plan de $\R^{n+1}$, donc une droite de $\PP^n$: c'est le plan engendr\'e par les deux premiers vecteurs du bloc de Jordan de degr\'e $k$ de $\g$. Si $D'$ est une droite de $\PP^n$ et $D' \not\subset H$ alors $\g^n \cdot D' \to D$. On appellera cette droite la \emph{droite attractive} de $\g$. Cette dernière assertion est simplement une cons\'equence du calcul des $\g^i$ et des $(\g-1)^i$ dans une base donnant une matrice de Jordan.

On peut r\'esumer l'essentiel de ce paragraphe dans la proposition suivante:

\begin{prop}
Soit $\g\in \Aut(\O)$ un \'el\'ement unipotent. Le degr\'e $k$ de $\g$ est impair et le bloc de Jordan de degr\'e maximal est unique.
\end{prop}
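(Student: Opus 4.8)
The plan is to extract both assertions from the asymptotics of the iterates $\g^n$. Write $N=\g-1$, so that $N$ is nilpotent with $N^{k}=0$ and $N^{k-1}\neq 0$, and expand
$$\g^n=(1+N)^n=\sum_{j=0}^{k-1}\binom{n}{j}N^j .$$
Since $\binom{n}{j}$ is a polynomial in $n$ of degree $j$, the term $\binom{n}{k-1}N^{k-1}$ dominates as $|n|\to\infty$; dividing by $\binom{n}{k-1}$ gives $\g^n/\binom{n}{k-1}\longrightarrow N^{k-1}$ in $\mathrm{End}(\R^{n+1})$. This single computation drives both parts, and may assume $k\geqslant 2$, the case $k=1$ giving $\g=\mathrm{id}$.

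For the parity I would run Benoist's argument on the double cover $\S$ of $\PP^n$. Lift $\O$ to the trace $\tilde\O$ of one of the two salient cones $C$ over it; since $\g$ is unipotent it lies in the identity component, hence preserves each lift, so $\g\tilde\O=\tilde\O$. Choose $x\in\tilde\O$ with $N^{k-1}x\neq0$, which is possible because $\ker N^{k-1}$ is a proper subspace and $\tilde\O$ is open. From the expansion, the direction of $\g^n x$ in $\S$ tends to $[N^{k-1}x]$ as $n\to+\infty$, because $\binom{n}{k-1}>0$; as $n\to-\infty$ the sign of $\binom{n}{k-1}=\frac{n(n-1)\cdots(n-k+2)}{(k-1)!}$ is $(-1)^{k-1}$, so the direction tends to $(-1)^{k-1}[N^{k-1}x]$. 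Both limits lie in $\overline{\tilde\O}$. If $k$ were even the two limits would be antipodal, so $\overline{\tilde\O}$, the trace of the salient cone $\overline{C}$, would contain a pair of antipodal points, which is impossible. Hence $k$ is odd.

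For the uniqueness I would use the compactification of $\textrm{PGL}_{n+1}(\R)$ inside $\PP(\mathrm{End}(\R^{n+1}))$. The computation above shows that the sequence $(\g^n)$ converges there to $[N^{k-1}]$, which is singular since $N^{k-1}$ has rank strictly less than $n+1$. As $\O$ is strictly convex with $\C^1$ boundary, Proposition \ref{prop_pre_action_conv} applies to this sequence of automorphisms and forces $\mathrm{Im}(N^{k-1})$ to be reduced to a single point of $\dO$, that is, a line in $\R^{n+1}$. Reading $N^{k-1}$ off a Jordan basis, $\mathrm{Im}(N^{k-1})$ is the span of the bottom vectors of the Jordan blocks of maximal size $k$: blocks of smaller size are annihilated by $N^{k-1}$, while each maximal block contributes a one-dimensional image. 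Its dimension therefore equals the number of maximal blocks, which must then be $1$, and the attracting fixed point is exactly $[\mathrm{Im}(N^{k-1})]$. Equivalently, one may first note that $\g$ is parabolic — it is neither elliptic (a nontrivial unipotent fixing an interior point would lie in the compact stabiliser of that point, which contains no nontrivial unipotent) nor hyperbolic (its spectral radius is $1$) — so Theorem \ref{classi_dym_1} yields a single attracting point, again $[\mathrm{Im}(N^{k-1})]$, forcing this image to be a line.

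The routine but genuinely delicate point is the sign bookkeeping in the parity argument: one must verify that, for large negative $n$, the leading coefficient $\binom{n}{k-1}$ has sign $(-1)^{k-1}$ and that the lower-order terms $\binom{n}{j}N^j$ with $j<k-1$ are negligible against it in $\S$, so that the backward limit is genuinely the antipode and not merely a nearby point. Everything else reduces to the explicit action of $N$ on a Jordan basis together with the already-established Proposition \ref{prop_pre_action_conv} and Theorem \ref{classi_dym_1}; no geometric input is needed beyond proper convexity for the parity and strict convexity with $\C^1$ boundary for the uniqueness.
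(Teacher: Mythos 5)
Your proof is correct and follows essentially the same route as the paper. For the parity, the paper reproduces exactly the argument you give (it is Benoist's, lemme 2.3 of \cite{MR2218481}): the explicit computation of $\g^n$ in a Jordan basis, read on the double cover $\S$, showing that for $k$ even the forward and backward limits of a generic orbit are antipodal, which is incompatible with the proper convexity of the lifted cone. For the uniqueness, the paper's argument is precisely your ``equivalently'' variant: $\g$ is necessarily parabolic by Theorem \ref{classi_dym_1}, hence admits a unique fixed point towards which all orbits converge, and this forces $\mathrm{Im}\big((\g-1)^{k-1}\big)$ to be a line. Your primary route, through the compactification in $\PP(\mathrm{End}(\R^{n+1}))$ and Proposition \ref{prop_pre_action_conv} applied to the sequence $[\g^n]\to[(\g-1)^{k-1}]$, is a valid alternative that reaches the same conclusion and uses only material already established in the paper.

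One justification should be repaired. You claim that $\g$ preserves each lift $\tilde\O$ ``since $\g$ is unipotent it lies in the identity component''; but $\ss$ is connected, so as stated this says nothing, and the one-parameter group $\exp(t\log\g)$ has no reason to preserve $\O$, so $\g$ need not lie in the identity component of the stabiliser of the pair of cones $\{C,-C\}$. The fact itself is true and easily restored: by Proposition \ref{brouwer}, the spectral radius $\rho(\g)=1$ is an eigenvalue of $\g$ whose eigenline lies in $\overline{\O}$; lifting this eigenline to a nonzero fixed vector $\tilde p\in\overline{C}$ and using that proper convexity gives $\overline{C}\cap(-\overline{C})=\{0\}$, one sees that $\g$ cannot exchange $C$ and $-C$. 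Alternatively, run your antipodality argument on $\g^2$, which does preserve each cone and has the same degree $k$, since $\g^2-1=(\g-1)(\g+1)$ with $\g+1$ invertible and commuting with $\g-1$.
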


\begin{defi}
Une courbe $\mathbb{S}^1 \rightarrow \PP^n$ est dite \emph{convexe} lorsqu'elle est incluse dans le bord d'un ouvert proprement convexe.
\end{defi}

\begin{lemm}\label{vero}
Soit $\g\in\Aut(\O)$ un \'el\'ement unipotent. On note $p$ le point de $\dO$ fix\'e par $\g$, $H$ l'hyperplan tangent à $\O$ en $p$ et $\U=\{g^t\}$ le groupe à un paramètre engendr\'e par $\g$. Si $x \notin H$, l'application 
$$
\begin{array}{rcl}
   \PP^1 & \longrightarrow & \PP^n\\
   t \in \R & \longmapsto & \g^t \cdot x\\
   \infty & \longmapsto & p
  \end{array}
$$
d\'efinit une courbe $\C_{x}$ alg\'ebrique, lisse et convexe. De plus, la tangente à $\C_x$ en $p$ est la droite attractive de $\g$.
\end{lemm}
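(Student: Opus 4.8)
The plan is to recognize $\C_x$ as a moment (rational normal) curve carried by the one-parameter unipotent flow. First I would write $\g=\exp N$ with $N=\log\g$ nilpotent; since $N$ is a polynomial in $\g-1$ with nonzero linear coefficient, $N^j$ and $(\g-1)^j$ share the same kernel and image for every $j$, and $N$ has the same (odd) degree $k$ as $\g-1$, with a unique maximal Jordan block (paragraphe \ref{laius}). Thus $\textrm{Im}\,N^{k-1}$ is the line over $p$, $\textrm{Im}\,N^{k-2}$ is the $2$-plane over the attractive line $D$, and $H=T_p\dO=\ker N^{k-1}$ (in a Jordan basis both are the hyperplane missing the top vector of the maximal block; equivalently $H$ is the unique $\g$-invariant supporting hyperplane at $p$). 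Fixing a lift $\tilde x$ of $x$ and setting $v_j:=N^j\tilde x$, the flow is the polynomial path
$$\g^t\tilde x=\exp(tN)\tilde x=\sum_{j=0}^{k-1}\frac{t^j}{j!}\,v_j,$$
of degree exactly $k-1$, because $x\notin H$ forces $v_{k-1}=N^{k-1}\tilde x\neq 0$.

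The crux is a linear independence statement: I would show $v_0,\dots,v_{k-1}$ are independent by applying $N^{k-1-m}$ to a hypothetical relation $\sum a_jv_j=0$ whose first nonzero coefficient is $a_m$, which collapses to $a_mN^{k-1}\tilde x=0$ and hence $a_m=0$. Consequently $W:=\mathrm{span}(v_0,\dots,v_{k-1})$ has dimension $k$, and in the basis $(v_j)$ the path reads $[1:t:t^2/2:\cdots:t^{k-1}/(k-1)!]$, i.e. it is the degree $(k-1)$ rational normal curve of $\PP(W)\cong\PP^{k-1}$. Homogenizing, $\tilde Q(s,t)=\sum_{j}\frac{t^js^{k-1-j}}{j!}v_j$ is a system of degree $(k-1)$ forms with no common zero on $\PP^1$ (for $s\neq0$ it equals $s^{k-1}\g^{t/s}\tilde x\neq0$; at $s=0$ it is $\frac{t^{k-1}}{(k-1)!}v_{k-1}$), hence defines a morphism $\PP^1\to\PP(W)$ sending $(0:1)\mapsto[v_{k-1}]=p$ and restricting to $t\mapsto\g^t x$ on the affine chart. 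This is exactly the smooth embedding of $\PP^1$ onto the rational normal curve; so $\C_x$ is algebraic and smooth (lisse), and the continuity of the extension $\infty\mapsto p$ is automatic.

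For convexity I would invoke the classical fact that the rational normal curve is convex: any $k-1$ of its points lie on a hyperplane that (by a Vandermonde sign computation) supports its convex hull, so the curve bounds a properly convex open set $\O_W$ of $\PP(W)$. It then remains to pass from $\PP(W)$ to $\PP^n$: choosing two points $q^{\pm}\in\PP^n\smallsetminus\PP(W)$ in general position, the interior $\O'$ of the join of $\overline{\O_W}$ with $q^+$ and $q^-$ is properly convex, and every hyperplane of $\PP(W)$ supporting $\O_W$ extends to a hyperplane of $\PP^n$ through $q^{\pm}$ supporting $\O'$; hence $\C_x\subset\partial\O'$ and $\C_x$ is convex.

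Finally, for the tangent at $p$, I would read off the homogenized parametrization $\tilde Q(s,1)=\sum_j\frac{s^{k-1-j}}{j!}v_j$ near $s=0$: its value is $[v_{k-1}]=p$ and its velocity is proportional to $v_{k-2}$, so the tangent line is $[\mathrm{span}(v_{k-1},v_{k-2})]$. Since $v_{k-1}$ and $v_{k-2}$ both lie in the $2$-plane $\textrm{Im}\,N^{k-2}$ and are independent, their span is exactly $\textrm{Im}\,N^{k-2}$, which projectivizes to the attractive line $D$ of \ref{laius}. The only genuinely non-formal point is the convexity step: isolating why the rational normal curve bounds a properly convex body and checking that this property survives the inclusion $\PP(W)\subset\PP^n$; everything else is bookkeeping of the unipotent flow.
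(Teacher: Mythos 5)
Your proposal follows, at bottom, the same route as the paper's proof: recognize the orbit curve as a Veronese (rational normal) curve. The paper does this in Jordan coordinates --- in the case of a single nontrivial Jordan block the curve is written $[t^{k-1}:t^{k-2}s:\cdots:s^{k-1}:1:\cdots:1]$, and the general case is dispatched with ``il suffit d'appliquer cette remarque \`a chaque bloc de Jordan''. Your organization via the cyclic subspace $W=\mathrm{span}(v_0,\dots,v_{k-1})$, $v_j=N^j\tilde x$, is actually cleaner on precisely that last point: with several blocks the orbit is a diagonal curve contained in no single block, whereas your independence argument shows directly that $\g$ acts on $W$ as a single Jordan block of size $k$ and that $\C_x$ is the rational normal curve of $\PP(W)\cong\PP^{k-1}$. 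The price is that you must make explicit the two steps the paper leaves implicit: convexity of the rational normal curve of even degree $k-1$ (even because $k$ is odd, by the paragraph \ref{laius}), and the passage of convexity from $\PP(W)$ to $\PP^n$. Your tangent computation, and the identification $T_p\dO=\PP(\ker N^{k-1})$ obtained by duality, are correct and consistent with the paper's definition of the attractive line.

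Two local corrections, in exactly those two steps. First, your justification of convexity is wrong as stated: a hyperplane of $\PP(W)$ through $k-1$ distinct points of the curve corresponds to a degree-$(k-1)$ polynomial with $k-1$ simple real roots, which changes sign, hence crosses the curve instead of supporting it. The supporting hyperplanes are those with double contact: through any $(k-1)/2$ points one takes the polynomial $\prod_i(t-t_i)^2$, nonnegative of degree $k-1$. Equivalently, and this is the cleanest form of the classical fact you invoke: the curve consists of the classes of $(k-1)$-st powers of linear forms, which, $k-1$ being even, lie on the boundary of the properly convex cone of nonnegative binary forms of degree $k-1$ --- the cone $\C_{max}$ that the paper itself uses in its counterexample section. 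Second, the join of $\overline{\O_W}$ with two points $q^{\pm}$ has dimension $k+1$, hence empty interior in $\PP^n$ as soon as $k+1<n$; you need $n-k+1$ points in general position (or a simplex spanning a complementary subspace). With that modification your extension argument goes through as you describe: a supporting hyperplane of $\O_W$ at a point of $\C_x$, extended through the added points, supports the resulting hull, so $\C_x$ lies in the boundary of a properly convex open subset of $\PP^n$. With these two repairs the proof is complete.
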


\begin{proof}
Si $\g$ possède un unique bloc de Jordan non trivial, alors, dans un système de coordonn\'ees convenable, $\C_x$ est d\'efinie par $[t:s] \rightarrow [t^{k-1}:t^{k-2}s: ... : s^{k-1}:1:...:1]$ où $k$ est le degr\'e de $\g$; autrement dit, $\C_x$ est la courbe Veronese de degr\'e $k-1$.\\
Il suffit alors d'appliquer cette remarque à chaque bloc de Jordan de $\g$.
\end{proof}

\begin{prop}\label{findieu}
Soit $\g$ (resp. $g$) une matrice unipotente poss\'edant un unique bloc de Jordan de degr\'e maximal impair $k\geqslant 5$ (resp. de degr\'e $3$). On suppose que $\g$ et $g$ ont le même point attractif $p$, la même droite attractive et que $\ker(\g-1)^2=\ker(g-1)^2$. Alors l'\'el\'ement $[\g,g]$ est unipotent de degr\'e $2$. En particulier $[\g,g]$ ne pr\'eserve pas d'ouvert proprement convexe.
\end{prop}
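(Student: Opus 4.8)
Mon plan est de d\'emontrer d'abord que $[\g,g]$ est unipotent de degr\'e exactement $2$, la seconde assertion s'en d\'eduisant aussit\^ot: d'apr\`es le paragraphe \ref{laius} (voir \cite{MR2218481}), tout \'el\'ement unipotent pr\'eservant un ouvert proprement convexe est de degr\'e impair, et $2$ est pair. Il s'agit donc de v\'erifier que $\nu := [\g,g] - 1$ satisfait $\nu^2 = 0$ et $\nu \neq 0$.

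Je commencerais par travailler dans une base adapt\'ee \`a $\g$. On place l'unique bloc de Jordan maximal de $\g$ sur des vecteurs $e_1, \dots, e_k$, de fa\c con que $(\g - 1)e_i = e_{i-1}$; le point attractif est alors $p = [e_1] = \mathrm{Im}(\g-1)^{k-1}$ et la droite attractive est $\langle e_1, e_2\rangle = \mathrm{Im}(\g-1)^{k-2}$ (paragraphe \ref{laius} et lemme \ref{vero}). Les trois hypoth\`eses se traduisent alors en autant de conditions de normalisation portant sur $g - 1$ relativement \`a $\g - 1$: le partage du point attractif impose $\mathrm{Im}(g - 1)^2 = \langle e_1\rangle$, celui de la droite attractive que $g - 1$ envoie le sommet de son bloc de degr\'e $3$ dans $\langle e_1, e_2\rangle$, et l'\'egalit\'e $\ker(\g - 1)^2 = \ker(g-1)^2$ fixe exactement le noyau de $(g - 1)^2$.

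Le c\oe ur de la preuve est le calcul du commutateur. En posant $N = \g - 1$ et $M = g - 1$ (nilpotents, avec $N^k = 0$ et $M^3 = 0$), on d\'eveloppe $[\g,g] - 1 = \g\, g\, \g^{-1} g^{-1} - 1$, dont le terme de plus bas ordre est le commutateur matriciel $[N,M] = NM - MN$. Il s'agit de v\'erifier que, gr\^ace aux normalisations pr\'ec\'edentes, l'op\'erateur $\nu$ a son image contenue dans la droite attractive $\langle e_1, e_2\rangle$, laquelle est \`a son tour contenue dans son noyau; on en d\'eduit aussit\^ot $\nu^2 = 0$. Les hypoth\`eses sur le point et la droite attractifs contr\^olent l'image de $\nu$, tandis que l'\'egalit\'e $\ker(\g-1)^2 = \ker(g-1)^2$ est ce qui fait s'annuler les contributions de degr\'e $\geqslant 3$ de $N$ et de $M$ dans le d\'eveloppement: c'est ce calcul explicite, et le r\^ole pr\'ecis de la condition sur les noyaux, qui constituent \`a mon sens le point d\'elicat. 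Le m\^eme calcul doit enfin montrer que le terme dominant $[N,M]$ est non nul, donc que $\nu \neq 0$; l'\'el\'ement $[\g,g]$ est alors unipotent de degr\'e exactement $2$, ce qui ach\`eve la d\'emonstration.
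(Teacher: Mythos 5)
Your strategy is the same as the paper's: fix a basis adapted to the maximal Jordan block of $\g$, translate the three hypotheses into a normal form for $g$, and show that $\nu=[\g,g]-1$ satisfies $\mathrm{Im}(\nu)\subset\langle e_1,e_2\rangle\subset\ker(\nu)$ and $\nu\neq 0$, so that $[\g,g]$ is unipotent of degree exactly $2$; the final reduction to Benoist's parity remark (paragraph \ref{laius}) is also the paper's. The problem is that the paper's proof consists precisely of the computation you postpone. The paper writes $\g=\mathrm{diag}(J_k,U)$ and $g=\mathrm{diag}(J'_a,I_{k-3},I_{n+1-k})$ with $a\neq 0$, where $J'_a$ is the $3\times 3$ unipotent matrix with superdiagonal entries $a,a$ and corner entry $a^2/2$, and then multiplies out $[\g,g]$ explicitly: the result is the identity plus a matrix supported in the first two rows and columns $4,\dots,k$ (the block $M^a_{2,k-3}$, with entries $\pm a^2/2$ and $\pm a$), which visibly has square zero and is nonzero. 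Everything you label as ``le point d\'elicat'' --- that the image lands in the attractive line, that this line is killed, that the commutator is not the identity --- is exactly this multiplication, so your text is a plan for the proof rather than a proof.

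Two concrete symptoms show the gap is real and not just a matter of polish. First, your heuristic that the ``terme de plus bas ordre'' of $\g g\g^{-1}g^{-1}-1$ is $NM-MN$ cannot replace the exact computation: unipotency of degree $2$ is an exact identity, the higher-order terms in $N$ and $M$ are not a priori negligible, and the paper never argues by leading order --- it computes the full product. Second, your sketch never uses the hypothesis $k\geqslant 5$, yet it is indispensable: for $k=3$ one could take $g=\g$, which has the same attractive point, the same attractive line and the same $\ker(\cdot-1)^2$, while $[\g,g]=\mathrm{id}$ is of degree $1$, not $2$. Any completed argument must therefore locate where the mismatch between the degrees $3$ and $k\geqslant 5$ forces $\nu\neq 0$; in the paper this is visible in the block $M^a_{2,k-3}$, which exists and is nonzero precisely because $k-3\geqslant 2$ and $a\neq 0$. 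Until you carry out that computation (and check that your normalization of $g$ is compatible with it), the statement is not proved.
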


\begin{proof}
C'est un simple calcul. On calcule le bloc principal de $[\g,g]$, pour cela on d\'efinit les matrices suivantes:

$$
\begin{array}{cccccc}
 
 J_k = 
 
 &
 \begin{pmatrix}
1 & 1 & 0 & \cdots  & 0        \\
0 & 1 & 1 &    \ddots    & \vdots \\ 
\vdots & \ddots & \ddots & 1 & 0                  \\
\vdots &  & \ddots & 1 & 1                            \\ 
0 & \cdots & \cdots & 0 & 1                          \\ 
       \end{pmatrix}
      &
       J'_a=
       &
 \begin{pmatrix}
1 & a & \frac{a^2}{2}                  \\
0 & 1 & a &      \\ 
0 & 0 & 1 &                 \\
       \end{pmatrix}
&
M^a_{2,l}=
&
       \begin{pmatrix}
-\frac{a^2}{2} &   \frac{a^2}{2}& \cdots   & -\frac{a^2}{2}  & \frac{a^2}{2}               \\
-a & a & \cdots & -a & a     \\ 
       \end{pmatrix}

       \end{array} 
$$

Ainsi, $J_k$ est le bloc de Jordan canonique de degr\'e $k$, c'est une matrice de taille $k \times k$, $M^a_{2,l}$ est une matrice de taille $2\times l$, où $l$ est un nombre pair et $a \in \R$.

Par hypothèse, les matrices de $\g$ et $g$ ont, dans une base convenable, la forme suivante:
$$
\begin{array}{cccc}

\g =  & 
\begin{pmatrix} J_k &  0 \\ 0  & U \end{pmatrix}
&
\textrm{ et }
&
g = 
\left(
\begin{array}{ccc}
J'_a & 0 &0\\
0      & I_{k-3} &0\\
0&0 & I_{n+1-k}
\end{array}
\right),
\end{array}
$$

\noindent où $U$ est une matrice triangulaire sup\'erieure avec uniquement des 1 sur la diagonale et dont les blocs de Jordan sont de degr\'e strictement inf\'erieur à $k$ et $a \neq 0$. Ainsi, on a,

$$
\begin{array}{cc}

[\g,g] = 

&

\left(
\begin{array}{cccc}
I_{2,2} & 0 & 
\begin{array}{cc} 
M^a_{2,k-3} & 0
\end{array}
\\
0 & 1 & 
\begin{array}{rl}
0 & 0
\end{array}

             \\
0 & 0 & I_{n+1-k}
\end{array}
\right).
\end{array}
$$

Par cons\'equent, $[\g,g]$ est une matrice unipotente de degr\'e $2$.
\end{proof}

Terminons cette partie sur un lemme cl\'e:

\begin{lemm}\label{uni}
Soit $\U$ un sous-groupe unipotent de $\Aut(\O)$ fixant un point $p \in \partial \O$. Si l'action de $\U$ sur $\partial \O \smallsetminus \{ p\}$ est transitive, alors $\O$ est un ellipsoïde.
\end{lemm}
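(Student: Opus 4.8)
The plan is to reduce to the case where $\U$ acts simply transitively on $\A^{n-1}_p$, then to show that every nontrivial element of $\U$ is unipotent of degree exactly $3$, and finally to read off that $\dO$ is a quadric, hence an ellipsoid.

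First I would record that $q\longmapsto (pq)$ is a $\U$-equivariant homeomorphism from $\dO\smallsetminus\{p\}$ onto $\A^{n-1}_p=\D_p(\O)$: since $\O$ is strictly convex and $\C^1$ at $p$, the tangent hyperplane $T_p\dO$ meets $\overline{\O}$ only at $p$, so every line through $p$ meeting $\O$ lies in $\A^{n-1}_p$ and cuts $\dO$ in exactly one further point. Thus transitivity of $\U$ on $\dO\smallsetminus\{p\}$ becomes transitivity on $\A^{n-1}_p$; as the $\Aut(\O)$-action is proper the point stabilizers are compact, and a unipotent group meets a compact group only in the identity, so the action is free. Hence $\U$ acts simply transitively on $\A^{n-1}_p$ and $\dim\U=n-1$; this is exactly the conclusion of Lemma \ref{lemm_gr_alg} applied to the parabolic group $\U$.

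The heart of the argument is to show that no $\g\in\U\smallsetminus\{\mathrm{Id}\}$ has degree $\geq 5$. By \S\ref{laius} every nontrivial $\g\in\U$ is parabolic and unipotent of odd degree $k\geq 3$, with a unique maximal Jordan block and $\mathrm{Im}(\g-1)^{k-1}=p$. The point I would isolate is that $\g$ has degree $\leq 3$ if and only if $\mathrm{Im}(\g-1)^2\subseteq p$, that is, if and only if the induced action of $\g$ on $\A^{n-1}_p$ is an affine translation; an element of degree $\geq 5$ acts on $\A^{n-1}_p$ with nontrivial linear part. Assuming such a $\g$ exists, I would pair it with a degree-$3$ element $g\in\U$ sharing the attractive point $p$, the attractive line, and the subspace $\ker(\cdot-1)^2$, and invoke Proposition \ref{findieu}: the commutator $[\g,g]$ is then unipotent of degree $2$. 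But $[\g,g]\in\U\subseteq\Aut(\O)$ preserves the proper convex $\O$, whereas \S\ref{laius} forces a unipotent preserving a proper convex set to have odd degree; a degree-$2$ element is impossible, a contradiction. Producing a degree-$3$ partner $g$ with \emph{exactly} the matching linear data is the delicate step and the main obstacle: one must exploit the simply transitive action together with the filtration attached to $\g$ (the common fixed point $p$ and the common kernel hyperplane) to realize a configuration to which Proposition \ref{findieu} applies.

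Once every nontrivial element of $\U$ has degree $3$, each acts on $\A^{n-1}_p$ as a translation, and simple transitivity then identifies $\U$ with $\R^{n-1}$ acting by \emph{all} translations. I would next pass to the affine chart $A=\PP^n\smallsetminus T_p\dO$, in which $\overline{\O}\smallsetminus\{p\}$ is a closed convex set whose only point at infinity is $p$; hence $\dO$ is the graph $\{y_n=f(y')\}$ of a $\C^1$ strictly convex function $f$ on $\R^{n-1}$, and the translation action yields, for every $v\in\R^{n-1}$, a relation $f(y'+v)=f(y')+r_v\cdot y'+c_v$ whose right-hand side is affine in $y'$. Every first difference of $f$ being affine forces $f$ to be a polynomial of degree $\leq 2$, and strict convexity makes it a positive-definite quadratic. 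Therefore $\dO$ is an elliptic paraboloid, its projective closure $\overline{\O}$ is a smooth quadric bounding a proper convex region, and such a quadric is projectively equivalent to the round ball; that is, $\O$ is an ellipsoid.
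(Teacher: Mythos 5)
Your route is genuinely different from the paper's, and its endgame is attractive: if every nontrivial element of $\U$ has degree $3$, then $\U$ acts by translations on $\A^{n-1}_p$, the boundary becomes the graph of a $\C^1$ strictly convex function all of whose first differences are affine, hence a positive definite quadratic, and $\O$ is an ellipsoid --- an elementary alternative to the paper's conclusion via Kostant--Rosenlicht (Theorem \ref{kos_ros}), Lemma \ref{vero} and Soci\'e-M\'ethou (Theorem \ref{socie}). But your proof has a genuine gap, exactly where you flag it: the production of the degree-$3$ partner. Proposition \ref{findieu} needs $g\in\U$ unipotent of degree exactly $3$, with the same attractive point, the same attractive line as $\g$, \emph{and} $\ker(\g-1)^2=\ker(g-1)^2$. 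Nothing in your setup produces such a $g$: a priori $\U$ might contain no element of degree $3$ at all, so the commutator argument cannot even start. Commutativity and unipotence are no obstruction to that --- the image of a unipotent one-parameter subgroup of $\s{2}$ under the representation $\rho_4$ into $\s{5}$ (the one used in Section \ref{cex}) is an abelian unipotent group all of whose nontrivial elements have degree $5$ --- and simple transitivity of the action on $\A^{n-1}_p$ does not by itself manufacture low-degree elements.

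This missing step is precisely what the paper's induction on the dimension supplies, which is why the paper's proof is structured the way it is. There, unipotence of $\U$ gives a preserved codimension-$2$ subspace $F\subset T_p\dO$; the group $\U$ acts by translations on the pencil of hyperplanes $H_t$ containing $F$, and the kernel $\V$ of the resulting morphism $\varphi:\U\to\R$ preserves every slice $\O_t=\O\cap H_t$. By the induction hypothesis these slices are ellipsoids, so the nontrivial elements of $\V$ are unipotent parabolic isometries of hyperbolic spaces and therefore have degree $3$ with the required Jordan structure: that is the pool from which the partner $g$ is drawn. Note also that the paper applies Proposition \ref{findieu} to a slightly different end: not to bound degrees throughout $\U$, but to show that the attractive line of any $\g\in\U\smallsetminus\V$ is transverse to $F$; combined with Theorem \ref{kos_ros} and Lemma \ref{vero} (and duality) this gives $\C^2$ regularity of $\dO$ with definite Hessian, and Soci\'e-M\'ethou concludes. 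Unless you add an inductive slicing of this kind, or some other mechanism guaranteeing a degree-$3$ element with matching data, the degree bound is unproved and everything after it in your argument is conditional.
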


\begin{proof}
Cette proposition se d\'emontre par r\'ecurrence. En dimension $n=2$, l'unique groupe unipotent qui pr\'eserve un convexe est le groupe suivant:

$$
\U = \left\{
\begin{pmatrix}
1 & a & \frac{a^2}{2}\\
0 & 1 & a\\
0 & 0 & 1\\
\end{pmatrix}
, a \in \R
\right\}
$$
Si on note $e_1,e_2,e_3$ les vecteurs de la base canonique de $\R^3$, alors l'orbite sous $\U$ d'un point qui n'est pas sur la droite projective $<e_1,e_2>$ est une ellipse priv\'ee du point $<e_1>$.\\

Supposons maintenant que la propri\'et\'e soit d\'emontr\'ee pour un ouvert convexe de $\PP^{n-1}$ et prenons $\O\subset\PP^n$. On va montrer que le bord de $\O$ est de classe $\C^2$ \`a hessien d\'efini positif. Le th\'eor\`eme \ref{socie} d'\'Edith Soci\'e-M\'ethou permettra de conclure que $\O$ est un ellipso\"ide.\\ 
On note $H$ l'hyperplan tangent à $\O$ en $p$. Comme le groupe $\U$ est unipotent, il pr\'eserve un sous-espace $F$ de dimension $n-2$ inclus dans $H$. L'ensemble des hyperplans contenant $F$ est l'espace projectif $\PP(\Quotient{\R^{n+1}}{\tilde F}) = \PP^1$, o\`u $\tilde F$ d\'esigne le relev\'e de $F$ \`a $\R^{n+1}$. L'action de $\U$ sur $\PP(\Quotient{\R^{n+1}}{\tilde F})$ pr\'eserve l'hyperplan $H$ donc $\U$ agit par transformations affines sur $\PP(\Quotient{\R^{n+1}}{\tilde F}) \smallsetminus {H} = \A^1$. Ces transformations affines \'etant unipotentes, $\U$ agit en fait par translations sur $\PP(\Quotient{\R^{n+1}}{\tilde F}) \smallsetminus {H}$. On obtient donc un morphisme $\varphi : \U \rightarrow \R$.\\
On note $(H_t)_{t \in \PP^1}$ le param\'etrage de la famille des hyperplans de $\PP^n$ contenant $F$ par $\PP^1$, obtenu en posant $H_{\infty} =H$. Ainsi, le noyau $\V$ de $\varphi$ pr\'eserve chacun des hyperplans $H_t$.\\
Par cons\'equent, si $t \neq \infty$, le groupe $\V$ pr\'eserve les ouverts proprement convexe $\O_t = \O \cap H_t$ qui sont strictement convexes à bord $\C^1$. L'action de $\V$ sur $\partial \O_t \smallsetminus \{ p\}$ \'etant clairement transitive, l'hypothèse de r\'ecurrence montre donc que les $\O_t$ sont des ellipsoïdes.\\
Soit $\g \in \U \smallsetminus \V$. Si la droite attractive de $\g$ est incluse dans $F$, alors il existe un \'el\'ement $g \in \V$ tel que $g$ et $\g$ ont le même point fixe $p$ et la même droite attractive, par cons\'equent, le lemme \ref{findieu} montre que l'\'el\'ement $[\g,g]$ ne pr\'eserve pas d'ouvert proprement convexe ce qui est absurde.

Par cons\'equent, la droite attractive de $\g$ n'est pas incluse dans $F$, et le convexe $\O$ est de classe $\C^2$ à hessien d\'efini positif, ainsi le th\'eorème \ref{socie} conclut. En effet, le th\'eor\`eme \ref{kos_ros} appliqu\'e \`a l'action de $\U$ sur l'espace affine $\PP^n\smallsetminus H$ montre que l'ensemble $\dO \smallsetminus \{ p\}$ est Zariski-ferm\'e; il est lisse car le groupe alg\'ebrique $\U$ agit transitivement sur ce dernier. L'ensemble $\dO$ est la compl\'etion alg\'ebrique de $\dO  \smallsetminus \{ p\}$ dans $\PP^n$, c'est une sous-vari\'et\'e de classe $C^2$:  le point $p$ est de classe $\C^2$ puisque dans la direction de $F$ c'est un ellipsoïde, et dans la direction donn\'ee par la droite attractive de $\g$, c'est une courbe alg\'ebrique convexe lisse (lemme \ref{vero}). De la m\^eme fa\c con, le bord du convexe dual $\O^*$ est aussi de classe $\C^2$ et donc $\dO$ est à hessien d\'efini positif. C'est ce qu'il fallait montrer.
\end{proof}

\begin{theo}[Soci\'e-M\'ethou \cite{MR1981171}]\label{socie}
Un ouvert proprement convexe de $\PP^n$ dont le bord est de classe $\C^2$ à hessien d\'efini positif et le groupe d'automorphisme est non compact est un ellipsoïde.
\end{theo}

On peut à pr\'esent se lancer dans l'\'etude des sous-groupes paraboliques uniform\'ement born\'es. Commençons par traiter le cas des

\subsubsection*{Sous-groupes paraboliques de rang maximal}

Le lemme pr\'ec\'edent va permettre d'obtenir le th\'eor\`eme suivant.

\begin{theo}\label{lemmededieu}
Soit $\P$ un sous-groupe parabolique discret de $\Aut(\O)$ fixant $p$. Si le groupe $\P$ est de rang maximal, alors il pr\'eserve des ellipsoïdes $\E^{int}$ et $\E^{ext}$ tels que
\begin{itemize}
 \item $\partial\E^{int}\cap \partial\E^{ext}=\partial\E^{int}\cap\dO = \partial\E^{ext}\cap\dO=\{p\}$;
 \item $\E^{int}\subset \O\subset \E^{ext}$;
 \item $\E^{int}$ est une horoboule de l'espace hyperbolique $(\E^{ext}, d_{\E^{ext}})$.
\end{itemize}
En particulier, le groupe $\P$ est conjugu\'e dans $\ss$ à un sous-groupe parabolique de $\SO$.
\end{theo}

\begin{center}
\begin{figure}[h!]
  \centering
\includegraphics[width=9cm]{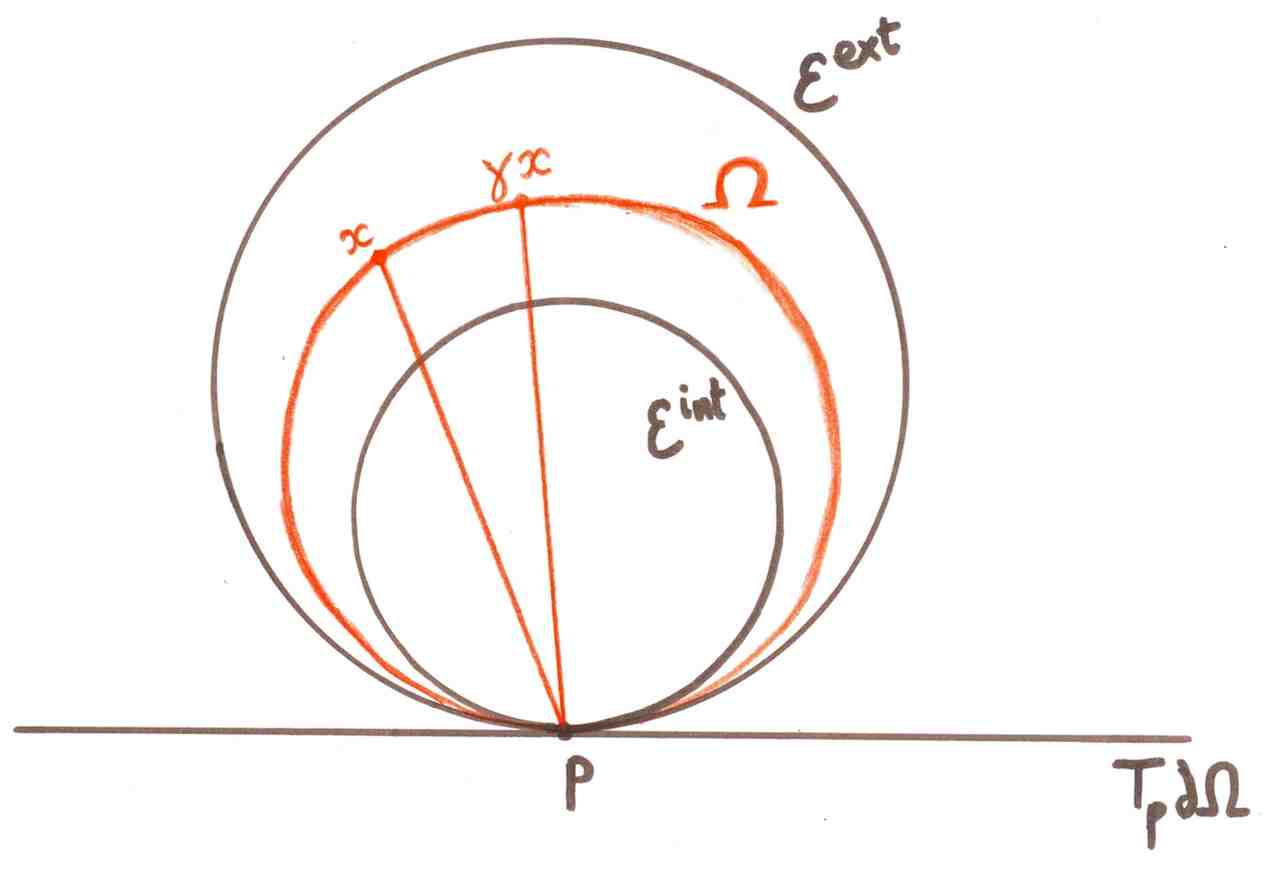}
\caption{$\O$ coinc\'e !}
\end{figure}
\end{center}

\begin{proof}
Soient $\Nn$ l'adh\'erence de Zariski de $\P$ dans $\ss$ et $\U=\U(\Nn)$ le sous-groupe des \'el\'ements unipotents de $\Nn$. Le lemme \ref{lemm_gr_alg} montre que le groupe $\P$ est un r\'eseau cocompact de $\Nn$ et que l'action de $\U$ sur $\A_p^{n-1}$ est simplement transitive.\\
Soient $H$ l'hyperplan tangent à $\dO$ en $p$ et $x\in\PP^n\smallsetminus H$. D'après le th\'eorème \ref{kos_ros} appliqu\'e \`a l'action de $\U$ sur l'espace affine $\PP^n\smallsetminus H$, l'orbite $\U \cdot x$ est une sous-vari\'et\'e alg\'ebrique lisse $C_x$ de $\PP^n\smallsetminus H$; $C_x$ est hom\'eomorphe à $\R^{n-1}$ puisque $\U \cdot (px)=\A_p^{n-1}$ est hom\'eomorphe à $\R^{n-1}$. 

On peut consid\'erer l'enveloppe convexe $\E_x$ de $C_x$ dans $\PP^n$ qui est un ouvert proprement convexe de $\PP^n$: en effet, on a $\lim \g \cdot x =p$ quand $\g$ tend vers l'infini dans $\P$ et donc \'egalement quand $\g$ tend vers l'infini dans $\Nn$. Le groupe $\U$ agit simplement transitivement sur $\partial \E_x \smallsetminus \{ p\}$ car $\partial \E_x \smallsetminus \{ p\}$ se projette  bijectivement sur $\A_p^{n-1}$. Par cons\'equent, $C_x =\partial \E_x \smallsetminus \{ p\}$.

Le bord $\partial \E_x$ de $\E_x$, qui est la compl\'etion alg\'ebrique de $C_x$ dans $\PP^n$ est un ferm\'e de Zariski de $\PP^n$. La vari\'et\'e alg\'ebrique $\partial \E_x$ est partout lisse sauf peut-être en $p$. Comme $\U$ agit transitivement sur l'espace affine $\A_p^{n-1}$ des droites passant par $p$ qui ne sont pas incluses dans $H$, le bord $\partial \E_x$ admet un unique plan tangent en $p$ : l'hyperplan $H$. Comme $\E_x$ est convexe, on en d\'eduit que son bord $\partial\E_x$ est de classe $\C^1$ au point $p$. 

Par cons\'equent, $\E_x$ est un ouvert proprement convexe à bord $\C^1$. Le même raisonnement montre que le dual $\E_x^*$ de $\E_x$ est un ouvert proprement convexe à bord $\C^1$ et donc que $\E_x$ est un ouvert proprement convexe strictement convexe à bord $\C^1$. Le lemme \ref{uni} montre alors que $\E_x$ est un ellipsoïde.\\

Comme l'action de $\P$ sur $\dO\smallsetminus \{p\}$ est cocompacte, on peut trouver $x$ et $y$ tels que $\E_x\subset \O\subset \E_y$. On pose alors $\E^{int}=\E_x$ et $\E^{ext}=\E_y$.
\end{proof}

\begin{rema}\label{pleinplein}
En faisant varier le point $x$ le long d'une droite passant par $p$ et coupant $\O$, on voit que le groupe $\P$ pr\'eserve une famille \`a un param\`etre d'ellipso\"ides tangents \`a $\O$ en $p$.
\end{rema}

Notons tout de suite une cons\'equence de ce r\'esultat.

\begin{coro}\label{loincusp}
Soit $\P$ un sous-groupe parabolique de rang maximal de $\Aut(\O)$ fixant le point $p$ de $\dO$. Le quotient $\Quotient{H}{\P}$ de toute horoboule $H$ bas\'ee en $p$ par $\P$ est de volume fini.
\end{coro}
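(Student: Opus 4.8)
The plan is to reduce, via Theorem \ref{lemmededieu}, to the classical finiteness of the volume of a hyperbolic cusp, and to transport this estimate to $\O$ through the monotonicity of the Busemann volume (Proposition \ref{compa}). First I would record the hyperbolic picture. Theorem \ref{lemmededieu} provides ellipsoids $\E^{int}\subset\O\subset\E^{ext}$, tangent to one another and to $\dO$ only at $p$, with $\E^{int}$ a horoball of the hyperbolic space $(\E^{ext},d_{\E^{ext}})$. Moreover, by the proof of that theorem together with Lemma \ref{lemm_gr_alg}, $\P$ is a cocompact lattice in its Zariski closure $\Nn$, whose unipotent radical acts simply transitively on $\A_p^{n-1}\cong\partial\E^{int}\smallsetminus\{p\}$; hence $\P$ acts cocompactly on $\partial\E^{int}\smallsetminus\{p\}$, i.e. $\P$ is a maximal-rank parabolic subgroup of the hyperbolic space $\E^{int}$. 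Classical hyperbolic geometry then gives, for every horoball $H^{int}$ of $\E^{int}$ based at $p$, that $\Vol_{\E^{int}}(\Quotient{H^{int}}{\P})<\infty$: a fundamental domain is the product of the compact quotient of a horosphere with the depth parameter, and the volume density decays exponentially with depth.

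Next I would reduce to a single horoball. Since $\P$ acts cocompactly on $\dO\smallsetminus\{p\}$ and every horosphere of $\O$ based at $p$ is $\P$-equivariantly homeomorphic to $\dO\smallsetminus\{p\}$, the quotient of any such horosphere by $\P$ is compact; consequently the shell between two nested horoballs based at $p$ is compact modulo $\P$, hence of finite volume. It therefore suffices to bound the volume of one, arbitrarily deep, horoball $H$ of $\O$ based at $p$.

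The heart of the argument is the inclusion: a sufficiently deep horoball of $\O$ based at $p$ lies inside a horoball of $\E^{int}$ based at $p$. To prove it I would compare the Busemann functions of the three convex sets at $p$. Fixing $x_0$ and $y$, and letting $z\to p$ along the line $(yp)$ --- which meets both $\dO$ and $\partial\E^{ext}$ at $p$ --- a cross-ratio computation shows that the divergent parts of $\d(y,z)$ and $d_{\E^{ext}}(y,z)$ cancel, so that $\d(y,z)-d_{\E^{ext}}(y,z)$ tends to a finite limit $g(y)$; moreover $g(y)\geqslant 0$ because $\O\subset\E^{ext}$ (Proposition \ref{compa}). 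Using that the Busemann functions exist under the $\C^1$ hypothesis (section \ref{busemannhoro}), this yields $b^{\O}_p(x_0,y)-b^{\E^{ext}}_p(x_0,y)=g(y)-g(x_0)$, and since $g\geqslant 0$ every horoball of $\O$ based at $p$ is contained in a horoball of $\E^{ext}$ based at $p$, the latter getting arbitrarily deep with the former. Finally, as $\E^{int}$ is a horoball of $\E^{ext}$, the sufficiently deep horoballs of $\E^{ext}$ based at $p$ are exactly the horoballs of $\E^{int}$ based at $p$; composing, a deep enough horoball $H$ of $\O$ satisfies $H\subset H^{int}\subset\E^{int}$ for some horoball $H^{int}$ of $\E^{int}$.

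The conclusion is then immediate: since $\E^{int}\subset\O$, Proposition \ref{compa} gives $\Vol_{\O}(\mathcal{A})\leqslant\Vol_{\E^{int}}(\mathcal{A})$ for $\mathcal{A}\subset\E^{int}$, whence
$$\Vol_{\O}(\Quotient{H}{\P})\leqslant\Vol_{\E^{int}}(\Quotient{H}{\P})\leqslant\Vol_{\E^{int}}(\Quotient{H^{int}}{\P})<\infty.$$
I expect the horoball comparison of the previous step to be the main obstacle: it is exactly where both inclusions $\E^{int}\subset\O\subset\E^{ext}$ and the tangency conditions of Theorem \ref{lemmededieu} are used, the sign $g\geqslant 0$ being what makes the inclusion go the right way. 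Everything else is volume monotonicity and the hyperbolic cusp computation.
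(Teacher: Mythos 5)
Your endgame is the paper's: reduce to a single horoball, squeeze it into a horoball $H^{int}$ of the inner ellipsoid $\E^{int}$, and conclude from the monotonicity $\Vol_{\O}\leqslant\Vol_{\E^{int}}$ on Borel subsets of $\E^{int}$ (proposition \ref{compa}) together with the classical finiteness of hyperbolic cusps. Your reduction to one horoball (compact shells between nested horospheres, via cocompactness of $\P$ on horospheres) is correct and even more detailed than the paper's, and the classical fact that the sufficiently deep horoballs of $(\E^{ext},d_{\E^{ext}})$ based at $p$ coincide with the horoballs of $(\E^{int},d_{\E^{int}})$ based at $p$ is also true (it can be checked with the quadratic forms defining the two ellipsoids in the projective model).

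The gap is exactly at the step you flag as the heart. The identity $b^{\O}_p(x_0,y)-b^{\E^{ext}}_p(x_0,y)=g(y)-g(x_0)$ does \emph{not} follow from the existence of the radial limits $g$. To subtract the two Busemann functions you must let $z\to p$ along a \emph{common} path, writing the difference as $\lim_{z\to p}\bigl\{\bigl[\d(y,z)-d_{\E^{ext}}(y,z)\bigr]-\bigl[\d(x_0,z)-d_{\E^{ext}}(x_0,z)\bigr]\bigr\}$; then at most one of the two brackets is evaluated along the line through its own base point, and for the other one the fixed-line cross-ratio computation no longer applies. This is not a formality: for two convex bodies tangent at $p$, the quantity $\d(x,z)-d_{\E^{ext}}(x,z)$ is genuinely path-dependent and can tend to $+\infty$ when $z\to p$ tangentially (this already happens when $\O$ is replaced by an ellipsoid which is a horoball of $\E^{ext}$). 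Concretely, taking the path radial for $y$, you only get that the second bracket converges to some $c(y)\geqslant 0$, and what your horoball inclusion needs is $\sup_y c(y)<\infty$. Proving that (indeed $c(y)=g(x_0)$ for all $y$) is where the sandwiching $\E^{int}\subset\O\subset\E^{ext}$ with common tangency must be used quantitatively: it pinches $\dO$ between two quadrics, so along the ray from $x_0$ through $z$ the exit points from $\O$ and from $\E^{ext}$ are $O(|zp|^2)$ apart while lying at distance $\gtrsim|zp|$ from $z$. None of this estimate is in your text; the sign $g\geqslant 0$ alone does not deliver it.

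Note finally that the paper bypasses this analysis entirely by a soft argument whose ingredients you already use: since $\P$ has maximal rank, it acts cocompactly on $\dO\smallsetminus\{p\}$, hence on every horosphere of $\O$ based at $p$; a sufficiently deep horoball $H$ of $\O$ therefore lies in $\E^{int}$ with closure meeting $\partial\E^{int}$ only at $p$, and the $\P$-invariant continuous function $b^{\E^{int}}_p(x_0,\cdot)$, being bounded on the $\P$-cocompact horosphere $\partial H$, puts $H$ inside a horoball $H^{int}$ of $\E^{int}$ at once. Replacing your Busemann comparison by this compactness argument closes the gap and yields exactly the paper's proof; alternatively, your analytic route can be completed, but only by proving the uniform pinching estimate above.
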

\begin{proof}
Bien entendu, il suffit de montrer le r\'esultat pour une seule horoboule. Prenons $\E^{int}$ comme dans le th\'eorème \ref{lemmededieu}, et appelons $\Vol^{int}$ le volume hyperbolique qu'il d\'efinit; on a $\Vol^{int}\geqslant \Vol_{\O}$ sur les bor\'eliens de $\E^{int}$ (proposition \ref{compa}). Comme $\P$ agit cocompactement sur $\dO\smallsetminus\{p\}$, on peut choisir une petite horoboule $H$ de $\O$ incluse dans $\E^{int}$ dont le bord ne rencontre celui de $\E^{int}$ qu'en $p$. Cette horoboule $H$ est contenue dans une horoboule $H^{int}$ de $\E^{int}$, de telle façon que $\Quotient{H}{\P}\subset\Quotient{H^{int}}{\P}$ et on a
$$\Vol_{\O}(\Quotient{H}{\P}) \leqslant \Vol^{int}(\Quotient{H^{int}}{\P}).$$
Or, le convexe $\E^{int}$ est un ellipsoïde, la g\'eom\'etrie de Hilbert qui lui est associ\'ee est la g\'eom\'etrie hyperbolique. On sait donc que $\Vol^{int}(\Quotient{H^{int}}{\P})$ est fini.
\end{proof}

\subsubsection*{Cas g\'en\'eral}

Le lemme suivant permet de ramener le cas g\'en\'eral au cas où le rang du sous-groupe parabolique est maximal.

\begin{lemm}\label{cas_general}
Soient $\G$ un sous-groupe discret de $\Aut(\O)$ et $p\in\LG$ un point parabolique uniform\'ement born\'e. Le groupe $\P = \Stab_{\G}(p)$ pr\'eserve un sous-espace projectif $\PP^{d+1}_p$ qui contient $p$ et intersecte $\O$, avec $d$ le rang de $\P$.\\
En particulier, le groupe $\P$ est un sous-groupe parabolique de rang maximal de $\Aut(\O_p)$, o\`u $\O_p$ d\'esigne l'ouvert proprement convexe $\PP^{d+1}_p\cap\O$.
\end{lemm}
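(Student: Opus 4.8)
L'id\'ee est de r\'ealiser $\P$ dans un sous-espace projectif invariant bien choisi comme un groupe parabolique de \emph{rang maximal}, cas o\`u la partie ``transitivit\'e simple'' du lemme~\ref{lemm_gr_alg} s'applique. Puisque $p$ est uniform\'ement born\'e il est en particulier born\'e, de sorte que $\P=\Stab_{\G}(p)$ est parabolique; quitte \`a passer \`a un sous-groupe d'indice fini sans torsion, on le suppose nilpotent et Zariski-connexe. Soient $\Nn$ son adh\'erence de Zariski et $\U=\U(\Nn)$ sa partie unipotente: d'apr\`es la proposition~\ref{decompo} et le lemme~\ref{lemm_gr_alg}, on a $\Nn=K\times\U$ avec $K$ compact, $\P$ est un r\'eseau cocompact de $\Nn$, et $\U$ agit librement et proprement sur $\A^{n-1}_p$. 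Comme $K$ est compact, la projection $\P\to\U$ est injective d'image cocompacte, donc $\P$ est un r\'eseau cocompact du groupe nilpotent simplement connexe $\U$ et $d=\mathrm{vcd}(\P)=\dim\U$. Pour l'essentiel de l'argument je supposerai $K$ trivial, c'est-\`a-dire $\P\subset\U$, le facteur compact \'etant discut\'e \`a la fin; on fixe alors un point $x_0$ dans l'int\'erieur relatif du convexe $\Sigma:=\mathcal{D}_p\big(\overline{C(\LG\smallsetminus\{p\})}\big)\subset\A^{n-1}_p$.

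Le c\oe ur de la preuve est d'extraire de l'orbite $W:=\U\cdot x_0$ un sous-espace affine de dimension $d$. Par le th\'eor\`eme de Kostant--Rosenlicht~\ref{kos_ros}, $W$ est Zariski-ferm\'e, donc une sous-vari\'et\'e ferm\'ee de dimension $d$ de $\A^{n-1}_p$ sur laquelle $\U$ agit simplement transitivement; comme $\P\subset\U$ est un r\'eseau, le quotient $\Quotient{W}{\P}$ est compact. Par ailleurs, dire que $p$ est uniform\'ement born\'e, c'est dire que $\P$ agit cocompactement sur $\Sigma$; \'ecrivons $\Sigma=\P\cdot D$ avec $D$ compact. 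Puisque $x_0\in\Sigma$ et $\P\subset\U$, on a $\P\cdot x_0\subset W\cap\Sigma$, et par convexit\'e l'enveloppe convexe $C(\P\cdot x_0)$ est contenue dans $\Sigma=\P\cdot D$. C'est le point d\'ecisif : les translat\'es par $\P$ du compact $D$ restent \`a distance born\'ee de l'orbite $W$, de dimension $d$, et ne peuvent donc remplir l'enveloppe convexe d'une orbite \emph{courb\'ee} --- d\'ej\`a pour un sous-groupe \`a un param\`etre parabolique, l'enveloppe convexe d'une orbite non plate s'\'etale transversalement et s'\'echappe de $\P\cdot D$. L'orbite $W$ est donc un $d$-plan affine, sur lequel $\U$ agit simplement transitivement. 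On pose alors $\PP^{d+1}_p$ \'egal \`a la r\'eunion de $p$ et des droites param\'etr\'ees par $W$ : c'est un sous-espace projectif $\P$-invariant de dimension $d+1$, contenant $p$, rencontrant $\O$, et tel que $\mathcal{D}_p(\O_p)=W$, o\`u $\O_p:=\PP^{d+1}_p\cap\O$.

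Il reste \`a voir que $\P$ est de rang maximal dans $\Aut(\O_p)$. L'ouvert $\O_p$ est proprement convexe dans $\PP^{d+1}_p$, strictement convexe et \`a bord $\C^1$ en tant que section de $\O$, et $\P$ le pr\'eserve, fixe $p$ et est discret dans $\Aut(\O_p)$. Par stricte convexit\'e, $\xi\mapsto(p\xi)$ identifie $\partial\O_p\smallsetminus\{p\}$ \`a $\mathcal{D}_p(\O_p)=W$ de mani\`ere $\P$-\'equivariante; comme $\U$ --- et donc $\P$, cocompact dans $\Nn\supset\U$ --- agit transitivement sur $W$, le groupe $\P$ agit cocompactement sur $\partial\O_p\smallsetminus\{p\}$. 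Ainsi $\P$ est un sous-groupe parabolique de $\Aut(\O_p)$ de rang $d=\dim\O_p-1$, c'est-\`a-dire de rang maximal, et le lemme~\ref{lemm_gr_alg} appliqu\'e dans $\O_p$ redonne la transitivit\'e simple de $\U$ sur $\A^d_p$.

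Le principal obstacle est l'\'etape de platitude ci-dessus : passer d'une orbite unipotente ferm\'ee de dimension $d$ \`a un v\'eritable $d$-plan affine. C'est exactement ici qu'il faut utiliser que $p$ est uniform\'ement born\'e et pas seulement born\'e --- \^etre born\'e ne contr\^ole que $\LG\smallsetminus\{p\}$, tandis que la cocompacit\'e de l'action de $\P$ sur le convexe $\Sigma$ des directions vers tout le c\oe ur convexe est ce qui emp\^eche l'orbite de se courber. Le facteur compact $K$ constitue une difficult\'e technique secondaire : on s'y ram\`ene en rempla\c cant $x_0$ par une direction fix\'ee par $K$ (celle d'un point du c\oe ur fix\'e par $K$, les actions de $K$ et de $\U$ commutant), ce qui rend $W$ \`a nouveau $\P$-invariant et permet de conduire le raisonnement comme si $K$ \'etait trivial.
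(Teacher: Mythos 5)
Your strategy --- proving that the unipotent orbit $W=\U\cdot x_0$ is itself an affine $d$-plane --- breaks down at exactly the step you call decisive, and what you offer there is not a proof. First, the claim that ``les translat\'es par $\P$ du compact $D$ restent \`a distance born\'ee de l'orbite $W$'' has no meaning as stated: at this stage there is no $\P$-invariant metric on $\A^{n-1}_p$ (producing one is essentially the conclusion of the whole section), and for the Euclidean metric the claim is false, since the elements of $\P$ act affinely with unipotent linear part and therefore distort distances polynomially, so $\g D$ need not stay in any bounded neighbourhood of $\g x_0$. Second, the assertion that the convex hull of a curved orbit ``s'\'echappe de $\P\cdot D$'' contradicts your own setup: you correctly note that $C(\P\cdot x_0)\subset\Sigma=\P\cdot D$ by convexity and $\P$-invariance, so nothing escapes from $\P\cdot D$. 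What would actually have to be shown is that if $W$ is curved then $\P$ cannot act \emph{cocompactly} on $\overline{C(\P\cdot x_0)}$; for instance, for the model affine action $u^t(x,y)=(x+t,\,y+tx+t^2/2)$ of $\R$ on $\R^2$, whose orbits are parabolas, one sees this because the $\U$-invariant function $q(x,y)=y-x^2/2$ is unbounded on the hull of an orbit. No argument of this kind appears in your text, and it is precisely the heart of the matter: for the degree-$5$ unipotents of the counter-example of la partie \ref{cex}, the orbit in $\A^3_p$ is a twisted-cubic-type curve, so the flatness of $W$ is genuinely equivalent to what one is trying to prove, not an incidental technicality. There is also a secondary gap in your treatment of the compact factor: to find a $K$-fixed point $x_0\in\Sigma$ you need $K$ to preserve $\Sigma$ (or the convex core), but these sets are only $\P$-invariant, and nothing guarantees they are invariant under the Zariski closure $\Nn$.

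The paper proceeds by a different and cleaner mechanism that never makes a claim about individual unipotent orbits. It considers the family of \emph{maximal affine subspaces} contained in $\D=\overline{\mathcal{D}_p(C(\LG))}$; these all share the same direction, so this family is a closed convex subset of a quotient affine space containing no line; the oddness of the degree of the unipotent elements of $\P$ (cf.\ le paragraphe \ref{laius}), which forces $\lim_{n\to+\infty}\g^n\cdot x$ and $\lim_{n\to-\infty}\g^n\cdot x$ to be antipodal directions in the compactification of $\A^{n-1}_p$, rules out half-lines as well, hence this family is compact; its center of mass is then fixed by $\P$, i.e.\ $\P$ preserves one maximal affine subspace $F\subset\D$, and $\dim F=d$ because $\P$ acts properly and cocompactly on $F$. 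If you want to salvage your approach, the realistic fix is to run this fixed-point argument on $\overline{C(\P\cdot x_0)}$ rather than to try to flatten $W$ directly; the algebraic ingredients you set up (lemme \ref{lemm_gr_alg}, Kostant--Rosenlicht) are then only needed afterwards, as in the paper, once the invariant flat exists.
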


\begin{proof}
Voyons l'ensemble $\LG\smallsetminus\{p\}$ comme un sous-ensemble de $\A_p^{n-1}$, et notons $\D=\overline{\mathcal{D}_p(C(\LG))}$: c'est, dans $\A_p^{n-1}$, l'adh\'erence de l'enveloppe convexe de $\LG\smallsetminus\{p\}$. Soit $K$ l'ensemble des sous-espaces affines maximaux inclus dans l'adh\'erence de $\D$. Les \'el\'ements de $K$ ont tous la même direction $D$. L'ensemble $K$ s'identifie à un ferm\'e convexe dans l'espace affine $\Quotient{\A_{p}^{n-1}}{D}$, qui, par d\'efinition, ne contient pas de droite. Montrons qu'il ne contient pas non plus de demi-droite.\\
Pour cela, compactifions l'espace $\A^{n-1}_p$ en $\overline{\A^{n-1}_p}$ en lui ajoutant l'ensemble des demi-droites passant par un point $o \in\A^{n-1}_p$ fix\'e, qui n'est rien d'autre qu'une sph\`ere. Si $x$ est un point de $\A^{n-1}_p$ et $\g$ un \'el\'ement d'ordre infini de $\P$ alors la limite dans $\overline{\A^{n-1}_p}$ de la suite $\g^n \cdot x$ v\'erifie que $\underset{n\to +\infty}{\lim} \g^n\cdot x = - \underset{n\to -\infty} {\lim}\g^n\cdot x $ car le degr\'e de tout \'el\'ement de $\P$ est impair. Ainsi, si $x$ est un point de $\D$, on voit que l'espace des demi-droites incluses dans $K$ est stable par la sym\'etrie centrale de centre $x$; autrement dit, si une demi-droite est dans incluse dans $K$, la droite enti\`ere l'est \'egalement, ce qui est impossible.\\
Par cons\'equent, le ferm\'e $K$ est proprement convexe. L'action de $\P$ sur $K=\Quotient{\A_{p}^{n-1}}{D}$ possède donc un point fixe, le centre de gravit\'e de $K$. Autrement dit, $\P$ pr\'eserve un sous-espaces affine maximal $F$ de $\F$, dont la dimension est n\'ecessairement \'egale à la dimension cohomologique $d$ de $\P$. Il ne reste plus qu'\`a faire machine arri\`ere: $F$ est un sous-espace affine de  $A_p^{n-1} = \Quo{\PP^n}{p} \smallsetminus T_p\dO$, qui engendre le sous-espace projectif $\tilde F$ de $\Quo{\PP^n}{p}$, lui aussi $\P$-invariant; l'espace $\PP_p^{d+1}$ est le relev\'e \`a $\PP^n$ de $\tilde F$.
\end{proof}

Notons $\Cone(p,C(\LG))=\{y\in\PP^n\ |\ y\in (px), x\in C(\LG)\}$. On en d\'eduit le corollaire suivant.

\begin{coro}\label{para_cas_gene}
Soient $\G$ un sous-groupe discret de $\Aut(\O)$ et $p\in\LG$ un point parabolique uniform\'ement born\'e. Alors le groupe $\P = \Stab_{\G}(p)$ est virtuellement isomorphe à $\Z^d$ et pr\'eserve des ellipsoïdes $\E^{int}$ et $\E^{ext}$ tels que
\begin{itemize}
 \item $\partial\E^{int}\cap \partial\E^{ext}=\partial\E^{int}\cap\dO = \partial\E^{ext}\cap\dO=\{p\}$;
 \item $\E^{int} \cap \Cone(p,C(\LG)) \subset \O \cap \Cone(p,C(\LG)) \subset \E^{ext} \cap \Cone(p,C(\LG))$;
 \item $\E^{int}$ est une horoboule de de l'espace hyperbolique $(\E^{ext},d_{\E^{ext}})$.
\end{itemize}
\end{coro}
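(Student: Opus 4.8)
The plan is to reduce everything to the maximal-rank situation already handled in Theorem \ref{lemmededieu}. Since $p$ is a uniformly bounded parabolic point of rank $d$, Lemma \ref{cas_general} provides a $\P$-invariant projective subspace $\PP^{d+1}_p$ containing $p$ and meeting $\O$, on which $\P = \Stab_\G(p)$ acts as a maximal-rank parabolic subgroup of $\Aut(\O_p)$, where $\O_p = \PP^{d+1}_p \cap \O$. Applying Theorem \ref{lemmededieu} to the pair $(\O_p,\P)$ inside $\PP^{d+1}_p$ already yields that $\P$ is conjugate in $\s{d+2}$ to a parabolic subgroup of $\so{d+1}$; such a group is a lattice in the unipotent radical acting on a Euclidean horosphere, so by Bieberbach it is virtually $\Z^d$, which disposes of the first assertion. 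The same theorem furnishes ellipsoids $\E^{int}_0 \subset \O_p \subset \E^{ext}_0$ in $\PP^{d+1}_p$, internally tangent to $\partial\O_p = \dO \cap \PP^{d+1}_p$ and to one another only at $p$, with $\E^{int}_0$ a horoball of $(\E^{ext}_0, d_{\E^{ext}_0})$.

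It remains to promote these $(d+1)$-dimensional ellipsoids to genuine $n$-dimensional ones adapted to the cone. First I would record the geometry behind $\Cone(p,C(\LG))$. Reading $\LG\smallsetminus\{p\}$ inside $\A_p^{n-1}$ and setting $\D = \overline{\D_p(C(\LG))}$, the proof of Lemma \ref{cas_general} shows that the lineality space of $\D$ is the common direction $D$ of its maximal affine subspaces, that $\D = D \times K$ with $K$ properly convex and, crucially, containing no half-line; since $K$ contains no line either, it is compact. This compactness is precisely the analytic content of uniform boundedness and is what makes the construction go through. Moreover $\P$ acts trivially on $K = \D/D$: a nontrivial unipotent transformation fixing the centroid cannot preserve a bounded convex body with nonempty interior. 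Hence (after passing to a finite-index subgroup if necessary) $\P$ preserves each leaf $D \times \{k\}$, and therefore the whole family of $(d+2)$-dimensional subspaces $\PP^{d+1}_{p,k}$ obtained by lifting the leaf together with $p$.

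On each leaf $\P$ acts once more as a maximal-rank parabolic subgroup of $\Aut(\O_k)$, $\O_k = \PP^{d+1}_{p,k}\cap\O$, so Theorem \ref{lemmededieu} and Remark \ref{pleinplein} supply, in every $\PP^{d+1}_{p,k}$, a one-parameter family of $\P$-invariant ellipsoids internally tangent to $\dO$ at $p$, some contained in $\O_k$ and some containing it. Since $\O\cap\Cone(p,C(\LG)) = \bigcup_{k\in K}\O_k$ and $K$ is compact, these leafwise families form a compact family. The plan is to assemble from them two $\P$-invariant $n$-dimensional ellipsoids $\E^{int}$ and $\E^{ext}$ whose slice on each leaf is a member of that leaf's family, chosen — using compactness of $K$ — so that the slice of $\E^{int}$ lies in $\O_k$ and the slice of $\E^{ext}$ contains $\O_k$ for every $k$. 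This gives exactly $\E^{int}\cap\Cone(p,C(\LG)) \subset \O\cap\Cone(p,C(\LG)) \subset \E^{ext}\cap\Cone(p,C(\LG))$; the restriction to the cone is essential, since outside it $\O$ may be too thin to contain or to be contained in an ellipsoid. Internal tangency at $p$ on the central leaf forces $\partial\E^{int}\cap\partial\E^{ext} = \partial\E^{int}\cap\dO = \partial\E^{ext}\cap\dO = \{p\}$, and two nested ellipsoids internally tangent at a single boundary point automatically realize the inner one as a horoball of the outer one's hyperbolic metric, giving the third bullet for free.

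The main obstacle is this assembly into honest $\P$-invariant $n$-dimensional ellipsoids. Writing $W_0 \subset \R^{n+1}$ for the lift of $\PP^{d+1}_p$, carrying a $\P$-invariant Lorentzian form $Q_0$ of signature $(d+1,1)$, one would like to extend $Q_0$ to a form of signature $(n,1)$ on $\R^{n+1}$; but $\P$ is unipotent, so $W_0$ need not admit a $\P$-invariant complement and one cannot simply take an orthogonal sum $Q_0 \perp Q_U$ — the shear terms of $\P$ must be shown to vanish in a suitable basis. The leverage I would use is that $\P$ preserves the entire compact family $\{\PP^{d+1}_{p,k}\}_{k\in K}$ of leaf subspaces and acts trivially on $K$; this abundance of invariant subspaces should force the relevant extension to split, after which $Q = Q_0 \perp Q_U$ is $\P$-invariant and the size of the positive-definite block $Q_U$ can be tuned, against the compact set $K$, to secure the two inclusions. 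Establishing this splitting and the quantitative tuning is where the real work lies; the reductions of the first three paragraphs are formal consequences of Lemma \ref{cas_general} and Theorem \ref{lemmededieu}.
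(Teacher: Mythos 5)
Your first paragraph is exactly the paper's own reduction (Lemma \ref{cas_general} followed by Theorem \ref{lemmededieu}, plus Bieberbach for the virtual $\Z^d$ statement), but from there the proposal breaks down. The central structural claim of your second paragraph --- that $\P$ acts \emph{trivially} on the leaf space $K$, hence preserves every leaf $D\times\{k\}$ and every subspace $\PP^{d+1}_{p,k}$ --- is false in general, and your justification only covers elements acting unipotently on the quotient. The elements of $\P$ are merely virtually nilpotent, and the very class of groups the corollary describes (parabolic subgroups of $\SO$) contains ``screw'' parabolics: a translation along the invariant axis composed with an irrational rotation of the transverse directions. Such an element fixes the centroid of $K$ and preserves $K$, yet permutes the leaves through an infinite compact group of rotations, and no finite-index subgroup removes it (Bieberbach, as stated in Theorem \ref{bieberbach}, only makes elements act as translations \emph{on} $E$, not on its complement). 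Concretely, a Klein combination in $\so{4}$ of a cyclic group generated by an irrational screw parabolic with a loxodromic cyclic group is discrete and geometrically finite, its fixed point $p$ is uniformly bounded, and its cusp stabilizer rotates the leaves of $\overline{\D_p(C(\LG))}$ with infinite order. With this claim gone, the leafwise application of Theorem \ref{lemmededieu} and the whole assembly of your third paragraph have no starting point. (A smaller error of the same kind: two nested ellipsoids internally tangent at a single boundary point are \emph{not} automatically a horoball inside an ambient hyperbolic space --- in the projective model, horoballs form only a one-parameter family among the many inscribed ellipsoids tangent at $p$ --- so the third bullet is not ``for free''.)

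The second gap you name yourself: the extension of the $(d+1)$-dimensional invariant ellipsoids to $\P$-invariant ellipsoids of $\PP^n$, together with the quantitative tuning, is ``where the real work lies'' and is left undone; a proposal that stops there has not proved the corollary, since that is precisely the step carrying its content. For comparison, the paper never foliates the cone: it extends $\E_p^{int}$ and $\E_p^{ext}$ once, to $\P$-invariant ellipsoids $\E^{int},\E^{ext}$ of $\PP^n$ with $\O_p\cap\E^{int}=\E_p^{int}$ and $\O_p\cap\E^{ext}=\E_p^{ext}$, and then obtains the two inclusions restricted to $\Cone(p,C(\LG))$ by combining the cocompactness of the action of $\P$ on $\overline{\D_p(C(\LG))}$ --- which is exactly the hypothesis of uniform boundedness --- with the one-parameter family of invariant ellipsoids of Remark \ref{pleinplein}, shrinking the inner one and enlarging the outer one as needed. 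Neither triviality of the action on $K$ nor any leafwise construction enters that argument, so repairing your proof would mean either justifying the single extension step directly (as the paper implicitly does) or finding a genuinely different mechanism that tolerates rotational parts.
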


\begin{proof}
Le lemme pr\'ec\'edent nous fournit un ouvert convexe $\O_p \subset \PP_p^{d+1}$ dont le groupe $\P$ est un sous-groupe parabolique de rang maximal. Prenons deux ellipsoïdes $\P$-invariants $\E_p^{int}$ et $\E_p^{ext}$ de $\PP^{d+1}_p$ comme dans le th\'eorème \ref{lemmededieu}.\\
Il existe donc des ellipsoïdes $\P$-invariants $\E^{int}$ et $\E^{ext}$ de $\PP^n$ tels que $\O_p\cap \E^{int} =\E_p^{int}$ et $\O_p\cap \E^{ext}=\E_p^{ext}$. L'action de $\P$ sur l'adh\'erence, dans $\A_p^{n-1}$ de $\mathcal{D}_p(C(\LG))$ \'etant cocompacte, on peut, quitte à prendre $\E_p^{int}$ et $\E_p^{ext}$ plus petits ou plus grands (c'est possible car, d'apr\`es la remarque \ref{pleinplein} on en a en fait une famille à un paramètre), faire en sorte que $\E^{int}$ et $\E^{ext}$ v\'erifient les conditions de l'\'enonc\'e.
\end{proof}

\begin{center}
\begin{figure}[h!]
  \centering
\includegraphics[width=9cm]{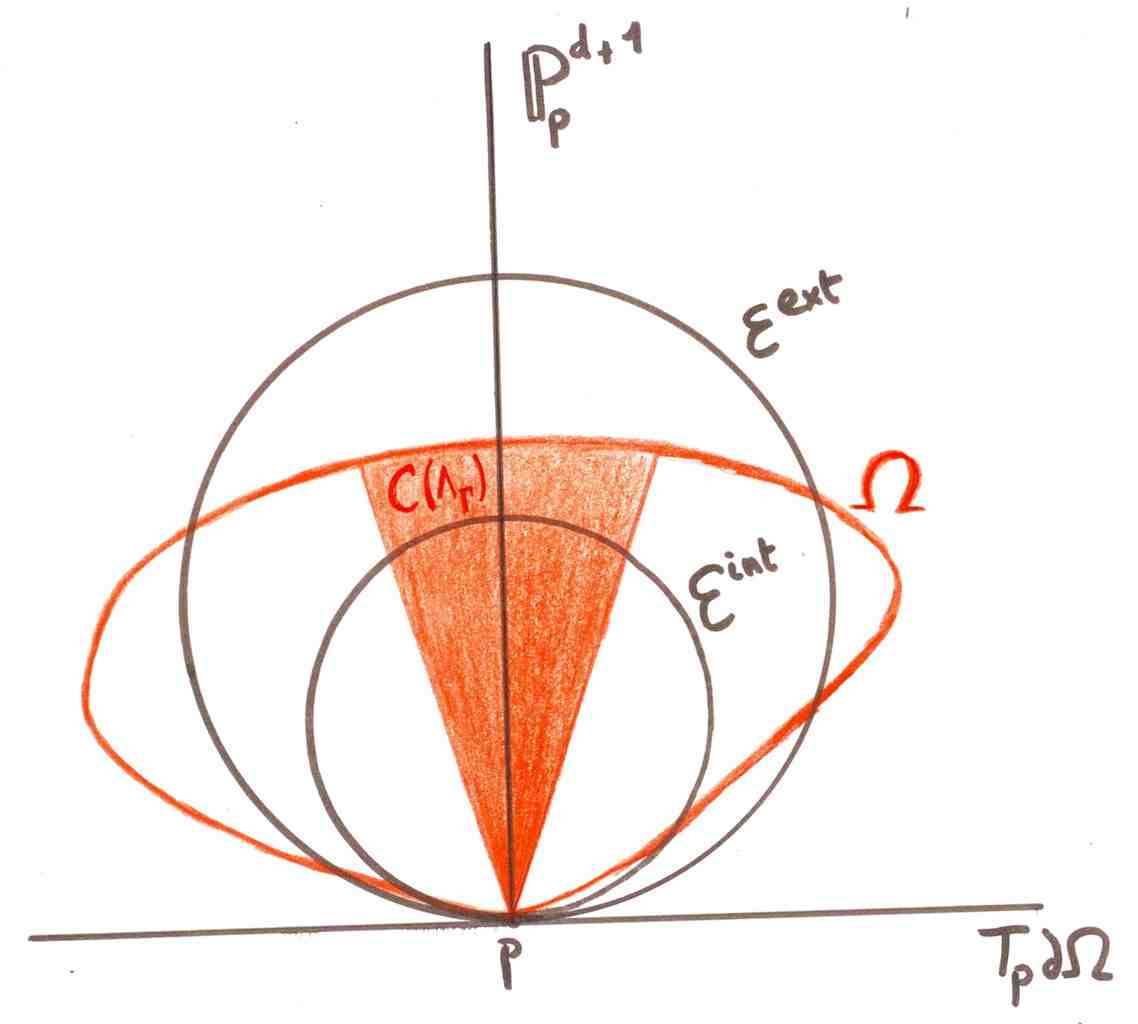}
\caption{Corollaire \ref{para_cas_gene}}
\end{figure}
\end{center}

\subsection{Constructions des r\'egions paraboliques standards}

Rappelons pourquoi il est agr\'eable que nos groupes paraboliques soient conjugu\'es \`a des groupes paraboliques de $\SO$. Ils apparaissent ainsi comme sous-groupes paraboliques d'isom\'etries de l'espace hyperbolique, mais surtout leur action sur $\A_p^{n-1}$ pr\'eserve une m\'etrique euclidienne. Le th\'eor\`eme de Bieberbach permet alors de les d\'ecrire:

\begin{theo}[Bieberbach, Th\'eor\`eme 5.4.4 de \cite{MR2249478}]\label{bieberbach}
Soit $\P$ un sous-groupe discret d'isom\'etries de l'espace euclidien $\mathbb{E}^n$. Il existe une d\'ecomposition $\P=T \times R$ du groupe $\P$ et un sous-espace $E$ de dimension $d$ tels que
\begin{itemize}
 \item le groupe $R$ est fini et agit trivialement sur $E$;
 \item le groupe $T$ est isomorphe \`a $\Z^d$ et agit cocompactement par translations sur $E$.
\end{itemize}
\end{theo}

On va maintenant d\'ecrire l'action d'un sous-groupe parabolique ``uniform\'ement born\'e'' sur $\O$.

\begin{defi}
Soit $\P$ un sous-groupe parabolique de $\Aut(\O)$ fixant le point $p\in\dO$. Une \emph{bande parabolique standard bas\'ee en $p$} est la projection sur $\dO$ d'une partie $\P$-invariante ferm\'ee, convexe et d'int\'erieur non vide de $\A^{n-1}_p$, sur laquelle l'action de $\P$ est cocompacte.
\end{defi}

Remarquons que, bien que les bandes standards soient d\'efinies comme des parties de $\dO \smallsetminus \{ p \}$, elles proviennent de convexes de $\A^{n-1}_p$. On passera souvent d'un point de vue à l'autre, en essayant de rester le plus clair possible.

\begin{prop}\label{soborne}
Soient $\G$ un sous-groupe discret de $\Aut(\O)$ et $\P$ un sous-groupe parabolique de $\G$ fixant le point $p\in\dO$. Les faits suivants sont \'equivalents:
\begin{enumerate}[(i)]
 \item le point parabolique $p$ est uniform\'ement born\'e;
 \item le groupe $\P$ est conjugu\'e à un sous-groupe parabolique de $\SO$;
 \item il existe une bande parabolique standard pour $\P$.
\end{enumerate}
\end{prop}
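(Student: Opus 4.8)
The plan is to establish the cycle of implications $(i)\Rightarrow(ii)\Rightarrow(iii)\Rightarrow(i)$. The first implication is essentially already in hand: if $p=\Stab_{\G}(p)$ is uniformly bounded, Corollaire \ref{para_cas_gene} produces a $\P$-invariant ellipsoid $\E^{ext}\subset\PP^n$ (together with the inner ellipsoid $\E^{int}$, a horoball based at $p$). Since the group of projective transformations preserving an ellipsoid is conjugate in $\ss$ to $\SO$, this already exhibits $\P$ as a subgroup of a conjugate of $\SO$; and because every nontrivial element of $\P$ is parabolic and fixes only $p$ on $\partial\E^{ext}$ (the horoball $\E^{int}$ being $\P$-invariant), $\P$ is in fact a parabolic subgroup of $\SO$. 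So $(i)\Rightarrow(ii)$ reduces to invoking the corollary.

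For $(ii)\Rightarrow(iii)$, realize $\P$ as a parabolic subgroup of $\SO$ fixing $p$. As recalled just before Theoreme \ref{bieberbach}, its action on the affine chart $\A^{n-1}_p$ is then by Euclidean isometries (horospherical coordinates). Bieberbach's theorem \ref{bieberbach} splits $\P=T\times R$ with $R$ finite and $T\cong\Z^{d}$ acting cocompactly by translations on an affine subspace $E\subset\A^{n-1}_p$ of dimension $d$. I would then take the tube $B=\{x\in\A^{n-1}_p:\ \mathrm{dist}(x,E)\le\rho\}$ for the invariant Euclidean metric: it is closed, convex (a sublevel set of the convex function $\mathrm{dist}(\cdot,E)$), has nonempty interior, is $\P$-invariant, and carries a cocompact $\P$-action since it lies within bounded distance of $E$. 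Its image in $\dO$ is the desired standard parabolic band.

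The implication $(iii)\Rightarrow(i)$ is where I expect the real work to lie. Given a band, i.e.\ a $\P$-invariant closed convex $B\subset\A^{n-1}_p$ with nonempty interior on which $\P$ acts cocompactly, the key tool is the convex-geometric argument already used in Lemme \ref{cas_general}: the maximal affine subspaces contained in $B$ share a common direction $D$, the induced family is a closed convex subset of $\A^{n-1}_p/D$ containing no line, and the central-symmetry property forced by the oddness of the degree of the unipotent elements of $\P$ excludes half-lines as well; hence this family is compact, so $B$ lies within bounded Hausdorff distance of a single $\P$-invariant affine subspace $E_B$ of dimension $d$ equal to the virtual cohomological dimension of $\P$, on which $\P$ acts cocompactly. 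Passing to $\PP^{d+1}_p=\langle E_B,p\rangle$, the group $\P$ becomes a parabolic subgroup of maximal rank of $\Aut(\O_p)$ with $\O_p=\O\cap\PP^{d+1}_p$, so Theoreme \ref{lemmededieu} makes it conjugate to a parabolic subgroup of $\so{d}$; extending the resulting ellipsoids from $\PP^{d+1}_p$ to $\PP^n$ as in Corollaire \ref{para_cas_gene} upgrades this to $\P$ being a parabolic subgroup of $\SO$, in particular crystallographic. It then remains to run the same convex-geometric argument on the canonical set $\D=\overline{\mathcal{D}_p(C(\LG))}$, which is likewise $\P$-invariant closed convex: it too lies within bounded distance of a $\P$-invariant affine subspace $E_{\D}$ on which $\P$ acts cocompactly, and matching dimensions through the virtual cohomological dimension of $\P$ forces $\dim E_{\D}=d$, whence $\P$ is cocompact on $\D$, i.e.\ $p$ is uniformly bounded. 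The delicate points are the half-line exclusion through the parity of the unipotent degree and, above all, the careful transfer of cocompactness across $B$, the extracted affine subspaces, and $\D$, keeping every dimension aligned with the virtual cohomological dimension of $\P$.
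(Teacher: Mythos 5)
Your implications $(i)\Rightarrow(ii)$, $(ii)\Rightarrow(iii)$ and the first half of $(iii)\Rightarrow(i)$ are sound and essentially coincide with the paper: $(i)\Rightarrow(ii)$ is the corollaire \ref{para_cas_gene}; your extraction, from a band $B$, of a $\P$-invariant affine subspace of dimension $d$ with cocompact $\P$-action (the argument of the lemme \ref{cas_general}, legitimate here because a band comes \emph{by definition} with a cocompact action), followed by the th\'eor\`eme \ref{lemmededieu}, is exactly the paper's proof of $(iii)\Rightarrow(ii)$; and the Bieberbach tube for $(ii)\Rightarrow(iii)$ is a correct variant (the paper produces a band from $(i)$ instead). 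The genuine gap is in your closing step, the return from $(ii)$ to $(i)$. You propose to rerun the lemme \ref{cas_general} argument on $\D=\overline{\mathcal{D}_p(C(\LG))}$ and assert that $\D$ lies at bounded distance from a $\P$-invariant affine subspace $E_{\D}$ on which $\P$ acts cocompactly, the dimension count then ``forcing'' cocompactness on $\D$. This is circular: in the lemme \ref{cas_general}, the cocompactness of $\P$ on the invariant subspace, the equality of its dimension with the virtual cohomological dimension $d$, and any bounded-distance statement are all \emph{consequences} of the standing hypothesis there that $p$ is uniform\'ement born\'e, i.e.\ that $\P$ acts cocompactly on $\D$ --- precisely what you are trying to prove. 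Without it the argument only yields some $\P$-invariant maximal affine subspace $E_{\D}\subset\D$ with $\dim E_{\D}\geqslant d$, and even equality plus cocompactness on $E_{\D}$ implies nothing about $\D$: take $\O$ the ball in $\PP^3$, $\P=\langle z\mapsto z+1\rangle$ acting on $\A^2_p\cong\mathbb{C}$, and $\G\supset\P$ a Klein--Maskit free product obtained by adjoining loxodromic elements with fixed points near $1/2+ik$, $k\geqslant 1$, so that $\Stab_{\G}(p)=\P$. Then $(ii)$ and $(iii)$ hold, $\D$ contains the half-plane $\{\mathrm{Im}\,z\geqslant 1\}$, every horizontal line in $\D$ is a $\P$-invariant affine subspace of dimension $d=1$ with cocompact $\P$-action, and yet $\P$ is not cocompact on $\D$: the point $p$ is not uniformly bounded, nor even born\'e.

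What is missing is an input tying $\LG$ --- an object depending on all of $\G$ --- to $\P$; conditions $(ii)$ and $(iii)$ depend on $\P$ alone, so no manipulation of them can control where $\LG$ sits. The paper's proof of $(ii)\Rightarrow(i)$ works with the limit set directly and uses exactly the ingredient your argument never invokes: the cocompactness of $\P$ on $\LG\smallsetminus\{p\}$ (that $p$ is a \emph{bounded} parabolic point, which is the situation in which the proposition is applied throughout the paper, and which justifies the paper's otherwise unexplained assertion that $\LG\smallsetminus\{p\}$ lies in a finite-size neighbourhood of $\A^d_p$). Granting it, the proof is short: by $(ii)$, $\P$ preserves a Euclidean metric on $\A^{n-1}_p$, so the distance to the Bieberbach subspace $\A^d_p$ is a continuous $\P$-invariant function, hence bounded on $\LG\smallsetminus\{p\}$ by cocompactness; therefore $\LG\smallsetminus\{p\}$, and then its convex hull $\D$, lie in a tube of finite radius around $\A^d_p$; $\P$ acts cocompactly on that tube, and cocompactness passes to the closed $\P$-invariant subset $\D$. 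This transfer --- invariant distance function plus cocompactness of $\P$ on the limit set itself --- is the idea your proof lacks, and, as the example above shows, it cannot be recovered from $(ii)$ and $(iii)$ alone.
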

\begin{proof}
$(i)\Leftrightarrow (ii)$ L'implication $(i)\Rightarrow (ii)$ \'etait l'objet de la partie pr\'ec\'edente. R\'eciproquement, si $\P$ est conjugu\'e à un sous-groupe parabolique de $\SO$, alors il pr\'eserve une m\'etrique euclidienne sur $\A_p^{n-1}$. D'apr\`es le th\'eor\`eme \ref{bieberbach}, il existe un sous-espace $\A^d_p$ de dimension $d\geqslant 1$ sur lequel $\P$ agit par translations et cocompactement. L'ensemble $\LG\smallsetminus\{p\}$ (vu dans $\A_p^{n-1}$) est inclus dans un voisinage de $\A_p^d$ de taille $r$ finie. Ce voisinage est un ensemble convexe et il contient donc aussi l'enveloppe convexe de  $\LG\smallsetminus\{p\}$ dans $\A_p^{n-1}$, sur laquelle le groupe $\P$ agit encore cocompactement. Autrement dit, le point $p$ est un point parabolique uniform\'ement born\'e.\\
$(i)\Rightarrow (iii)$ Si $p$ est un point parabolique uniform\'ement born\'e, l'action de $\P$ sur l'adh\'erence de $C(\LG\smallsetminus\{p\})$ dans $\A_p^{n-1}$ est cocompacte; l'adh\'erence de $C(\LG\smallsetminus\{p\})$ dans $\A_p^{n-1}$ est donc une bande parabolique standard.\\
$(iii)\Rightarrow (ii)$ Supposons qu'il existe une bande standard $B$ pour $\P$. En proc\'edant comme dans la preuve du lemme \ref{cas_general}, on voit que l'ensemble $K$ des espaces affines maximaux inclus dans $B$, qui ont tous la m\^eme direction $D$, est compact. Ainsi, $\P$ stabilise un sous-espace affine $\A_p^d$ sur lequel il agit cocompactement. On en d\'eduit, d'apr\`es le th\'eor\`eme \ref{lemmededieu}, que $\P$ est conjugu\'e \`a un sous-groupe parabolique de $\SO$.
\end{proof}

\begin{defi}
Soit $\P$ un sous-groupe parabolique uniform\'ement born\'e de $\G$ fixant un point $p$. Si $\P$ est de rang maximal alors une \emph{r\'egion parabolique standard bas\'ee en $p$} est une horoboule de centre $p$. Si $\P$ n'est pas de rang maximal alors  une \emph{r\'egion parabolique standard bas\'ee en $p$} est l'enveloppe convexe du compl\'ementaire d'une bande standard d'int\'erieur non vide de $\P$ dans $\overline{\O}$.
\end{defi}

Dans le cas o\`u $\O$ est un ellipso\"ide, on retrouve les r\'egions paraboliques standards consid\'er\'ees par Bowditch \cite{MR1218098}.

\begin{prop}\label{prop_reg}
Soient $\G$ un sous-groupe discret de $\Aut(\O)$ et $p$ un point parabolique uniform\'ement born\'e de $\LG$, de stabilisateur $\P$ dans $\G$.\\
Toute r\'egion parabolique standard $R$ est une partie convexe et $\P$-invariante de $\overline{\O}$, la vari\'et\'e à bord $\Quotient{ (\overline{\O} \smallsetminus (R \cup \{ p \} ) ) }{\P}$ est compacte et l'ensemble $R \cap \O$ est un ouvert.\\
En particulier, si $D_{\P}$ est un domaine fondamental convexe localement fini pour l'action de $\P$ sur $\O$, alors l'adh\'erence de $D_{\P} \smallsetminus R$ dans $\overline{\O}$ ne contient pas $p$.
\end{prop}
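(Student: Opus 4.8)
The plan is to treat separately the two cases of the definition — $\P$ of maximal rank, where $R$ is a horoball, and $\P$ of rank $d<n-1$, where $R$ is the convex hull in $\overline{\O}$ of the complement of a standard band $B$ — and to establish the assertions in the order: convexity and $\P$-invariance, then compactness of the quotient, then openness of $R\cap\O$, deducing the final sentence formally from the compactness. The first two properties are essentially built into the definitions. If $R=H_p(x)$ is a horoball, it is convex by Section \ref{busemannhoro} and $\P$-invariant because every parabolic element fixing $p$ preserves all horospheres based at $p$ (Theorem \ref{classi_dym_1}). If $R$ is the convex hull of the complement of $B$, it is convex by construction and $\P$-invariant because $B$, hence its complement in $\dO$, is $\P$-invariant and $\P$ acts on $\overline{\O}$ by projective maps, which preserve convex hulls. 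For openness I would use that the complement of $B$ has non-empty interior in $\dO$ (when $d<n-1$ the band does not exhaust $\A^{n-1}_p$): then $R$ is a convex body and each point of $R\cap\O$ is interior to $R$, since a point of $\O$ realised as a convex combination of boundary points among which sits an open piece of $\dO$ lies in the interior of the hull. In the horoball case $R\cap\O=H$ is open directly.

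The heart of the statement is the compactness of $\Quotient{(\overline{\O}\smallsetminus(R\cup\{p\}))}{\P}$. For $\P$ of maximal rank I would parametrise $\overline{\O}\smallsetminus\{p\}$ by ray coordinates: each point lies on a unique segment $[p,\ell^+]$ with $\ell\in\A^{n-1}_p$ and $\ell^+\in\dO\smallsetminus\{p\}$, and I replace the position on the ray by the Busemann value $s=b_p(x,\cdot)\in(-\infty,+\infty]$, with $s=+\infty$ corresponding to $\ell^+$. In these coordinates $R=H=\{s<0\}$, so $\overline{\O}\smallsetminus(H\cup\{p\})$ is the set $\{\,s\in[0,+\infty]\,\}$. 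Since $\P$ preserves $b_p$ (shown in the proof of Theorem \ref{classi_dym_1}) and fixes $p$, it acts fibrewise over $\A^{n-1}_p$, and it acts cocompactly on $\A^{n-1}_p\cong\dO\smallsetminus\{p\}$ because $p$ has maximal rank. As the fibres $[0,+\infty]$ are compact, the total space is $\P$-cocompact; away from $p$ it is a manifold with boundary (the pieces $\dO\smallsetminus\{p\}$ and the horosphere), so the quotient is a compact (orbi)manifold with boundary.

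For $\P$ of rank $d<n-1$ I would first localise in the band directions: if the direction $\ell$ of a point $y$ does not lie in $\tilde{B}$, then $\ell^+$ lies in the complement of $B$, hence in $R$, and as $p\in R$ and $R$ is convex the whole segment $[p,\ell^+]$ lies in $R$; thus every point of $\overline{\O}\smallsetminus(R\cup\{p\})$ has its direction in $\tilde B$. Using that $\P$ acts cocompactly on $\tilde B$, I fix a compact $K_0\subset\tilde B$ with $\P\cdot K_0=\tilde B$; by $\P$-invariance it then suffices to show that the points of $\overline{\O}\smallsetminus(R\cup\{p\})$ with direction in $K_0$ stay away from $p$. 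This is the one step that needs real work, and the hard part is precisely this uniform statement: I must produce a neighbourhood $N$ of $p$ in $\overline{\O}$ meeting every ray of direction in $K_0$ inside $R$. Here I would invoke the sandwiching ellipsoids of Corollary \ref{para_cas_gene}, namely $\E^{int}\cap\Cone(p,C(\LG))\subset\O\cap\Cone(p,C(\LG))$ with $\E^{int}$ a horoball of $(\E^{ext},d_{\E^{ext}})$, which furnishes an explicit horoball neighbourhood of $p$ over the cone; combined with the convexity of $R$ and the fact that the complement of $B$ surrounds $\tilde B$ uniformly (because $\tilde B$ lies at bounded distance from the translated subspace $\A^{d}_p$, as in the proof of Lemma \ref{cas_general}), this forces the part of that horoball over $K_0$ into $R$, giving the required $N$.

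Finally the last sentence follows formally from the compactness. Let $D_{\P}$ be a convex locally finite fundamental domain for $\P$ on $\O$. Choosing a compact $K\subset\overline{\O}\smallsetminus\{p\}$ with $\P\cdot K\supseteq\overline{\O}\smallsetminus(R\cup\{p\})$, and noting that $D_{\P}\subset\O$ gives $D_{\P}\smallsetminus R=D_{\P}\cap(\overline{\O}\smallsetminus(R\cup\{p\}))$, local finiteness ensures that only finitely many translates $\gamma K$ meet $D_{\P}$; hence $D_{\P}\smallsetminus R$ is contained in a compact subset of $\overline{\O}\smallsetminus\{p\}$, so its closure in $\overline{\O}$ cannot contain $p$. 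Thus, apart from the uniform bound over band directions handled through Corollary \ref{para_cas_gene}, every step is either immediate from the definitions or reduces to the maximal-rank model.
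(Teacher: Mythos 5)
Your overall architecture is sound up to a point: the routine assertions, the maximal-rank case via Busemann coordinates, the localisation step (every point of $\overline{\O}\smallsetminus(R\cup\{p\})$ has its direction in $\tilde B$, because $p\in R$ and $R$ is convex), and the reduction via a compact $K_0\subset\tilde B$ with $\P\cdot K_0=\tilde B$ are all correct, and you rightly identify the uniform statement near $p$ as the heart of the matter. But precisely there the proposal stops being a proof, and the tool you invoke is not the right one. Corollary \ref{para_cas_gene} sandwiches $\O$ between ellipsoids only over $\Cone(p,C(\LG))$, whereas the band $\tilde B$ defining $R$ is an \emph{arbitrary} closed convex $\P$-invariant set with cocompact action: it need bear no inclusion relation to $\mathcal{D}_p(C(\LG))$. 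Worse, knowing that some horoball-like neighbourhood of $p$ lies in $\O$ says nothing about its lying in $R$, which is a convex hull of boundary points; so ``this forces the part of that horoball over $K_0$ into $R$'' is an assertion, not an argument. What actually does the job is pure convexity, and it makes the ellipsoids superfluous. Since $p$ is uniformly bounded, $\P$ preserves a Euclidean metric on $\A^{n-1}_p$ (Proposition \ref{soborne}) and acts cocompactly on an affine subspace $E$ of dimension $d\leqslant n-2$, so $\tilde B$ lies in a bounded neighbourhood of $E$; hence every $\ell\in\tilde B$ lies on an affine segment $[\ell_1,\ell_2]\subset\A^{n-1}_p$ with $\ell_1,\ell_2\notin\tilde B$. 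The convex hull of $\{p,\ell_1^+,\ell_2^+\}$ is then contained in $R$ and contains a non-trivial segment issuing from $p$ with direction $\ell$. Consequently the gauge function of the convex set $R-p$ is finite on the whole open cone of directions entering $\O$; being convex and finite on an open convex cone, it is continuous there, hence bounded on the compact set of directions $K_0$ --- and this bound is exactly the uniform neighbourhood $N$ you need.

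The second gap is the claim that the last sentence follows \emph{formally} from the compactness; it does not. Any compact $K$ with $\P\cdot K\supseteq\overline{\O}\smallsetminus(R\cup\{p\})$ must meet $\dO$ (it has to cover $B$ modulo $\P$), while local finiteness of $D_\P$ only concerns compact subsets of $\O$: nothing forbids infinitely many translates $\g^{-1}D_\P$ from meeting $K$, with the intersection points accumulating on $K\cap\dO$. Concretely, if $x_n\in D_\P\smallsetminus R$, $x_n\to p$, and $x_n=\g_n k_n$ with $k_n\in K$, the case where the $\g_n$ are pairwise distinct and $k_n\to k\in K\cap\dO$ yields no contradiction: the $\g_n$ then converge to the constant map $p$ uniformly on compacts of $\overline{\O}\smallsetminus\{p\}$, so $\g_n k_n\to p$ is exactly what one expects. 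To exclude this scenario you must again use directions: $x_n\in D_\P\smallsetminus R$ forces the line $(p\,x_n)$ to lie in $\mathcal{D}_p(D_\P)\cap\tilde B$, which is compact, whereas in the scenario above $(p\,x_n)=\g_n\cdot(p\,k_n)$ leaves every compact of $\A^{n-1}_p$, the action of $\P$ on $\A^{n-1}_p$ being proper (Lemma \ref{lemm_gr_alg}). This is in fact the paper's proof, run in the opposite order to yours: the paper first notes that $\mathcal{D}_p(D_\P)$ is a fundamental domain for $\P$ acting on $\A^{n-1}_p$, so that $\mathcal{D}_p(D_\P)\cap\tilde B$ is a \emph{compact} fundamental domain for the action on $\tilde B$; from this it deduces first that the closure of $D_\P\smallsetminus R$ avoids $p$, and only then the compactness of $\Quotient{(\overline{\O}\smallsetminus(R\cup\{p\}))}{\P}$. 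In other words, the final sentence is an input to the compactness, not a corollary of it.
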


\begin{proof}
Si le groupe $\P$ est de rang maximal, c'est \'evident.\\
Soit donc $B$ la bande parabolique standard d\'efinissant la r\'egion parabolique standard $R$. L'ensemble $\mathcal{D}(D_{P})$ est un domaine fondamental pour l'action de $\P$ sur $\A^{n-1}_p$ et l'intersection de $\mathcal{D}(D_{P})$ avec $B$ est un domaine fondamental pour l'action de $\P$ sur $B$, qui est compact. Il vient alors que l'adh\'erence de $D_{\P} \smallsetminus R$ dans $\overline{\O}$ ne contient pas $p$. Ce qui montre que la vari\'et\'e à bord $\Quotient{ (\overline{\O} \smallsetminus (R \cup \{ p \} ) ) }{\P}$ est compacte. Les autres points sont triviaux.
\end{proof}

\begin{rema}\label{chiantissime}
Donnons-nous un sous-groupe discret $\G$ de $\Aut(\O)$ et un point parabolique $p\in\LG$ uniform\'ement born\'e. On peut construire une r\'egion parabolique standard pour le stabilisateur $\P$ de $p$ de la façon suivante.\\
Pour un point $x$ de $C(\LG)$, on consid\`ere le plan tangent $T_x\H_p(x)$; il s\'epare $\O$ en deux ouverts convexes et on appelle $\O(x,p)$ celle qui contient $p$. On obtient une r\'egion parabolique standard en choisissant une horoboule $H_p$ bas\'ee en $p$, de bord l'horosph\`ere $\H_p$, et en posant
$$R_{H_p} = \bigcap_{x\in C(\LG)\cap \H_p} \O(x,p).$$
Plus g\'en\'eralement, on peut consid\'erer les intersections
$$\bigcap_{x\in A\cap \H_p} \O(x,p),$$
pour toute partie $A$ convexe et $\P$-invariante. En particulier, on pourrait prendre pour $A$ un ouvert convexe $\O_p$ pr\'eserv\'e par $\P$.
\end{rema}

\begin{center}
\begin{figure}[h!]
  \centering
\includegraphics[width=5.5cm]{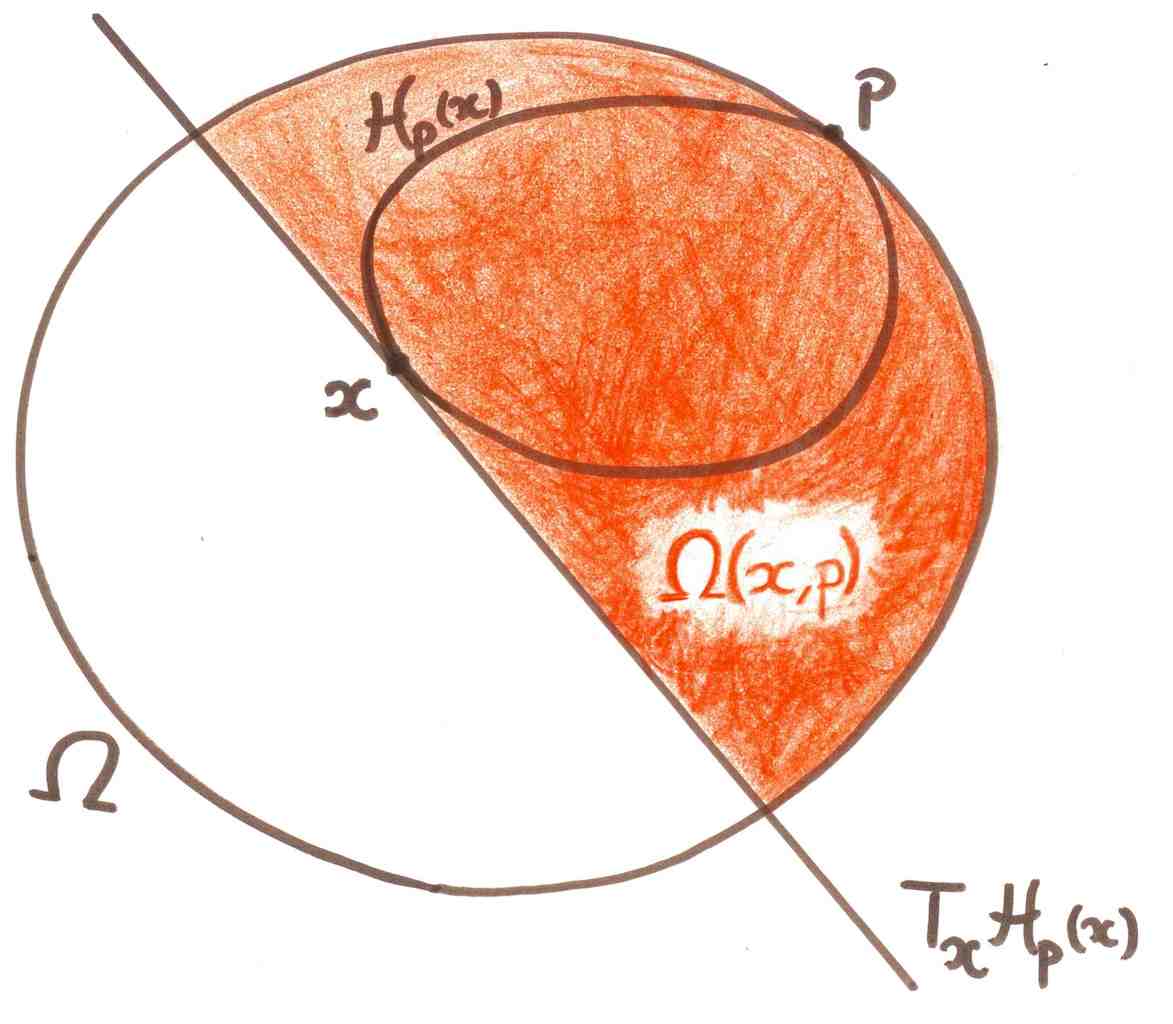}\ \includegraphics[width=5cm]{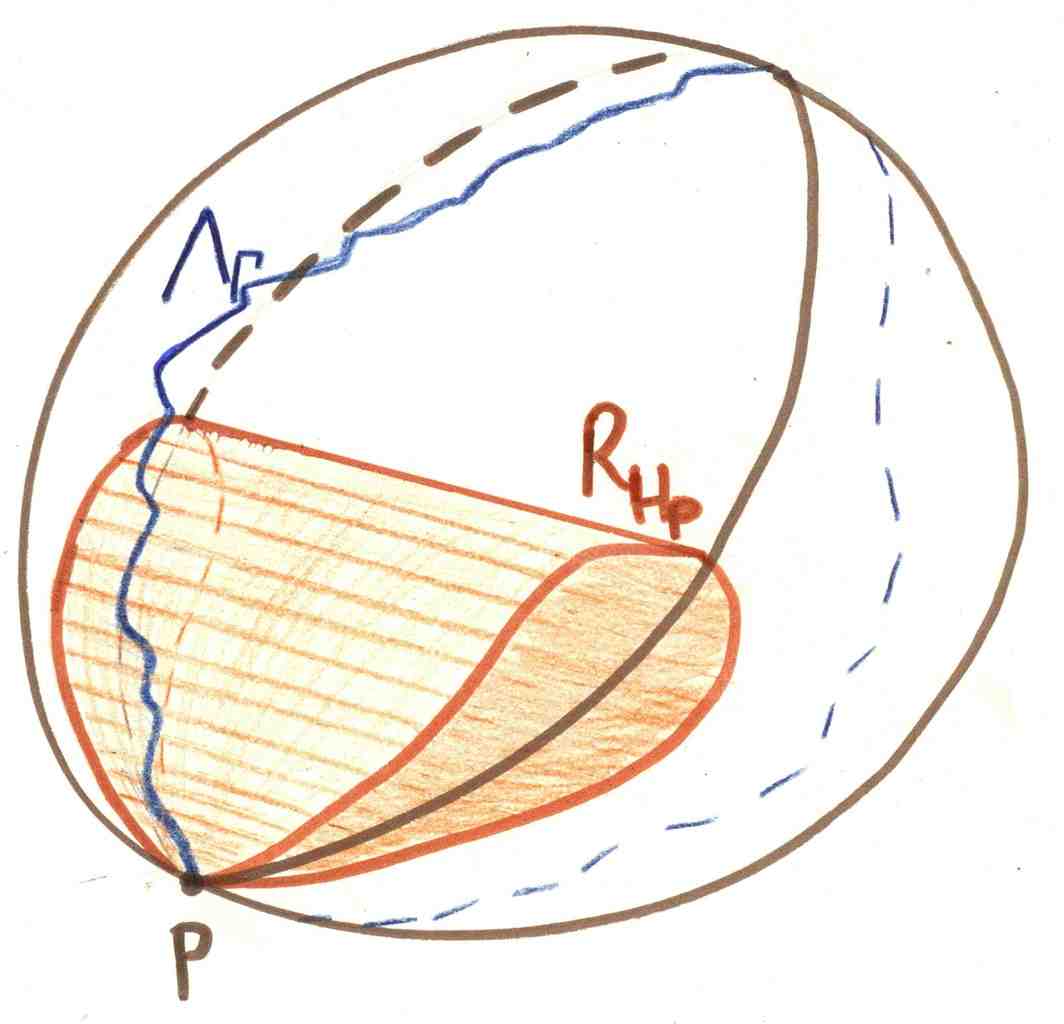}
\caption{Remarque \ref{chiantissime}
\label{reg_standard}
}
\end{figure}
\end{center}


De cette derni\`ere remarque, on d\'eduit le

\begin{coro}\label{horo_stan}
Soient $\G$ un sous-groupe discret de $\Aut(\O)$ et $p$ un point parabolique uniform\'ement born\'e de $\dO$ de stabilisateur $\P$ dans $\G$. Il existe une horoboule $H$ bas\'ee en $p$ telle que
$$R_H \cap C(\LG) = H \cap C(\LG) \subset \O_{\varepsilon}(\P).$$
\end{coro}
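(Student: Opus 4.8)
Le plan est le suivant : l'\'egalit\'e $R_H\cap C(\LG)=H\cap C(\LG)$ vaut en fait pour \emph{toute} horoboule $H$ bas\'ee en $p$, et seule l'inclusion dans $\O_{\varepsilon}(\P)$ demandera de choisir $H$ assez petite. Commen\c cons par l'\'egalit\'e. On note $\H$ l'horosph\`ere bordant $H$ et, suivant la remarque \ref{chiantissime},
$$R_H=\bigcap_{x\in C(\LG)\cap\H}\O(x,p).$$
Pour $x\in\H$, l'horosph\`ere bas\'ee en $p$ passant par $x$ est exactement $\H$, donc $\O(x,p)$ est le demi-espace ouvert d\'elimit\'e par le plan tangent $T_x\H$ et contenant $p$ ; comme ce plan est un hyperplan d'appui de l'horoboule strictement convexe $H$ en $x$, on a $H\subset\O(x,p)$, et en intersectant $H\subset R_H$. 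R\'eciproquement, soit $y\in C(\LG)$ avec $y\notin H$. La g\'eod\'esique $[y,p[$ reste dans $C(\LG)$ par convexit\'e ; comme $b_p$ y d\'ecro\^it strictement vers $-\infty$, elle rencontre $\H$ en un unique point $x$ situ\'e strictement entre $y$ et $p$, avec $x\in C(\LG)\cap\H$. Le rayon $[x,p[$ entre dans $H$, donc la droite $(yp)$ traverse transversalement $T_x\H$ en $x$ et $y$ est du c\^ot\'e oppos\'e \`a $p$ : ainsi $y\notin\O(x,p)$, d'o\`u $y\notin R_H$. Ceci donne $R_H\cap C(\LG)\subset H\cap C(\LG)$, et l'\'egalit\'e.

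Passons \`a l'inclusion $H\cap C(\LG)\subset\O_{\varepsilon}(\P)$. Quitte \`a remplacer $\P$ par un sous-groupe d'indice fini ab\'elien $\P_0$ (comme $\O_{\varepsilon}(\P_0)\subset\O_{\varepsilon}(\P)$, il suffit de traiter $\P_0$), fixons $\g\in\P_0$ d'ordre infini et posons $\ph(y)=\d(y,\g\cdot y)$. Puisque $\P_0$ est ab\'elien, $\ph$ est $\P_0$-invariante, et $\{\ph<\varepsilon\}\subset\O_{\varepsilon}(\P_0)$ car $\g$ engendre alors un sous-groupe infini de $(\P_0)_{\varepsilon}(y)$. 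De plus les fonctions de Busemann bas\'ees en $p$ sont invariantes sous tout \'el\'ement parabolique fixant $p$ (preuve du th\'eor\`eme \ref{classi_dym_1}), donc toute horoboule bas\'ee en $p$ est $\P$-invariante. Le point $p$ \'etant uniform\'ement born\'e, $\P_0$ agit cocompactement sur $\overline{\mathcal{D}_p(C(\LG))}\subset\A^{n-1}_p$ ; soit $K$ un domaine fondamental compact. Pour chaque direction $z\in K$, le lemme \ref{decomposition_partie_fine}(\ref{partie_fine_cusp}) assure que $\ph$ d\'ecro\^it vers $0$ le long du rayon g\'eod\'esique $\rho_z$ d'extr\'emit\'e $p$ de direction $z$ ; il existe donc un niveau de Busemann au-del\`a duquel $\ph<\varepsilon$ sur $\rho_z$. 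Par compacit\'e de $K$ on en tire un niveau \emph{uniforme}, c'est-\`a-dire une horoboule $H$ telle que, pour tout $z\in K$, la partie de $\rho_z$ contenue dans $H$ soit dans $\{\ph<\varepsilon\}$. Enfin, tout $y\in H\cap C(\LG)$ a une direction dans $\overline{\mathcal{D}_p(C(\LG))}$, donc s'\'ecrit $y=\delta\cdot y'$ avec $\delta\in\P_0$ et $y'$ sur un $\rho_z$, $z\in K$ ; comme $H$ est $\P_0$-invariante, $y'=\delta^{-1}y\in H\cap\rho_z$, d'o\`u $\ph(y')<\varepsilon$, puis $\ph(y)=\ph(y')<\varepsilon$ par $\P_0$-invariance, et $y\in\O_{\varepsilon}(\P)$.

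Le point d\'elicat est la derni\`ere uniformit\'e : il faut que l'entr\'ee dans $\O_{\varepsilon}(\P)$ se produise \`a un niveau de Busemann born\'e ind\'ependamment de $z\in K$, autrement dit que $\ph(y)\to 0$ lorsque $y\to p$ \emph{dans le c\^one} $\Cone(p,C(\LG))$, et pas seulement le long des rayons. Pour l'\'etablir, j'utiliserais les ellipso\"ides $\P$-invariants $\E^{int}$ et $\E^{ext}$ du corollaire \ref{para_cas_gene}, qui encadrent $\O$ dans $\Cone(p,C(\LG))$ : sur ce c\^one la g\'eom\'etrie de Hilbert de $\O$ est coinc\'ee entre deux g\'eom\'etries hyperboliques dans lesquelles $\g$ est parabolique, o\`u le d\'eplacement d'un point tend vers $0$ quand il tend vers le point fixe $p$. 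Combin\'ee \`a la $\P_0$-invariance de $\ph$ et \`a la cocompacit\'e sur $\overline{\mathcal{D}_p(C(\LG))}$, cette comparaison fournit l'uniformit\'e recherch\'ee, et donc l'horoboule $H$ voulue.
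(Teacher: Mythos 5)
Votre preuve est correcte et suit essentiellement la m\^eme d\'emarche que celle du papier, dont la r\'edaction se contente d'affirmer les deux faits que vous d\'emontrez en d\'etail : l'\'egalit\'e $R_H\cap C(\LG)=H\cap C(\LG)$ (valable, comme vous le montrez, pour toute horoboule bas\'ee en $p$) et l'inclusion $H\cap C(\LG)\subset\O_{\varepsilon}(\P)$ pour $H$ assez profonde, obtenue par cocompacit\'e de l'action de $\P$ sur $\overline{\mathcal{D}_p(C(\LG))}$ et d\'ecroissance du d\'eplacement le long des rayons aboutissant en $p$. Votre scrupule final est d'ailleurs superflu : la fonction $y\mapsto\d(y,\g y)$ \'etant continue et \emph{d\'ecroissante} le long de chaque rayon vers $p$ (le point du lemme \ref{decomposition_partie_fine} que vous citez), un argument \`a la Dini sur le compact $K$ fournit directement le niveau de Busemann uniforme, sans recourir aux ellipso\"ides du corollaire \ref{para_cas_gene}.
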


\begin{proof}
Fixons $o$ dans $\O$, notons $H_t$ l'horoboule
$$H_t=\{x\in\O,\ b_p(o,x)\leqslant t\},$$
et $\H_t$ l'horosph\`ere au bord de $H_t$. Pour tout $t\in\R$, l'action de $P$ sur $C(\LG)\cap\H_t$ est cocompacte et on a
$$\lim_{t\to+\infty} \sup_{x\in C(\LG)\cap H_t}  \d(x,\g x) = 0.$$ On peut donc choisir $t_0$ assez grand pour que $H_{t_0}\cap C(\LG)$ soit inclus dans $\O_{\varepsilon}(\P)$.\\
De cette fa\c con, la r\'egion parabolique standard construite comme dans la remarque pr\'ec\'edente via
$$R_{H_{t_0}} = \bigcap_{x\in \H_{t_0}\cap C(\LG)} \O(x,p)$$
v\'erifie $R_{H_{t_0}} \cap C(\LG) = H_{t_0}\cap C(\LG)$.
\end{proof}

\begin{nota}
Soit $\G$ un sous-groupe discret de $\Aut(\O)$. On notera $\Pi$ (resp. $\Pi_{ub}$) l'ensemble des points paraboliques (resp. paraboliques uniform\'ement born\'es) de $\G$ et pour tout point $p \in \Pi$ on notera $\P_p = \Stab_p(\G)$.  
\end{nota}

\begin{lemm}\label{region_stan_disj}
Soit $\G$ un sous-groupe discret de $\Aut(\O)$. \`A tout point $p\in \Pi_{ub}$, on peut associer une r\'egion parabolique standard $R_p$ de telle fa\c con que la famille $(R_p)_{p \in \Pi_{ub}}$ soit strictement invariante, c'est-à-dire:
\begin{itemize}
\item $\forall \g \in \G$ et $\forall p \in \Pi_{ub}$, on a $R_{\g p} = \g R_p$
\item $\forall p,q \in \Pi_{ub}$ distincts, $R_p \cap R_q = \varnothing$.
\end{itemize}
\end{lemm}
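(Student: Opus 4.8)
The plan is to reduce the whole construction to a single choice per $\G$-orbit of parabolic points, so that the equivariance condition comes for free, and then to concentrate all the effort on disjointness. The starting remark is that $\Pi_{ub}$ is $\G$-invariant: for $\g\in\G$ one has $\P_{\g p}=\g\P_p\g^{-1}$, and the cocompactness of the action of $\Stab_{\G}(p)$ on $\mathcal{D}_p(\overline{C(\LG\smallsetminus\{p\})})$ is transported by $\g$, so $\g p$ is again uniformly bounded. I would therefore fix a set of representatives of the $\G$-orbits in $\Pi_{ub}$, and for each representative $p$ use Corollary \ref{horo_stan} to pick a horoball $H_p$ based at $p$ with $R_{H_p}\cap C(\LG)=H_p\cap C(\LG)\subset\O_{\varepsilon}(\P_p)$, letting $R_p$ be the attached standard parabolic region (a horoball if $\P_p$ has maximal rank, otherwise the region built from $H_p$ as in Remark \ref{chiantissime}). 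I then extend by equivariance, setting $R_{\g p}:=\g R_p$. This is well defined precisely because a standard parabolic region is $\P_p$-invariant (Proposition \ref{prop_reg}): if $\g p=\g' p$ then $\g^{-1}\g'\in\P_p$, hence $\g^{-1}\g' R_p=R_p$. The first bullet holds by construction, and since $C(\LG)$ is $\G$-invariant one gets $R_P\cap C(\LG)\subset\O_{\varepsilon}(\P_P)$ for \emph{every} $P\in\Pi_{ub}$, not only the representatives.

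The second step is disjointness inside the convex core, which reduces to showing that the sets $\O_{\varepsilon}(\P_P)$, $P\in\Pi_{ub}$, are pairwise disjoint. For $P,Q$ in \emph{distinct} orbits the maximal parabolic subgroups $\P_P$ and $\P_Q$ are non-conjugate, so $\O_{\varepsilon}(\P_P)$ and $\O_{\varepsilon}(\P_Q)$ lie in the full preimages of the distinct pieces $M_{\varepsilon}(\P_P)$ and $M_{\varepsilon}(\P_Q)$ of the thin part, whose closures are disjoint by Lemma \ref{decomposition_partie_fine}; hence so are these preimages. For $P$ and $\g P$ in the \emph{same} orbit with $\g\notin\P_P$ (note $N_{\G}(\P_P)=\P_P$, since a normalizer fixes the unique fixed point of $\P_P$), a common point $x\in\O_{\varepsilon}(\P_P)\cap\O_{\varepsilon}(\P_{\g P})$ would make both $(\P_P)_{\varepsilon}(x)$ and $(\g\P_P\g^{-1})_{\varepsilon}(x)$ infinite subgroups of $\G_{\varepsilon}(x)$, which is virtually nilpotent by the Margulis lemma (Lemma \ref{lem_mar}). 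The unique maximal virtually nilpotent subgroup containing $\G_{\varepsilon}(x)$ is then parabolic and fixes a single boundary point, which by uniqueness of the fixed point of an infinite parabolic group (Theorem \ref{classi_dym_1}) must equal both $P$ and $\g P$, a contradiction. Thus the $R_P\cap C(\LG)$ are pairwise disjoint throughout $\Pi_{ub}$.

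The last and hardest step is to upgrade this to disjointness of the full regions $R_P\subset\overline{\O}$, since convexity together with disjointness on $C(\LG)$ does not by itself separate the ``funnel'' parts of the regions lying in $\overline{\O}\smallsetminus C(\LG)$, and one must also avoid accumulation when $\Pi_{ub}$ has infinitely many orbits. Each $R_P$ is convex with $p\in\overline{R_P}$, hence star-shaped at $p$, and is built so that its only limit point in $\LG$ is $p$; consequently, pushing $H_p$ deeper produces a decreasing family $R_{H_p^{t}}$ shrinking toward $p$, with $\bigcap_{t}R_{H_p^{t}}\cap\O=\varnothing$. I would exploit this concentration: for a fixed pair of distinct parabolic points $p\neq q$, the regions $R_{H_p^{t}}$ and $R_{H_q^{s}}$ accumulate only at the distinct points $p$ and $q$, so by the proper discontinuity of $\G$ on $\O$ and on the domain of discontinuity $\Og$ (Lemma \ref{dom_disc}), applied to the compact pieces that the standard bands cut out in $\A_p^{n-1}$, they become disjoint once $t,s$ are large. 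An equivariant inductive choice of depths over the countable orbit set then yields a globally disjoint family. The point requiring the most care is exactly this passage from the core to all of $\overline{\O}$, and it is there that the $\C^1$ and strict convexity hypotheses, through the control they give on how $R_P$ approaches $\dO$ away from $p$, do the decisive work.
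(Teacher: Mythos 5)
Your first two steps are sound and essentially reproduce the paper's construction: choose orbit representatives $p_i$, fix horoballs $H_{p_i}$ as in Corollary \ref{horo_stan}, build $R_{p_i}$ as in Remark \ref{chiantissime}, extend equivariantly (well-defined by $\P_{p_i}$-invariance), and get disjointness of the traces $R_p\cap C(\LG)$ from the disjointness of the sets $\O_{\varepsilon}(\P_p)$. But your third step, which you yourself identify as the heart of the matter, is a genuine gap rather than a proof. The strategy ``for a fixed pair $p\neq q$ the regions become disjoint once the depths $t,s$ are large, then make an equivariant inductive choice of depths over the countable orbit set'' cannot be carried out: equivariance leaves you exactly \emph{one} depth parameter per $\G$-orbit, while disjointness must hold simultaneously for infinitely many pairs involving each orbit --- in particular for all pairs $(\g p_i,\delta p_i)$ \emph{inside} a single orbit, which all carry the same (transported) depth. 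An induction over orbits never revisits these pairs, and no choice of a single depth can be ``large enough for each pair'' when the pairs are infinite in number. Proper discontinuity (Lemma \ref{dom_disc}) does not rescue this either, since the regions $R_p$ are non-compact and accumulate on $\dO$, which is exactly where the unresolved overlaps could occur.

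What is missing is a \emph{uniform} separation argument valid for every pair at once, and this is precisely what the paper supplies. For distinct $p,q\in\Pi_{ub}$, the line $(pq)$ meets the horosphere $\H_p$ at a point $P$ and $\H_q$ at a point $Q$; since $H_p\cap C(\LG)$ and $H_q\cap C(\LG)$ are disjoint, the order along the segment is $p,P,Q,q$. The geometric description of tangent spaces to horospheres (section \ref{busemannhoro}) gives
$$T_PH_p\cap T_QH_q \;=\; T_p\dO\cap T_q\dO,$$
so the two tangent hyperplanes both pass through $T_p\dO\cap T_q\dO$ and the half-spaces $\O(P,p)$ and $\O(Q,q)$ they cut off are disjoint. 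Since $P\in\H_p\cap C(\LG)$, the defining intersection of $R_p$ includes $\O(P,p)$, hence $R_p\subset\O(P,p)$, and likewise $R_q\subset\O(Q,q)$; thus $R_p\cap R_q=\varnothing$ for \emph{every} pair, with no further tuning of the horoballs. This is where the strict convexity and $\C^1$ hypotheses do the ``decisive work'' you allude to, but through an explicit tangency computation, not through a limiting or inductive process. Without this (or an equivalent) separation lemma, your proof does not establish the second bullet of the statement.
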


\begin{proof}
Choisissons une famille de points paraboliques $(p_i)_{i\in I}$ uniform\'ement born\'es telle, que pour tout $p\in \Pi_{ub}$, il existe un unique $i\in I$  et un \'el\'ement $\g\in\G$ tels que $\g p_i=p$. Les stabilisateurs $(\P_{p_i})_{i\in I}$ forment une famille de repr\'esentants des classes de conjugaisons de sous-groupes paraboliques maximaux uniform\'ement born\'es de $\G$.\\

Pour chaque $i\in I$, on fixe une horoboule $H_{p_i}$ bas\'ee en $p_i$ comme dans le corollaire \ref{horo_stan}: on a
$$H_{p_i}\cap C(\LG) \subset \O_{\varepsilon}(\P_{p_i}).$$
Notons $\H_{p_i}$ l'horosph\`ere au bord de $H_{p_i}$. La r\'egion parabolique standard donn\'ee par
$$R_{p_i} = \bigcap_{x\in \H_{p_i}\cap C(\LG)} \O(x,p)$$
v\'erifie $R_{p_i}\cap C(\LG) = H_{p_i}$.\\

\`A chaque point $p=\g p_i$ de $\Pi_{ub}$, on associe l'horoboule $H_p=\g H_{p_i}$ et la r\'egion parabolique standard $R_p=\g R_{p_i}$. La famille $(R_p)_{p\in\Pi_{ub}}$ ainsi construite v\'erifie alors imm\'ediatement le premier point du lemme. Voyons qu'elle v\'erifie aussi le second.\\
Pour cela, prenons deux points distincts $p,q\in\Pi_{ub}$. Les ensembles $\O_{\varepsilon}(\P_p)$ et $\O_{\varepsilon}(\P_q)$ sont disjoints d'apr\`es le lemme \ref{decomposition_partie_fine} et donc les horoboules $H_p$ et $H_q$ \'egalement. La droite $(pq)$ coupe $H_p$ en $P$ et $H_q$ en $Q$. L'intersection des plans tangents \`a $H_p$ et $H_q$ en $P$ et $Q$ v\'erifie (voir section \ref{busemannhoro})
$$T_PH_p\cap T_QH_q = T_p\dO \cap T_q \dO.$$
Ainsi, les ensembles $\O(P,p)$ et $\O(Q,q)$ sont disjoints et par suite $R_p$ et $R_q$ aussi.
\end{proof}

\begin{center}
\begin{figure}[h!]
  \centering
\includegraphics[width=7cm]{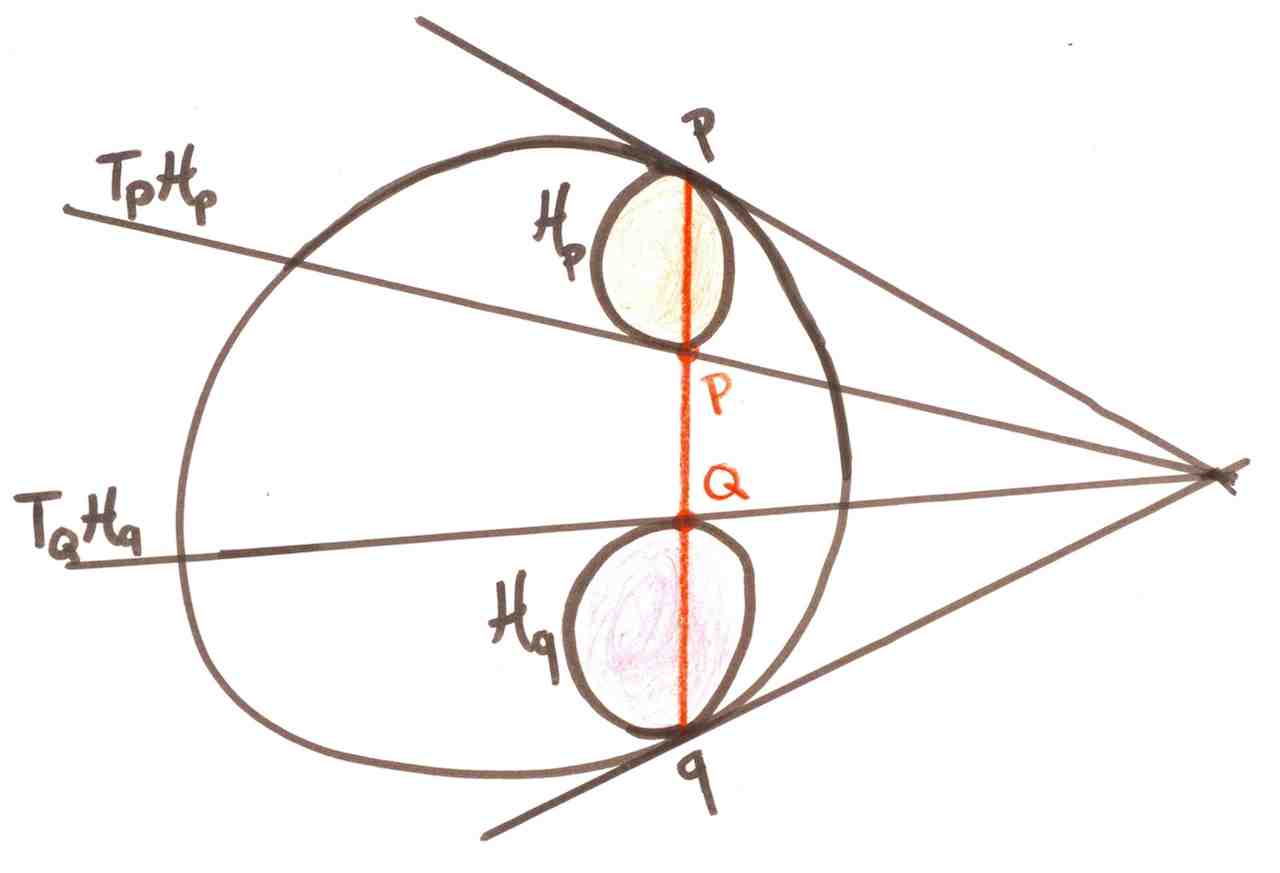}
\caption{Les r\'egions $(R_p)_{p\in\Pi_{ub}}$ sont disjointes}
\end{figure}
\end{center}

\subsection{Adh\'erence de Zariski de $\G$}

Dans \cite{MR1767272}, Yves Benoist a montr\'e le

\begin{theo}[Benoist]\label{benoist_zar}
Soit $\O$ un ouvert proprement convexe strictement convexe de $\PP^n$. Si $\G$ est un sous-groupe de $\Aut(\O)$ agissant de façon cocompacte sur $\O$, alors l'adh\'erence de Zariski de $\G$ est soit $\ss$ soit conjugu\'ee à $\SO$.
\end{theo}

Nous allons, en utilisant les mêmes techniques, montrer un r\'esultat similaire, valable pour les actions g\'eom\'etriquement finies sur $\O$ qui ne sont pas convexes-cocompactes. Dans ce dernier cas, le r\'esultat est faux comme nous le verrons dans la partie \ref{cex}.\\
Signalons en passant que dans \cite{MR2010735}, Benoist a montr\'e le th\'eorème \ref{benoist_zar} en se passant de l'hypothèse de stricte convexit\'e; nous renvoyons à son texte pour un \'enonc\'e pr\'ecis.\\
Notre r\'esultat est le suivant:

\begin{theo}\label{zariski-dense}
Soit $\G$ un sous-groupe discret et irr\'eductible de $\Aut(\O)$. Si  $\LG$ contient un point parabolique uniform\'ement born\'e, alors l'adh\'erence de Zariski de $\G$ est soit $\ss$ soit conjugu\'ee à $\SO$.
\end{theo}







Nous utiliserons le r\'esultat \ref{lemm_benoist} ci-dessous, d\^u \`a Benoist. Pour cela, il nous faut d'abord d\'efinir quelques objets.\\

Soit $G$ un groupe de Lie r\'eel semi-simple connexe.\\

Consid\'erons une repr\'esentation irr\'eductible $\rho$ de $G$ dans $\ss$. On dit qu'elle est \emph{proximale} si tout sous-groupe nilpotent $N$ maximal de $G$ stabilise exactement une droite de $\R^{n+1}$, c'est-\`a-dire un point de $\PP^n$; cette droite est la \emph{droite de plus haut poids} associ\'ee \`a $N$. De fa\c con \'equivalente, la repr\'esentation $\rho$ est proximale s'il existe un \'el\'ement $g\in G$ dont l'image $\rho(g)$ est un \'el\'ement proximal, c'est-\`a-dire que sa valeur propre de module maximal est de multiplicit\'e $1$.\\

Supposons donc que la repr\'esentation $\rho$ est proximale. Pour chaque \'el\'ement proximal $g$, on note $x_g^+$ le point de $\PP^n$ correspondant \`a sa valeur propre de module maximal. Les repr\'esentations proximales ont la propri\'et\'e remarquable qu'il existe un plus petit ferm\'e invariant; on l'appelle \emph{l'ensemble limite de $G$ dans $\PP^n$}, qu'on note $\Lambda_G$.\\

Comme tout groupe semi-simple connexe, $G$ admet une d\'ecomposition d'Iwasawa $G=KAN$, o\`u $K$ est un sous-groupe compact maximal, $A$ un tore maximal, et $N$ un sous-groupe nilpotent maximal. Si $x\in \PP^n$ est la droite de plus haut poids de $N$, comme $A$ normalise $N$ et que $x$ est l'unique point fixe de $N$, on a que $A$ fixe aussi $x$. L'orbite de $x$ sous $G$ est donc celle de $x$ sous le groupe compact $K$, et \`a ce titre, c'est une orbite ferm\'ee; elle est donc \'egale à l'ensemble limite $\Lambda_G$.\\
L'ensemble limite $\Lambda_G$ est donc l'orbite de la droite de plus haut poids $x$ sous $G$. Comme $x$ est fix\'e par $A$, il existe un \'el\'ement proximal $g\in A$ tel que $x_g^+ =x$. Cela permet de voir que 
$\Lambda_G = \{x_g^+,\ g \in G\ \text{proximal}\}.$

\begin{lemm}[Th\'eorème 1.5 et d\'emonstration du th\'eorème 3.6 de \cite{MR1767272}]\label{lemm_benoist}
Soient $\O$ un ouvert proprement convexe et $\G$ un sous-groupe irr\'eductible de $\Aut(\O)$. La composante neutre $G$ de l'adh\'erence de Zariski de $\G$ est un groupe de Lie semi-simple et la repr\'esentation $\rho:G\rightarrow \ss$ est irr\'eductible et proximale.\\
De plus, si l'ensemble limite $\Lambda_{G}$ de $G$ s'identifie au bord d'un ouvert proprement convexe $\O'$, c'est-\`a-dire $\L_G=\partial\O'$, alors $G$ est conjugu\'e à  $\SO$; si $\Lambda_{G}= \PP^n$ alors $G=\ss$.
\end{lemm}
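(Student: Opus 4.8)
\emph{Overview.} The plan is to reproduce Benoist's argument from \cite{MR1767272} with the tools already assembled in the excerpt. It splits naturally into an algebraic half (showing $G$ is semisimple and $\rho$ irreducible and proximal) and a rigidity half (reading off $G$ from the shape of $\Lambda_G$), the latter being where the real difficulty sits.

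\emph{Algebraic half.} First I would pass to the identity component $G=(\overline{\G}^{\mathrm{Zar}})^0$. Since an irreducible discrete subgroup of $\Aut(\O)$ is in fact strongly irreducible (noted in Section \ref{def_ens_lim}'s discussion), the finite-index subgroup $\G\cap G$ is still irreducible, so $G$ acts irreducibly on $\R^{n+1}$. An algebraic group acting irreducibly is reductive: its unipotent radical $R_u(G)$ is normal, hence the nonzero subspace of $R_u(G)$-fixed vectors is $G$-invariant and therefore all of $\R^{n+1}$, forcing $R_u(G)=1$. To upgrade reductive to semisimple, I would observe that the connected center $Z(G)^0$ acts by scalars in the commutant of the irreducible real representation; a properly convex cone cannot be preserved by complex or quaternionic scalar multiplication, so the commutant is $\R$ and $Z(G)^0\cap\ss$ is trivial, whence $G=[G,G]$ is semisimple. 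Proximality of $\rho$ is then immediate from Theorem \ref{classi_dym_1}: a hyperbolic element $\g\in\G$ has a simple dominant eigenvalue, namely its attracting fixed point $p^+\in\dO$, i.e.\ $\g$ is a proximal matrix; such elements being Zariski-dense, $\rho$ is a proximal representation.

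\emph{Rigidity half.} Here I would use the description recalled just before the statement: $\Lambda_G$ is the $G$-orbit of the highest-weight line, so $\Lambda_G\cong G/P$ for the parabolic $P$ stabilizing that line, and both conclusions are extracted from this single orbit. If $\Lambda_G=\PP^n$, then $G/P$ is closed of full dimension $n$ and equals $\PP^n$, so $G$ acts transitively on $\PP^n$ with $P$ the stabilizer of a line; combined with the positivity carried by the $\G$-invariant cone (which forbids a symplectic realization, since a symplectic group preserves no properly convex cone) this pins the root system to type $A_n$ and the realization to the standard one, giving $G=\ss$. If instead $\Lambda_G=\dO'$, then $G$ preserves $\O'$ and acts transitively on its boundary $G/P=\dO'$, a compact real-analytic homogeneous hypersurface bounding the properly convex body $\O'$; its second fundamental form is $G$-invariant, hence of constant rank, and proper convexity excludes the degenerate rank, so $\dO'$ is $\C^2$ with positive-definite Hessian and $\Aut(\O')\supseteq G$ is noncompact. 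Socié-Méthou's Theorem \ref{socie} then forces $\O'$ to be an ellipsoid, so $G$ is conjugate into $\SO$, and a dimension/irreducibility count upgrades this to conjugacy with $\SO$ itself.

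\emph{Main obstacle.} The hard part is the classification underlying the rigidity half, namely ruling out the ``impostor'' semisimple groups that share the orbit dimension of the target: a priori a symplectic group (transitive on an odd-dimensional $\PP^n$) or a rank-one group other than $\SO$ (whose flag variety is already a sphere) could reproduce the topological hypothesis on $\Lambda_G$. Eliminating them is exactly the content of Benoist's Theorem 3.6: one shows that the extra positivity of a convex-cone-preserving (positively proximal) group, together with the demand that the minimal orbit be a convex hypersurface (resp.\ all of $\PP^n$), is compatible only with the standard representation of $\SO$ (resp.\ of $\ss$). I would carry this out by coupling the invariance of the second fundamental form with the rigidity of Theorem \ref{socie} and Lemma \ref{uni} in the $\dO'$ case, and with the classification of transitive proximal actions on $\PP^n$ in the $\PP^n$ case; this analysis, not the algebraic preliminaries, is where essentially all the work resides, exactly as in the cocompact situation of Theorem \ref{benoist_zar}.
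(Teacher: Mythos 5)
Note first that the paper does not actually prove this lemma: its ``proof'' is only a signpost into \cite{MR1767272} (semisimplicity from Proposition 3.1 and the remark after Corollary 3.2, proximality from Th\'eor\`eme 1.5, and the dichotomy from the proof of Th\'eor\`eme 3.6). So you were bound to attempt a reconstruction, but two of your steps fail in the generality of the statement. The first concerns proximality. The lemma assumes only that $\O$ is properly convex and that $\G$ is an irreducible subgroup of $\Aut(\O)$ --- not that $\O$ is strictly convex with $\C^1$ boundary, nor even that $\G$ is discrete. Theorem \ref{classi_dym_1}, which you invoke to say a hyperbolic element is a proximal matrix, is only valid under those stronger hypotheses; and even there you would first have to produce a hyperbolic element of $\G$, which nothing in the hypotheses supplies. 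For a general properly convex $\O$ (a simplex, say) the trichotomy is simply unavailable, and exhibiting a proximal element in an arbitrary irreducible cone-preserving group is precisely the content of Benoist's Th\'eor\`eme 1.5 (positive proximality) --- i.e.\ you are assuming the very ingredient the lemma cites.

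The second, more structural gap is that you repeatedly let the Zariski closure inherit the invariant cone, which is illegitimate: $G$ need not preserve $\O$ or any properly convex cone --- the intended conclusion $G=\ss$ itself preserves none. This undermines (i) your center-killing step (``complex or quaternionic scalar multiplication cannot preserve the cone'': $Z(G)^0$ is not known to preserve it; the correct route is that a proximal element of $\G$ forces the commutant of the irreducible representation to be $\R$, but that again requires proximality first), and (ii), decisively, your elimination of the symplectic group when $\Lambda_G=\PP^n$. That $\Sp_{2m}(\R)$ preserves no properly convex cone is true but irrelevant: what must be excluded is a \emph{Zariski-dense cone-preserving subgroup} $\G$ of $\Sp_{2m}(\R)$, and Zariski-dense subgroups of a group preserving no cone can perfectly well preserve one (a group dividing a strictly convex set that is not an ellipsoid is Zariski-dense in $\ss$, which preserves no cone). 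Ruling out the symplectic case is exactly the positive-proximality analysis inside the proof of Th\'eor\`eme 3.6 of \cite{MR1767272}; so the ``main obstacle'' you flag is not merely where the work resides --- your proposed shortcut around it is fallacious. By contrast, your $\Lambda_G=\dO'$ half is essentially sound in outline ($G$ genuinely preserves $\O'$ there, since it preserves $\dO'$ and is connected, and the route through nondegeneracy of the second fundamental form plus Th\'eor\`eme \ref{socie} matches the spirit of the paper's Lemma \ref{uni}), though ``proper convexity excludes the degenerate rank'' needs justification: pointwise it is false (think of $y=x^4$), and you must use homogeneity, e.g.\ transitivity of $G$ on $\dO'$ forcing every boundary point to be extremal, or completeness of the nullity leaves.
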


\begin{proof}[O\`u trouver la preuve dans \cite{MR1767272}]
Tout d'abord, Benoist montre que l'adh\'erence de Zariski d'un groupe irr\'eductible $\G$ qui pr\'eserve un ouvert proprement convexe est un groupe de Lie semi-simple; voir la proposition 3.1 et la remarque qui suit le corollaire 3.2.\\
Ensuite, le th\'eorème 1.5 montre que la repr\'esentation de $G$ dans $\ss$ ainsi obtenue est proximale.\\
Enfin, la d\'emonstration du th\'eorème 3.6 de \cite{MR1767272} se divise en deux cas, $\Lambda_{G}= \dO'$ ou $\Lambda_{G}= \PP^n$, et conclut comme indiqu\'e dans l'\'enonc\'e du lemme.
\end{proof}

Dans la d\'emonstration qui suit, on appellera \emph{ellisphère} de dimension $k$ le bord d'un ellipso\"ide de dimension $k+1$.

\begin{proof}[D\'emonstration du th\'eorème \ref{zariski-dense}]
Soit $G$ la composante connexe de l'adh\'erence de Zariski de $\G$. Le lemme \ref{lemm_benoist} montre que $G$ est un groupe de Lie semi-simple et la repr\'esentation $\rho:G\rightarrow \ss$ est irr\'eductible et proximale. Si $G=KAN$ est une d\'ecomposition d'Iwasawa de $G$, alors l'ensemble limite de $G$ est $\Lambda_{G} = K\cdot x$, o\`u $x$ d\'esigne la droite de plus haut poids de $N$. $\L_G$ est ainsi une sous-vari\'et\'e alg\'ebrique compacte connexe de $\PP^n$.\\
Fixons un point parabolique uniform\'ement born\'e $p$ de $\LG$. Notons $\P_p$ le stabilisateur dans $\G$ de $p$ et $\U_p$ le sous-groupe de l'adh\'erence de Zariski de $\P_p$ form\'e par les \'el\'ements unipotents. Le lemme \ref{para_cas_gene} montre que $\U_p$ est un groupe ab\'elien isomorphe à $\R^{k}$. D'apr\`es ce même lemme, il existe un sous-espace $F_p$ de dimension $k$ de l'hyperplan tangent $T_p\dO$ tel que tout sous-espace $H'$ de dimension $k+1$ de $\PP^n$ contenant $F_p$ et intersectant $\O$ est pr\'eserv\'e par $\U_p$; de plus, si $z$ est un point hors de $T_p\dO$, alors l'ensemble $\U_p \cdot z \cup \{ p \}$ est une ellisphère de dimension $k$. Si $z$ est dans $\LG$ ou plus g\'en\'eralement dans $\Lambda_G$, cette ellisph\`ere est incluse dans $\Lambda_G$.
\\

Commen\c cons par le cas simple o\`u le groupe $\P_p$ est de rang maximal. L'ensemble limite $\L_G$ contient alors une ellisphère de dimension $n-1$. Ainsi, soit $\L_G$ est pr\'ecis\'ement cette ellisph\`ere, soit $\L_G$ est de dimension $n$, autrement dit, $\L_G=\PP^n$. Le lemme \ref{lemm_benoist} permet de conclure comme annonc\'e.\\

Traitons maintenant le cas g\'en\'eral en supposant que le groupe parabolique $\P_p$ est de rang $1$. Dans ce cas, le groupe $\U_p$ est un groupe ab\'elien isomorphe à $\R$. Soit $z$ un point de $\L_G\smallsetminus T_p\dO$ et $H_z$ le plan projectif engendr\'e par $z$ et $F_p$, qui est stable sous $\U_p$. L'ensemble limite $\L_G$ contient l'ellipse $\U_p \cdot z \cup \{ p \}$. Par cons\'equent, la sous-vari\'et\'e alg\'ebrique $\L_G \cap H_z$ de $H_z$ est soit une ellipse soit $H_z$ tout entier, et cette conclusion ne d\'epend pas de $z$. Comme $\G$ est irr\'eductible, le cas $\LG \cap H_z=H_z$ implique que $\L_G = \PP^n$ et donc que $G=\ss$ par le lemme \ref{lemm_benoist}.\\
Supposons donc que $\L_G \cap H_z$ est une ellipse. Comme le sous-groupe compact maximal $K$ de $G$ agit transitivement sur $\Lambda_G$, ceci est en fait valable pour tous points $p$ de $\L_G$ et $z$ de $\LG$: il existe une droite $F_p$ de $T_p\Lambda_G$ telle que pour tout sous-espace $H$ de dimension $2$ contenant $F_p$ et non inclus dans $T_p\Lambda_G$, l'intersection $H \cap \Lambda_G$ est une ellipse.\\

On va montrer que $\Lambda_G$ est une ellisphère de dimension $n-1$, en utilisant une r\'ecurrence, dont l'initialisation vient juste d'\^etre faite.\\

Prenons $k\geqslant 1$. Supposons que pour tout point $p$ de $\L_G$, il existe un sous-espace $F^k_p$ de dimension $k$ de $T_p\Lambda_G$ tel que pour tout sous-espace $H$ de dimension $k+1$ contenant $F^k_p$ et non inclus dans $T_p\Lambda_G$ alors $H \cap \Lambda_G$ est une ellisphère de dimension $k$.\\
Voyons que cette propri\'et\'e est encore vraie au rang $k+1$. Par irr\'eductibilit\'e de $\G$, on peut trouver des points $p, q \in \LG$ tels que l'espace engendr\'e par la droite $F_p$ et le sous-espace $F^k_q$ soit de dimension $k+2$. En effet, on aurait sinon que l'intersection $F$ de tous les sous-espaces $F^k_q$, pour $q\in\LG$, est non vide. $F$ serait donc un sous-espace projectif pr\'eserv\'e par $\G$, et donc $\G$ ne serait pas irr\'eductible.\\
Notons $E$ l'espace de dimension $k+2$ engendr\'e par $F_p$ et $F^k_q$. On obtient ainsi deux feuilletages en ellisph\`eres de $E\cap \Lambda_G$ qui n'ont aucune feuille en commun. Cela montre que $E\cap \Lambda_G$ est une ellisphère de dimension $k+1$. L'espace tangent en $p$ \`a cette ellisph\`ere est l'espace $F_{k+1}^p$ que l'on cherchait. On a le r\'esultat pour tout point de $\L_G$ en utilisant l'action de $K$.\\
Le cas $k=n-1$ permet de conclure que $\Lambda_G$ est une ellisphère de dimension $n-1$.
\end{proof}

\section{D\'efinitions \'equivalentes de la finitude g\'eom\'etrique}\label{mainsection}

Le but de cette partie est de montrer notre th\'eorème principal, qui donne des d\'efinitions \'equivalentes de la notion de finitude g\'eom\'etrique \emph{sur $\O$}. En fait, celles-ci sont pr\'ecis\'ement celles que Brian Bowditch \cite{MR1317633} a donn\'ees, en courbure n\'egative pinc\'ee, pour la finitude g\'eom\'etrique telle que d\'efinie en \ref{geofini_ghyp}.\\

Pour être plus pr\'ecis, et plus juste, la première d\'efinition d'action g\'eom\'etriquement finie est due à Lars Alhfors dans \cite{MR0194970} dans le contexte de g\'eom\'etrie hyperbolique de dimension 3. Ahlfors demandait à cette action d'avoir un domaine fondamental qui soit un polyèdre avec un nombre fini de côt\'es. Le temps (sous l'action de Brian Bowditch) a montr\'e que cette d\'efinition n'\'etait pas la bonne en dimension sup\'erieure ou \'egale à 4. Une seconde d\'efinition, (GF) dans ce texte, a \'et\'e propos\'ee par Alan Beardon et Bernard Maskit \cite{MR0333164} pour la dimension 3. William Thurston propose 3 autres d\'efinitions dans ses notes (\cite{MR1435975} chapitre 8), toujours en dimension 3; ce sont les d\'efinitions (PEC), (PNC), (VF) de ce texte. La situation devient vraiment claire lorsque Bowditch \cite{MR1218098,MR1317633} montre qu'en g\'eom\'etrie hyperbolique ou en courbure n\'egative pinc\'ee, toutes ces d\'efinitions sont \'equivalentes et ce quelque soit la dimension.

\begin{theo}\label{theo_geo_finie}
Soient $\G$ un sous-groupe discret de $\Aut(\O)$, et $M=\Quo$ l'orbifold quotient correspondante. Les propositions suivantes sont \'equivalentes.
\begin{enumerate}
\item[(GF)] L'action de $\G$ sur $\O$ est g\'eom\'etriquement finie sur $\O$ (i.e les points de $\LG$ sont des points limites coniques ou des points paraboliques \underline{uniform\'ement} born\'es).
\item[(TF)] Le quotient $\Quotient{\Og}{\G}$ est une orbifold à bord qui est l'union d'un compact et d'un nombre fini de projections de r\'egions paraboliques standards disjointes.
\item[(PEC)] La partie \'epaisse du c\oe ur convexe de $M$, c'est-à-dire $M^{\varepsilon}\cap C(M)$, est compacte.
\item[(PNC)] La partie non cuspidale du c\oe ur convexe de $M$, c'est-à-dire $M^{nc}_{\varepsilon}\cap C(M)$, est compacte.
\item[(VF)] Le $1$-voisinage du c\oe ur convexe de $\Quo$ est de volume fini et le groupe $\G$ est de type fini.
\end{enumerate}
En particulier, le quotient $M=\Quo$ est \emph{sage}, c'est-\`a-dire l'int\'erieur d'une orbifold compacte à bord, et par suite le groupe $\G$ est de pr\'esentation finie.
\end{theo}

\subsection{Finitude topologique}

\begin{lemm}\label{lem_pbord}
Soit $\G$ un sous-groupe discret de $\Aut(\O)$. Soit $D$ un domaine fondamental convexe et localement fini pour l'action de $\G$ sur $\O$. Aucun point de $\partial D \cap \partial \O$ n'est un point limite conique.
\end{lemm}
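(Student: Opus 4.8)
Le plan est d'\'etablir la contrapos\'ee par un argument de finitude locale, en exploitant le fait qu'un rayon trac\'e dans $D$ et aboutissant \`a $x$ ne peut repasser une infinit\'e de fois pr\`es d'un compact du quotient. Supposons donc que $x \in \partial D \cap \partial \O$ soit un point limite conique et cherchons une contradiction.

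Je commencerais par fabriquer un rayon enti\`erement contenu dans $D$ et d'extr\'emit\'e $x$. Puisque $x \in \partial D$, le point $x$ est adh\'erent \`a $D$ dans $\PP^n$~: on choisit une suite $(d_k)_{k\in\N}$ de points de $D$ tendant vers $x$ ainsi qu'un point $z \in D$. Pour tout $w \in\ ]z,x[$, que l'on \'ecrit $w = (1-t)z + tx$ avec $t\in(0,1)$ dans une carte affine contenant $\overline{\O}$, les points $w_k = (1-t)z + t\,d_k$ appartiennent au segment $[z,d_k]$, donc \`a $D$ par convexit\'e (th\'eor\`eme \ref{lee}), et convergent vers $w\in\O$~; comme $D$ est ferm\'e dans $\O$, on obtient $w\in D$. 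Ainsi le rayon $r=[z,x[$ est inclus dans $D$, et $r(s)\to x$ lorsque la longueur d'arc $s$ tend vers $+\infty$.

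J'exploiterais ensuite la conicit\'e. D'apr\`es la remarque \ref{rem_lim_coni}, la projection sur $M=\Quo$ de n'importe quelle demi-droite aboutissant \`a $x$, en particulier de $r$, repasse une infinit\'e de fois dans un compact $K$ de $M$. En relevant $K$ en un compact $\tilde K\subset\O$ qui se projette surjectivement sur $K$ (possible car l'action de $\G$ est propre), on obtient une suite $s_n\to+\infty$ et des \'el\'ements $\g_n\in\G$ tels que $\g_n\cdot r(s_n)\in\tilde K$. Comme $r(s_n)\in D$, le translat\'e $\g_n D$ rencontre $\tilde K$ pour tout $n$. Or $D$ est localement fini, donc seuls un nombre fini de translat\'es $\g D$ rencontrent le compact $\tilde K$~; et pour un domaine fondamental d'int\'erieur non vide, $\g D=\g' D$ force $\g=\g'$ (sinon $\g\mathring{D}$ et $\g'\mathring{D}$, qui sont \'egaux et non vides, seraient disjoints). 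Il n'y a donc qu'un nombre fini de valeurs distinctes parmi les $\g_n$, et l'une d'elles, disons $\g$, se r\'ep\`ete pour une infinit\'e d'indices. Pour ces indices, $r(s_n)\in\g^{-1}\tilde K$, compact de $\O$, alors que $r(s_n)\to x\in\partial\O$~: c'est absurde.

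Le c\oe ur de l'argument, et le point le plus d\'elicat \`a justifier proprement, est la premi\`ere \'etape~: la convexit\'e de $D$ et sa fermeture dans $\O$ doivent garantir qu'un rayon issu de $D$ et pointant vers $x\in\partial D\cap\partial\O$ reste dans $D$ jusqu'au bout. Une fois ce rayon obtenu, la conclusion ne fait plus qu'opposer deux propri\'et\'es incompatibles~: la conicit\'e impose un retour infini vers un compact fixe du quotient, tandis que la finitude locale du domaine fondamental interdit qu'une infinit\'e de translat\'es distincts s'accumulent pr\`es d'un compact, ce qui confine le rayon dans un unique translat\'e compact et contredit sa fuite vers le bord. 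On pourrait d'ailleurs se passer de la remarque \ref{rem_lim_coni} et partir directement de la d\'efinition \ref{def_lim_con}, la demi-droite $[x_1,x[$ y jouant le r\^ole de $r$.
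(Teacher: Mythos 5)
Votre preuve est correcte et suit essentiellement la m\^eme d\'emarche que celle du papier~: l'argument du papier consiste pr\'ecis\'ement \`a tracer la demi-droite $[z,x[\subset D$ et \`a constater que sa projection sur $\Quo$ sort de tout compact, ce qui exclut la conicit\'e. Vous ne faites qu'expliciter les deux points que le papier laisse implicites (l'inclusion $[z,x[\subset D$ via la convexit\'e et la fermeture de $D$ dans $\O$, et la fuite hors de tout compact via la finitude locale), ce qui est tout \`a fait valable.
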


\begin{proof}
Soient $p$ un point de $\partial D \cap \partial \O$ et $x$ un point de $D$. La demi-droite $[xp[ \subset D$ d\'efinit une demi-g\'eod\'esique de $\Quo$ qui sort de tout compact; par cons\'equent, le point $p$ n'est pas un point limite conique.
\end{proof}

\begin{proof}[D\'emonstration de (GF)$\Rightarrow$(TF)]
Le lemme \ref{dom_disc} montre que le groupe $\G$ agit proprement discontin\^ument sur $\Og$. Le lemme \ref{region_stan_disj} montre que pour tout point point parabolique $p$, il existe une r\'egion parabolique standard $R_p$ bas\'ee en $p$ puisque l'action de $\G$ est g\'eom\'etriquement finie sur $\O$. De plus, le même lemme \ref{region_stan_disj} montre que l'on peut choisir ces r\'egions de telle sorte que la famille $(R_p)_{p \in \Pi}$ soit strictement invariante, puisque l'action est g\'eom\'etriquement finie sur $\O$ ($\Pi$ d\'esigne l'ensemble des points paraboliques).

On considère la partie $K$ de $\Qug$ obtenue en retirant les r\'egions paraboliques standards $R_p$ bas\'ees aux points paraboliques $p$. Il nous reste à montrer que $K$ est compact et que l'ensemble $\Pi$ des points paraboliques est fini modulo $\G$. D'apr\`es le lemme \ref{decomposition_partie_fine}, les composantes connexes du bord de $K$ sont en bijection avec les classes de points paraboliques modulo $\G$. Ainsi, si $K$ est compact, alors l'ensemble $\Quotient{\Pi}{\G}$ est fini. Il suffit donc de montrer la compacit\'e de $K$ pour conclure.

On considère un domaine fondamental convexe et localement fini $D$ pour l'action de $\G$ sur $\O$. On doit montrer que tout point d'accumulation $z$ dans $\overline{\O}$ de $D \smallsetminus \bigcup_{p \in \Pi} R_p$ est un point de $\Og$. Comme l'action de $\G$ sur $\O$ est g\'eom\'etriquement finie sur $\O$, on a $\LG \cap \overline{D} \subset \Pi$ d'après le lemme \ref{lem_pbord}. Le point $z$ est donc soit dans $\Og$ soit un point de $\Pi$. La proposition \ref{prop_reg} montre qu'aucune suite de points de  $D \smallsetminus \bigcup_{p \in \Pi} R_p$ ne peut converger vers un point parabolique.
\end{proof}

\subsection{Parties \'epaisse et non cuspidale}

Donnons maintenant une

\begin{proof}[Preuve de (TF)$\Rightarrow$(PNC)$\Rightarrow$(PEC)]
Supposons que $\G$ v\'erifie (TF). Il existe alors un compact $K$ de $\Og$ et une famille $\G$-\'equivariante $(R_{p_i})_{1\leqslant i\leqslant k}$ de r\'egions paraboliques standards disjointes, bas\'ees en des points paraboliques $p_i$, tels que 
$$\Og = (\G\cdot K) \bigsqcup \sqcup_{i=1}^k \G\cdot R_{p_i}.$$
Le c\oe ur convexe de $M$ est le quotient $\Quotient{\overline{C(\LG)}^{\O}}{\G}$, o\`u $\overline{C(\LG)}^{\O}$  d\'esigne l'adh\'erence de $C(\LG)$ dans $\O$. Or, on a
$$\overline{C(\LG)}^{\O} = \G\cdot (K\cap\overline{C(\LG)}^{\O})  \bigsqcup \sqcup_{i=1}^k \G\cdot (R_{p_i}\cap \overline{C(\LG)}^{\O});$$
autrement dit, $C(M)$ est l'union d'un compact et des projections des $R_{p_i}\cap \overline{C(\LG)}^{\O}$.\\
Le corollaire \ref{soborne} montre que tous les points paraboliques de $\LG$ sont uniform\'ement born\'es. Par cons\'equent, il existe pour chaque $p_i$ une horoboule $H_{p_i}$ bas\'ee en $p_i$ telle que 
$$H_{p_i}\cap C(\LG) \subset R_{p_i}\cap C(\LG).$$ Le lemme \ref{horo_stan} montre qu'on peut choisir $H_{p_i}$ de telle fa\c con que
$$H_{p_i}\cap C(\LG) \subset \O_{\varepsilon}(\Stab_{\G}(p_i)).$$
L'ensemble $K_{p_i} = \overline{C(\LG) \cap R_{p_i} \smallsetminus H_{p_i}\cap C(\LG)}^{\O}$ est compact, et, en posant 
$$K'=K\cap\overline{C(\LG)}^{\O}\bigcup \cup_{i=1}^k K_{p_i},$$ on obtient
$$\overline{C(\LG)}^{\O} = (\G\cdot K') \bigsqcup \sqcup_{i=1}^k \G\cdot (H_{p_i}\cap \overline{C(\LG)}^{\O}).$$
La partie non cuspidale du c\oe ur convexe est un ferm\'e du compact $\Quotient{\G\cdot K'}{\G}$, elle est donc compacte. L'implication (TF)$\Rightarrow$(PEC) est imm\'ediate puisque la partie \'epaisse du c\oe ur convexe est un ferm\'e de la partie non cuspidale du c\oe ur convexe.\\
Reste \`a voir que (PEC) entra\^ine (PNC). Supposons donc que la partie \'epaisse du c\oe ur convexe soit compacte. Celle-ci \'etant une orbifold à bord, le nombre de ses composantes connexes de bord est fini. Ainsi, $M_{\varepsilon}^{nc}\cap C(M) \smallsetminus M^{\varepsilon}\cap C(M)$ a un nombre fini de composantes connexes. Or,  d'après le lemme \ref{decomposition_partie_fine}, chacune des composantes connexes de $M_{\varepsilon}^{nc}\cap C(M) \smallsetminus M^{\varepsilon}\cap C(M)$ est compacte. Il vient que la partie non cuspidale elle-même est compacte.
\end{proof}

\begin{rema}\label{decomposition_corps_cvx} 
La preuve pr\'ec\'edente montre que sous l'hypoth\`ese (TF), le c\oe ur convexe de $M$ se d\'ecompose en
$$C(M) = (C(M))_{\varepsilon}^{nc} \bigsqcup \sqcup_{i=1}^k \Quotient{\left(H_{p_i}\cap \overline{C(\LG)}^{\O}\right)}{\P_{p_i}},$$
o\`u $(C(M))_{\varepsilon}^{nc}$ est la partie non cuspidale du c\oe ur convexe, qui est compacte, les $\{p_i\}_{1\leqslant i \leqslant k}$ forment un ensemble de repr\'esentants de points paraboliques de $\LG$, les $\{H_{p_i}\}$ sont des horoboules bas\'ees aux points $\{p_i\}$ et $\P_{p_i}=\Stab_{\G}(p_i)$.
\end{rema}

Bouclons une première boucle :

\begin{proof}[Preuve de (PNC) $\Rightarrow$ (GF)]
Tout d'abord, comme la partie non cuspidale du c\oe ur convexe de $M$ est compacte, le nombre de ses composantes connexes de bord est fini. Cela entraîne que $M$ a un nombre fini de cusps.

Soient $p$ un point de l'ensemble limite $\Lambda_{\G}$ et $x$ un point dans l'enveloppe convexe $C(\LG)$ de $\Lambda_{\G}$ dans $\O$. La projection de la demi-droite $[xp)$ sur le quotient $M=\Quo$ est un rayon g\'eod\'esique inclus dans le c\oe ur convexe de $M$. De deux choses l'une: soit ce rayon g\'eod\'esique revient un nombre infini de fois dans la partie non cuspidale du c\oe ur convexe, qui est compacte, et donc le point $p$ est un point limite conique; soit il n'y revient qu'un nombre fini de fois, et il est ainsi ultimement inclus dans une composante connexe de la partie cuspidale de $M$, puisque $M$ a un nombre fini de cusps; le point \ref{partie_fine_cusp} du lemme \ref{decomposition_partie_fine} montre alors que le point $p$ est parabolique, n\'ecessairement uniform\'ement born\'e puisque la partie non cuspidale du c\oe ur convexe est compacte. Le quotient $M=\Quo$ est donc g\'eom\'etriquement fini.
\end{proof}

\subsection{Volume}
  
Nous allons voir ici que l'hypothèse (VF) est \'equivalente à la finitude g\'eom\'etrique sur $\O$. Remarquons que cette hypothèse en regroupe en fait deux: 
\begin{enumerate}
\item[(a)] le $1$-voisinage du c\oe ur convexe de $\Quo$ est de volume fini et
\item[(b)] l'ordre des sous-groupes finis de $\G$ est born\'e.
\end{enumerate}
Dans le point $(a)$, on est oblig\'e de consid\'erer le $1$-voisinage pour prendre en compte les groupes dont l'action serait r\'eductible: dans ce cas, le c\oe ur convexe est d'int\'erieur vide et son volume est donc toujours nul. Si on suppose que les groupes sont irr\'eductibles, on peut alors consid\'erer le c\oe ur convexe et non son $1$-voisinage.\\
En g\'eom\'etrie hyperbolique, le point $(b)$ est inutile lorsque le quotient $\Quo$ est de volume fini ou la dimension est inf\'erieure ou \'egale à $3$. On notera qu'Emily Hamilton \cite{MR1604903} a construit un sous-groupe $\G_0$ de $\mathrm{SO}_{4,1}(\R)$ tel que le $1$-voisinage du c\oe ur convexe est de volume fini mais tel que le groupe $\G_0$ n'est pas de type fini et par suite l'action de $\G_0$ n'est pas g\'eom\'etriquement finie sur $\mathbb{H}^4$.\\

Pour prouver l'\'equivalence, nous utiliserons le fait que l'on peut minorer de façon uniforme le volume des boules de rayon $r>0$ d'une g\'eom\'etrie de Hilbert:

\begin{lemm}[Colbois - Vernicos Th\'eorème 12 de \cite{MR2245997}]\label{Min_vol_boul}
Pour tout $n \geqslant 1$ et tout $r >0$, il existe une constante $v_n(r) >0$ tel que pour tout ouvert proprement convexe $\O$ de $\PP^n$, pour tout point $x$ de $\O$, on a $$\textrm{Vol}_{\O}(B_{\O}(x,r)) \geqslant v_n(r) > 0.$$
\end{lemm}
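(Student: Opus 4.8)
The plan is to recognize that the quantity $\textrm{Vol}_{\O}(B_{\O}(x,r))$ is invariant under the projective action, and therefore descends to a function on the compact quotient furnished by Benz\'ecri's theorem. Indeed, the Hilbert distance is defined through a cross-ratio, hence invariant under $\ss$: for every $g\in\ss$ one has $d_{\O}(x,y)=d_{g\O}(gx,gy)$, so that $g\cdot B_{\O}(x,r)=B_{g\O}(gx,r)$. The Busemann volume is likewise projectively equivariant, $\textrm{Vol}_{g\O}(g\mathcal{A})=\textrm{Vol}_{\O}(\mathcal{A})$. Combining these two facts, the function
$$
f:X^{\bullet}\longrightarrow\R,\qquad f(\O,x)=\textrm{Vol}_{\O}(B_{\O}(x,r)),
$$
is constant on the $\ss$-orbits in the space $X^{\bullet}$ of marked convex sets.

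By the theorem of Benz\'ecri (Th\'eor\`eme \ref{theo_ben}), the action of $\ss$ on $X^{\bullet}$ is cocompact: there exists a compact set $C\subset X^{\bullet}$ meeting every orbit, that is $\ss\cdot C=X^{\bullet}$. Since $f$ is $\ss$-invariant, one has $\inf_{X^{\bullet}}f=\inf_{C}f$, and it therefore suffices to bound $f$ from below on the single compact set $C$.

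On $C$ the function $f$ is positive and continuous. Positivity is immediate: $B_{\O}(x,r)$ is a nonempty open subset of $\O$ and $\textrm{Vol}_{\O}$ is a smooth positive density, so $f(\O,x)>0$ for every $(\O,x)$. For continuity, the crucial point is that, because $C$ is compact \emph{inside} $X^{\bullet}=\{(\O,x)\mid x\in\O\}$, the marked point stays uniformly in the interior of its convex set: no sequence in $C$ can have $x_k$ approaching $\partial\O_k$, for the limiting pair would then satisfy $x\in\partial\O$ and would not belong to $X^{\bullet}$. Consequently the degeneration of balls described in Proposition \ref{non_stric_conv1} never occurs on $C$, the Finsler norm and the Busemann density $y\mapsto v_n/\textrm{Vol}(B(T_y\O))$ depend continuously on $(\O,x,y)$ as long as $y$ stays in the interior, and a standard dominated-convergence argument yields the continuity of $(\O,x)\mapsto\textrm{Vol}_{\O}(B_{\O}(x,r))$.

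A continuous positive function on a compact set attains a positive minimum, so $v_n(r):=\inf_{C}f>0$. Finally, given an arbitrary $(\O,x)$, choose $g\in\ss$ with $(g\O,gx)\in C$; then $\textrm{Vol}_{\O}(B_{\O}(x,r))=f(\O,x)=f(g\O,gx)\geqslant v_n(r)$, which is exactly the uniform lower bound sought. The only genuinely technical step is the continuity of $f$ on $C$; the main obstacle there is to control the joint variation of the ball $B_{\O}(x,r)$ and of the Busemann density, but this is entirely governed by the fact that everything remains bounded away from the boundary, so the argument stays soft.
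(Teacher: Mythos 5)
Your proof is correct and follows exactly the paper's own argument: the paper likewise defines $f(\O,x)=\textrm{Vol}_{\O}(B_{\O}(x,r))$ on the space $X^{\bullet}$ of marked convex sets, observes that $f$ is continuous, strictly positive and $\ss$-invariant, and invokes Benz\'ecri's theorem on the proper cocompact action of $\ss$ on $X^{\bullet}$ to deduce the uniform positive lower bound. The only difference is that you spell out the invariance and the continuity on a compact fundamental set, which the paper simply asserts.
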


Bruno Colbois et Constantin Vernicos ont obtenu une in\'egalit\'e quantitative d\'ependant du rayon $r$ des boules. Si l'on veut simplement une in\'egalit\'e qualitative alors il s'agit d'une simple cons\'equence du th\'eorème de Benz\'ecri:

\begin{proof}
Soit $r >0$ une constante. On rappelle la d\'efinition de l'espace des convexes marqu\'es $X^{\bullet}$:
$$X^{\bullet} = \{  (\O,x) \,\mid\, \O \textrm{ est un ouvert proprement convexe de } \PP^n \textrm{ et } x \in \O \}$$

La fonction $f$ qui a un point $(\O,x)$ de $X^{\bullet}$ associe le volume de la boule de $(\O,d_{\O})$ de centre $x$ et de rayon $r$ est continue, strictement positive, et $\ss$-invariante. Or, le th\'eorème de Benz\'ecri \ref{theo_ben} montre que l'action de $\ss$ sur l'espace $X^{\bullet}$ est propre et cocompacte. La fonction $f$ est donc minor\'ee par une constante strictement positive.
\end{proof}

Nous pouvons maintenant donner une:

\begin{proof}[Preuve de (GF)$\Leftrightarrow$(VF)] $\Rightarrow$ La remarque \ref{decomposition_corps_cvx} et l'implication (GF)$\Rightarrow$(TF) montrent que le c\oe ur convexe de $\Quo$ se d\'ecompose en 
$$C(M) = (C(M))_{\varepsilon}^{nc} \bigsqcup \sqcup_{i=1}^k \Quotient{\left(H_{p_i}\cap \overline{C(\LG)}^{\O}\right)}{\P_{p_i}},$$
avec la partie non cuspidale $(C(M))_{\varepsilon}^{nc}$ compacte.\\
D'apr\`es le corollaire \ref{para_cas_gene}, il existe pour chaque point $p_i$, une coupe $\O_{p_i}$ (i.e l'intersection de $\O$ avec un sous-espace projectif) de $\O$ de dimension $d+1\geqslant 2$, contenant $p_i$ dans son bord, et deux ellipso\"ides tangents \`a $\dO_{p_i}$ en $p_i$ qui encadrent $\O$. En particulier, le bord $\dO_{p_i}$ est de classe $\C^{1,1}$ en $p_i$: le bord est de classe $\C^1$ et sa diff\'erentielle est Lipschitz. On peut donc appliquer la proposition \ref{volpic} de l'annexe \`a $\O$, qui montre que chaque partie $\Quotient{\left(H_{p_i}\cap \overline{C(\LG)}^{\O}\right)}{\P_{p_i}}$ est de volume fini.\\
Pour finir, la d\'ecomposition pr\'ec\'edente montre que le c\oe ur convexe se r\'etracte sur sa partie non cuspidale. Le quotient $\Quo$ est donc une orbifold sage; par cons\'equent, le groupe $\G$ est de type fini et même de pr\'esentation finie.\\

$\Leftarrow$ Comme le groupe $\G$ est de type fini, le lemme de Selberg affirme que, quitte à prendre un sous-groupe d'indice fini, on peut supposer que le groupe $\G$ est sans torsion.  Le lemme \ref{volfini_impli_compact} qui suit, appliqu\'e à la partie \'epaisse du c\oe ur convexe, implique que celle-ci est compacte, soit l'hypothèse (PEC) dont on a vu pr\'ec\'edemment qu'elle impliquait (GF).
\end{proof}

\begin{lemm}\label{volfini_impli_compact}
Soit $\G$ un sous-groupe discret et \underline{sans torsion} de $\Aut(\O)$. Si un ferm\'e $\F$ de la partie \'epaisse de $\Quo$ est de volume fini alors il est compact.
\end{lemm}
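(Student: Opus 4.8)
Le plan est de ramener la compacit\'e \`a la seule bornitude, puis de contredire la finitude du volume en fabriquant une infinit\'e de boules plong\'ees disjointes de volume uniform\'ement minor\'e. Commen\c cons par les r\'eductions. Le groupe $\G$ \'etant sans torsion, $M=\Quo$ est une vari\'et\'e ; de plus $(\O,\d)$ est un espace m\'etrique \emph{propre} (ses boules ferm\'ees sont compactes, car born\'ees et ferm\'ees dans l'ouvert $\O$) sur lequel $\G$ agit proprement discontin\^ument par isom\'etries, de sorte que $M$ est lui aussi un espace m\'etrique propre. Comme la partie fine $M_{\varepsilon}$ est ouverte (la condition $\d(x,\g x)<\varepsilon$ l'est), la partie \'epaisse $M^{\varepsilon}$ est ferm\'ee dans $M$, et donc $\F$, ferm\'e de $M^{\varepsilon}$, est ferm\'e dans $M$. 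Dans un espace propre un ferm\'e born\'e est compact ; il suffit donc de montrer que $\F$ est born\'e, ce que je fais par l'absurde en supposant $\F$ non born\'e.

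Ensuite je produis une suite \'echappante et $\varepsilon$-s\'epar\'ee. Puisque $\F$ est non born\'e et que toute boule ferm\'ee de $M$ est compacte, on construit par r\'ecurrence une suite $(y_k)_{k\in\N}$ de points de $\F$ deux \`a deux \`a distance $>\varepsilon$ : ayant choisi $y_1,\dots,y_k$, la r\'eunion $\bigcup_{i=1}^{k}\overline{B_M(y_i,\varepsilon)}$ est compacte, donc ne peut contenir $\F$ tout entier, et l'on choisit $y_{k+1}\in\F$ hors de cette r\'eunion. Rappelons que, $\G$ \'etant sans torsion, un point $x\in\O$ se projette dans la partie \'epaisse si et seulement si $\d(x,\g x)\geqslant\varepsilon$ pour tout $\g\in\G\smallsetminus\{\mathrm{Id}\}$ (car un unique $\g\neq\mathrm{Id}$ d\'epla\c cant $x$ de moins de $\varepsilon$ engendrerait d\'ej\`a un sous-groupe infini). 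Fixons pour chaque $k$ un relev\'e $\tilde y_k\in\O^{\varepsilon}$ de $y_k$. Si $a,b\in B_{\O}(\tilde y_k,\varepsilon/2)$ v\'erifient $b=\g a$ avec $\g\neq\mathrm{Id}$, alors $\d(\tilde y_k,\g\tilde y_k)\leqslant\d(\tilde y_k,b)+\d(\g a,\g\tilde y_k)=\d(\tilde y_k,b)+\d(a,\tilde y_k)<\varepsilon$, ce qui contredit l'\'epaisseur de $\tilde y_k$. La projection $\pi:\O\to M$ est donc injective sur $B_{\O}(\tilde y_k,\varepsilon/2)$, et $B_k:=\pi\big(B_{\O}(\tilde y_k,\varepsilon/2)\big)$ est une boule plong\'ee, isom\'etrique \`a $B_{\O}(\tilde y_k,\varepsilon/2)$.

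Enfin je conclus par le volume. Le lemme \ref{Min_vol_boul} de Colbois--Vernicos fournit $\Vol(B_k)=\Vol_{\O}\big(B_{\O}(\tilde y_k,\varepsilon/2)\big)\geqslant v_n(\varepsilon/2)>0$, la constante $v_n(\varepsilon/2)$ ne d\'ependant ni de $k$, ni de $\O$. Comme les centres $y_k$ sont deux \`a deux \`a distance $>\varepsilon$ et que $B_k\subset B_M(y_k,\varepsilon/2)$, les $B_k$ sont deux \`a deux disjointes. On obtient ainsi une infinit\'e de parties disjointes, chacune de volume au moins $v_n(\varepsilon/2)$, toutes rattach\'ees \`a $\F$ via leurs centres \'epais $y_k\in\F$ ; il en r\'esulte $\Vol(\F)=+\infty$, contredisant l'hypoth\`ese. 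Donc $\F$ est born\'e, donc compact.

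La difficult\'e principale est pr\'ecis\'ement la derni\`ere \'etape de comptage de volume : tout repose sur le fait que l'\'epaisseur de chaque $y_k$ (c'est-\`a-dire la minoration $\d(\cdot,\g\cdot)\geqslant\varepsilon$ issue du lemme de Margulis) produit une boule \emph{plong\'ee} de volume uniform\'ement minor\'e par \ref{Min_vol_boul}, et que la famille $\varepsilon$-s\'epar\'ee rend ces boules disjointes et charg\'ees par le volume de $\F$ ; c'est ce point -- l'interaction entre la condition d'\'epaisseur, l'injectivit\'e de $\pi$ et la minoration uniforme des volumes -- qui est le c\oe ur de l'argument, le reste (propret\'e de $M$, fermeture de $M^{\varepsilon}$, extraction $\varepsilon$-s\'epar\'ee) \'etant de routine.
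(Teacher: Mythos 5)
Your proof follows the paper's strategy exactly in its two key ingredients: around a point of the thick part, a small metric ball embeds in the quotient, and Lemma \ref{Min_vol_boul} of Colbois--Vernicos bounds its volume below uniformly; you then run a packing argument. The only structural difference is that you argue contrapositively (unbounded $\Rightarrow$ infinite $\varepsilon$-separated family $\Rightarrow$ infinitely many disjoint embedded balls), whereas the paper argues directly (a maximal packing is finite, and doubling the radii covers $\F$ by finitely many relatively compact balls). Your preliminaries are correct and in one place more careful than the paper: from $\d(x,\g x)\geqslant\varepsilon$ for all $\g\neq\mathrm{Id}$ one only gets that the ball of radius $\varepsilon/2$ embeds, as in your triangle-inequality computation, while the paper asserts this for radius $\varepsilon$.

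There is however a genuine gap at your last inference, \emph{il en r\'esulte $\Vol(\F)=+\infty$}: your balls $B_k$ are centered at points of $\F$ but are in no way contained in $\F$, so their disjointness only shows that the $\varepsilon/2$-neighborhood of $\F$ has infinite volume, which the hypothesis $\Vol(\F)<+\infty$ does not contradict. This hole cannot be patched, because the statement as literally written is false: take for $\O$ an ellipsoid and for $\G$ the cyclic group generated by a hyperbolic isometry of translation length at least $\varepsilon$; the thick part is then all of $\Quo$, and the image of a properly embedded geodesic ray is a closed subset of the thick part, of volume zero, yet non-compact. To be fair, the paper's own proof stumbles on the mirror image of the same point: it counts balls \emph{incluses dans $\F$} (for which the bound $\Vol(\F)/v_n(\varepsilon)$ is valid), but the maximality step, covering $\F$ by the doubled balls, fails precisely when candidate balls are not included in $\F$ (for the ray above, the maximal family is empty). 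What both arguments actually prove is the corrected statement: if some uniform neighborhood of $\F$, say of radius $\varepsilon/2$, has finite volume, then $\F$ is compact; and this is all that is needed where the lemma is used, namely for the thick part of the convex core under (VF) in Theorem \ref{theo_geo_finie} (the $1$-neighborhood of the convex core has finite volume) and for the thick part of a finite-covolume quotient in Corollary \ref{theo_vol_fini}. So strengthen the hypothesis accordingly and replace your last line by: the $\varepsilon/2$-neighborhood of $\F$ would have infinite volume, a contradiction; your proof is then complete.
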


\begin{proof}
Par d\'efinition de la partie \'epaisse $\O^{\varepsilon}$ (et car le groupe $\G$ est sans torsion), si un point $x$ de $\O$ est dans $\O^{\varepsilon}$ alors la boule $B(x,\varepsilon)$ s'injecte par projection dans $\Quo$. Le lemme \ref{Min_vol_boul} montre que la boule $B(x,\varepsilon)$ a un volume minor\'e par une constante strictement positive ind\'ependante de $x$. Par cons\'equent, on ne peut pas trouver plus de $\textrm{Vol}(\F)/v_n(\varepsilon)$ boules disjointes incluses dans $\F$. Soient $B(x_1,\varepsilon) ,..., B(x_k,\varepsilon)$ un ensemble maximal de boules disjointes incluses dans $\F$. Par maximalit\'e, la r\'eunion finie des boules $B(x_1, 2\varepsilon) ,..., B(x_k, 2\varepsilon)$ recouvre $\F$. L'ensemble $\F$ est donc compact.
\end{proof}

\subsection{Cas particuliers}

La notion de finitude g\'eom\'etrique regroupe, comme on va le voir, des situations un peu diff\'erentes, selon que le quotient est de volume fini ou infini, selon que le c\oe ur convexe est compact ou pas.

\subsubsection*{Cas convexe-cocompact}

Lorsque le c\oe ur convexe du quotient $M=\Quo$ de $\O$ par le sous-groupe discret $\G$ de $\Aut(\O)$ est compact, on dit que l'action de $\G$ sur $\O$ est \emph{convexe-cocompacte} ou que le quotient $M$ lui-même est \emph{convexe-cocompact}. Le corollaire suivant affirme que ces groupes sont exactement ceux dont l'action est g\'eom\'etriquement finie sur $\O$ et qui ne contiennent pas de paraboliques.

\begin{coro}\label{theo_convcocomp}
Soit $\G$ un sous-groupe discret de $\Aut(\O)$. L'action de $\G$ sur $\O$ est convexe-cocompacte si et seulement si tout point de l'ensemble limite $\Lambda_{\G}$ est un point limite conique.
\end{coro}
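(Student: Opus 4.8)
Le plan est de d\'eduire ce corollaire du th\'eor\`eme \ref{theo_geo_finie} et de la caract\'erisation dynamique des points coniques fournie par la remarque \ref{rem_lim_coni}, sans avoir \`a reprendre d'argument g\'eom\'etrique lourd.

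Pour le sens direct, je suppose le c\oe ur convexe $C(M)$ compact. Soit $p\in\LG$; je fixe un point $x_0\in C(\LG)$. Comme $C(\LG)$ est un ouvert convexe dont $p$ est un point adh\'erent, le rayon $[x_0,p[$ est enti\`erement contenu dans $C(\LG)$, et sa projection sur $M$ est un rayon g\'eod\'esique trac\'e dans le c\oe ur convexe. Celui-ci \'etant compact, ce rayon y revient trivialement une infinit\'e de fois, et la remarque \ref{rem_lim_coni} assure alors que $p$ est un point limite conique.

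Pour la r\'eciproque, je suppose que tout point de $\LG$ est conique. L'hypoth\`ese (GF) du th\'eor\`eme \ref{theo_geo_finie} est alors trivialement satisfaite, et le th\'eor\`eme fournit la propri\'et\'e (PNC): la partie non cuspidale $M^{nc}_{\varepsilon}\cap C(M)$ du c\oe ur convexe est compacte. Il reste \`a voir qu'il n'y a pas de cusp. Chaque cusp proviendrait d'un point parabolique de $\LG$ (lemme \ref{decomposition_partie_fine}); or un tel point ne peut \^etre conique. En effet, le long d'un rayon g\'eod\'esique $r(t)$ aboutissant \`a un point parabolique $p$ de stabilisateur $\P$, on a $d_{\O}(r(t),\g r(t))\to 0$ pour $\g\in\P$ (point \ref{partie_fine_cusp} du lemme \ref{decomposition_partie_fine}), donc le rayon d'injectivit\'e de $M$ le long de la projection de $r$ tend vers $0$; puisque tout compact de $M$ a un rayon d'injectivit\'e minor\'e, la projection de $r$ quitte finalement tout compact et ne saurait y revenir une infinit\'e de fois, ce qui, d'apr\`es la remarque \ref{rem_lim_coni}, interdit \`a $p$ d'\^etre conique. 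Tous les points de $\LG$ \'etant coniques par hypoth\`ese, il n'existe donc aucun point parabolique, la partie cuspidale est vide, et par cons\'equent $C(M)=M^{nc}_{\varepsilon}\cap C(M)$ est compacte: l'action de $\G$ sur $\O$ est convexe-cocompacte.

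L'obstacle principal, et le seul point r\'eellement substantiel, est la disjonction entre points coniques et points paraboliques, c'est-\`a-dire le fait qu'un point parabolique ne soit jamais conique; une fois ce point acquis via le comportement du rayon d'injectivit\'e dans un cusp, tout le reste d\'ecoule imm\'ediatement du th\'eor\`eme \ref{theo_geo_finie} et de la remarque \ref{rem_lim_coni}.
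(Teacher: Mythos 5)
Your proof is correct, and it follows the same outline as the paper's: the direct implication is exactly the appeal to Remark \ref{rem_lim_coni}, and the converse consists in observing that (GF) holds trivially and invoking Theorem \ref{theo_geo_finie}. The one genuine difference is that you prove the point the paper silently assumes: the paper merely asserts that the non-cuspidal part of the convex core is the whole convex core, which presupposes that no point of $\LG$ is parabolic, i.e. that a parabolic fixed point can never be a conical limit point. Your argument for this disjointness --- along a ray $r(t)$ ending at a parabolic point $p$ one has $d_{\O}(r(t),\g\, r(t))\to 0$ for $\g$ parabolic in $\Stab_{\G}(p)$ (point \ref{partie_fine_cusp} of Lemma \ref{decomposition_partie_fine}), so the projected ray must eventually leave every compact subset of $M$, contradicting Remark \ref{rem_lim_coni} --- is sound and makes the proof self-contained, which is a real improvement in rigor. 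Only one phrasing deserves care: since $\G$ may have torsion, ``every compact of $M$ has injectivity radius bounded below'' should be understood as follows: by properness of the action, for every compact $\tilde K\subset\O$ the displacement $\inf_{y\in\tilde K} d_{\O}(y,\delta y)$ over non-elliptic elements $\delta\in\G$ is bounded below by a positive constant; since the elements witnessing small displacement along your ray are conjugates of a parabolic element, hence never elliptic, your argument then goes through unchanged.
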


\begin{proof}
Si l'action de $\G$ sur $\O$ est convexe-cocompacte alors tout point de l'ensemble limite est un point limite conique (remarque \ref{rem_lim_coni}).\\
Inversement, si tout point de l'ensemble limite est un point limite conique, alors $\G$ agit par d\'efinition de façon g\'eom\'etriquement finie sur $\O$. Mais dans ce cas, la partie non cuspidale du c\oe ur convexe de $M$ est le c\oe ur convexe de $M$ tout entier. Le th\'eorème \ref{theo_geo_finie} montre qu'alors le c\oe ur convexe de $M$ est compact.
\end{proof}

\subsubsection*{Action de covolume fini}

Nous obtenons ici la caract\'erisation suivante des actions de covolume fini.

\begin{coro}\label{theo_vol_fini}
Soit $\G$ un sous-groupe discret de type fini de $\Aut(\O)$. L'action de $\G$ sur $\O$ est de covolume fini si et seulement si l'action de $\G$ sur $\dO$ est g\'eom\'etriquement finie et $\Lambda_{\G} = \partial \O$.
\end{coro}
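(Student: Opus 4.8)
Le plan est de ramener l'\'enonc\'e au th\'eor\`eme \ref{theo_geo_finie} via l'observation suivante: lorsque $\LG = \dO$, l'enveloppe convexe $C(\LG)$ (int\'erieur de l'enveloppe convexe de $\dO$) est $\O$ tout entier, de sorte que le c\oe ur convexe de $M = \Quo$, ainsi que son $1$-voisinage, valent $M$. La condition (VF) du th\'eor\`eme \ref{theo_geo_finie} se lit alors exactement ``$\Vol_\O(M) < \infty$ et $\G$ de type fini'', c'est-\`a-dire le covolume fini puisque $\G$ est suppos\'e de type fini. Il reste \`a relier la finitude g\'eom\'etrique sur $\dO$ (points paraboliques born\'es) \`a celle sur $\O$ (points paraboliques uniform\'ement born\'es), ce que permet le \emph{lemme cl\'e}: si $\LG = \dO$, un point parabolique est born\'e si et seulement s'il est uniform\'ement born\'e. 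En effet, la stricte convexit\'e et la proposition \ref{tangent} fournissent un hom\'eomorphisme $\G$-\'equivariant $\mathcal{D}_p : \dO\smallsetminus\{p\} \to \A^{n-1}_p$, qui envoie $\LG\smallsetminus\{p\}$ sur une partie dont l'enveloppe convexe dans $\A^{n-1}_p$ est $\overline{\mathcal{D}_p(C(\LG\smallsetminus\{p\}))}$. Quand $\LG = \dO$, cette image est $\A^{n-1}_p$ tout entier, qui est d\'ej\`a convexe; les deux conditions de cocompacit\'e d\'efinissant ``born\'e'' et ``uniform\'ement born\'e'' se confondent donc en la cocompacit\'e de l'action de $\Stab_\G(p)$ sur $\A^{n-1}_p$.

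Avec ce lemme, le sens r\'eciproque est direct: si l'action est g\'eom\'etriquement finie sur $\dO$ et $\LG = \dO$, alors elle l'est sur $\O$, soit (GF); le th\'eor\`eme \ref{theo_geo_finie} donne (VF), donc le $1$-voisinage du c\oe ur convexe --- \'egal \`a $M$ --- est de volume fini, et l'action est de covolume fini. Pour le sens direct, on part du covolume fini: le $1$-voisinage du c\oe ur convexe \'etant inclus dans $M$ est de volume fini, et $\G$ \'etant de type fini, (VF) est satisfaite, d'o\`u (GF) par le th\'eor\`eme \ref{theo_geo_finie}. Il ne reste qu'\`a \'etablir $\LG = \dO$; le lemme cl\'e conclura alors \`a la finitude g\'eom\'etrique sur $\dO$.

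C'est cette derni\`ere \'etape, ``covolume fini $\Rightarrow \LG = \dO$'', qui constitue le principal obstacle. Je raisonnerais par l'absurde en supposant l'existence de $\xi \in \dO\smallsetminus\LG$. Un tel $\xi$ appartient au domaine de discontinuit\'e, sur lequel $\G$ agit proprement discontin\^ument (lemme \ref{dom_disc}); il existe donc un voisinage $U$ de $\xi$ dans $\overline\O$ tel que l'ensemble fini $F = \{\g\in\G : \g U\cap U \neq \varnothing\}$ contr\^ole la multiplicit\'e de la projection $U\cap\O \to M$, laquelle est au plus $|F|$-\`a-$1$. Le long d'une demi-droite aboutissant en $\xi$, je prendrais des points $x_k$ avec $\d(x_k,x_{k+1}) = 2\varepsilon$, de sorte que les boules $B_\O(x_k,\varepsilon)$ soient deux \`a deux disjointes; la proposition \ref{non_stric_conv1} jointe \`a la stricte convexit\'e (la facette de $\xi$ se r\'eduit \`a $\{\xi\}$) assure que ces boules convergent vers $\{\xi\}$ pour la distance canonique, donc $B_\O(x_k,\varepsilon)\subset U$ pour $k$ assez grand. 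Le lemme \ref{Min_vol_boul} de Colbois--Vernicos minore chaque $\Vol_\O(B_\O(x_k,\varepsilon))$ par une constante $v_n(\varepsilon)>0$; comme la projection est au plus $|F|$-\`a-$1$ sur $U$, leur r\'eunion se projette sur une partie de $M$ de volume au moins $\frac{1}{|F|}\sum_k v_n(\varepsilon) = +\infty$, contredisant le covolume fini. La difficult\'e essentielle est ainsi de produire, au voisinage d'un point du bord qui n'est pas dans $\LG$, un ``flux'' de volume infini se plongeant dans le quotient, ce qui repose de fa\c con cruciale sur la minoration uniforme du volume des boules et sur la d\'eg\'en\'erescence contr\^ol\'ee de ces boules pr\`es du bord.
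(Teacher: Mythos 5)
Votre d\'emonstration est correcte. Le sens r\'eciproque est essentiellement celui du papier : m\^eme observation-cl\'e (lorsque $\LG=\dO$, l'hom\'eomorphisme $\Stab_{\G}(p)$-\'equivariant entre $\dO\smallsetminus\{p\}$ et $\A^{n-1}_p$ identifie ``born\'e'' et ``uniform\'ement born\'e''), m\^eme r\'eduction \`a la condition (VF) du th\'eor\`eme \ref{theo_geo_finie}, le c\oe ur convexe \'etant $M$ tout entier. En revanche, votre sens direct suit une route r\'eellement diff\'erente. Le papier invoque le lemme de Selberg pour se ramener \`a un groupe sans torsion, applique le lemme \ref{volfini_impli_compact} \`a la partie \'epaisse de $M$ tout entier (et non du seul c\oe ur convexe) pour obtenir sa compacit\'e, puis conclut par la dichotomie du rayon g\'eod\'esique (celle de la preuve de (PNC)$\Rightarrow$(GF)) : tout point de $\dO$ est conique ou parabolique born\'e, ce qui livre \emph{simultan\'ement} $\LG=\dO$ et la finitude g\'eom\'etrique sur $\dO$. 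Vous, au contraire, obtenez (GF) imm\'ediatement via (VF) --- le $1$-voisinage du c\oe ur convexe, inclus dans $M$, est de volume fini --- puis \'etablissez $\LG=\dO$ par un argument ind\'ependant d'empilement de boules : discontinuit\'e propre sur $\Og$ (lemme \ref{dom_disc}), d\'eg\'en\'erescence des boules vers un point extr\'emal (proposition \ref{non_stric_conv1}) et minoration uniforme du volume des boules (lemme \ref{Min_vol_boul}). Les deux approches sont valides : la v\^otre \'evite Selberg \`a ce niveau (il reste cach\'e dans la preuve de (VF)$\Rightarrow$(GF)) et rend explicite l'\'etape ``covolume fini $\Rightarrow\LG=\dO$'', que le papier laisse implicite dans son argument de partie \'epaisse ; celle du papier est plus courte \'etant donn\'e son infrastructure de lemmes. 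Un seul point \`a r\'ediger avec soin chez vous : le passage ``projection au plus $|F|$-\`a-$1$ $\Rightarrow$ $\Vol_M(\pi(A))\geqslant \Vol_{\O}(A)/|F|$'' se justifie proprement avec un domaine fondamental $D$, en \'ecrivant $\Vol_{\O}(A)=\int_D \#\{\g\in\G : \g y\in A\}\ d\Vol(y)\leqslant |F|\,\Vol_M(\pi(A))$, majoration de multiplicit\'e qui vaut aussi en pr\'esence de torsion (prendre pour $U$ l'int\'erieur d'un voisinage compact $K\subset\Og$ de $\xi$ et $F=\{\g\in\G : \g K\cap K\neq\varnothing\}$).
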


\begin{proof}
Si $\Lambda_{\G} = \partial \O$, alors $C(\LG)=\O$ et le c\oe ur convexe de $\Quo$ est $\Quo$ tout entier. Si l'action de $\G$ sur $\dO$ est g\'eom\'etriquement finie, comme $\Lambda_{\G} = \partial \O$, elle est en fait g\'eom\'etriquement finie sur $\O$. Le th\'eorème \ref{theo_geo_finie} montre alors que $\Quo$ est de volume fini.\\
Comme le groupe $\G$ est de type fini, le lemme de Selberg montre qu'on peut supposer que le groupe $\G$ est sans torsion. Par cons\'equent, le lemme \ref{volfini_impli_compact} montre que la partie \'epaisse de $\Quo$ est compacte. Par cons\'equent, tout point de $\partial \O$ est un point limite conique ou un point parabolique et tout point parabolique est born\'e et de rang maximal. C'est ce qu'il fallait d\'emontrer.
\end{proof}

\begin{coro}\label{dualite_vol_fini}
Soit $\G$ un sous-groupe discret de type fini de $\Aut(\O)$. L'action de $\G$ sur $\O$ est de covolume fini si et seulement si l'action de $\G^*$ sur $\O^*$ est de covolume fini.
\end{coro}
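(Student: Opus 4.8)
The plan is to deduce this corollary formally from the characterization of finite covolume already obtained in Corollary \ref{theo_vol_fini}, together with the duality statement for geometric finiteness on the boundary, Proposition \ref{dual_geo_fini}. Since $(\O^*)^*=\O$ and $(\G^*)^*=\G$, the two implications are symmetric, so it suffices to prove that if $\G$ acts on $\O$ with finite covolume then $\G^*$ acts on $\O^*$ with finite covolume. Recall that Corollary \ref{theo_vol_fini} says, for a finitely generated group, that finite covolume on $\O$ is equivalent to the conjunction of two conditions: the action on $\dO$ is geometrically finite, and $\LG=\dO$. I would carry both conditions across to the dual separately.

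First I would transport the condition $\LG=\dO$. As noted just before Proposition \ref{dual_geo_fini}, the strict convexity and $\C^1$ regularity of $\O$ give a continuous involution $x\longmapsto x^*$ between $\dO$ and $\dO^*$, and it sends $\LG$ onto $\Lambda_{\G^*}$. Being a homeomorphism of $\dO$ onto $\dO^*$ carrying $\LG$ onto $\Lambda_{\G^*}$, it yields the equivalence
$$\LG=\dO \quad\Longleftrightarrow\quad \Lambda_{\G^*}=\dO^*.$$
Next I would transport geometric finiteness on the boundary, which is exactly the content of Proposition \ref{dual_geo_fini}: the action of $\G$ on $\dO$ is geometrically finite if and only if the action of $\G^*$ on $\dO^*$ is. Finally I would observe that the finite generation hypothesis passes to the dual for free, since $\G^*$ is the image of $\G$ under the group isomorphism $g\longmapsto {}^{t}g^{-1}$ and is therefore isomorphic to $\G$; hence Corollary \ref{theo_vol_fini} is applicable to the pair $(\O^*,\G^*)$ as well.

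Assembling these: assuming $\G$ acts on $\O$ with finite covolume, Corollary \ref{theo_vol_fini} gives that $\G$ acts geometrically finitely on $\dO$ with $\LG=\dO$; Proposition \ref{dual_geo_fini} then gives geometric finiteness of $\G^*$ on $\dO^*$, and the boundary involution gives $\Lambda_{\G^*}=\dO^*$. Applying Corollary \ref{theo_vol_fini} to $(\O^*,\G^*)$ yields finite covolume for $\G^*$ on $\O^*$, and the converse follows by exchanging the roles of $(\O,\G)$ and $(\O^*,\G^*)$. The only point requiring genuine care — rather than a true obstacle — is the use of the boundary involution: the equivalence $\LG=\dO \Leftrightarrow \Lambda_{\G^*}=\dO^*$ relies on $x\longmapsto x^*$ being an actual homeomorphism of $\dO$ onto $\dO^*$, which is precisely where the standing hypotheses of strict convexity and $\C^1$ boundary are indispensable.
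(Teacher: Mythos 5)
Your proof is correct and follows essentially the same route as the paper's: apply Corollary \ref{theo_vol_fini}, transport both conditions to the dual via Proposition \ref{dual_geo_fini} and the boundary involution $x\longmapsto x^*$ (which carries $\LG$ onto $\Lambda_{\G^*}$), then apply Corollary \ref{theo_vol_fini} again. Your additional remarks — the symmetry reduction, and the observation that $\G^*$ is finitely generated because $\g\longmapsto {}^{t}\g^{-1}$ is a group isomorphism — are points the paper leaves implicit, and they are verified correctly.
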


\begin{proof}
Le corollaire \ref{theo_vol_fini} montre que si l'action de $\G$ sur $\O$ est de covolume fini alors l'action de $\G$ sur $\dO$ est g\'eom\'etriquement finie et $\LG=\dO$. La proposition \ref{dual_geo_fini} montre qu'alors l'action de $\G$ sur $\dO^*$ est g\'eom\'etriquement finie et $\Lambda_{\G^*}=\dO^*$. Le corollaire \ref{theo_vol_fini} montre enfin que l'action de $\G^*$ sur $\O^*$ est de covolume fini.
\end{proof}


\section{Hyperbolicit\'e au sens de Gromov}

\subsection{Gromov-hyperbolicit\'e de $(C(\LG),\d)$}

Le but de cette partie est de montrer le r\'esultat suivant.

\begin{theo}\label{eqgromovhyp}
Soient $\G$ un sous-groupe discret de $\Aut(\O)$. L'action de $\G$ sur $\O$ est g\'eom\'etriquement finie sur $\O$ si et seulement si elle est g\'eom\'etriquement finie sur $\dO$ et l'espace $(C(\LG),\d)$ est Gromov-hyperbolique.
\end{theo}

Ce th\'eorème sera cons\'equence des deux lemmes qui suivent:

\begin{lemm}\label{gromov1}
Soit $\G$ un sous-groupe discret de $\Aut(\O)$. Si l'espace m\'etrique $(C(\LG),d_{\O})$ est Gromov-hyperbolique, alors tout point parabolique born\'e est uniform\'ement born\'e.
\end{lemm}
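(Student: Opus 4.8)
The plan is to give a direct proof by transporting both notions onto a horosphere based at $p$ and then exploiting the thinness of ideal triangles. Let $\P=\Stab_{\G}(p)$ and fix a horosphere $\H_p$ based at $p$ (section \ref{busemannhoro}). Radial projection from $p$ identifies the set $S=\H_p\cap C(\LG)$ with $\overline{\mathcal{D}_p(C(\LG))}=\overline{\mathcal{D}_p(C(\LG\smallsetminus\{p\}))}\subset \A_p^{n-1}$, $\P$-equivariantly. Since $\P$ acts by isometries on $(\O,\d)$ and, by Lemme \ref{lemm_gr_alg}, properly on $\A_p^{n-1}$, cocompactness can be read off either side, so $p$ is uniform\'ement born\'e if and only if $S/\P$ is compact. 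Likewise, letting $S_0\subset S$ be the set of points where the geodesics $(\xi\,p)$, $\xi\in\LG\smallsetminus\{p\}$, cross $\H_p$, radial projection identifies $S_0$ with $\mathcal{D}_p(\LG\smallsetminus\{p\})$, so $p$ is born\'e (the hypothesis) if and only if $S_0/\P$ is compact.

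The statement then reduces to a single inclusion: there is a constant $R>0$ with $S\subset N_R(S_0)$, the $R$-neighbourhood being taken for $\d$. Indeed, granting this, $\P$ acts on $S_0$ cocompactly and by isometries, so $S_0=\P\cdot C_0$ for some compact $C_0$; as $(\O,\d)$ is proper and $S$ is closed and $\P$-invariant, the set $K=S\cap\overline{N_R(C_0)}$ is compact and satisfies $S=\P\cdot K$. Hence $S/\P$ is compact, i.e. $p$ is uniform\'ement born\'e.

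To prove the inclusion I would use the Gromov-hyperbolicity of $(C(\LG),\d)$. Take $y\in S$. By Carath\'eodory, $y$ lies in the convex hull of at most $n+1$ points of $\LG$; since the geodesics of $(\O,\d)$ are the projective segments, this hull is the metric convex hull, and in a $\delta$-hyperbolic geodesic space the convex hull of at most $n+1$ points stays within a distance $R_0=R_0(\delta,n)$ of the union of the geodesics joining them. Thus $y$ is $R_0$-close to a geodesic $(\alpha\,\beta)$ with $\alpha,\beta\in\LG$. Applying $\delta$-thinness to the ideal triangle of vertices $\alpha,\beta,p$, a nearest point of $(\alpha\,\beta)$ lies within $\delta$ of one of the two sides aimed at $p$, say $(\alpha\,p)$ with $\alpha\neq p$. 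Finally, because the $\C^1$ Busemann function $b_p$ is $1$-Lipschitz and varies at unit speed along $(\alpha\,p)$, the crossing point $s_\alpha=(\alpha\,p)\cap\H_p$, which belongs to $S_0$, lies at $\d$-distance from $y$ bounded solely in terms of $\delta$ and $n$; this yields $S\subset N_R(S_0)$.

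The main obstacle is precisely this inclusion, and inside it the two quantitative inputs that must be made uniform in $y$: the ``convex hull of $n+1$ points is close to its edges'' estimate (uniform because $n$ is fixed) and the passage, via the unit-speed Busemann function, from coarse thin-triangle proximity to proximity measured along $\H_p$. This is exactly where strict convexity and the $\C^1$ regularity of $\dO$ enter, since they guarantee the existence and $\C^1$-regularity of the horospheres and Busemann functions used throughout (section \ref{busemannhoro}); without Gromov-hyperbolicity the inclusion fails, the model obstruction being the Veronese curves of Lemme \ref{vero} attached to unipotents of degree $\geqslant 5$, whose convex hulls carry no cocompact $\P$-action, which is the mechanism behind the counterexample of Proposition \ref{contreex}.
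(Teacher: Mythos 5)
Your proof follows essentially the same route as the paper's: both arguments reduce the statement to showing that the trace of $C(\LG)$ on a horosphere based at $p$ lies in a uniformly bounded neighbourhood of the trace of the union of geodesics $(p\,\xi)$, $\xi\in\LG\smallsetminus\{p\}$, via Carath\'eodory plus thinness of ideal triangles, and then transfer the cocompactness of $\Stab_{\G}(p)$ from the latter set to the former. The only cosmetic differences are that the paper establishes your convex-hull estimate by an explicit induction on the number of points (treating $C_2(\LG)$ first), and pushes points onto the horosphere using the nearest-point property of the ray $(py)$ rather than the unit-speed Busemann function along $(\alpha\,p)$; these are equivalent devices.
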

\begin{proof}
Supposons l'espace m\'etrique $(C(\LG),d_{\O})$ Gromov-hyperbolique et choisissons un point parabolique born\'e $p\in \LG$.\\
Fixons une horosphère $\mathcal{H}$ bas\'ee en $p$ et notons $(p\LG)=\{y \in (xp) \ | \ x\in\LG\smallsetminus\{p\}\}$ (voir figure \ref{coquillage}). Comme le point $p$ est un point parabolique born\'e, le groupe $\Stab_{\G}(p)$ agit de façon cocompacte sur $\H\cap (p\LG)$.

\begin{center}
\begin{figure}[h!]
  \centering
\includegraphics[width=8cm]{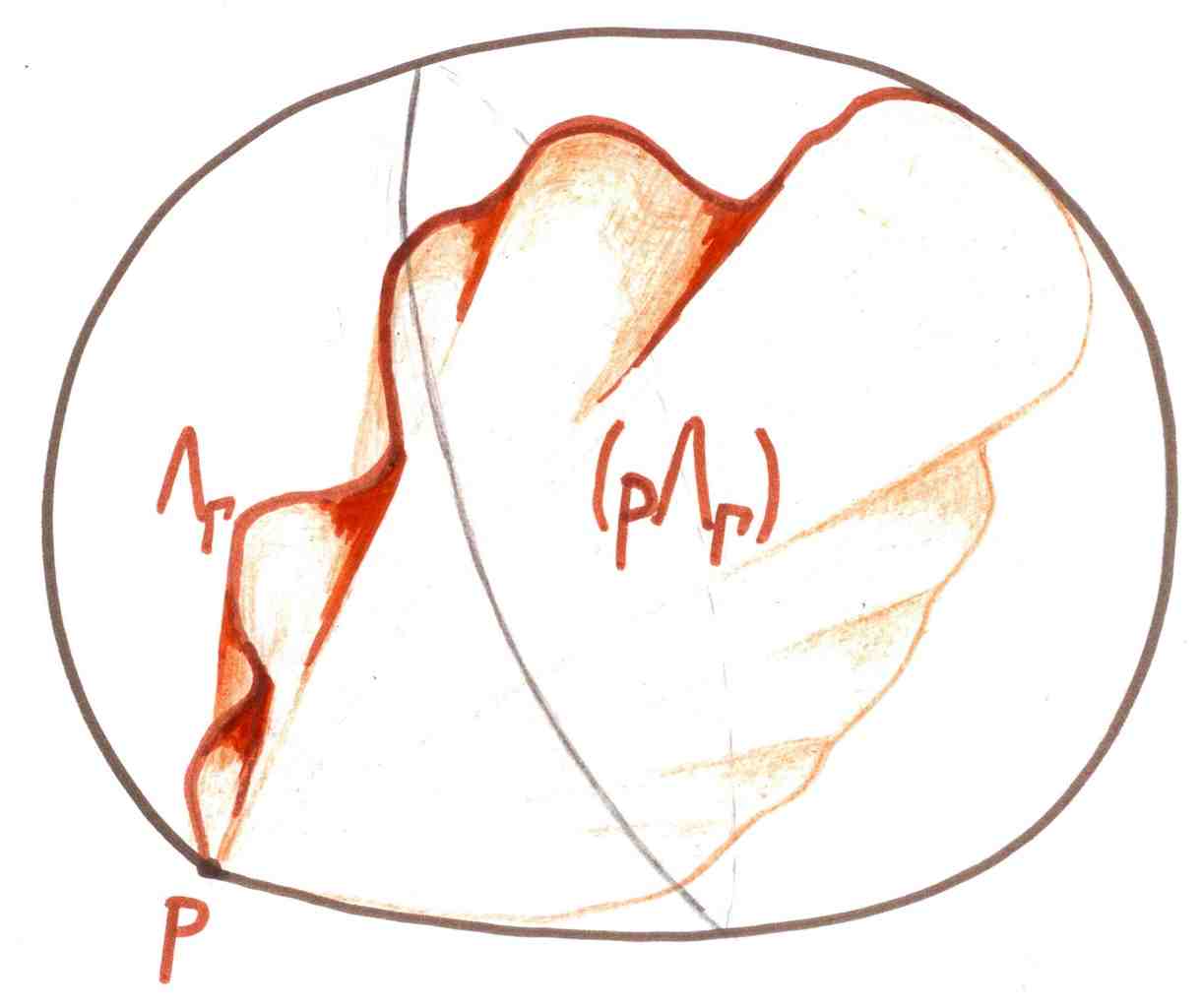}
\caption{L'ensemble $(p\LG)$ \label{coquillage}}
\end{figure}
\end{center}	

On peut identifier l'espace des droites $\mathcal{D}_p(C(\LG))$ à sa trace sur l'horosphère $\mathcal{H}$. On va voir que $\mathcal{H}\cap C(\LG)$ est dans un voisinage born\'e de $\H\cap (p\LG)$, ce qui permettra de conclure que le groupe $\Stab_{\G}(p)$ agit de façon cocompacte sur $\overline{\mathcal{H}\cap C(\LG)}$ et donc aussi sur $\overline{\mathcal{D}_p(C(\LG))}$ (l'adh\'erence est prise respectivement dans $\O$ et dans $\A_p^{n-1}$).\\

L'ensemble $\LG$ est l'ensemble des points extr\'emaux de $C(\LG)$. Ainsi, tout point $x$ de $C(\LG)$ est barycentre d'au plus $n+1$ points de $\LG$. Consid\'erons d'abord l'ensemble $C_2(\LG)$ des points $x\in C(\LG)$ qui sont sur une droite $(ab)$ avec $a,b\in \LG$ (on s'aidera de la figure \ref{grogro1}). Comme l'espace $(C(\LG),d_{\O})$ est Gromov-hyperbolique, le point $x$ est dans un voisinage de taille au plus $\delta$ (pour $d_{\O}$) de $(pa)\cup(pb)$, pour un certain $\delta>0$, ind\'ependant de $x$. Autrement dit, pour tout $x\in\mathcal{H}\cap C_2(\LG)$, il existe un point $y \in (p\LG)$ tel que $\d(x,y)<\delta$. Maintenant, le point $z=(py)\cap\mathcal{H} \in (p\LG)\cap \mathcal{H}$ est le point de $\mathcal{H}$ le plus proche de $y$; en particulier, $d_{\O}(y,z) \leqslant d_{\O}(y,x) < \delta$. L'in\'egalit\'e triangulaire donne que $d_{\O}(x,z)< 2\delta$. On obtient donc que $C_2(\LG)\cap \mathcal{H}$ est dans un voisinage de taille $2\delta$ de  $(p\LG)\cap \mathcal{H}$. On proc\`ede par r\'ecurrence 
pour avoir le r\'esultat pour $C(\LG)$.

\begin{center}
\begin{figure}[h!]
  \centering
\includegraphics[width=7cm]{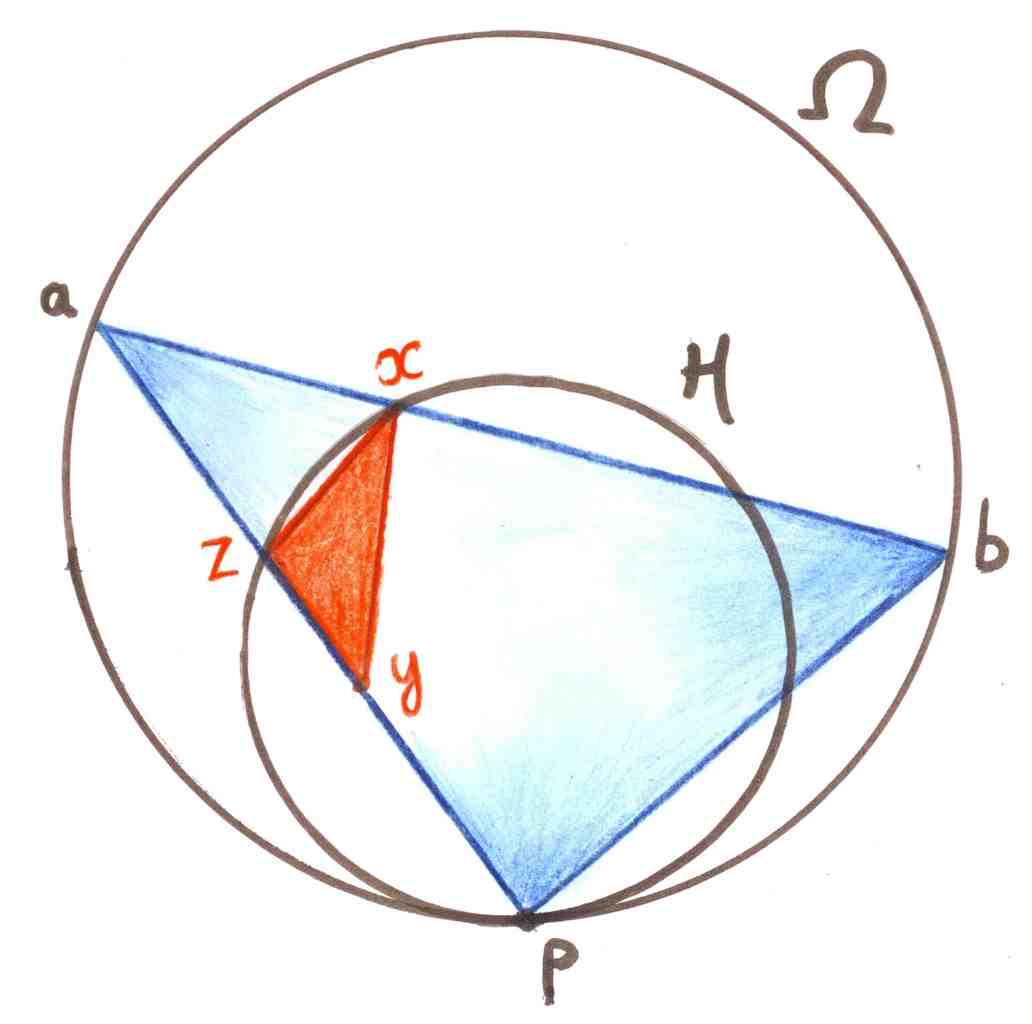}
\caption{Preuve du lemme \ref{gromov1} \label{grogro1}}
\end{figure}
\end{center}

\end{proof}

\begin{lemm}\label{gromov2}
Soit $\G$ un sous-groupe discret de $\Aut(\O)$. Si l'action de $\G$ sur $\O$ est g\'eom\'etriquement finie sur $\O$ alors l'espace m\'etrique $(C(\LG),d_{\O})$ est Gromov-hyperbolique.
\end{lemm}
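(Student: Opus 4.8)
The plan is to establish the Rips thin-triangles condition for $(C(\LG),\d)$ with a \emph{uniform} constant $\delta$. The whole strategy rests on strict convexity of $\O$: the $\d$-geodesics are exactly the straight projective segments, and since the interior of the convex hull $C(\LG)$ is convex, such segments between two of its points stay inside, so $(C(\LG),\d)$ is a geodesic space and a geodesic triangle is an honest straight-sided triangle. The crucial bonus is that the very same segments are also geodesics for any Hilbert metric containing them, in particular for the hyperbolic metrics carried by the ellipsoids attached to the cusps; thus a triangle of $(C(\LG),\d)$ is simultaneously a triangle in those auxiliary hyperbolic spaces, and only the measured distances differ.

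Next I would extract the geometry from the hypothesis. As the action is geometrically finite on $\O$, Theorem \ref{theo_geo_finie} together with Remark \ref{decomposition_corps_cvx} decomposes the convex core into a compact non-cuspidal part and finitely many cusps $\Quotient{(H_{p_i}\cap\overline{C(\LG)}^{\O})}{\P_{p_i}}$; lifting to $C(\LG)$ and using Lemma \ref{region_stan_disj} one gets a strictly invariant, pairwise disjoint family of horoball regions $H_p\cap C(\LG)$ on whose complement $\G$ acts cocompactly. The key input on the cusps is Corollary \ref{para_cas_gene}: every uniformly bounded parabolic point $p$ is trapped, inside $\Cone(p,C(\LG))$, between two ellipsoids $\E^{int}\subset\O\subset\E^{ext}$ tangent at $p$, with $\E^{int}$ a horoball of the hyperbolic space $(\E^{ext},d_{\E^{ext}})$. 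Comparison of metrics (Proposition \ref{compa}) then squeezes, along $C(\LG)$, the Hilbert metric between two hyperbolic metrics, $d_{\E^{ext}}\le\d\le d_{\E^{int}}$.

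From here the argument proceeds in two moves. First, a triangle confined well inside one cusp is uniformly thin: one compares $\d$ with the genuine hyperbolic metric $d_{\E^{int}}$ (a copy of $\mathbb{H}^n$, hence CAT$(-1)$ and $\delta_0$-hyperbolic). A straight-sided triangle there is $\delta_0$-thin for $d_{\E^{int}}$, and since $\d\le d_{\E^{int}}$ on the relevant region and the three hyperbolic-geodesic sides coincide with the three $\d$-geodesic sides, the triangle is a fortiori $\delta_0$-thin for $\d$; packaged with Bieberbach's Theorem \ref{bieberbach}, this says each cusp $(H_p\cap C(\LG),\d)$ is quasi-isometric to a standard rank-$d$ hyperbolic cusp and in particular satisfies the logarithmic penetration estimate. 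Second, for an arbitrary triangle I would cut each side along the disjoint invariant horospheres $\partial H_p$, splitting it into segments in the cocompact complement and ``bridges'' across cusps. On the complement, cocompactness of the $\G$-action plus the uniform comparison of Hilbert metrics over compact families (Benzécri, Theorem \ref{theo_ben}) reduces thinness to finitely much data, while the bridges are governed by the penetration estimate; this is exactly the mechanism by which $\mathbb{H}^n$ itself (cocompact thick part plus horoball cusps, of arbitrary rank) is Gromov-hyperbolic. A clean way to organise the conclusion is to build a quasi-isometry from $(C(\LG),\d)$ onto the space glued from the thick part and genuine hyperbolic horoballs, and to invoke quasi-isometry invariance of Gromov-hyperbolicity.

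The hard part will be precisely this assembly together with the uniformity of the cusp comparison. The two bounding metrics $d_{\E^{ext}}$ and $d_{\E^{int}}$ are \emph{not} globally quasi-isometric (full hyperbolic versus horoball-restricted), so the squeeze $d_{\E^{ext}}\le\d\le d_{\E^{int}}$ is only exploitable deep in each of the finitely many cusps, where $\O$ is genuinely pinched between the tangent ellipsoids; one must therefore make this comparison effective on the convex core there and then propagate thinness across the transition from cusp to thick part. It is exactly at this point that uniform boundedness of the parabolic points — not mere boundedness — is indispensable, since it is what yields the trapping ellipsoids of Corollary \ref{para_cas_gene}. This dovetails with Lemma \ref{gromov1} and explains why the conclusion must fail for the merely bounded, non-uniformly bounded parabolic points of the counterexample of Proposition \ref{contreex}.
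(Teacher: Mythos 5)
Your architecture --- hyperbolic-like cusps glued to a cocompact thick part, assembled by cutting triangles along horospheres or by a quasi-isometry to a cusped model space --- is not the paper's argument, and it has a genuine gap at its central step: the thick part. Cocompactness of the $\G$-action on the non-cuspidal part of the convex core, even combined with Benz\'ecri's theorem \ref{theo_ben}, does not by itself yield a uniform thinness constant there. The same soft reasoning would apply verbatim to a simplex of $\PP^2$ divided by $\Z^2$: all limit points are conical, the convex core is everything, the action is cocompact, straight segments are geodesics --- yet that Hilbert geometry is bi-Lipschitz to a normed plane, its straight-sided triangles are arbitrarily fat, and it is not Gromov-hyperbolic. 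What rules this out is strict convexity, and it must enter the thick part \emph{quantitatively}, not merely through ``geodesics are straight lines'', which is the only way you use it there. Concretely, the paper argues by contradiction with a sequence of triangles $(x_ny_nz_n)$ of $C(\LG)$ whose size $\delta_n\to\infty$, renormalizes by $\G$ so that the fat point $u_n$ stays in a convex locally finite fundamental domain, and extracts limits; if the limit $u$ of $(u_n)$ lies in $\O$, the limit vertices are distinct points of $\LG$, and strict convexity forces the segment $]x,z[$ to lie in $\O$ at finite Hilbert distance from $u$, a contradiction. That is the step your proposal is missing, and no amount of ``finitely much data'' substitutes for it. There is also a circularity in your clean packaging: hyperbolicity of the glued model (thick part plus genuine hyperbolic horoballs) is, by the Bowditch/Groves--Manning characterization, equivalent to relative hyperbolicity of $\G$ with respect to its maximal parabolic subgroups, and in this paper that statement is \emph{deduced from} the present lemma (Theorem \ref{eqgromovhyp} and the propositions that follow); invoking it here assumes the conclusion.

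Your cusp analysis, on the other hand, is in the right spirit, but beware that Corollary \ref{para_cas_gene} pinches $\O$ between $\E^{int}$ and $\E^{ext}$ only inside $\Cone(p,C(\LG))$, not globally, so even the inequality $\d\leqslant d_{\E^{int}}$ is not immediate (chords of $\E^{int}$ issued from points of the cone may leave the cone). The paper exploits the pinching in one stroke for both regimes: in the second case of its limit argument ($u\in\dO$, hence a uniformly bounded parabolic point by Lemma \ref{lem_pbord}), it pushes the fat triangles by powers $\g^{k_n}$ of a hyperbolic isometry $\g$ of the outer ellipsoid $(\E^{ext},d_{\E^{ext}})$ with repulsive point $u$, keeping the fat point in a fixed compact slab; the renormalized convexes $\g^{k_n}(\O)\cap\E^{ext}$, squeezed between $\g^{k_n}(\E^{int})$ and $\E^{ext}$, converge to $\E^{ext}$ itself, so the limit is an honest triangle of a hyperbolic space, whose size is bounded, contradicting $\delta_n\to\infty$. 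A correct proof along your lines would ultimately have to reproduce this compactness-and-renormalization mechanism (or the analogous one of Benoist), rather than glue local thinness statements together.
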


\begin{rema}
La d\'emonstration qui suit est une am\'elioration de la d\'emonstration du lemme 7.10 de l'article \cite{Marquis:2010fk} qui est elle-même une am\'elioration de la d\'emonstration de la proposition 2.5 de l'article \cite{MR2094116}. Elle est ind\'ependante des deux pr\'ec\'edentes mais leur lecture pr\'ealable peut aider.
\end{rema}

\begin{proof}
On va proc\'eder par l'absurde en supposant qu'il existe une suite de triangles $(x_ny_nz_n)$ de $C(\LG)$ dont la taille $\delta_n=\sup\{\d(u_n,[xz]),\d(u_n,[y_n z_n])\}$ tend vers l'infini, $u_n$ \'etant un point du segment $[x_n y_n]$.\\
Quitte à extraire, on peut supposer que toutes les suites convergent dans $\overline{C(\LG)}$ (l'adh\'erence est prise dans $\PP^n$), et on note $x,y,z,u$ les limites correspondantes.\\
On va distinguer deux cas.

\begin{itemize}
 \item Supposons que $u$ est un point de $\O$. Dans ce cas, il faut au moins, pour que $\delta_n$ puisse tendre vers l'infini, que les points $x,y,z$ soient à l'infini, autrement dit dans $\LG$, et qu'ils soient deux à deux distincts. Or, l'ouvert $\O$ \'etant strictement convexe, la distance de $u$ à la droite $(xz)$ est finie, d'où une contradiction.
 \item Supposons maintenant que $u$ est un point de $\dO$. En utilisant l'action de $\G$, on aurait pu, avant extraction, faire en sorte que la suite $(u_n)$ reste dans un domaine fondamental convexe localement fini $D\subset \overline{C(\LG)}$. Le point limite $u\in\dO$ est alors dans l'adh\'erence du domaine $D$ dans $\PP^n$ et dans $\LG$; c'est donc un point parabolique uniform\'ement born\'e de $\LG$, d'apr\`es le lemme \ref{lem_pbord}.\\
Prenons alors deux ellipsoïdes $\E^{int}$ et $\E^{ext}$ comme dans le corollaire \ref{para_cas_gene}, et notons $C=\Cone(u,C(\LG))$. On a
$$C\cap\E^{int} \subset \O\cap C \subset C\cap\E^{ext}.$$
On considère maintenant une isom\'etrie hyperbolique $\g$ de l'espace hyperbolique $(\E^{ext},d_{\E^{ext}})$, dont le point r\'epulsif est $u$ et le point attractif un point $v\in C\cap\partial\E^{ext}$ quelconque. On fixe un hyperplan $H$ s\'eparant les points $u$ et $v$, et on note $H'=\g(H)$, en s'arrangeant pour que $H$ et $H'$ intersectent ent $C(\LG)$. On note $A= \E^{ext}\cap C(H,H')$, où $C(H,H')$ repr\'esente l'ensemble d\'elimit\'e par $H$ et $H'$ et ne contenant ni $u$ ni $v$.\\
Pour chaque \'el\'ement $u_n$, il existe $k_n\in\Z$ tel que $\g^{k_n}(u_n)\in A$. On pose $u_n'= \g^{k_n}(u_n)$, et on fait de même pour $x_n',y_n',z_n'$. Il revient alors au même, par isom\'etrie, de regarder la suite de triangles $(x'_ny_n'z_n')$ et de points $(u_n')$ dans la g\'eom\'etrie de Hilbert d\'efinie par $\O_n=\g^{k_n}(\O)$. On va même remplacer le convexe $\O_n$ par $\O_n'=\O_n \cap \E^{ext}$, la taille du triangle $(x'_ny_n'z_n')$ \'etant plus grande dans $\O_n'$ que dans $\O_n$ (voir figure \ref{grogro2}).

\begin{center}
\begin{figure}[h!]
  \centering
\includegraphics[width=12cm]{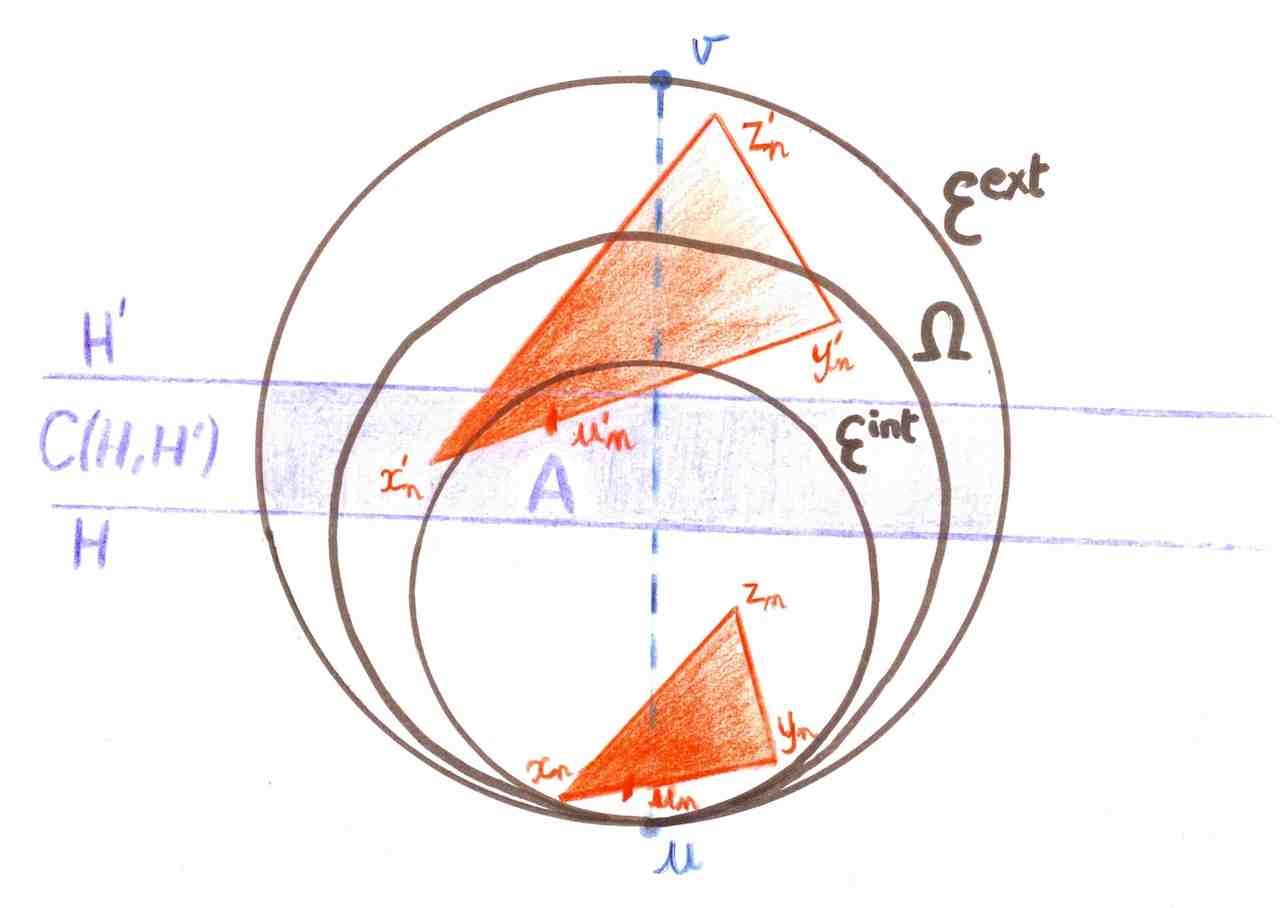}
\caption{Preuve du lemme \ref{gromov2} \label{grogro2}}
\end{figure}
\end{center}

Quitte à extraire à nouveau, on peut supposer que toutes ces suites convergent, et on note $x',y',z',u'$ leurs limites. Il n'est pas dur de voir que $u' \in [uv]\cap A$: en effet, le point $u_n'$ est dans $D_n=\g^{k_n}(D)$ et $D_n$ tend vers $[uv]$ car $u$ est un point parabolique uniform\'ement born\'e. Comme $\O_n'$ est coinc\'e entre $\g^{k_n}(\E^{int})$ et $\E^{ext}$, la suite de convexes $(\O_n')$ tend, tout comme $(\g^{k_n}(\E^{int}))$, vers $\E^{ext}$. Les points $x',y',z'$ sont quant à eux des point de $\partial\E^{ext}$. Autrement dit, on obtient à la limite un triangle $x'y'z'$ d'un espace hyperbolique, dont la taille est n\'ecessairement born\'ee. D'où une contradiction avec l'hypoth\`ese $\delta_n\to +\infty$.
\end{itemize}

\end{proof}

Comme corollaire de la proposition \ref{eqgromovhyp}, on peut \'enoncer le r\'esultat suivant dans le cas d'une g\'eom\'etrie de Hilbert Gromov-hyperbolique.

\begin{coro}
Pour une g\'eom\'etrie de Hilbert \emph{Gromov-hyperbolique}, les notions de finitude g\'eom\'etrique sur $\O$ et sur $\dO$ sont \'equivalentes.
\end{coro}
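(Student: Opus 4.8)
The plan is to deduce this corollary directly from Theorem \ref{eqgromovhyp}, which characterizes geometric finiteness on $\O$ as the conjunction of geometric finiteness on $\dO$ together with the Gromov-hyperbolicity of $(C(\LG),\d)$. One of the two implications is then free: if the action is geometrically finite on $\O$, Theorem \ref{eqgromovhyp} already yields geometric finiteness on $\dO$, and this half does not even use the standing hypothesis that $(\O,\d)$ is Gromov-hyperbolic. So the whole content lies in the reverse implication, and the only thing that must be supplied is the Gromov-hyperbolicity of the convex core $(C(\LG),\d)$ under the assumption that the ambient space $(\O,\d)$ is Gromov-hyperbolic.

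First I would record that $C(\LG)$, being the convex hull of $\LG$ inside $\O$, is convex, and that --- since $\O$ is strictly convex --- the unique $\d$-geodesic joining two points of $\O$ is the straight projective segment between them. Hence any geodesic of $(\O,\d)$ with endpoints in $C(\LG)$ stays inside $C(\LG)$: the convex core is a geodesically convex subspace of $(\O,\d)$, and the restriction of $\d$ to it is exactly the metric $\d$ appearing in Theorem \ref{eqgromovhyp}. A geodesic triangle in $(C(\LG),\d)$ is therefore literally a geodesic triangle in $(\O,\d)$, with the same three sides; so the $\delta$-thinness of triangles in $(\O,\d)$ transfers verbatim, and $(C(\LG),\d)$ is Gromov-hyperbolic with the same constant.

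With this in hand the reverse implication is immediate: assuming the action geometrically finite on $\dO$, I have produced the second hypothesis of Theorem \ref{eqgromovhyp}, namely the Gromov-hyperbolicity of $(C(\LG),\d)$, so that theorem delivers geometric finiteness on $\O$, completing the equivalence.

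I do not expect any serious obstacle: the corollary is essentially a bookkeeping consequence of Theorem \ref{eqgromovhyp}. The one genuinely mathematical point --- that a geodesically convex subset of a Gromov-hyperbolic geodesic space is again Gromov-hyperbolic --- is standard, and in the present setting it is particularly clean because strict convexity makes geodesics unique and forces them to lie in any convex subset, so that no quasi-geodesic comparison or reparametrization argument is required.
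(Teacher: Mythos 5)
Your proposal is correct and is exactly the argument the paper intends: the corollary is stated without proof as an immediate consequence of Theorem \ref{eqgromovhyp}, the only content being precisely the point you supply, namely that $C(\LG)$ is a geodesically convex subset of the strictly convex $(\O,\d)$ (geodesics being unique straight segments), so that $(C(\LG),\d)$ inherits Gromov-hyperbolicity from $(\O,\d)$ and the theorem's criterion applies. Note also that your ``free'' direction (GF on $\O$ implies GF on $\dO$) is indeed immediate from the statement of Theorem \ref{eqgromovhyp}, so nothing is missing.
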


Notons enfin un autre corollaire dans le cas d'une action de covolume fini.

\begin{coro}\label{ghyp_vol_fini}
Si l'ouvert convexe $\O$ admet un quotient de volume fini, alors l'espace m\'etrique $(\O,d_{\O})$ est Gromov-hyperbolique.
\end{coro}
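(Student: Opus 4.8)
Le plan est de se ramener au cas d'une action géométriquement finie \emph{sur $\O$}, pour lequel la Gromov-hyperbolicité du cœur convexe a déjà été établie. Soit $\G$ un sous-groupe discret de $\Aut(\O)$ agissant avec un covolume fini. Il suffit d'établir deux faits: que $\LG = \dO$, de sorte que $C(\LG) = \O$; et que l'action de $\G$ sur $\O$ est géométriquement finie. Le lemme \ref{gromov2} donnera alors la Gromov-hyperbolicité de $(C(\LG), \d)$, c'est-à-dire de $(\O, \d)$, ce qui est la conclusion voulue.

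Pour l'exécution, on remarquerait d'abord que le $1$-voisinage du cœur convexe, inclus dans le quotient $\Quo$ de volume fini, est de volume fini; pourvu que $\G$ soit de type fini, la condition (VF) du théorème \ref{theo_geo_finie} est donc remplie, et ce théorème fournit aussitôt (GF), à savoir la finitude géométrique sur $\O$. Le corollaire \ref{theo_vol_fini} garantit de son côté qu'une action de covolume fini vérifie $\LG = \dO$. Les deux faits annoncés étant acquis, le lemme \ref{gromov2} permet de conclure.

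La seule véritable difficulté est l'hypothèse de type fini, invoquée aux deux étapes précédentes: il faut s'assurer qu'une action discrète de covolume fini provient d'un groupe de type fini --- l'exemple d'Emily Hamilton évoqué dans l'introduction montre que la seule finitude du volume du $1$-voisinage du cœur convexe ne le garantirait pas. On l'obtiendrait par un argument d'épais-mince: d'après le lemme \ref{Min_vol_boul}, le volume des $\varepsilon$-boules est minoré par une constante strictement positive, ce qui, le volume total étant fini, interdit d'y placer une infinité de boules disjointes; la partie épaisse du quotient est alors compacte (c'est l'esprit du lemme \ref{volfini_impli_compact}), d'où un nombre fini de bouts et, finalement, la finitude du type de $\G$. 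C'est là que se concentrerait l'essentiel du travail, le reste se réduisant à enchaîner des résultats déjà démontrés.
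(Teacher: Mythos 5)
Your first two paragraphs reproduce exactly the route the paper has in mind: Corollary \ref{ghyp_vol_fini} is stated there without any further argument, as an immediate consequence of Corollary \ref{theo_vol_fini} (which, for a finite-covolume action, gives $\LG=\dO$, hence $C(\LG)=\O$, and geometric finiteness on $\O$) combined with Lemma \ref{gromov2} (equivalently, Theorem \ref{eqgromovhyp}). Up to that point there is nothing to object to, \emph{provided} $\G$ is of finite type --- which is precisely the hypothesis carried by Corollary \ref{theo_vol_fini} and by condition (VF) of Theorem \ref{theo_geo_finie}.

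The gap is in your third paragraph, the one you yourself present as carrying the essential work. The thick-thin argument you sketch is circular. The uniform lower bound $v_n(\varepsilon)$ of Lemma \ref{Min_vol_boul} converts into a lower bound on the volume of $\Quo$ only if the $\varepsilon$-balls centered at points of the thick part \emph{inject} into the quotient, and this is exactly where Lemma \ref{volfini_impli_compact} uses the hypothesis that $\G$ is torsion-free. If $\G$ has unbounded torsion, a point of the thick part (whose almost-stabilizer $\G_{\varepsilon}(x)$ is finite, possibly huge) can lie arbitrarily close to the fixed point of an arbitrarily large finite subgroup $F$, and the image of its $\varepsilon$-ball in $\Quo$ has volume of order $v_n(\varepsilon)/|F|$; the finiteness of the total volume then gives no bound on the number of such balls, and compactness of the thick part cannot be deduced this way. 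In the paper, torsion-freeness is obtained from Selberg's lemma, which applies to \emph{finitely generated} linear groups --- i.e.\ it presupposes exactly what you are trying to prove. This is also the true content of Hamilton's example: the pathology there comes from unbounded torsion, not merely from the distinction between the volume of the core and the volume of the full quotient, so it is a warning against your argument rather than a confirmation that finite covolume is the stronger hypothesis. As it stands, your proposal proves the corollary for finitely generated $\G$ (which is all the paper's chain of references yields as well); the argument you offer to remove that hypothesis does not work, and removing it would require a genuinely new input (for instance a bound on the order of finite subgroups, or a Hilbert-geometric analogue of the finite generation of lattices).
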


\subsection{Gromov-hyperbolicit\'e du groupe $\G$}

Rappelons qu'un groupe de type fini est Gromov-hyperbolique si son graphe de Cayley, muni de la m\'etrique des mots, l'est. De façon plus g\'en\'erale, nous prendrons la d\'efinition suivante de groupe relativement hyperbolique:

\begin{defi}
Soient $\G$ un groupe et $(\P_i)_i$ une famille de sous-groupes de type fini de $\G$. On dit que le groupe $\G$ est \emph{relativement hyperbolique relativement aux groupes $(\P_i)_i$} lorsqu'il existe un espace Gromov-hyperbolique propre $X$ et une action g\'eom\'etriquement finie de $\G$ sur $X$ (au sens de la d\'efinition \ref{geofini_ghyp}) telle que le stabilisateur de tout point parabolique de $\LG$ est conjugu\'e à l'un des groupes $(\P_i)_i$.
\end{defi}

Les r\'esultats de la partie pr\'ec\'edente permettent donc d'affirmer le fait suivant.

\begin{prop}
Si $\G$ est un sous-groupe discret de $\Aut(\O)$ agissant de façon g\'eom\'etriquement finie sur $\O$, alors le groupe $\G$ est relativement hyperbolique relativement à ses sous-groupes paraboliques maximaux.
\end{prop}

En fait, on peut changer l'hypothèse d'action g\'eom\'etriquement finie sur $\O$ en action g\'eom\'etriquement finie sur $\dO$ via le travail d'Asli Yaman. Elle a montr\'e le th\'eorème suivant qui donne une caract\'erisation topologique des groupes relativement hyperboliques (dans \cite{MR2039323}).

\begin{theo}[Yaman \cite{MR2039323}]\label{yaman}
Soient $M$ un compact parfait non vide et m\'etrisable , et $\G$ un groupe. Supposons que le groupe $\G$ agit par une action de convergence sur $M$ tel que tout point de $M$ est un point limite conique ou un point parabolique born\'e, que l'ensemble des points paraboliques modulo l'action de $\G$ est fini et que les stabilisateurs des points paraboliques sont de type fini. Alors le groupe $\G$ est relativement hyperbolique relativement aux stabilisateurs de ses points paraboliques.
\end{theo}

On obtient alors le r\'esultat \emph{a priori} plus satisfaisant:

\begin{prop}
Soit $\G$ un sous-groupe discret de $\Aut(\O)$ agissant de façon g\'eom\'etriquement finie sur $\dO$. Alors le groupe $\G$ est relativement hyperbolique relativement à ses sous-groupes paraboliques maximaux.
\end{prop}

\begin{proof}
On prend pour compact $M$ l'ensemble limite $\LG$. Le th\'eorème \ref{action_conv} montre que l'action de $\G$ sur $\LG$ est une action de convergence (d\'efinition \ref{def_action_conv}).\\
On note $P$ un ensemble de repr\'esentants des points paraboliques modulo $\G$. L'ensemble $P$ est fini. En effet, l'action du stabilisateur de tout point parabolique $p$ sur $\LG \smallsetminus \{ p \}$ est cocompacte, on peut donc choisir l'ensemble $P$ de telle fa\c con que $p$ soit isol\'e dans $P$. On peut donc faire en sorte que tous les points de $P$ soient isol\'es, auquel cas $P$ de est un sous-ensemble discret du compact $\LG$: $P$ est donc fini.\\
Il nous reste à v\'erifier que les stabilisateurs des points paraboliques sont de type fini. Or, tout sous-groupe discret d'un groupe de Lie nilpotent connexe est de type fini. Ainsi, les sous-groupes paraboliques de $\G$ sont de type fini: c'est le corollaire 2 de la partie 2.10 du livre \cite{MR0507234} de Raghunathan. 
\end{proof}


\section{Petites dimensions}

\subsection{La dimension 2}\label{main2}

En dimension 2, la situation est beaucoup plus simple qu'en dimension sup\'erieure. La proposition suivante a presque \'et\'e montr\'e par l'un des auteurs dans \cite{Marquis:2009kq}.

\begin{theo}
Soient $\O \subset \PP^2$ et $\G$ un sous-groupe discret de $\Aut(\O)$. Les propositions suivantes sont \'equivalentes:
\begin{enumerate}[(1)]
\item le c\oe ur convexe est de volume fini;
\item l'action de $\G$ sur $\dO$ est g\'eom\'etriquement finie;
\item l'action de $\G$ sur $\O$ est g\'eom\'etriquement finie;
\item le groupe $\G$ est de type fini.
\end{enumerate} 
\end{theo}

\begin{proof}[\'El\'ements de d\'emonstration]
L'implication (2)$\Rightarrow$(3) est \'evidente puisqu'ici le bord de $\O$ est de dimension 1. L'implication (3)$\Rightarrow$(4) est une partie du th\'eorème \ref{theo_geo_finie}.

L'implication (4)$\Rightarrow$(1), a d\'ejà \'et\'e montr\'ee dans \cite{Marquis:2009kq} (proposition 6.16) et on ne reproduirera pas la d\'emonstration de ce r\'esultat ici. Remarquons simplement que la classification des surfaces (compactes ou non) montre que le groupe fondamental d'une surface est de type fini si et seulement si cette dernière est hom\'eomorphe à une surface compacte à laquelle on a enlev\'e un nombre fini de points. Le reste de la d\'emonstration est une \'etude attentive de la g\'eom\'etrie des bouts d'un tel quotient dans le cadre de la g\'eom\'etrie de Hilbert.

Pour montrer l'implication (1)$\Rightarrow$(2), on montre plutôt l'implication (1)$\Rightarrow$(4). En effet, si un groupe $\G$ v\'erifie (1) et (4) alors il v\'erifie l'hypothèse (VF); par cons\'equent, le th\'eorème \ref{theo_geo_finie} montre que $\G$ v\'erifie (3), qui entra\^ine (2).\\
C'est le th\'eorème 5.22 de \cite{Marquis:2009kq} qui montre que si le groupe $\G$ v\'erifie (1), alors il est de type fini. Il serait un peu long de reproduire ici la d\'emonstration de ce r\'esultat. Mais l'id\'ee principale est que pour une g\'eom\'etrie de Hilbert de dimension 2, il existe une borne uniforme strictement positive minorant l'aire des triangles id\'eaux (ce r\'esultat est d\^u à Constantin Vernicos, Patrick Verovic et Bruno Colbois dans \cite{MR2116715}); c'est un analogue du fait que tout triangle id\'eal du plan hyperbolique a une aire \'egale à $\pi$.
\end{proof}

\subsection{La dimension 3}\label{main3}

Le r\'esultat principal en dimension $3$ est le suivant, qui peut être prouv\'e ''à la main``.

\begin{prop}
Soient $\O\subset\PP^3$ et $\P$ un sous-groupe parabolique de $\Aut(\O)$ fixant le point $p\in\dO$. Alors le groupe $\P$ pr\'eserve un ellipsoïde $\E$ tangent à $\O$ en $p$. $\P$ est donc conjugu\'e \`a un sous-groupe de $\so{3}$; en particulier, $\P$ est virtuellement isomorphe à $\Z$ ou $\Z^2$.
\end{prop}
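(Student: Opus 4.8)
Le plan est d'analyser la cl\^oture de Zariski $\Nn$ de $\P$ et son sous-groupe unipotent $\U=\U(\Nn)$, puis de distinguer deux cas selon le rang. D'apr\`es le lemme \ref{lemm_gr_alg}, $\P$ est un r\'eseau cocompact de $\Nn$, l'action de $\Nn$ sur $\A^2_p$ est propre et celle de $\U$ y est libre; la proposition \ref{decompo} fournit de plus une d\'ecomposition $\Nn=\U\times K$ avec $K$ compact. Comme $\U$ agit librement et proprement sur $\A^2_p\cong\R^2$, toute orbite est une sous-vari\'et\'e ferm\'ee de dimension $\dim\U$, d'o\`u $1\leqslant\dim\U\leqslant 2$ (la minoration venant de ce que $\P$, donc $\Nn$, est infini). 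Or un groupe unipotent de dimension au plus $2$ est ab\'elien, donc $\U\cong\R^d$ avec $d\in\{1,2\}$ et $\P$ est virtuellement isomorphe \`a $\Z^d$; ceci \'etablit d\'ej\`a la derni\`ere assertion.

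Dans le premier cas, $d=2$. Une action libre et propre d'un groupe de dimension $2$ sur la surface connexe $\A^2_p$ est transitive (les orbites sont ouvertes et partitionnent un espace connexe). L'identification naturelle $\A^2_p\cong\dO\smallsetminus\{p\}$, qui \`a $q\in\dO\smallsetminus\{p\}$ associe la droite $(pq)$ (laquelle rencontre $\O$ par stricte convexit\'e), \'etant $\P$-\'equivariante, $\U$ agit transitivement, donc cocompactement, sur $\dO\smallsetminus\{p\}$: autrement dit $\P$ est de rang maximal. Le th\'eor\`eme \ref{lemmededieu} livre alors directement les ellipso\"ides cherch\'es et le fait que $\P$ est conjugu\'e \`a un sous-groupe parabolique de $\so3$.

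Le second cas, $d=1$, est le plus d\'elicat. Ici $\U=\{\g^t\}_{t\in\R}$ est \`a un param\`etre; tout \'el\'ement non trivial de $\P$ \'etant parabolique, donc unipotent, il est de degr\'e impair avec un unique bloc de Jordan maximal (la\"ius de la partie \ref{laius}), donc de type $J_3\oplus J_1$ dans $\SL_4(\R)$. Je commencerais par \'etablir que $K$ est trivial: \'etant compact, $K$ fixe un point $o\in\O$ (par exemple le circoncentre d'une orbite born\'ee), et comme $K$ commute avec $\U$ il fixe point par point l'orbite $\U\cdot o$; or $\U\cdot o\cup\{p\}$ est une conique (lemme \ref{vero}) qui engendre un plan projectif, et un \'el\'ement semi-simple de $\SL_4(\R)$ fixant point par point un hyperplan est l'identit\'e. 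Ainsi $\Nn=\U$ et $\P\subset\U$. Reste \`a produire l'ellipso\"ide. Dans une base de Jordan $(e_1,\dots,e_4)$ de $\g$, avec $p=[e_1]$, le point cl\'e sera que la droite $\PP(\langle e_1,e_4\rangle)$, fix\'ee point par point par $\g$, ne rencontre $\overline\O$ qu'en $p$ (sinon elle contiendrait un point fixe de $\overline\O\smallsetminus\{p\}$, contredisant la dynamique parabolique du th\'eor\`eme \ref{classi_dym_1}); cette droite est donc contenue dans l'hyperplan tangent $T_p\dO$, ce qui force $T_p\dO=\{x_3=0\}$. Un calcul direct montre alors que la forme quadratique $Q(x)=2x_1x_3-x_2^2-x_4^2$, de signature $(1,3)$, est invariante par $\U$ et que son c\^one isotrope est tangent \`a $\dO$ en $p$ (son plan tangent en $p$ vaut pr\'ecis\'ement $\{x_3=0\}$). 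L'ellipso\"ide $\E=\{Q>0\}$ convient: il est tangent \`a $\O$ en $p$ et pr\'eserv\'e par $\U\subset\mathrm{SO}(Q)$, donc par $\P$; enfin $\mathrm{SO}(Q)$ \'etant conjugu\'e \`a $\so3$, le groupe $\P$ l'est \`a un sous-groupe parabolique de $\so3$.

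La principale difficult\'e se concentrera dans ce second cas: rep\'erer le bon plan tangent en $p$ gr\^ace \`a la droite fixe $\PP(\langle e_1,e_4\rangle)$, et exhiber une forme quadratique $\U$-invariante de signature $(1,3)$ dont le c\^one est tangent \`a $\O$; les v\'erifications de transitivit\'e et de trivialit\'e de $K$ resteront, elles, plus routini\`eres.
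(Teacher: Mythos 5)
Your opening paragraph and your case $d=2$ are essentially correct, and that reduction is genuinely different from the paper's proof (which never splits into cases): once $\U$ is two-dimensional, its orbits in $\A^2_p$ are open and closed, hence $\U$ is transitive, so $\P$ — a cocompact lattice in $\Nn$ — acts cocompactly on $\A^2_p$, and Theorem \ref{lemmededieu} finishes. Mind only that $\U$ itself does not act on $\dO\smallsetminus\{p\}$, since nothing guarantees $\U\subset\Aut(\O)$; what you actually use is cocompactness of $\P$ on $\A^2_p$ together with the $\P$-equivariance of the identification of Proposition \ref{tangent}.

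The case $d=1$, however, contains a genuine gap: the implication \og parabolique, donc unipotent\fg{} is false, and so is your intermediate claim that $K$ is trivial. A parabolic isometry is only quasi-unipotent: its Jordan decomposition $\g=\g_u\g_s$ may have a nontrivial elliptic part $\g_s$. The counterexample sits inside $\so{3}$ itself: take $\O$ an ellipsoid and $\delta$ a glide-reflection parabolic, acting on a horosphere $\R^2$ by $(x,y)\mapsto(x+1,-y)$. Then $\P=\langle\delta\rangle$ is a discrete parabolic subgroup in the paper's sense (the finite-index subgroup $\langle\delta^2\rangle$ consists of parabolic translations), one has $d=1$, yet $\delta$ is not unipotent and $K=\mathcal{S}(\Nn)\cong\Z/2\Z$, generated by the projective class of $\mathrm{diag}(-1,-1,-1,1)$. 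This example also pinpoints the false linear-algebra step in your proof of triviality of $K$: a semisimple element of $\s{4}$ fixing a hyperplane pointwise need not act trivially on $\PP^3$ — it can be a homology $\mathrm{diag}(\lambda,\lambda,\lambda,\mu)$ — and the element $\mathrm{diag}(-1,-1,-1,1)$ above is exactly such a homology, fixing pointwise the plane containing the conic $\U\cdot o\cup\{p\}$ without being the identity. Note also that passing to a finite-index subgroup to kill $K$ does not save the argument: you would then obtain an ellipsoid invariant under that subgroup only, whereas the statement requires invariance under $\P$.

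The constructive half of your case $d=1$ is fine (the fixed line $\PP(\langle e_1,e_4\rangle)$ forcing $T_p\dO=\{x_3=0\}$, and the $\U$-invariance, signature and tangency of $Q=2x_1x_3-x_2^2-x_4^2$), and the gap is repairable: since $K$ is compact, commutes with $\U$, fixes $p$ and preserves $T_p\dO$, you may average $Q$ over $K$ (the $\U$-invariant forms of signature $(1,3)$ whose isotropic cone is tangent to $T_p\dO$ at $p$ form a convex, $K$-stable set), producing an $\Nn$-invariant, hence $\P$-invariant, ellipsoid. But this repair is in substance the paper's own proof, which never claims elements of $\P$ are unipotent: it applies the Jordan decomposition to each $\g\in\P$, shows via the discussion of \ref{laius} that $\g_u$ must be of type $J_3\oplus J_1$, deduces that $\g$ acts on $\A^2_p$ as a translation composed with an elliptic linear map, and concludes that the linear parts of the $\P$-action on $\A^2_p$ lie in a compact group, so that $\P$ preserves a Euclidean scalar product on $\A^2_p$ and is therefore conjugate into a parabolic subgroup of $\so{3}$.
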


\begin{proof}
Soit $\g$ un \'el\'ement parabolique de $\P$, qu'on voit comme matrice de $\s4$. La d\'ecomposition de Jordan de $\g$ permet d'\'ecrire $\g$ comme le produit d'une matrice unipotente $\g_u$ et d'une matrice elliptique. Le paragraphe \ref{laius} montre que la seule possibilit\'e pour $\g_u$ est la matrice suivante:
$$
\begin{tabular}{ll}
$
\underbrace{
\left(
\begin{array}{ccc}
1 & 1 & 0\\
   & 1  & 0\\
   &     & 1\\
   &     &\\
\end{array}
\right.
}_H
$
&
$
\left.
\begin{array}{l}
 0\\
1\\
0\\
1\\
\end{array}
\right),
$
\\
$
\begin{array}{ccc}
\underbrace{\,}_p &  & \\
\end{array}
$
&
\\
\end{tabular}
$$

\noindent o\`u $H$ l'hyperplan tangent à $\O$ en $p$. Par cons\'equent, l'action de $\g_u$ (resp. $\g$) sur l'espace $\A^{2}_p$ est une action par translation (resp. vissage). La partie lin\'eaire de l'action de $\G$ sur $\A^2_p$ est incluse dans un groupe compact. Il vient que le groupe $\G$ pr\'eserve un produit scalaire sur $\A^2_p$.


On en d\'eduit que $\P$ est inclus dans un conjugu\'e de $\so{3}$, c'est-à-dire que $\P$ pr\'eserve un ellipsoïde, qui est n\'ecessairement tangent à $\O$ en $p$.
\end{proof}

Cela permet d'obtenir le
\begin{coro}\label{dim3fini}
En dimension $3$, les notions de finitude g\'eom\'etrique sur $\O$ et sur $\dO$ sont \'equivalentes.
\end{coro}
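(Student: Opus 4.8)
The plan is to split the equivalence into its two implications, the substance being concentrated in the converse, which is precisely where dimension $3$ intervenes, through the proposition just established.

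First I would dispose of the easy direction, that being g\'eom\'etriquement finie sur $\O$ implies being g\'eom\'etriquement finie sur $\dO$; this holds in arbitrary dimension and only requires noting that a point parabolique uniform\'ement born\'e is a fortiori parabolique born\'e. Concretely, if $p\in\LG$ and $\Stab_{\G}(p)$ acts cocompactly on $\overline{\mathcal{D}_p(C(\LG\smallsetminus\{p\}))}$, then the radial projection $\dO\smallsetminus\{p\}\to\A^{n-1}_p$ identifies $\LG\smallsetminus\{p\}$ with a closed $\Stab_{\G}(p)$-invariant subset of that convex set, on which the action therefore stays cocompact; hence $p$ is parabolique born\'e. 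So every limit point which is conique or parabolique uniform\'ement born\'e is conique or parabolique born\'e, which is the claim.

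For the converse I would start from an action g\'eom\'etriquement finie sur $\dO$ and pick $p\in\LG$. If $p$ is a point limite conique there is nothing to do, so assume $p$ is parabolique born\'e and set $\P=\Stab_{\G}(p)$. The key step is to feed $\P$ into the preceding proposition, the one place where $n=3$ is genuinely used: it asserts that the sous-groupe parabolique $\P$ preserves an ellipso\"ide tangent to $\O$ en $p$, hence is conjugate in $\ss$ to a sous-groupe parabolique de $\SO$. This is exactly condition $(ii)$ of Proposition \ref{soborne}; since our $p$ is already parabolique born\'e, the hypotheses of that proposition are met, and its implication $(ii)\Rightarrow(i)$ yields that $p$ is in fact parabolique uniform\'ement born\'e. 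As the conique alternative is carried over unchanged, every point of $\LG$ is conique or parabolique uniform\'ement born\'e, i.e.\ the action is g\'eom\'etriquement finie sur $\O$.

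I expect the whole difficulty to be already encapsulated in the preceding proposition, so that the corollary itself is only a formal assembly of that structural fact with Proposition \ref{soborne}. The genuinely dimension-dependent ingredient is the conjugacy of every sous-groupe parabolique to a parabolique subgroup of $\SO$, which is exactly what fails in dimension $4$ and is exploited in Proposition \ref{contreex} to produce a point parabolique born\'e that is not uniform\'ement born\'e; the only point deserving attention in the present argument is to apply Proposition \ref{soborne} to a point already known to be parabolique born\'e, which our hypothesis guarantees.
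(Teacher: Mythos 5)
Your proof is correct and follows essentially the same route as the paper's: reduce to showing that every point parabolique born\'e is uniform\'ement born\'e, invoke the dimension-$3$ proposition to get that its stabilizer is conjugate to a parabolic subgroup of $\so{3}$, and conclude by the implication $(ii)\Rightarrow(i)$ of Proposition \ref{soborne}. Your extra remarks — spelling out the easy direction via restriction of a cocompact action to the closed invariant subset $\mathcal{D}_p(\LG\smallsetminus\{p\})$, and insisting that $(ii)\Rightarrow(i)$ be applied to a point already known to be parabolique born\'e (which is indeed what that implication's proof uses when it places $\LG\smallsetminus\{p\}$ in a bounded neighborhood of the Bieberbach subspace) — are accurate and, if anything, slightly more careful than the paper's two-line argument.
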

\begin{proof}
Il s'agit de montrer que, \'etant donn\'e une action g\'eom\'etriquement finie d'un groupe $\G$ sur $\dO$, tout point parabolique born\'e $p\in\LG$ est en fait uniform\'ement born\'e. Or, on vient de voir que les sous-groupes paraboliques sont, en dimension $3$, conjugu\'es dans $\so{3}$. Le corollaire \ref{soborne} permet de conclure.
\end{proof}

\subsection{Un contre-exemple}\label{cex}

Pour trouver un exemple d'une action g\'eom\'etriquement finie sur $\dO$ mais pas g\'eom\'etriquement finie sur $\O$, il faudra, d'après les deux parties pr\'ec\'edentes, chercher en dimension sup\'erieure ou \'egale à $4$. On a vu aussi que, dès que la g\'eom\'etrie de Hilbert \'etait Gromov-hyperbolique, les deux notions \'etaient \'equivalentes. Enfin, on a vu dans le corollaire \ref{soborne} que, si le stabilisateur d'un point parabolique born\'e \'etait conjugu\'e dans $\SO$, alors ce point \'etait en fait uniform\'ement born\'e.\\
On peut r\'esumer tout cela dans l'\'enonc\'e suivant:

\begin{prop}\label{bilan_geo}
Soit $\G$ un sous-groupe discret de $\Aut(\O)$. Les propositions suivantes sont \'equivalentes:
\begin{enumerate}[(i)]
 \item l'action de $\G$ sur $\O$ est g\'eom\'etriquement finie;
 \item l'action de $\G$ sur $\dO$ est g\'eom\'etriquement finie et les sous-groupes paraboliques de $\G$ sont conjugu\'es à des sous-groupes paraboliques de $\SO$;
 \item l'action de $\G$ sur $\dO$ est g\'eom\'etriquement finie et l'espace m\'etrique $(C(\LG),\d)$ est Gromov-hyperbolique.
\end{enumerate}
\end{prop}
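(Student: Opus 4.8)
Le plan est de ramener enti\`erement cette proposition aux deux r\'esultats qui en constituent le c\oe ur, le th\'eor\`eme \ref{eqgromovhyp} et la proposition \ref{soborne}; aucune construction nouvelle ne devrait \^etre n\'ecessaire. L'\'equivalence entre (i) et (iii) ne demande rien de plus que le th\'eor\`eme \ref{eqgromovhyp}, qui affirme pr\'ecis\'ement que l'action de $\G$ est g\'eom\'etriquement finie sur $\O$ si et seulement si elle l'est sur $\dO$ et que $(C(\LG),\d)$ est Gromov-hyperbolique. Tout le travail restant porte donc sur l'\'equivalence entre (i) et (ii), que je compte \'etablir ponctuellement, point parabolique par point parabolique.

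Je commencerais par observer que chacune des conditions (i) et (ii) contient la finitude g\'eom\'etrique sur $\dO$: c'est explicite dans (ii), et cela r\'esulte pour (i) du fait qu'un point parabolique uniform\'ement born\'e est en particulier born\'e. On peut donc, dans toute la discussion, supposer acquise la finitude g\'eom\'etrique sur $\dO$; dans ce cadre, tout point de $\LG$ est conique ou parabolique born\'e, et les points paraboliques born\'es sont exactement les points non coniques de $\LG$, de stabilisateurs $\P_p=\Stab_{\G}(p)$ les sous-groupes paraboliques maximaux de $\G$. La traduction entre (i) et (ii) passe alors par la proposition \ref{soborne}, appliqu\'ee \`a chaque $\P_p$: celle-ci affirme que $p$ est uniform\'ement born\'e si et seulement si $\P_p$ est conjugu\'e \`a un sous-groupe parabolique de $\SO$.

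Il restera \`a d\'erouler les deux implications. Pour (i)$\Rightarrow$(ii): l'action \'etant g\'eom\'etriquement finie sur $\O$, tout point parabolique born\'e $p$ est uniform\'ement born\'e, donc chaque $\P_p$ est conjugu\'e \`a un sous-groupe parabolique de $\SO$ d'apr\`es la proposition \ref{soborne}; tout sous-groupe parabolique de $\G$ fixant un point $q\in\LG$ \'etant contenu dans le stabilisateur maximal $\P_q$ et form\'e d'\'el\'ements paraboliques, il h\'erite de cette conjugaison, ce qui donne (ii). Pour (ii)$\Rightarrow$(i): par hypoth\`ese chaque stabilisateur maximal $\P_p$, qui est lui-m\^eme un sous-groupe parabolique de $\G$, est conjugu\'e \`a un sous-groupe parabolique de $\SO$, donc la proposition \ref{soborne} montre que $p$ est uniform\'ement born\'e; tout point de $\LG$ \'etant alors conique ou parabolique uniform\'ement born\'e, l'action est g\'eom\'etriquement finie sur $\O$.

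Je ne pr\'evois pas d'obstacle v\'eritable. Le seul point qui r\'eclame un peu d'attention appara\^it dans l'implication (i)$\Rightarrow$(ii): il faut passer de la conjugaison des stabilisateurs maximaux $\P_p$ \`a celle de \emph{tous} les sous-groupes paraboliques de $\G$. Ce passage est anodin, car un sous-groupe parabolique quelconque fixe un point $q\in\LG$ et se plonge dans le stabilisateur maximal $\P_q$; form\'e d'\'el\'ements paraboliques, il est donc conjugu\'e, par le m\^eme \'el\'ement que $\P_q$, \`a un sous-groupe du sous-groupe parabolique de $\SO$ correspondant, lequel demeure un sous-groupe parabolique de $\SO$. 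Il convient seulement de garder \`a l'esprit que les notions de point parabolique born\'e et uniform\'ement born\'e sont des propri\'et\'es du stabilisateur \emph{entier}, ce qui justifie de mener toute l'argumentation au niveau des $\P_p$.
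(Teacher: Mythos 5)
Your proposal is correct and takes essentially the same route as the paper: the paper presents Proposition \ref{bilan_geo} as a summary of what precedes, getting (i)$\Leftrightarrow$(iii) from Theorem \ref{eqgromovhyp} and (i)$\Leftrightarrow$(ii) from Proposition \ref{soborne} applied to the parabolic stabilizers. Your extra care in passing from the maximal stabilizers $\P_p$ to arbitrary parabolic subgroups of $\G$ is a detail the paper leaves implicit.
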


On notera au passage le corollaire suivant:

\begin{coro}\label{fin_dual}
L'action de $\G$ sur $\O$ est g\'eom\'etriquement finie si et seulement si l'action de $\G^*$ sur $\O^*$ est g\'eom\'etriquement finie.
\end{coro}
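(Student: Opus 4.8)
The plan is to deduce this corollary directly from Proposition \ref{bilan_geo} together with the duality statement Proposition \ref{dual_geo_fini}, using the self-duality of $\SO$. First I would invoke the equivalence $(i)\Leftrightarrow(ii)$ of Proposition \ref{bilan_geo}, which says that the action of $\G$ on $\O$ is geometrically finite if and only if the action of $\G$ on $\dO$ is geometrically finite and every parabolic subgroup of $\G$ is conjugate in $\ss$ to a parabolic subgroup of $\SO$. The same equivalence applies verbatim to $\G^*$ acting on $\O^*$. Hence it is enough to check that both conditions on the right-hand side are preserved under the duality $\G\mapsto\G^*$.

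The first condition, geometric finiteness on the boundary, is transported by Proposition \ref{dual_geo_fini}: the action of $\G$ on $\dO$ is geometrically finite if and only if the action of $\G^*$ on $\dO^*$ is geometrically finite. For the second condition I would use the description of duality from section \ref{def_dualite}: the map $g\mapsto g^*={}^tg^{-1}$ is an involutive automorphism of $\ss$ carrying $\Aut(\O)$ onto $\Aut(\O^*)$, and (as recalled just before Proposition \ref{dual_geo_fini}) it sends each parabolic element, hence each parabolic subgroup $\P=\Stab_{\G}(p)$ of $\G$, to the parabolic subgroup $\P^*=\Stab_{\G^*}(p^*)$ of $\G^*$. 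Since $p\mapsto p^*$ is a bijection from $\LG$ onto $\Lambda_{\G^*}$ preserving parabolic points, this gives a bijection between the parabolic subgroups of $\G$ and those of $\G^*$.

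The one genuine computation, and the only possible obstacle, is to see that the property ``conjugate to a parabolic subgroup of $\SO$'' is itself preserved by duality. This reduces to the self-duality of $\SO$: if $M$ denotes the Gram matrix of the quadratic form of signature $(n,1)$ preserved by $\SO$, then ${}^tgMg=M$ for $g\in\SO$ gives $g^*={}^tg^{-1}=MgM^{-1}$, so that $(\SO)^*=M\,\SO\,M^{-1}$ is conjugate to $\SO$ in $\ss$. Consequently, if $\P=h\P_0h^{-1}$ with $\P_0$ a parabolic subgroup of $\SO$, then $\P^*=h^*\P_0^*(h^*)^{-1}$ with $\P_0^*$ a parabolic subgroup of $M\,\SO\,M^{-1}$, and therefore $\P^*$ is again conjugate to a parabolic subgroup of $\SO$; the converse is identical because duality is involutive.

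Putting these three observations together yields the chain of equivalences: the action of $\G$ on $\O$ is geometrically finite $\iff$ the action of $\G$ on $\dO$ is geometrically finite and the parabolic subgroups of $\G$ are conjugate to parabolic subgroups of $\SO$ $\iff$ the action of $\G^*$ on $\dO^*$ is geometrically finite and the parabolic subgroups of $\G^*$ are conjugate to parabolic subgroups of $\SO$ $\iff$ the action of $\G^*$ on $\O^*$ is geometrically finite, which is exactly the claim. I expect no serious difficulty beyond the conjugacy bookkeeping, since all the substantial work has already been carried out in Proposition \ref{bilan_geo}.
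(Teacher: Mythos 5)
Your proof is correct and follows essentially the same route as the paper's: both deduce the corollary from the equivalence $(i)\Leftrightarrow(ii)$ of Proposition \ref{bilan_geo}, transport geometric finiteness on the boundary via Proposition \ref{dual_geo_fini}, and use the self-duality of $\SO$ to see that the condition on parabolic subgroups is preserved under $\G\mapsto\G^*$. The only difference is cosmetic: you spell out the Gram-matrix computation behind the self-duality of $\SO$, which the paper simply asserts in one line.
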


\begin{proof}
On sait d\'ejà que l'action de $\G$ sur $\dO$ est g\'eom\'etriquement finie si et seulement si l'action de $\G^*$ sur $\dO^*$ est g\'eom\'etriquement finie (proposition \ref{dual_geo_fini}). Or, le dual d'un sous-groupe parabolique de $\SO$ est un sous-groupe parabolique de $\SO$ puisque $\SO$ est autodual.
\end{proof}

On en vient à pr\'esent aux contre-exemples annonc\'es dans l'introduction:

\begin{prop}\label{contrex_geo_finie}
Il existe un ouvert proprement convexe $\O$ de $\PP^4$, strictement convexe et à bord $\C^1$, qui admet une action d'un sous-groupe discret d'automorphismes $\G$ dont l'action est g\'eom\'etriquement finie sur $\dO$ mais pas g\'eom\'etriquement finie sur $\O$.
\end{prop}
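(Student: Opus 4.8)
We need to construct an open properly convex set $\O \subset \PP^4$, strictly convex with $\C^1$ boundary, admitting a discrete subgroup $\G \leq \Aut(\O)$ whose action is geometrically finite on $\dO$ but not on $\O$.

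The key insight from the paper's characterizations (Proposition \ref{bilan_geo} and Corollary \ref{soborne}) is that the difference between "geometrically finite on $\dO$" and "geometrically finite on $\O$" lies entirely in whether the parabolic subgroups are conjugate to parabolic subgroups of $\SO$. So I need a parabolic subgroup that is bounded (acts cocompactly on $\LG \setminus \{p\}$) but NOT uniformly bounded (equivalently, not conjugate to a parabolic of $\SO$, equivalently does not preserve a Euclidean metric on $\A_p^{n-1}$).

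The hint in the introduction tells me exactly where to look: the spherical/symmetric representation of $\s2$ into $\R^5$ (the irreducible 5-dimensional representation, i.e., $\mathrm{Sym}^4$ of the standard representation), whose limit set in $\PP^4$ is the Veronese curve. The parabolic elements of $\s2$ (upper triangular unipotents) map under $\mathrm{Sym}^4$ to unipotent elements of degree $5$ in $\s5$ — a single Jordan block. Crucially, this is genuinely different from a parabolic of $\SO$: in $\so{4}$, parabolic unipotents have much smaller Jordan blocks (degree $3$), so a degree-$5$ unipotent cannot be conjugate into $\SO$.

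**The plan.**

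Let me sketch the construction in the order I would carry it out. First, I would fix the irreducible representation $\rho : \s2 \to \s5$ given by $\mathrm{Sym}^4$ of the standard action on $\R^2$; its image lies in $\Aut(\O_r)$ for a whole one-parameter family $\{\O_r\}_{r \in [0,+\infty]}$ of properly convex sets (all strictly convex with $\C^1$ boundary except the degenerate endpoints $\O_0, \O_\infty$), as asserted in the introduction. The limit set of $\rho(\s2)$ is the Veronese curve $\C \subset \PP^4$. Next, I would take $\G_0$ to be (the image under $\rho$ of) a non-uniform lattice in $\s2$ — for instance a finitely generated Fuchsian group of finite covolume with at least one cusp, so that $\G = \rho(\G_0)$ contains a genuine parabolic subgroup.

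Second, and this is the crux, I would analyze the parabolic point $p$ corresponding to a cusp. The stabilizer $\P = \Stab_\G(p)$ is the image under $\rho$ of a parabolic (unipotent) subgroup of $\s2$, hence consists of degree-$5$ unipotents with a single Jordan block. I would then verify two things: (a) $p$ \emph{is} parabolic bounded, since $\P$ acts cocompactly on $\LG \setminus \{p\} = \C \setminus \{p\}$ — this follows because $\rho$ is equivariant with the standard action of the lattice on $\PP^1 = \partial \H^2$, under which the parabolic acts cocompactly on the punctured circle; the Veronese embedding transports this cocompactness; (b) $p$ is \emph{not} uniformly bounded, which by Corollary \ref{soborne} is equivalent to $\P$ \emph{not} being conjugate in $\ss$ to a parabolic subgroup of $\SO$. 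For (b) the essential point is the algebraic one above: parabolic elements of $\so{4}$ acting on $\R^5$ have Jordan blocks of degree at most $3$, so a nontrivial degree-$5$ unipotent cannot be conjugated into any copy of $\SO$. Equivalently, by Lemma \ref{cas_general} and Theorem \ref{lemmededieu}, if $p$ were uniformly bounded then $\P$ would act cocompactly on $\mathcal{D}_p(\overline{C(\LG \smallsetminus \{p\})})$ and would be conjugate to a parabolic of $\SO$, forcing the Jordan structure to be that of $\so{4}$, a contradiction.

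Third, I would assemble the conclusion. Every non-parabolic point of $\LG = \C$ is a conical limit point: this is inherited from the fact that every point of $\partial \H^2$ other than the cusps is a conical limit point for the lattice $\G_0$, and conicality is a topological condition (Lemma \ref{lem_con_fort=faible}) preserved under the equivariant homeomorphism $\partial\H^2 \cong \C$. Thus every point of $\LG$ is either conical or parabolic bounded, so the action is geometrically finite on $\dO$; but since $p$ is not uniformly bounded, the action is \emph{not} geometrically finite on $\O$. Taking $\O = \O_r$ for any $r \in (0,+\infty)$ gives the desired example in $\PP^4$.

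**Main obstacle.** The hardest part will be rigorously establishing the existence and properties of the convex family $\{\O_r\}$ — i.e., showing that the image $\rho(\s2)$ genuinely preserves a \emph{strictly convex} set with $\C^1$ boundary that is not an ellipsoid (an ellipsoid would make the geometry hyperbolic and the two notions would coincide by the Corollary following Proposition \ref{eqgromovhyp}). By the remark in \ref{laius}, the degree-$5$ unipotent structure and Lemma \ref{vero} show that the boundary near $p$ looks like a Veronese curve of degree $4$, which is genuinely non-quadratic, so $\O_r$ cannot be an ellipsoid; this simultaneously confirms strict convexity/$\C^1$-regularity via the dual argument and rules out conjugacy into $\SO$. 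Verifying cocompactness of $\P$ on the relevant boundary sets (the distinction between acting cocompactly on $\LG \smallsetminus \{p\}$ versus on $\mathcal{D}_p(\overline{C(\LG \smallsetminus \{p\})})$) is the delicate geometric heart of the separation between the two finiteness notions.
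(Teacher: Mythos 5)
Your strategy is the paper's own: the representation $\rho_4=\mathrm{Sym}^4$ of $\s{2}$, a non-uniform lattice, bounded parabolicity transported from $\partial\mathbb{H}^2$ by the equivariant Veronese homeomorphism, and the obstruction that a unipotent with a single Jordan block of size $5$ cannot be conjugated into $\SO$ (whose unipotents have blocks of size at most $3$), combined with Proposition \ref{bilan_geo}/Corollary \ref{soborne}. All of that matches the paper, which indeed disposes of these points quickly.

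However, there is a genuine gap exactly at what you yourself call the main obstacle, and your sketch for it does not work. You cannot cite the introduction for the existence and regularity of the family $\{\O_r\}$: that assertion is precisely what the paper proves in the section containing this proposition (Proposition \ref{contrex}, via Lemma \ref{dim4}), so invoking it is circular. Moreover, your proposed substitute argument --- that the boundary near $p$ is a degree-$4$ Veronese curve, hence $\O_r$ is not an ellipsoid, and that this ``simultaneously confirms strict convexity/$\C^1$-regularity'' --- is invalid: non-ellipsoidality gives neither strict convexity nor $\C^1$ regularity. Indeed the extreme members $\O_0$ (convex polynomials) and $\O_{\infty}$ (nonnegative polynomials) are preserved by the very same group, carry the same Veronese curve as limit set in their boundary, and are \emph{neither} strictly convex \emph{nor} $\C^1$; any correct proof must therefore distinguish $\O_r$, $r\in(0,\infty)$, from these endpoints. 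The paper's actual argument is a homogeneity argument: the action of $\s{2}$ on $\O_{\infty}\smallsetminus\O_0$ is proper and free (freeness requires Lemma \ref{elli}, i.e. that each nontrivial elliptic element has a unique fixed point in $\PP^4$ and that this fixed point lies in $\O_0$); since $\dim\s{2}=\dim\dO_r=3$, the orbits in $\dO_r\smallsetminus\Lambda_{\s{2}}$ are open, they are closed because the limit set has been removed, and $\dO_r\smallsetminus\Lambda_{\s{2}}$ (a $3$-sphere minus a Veronese circle) is connected, so the action on it is transitive. Transitivity yields strict convexity: one non-extremal point of $\dO_r\smallsetminus\Lambda_{\s{2}}$ would force all points to be non-extremal, i.e. $\O_r=\O_0$. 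Finally, $\C^1$ regularity follows by duality: the dual of $\O_r$ is some $\O_{r'}$ with $r'\neq 0,\infty$, which is strictly convex, hence $\dO_r$ is $\C^1$. Without this (or an equivalent) argument, the convex set $\O$ your construction needs has not been shown to exist with the required regularity, so the proof is incomplete.
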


\begin{prop}\label{contrex_conv_cocomp}
Il existe un ouvert proprement convexe $\O$ de $\PP^4$, strictement convexe et à bord $\C^1$, et un sous-groupe discret $\G$ de $\Aut(\O)$ dont l'action est convexe-cocompacte et l'adh\'erence de Zariski n'est ni $\s{5}$ ni conjugu\'ee à $\so{4}$. 
\end{prop}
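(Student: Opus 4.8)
Le plan est de reprendre la construction d\'ej\`a mise en place pour la proposition \ref{contrex_geo_finie}, en y rempla\c cant le r\'eseau de $\s2$ par un sous-groupe convexe-cocompact. On dispose de la repr\'esentation irr\'eductible (sph\'erique) $\rho : \s2 \to \s{5}$, c'est-\`a-dire $\mathrm{Sym}^4$ de la repr\'esentation standard, dont l'ensemble limite dans $\PP^4$ est la courbe de Veronese $\xi(\PP^1)$, ainsi que de la famille $\{\O_r\}_{r\in[0,+\infty]}$ d'ouverts proprement convexes $\rho(\s2)$-invariants, tous strictement convexes \`a bord $\C^1$ hormis $\O_0$ et $\O_{\infty}$. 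Je fixerais une fois pour toutes un param\`etre $r\in\,]0,+\infty[$, de sorte que $\O=\O_r$ soit strictement convexe \`a bord $\C^1$, puis un sous-groupe discret, sans torsion, \emph{non \'el\'ementaire et convexe-cocompact} $\Gamma_0$ de $\s2$ (par exemple un groupe de Schottky libre, ou le groupe fondamental d'une surface hyperbolique \`a bord). On pose $\G=\rho(\Gamma_0)$. Comme $\rho$ est un morphisme de groupes alg\'ebriques \`a noyau central fini et d'image ferm\'ee dans $\s{5}$, le groupe $\G$ est discret, et il est contenu dans $\Aut(\O)$ puisque $\rho(\s2)$ pr\'eserve $\O$; l'irr\'eductibilit\'e de $\rho$ jointe \`a la Zariski-densit\'e de $\Gamma_0$ dans $\s2$ montre de plus que $\G$ est irr\'eductible.

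La deuxi\`eme \'etape consiste \`a \'etablir que l'action de $\G$ sur $\O$ est convexe-cocompacte. D'apr\`es le corollaire \ref{theo_convcocomp}, il suffit de montrer que tout point de $\LG$ est un point limite conique. On v\'erifie d'abord que $\LG=\xi(\Lambda_{\Gamma_0})$, o\`u $\Lambda_{\Gamma_0}\subset\PP^1$ est l'ensemble limite fuchsien de $\Gamma_0$: les points fixes des \'el\'ements hyperboliques de $\G$ sont les images par $\xi$ des points fixes des \'el\'ements hyperboliques de $\Gamma_0$, et $\LG$ en est l'adh\'erence. Soit alors $x\in\Lambda_{\Gamma_0}$; comme $\Gamma_0$ est convexe-cocompact, $x$ est conique dans $\mathbb{H}^2$, donc (lemme \ref{lem_con_fort=faible} appliqu\'e en dimension $1$) il existe une suite $(\gamma_n)$ de $\Gamma_0$ et deux points distincts $a_0,b_0\in\PP^1$ tels que $\gamma_n x\to a_0$ et $\gamma_n y\to b_0$ pour tout $y\neq x$. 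Un \'el\'ement hyperbolique de $\s2$ de valeurs propres $\lambda^{\pm 1}$ a pour image un \'el\'ement de valeurs propres $\lambda^{\pm 4},\lambda^{\pm 2},1$, donc proximal et biproximal; par \'equivariance de $\xi$ et par le th\'eor\`eme \ref{action_conv} sur les actions de convergence, on en d\'eduit $\rho(\gamma_n)\,\xi(x)\to\xi(a_0)$ et $\rho(\gamma_n)\,\eta\to\xi(b_0)$ pour tout $\eta\in\dO\smallsetminus\{\xi(x)\}$. Le lemme \ref{lem_con_fort=faible} montre alors que $\xi(x)$ est un point limite conique, et le corollaire \ref{theo_convcocomp} permet de conclure.

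Il reste le calcul de l'adh\'erence de Zariski, qui est imm\'ediat. Un sous-groupe discret non \'el\'ementaire de $\s2$ est Zariski-dense dans $\s2$ (ses seuls sous-groupes alg\'ebriques propres sont r\'esolubles, donc \'el\'ementaires). Comme $\rho$ est un morphisme de groupes alg\'ebriques, l'adh\'erence de Zariski de $\G=\rho(\Gamma_0)$ est $\rho(\s2)$, qui est un groupe de dimension $3$. Ce groupe ne peut \^etre ni $\s{5}$, de dimension $24$, ni conjugu\'e \`a $\so{4}$, de dimension $10$. On obtient ainsi un exemple d'action convexe-cocompacte (donc sans point parabolique) dont l'adh\'erence de Zariski \'echappe \`a la dichotomie du th\'eor\`eme \ref{zariski-dense}, ce qui confirme que l'hypoth\`ese sur les points paraboliques y est indispensable.

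L'obstacle principal ne r\'eside pas dans ces arguments, mais dans le transfert effectif de la conicalit\'e de $\PP^1$ vers $\dO\subset\PP^4$: il faut garantir que la dynamique proximale de $\rho(\gamma_n)$ contracte bien \emph{tout} $\eta\neq\xi(x)$ vers un seul et m\^eme point $\xi(b_0)$, ce qui repose sur le fait que les points attractif et r\'epulsif de $\rho(\gamma_n)$ sont port\'es par la courbe de Veronese $\xi(\PP^1)\subset\dO$ et sur la stricte convexit\'e \`a bord $\C^1$ de $\O$ (qui assure, via le th\'eor\`eme \ref{action_conv}, que toute valeur d'adh\'erence de la suite $(\rho(\gamma_n))$ est de la forme $b_a$, c'est-\`a-dire \'egale \`a un point $b$ en dehors d'un unique point $a$). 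La v\'erification que $\O=\O_r$ est bien strictement convexe \`a bord $\C^1$ pour $r\in\,]0,+\infty[$, elle, est une cons\'equence directe de la construction d\'ej\`a effectu\'ee pour la proposition \ref{contrex_geo_finie}.
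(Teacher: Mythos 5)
Votre d\'emonstration est correcte et suit essentiellement la m\^eme approche que celle de l'article : prendre $\O=\O_r$ avec $r\neq 0,\infty$ et poser $\G=\rho_4(\Gamma_0)$ o\`u $\Gamma_0$ est un sous-groupe discret convexe-cocompact (ou m\^eme un r\'eseau cocompact) de $\s{2}$, l'adh\'erence de Zariski \'etant alors $\rho_4(\s{2})$, de dimension $3$. Vous d\'etaillez en outre la v\'erification, que l'article se contente d'affirmer, que l'action de $\G$ est convexe-cocompacte, via le transfert de conicalit\'e le long de la courbe de Veronese (lemme \ref{lem_con_fort=faible}, th\'eor\`eme \ref{action_conv}) et le corollaire \ref{theo_convcocomp}.
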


\paragraph*{Construction du contre-exemple via les repr\'esentations sph\'eriques de $\s{2}$}
$\,$
\par{
L'action de $\s{2}$ sur $\R^2$ induit une action $\rho_d$ de $\s{2}$ sur l'espace vectoriel $V_d$ des polynômes homogènes de degr\'e $d$ en deux variables, qui est de dimension $d+1$. De plus, toute repr\'esentation irr\'eductible de dimension finie de $\s{2}$ est \'equivalente à l'une des repr\'esentations $\rho_d:\s{2}\rightarrow \textrm{GL}(V_d)$ pour un $d\geqslant 1$.
}
\\
\par{
Il est facile de voir que $\rho_d$ pr\'eserve un ouvert proprement convexe de $\PP(V_d)$ si et seulement si $d$ est pair. En effet, si $d$ est impair alors $\rho_d(-Id_2)=-Id_{V_d}$: par cons\'equent, $\rho_d$ ne peut pr\'eserver d'ouvert proprement convexe.  Notons $\C_{min}$ l'ensemble des polynômes convexes de $V_d$ et $\C_{max}$ l'ensemble des polynômes positifs de $V_d$. Ce sont deux cônes proprement convexes de $V_d$. Ils sont non vides si et seulement si $d$ est pair et $\C_{min}$ est inclus dans $\C_{max}$. En fait, $\C_{max}$ est le c\^one dual de $\C_{min}$. Enfin, tous deux sont pr\'eserv\'es par $\rho_d$. En fait, on peut même montrer que tout cône convexe proprement convexe de $V_d$ pr\'eserv\'e par $\rho_d$ contient $\C_{min}$ et est contenu dans $\C_{max}$. Vinberg \'etudie le cas d'un groupe semi-simple quelconque dans \cite{MR565090}, on pourra aussi trouver un \'enonc\'e dans l'article \cite{MR1767272}, proposition 4.7. 
}
\\
\par{
On notera $\O_0=\PP(\C_{min})$ et $\O_{\infty}= \PP(\C_{max})$. Il n'est pas difficile de voir que $\O_0 \neq \O_{\infty}$ si et seulement si $d \geqslant 4$. Par cons\'equent, on peut introduire l'ouvert $\O_r= \{ x \in \O_{\infty} \,|\, d_{\O_{\infty}}(x,\O_0) < r\}$, c'est-à-dire le $r$-voisinage de $\O_0$ dans $(\O_{\infty},d_{\O_{\infty}})$. La proposition suivante montre que les $\O_r$ sont convexes. 
 }

\begin{lemm}[Corollaire 1.10 de \cite{Cooper:2011fk}]
Le $r$-voisinage (pour $d_{\O}$) d'une partie convexe d'un ouvert proprement convexe est convexe.
\end{lemm}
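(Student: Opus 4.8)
The plan is to show that the function $x \mapsto d_{\O}(x,C) = \inf_{p \in C} d_{\O}(x,p)$ has convex sublevel sets, since $N_r(C) = \{x \in \O : d_{\O}(x,C) < r\}$. First I would reduce this to a statement about pairs of points. Given $a,b \in N_r(C)$ and a point $c = (1-t)a + tb$ of the segment $[a,b]$ (computed in an affine chart containing $\overline{\O}$), choose $a',b' \in C$ with $d_{\O}(a,a') < r$ and $d_{\O}(b,b') < r$, and set $c' = (1-t)a' + tb'$; then $c' \in C$ because $C$ is convex. Writing $R = \max(d_{\O}(a,a'),d_{\O}(b,b')) < r$, it suffices to prove $d_{\O}(c,c') \le R$, for then $d_{\O}(c,C) < r$ and $c \in N_r(C)$. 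In other words everything reduces to the convexity, inside $\O \times \O$, of the tube around the diagonal
$$ T_{\rho} = \{(x,y) \in \O \times \O : d_{\O}(x,y) \le \rho \}, $$
the interpolated pair $(c,c')$ being exactly the point of parameter $t$ on the segment joining $(a,a')$ to $(b,b')$ in $\O \times \O$.

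The heart of the argument is therefore to prove that each tube $T_{\rho}$ is convex, and here I would pass through the Funk (weak) metric. Recall that $d_{\O} = \frac{1}{2}(F + \check F)$, where $F(x,y)$ is the Funk distance and $\check F(x,y) = F(y,x)$; concretely $e^{-F(x,y)} = |yq|/|xq|$, with $q$ the point where the ray from $x$ through $y$ meets $\dO$. The key elementary computation is the description of $e^{-F}$ as an infimum over supporting functionals: if $\Sigma$ denotes the affine forms $\ell$ with $\ell \le 1$ on $\overline{\O}$ and $\sup_{\overline{\O}} \ell = 1$, then
$$ e^{-F(x,y)} = \inf_{\ell \in \Sigma} \frac{1 - \ell(y)}{1 - \ell(x)}, $$
the infimum being attained at the functional supporting $\overline{\O}$ at $q$; this follows at once from the fact that $\ell(q) \le 1$ for every $\ell \in \Sigma$. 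Feeding this into $e^{-2d_{\O}(x,y)} = e^{-F(x,y)} e^{-F(y,x)}$ and using $T_{\rho} = \{e^{-2d_{\O}} \ge e^{-2\rho}\}$, I would present $T_{\rho}$ as an intersection $\bigcap_{\ell,m \in \Sigma} S_{\ell,m}$ of the elementary regions
$$ S_{\ell,m} = \Big\{ (x,y) \in \O \times \O : \phi(x) \ge e^{-2\rho}\, \phi(y) \Big\}, \qquad \phi = \frac{1 - m}{1 - \ell}, $$
where $\phi$ is a positive projective-linear (ratio-of-affine) function on $\O$. As an intersection of convex sets is convex, it then suffices to show each $S_{\ell,m}$ is convex.

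This last point is where the real work lies and is the main obstacle. Cleared of denominators, the defining inequality of $S_{\ell,m}$ reads $(1-m(x))(1-\ell(y)) \ge e^{-2\rho}(1-\ell(x))(1-m(y))$, whose zero locus is a saddle-type quadric; so $S_{\ell,m}$ is emphatically not convex in the whole ambient affine product, and one cannot argue by mere two-point estimates. The decisive feature is that on $\O \times \O$ all four forms $1-\ell,\,1-m$ are strictly positive, so that this quadric meets $\O \times \O$ in a single branch bounding a convex region. I would make this precise by solving the boundary equation for one variable: for fixed $x$, the slice $\{y : \phi(y) \le e^{2\rho}\phi(x)\}$ is the intersection of $\O$ with a half-space belonging to the pencil of hyperplanes through $\{\ell = 1\} \cap \{m = 1\}$, and the dependence of this half-space on $x$ is monotone in the pencil precisely because $\phi$ is projective-linear and $e^{-2\rho} \le 1$; checking that the resulting boundary graph is concave over $\O$ gives convexity of $S_{\ell,m}$.

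Once tube convexity is established the rest is formal, and one recovers the statement (which is Corollaire 1.10 of \cite{Cooper:2011fk}). Should the functional bookkeeping in the last step become unwieldy, an alternative is to prove convexity of $T_{\rho}$ directly: one reduces, as in the arguments on paraboliques, to a well-chosen two- or three-dimensional section containing $x$, $y$ and the relevant chords of $\dO$, and then compares via the monotonicity Proposition \ref{compa} with an ellipsoidal (hyperbolic) model, where convexity of the analogous tube is classical. Either way, the affine reduction of the first paragraph is the conceptual core that turns the neighbourhood statement into a question purely about the joint quasi-convexity of $d_{\O}$.
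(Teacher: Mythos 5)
The paper offers no proof of this lemma — it is quoted verbatim from \cite{Cooper:2011fk} — so your argument has to stand on its own, and it cannot: the statement you reduce everything to in your first paragraph is false. The inequality $\d\big((1-t)a+tb,(1-t)a'+tb'\big)\leqslant\max\big(\d(a,a'),\d(b,b')\big)$, i.e.\ convexity of your tube $T_\rho$ in $\O\times\O$, fails in Hilbert geometry: the point of $C$ closest to $c=(1-t)a+tb$ is in general \emph{not} the matched-parameter point $c'=(1-t)a'+tb'$. Concretely, let $\O\subset\PP^3$ be the open simplex, projectivization of the cone $\mathcal{K}=\{x\in\R^4 \mid x_i>0\}$, for which $\d([x],[y])=\frac{1}{2}\ln\big(\max_i(x_i/y_i)\,/\,\min_i(x_i/y_i)\big)$. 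Fix $\delta>0$ small, work in the affine chart $\{\sum_i x_i=7+14\delta\}$ (which contains $\overline{\O}$), and take
$$a=(4,3,7\delta,7\delta),\qquad a'=(1,6,7\delta,7\delta),\qquad b=(7\delta,7\delta,6,1),\qquad b'=(7\delta,7\delta,3,4).$$
Then $\d(a,a')=\d(b,b')=\frac{1}{2}\ln 8$ exactly, whereas for the midpoints $c=\frac{1}{2}(a+b)$, $c'=\frac{1}{2}(a'+b')$ the coordinate ratios tend to $(4,1/2,2,1/4)$ as $\delta\to 0$, so $\d(c,c')\to\frac{1}{2}\ln 16>\frac{1}{2}\ln 8$. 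Thus $(a,a'),(b,b')\in T_\rho$ with $\rho=\frac{1}{2}\ln 8$ but $(c,c')\notin T_\rho$. (The lemma itself is not contradicted: the point $\frac{2}{3}a'+\frac{1}{3}b'$ of $[a',b']$ is at distance tending to $\frac{1}{2}\ln 8$ from $c$.) Smoothing the simplex changes nothing, since Hilbert distances vary continuously with the domain on compact sets, so the paper's standing hypotheses (strict convexity, bord $\C^1$) do not rescue the reduction. This also settles your third paragraph without further computation: your Funk formula and the decomposition $T_\rho=\bigcap_{\ell,m}S_{\ell,m}$ are correct, so, an intersection of convex sets being convex, some $S_{\ell,m}$ must itself be non-convex. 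Indeed each $S_{\ell,m}$ is only \emph{bi-convex} (convex in $x$ for fixed $y$, and in $y$ for fixed $x$), and bi-convexity does not imply convexity; the ``single branch bounding a convex region'' claim is exactly where that paragraph breaks. The fallback via Proposition \ref{compa} is moot for the same reason, and in any case inclusion of domains gives only one-sided distance inequalities, which cannot transport a convexity statement.

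What makes the lemma true is the cone structure, which produces the correct, non-matched witness. Let $\mathcal{K}\subset\R^{n+1}$ be a cone over $\O$. Restricting to the $2$-plane spanned by the lifts, one checks that $\d([x],[y])\leqslant\rho$ if and only if the lifts can be scaled so that $x-y\in\overline{\mathcal{K}}$ and $e^{2\rho}y-x\in\overline{\mathcal{K}}$. Now suppose $a,b$ lie in the $\rho$-neighbourhood of the convex set $C$, with scaled witnesses $y_a,y_b$ lifted from points of $C$. Every point of the projective segment $[a,b]$ lifts to $\alpha a+\beta b$ with $\alpha,\beta\geqslant 0$, and then $(\alpha a+\beta b)-(\alpha y_a+\beta y_b)\in\overline{\mathcal{K}}$ and $e^{2\rho}(\alpha y_a+\beta y_b)-(\alpha a+\beta b)\in\overline{\mathcal{K}}$, because $\overline{\mathcal{K}}$ is a convex cone; moreover $[\alpha y_a+\beta y_b]$ lies on the segment of $\overline{\O}$ joining $[y_a]$ to $[y_b]$, hence in $C$. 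So the neighbourhood is convex. In the example above this mechanism selects precisely the witness $\frac{1}{2}a'+\frac{1}{4}b'$, i.e.\ the projective point $\frac{2}{3}a'+\frac{1}{3}b'$: the discrepancy between cone-sum parameters and affine matched parameters is exactly the information your first reduction discards, and a correct proof of the quoted Corollaire 1.10 has to exploit it.
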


Nous allons montrer la proposition suivante:

\begin{prop}\label{contrex}
Si $d=4$ et $r\neq 0,\infty$ alors les ouverts proprement convexes $\O_r$ sont strictement convexes et à bord $\C^1$.
\end{prop}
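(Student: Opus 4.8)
The plan is to describe $\partial\O_r$ explicitly and to separate the two regions where regularity must be checked: the Veronese curve $\mathcal{V}=\{[\ell^4]\ :\ \ell\text{ linéaire}\}$, which is the common part of $\partial\O_0$ and $\partial\O_{\infty}$, and the metric sphere $L_r=\{x\in\O_{\infty}\ :\ d_{\O_{\infty}}(x,\O_0)=r\}$, which will lie in the interior of $\O_{\infty}$. First I would analyse the faces of $\O_{\infty}=\PP(\C_{max})$. A boundary point of $\C_{max}$ is a nonnegative quartic vanishing somewhere, and the nontrivial faces are the flat two-dimensional pieces $\mathcal{F}_\ell=\PP\big(\ell^2\cdot\{\text{quadratiques positives}\}\big)$ indexed by $[\ell]\in\PP^1$, the Veronese point $[\ell^4]$ being the unique extremal vertex of $\mathcal{F}_\ell$. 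The key geometric input is that $\overline{\O_0}$ meets $\partial\O_{\infty}$ only along $\mathcal{V}$: a convex quartic with a real zero restricts to a convex function on every line through that zero, which forces the zero to have order four, i.e. the quartic is some $\ell^4$.

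Since $\O_0$ touches each face $\mathcal{F}_\ell$ only at its vertex $[\ell^4]$, the distance $d_{\O_{\infty}}(\cdot,\O_0)$ blows up as one approaches the relative interior of any $\mathcal{F}_\ell$; hence $\overline{\O_r}\cap\partial\O_{\infty}=\mathcal{V}$ and $\partial\O_r=\mathcal{V}\sqcup L_r$ with $L_r$ interior to $\O_{\infty}$. This avoidance of the flat faces is exactly what allows $\O_r$ to behave better than either extreme member, both of which inherit the defects of the $\mathcal{F}_\ell$ (non strict convexity of $\O_{\infty}$, non $\C^1$ regularity of $\O_0$). Along $\mathcal{V}$ strict convexity is then immediate: each $[\ell^4]$ is an extremal ray of $\C_{max}$, hence an extremal point of $\O_{\infty}$, hence of $\O_r\subset\O_{\infty}$; by $\s{2}$-transitivity on $\mathcal{V}$ this holds all along the curve.

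For the $\C^1$ property along $\mathcal{V}$ I would compute the tangent cones of $\O_0$ and $\O_{\infty}$ at the single point $p=[t^4]=[0:0:0:0:1]$, reducing to it by equivariance. In the affine chart with coordinates the coefficients $(x_0,x_1,x_2,x_3)$ of $(s^4,s^3t,s^2t^2,st^3)$, both conditions ``$p$ positive'' and ``$p$ convexe'' force $x_0\geqslant 0$ (restrict to the line $t=0$), while the open half-space $\{x_0>0\}$ lies in both tangent cones; therefore $\{x_0=0\}$ is the unique supporting hyperplane of $\O_0$ and of $\O_{\infty}$ at $p$. As $\O_0\subset\O_r\subset\O_{\infty}$ and the two bounding bodies share this tangent hyperplane at $p$, the sandwiched set $\O_r$ is $\C^1$ at $p$, the tangent line $T_p\mathcal{V}$ (the droite attractive of the regular unipotent, lemme \ref{vero}) being contained in $\{x_0=0\}$.

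The remaining, and I expect most delicate, point is the regularity of $\O_r$ along the interior part $L_r$. Here $L_r$ is a genuine metric sphere of $(\O_{\infty},d_{\O_{\infty}})$, and $\O_{\infty}$ is neither strictly convexe nor $\C^1$, so no general ``balls are round'' statement applies. The strategy I would follow is to rule out both a boundary segment and a corner of $\O_r$ inside $L_r$ by showing that either would, after pushing to infinity by a one-parameter unipotent subgroup and using the comparison Proposition \ref{compa}, produce a flat piece of $\partial\O_{\infty}$ lying outside $\mathcal{V}$, i.e. inside some $\mathcal{F}_\ell$, which contradicts the avoidance established above. Concretely the crux is to prove that the geodesics of $\O_{\infty}$ realising $d_{\O_{\infty}}(L_r,\O_0)$ exit $\O_{\infty}$ transversally to the flat faces (morally, near $\mathcal{V}$), so that $L_r$ is simultaneously strictly convexe and $\C^1$; the hypothesis $d=4$ is used precisely because it is the smallest degree with $\O_0\neq\O_{\infty}$, keeping the faces $\mathcal{F}_\ell$ two-dimensional with vertices exactly the Veronese curve. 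This interior step is where the real work lies, the two boundary regimes being either elementary or a short tangent-cone computation.
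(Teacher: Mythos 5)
Your proposal has two genuine gaps, and one of them is an outright error.

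First, the part you yourself flag as ``where the real work lies'' --- strict convexity and $\C^1$ regularity along $L_r=\partial\O_r\cap\O_{\infty}$, i.e.\ along all of $\partial\O_r$ except one circle --- is never proved: you only propose to push hypothetical segments or corners to infinity by unipotents and assert that the distance-realising geodesics exit ``transversally'', with no argument. The paper closes exactly this gap by a homogeneity argument absent from your outline: since $\dim\s{2}=3=\dim\partial\O_r$ and the action of $\s{2}$ on $\partial\O_r\smallsetminus\Lambda_{\s{2}}\subset\O_{\infty}\smallsetminus\O_0$ is proper and free (lemmes \ref{elli} and \ref{libre}), its orbits are open; they are also closed because the limit set has been removed, and $\partial\O_r\smallsetminus\Lambda_{\s{2}}$ is connected (a $3$-sphere minus the Veronese circle), so the action is \emph{transitive} there. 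Hence if one point of $\partial\O_r\smallsetminus\Lambda_{\s{2}}$ were non-extremal, none would be extremal, all extremal points would lie on $\Lambda_{\s{2}}$, and $\O_r$ would be the convex hull $\O_0$ of the Veronese curve, contradicting $r\neq 0$. Moreover the $\C^1$ property is not obtained pointwise at all: transitivity also shows every $\rho_4$-invariant properly convex open set is some $\O_{r'}$, so the dual of $\O_r$ is an $\O_{r'}$ with $r'\neq 0,\infty$, which is strictly convex by the previous step, and the dual of a strictly convex body has $\C^1$ boundary (lemme \ref{dim4}). Without this kind of global input, a direct local analysis of the metric sphere $L_r$ inside the non-strictly-convex, non-$\C^1$ geometry $(\O_{\infty},d_{\O_{\infty}})$ is exactly the hard point, and your sketch stalls there.

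Second, your tangent-cone computation at $p=[t^4]$ is wrong. Convexity of $x_0s^4+x_1s^3t+x_2s^2t^2+x_3st^3+t^4$ imposes much more than $x_0\geqslant 0$: evaluating $\partial^2/\partial t^2$ at $(s,t)=(1,0)$ forces $2x_2\geqslant 0$, and the Hessian determinant at $(1,0)$ forces $24x_0x_2\geqslant 9x_1^2$. So the open half-space $\{x_0>0\}$ is \emph{not} contained in the tangent cone of $\O_0$ at $p$ (the direction $(1,0,-1,0)$ is excluded), and $\{x_2=0\}$ is a second supporting hyperplane of $\O_0$ at $p$, distinct from $\{x_0=0\}$: the boundary $\partial\O_0$ is not $\C^1$ at the Veronese points. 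This breaks your sandwich argument, because it needs uniqueness of the supporting hyperplane of the \emph{inner} body: every hyperplane supporting $\O_r$ at $p$ avoids $\O_0\subset\O_r$ and hence supports $\O_0$ at $p$, so knowing only that $\O_{\infty}$ is $\C^1$ at $p$ gives no uniqueness for $\O_r$. (A smaller slip: the extreme points of a face $\mathcal{F}_{\ell}$ form the whole circle of points $[\ell^2m^2]$, $[m]\in\PP^1$, not just the vertex $[\ell^4]$; this does not affect your correct claim that $\overline{\O_0}\cap\partial\O_{\infty}=\mathcal{V}$.)
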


\begin{proof}[D\'emonstration de la proposition \ref{contrex_geo_finie} et de la proposition \ref{contrex_conv_cocomp}]
Choisissons un r\'eel $r>0$ et posons $\O=\O_r$.\\
Pour la proposition \ref{contrex_geo_finie}, il suffit de prendre un r\'eseau $\G$ non cocompact de $\s{2}$ et de remarquer que tout \'el\'ement parabolique de $\rho_4(\G)$ est conjugu\'e à un bloc de Jordan de taille 5. L'action de $\rho_4(\G)$ est bien sûr g\'eom\'etriquement finie sur $\dO$ mais la proposition \ref{bilan_geo} (ii) montre qu'elle n'est pas g\'eom\'etriquement finie sur $\O$. On pourra même remarquer que l'enveloppe convexe de $\LG\smallsetminus \{p\}$ dans $\A^3_p$ est $\A^3_p$ tout entier (où $p$ est n'importe quel point de $\LG$; on rappelle que $\LG$ est un cercle d'un point de vue topologique).

Pour la proposition \ref{contrex_conv_cocomp}, il suffit de prendre un r\'eseau $\G'$ cocompact de $\s{2}$ ou un sous-groupe discret $\G'$ convexe-cocompact de $\s{2}$. Dans tous les cas, l'action de $\rho_4(\G')$ sera convexe-compacte mais l'adh\'erence de Zariski de $\G'$ dans $\s{5}$ est $\rho_4(\s{2})$ qui est incluse dans $\mathrm{SO}_{2,3}(\R)$. On rappelle que la repr\'esentation irr\'eductible de $\s{2}$ de dimension $2n$ est incluse dans le groupe symplectique alors que celle de dimension $2n+1$ est incluse dans $\textrm{SO}_{n,n+1}(\R)$.
\end{proof}

Nous allons avoir besoin de plusieurs lemmes pour d\'emontrer la proposition \ref{contrex}.

\begin{lemm}\label{elli}
On suppose $d$ pair. Si $\g$ est un \'el\'ement elliptique non trivial de $\s{2}$ (c'est-\`a-dire qui n'est pas dans le centre) alors $\rho_d(\g)$ possède un unique point fixe sur $\PP(V_d)$. En particulier, tout point fixe d'un \'el\'ement elliptique appartient à $\O_0$.
\end{lemm}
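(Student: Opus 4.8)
Le plan est de se ramener d'abord au cas d'une rotation. Tout élément elliptique non central $\g$ de $\s{2}$ est conjugué dans $\s{2}$ à une rotation $R_\theta\in\mathrm{SO}_2(\R)$ d'angle $\theta\in(0,2\pi)\smallsetminus\{\pi\}$ : c'est exactement le stabilisateur d'un point de $\mathbb{H}^2$. Comme l'ouvert $\O_0=\PP(\C_{min})$ est invariant sous $\rho_d(\s{2})$ tout entier et que $\Fix(\rho_d(g\g g^{-1}))=\rho_d(g)\cdot\Fix(\rho_d(\g))$ avec $\rho_d(g)$ préservant $\O_0$, le cardinal de l'ensemble des points fixes, leur unicité éventuelle et leur appartenance à $\O_0$ sont tous invariants par conjugaison. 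Il suffit donc de traiter le cas $\g=R_\theta$.

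Ensuite je diagonaliserais $\rho_d(R_\theta)$ sur $\mathbb{C}$. Sur $V_d\otimes\mathbb{C}=\mathrm{Sym}^d(\mathbb{C}^2)$, et puisque $R_\theta$ a pour valeurs propres $e^{\pm i\theta}$ sur $\mathbb{C}^2$, les valeurs propres de $\rho_d(R_\theta)$ sont les $e^{im\theta}$ pour $m\in\{-d,-d+2,\dots,d\}$. Un point de $\PP(V_d)$ fixé par $\rho_d(R_\theta)$ correspond à une droite propre \emph{réelle}, donc à une valeur propre réelle, nécessairement égale à $\pm1$ ; une telle valeur propre apparaît si et seulement si $m\theta\in\pi\Z$ pour un certain $m$ de cet ensemble. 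L'indice $m=0$ fournit toujours la valeur propre $1$, dont la droite propre est celle des polynômes $\mathrm{SO}_2(\R)$-invariants, engendrée par $(x^2+y^2)^{d/2}$. Je vérifierais alors que cette forme est strictement convexe (son hessien, égal à un multiple positif de $(d-2)\,\mathbf{x}\mathbf{x}^{t}+(x^2+y^2)\,\mathrm{Id}$, est défini positif hors de l'origine), donc que son image appartient à l'intérieur du cône $\C_{min}$. On obtient ainsi un point fixe $[(x^2+y^2)^{d/2}]\in\O_0$, ce qui établit à la fois l'existence et l'assertion « en particulier ».

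Le point délicat, et le véritable obstacle, est l'unicité : il faut montrer que $m=0$ est la seule résonance, c'est-à-dire que $e^{im\theta}\neq\pm1$ pour tout $m$ pair avec $0<|m|\leqslant d$ ; la seule valeur propre réelle de $\rho_d(R_\theta)$ serait alors $1$, de multiplicité un, ce qui donnerait immédiatement l'unicité du point fixe et son identification avec $[(x^2+y^2)^{d/2}]$. C'est ici que se concentre toute la difficulté, car l'absence d'autre résonance tient à une condition arithmétique sur $\theta/\pi$ : pour certains angles rationnels à petit dénominateur (typiquement $\theta=\pi/2$, qui rend $\rho_d(R_\theta)$ involutif) plusieurs indices $m$ sont résonants et $\rho_d(R_\theta)$ possède alors un lieu fixe de dimension positive rencontrant $\O_0$. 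L'énoncé doit donc être compris pour les éléments dont l'angle évite ces résonances — c'est le cas pertinent dans la construction (rotations d'angle irrationnel, ou éléments d'ordre assez grand) — et c'est précisément ce contrôle des valeurs propres de module $1$ qui constitue l'étape cruciale à justifier proprement.
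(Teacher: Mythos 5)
You follow essentially the same route as the paper: both proofs come down to the eigenvalue computation, listing the eigenvalues of $\rho_d(\g)$ as $e^{im\theta}$ for $m=-d,-d+2,\dots,d$, and both then exhibit the $\mathrm{SO}_2(\R)$-invariant fixed point and place it in $\O_0$. The only real divergence is at that last step: you check by hand that $(x^2+y^2)^{d/2}$ has positive definite Hessian away from the origin, hence lies in the interior of the cone $\C_{min}$, whereas the paper argues that the compact one-parameter group $K$ containing $\g$ preserves $\O_0$ and therefore fixes the barycenter of any $K$-orbit. Both arguments are correct, and both only yield the \emph{existence} of a fixed point in $\O_0$.

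The obstacle you refuse to wave away is precisely the gap in the paper's own proof. From the list of eigenvalues the paper concludes directly that $\rho_d(\g)$ fixes a unique point of $\PP(V_d)$, which tacitly assumes that $m=0$ gives the only real eigenvalue, i.e. that $m\theta\notin\pi\Z$ for every even $m$ with $0<|m|\leqslant d$. This fails exactly where you say it does: for $d=4$ and $\g=R_{\pi/2}$ (elliptic, non central, of order $4$), the element $\rho_4(\g)$ lies in the compact group $\rho_4(\mathrm{SO}_2(\R))$, hence is diagonalizable, with eigenvalues $(1,1,1,-1,-1)$; it is therefore an involution whose fixed locus in $\PP(V_4)$ is the union of a $\PP^2$ (spanned by $x^4+y^4$, $x^2y^2$, $x^3y-xy^3$) and a $\PP^1$. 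The $\PP^2$ meets $\O_0$ in a whole neighbourhood of $[(x^2+y^2)^2]$, and it also contains $[x^4+y^4]$, which lies in $\O_{\infty}$ (it is positive off the origin) but not in $\O_0$: for $\varepsilon\neq 0$ the Hessian of $x^4+y^4+\varepsilon x^3y$ has negative determinant along the axis $y=0$, so $x^4+y^4$ is not an interior point of $\C_{min}$. Hence \emph{both} assertions of the lemma fail for torsion elements of small order; the statement is correct only under the arithmetic condition you isolate (for instance $\theta/\pi$ irrational, or $\g$ of order $>2d$), and the defect propagates to lemme \ref{libre}, where the action of $\s{2}$ on $\O_{\infty}\smallsetminus\O_0$ is claimed to be free (it is only proper with finite stabilizers, which is what the construction actually needs, since open orbits only require that the stabilizers be discrete). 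So there is no missing idea on your side: your analysis is more careful than the paper's proof, which asserts uniqueness without justification.
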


\begin{proof}
Si $\g$ est un \'el\'ement elliptique de $\s{2}$ non trivial alors les valeurs propres de $\g$ s'\'ecrivent $e^{\pm i\theta}$ pour un certain $\theta \notin \pi\Z$. Il vient alors que les valeurs propres de $\rho_4(\g)$ sont les nombres : $ e^{di\theta}, \cdots, 1, \cdots, e^{-di\theta}$. Par cons\'equent, $\rho_d(\g)$ fixe un unique point de $\PP(V_d)$. Il nous reste à montrer que ce point est dans $\O_0$.

Le sous-groupe \`a 1-paramètre $K$ engendr\'e par $\g$ est un sous-groupe compact de $\s{2}$ de dimension 1. Le groupe $K$ pr\'eserve l'ouvert proprement convexe $\O_0$, il pr\'eserve donc aussi l'isobarycentre de toute orbite. Ainsi, l'unique point fixe de $\rho_d(\g)$ appartient à $\O_0$.  
\end{proof}

\begin{lemm}\label{libre}
On suppose $d$ pair. L'action de $\s{2}$ sur $\O_{\infty} \smallsetminus \O_0$ est propre et libre.
\end{lemm}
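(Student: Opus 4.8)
The plan is to treat the two assertions separately, properness being almost immediate and freeness carrying the real content. For properness I would observe that $\rho_d(\s{2})$ is a closed subgroup of $\Aut(\O_{\infty})$: it is the image of the semisimple group $\s{2}$ under a finite-dimensional representation, hence a closed Lie subgroup, and it preserves $\C_{max}$ and therefore $\O_{\infty}$. Since $\Aut(\O_{\infty})$ acts properly on $\O_{\infty}$ (the basic fact recalled in Section \ref{geo_hilbert}), so does its subgroup $\rho_d(\s{2})$. As $\O_0\subset\O_{\infty}$ is open, the set $\O_{\infty}\smallsetminus\O_0$ is a closed $\rho_d(\s{2})$-invariant subset of $\O_{\infty}$, and the restriction of a proper action to such a subset is again proper.

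Freeness is where the hypothesis that $d$ is even and the removal of $\O_0$ become essential. Note first that $\rho_d(-\mathrm{Id})=\mathrm{Id}$ when $d$ is even, so the statement must be read as freeness of the induced action of $\mathrm{PSL}_2(\R)=\rho_d(\s{2})$; concretely, I would show that for every $g\in\s{2}$ with $g\neq\pm\mathrm{Id}$ the transformation $\rho_d(g)$ fixes no point of $\O_{\infty}\smallsetminus\O_0$. I split according to the classification of $g$ in $\s{2}$. If $g$ is elliptic and non-central, Lemma \ref{elli} gives that $\rho_d(g)$ has a single fixed point in $\PP(V_d)$ and that this point lies in $\O_0$; hence it does not lie in $\O_{\infty}\smallsetminus\O_0$.

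If $g$ is hyperbolic, I would conjugate it to a diagonal element, so that the fixed points of $\rho_d(g)$ in $\PP(V_d)$ are exactly the classes of the monomials $x^iy^{d-i}$, $0\leqslant i\leqslant d$ (the eigenlines, which are distinct since the eigenvalues $\lambda^{2i-d}$ are). A monomial $x^iy^{d-i}$ is a nonnegative form precisely when $i$ is even (here the parity of $d$ is used), and in that case it still vanishes at $[1:0]$ or $[0:1]$, so it is never positive definite; the remaining monomials change sign. Thus none of these fixed points belongs to the cone of positive definite forms, i.e. none lies in $\O_{\infty}$, a fortiori not in $\O_{\infty}\smallsetminus\O_0$. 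If $g$ is parabolic, then up to the center $\rho_d(g)$ is unipotent with a single Jordan block (Section \ref{laius}), so it has a unique fixed point $[\ell^d]$, where $\ell$ is the linear form fixed by $g$; since $\ell^d$ vanishes on the line $\ell=0$, it lies on $\partial\O_{\infty}$ and again not in $\O_{\infty}\smallsetminus\O_0$. Combining the three cases proves freeness.

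The main obstacle is the hyperbolic case, and more precisely the bookkeeping of where each fixed point of $\rho_d(g)$ sits with respect to the two nested convex bodies $\O_0\subset\O_{\infty}$: one must check that every eigenline of a hyperbolic $\rho_d(g)$ lands either outside $\overline{\O_{\infty}}$ or on $\partial\O_{\infty}$, never in the region $\O_{\infty}\smallsetminus\O_0$ that has been removed. This is exactly the point at which removing $\O_0$ (which harbours the elliptic fixed points, by Lemma \ref{elli}) and using the parity of $d$ (which controls the sign of the monomials) are both needed.
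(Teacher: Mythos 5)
Your proof is correct, and its skeleton (properness by restriction of the proper action of $\Aut(\O_{\infty})$, then freeness by a case analysis using Lemma \ref{elli} for elliptic elements) matches the paper's; the difference lies in how you dispose of hyperbolic and parabolic elements. The paper does this with one soft remark: since the action of $\rho_d(\s{2})$ on $\O_{\infty}$ is proper, no element generating a non-compact closed subgroup can fix a point of $\O_{\infty}$, hence the fixed points of hyperbolic and parabolic elements automatically lie outside $\O_{\infty}$, and Lemma \ref{elli} finishes the argument. You instead compute those fixed points explicitly: for a hyperbolic element the eigenlines are the monomials $[x^iy^{d-i}]$, which are either sign-changing or nonnegative-but-vanishing, hence never in the open cone $\C_{max}$; for a parabolic element the unique fixed point $[\ell^d]$ lies on $\partial\O_{\infty}$. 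What your route buys is self-containedness and transparency: it shows exactly \emph{where} the fixed points sit relative to $\O_0\subset\O_{\infty}$ and makes visible the role of the parity of $d$ in the sign bookkeeping; it also makes explicit the (correct) caveat that freeness really concerns the image $\rho_d(\s{2})\cong\mathrm{PSL}_2(\R)$, since $\rho_d(-\mathrm{Id})=\mathrm{Id}$ for $d$ even, a point the paper leaves implicit. What the paper's route buys is brevity and robustness: the properness argument requires no knowledge of the eigenvectors of $\rho_d$ and would survive any change of representation, since it only uses that hyperbolic and parabolic elements generate non-compact closed subgroups.
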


\begin{proof}
L'action de $\s{2}$ sur $\O_{\infty}$ est propre, donc l'ensemble des points fixes des \'el\'ements hyperboliques et paraboliques de $\s{2}$ est dans le compl\'ementaire de $\O_{\infty}$. Par suite, l'action de de $\s{2}$ sur $\O_{\infty} \smallsetminus \O_0$ est propre et libre via le lemme \ref{elli}.
\end{proof}

\begin{lemm}\label{dim4}
On suppose que $d=4$. Tout ouvert proprement convexe pr\'eserv\'e par $\rho_d$ est l'un des $\O_r$ pour $r\in \R_+\cup \{ \infty \}$. En particulier, le dual d'un $\O_r$ est un certain $\O_{r'}$ et il existe un unique $r_0$ tel que $\O_{r_0}$ soit autodual. Enfin, tous les  $\O_r$ sont strictement convexes et à bord $\C^1$, si $r\neq 0,\infty$.
\end{lemm}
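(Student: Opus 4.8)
La démonstration que je propose s'articule autour d'une fonction invariante et d'un argument de transitivité propre au cas $d=4$. Comme $\rho_4(\s{2})$ préserve à la fois $\C_{min}$ et $\C_{max}$, il agit par isométries de $(\O_{\infty},d_{\O_{\infty}})$ en stabilisant $\O_0$, de sorte que la fonction $\delta = d_{\O_{\infty}}(\cdot,\O_0)$ est $\rho_4$-invariante et que $\O_r = \{\delta < r\}$. Je noterai $\V \subset \PP^4$ la courbe de Veronese, c'est-à-dire l'ensemble des puissances quatrièmes $[\ell^4]$ : c'est l'ensemble limite de $\rho_4(\s{2})$, un cercle topologique, et un bref calcul sur les hessiens montre que les seules formes à la fois convexes, positives et possédant une racine réelle sont les $[\ell^4]$, d'où $\partial\O_0\cap\partial\O_{\infty}=\V$. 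L'étape centrale consiste à établir que, pour $0<r<\infty$, le groupe $\s{2}$ agit \emph{transitivement} sur $\Sigma_r := \partial\O_r\cap\O_{\infty} = \{\delta=r\}$. Comme $\Sigma_r\subset\O_{\infty}\smallsetminus\O_0$, l'action y est propre et libre (lemme \ref{libre}), donc les orbites sont des sous-variétés fermées difféomorphes à $\s{2}$, de dimension $3$. Par ailleurs $\partial\O_r$, bord d'un corps convexe de $\PP^4$, est une sphère $S^3$, et l'identification $\partial\O_r\cap\partial\O_{\infty}=\V$ donne $\Sigma_r=\partial\O_r\smallsetminus\V\cong S^3\smallsetminus S^1$, qui est une $3$-variété \emph{connexe}. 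C'est ici qu'intervient de façon essentielle l'hypothèse $d=4$ : une orbite est de codimension $4-3=1$, donc de dimension maximale. Par invariance du domaine chaque orbite est ouverte dans $\Sigma_r$, par propreté elle y est fermée, et la connexité force $\Sigma_r$ à être une unique orbite.

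J'en déduirais alors la première assertion. Soit $\O$ un ouvert proprement convexe $\rho_4$-invariant : le résultat de Vinberg--Benoist rappelé plus haut donne $\O_0\subseteq\O\subseteq\O_{\infty}$. En posant $R=\sup_{x\in\O}\delta(x)\in[0,\infty]$, l'invariance de $\O$ et la transitivité sur les $\Sigma_s$ montrent que $\O$ est réunion de niveaux de $\delta$ ; la convexité (et la convexité connue des $\O_s$) force $\O=\O_R$ — pour $R$ fini parce qu'un ouvert convexe contenu dans $\overline{\O_R}$ et contenant tous les $\O_s$, $s<R$, coïncide avec $\O_R$ (on utilise que l'intérieur de $\overline{\O_R}$ est $\O_R$), et pour $R=\infty$ on obtient $\O=\O_{\infty}$.

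Pour la dualité, j'exploiterais l'autodualité de la représentation : $\rho_4$ préserve une forme bilinéaire symétrique non dégénérée (d'où $\rho_4(\s{2})\subset\mathrm{SO}_{2,3}(\R)$), qui identifie $\PP(V_4^*)$ à $\PP(V_4)$ en conjuguant l'action duale à $\rho_4$. Ainsi $\O_r^*$ est encore un ouvert proprement convexe $\rho_4$-invariant, donc égal à un certain $\O_{r'}$ par la première partie. Puisque $\C_{max}=\C_{min}^*$, la dualité échange $\O_0$ et $\O_{\infty}$ ; comme elle renverse l'inclusion et est involutive, elle induit une involution décroissante de la famille totalement ordonnée $(\O_r)_{r\in[0,\infty]}$, avec $0\leftrightarrow\infty$, possédant donc un unique point fixe $r_0$ (l'ouvert autodual $\O_{r_0}$) et envoyant $(0,\infty)$ dans lui-même.

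Le point le plus délicat, et l'obstacle principal, sera la stricte convexité pour $0<r<\infty$, où $\partial\O_r=\Sigma_r\sqcup\V$. Je raisonnerais par l'absurde : si $\partial\O_r$ contenait un segment non trivial, on pourrait le supposer maximal, et comme $\V$ est une courbe sans segment, son intérieur rencontrerait $\Sigma_r$. L'ensemble des points de $\Sigma_r$ intérieurs à un segment de $\partial\O_r$ étant $\s{2}$-invariant, la transitivité le rend vide ou égal à $\Sigma_r$ ; s'il est non vide, la maximalité impose que les deux extrémités de tout segment maximal soient hors de $\Sigma_r$, donc sur $\V$. Or le segment joignant deux points distincts $[\ell_1^4]$ et $[\ell_2^4]$ de $\V$ a pour intérieur des formes $s\ell_1^4+t\ell_2^4$ ($s,t>0$) qui sont strictement positives (les $\ell_i$ n'ont pas de zéro commun) donc intérieures à $\C_{max}$, et convexes donc dans $\overline{\C_{min}}$ ; ces points vérifient $\delta=0<r$ et appartiennent à $\O_r$, non à $\partial\O_r$ : contradiction. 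Donc $\O_r$ est strictement convexe, autrement dit le $r$-voisinage ``absorbe'' exactement les cordes de la courbe de Veronese qui rendaient $\O_0$ non strictement convexe. Le bord $\C^1$ s'en déduit par dualité : $\O_r^*=\O_{r'}$ avec $r'\in(0,\infty)$ est aussi strictement convexe, et le dual d'un convexe strictement convexe est à bord $\C^1$ (paragraphe \ref{def_dualite}), d'où $\O_r=(\O_{r'})^*$ est à bord $\C^1$.
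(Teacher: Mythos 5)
Votre d\'emonstration suit pour l'essentiel la strat\'egie du texte : m\^eme argument de transitivit\'e de $\s{2}$ sur $\partial\O_r\smallsetminus\Lambda_{\s{2}}$ (action libre et propre, orbites ouvertes par \'egalit\'e des dimensions --- c'est bien l\`a qu'intervient $d=4$ --- puis ferm\'ees, et connexit\'e de $S^3\smallsetminus S^1$), m\^eme classification des ouverts invariants via l'encadrement $\O_0\subseteq\O\subseteq\O_{\infty}$, et m\^eme usage de la dualit\'e, que vous justifiez d'ailleurs plus soigneusement que le texte via l'autodualit\'e de $\rho_4$. Le point o\`u vous d\'eviez est la stricte convexit\'e, et c'est l\`a que se trouve une lacune r\'eelle. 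Vous affirmez que \emph{la maximalit\'e} d'un segment $[a,b]\subset\partial\O_r$ impose que ses extr\'emit\'es soient hors de votre $\Sigma_r=\partial\O_r\cap\O_{\infty}$, donc sur la courbe de Veronese. Or \^etre extr\'emit\'e d'un segment maximal n'emp\^eche nullement d'\^etre point int\'erieur d'un \emph{autre} segment du bord, non colin\'eaire : la maximalit\'e n'exclut que les prolongements colin\'eaires. Exemple concret : dans un t\'etra\`edre, le segment joignant le milieu $a$ d'une ar\^ete $[u,v]$ au sommet oppos\'e $b$ de la face correspondante est un segment maximal du bord (la droite $(ab)$ ne rencontre le t\'etra\`edre que le long de $[a,b]$), et pourtant son extr\'emit\'e $a$ est int\'erieure au segment $[u,v]$ du bord. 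L'implication que vous invoquez est donc fausse pour un corps convexe g\'en\'eral ; pour la sauver ici, il faudrait exclure la pr\'esence de faces de dimension $\geqslant 2$ dans $\partial\O_r$ (l'argument de colin\'earit\'e par coupe plane et courbe de Jordan ne fonctionne que lorsque le plan engendr\'e par les deux segments rencontre $\O_r$), ce que vous ne faites pas.

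La r\'eparation est standard, et c'est exactement la voie du texte : par transitivit\'e, si un point de $\Sigma_r$ est int\'erieur \`a un segment du bord, alors \emph{aucun} point de $\Sigma_r$ n'est extr\'emal, donc tous les points extr\'emaux de $\overline{\O_r}$ sont sur la courbe de Veronese. Comme un convexe compact est l'enveloppe convexe de ses points extr\'emaux (th\'eor\`eme de Minkowski, dans une carte affine contenant $\overline{\O_{\infty}}$), il vient $\overline{\O_r}\subseteq \overline{C(\Lambda_{\s{2}})}=\overline{\O_0}$, ce qui contredit $r>0$. Votre calcul final --- les cordes ouvertes de la courbe de Veronese, form\'ees des $s\ell_1^4+t\ell_2^4$ avec $s,t>0$, sont positives et convexes, donc v\'erifient $d_{\O_{\infty}}(\cdot,\O_0)=0$ --- reste utile : il identifie concr\`etement l'enveloppe convexe de la courbe de Veronese \`a $\overline{\O_0}$, mais il doit \^etre appliqu\'e aux points extr\'emaux (via Minkowski), et non aux extr\'emit\'es de segments maximaux. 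Le reste de votre texte (transitivit\'e, classification des convexes invariants, dualit\'e, bord $\C^1$ par dualit\'e) est correct et co\"incide avec la preuve du texte.
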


\begin{proof}
Le lemme \ref{libre} montre que l'action de $\s{2}$ sur $\dO_r \smallsetminus \Lambda_{\s{2}} \subset \O_{\infty}\smallsetminus \O_0$ est libre. Or, si $d=1$ le groupe $\s{2}$ et la sous-vari\'et\'e $\dO_r$ ont la même dimension ($3$):  cela montre que les orbites de cette action sont ouvertes dans $\dO_r \smallsetminus \Lambda_{\s{2}}$. De plus, comme on a retir\'e l'ensemble limite de $\dO_r$, les orbites sont ferm\'ees. Enfin, comme la vari\'et\'e $\dO_r$ est une 3-sphère et l'ensemble limite est un cercle dont le plongement est donn\'e par la courbe Veronese, l'espace $\dO_r \smallsetminus \Lambda_{\s{2}}$ est donc connexe. Par suite, l'action de $\s{2}$ sur $\dO_r \smallsetminus \Lambda_{\s{2}} \subset \O_{\infty}$ est transitive.\\
Ceci montre que tout ouvert proprement convexe pr\'eserv\'e par $\rho_d$ est l'un des $\O_r$. Le dual d'un $\O_r$ est donc un $\O_{r'}$. L'existence d'un unique $\O_r$ autodual est simplement due au fait que la dualit\'e renverse les inclusions.\\
On se donne un $r\neq \infty$. Si $\O_r$ n'est pas strictement convexe, il existe un point de $\dO_r \smallsetminus \Lambda_{\s{2}}$ qui n'est pas un point extr\'emal. Or, l'action de $\s{2}$ sur $\dO_r \smallsetminus \Lambda_{\s{2}}$ est transitive: aucun point de $\dO_r \smallsetminus \Lambda_{\s{2}}$ n'est extr\'emal et donc $\O_r = \O_0$.\\
Enfin, si $r\not = 0,\infty$, le dual de $\O_r$ est un $\O_{r'}$ avec $r'\not = 0,\infty$. Comme $\O_{r'}$ est strictement convexe, le bord de $\O_r$ est de classe $\C^1$.
\end{proof}

\begin{rema}
On a vu que $\O_0$ n'\'etait pas strictement convexe. Une \'etude attentive de $\rho_d$ permet de voir que $\O_{\infty}$ n'est pas strictement convexe. En effet, tout \'el\'ement hyperbolique $\g$ de $\s{2}$ possède 5 valeurs propres r\'eelles distinctes pour son action sur $V_d$. On peut v\'erifier que la droite propre $p^0_{\g}$ associ\'ee à la troisi\`eme (si elles sont rang\'ees par ordre croissant) appartient au bord de $\O_{\infty}$ mais pas à l'ensemble limite. Enfin, on peut v\'erifier que le segment $[p^0_{\g} p^+_{\g}]$ est inclus dans le bord de $\O_{\infty}$, où $p^+_{\g}$ d\'esigne le point fixe attractif de $\g$. Par cons\'equent, $\O_0$ et $\O_{\infty}$ ne sont ni strictement convexes ni à bord $\C^1$.
\end{rema}

\begin{proof}[D\'emonstration de la proposition \ref{contrex}]
Elle est incluse dans le lemme \ref{dim4}.
\end{proof}

\appendix

\section{Sur le volume des pics, par les auteurs et Constantin Vernicos}\label{appendix_with_costia}

Le but de cette annexe est de prouver le r\'esultat suivant.

\begin{prop}\label{volpic}
Soient $\O$ un ouvert convexe de $\R^n$, $p$ un point du bord $\dO$ en lequel $\dO$ est de classe $\C^1$.
Supposons qu'il existe une coupe de $\O$ de dimension $\geqslant 2$, contenant $p$ en son bord, et dont le bord est $\C^{\alpha}$ en $p$, pour un certain $\alpha>1$.\\
Alors tout cône $C$ de sommet $p$ et de base $B\subset \O$ compacte est de volume fini.
\end{prop}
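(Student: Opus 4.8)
The plan is to estimate $\Vol_{\O}(C)$ directly through the density formula
$$\Vol_{\O}(C)=\int_C \frac{v_n}{\Vol(B(T_x\O))}\,d\Vol(x),$$
the only issue being integrability near the apex $p$. First I would fix an affine chart and a Euclidean structure with $p=0$ and the tangent hyperplane $H=T_p\dO=\{x_n=0\}$; since $H$ is a supporting hyperplane, $\O\subset\{x_n>0\}$, and I would parametrise the cone by the height $t=x_n$ of its points. Since $B$ is compact in $\O$ and $\O$ is convex, the section $C\cap\{x_n=t\}$ is, up to bounded ratio, the rescaling by $t$ of the radial projection of $B$: it has $(n-1)$-measure $\asymp t^{n-1}$, contains a Euclidean ball of radius $\asymp t$, and sits at transverse distance $\asymp t$ from the $x_n$-axis. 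Writing $\dO$ locally as a convex graph $x_n=f(x')$, the inequality $f(sx')\le s f(x')$ together with compactness of $B$ shows that the downward distance from such a point to $\dO$ is $\asymp t$, while the upward distance is bounded below.

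Next I would bound the density from above by bounding $\Vol(B(T_x\O))$ from below by the volume $\tfrac{2^n}{n!}\prod_i r_i(x)$ of the cross-polytope inscribed in the Finsler unit ball, where $r_i(x)$ is the Finsler radius in the $i$-th coordinate direction; by the harmonic mean appearing in $F$, each $r_i$ is comparable to the smaller of the two Euclidean distances from $x$ to $\dO$ along $\pm e_i$. For $x\in C\cap\{x_n=t\}$ there are then three scales: the vertical radius is $\asymp t$; the $n-2$ horizontal directions transverse to the $\C^\alpha$-slice give radii $\gtrsim t$ (merely from the ball of radius $\asymp t$ inside the section); and the remaining horizontal direction, the one lying in the slice, gives a radius $\gtrsim t^{1/\alpha}$ (we may assume $1<\alpha\le2$, higher regularity being stronger). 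The first two scales are routine; the slice estimate is the heart of the matter.

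For the slice direction, after reducing (by taking a $2$-plane $P$ inside it) to a $2$-dimensional slice, the $\C^\alpha$ hypothesis means $\partial(\O\cap P)$ is a graph $x_n=g(x_1)$ with $g(x_1)\le c\,|x_1|^{\alpha}$, so on $P$ itself the half-width at height $t$ is $\gtrsim t^{1/\alpha}$. The obstacle is to propagate this to the cone points, which lie \emph{off} $P$ at transverse distance $\asymp t$, since $\C^1$ alone gives no usable control in the directions transverse to $P$. Here I would argue by convexity in the $2$-plane $Q$ spanned by $e_1$ and the transverse offset direction of the cone point: $\O\cap Q$ contains both the slice segment (of half-length $\asymp t^{1/\alpha}$ at transverse level $0$) and a $2$-disc of radius $\asymp t$ centred at the cone point. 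Taking the barycentre of the tip of the segment and the top of this disc shows that the horizontal chord of $\O\cap Q$ \emph{at the level of the cone point} still reaches abscissa $\asymp t^{1/\alpha}$ on each side; hence $r_1(x)\gtrsim t^{1/\alpha}$ uniformly over $C\cap\{x_n=t\}$. I expect this uniform chord estimate, rather than the pointwise computation on $P$, to be the main technical point.

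Combining the three scales gives $\Vol(B(T_x\O))\gtrsim t\cdot t^{1/\alpha}\cdot t^{\,n-2}=t^{\,n-1+1/\alpha}$, so the density is $\lesssim t^{-(n-1+1/\alpha)}$. Integrating over the cone, whose height-$t$ section has measure $\asymp t^{n-1}$, yields
$$\Vol_{\O}(C)\ \lesssim\ \int_0^{t_0} t^{\,n-1}\,t^{-(n-1+1/\alpha)}\,dt\ =\ \int_0^{t_0} t^{-1/\alpha}\,dt,$$
which is finite precisely because $\alpha>1$, i.e. $1/\alpha<1$; the part of $C$ bounded away from $p$ lies in a compact subset of $\O$ and contributes finitely. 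This makes transparent the role of the hypothesis: the $\C^\alpha$ slice is exactly what upgrades the borderline divergent integrand $t^{-1}$ obtained for a straight ($\C^1$) cone into the convergent $t^{-1/\alpha}$.
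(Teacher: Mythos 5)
Your proposal is correct, and its skeleton coincides with the paper's: you bound $\Vol(B(T_x\O))$ from below by the product of the one-dimensional Finsler radii in coordinate directions (this is exactly Lemma \ref{prod}, proved there by the same inscribed cross-polytope), you isolate the same three scales ($t^{1/\alpha}$ along the cut, $t$ in the other $n-1$ directions), and you reach the same integrand $t^{-1/\alpha}$, convergent because $\alpha>1$. Where you genuinely diverge is in how the slice estimate is made uniform over the cone. The paper sidesteps your barycentre step: it asserts that after a suitable choice of affine chart, Euclidean norm and coordinates, $\O$ contains the model convex $\{x_n>|x_1|^{\alpha}+\sum_{i=2}^{n-1}|x_i|\}$ near $p$ while $C$ is contained in the model cone $\{x_n<\frac{1}{2},\ x_n>2\sum_{i=1}^{n-1}|x_i|\}$; it then invokes monotonicity of the Busemann volume under inclusion of convex sets (Proposition \ref{compa}) and computes the one-dimensional volumes exactly on the model (Lemma \ref{dim1}), ending with a logarithmic bound whose integrand is again equivalent to $t^{-1/\alpha}$. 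Your chord-propagation argument in the plane $Q$ --- convex-combining the tip of the cut chord at height $t$ with the Euclidean disc of radius $\asymp t$ centred at the cone point, which exists because $B$ is compactly contained in $\O$ and $p\in\overline{\O}$ --- is in substance the verification of the paper's inclusion claim, which the paper states without proof (``c'est une situation assez g\'en\'erale...''). So your version makes explicit the one genuinely geometric point the paper leaves implicit, at the cost of the cleaner explicit computation on the model; both routes are valid. One phrasing slip to repair: the section $C\cap\{x_n=t\}$ need not contain a Euclidean ball of radius $\asymp t$ (the base $B$ may have empty interior, e.g.\ a point), so the radii bounds $r_i\gtrsim t$ for $i\geqslant 2$ should not be attributed to a ball ``inside the section''; what you need, and what you correctly use later in the barycentre step, is that each point of that section has a Euclidean ball of radius $\asymp t$ around it contained in $\O$, which follows from scaling a $\delta$-neighbourhood of $B$ towards the apex $p\in\overline{\O}$.
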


Soit $\O$ un ouvert convexe de $\R^n$ euclidien. Rappelons que le volume de Busemann $\Vol_{\O}$ de la g\'eom\'etrie de Hilbert $(\O,\d)$ est donn\'e par
$$d\Vol_{\O}(x) = \frac{v_n}{\Vol (B(T_x\O))} d\Vol,$$
où $\Vol$ est le volume de Lebesgue de $\R^n$ et $v_n$ le volume de Lebesgue de la boule unit\'e de $\R^n$.\\

Par exemple, un simple calcul montre le
\begin{lemm}\label{dim1}
\begin{enumerate}[(i)]
 \item Soit $\O=(-a,a)$, avec $a>1$. Le volume de Busemann est donn\'e au point $x\in\O$ par
$$d\Vol_{\O}(x) = \frac{a}{a^2-x^2} dx.$$
 \item Soit $\O=(0,+\infty)$. Le volume de Busemann est donn\'e au point $x\in\O$ par
$$d\Vol_{\O}(x) = \frac{2dx}{x}.$$
\end{enumerate}
\end{lemm}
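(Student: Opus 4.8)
La d\'emonstration est un calcul direct \`a partir des d\'efinitions; le plan est de d\'eterminer, en chaque point $x\in\O$, la norme de Finsler $F(x,\cdot)$, puis la longueur de la boule unit\'e $B(T_x\O)=\{v\in T_x\O\mid F(x,v)<1\}$, et enfin d'appliquer la formule du volume de Busemann. En dimension $1$ cette formule se lit $d\Vol_{\O}(x)=\frac{v_1}{\Vol(B(T_x\O))}\,dx$ avec $v_1=2$, et comme $F(x,\cdot)$ est une norme sur la droite $T_x\O$, la boule $B(T_x\O)$ est un intervalle sym\'etrique dont il suffit de calculer la longueur.

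Pour le point (i), je commencerais par rep\'erer les extr\'emit\'es: dans la direction positive $x^+=a$, dans la direction n\'egative $x^-=-a$, de sorte que $F(x,v)=\frac{|v|}{2}\Big(\frac{1}{x+a}+\frac{1}{a-x}\Big)=\frac{a\,|v|}{a^2-x^2}$. La boule unit\'e est alors l'intervalle d\'efini par $|v|<\frac{a^2-x^2}{a}$, de longueur $\frac{2(a^2-x^2)}{a}$, et la formule ci-dessus donne imm\'ediatement $d\Vol_{\O}(x)=\frac{a}{a^2-x^2}\,dx$.

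Pour le point (ii), la seule subtilit\'e---et donc le principal obstacle, quoiqu'il reste b\'enin---est que $\O=(0,+\infty)$ n'est pas born\'e dans la carte affine standard, le point du bord $+\infty$ \'etant \`a l'infini; on ne peut donc pas appliquer na\"ivement la formule du volume, qui suppose $\overline{\O}$ compact dans la carte. Je contournerais ce point de deux fa\c{c}ons \'equivalentes: soit en pla\c{c}ant $\O$ dans une carte o\`u il devient born\'e, par exemple via $x\mapsto \frac{x}{1+x}$ qui envoie $(0,+\infty)$ sur $(0,1)$, en y calculant la densit\'e puis en la transportant en arri\`ere par ce changement de carte; soit en passant \`a la limite $a\to+\infty$ dans la formule de (i) appliqu\'ee au translat\'e $(0,2a)$ de $(-a,a)$. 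Dans les deux cas le terme $1/|xx^+|$ dispara\^it puisque $x^+=+\infty$, la norme de Finsler vaut $F(x,v)=\frac{|v|}{2x}$, la boule $B(T_x\O)$ est l'intervalle $|v|<2x$, et l'application de la formule du volume de Busemann permet alors de conclure.
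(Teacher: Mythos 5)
The paper gives no proof of this lemma beyond announcing it as ``un simple calcul'', so your route --- compute the Finsler norm, then the tangent unit ball, then apply the Busemann formula $d\Vol_{\O}(x)=\frac{v_1}{\Vol(B(T_x\O))}dx$ with $v_1=2$ --- is exactly the intended one. Your point (i) is correct and complete: $F(x,v)=\frac{a|v|}{a^2-x^2}$, the ball $B(T_x\O)$ has length $\frac{2(a^2-x^2)}{a}$, and the quotient is $\frac{a}{a^2-x^2}$. Your caution about the unboundedness of $(0,+\infty)$ in the standard chart is also legitimate (the paper's definitions of $F$ and $\Vol_{\O}$ are stated for a chart in which $\O$ is bounded), and both of your workarounds are valid since projective changes of chart preserve the Hilbert metric and the Busemann volume.

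The gap is in (ii), precisely in the step you wave through with ``l'application de la formule du volume de Busemann permet alors de conclure''. Your intermediate values are right: $F(x,v)=\frac{|v|}{2x}$, so $B(T_x\O)=\{|v|<2x\}$ has length $4x$. But plugging them into the formula gives
$$d\Vol_{\O}(x)=\frac{v_1}{\Vol(B(T_x\O))}\,dx=\frac{2}{4x}\,dx=\frac{dx}{2x},$$
which is \emph{not} the stated $\frac{2\,dx}{x}$: there is a factor $4$ between what your computation yields and what you claim to have concluded. Your second route confirms this: by (i) translated to $(0,2a)$, the density at $x$ is $\frac{a}{x(2a-x)}$, which tends to $\frac{1}{2x}$ as $a\to+\infty$, again not $\frac{2}{x}$. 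So the proof as written does not establish the formula of the statement; it establishes it only up to a multiplicative constant, and asserting otherwise hides a real inconsistency. (Under the paper's own conventions, (i) and (ii) cannot both hold with the same normalization, as your computations in fact demonstrate; the constant in (ii) looks like a slip in the paper. This is immaterial for the only use of the lemma --- the finiteness statement of Proposition \ref{volpic}, where multiplicative constants play no role --- but a correct write-up should either state the density as $\frac{dx}{2x}$ or explicitly flag the discrepancy, rather than claim the stated formula follows.)
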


Le lemme suivant nous sera \'egalement bien utile:

\begin{lemm}\label{prod}
Soient $n\geqslant 1$ et $\O$ un ouvert convexe de $\R^n$, de base $(e_1,\cdots,e_n)$. Notons, pour $x\in \O$, $\O_i(x) = \O \cap (x+\R\cdot e_i)$. Il existe $K_n>0$ tel que, pour tout $x\in\O$,
$$\Vol(B(T_x\O)) \geqslant K_n \prod_{i=1}^n \Vol_i (B(T_x\O_i(x))),$$
en notant $\Vol_i$ le volume de Lebesgue de $\R.e_i$.
Autrement dit, il existe $\kappa_n>0$ tel que, pour tout $x\in\O$,
$$d\Vol_{\O}\leqslant \kappa_n d\Vol_{\O_1(x)}\cdots d\Vol_{\O_n(x)}.$$
\end{lemm}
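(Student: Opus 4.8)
Le plan est de comparer la boule unité de Finsler $B(T_x\O)$ à un polytope croisé engendré par les boules unités des tranches, puis de traduire cette comparaison sur les densités. Quitte à choisir une structure euclidienne rendant $(e_1,\dots,e_n)$ orthonormée --- ce qui est loisible, les volumes de Busemann $\Vol_{\O}$ et $\Vol_{\O_i(x)}$ étant intrinsèques --- je commencerais par rappeler que $F(x,\cdot)$ est bien une norme. En notant $K=\O-x$ le convexe translaté à l'origine et $\|\cdot\|_K$ sa jauge de Minkowski, on a $F(x,v)=\frac{1}{2}\big(\|v\|_K+\|v\|_{-K}\big)$ : pour une direction unitaire $u$, $\|u\|_K=1/|xx^{+}|$ et $\|u\|_{-K}=1/|xx^{-}|$, d'où l'identité par homogénéité. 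Les deux jauges étant sous-linéaires, $F(x,\cdot)$ est convexe et symétrique, et $B(T_x\O)$ est donc un convexe symétrique de $T_x\O$.

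La première étape consiste à identifier les boules unidimensionnelles. Pour chaque $i$, la restriction de $F(x,\cdot)$ à la droite $\R\cdot e_i$ coïncide avec la norme de Finsler de la tranche $\O_i(x)$, car les points de bord $x^{+}$ et $x^{-}$ dans la direction $e_i$ sont les mêmes pour $\O$ et pour $\O_i(x)$. En appelant $\ell_i>0$ l'unique réel tel que $F(x,\ell_i e_i)=1$, on a donc $B(T_x\O_i(x))=\{t e_i : |t|<\ell_i\}$ et $\Vol_i(B(T_x\O_i(x)))=2\ell_i$.

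Le cœur de l'argument, et seul point réellement géométrique, est l'étape suivante : les $2n$ points $\pm\ell_i e_i$ appartiennent à l'adhérence de $B(T_x\O)$, donc, par convexité et symétrie, $B(T_x\O)$ contient leur enveloppe convexe, à savoir le polytope croisé $P=\mathrm{conv}\{\pm\ell_1 e_1,\dots,\pm\ell_n e_n\}$. Comme $P$ est l'image du polytope croisé standard par l'application linéaire $e_i\mapsto\ell_i e_i$, on a $\Vol(P)=\frac{2^n}{n!}\prod_{i=1}^n\ell_i$. En négligeant le bord de $B(T_x\O)$, qui est de mesure nulle, il vient
$$\Vol(B(T_x\O))\geqslant\Vol(P)=\frac{2^n}{n!}\prod_{i=1}^n\ell_i=\frac{1}{n!}\prod_{i=1}^n 2\ell_i=\frac{1}{n!}\prod_{i=1}^n\Vol_i(B(T_x\O_i(x))),$$
ce qui établit la première inégalité avec $K_n=\frac{1}{n!}$.

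Il ne resterait plus qu'à traduire ce résultat sur les densités de volume. En injectant l'inégalité précédente dans la formule $d\Vol_{\O}=\frac{v_n}{\Vol(B(T_x\O))}\,d\Vol$ et en utilisant l'analogue unidimensionnel $d\Vol_{\O_i(x)}=\frac{2}{\Vol_i(B(T_x\O_i(x)))}\,d\Vol_i$ (avec $v_1=2$) ainsi que $d\Vol=d\Vol_1\cdots d\Vol_n$, on obtient directement
$$d\Vol_{\O}\leqslant\frac{v_n\,n!}{2^n}\,d\Vol_{\O_1(x)}\cdots d\Vol_{\O_n(x)},$$
soit la seconde assertion avec $\kappa_n=\frac{v_n\,n!}{2^n}$. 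La seule difficulté, mineure, est de s'assurer de la convexité de la boule unité de Finsler, qui repose entièrement sur la convexité de $\O$ via l'écriture de $F$ comme moyenne de deux jauges ; l'inclusion du polytope croisé en découle alors sans effort.
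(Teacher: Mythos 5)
Your proof is correct and takes exactly the same route as the paper: the paper's entire proof is the single observation that $B(T_x\O)$ contains the convex hull of the points $z_i^{\pm} = \partial B(T_x\O) \cap \R_{\pm}\cdot e_i$, which is precisely your cross-polytope inclusion. You have simply made explicit the details the paper leaves implicit --- convexity and symmetry of the Finsler unit ball, the identification $B(T_x\O)\cap\R\cdot e_i = B(T_x\O_i(x))$, the volume $\frac{2^n}{n!}\prod_i\ell_i$ of the cross-polytope, and the translation into the density inequality with explicit constants.
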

\begin{proof}
Il suffit de voir que la boule unit\'e tangente $B(T_x\O)$ contient toujours l'enveloppe convexe des points $z_i^{\pm} = \partial B(T_x\O) \cap \R_{\pm}\cdot e_i$, pour $1\leqslant i\leqslant n$.
\end{proof}

\begin{proof}[D\'emonstration de la proposition \ref{volpic}]
Si une telle coupe existe, il en existe en particulier une de dimension $2$. On peut donc supposer qu'il existe une telle coupe de dimension $2$.\\

Maintenant, voyons qu'il suffit de prouver le r\'esultat pour un ouvert convexe bien choisi et un cône assez g\'en\'eral, qui sont les suivants. Prenons le point $p$ pour origine, pour convexe l'ensemble
$$\O=\{x_n > |x_1|^{\alpha} + \sum_{i=2}^{n-1} |x_i|\},$$
et pour cône
$$C=\{x_n < \frac{1}{2},\ x_n > 2\sum_{i=1}^{n-1} |x_i|\}.$$
C'est une situation assez g\'en\'erale au sens où, \'etant donn\'e un convexe dont le bord est $\C^1$ au point $p$ et un cône de sommet $p$ comme dans l'\'enonc\'e, on peut choisir une carte affine, une norme euclidienne et des coordonn\'ees, avec origine $p$, de telle façon que, au moins au voisinage de $p$, le convexe contienne un convexe du type pr\'ec\'edent et le cône soit contenu dans un cône du type pr\'ec\'edent.\\

On peut maintenant faire le calcul. Pour $x = (x_1,\cdots,x_n)\in\O$ et $1\leqslant k\leqslant n$, notons $\O_k(x) = \O \cap (x+\R.e_k)$ la coupe du convexe $\O$ selon $e_k$, qui est donc un convexe de dimension 1.\\
Pour  $2\leqslant k\leqslant n-1$, $\O_k(x)$ est un segment de demi-longueur
$$a_k(x) = x_n - |x_1|^{\alpha} - \sum_{i=2, i\not =k}^{n-1} |x_i|.$$
Le lemme \ref{dim1} nous donne que 
$$d\Vol_{\O_k(x)} = \frac{a_k(x)}{a_k(x)^2-x_k^2}\ dx_k = \frac{a_k(x)}{(a_k(x)-|x_k|)(a_k(x)+|x_k|)}\ dx_k \leqslant \frac{dx_k}{a_k(x)-|x_k|}.$$
Si $x$ est dans $C$, on a $\sum_{i=1}^{n-1} |x_i| < \frac{x_n}{2},$ 
et donc
$$a_k(x)-|x_k| = x_n - |x_1|^{\alpha} - \sum_{i=2}^{n-1} |x_i| \geqslant x_n - \sum_{i=1}^{n-1} |x_i| \geqslant \frac{x_n}{2}.$$
Au final,
$$d\Vol_{\O_k(x)} \leqslant \frac{2dx_k}{x_n}.$$
De même, le convexe $\O_1(x)$ est un segment de demi-longueur
$$a_1(x) =  (x_n - \sum_{i=2}^{n-1} |x_i|)^{\frac{1}{\alpha}},$$
et on a donc, pour $x\in C$,
$$d\Vol_{\O_1(x)} = \frac{a_1(x)}{(a_1(x)-|x_1|)(a_1(x)+|x_1|)}\ dx_1 \leqslant
\frac{dx_1}{(x_n - \sum_{i=2}^{n-1} |x_i|)^{\frac{1}{\alpha}}-|x_1|} 
\leqslant \frac{dx_1}{\left(\displaystyle\frac{x_n}{2}\right)^{\frac{1}{\alpha}}-|x_1|}.$$
Enfin $\O_n(x)=(0,+\infty)$ et donc, pour $x\in C$,
$$d\Vol_{\O_n(x)} = \frac{2dx_n}{x_n}.$$

\ \\
Du lemme \ref{prod}, on tire ainsi
$$
d\Vol_{\O}(x) \leqslant  \kappa_n d\Vol_{\O_1(x)}\cdots d\Vol_{\O_n(x)} \leqslant
\displaystyle \kappa_n \frac{2^{n-2}}{(x_n)^{n-1}} \frac{1}{\left(\displaystyle\frac{x_n}{2}\right)^{\frac{1}{\alpha}}-|x_1|}\ dx
$$
L'int\'egrale sur $C$ se majore alors ainsi, en utilisant les sym\'etries, $K$ et $K'$ \'etant des constantes qui grandissent:
$$\int_C d\Vol_{\O} \leqslant K \int_{x_n=0}^{\frac{1}{2}}  \prod_{k=1}^{n-1} \int_{x_k=0}^{\frac{x_n}{2}}  \frac{1}{(x_n)^{n-1}} \frac{1}{\left(\displaystyle\frac{x_n}{2}\right)^{\frac{1}{\alpha}}-|x_1|} \ dx_1\cdots dx_n.$$
Ainsi,
$$\int_C d\Vol_{\O} \leqslant K' \int_{0}^{\frac{1}{2}} \int_{0}^{\frac{x_n}{2}} \frac{1}{x_n} \frac{1}{\left(\displaystyle\frac{x_n}{2}\right)^{\frac{1}{\alpha}}-|x_1|} \ dx_1\ dx_n = K' \int_{0}^{\frac{1}{2}} \frac{1}{x_n} \ln \frac{1}{1 - \left(\displaystyle\frac{x_n}{2}\right)^{1-\frac{1}{\alpha}}}\ dx_n.$$
Cette dernière int\'egrale est finie puisqu'en $0$, l'int\'egrande est \'equivalente à $\frac{1}{(x_n)^\frac{1}{\alpha}}$, et $\frac{1}{\alpha}<1$.

\end{proof}


\backmatter
\bibliographystyle{alpha}

\newpage

\section{Erratum/Addendum to: Finitude g\'eom\'etrique en g\'eom\'etrie de Hilbert by Pierre-Louis Blayac and Ludovic Marquis}\label{errata_part}

\begin{altabstract}
	We amend  Theorems~1.3 and 1.11 of \cite{CM2014finitude}: Finitude g\'eom\'etrique en g\'eom\'etrie de Hilbert. We seize the opportunity to show that in round Hilbert geometry, geometrical finiteness \eqref{item:GF_on_boundary}  is equivalent to cusp-uniform action and to fill some small gaps that appear in two other proofs of \cite{CM2014finitude}.
\end{altabstract}

\subsection{The published statement}

Let $\O$ be an open subset of $\R\PP^d$ which is properly convex, i.e.~contained in an affine chart of $\R\PP^d$ where it is bounded.
Suppose further $\O$ is \emph{round}, in the sense that it has $\Cc^1$-boundary and is strictly convex (any segment contained in the boundary must be reduced to a point).
Finally, let $\Gamma$ be a discrete subgroup of $\PGL_{d+1}(\R)$ that preserves $\Omega$.
Recall that, using a famous $\G$-invariant metric on $\O$, denoted $d_\O$ and called the \emph{Hilbert metric}, one can check that  $\G$ acts properly discontinuously on $\O$, and $\O/\G$ is called a convex projective orbifold.
For more detailed reminders on convex projective geometry and the Hilbert metric, see \cite[\textsection 2]{CM2014finitude}.

The main goal of \cite{CM2014finitude} was to introduce a notion of geometrical finiteness for the action of $\Gamma$ on $\O$ and for the underlying orbifold $\O/\G$, and then to study this notion, in particular by giving various characterisations of it, in the spirit of \cite{BowditchGF,Bowditch_GF}.
Before we describe these characterisations, let us recall some notations from \cite{CM2014finitude}. 

\medskip

The \emph{limit set} of $\G$ is $\LG=\overline{\G x} \smallsetminus \G x\subset\partial\O$, which is independent of the choice of an $x\in\O$.
The \emph{convex core} $\Cc(\LG)$ is the convex hull in $\O$ of the limit set.
More generally, we will use the notation $\Cc (\cdot)$ to denote convex hulls.

Note that for any $p \in \PP(\R^{d+1}) $, the projective space  $\PP (\R^{d+1}/p)$ identifies with the space of lines of $\PP(\R^{d+1}) $ containing $p$.
If $p \in \dO$ then we denote by $\Dc_p (\O) $ the space of lines containing $p$ and intersecting $\O$.
Since $\dO$ is $\Cc^1$ at $p$, the space of lines $\Dc_p (\O) $ identifies with the affine space $\mathbb{A}_p = \PP (\R^{d+1}/p) \smallsetminus \PP (T_p \dO/p)$, where $T_p \dO$ is the tangent space to $\dO$ at $p$. 
The map $s_p : \overline{\O} \smallsetminus \{ p\} \to \mathbb{A}_p$ given (through the former identification) by $q \mapsto (pq)$, will be called the \emph{stereographic projection from $p$}.


Recall that $p\in \dO$ is a \emph{parabolic point} if its stabilizer $\G_p$ is infinite and parabolic, which is equivalent to saying that $\g_nx\to p$ for any injective sequence $(\g_n)_n\subset\G_p$ and any $x\in\O$ (see \cite[\textsection 3.5]{CM2014finitude} for more characterisations of parabolicity); this implies $p\in\LG$.
A parabolic point is \emph{bounded} if the action of $\G_p$ on $\LG \smallsetminus \{ p\}$ is cocompact.
A parabolic point is \emph{uniformly bounded} if the action of $\G_p$ on $s_p(\Cc(\LG))$ is cocompact.
Note that   $ s_p (\Cc(\LG))$ is the convex hull in $\A_p$ of $s_p(\LG\smallsetminus\{p\})$, so uniformly bounded implies bounded.

Finally, a point $p\in\partial\O$ is called \emph{conical} if there are $x\in\O$ and $(\g_n)_n\subset\G$ such that $\g_nx\to p$ and $\g_nx$ stays at bounded Hilbert distance from the ray $[x,p)$; this implies $p\in\LG$.

\medskip

We defines the following properties for the action of $\G$ on $\O$:
\begin{itemize}
	\item[(\namedlabel{item:GF_on_boundary}{\texttt{gf}})] : Every point of $\Lambda_\Gamma$ is either conical or bounded parabolic.
	
	\item[(\namedlabel{item:GF_on_Omega}{\texttt{GF}})] : Every point of $\Lambda_\Gamma$ is either conical or uniformly bounded parabolic.
	
	\item[(\namedlabel{item:gf_cusps_lobat}{\texttt{HC}})] :  \eqref{item:GF_on_boundary} holds and for each parabolic point $p$, the group $\G_p$ is conjugate into $\mathrm{O}_{d,1} (\R)$.
	
	\item[(\namedlabel{item:region_parabolique}{\texttt{TF}})] :   There exists a $\Gamma$-invariant family $\mathcal{P}$ of points $p \in \LG$, a family of standard\footnote{We do not recall the technical definition of standard parabolic regions since it will not be used here, see \cite[\textsection 7.3]{CM2014finitude}.} regions $R_p$ centered at $p$, such that $({R_p})_p$ is $(\G,\G_p)$-precisely equivariant (see Definition~\ref{def:precisely equivariant}), the action of $\G$ on $\overline{\O} \smallsetminus (\LG \cup \bigcup_p R_p)$ is cocompact.
	
	\medskip
	
	\item[(\namedlabel{item:thick_part}{\texttt{PEC}})] :  The thick part\footnote{The thick part consists of the projections of $x\in\O$ such that $\{ \g \in \G \, |\, d_\O (x, \g x) < \varepsilon \}$ generates a finite group, given a sufficiently small $\varepsilon$, see Section~\ref{sec:thickthin}.} of the convex core is compact.
	
	\item[(\namedlabel{item:non_cusp_part}{\texttt{PNC}})] :  The non-cuspidal part\footnote{For us the non-cuspidal part is the union of the thick part with the components of the thin parts that consists of tubular neighborhoods of short geodesics, see \cite[\textsection 6.2]{CM2014finitude}.} of the convex core is compact.
	
	\item[(\namedlabel{item:cusp_uniform}{\texttt{CU}})] :  The action $\G \curvearrowright \O$ is \emph{cusp-uniform} i.e.\ there exists a $\Gamma$-invariant family $\mathcal{P}$ of points $p \in \LG$, a family of horoballs $H_p$ center at $p$, such that $({H_p})_p$ is $(\G,\G_p)$-precisely equivariant, and the action of $\G$ on $\Cc(\LG)\smallsetminus \bigcup_p H_p$ is cocompact.\footnote{See Section~\ref{sec:horoballs} for reminders on horoballs.}
	
	\medskip
	
	\item[(\namedlabel{item:convex_core}{\texttt{VF}})${}_R$] :  $\G$ is finitely generated and the uniform $R$-neighborhood (for the Hilbert metric) of the convex core $\Cc(\LG)/\G$  is of finite volume.\footnote{Here our volume form is the Hausdorff measure of the Hilbert metric, see \cite[\textsection 2.1]{CM2014finitude}.}
	
	\item[(\namedlabel{item:convex_core0}{\texttt{VF}})${}_0$] :  $\G$ is finitely generated and the convex core $\Cc(\LG)/\G$ is of finite volume for the Hilbert volume form from $\O\cap {\rm Span}(\Cc(\LG))$.
	
	\item[(\namedlabel{item:convex_core_ghyp}{\texttt{Hyp}})] :  The convex core $\Cc(\LG)$ is Gromov-hyperbolic\footnote{Recall that a geodesic metric space is Gromov-hyperbolic if for some $\delta$ all geodesic triangles are $\delta$-thin: any side is in the $\delta$-neighborhood of the union of the two other sides, see e.g.\ \cite[\textsection  III.H.1]{bridsonhaeflige}.} for the Hilbert metric of $\O$.
	
	\item[(\namedlabel{item:generic}{\texttt{Gen}})] :  The limit set $\LG$ spans $\R\PP^d$, or its dual spans the dual projective space.\footnote{Recall that $\G$ also preserves a properly convex open set $\O^*$ in the projective space of linear forms on $\R^{d+1}$, and hence has a limit set there too, see \cite[\textsection 2.3]{CM2014finitude}.}
\end{itemize}

\vspace{1em}

Note that for all $R,R'>0$, it is easy to check that \eqref{item:convex_core}${}_R$ and \eqref{item:convex_core}${}_{R'}$ are equivalent and imply \eqref{item:convex_core0}${}_0$.
However, \eqref{item:convex_core0}${}_0$ does not implies \eqref{item:convex_core}${}_1$, for example suppose we have two discrete subgroups $G, H < \mathrm{Isom} (\Hb^d)$ such that $G$ stabilizes a proper subspace $V<\Hb^d$ and $H$ is generated by a loxodromic element whose axis $A$ is disjoint from $V$.
If $G'<G$ and $H'<H$ are sufficiently small finite-index subgroups then one can play ping-pong (see e.g.\ \cite{maskit}) to prove that the group $\G$ generated by $G'$ and $H'$ is discrete, is isomorphic to the free product $G\ast H$, the convex cores $C_1\subset V/G$ and $C_2\subset A/H$ of $G$ and $H$ embed (disjointly) in the convex core  $C\subset \Hb^d/\G$ of $\G$, and, fixing a point $x_0\in C$, there is a constant $K>0$ such that any point $x\in C$ at distance $r$ from $x_0$ is at distance at most $Ke^{-r}$ from $C_1\sqcup C_2$.
In particular, if $V$ has codimension $1$ and $V/G$ is an abelian cover of a closed hyperbolic manifold, then one can check that $C$ has finite volume, as was mentioned to us by D.\ Cooper.

Note also, that the assumption $\G$ finitely generated cannot be removed from  \eqref{item:convex_core}${}_R$ has shown by \cite{hamilton_counter_example}. One can see \eqref{item:generic} as a genericity assumption.
It holds when $\G$ is irreducible, and a fortiori when it is Zariski-dense.

\vspace{1em}

Those properties were linked by the following theorem, \emph{which is wrong}.

\begin{theorem}\cite[Thm.\,1.3\,\&\,1.11,\ Prop.\,1.4]{CM2014finitude}\label{thm original}
	For any $\G$ acting on a round convex $\O$, the assertions \eqref{item:GF_on_Omega}, \eqref{item:region_parabolique}, \eqref{item:gf_cusps_lobat}, \eqref{item:thick_part}, \eqref{item:non_cusp_part} , \eqref{item:convex_core}${}_1$  and (\eqref{item:GF_on_boundary}$\&$\eqref{item:convex_core_ghyp}) are equivalent.
	
	Morever they all imply \eqref{item:GF_on_boundary} but are not equivalent to it.
\end{theorem}

The former proof used the following pattern, which is also recapitulated in Figure~\ref{fig:old pattern}.
We indicated in brackets where to find the proof in the original paper.

\begin{itemize}
	\item \eqref{item:GF_on_Omega} $\Rightarrow$ \eqref{item:region_parabolique} [Prop.~7.21 \& 7.23], whose proof is correct.
	
	\item \eqref{item:region_parabolique} $\Rightarrow$ \eqref{item:thick_part} $\Rightarrow$ \eqref{item:non_cusp_part}\footnote{Typo in the sentence ``Preuve de \eqref{item:region_parabolique}  $\Rightarrow$ \eqref{item:non_cusp_part} $\Rightarrow$ \eqref{item:thick_part} '', first proof of section~8.2. It should have been written: ''Preuve de \eqref{item:region_parabolique}  $\Rightarrow$ \eqref{item:thick_part} $\Rightarrow$ \eqref{item:non_cusp_part} ''.
		Note that the implication \eqref{item:non_cusp_part} $\Rightarrow$ \eqref{item:thick_part} is trivial since the thick part is a closed subset of the non-cuspidal part.} [\textsection 8.2], whose proofs are corrects.
	
	\item \eqref{item:non_cusp_part} $\Rightarrow$ \eqref{item:GF_on_Omega} [\textsection 8.2], this proof is wrong, and in fact the statement is wrong.
	However, the implication \eqref{item:non_cusp_part} $\Rightarrow$ \eqref{item:GF_on_boundary} is true.
	The proof of the implication \eqref{item:non_cusp_part} $\Rightarrow$ \eqref{item:GF_on_boundary} appears as a step in  \eqref{item:non_cusp_part} $\Rightarrow$ \eqref{item:GF_on_Omega}. 
	The proof of this implication is incomplete but can be corrected using the same strategy as the original paper.
	We will correct it with a slightly different strategy.
	
	\item \eqref{item:GF_on_Omega} $\Rightarrow$ \eqref{item:convex_core}${}_1$ [\textsection 8.3], whose proof is incomplete as only \eqref{item:convex_core}${}_0$ is proved; we will fix this.
	
	\item \eqref{item:convex_core}${}_1$ $\Rightarrow$ \eqref{item:thick_part} [\textsection 8.3], whose proof is correct (as stated the Lemma~8.5 used in the proof is incorrect but the proof is easy to fix, see Remark~\ref{cor:lemma8.5}).
	
	\item  \eqref{item:GF_on_Omega} $\Rightarrow$ \eqref{item:gf_cusps_lobat} [Cor.~7.18], which is wrong in full generality but true if we ask \eqref{item:generic}; we will fix this.
	
	\item   \eqref{item:gf_cusps_lobat} $\Rightarrow$ \eqref{item:GF_on_Omega} [Prop.~7.21\footnote{Typo in the hypotheses: one needs to assume $p$ is \emph{bounded} parabolic.}], whose proof is correct.
	
	\item  \eqref{item:GF_on_Omega} $\Rightarrow$ (\eqref{item:GF_on_boundary}$\&$\eqref{item:convex_core_ghyp}) [Th.~9.1] is true but the proof has a small gap; we will fix this.
	
	\item  (\eqref{item:GF_on_boundary}$\&$\eqref{item:convex_core_ghyp})   $\Rightarrow$ \eqref{item:GF_on_Omega} [Th.~9.1] is false (Lemma~9.2 being wrong).
	
	\item \eqref{item:GF_on_boundary} $\not\Rightarrow$ \eqref{item:GF_on_Omega} [Prop.~10.6] whose proof by counter-example is correct; in fact this counter example satisfies \eqref{item:convex_core_ghyp} and \eqref{item:convex_core}${}_1$ as we explain in Section~\ref{sec:overview} and in \cite{Next}.
\end{itemize}


\begin{remark}(The error  in  (\eqref{item:GF_on_boundary}$\&$\eqref{item:convex_core_ghyp})   $\Rightarrow$ \eqref{item:GF_on_Omega}). The error is hidden in the sentence: ``On peut identifier l'espace des droites $\Dc_p (\Cc(\LG ))$ à sa trace sur l'horosphère $\Hc$.'' meaning that we can identified $\Dc_p (\Cc(\LG ))$ and $\Hc \cap \Cc(\LG)$, which is wrong. In fact, if $\Hc_t$ is family of horosphere such that the corresponding family of horoball decreases, then stereographic projection on $\mathbb{A}_p$ of $\Hc_t \cap \Cc(\LG)$ is a closed subset $F_t$ of $\mathbb{A}_p$. For each $t$, the group $\G_p$ acts cocompactly on $F_t$ but this family of closed subset is increasing and $\G_p$ may not act cocompactly on their union.
\end{remark}

\begin{remark}(The error in  \eqref{item:non_cusp_part} $\Rightarrow$ \eqref{item:GF_on_Omega})
	The error is hidden in the sentence ``n\'ecessairement uniform\'ement born\'e puisque la partie non cuspidale du c\oe ur convexe est 	compacte.'', implicitly the authors had in mind that the intersection  $\partial \O_{\varepsilon} (\G_p) \cap \Cc(\LG)$  can be identified with  $\Dc_p (\Cc(\LG ))$. Similarly to the above mistake, the stereographic projection on $\mathbb{A}_p$ of $\partial \O_{\varepsilon} (\G_p) \cap \Cc(\LG)$ is a closed subset $E_\varepsilon$ of $\mathbb{A}_p$. For each $\varepsilon$, the group $\G_p$ acts cocompactly on $F_\varepsilon$ but this family of closed subset is increasing (as $\varepsilon \to 0$) and $\G_p$ may not act cocompactly on their union.
\end{remark}

\begin{remark}(\eqref{item:convex_core}${}_1$ $\Rightarrow$ \eqref{item:thick_part})\label{cor:lemma8.5}
First, for any $R>0$ there is a constant $c_R >0$ independent of the convex $\Omega$ and of the point $x \in \Omega$ such that $\Vol_\Omega (B(x,R)) \geqslant c_R$ (\cite[Thm.12]{colbois_vernicos_spectre} or see e.g. \cite[Lem.8.4]{CM2014finitude}). Second, if $x$ is in the $\varepsilon$-thick part then the ball $B(x,\varepsilon)$ of $\Omega$ embeds in $\O/\G$. Hence, if $x$ is in the convex core then $B(x,\varepsilon)$ embeds in the $1$-neighborhood of the convex core $\Cc(\LG)/\G$ (assuming that $\varepsilon < 1$). Crampon and the second author conclude erroneously that such a ball embeds in $\Cc(\LG)/\G$.
	
Leading to the erroneous conclusion that if the action of $\Gamma$ satisfies \eqref{item:convex_core}$_0$ then the $\varepsilon$-thick part of the convex core can contain only finitely many disjoint balls of radius $\varepsilon$, hence is compact. When, in fact, one needs to assume \eqref{item:convex_core}$_1$ to conclude that the $\varepsilon$-thick part of the convex core can contain in its $1$-neighborhood only finitely many disjoint balls of radius $\varepsilon$, and hence must be compact.
\end{remark}

\begin{figure}
	\centering
	\begin{tikzpicture}[>=latex]
		\tikzstyle{implique}=[-{Implies},double equal sign distance]
		\tikzstyle{equivalent}=[implies-implies,double equal sign distance]
		\node (PNC) at (0,0){\eqref{item:non_cusp_part}};
		\node (GF) at (0,4) {\eqref{item:GF_on_Omega}};
		\node (gf+hyp) at (-4,4) {\eqref{item:GF_on_boundary}$\&$\eqref{item:convex_core_ghyp}};
		\node (TF) at (4,4) {\eqref{item:region_parabolique}};
		\node (PEC) at (4,0) {\eqref{item:thick_part}};
		\node (VF) at (2,2) {\eqref{item:convex_core}${}_1$};
		\node (gf) at (-4,0) {\eqref{item:GF_on_boundary}};
		\node (HC) at (2,6) {\eqref{item:gf_cusps_lobat}};
		
		\draw[implique] (GF) -- (TF);
		\draw[implique] (HC) -- (GF);
		\draw [implique] (GF) -- (gf);
		\draw [implique,red!85!black] (PNC) -- (GF) node[midway, sloped,scale=.1cm]{$\times$};
		\draw[implique] (TF) -- (PEC);
		\draw [implique,orange] (PNC) -- (gf);
		\draw [equivalent] (PNC) -- (PEC);
		\draw [implique] (VF) -- (PEC);
		\draw [implique,orange] (GF) -- (VF);
		\draw [-{Implies},double,red,double equal sign distance] (gf+hyp) to [out=30,in=150] node[midway, sloped,scale=.1cm]{$\times$} (GF);
		\draw [-{Implies},double,red,double equal sign distance] (GF) to [out=90,in=180]  node[midway, sloped,scale=.1cm]{$\times$} (HC);
		\draw [implique,orange] (GF) -- (gf+hyp);
		
	\end{tikzpicture}
	\caption{Old pattern of implications: black arrows were correctly proved in the former paper, the red arrows are mistakes of the former paper, the orange arrows need to be fixed (they are badly written, are incomplete, or have mistakes in the former paper).}
	\label{fig:old pattern}
\end{figure}

%

\begin{figure}
	\centering
	\begin{tikzpicture}[>=latex,scale=1.1]
		\tikzstyle{arrondi}=[rounded corners=4pt]
		\tikzstyle{implique}=[-{Implies},double equal sign distance]
		\tikzstyle{equivalent}=[implies-implies,double equal sign distance]
		\node (TF) at (2,1) {\eqref{item:region_parabolique}};
		\node (GF) at (2,-1) {\eqref{item:GF_on_Omega}};
		\node (HC) at (0,0) {\eqref{item:gf_cusps_lobat}};
		\node (GF+Gen) at (-2,0) {\eqref{item:GF_on_Omega}\&\eqref{item:generic}};
		
		\node (VF) at (4,-1) {\eqref{item:convex_core}${}_1$};
		\node (gf+hyp) at (4,1) {\eqref{item:GF_on_boundary}$\&$\eqref{item:convex_core_ghyp}};
		
		\node (PEC) at (6,-1) {\eqref{item:thick_part}};
		\node (PNC) at (8,-1){\eqref{item:non_cusp_part}};
		\node (gf) at (6,1) {\eqref{item:GF_on_boundary}};
		\node (CU) at (8,1) {\eqref{item:cusp_uniform}};

		\draw[implique] (HC) -- (GF);
		\draw[equivalent] (TF) -- (GF);
		\draw [implique,green!60!black] (GF) -- (VF);
		\draw[implique,green!60!black] (GF) -- (gf+hyp);
		\draw[implique] (gf+hyp) -- (gf);
		\draw [implique] (VF) -- (PEC);
		\draw [implique,green!60!black] (GF+Gen) -- (HC);
		\draw [equivalent,green!60!black] (gf) -- (CU);
		\draw [equivalent,green!60!black]  (PNC) -- (gf);
		\draw [equivalent] (PNC) -- (PEC);

		\draw[arrondi] (1.5,-1.3) rectangle (2.5,1.3);
		\draw[arrondi] (5.5,-1.3) rectangle (8.52,1.3);
		
	\end{tikzpicture}
	\caption{New pattern of implications: black arrows where correctly proved in the former paper, the green ones repair the mistakes of the former paper.}
	\label{fig:new pattern}
\end{figure}

\subsection{A correct statement}

In the present paper we prove the following result, which corrects Theorem~\ref{thm original}.
Figure~\ref{fig:new pattern} shows the new pattern of the proof.

\begin{theorem}\label{thm:erratum}
	Let $\O$ be a round convex of $\R\PP^d$ and $\G \leqslant \Aut(\Omega)$. Then:
	\begin{enumerate}
		\item (\eqref{item:GF_on_Omega}\&\eqref{item:generic})
		$\Longrightarrow$
		\eqref{item:gf_cusps_lobat} $\Longrightarrow$ \eqref{item:GF_on_Omega} $\Longleftrightarrow$ \eqref{item:region_parabolique}.
		
		\item \eqref{item:thick_part} $\Longleftrightarrow$ \eqref{item:non_cusp_part} $\Longleftrightarrow$ \eqref{item:GF_on_boundary} $\Longleftrightarrow$ \eqref{item:cusp_uniform}.
		
		\item \eqref{item:GF_on_Omega} $\Longrightarrow$ \eqref{item:convex_core}${}_1$ $\Longrightarrow$  \eqref{item:GF_on_boundary}.	
		\item \eqref{item:GF_on_Omega} $\Longrightarrow$ \eqref{item:GF_on_boundary}$\&$\eqref{item:convex_core_ghyp} $\Longrightarrow$  \eqref{item:GF_on_boundary}.
	\end{enumerate}
\end{theorem}

A counter-example to the reciprocal of the implication \eqref{item:gf_cusps_lobat} $\Longrightarrow$ \eqref{item:GF_on_Omega} is given in Section~\ref{sec:contrex}. It is trivial to find an example satisfying \eqref{item:gf_cusps_lobat} but not \eqref{item:generic}.
We will provide in a separate article counter-examples to all implications which are not equivalence in Theorem~\ref{thm:erratum}.(3-4), see Section~\ref{sec:overview} for an overview. 

\begin{theorem}[{\cite{Next}}]\label{thm:non_implication}
	Let $\O$ be a round convex set of $\R\PP^d$ and $\G \leqslant \Aut(\Omega)$. Then:
	\begin{enumerate}
		\item The condition  \eqref{item:convex_core}${}_1$ does not imply the condition  \eqref{item:GF_on_Omega}.
		\item The condition  \eqref{item:GF_on_boundary}$\&$\eqref{item:convex_core_ghyp} does not imply the condition  \eqref{item:GF_on_Omega}.
		\item The condition \eqref{item:GF_on_boundary} does not imply the condition \eqref{item:convex_core}${}_1$.
		\item The condition  \eqref{item:GF_on_boundary} does not imply the condition \eqref{item:GF_on_boundary}$\&$\eqref{item:convex_core_ghyp}.
	\end{enumerate}
	Indeed, for any non-uniform lattice $\G$ of $\SL_2 (\R)$, if $\rho: \SL_2 (\R) \to \SL_5 (\R)$ is the $5$-dimensional irreducible representation of $\SL_2 (\R)$ then, there exists  $\rho(\G)$-invariant round convex domains $\O_0$, $\O_1$ of $\R\PP^4$ such that :
	\begin{enumerate}[1.]
		\item $\rho (\G) \curvearrowright \O_0,\O_1$ are \eqref{item:GF_on_boundary}, but not \eqref{item:GF_on_Omega}.
		\item The convex core of $\O_0/\rho(\G)$ is of finite (nonzero) volume and
		\item $\Cc(\LG)$ is Gromov-hyperbolic for the Hilbert metric of $\O_0$.
		\item  While the convex core of $\O_1/\rho(\G)$ is of infinite volume and
		\item  $\Cc (\LG)$ is not Gromov-hyperbolic for the Hilbert metric of $\O_1$.
	\end{enumerate}
\end{theorem}


As we mentioned, in \cite[Prop.\,1.4]{CM2014finitude}, the authors exhibit examples of pairs $(\Omega,\Gamma)$ which satisfies \eqref{item:GF_on_boundary} but not \eqref{item:GF_on_Omega}. We use those examples to show the existence of $\O_0$ in Theorem~\ref{thm:non_implication}.
The construction of $\Omega_1$ is more involved.

\begin{remark}
	Fix a discrete subgroup $\G\leqslant \Aut(\O)$ preserving at least one round convex set of the projective space, such that $\G$ is non-elementary (not virtually nilpotent).
	As one can see from Theorem~\ref{thm:non_implication}, the properties \eqref{item:convex_core}${}_0$, \eqref{item:convex_core}${}_1$ and \eqref{item:convex_core_ghyp} depend on the choice of the $\G$-invariant round convex set $\O$: they might hold for one domain but not for another.
	
	However, all the other properties studied in this paper are independent of the choice of $\O$.
	This comes from the classical fact that the limit set $\Lambda_\G$ is independent of $\O$.
	Indeed recall that an element $g\in\SL_{d+1}(\R)$ is \emph{proximal} if it has an attracting fixed point in $\R\PP^d$.
	Then one can check that the limit set $\Lambda_\G$ is the closure in $\R\PP^d$ of the set of attracting fixed points of proximal elements of $\G$, also called proximal limit set \footnote{An element $\g$ is proximal if and only if it is a hyperbolic automorphism of $\O$ in the sense of the classification theorem \cite[Th.\,3.3]{CM2014finitude}.
		This theorem also easily implies that any attracting fixed point of a proximal element is in the limit set, so the proximal limit set is contained in the limit set.
		To prove the other inclusion, consider $p$ in the limit set and $p_0$ any point in the proximal limit set.
		Then $\g_nx\to p\in\partial\O$ for some sequence $\g_n$.
		Up to extracting $\g_n y\to p$ for any $y\in\partial\O\smallsetminus\{q\}$, for some $q$ (see\cite[Prop.\,4.8]{CM2014finitude}).
		Since $\G$ is non-elementary there is $\alpha\in\G$ such that $\alpha p_0\neq q$, so $\g_n\alpha p_0\to p$, so $p$ is in the proximal limit set.}.
	From the definitions, one immediately sees that the properties \eqref{item:GF_on_boundary}, \eqref{item:GF_on_Omega}, \eqref{item:gf_cusps_lobat} and \eqref{item:generic} are independent of $\O$, and Theorem~\ref{thm:erratum} implies the properties \eqref{item:region_parabolique},  \eqref{item:thick_part}, \eqref{item:non_cusp_part} and \eqref{item:cusp_uniform} are independent of $\O$ too.
\end{remark}

The second author warmly thanks the first author for pointing out to him the mistake in the former paper and his help to find and write the proper statement. 
The second author also thanks A. Zimmer for pointing out to him the second point of Theorem~\ref{thm:non_implication} using the same $\O_0$ that we will use. 
The authors thank B. Fl\'echelles and D. Cooper for interesting discussions and useful comments.

\subsection{Plan of proof}

The main results are \eqref{item:GF_on_boundary} $\Longleftrightarrow$ \eqref{item:cusp_uniform} and \eqref{item:GF_on_boundary} $\Longleftrightarrow$ \eqref{item:non_cusp_part}, whose proofs are extremely similar, so our goal is to prove them both at the same time, by proving a more general result.

In Section~\ref{sec:conical case} we establish a short independent lemma, useful in the proofs of \eqref{item:GF_on_boundary} $\Longleftrightarrow$ \eqref{item:cusp_uniform} and \eqref{item:GF_on_boundary} $\Longleftrightarrow$ \eqref{item:non_cusp_part}.

\medskip

In Section~\ref{sec:star}, we prove that \eqref{item:GF_on_boundary} is equivalent to a whole family of properties.
More precisely, we show that, given any precisely equivariant family of star domains that satisfy a certain convexity condition, asking $\G$ to act geometrically finitely on $\partial\O$ (i.e.~ asking \eqref{item:GF_on_boundary}) is equivalent to asking that $\G$ acts cocompactly on the complement in the convex core of the family of star domains.

\medskip

In Section~\ref{sec:cusp_uniform}, we check that horoballs satisfy the above convexity condition (because horoballs are convex), and obtain the equivalence \eqref{item:GF_on_boundary} $\Longleftrightarrow$ \eqref{item:cusp_uniform} as a consequence.
The condition \eqref{item:cusp_uniform} is not present in the original paper \cite{CM2014finitude} but it should have been, so we seize the opportunity to give a proof. 
A proof of the implication \eqref{item:GF_on_boundary} $\Longrightarrow$ \eqref{item:cusp_uniform} is also given in \cite[Prop.~3.3]{bray_tiozzo}

\medskip

In Section~\ref{sec:non_cuspidal_part}, we check that the star domains obtained in the thick-thin decomposition of $\O$ also satisfy the above-mentioned convexity condition, and obtain the equivalence \eqref{item:GF_on_boundary} $\Longleftrightarrow$ \eqref{item:non_cusp_part} as a consequence.

\medskip

In Section~\ref{sec:contrex} we give a counterexample to \eqref{item:GF_on_Omega} $\Rightarrow$ \eqref{item:gf_cusps_lobat}, and then in Section~\ref{sec:GF imp HC} we prove that  \eqref{item:GF_on_Omega} $\Rightarrow$ \eqref{item:gf_cusps_lobat} holds under the additional genericity assumption \eqref{item:generic}.

\medskip

In Sections~\ref{sec:GF imp VF} and \ref{sec:GF imp Hyp} we fill in the gaps in the proofs of respectively \eqref{item:GF_on_Omega} $\Rightarrow$ \eqref{item:convex_core}${}_1$ and \eqref{item:GF_on_Omega} $\Rightarrow$ \eqref{item:convex_core_ghyp}.

\medskip

The only missing implication of Theorem~\ref{thm:erratum} is the implication \eqref{item:region_parabolique} $\Rightarrow$ \eqref{item:GF_on_Omega}.
This implication is not present in the original paper  \cite{CM2014finitude}.
Because it was done through the erroneous implication \eqref{item:non_cusp_part} $\Rightarrow$ \eqref{item:GF_on_Omega}.
Nevertheless, it is easy to check that \eqref{item:region_parabolique} $\Rightarrow$ \eqref{item:GF_on_boundary} (for instance using Proposition~\ref{prop:cocpct implies gf} below) and the proof of  \cite[Prop.~7.21]{CM2014finitude} shows that  if \eqref{item:region_parabolique} holds then all bounded parabolic points are in fact uniformly bounded.

%
%
%
%

\subsection{Dirichlet domain and conical limit points}\label{sec:conical case}

This section only contains a short independent lemma saying that Dirichlet domains do not accumulate on conical limit point.
The argument is standard, see for instance \cite[Prop.1.10]{roblin_smf}.
This lemma, as well as Dirichlet domains and the ideas in \cite[Prop.1.10]{roblin_smf}, will be used to prove that \eqref{item:GF_on_boundary} implies cocompactness properties (see Proposition~\ref{prop:gf implies cocpct}).

Let $\Omega$ be a round convex subset of $\R\PP^d$, $\Gamma\subset\Aut(\Omega)$ discrete. If $o$ is a point of $\O$, the \emph{Dirichlet domain based at $o$} is 
$$
\Dc = \{ x \in \O \, |\, \forall \g \in \G, \,  d_\O (x,o)  \leqslant d_\O(x,\gamma o) \}
$$
Note that $\Dc$ is a closed subset of $\O$, and that the translates of $\Dc$ by $\G$ cover $\Omega$.

Using the fact that $\O$ is strictly convex, one can check that if $\G$ is torsion-free then the translates of $\Dc$ by $\G$ have disjoint interiors and that those interiors have the form $\{ x \in \O \, |\, d_\O (x,o)  < d_\O(x,\gamma o) ,\, \forall \g \in \G\smallsetminus\{1\}\}$, but we will not need this fact.
Note that the translates of $\Dc$ by $\G$ may intersect on their interiors if $\O$ is not strictly convex: this happens for instance in the case of a $\Z$-action on a triangle generated by a diagonal $3\times 3$ matrix with diagonal entries $2,2,1/4$.

In the following lemma we check that Dirichlet domains cannot contain conical limit points at their boundary at infinity.
Compare with \cite[Lem.~8.2]{CM2014finitude}, which has a similar result for a different kind of fundamental domains.

\begin{lemma}\label{lem:boundary_pt_of_fund_dom_are_not_conical}
	Let $\Omega$ be a round convex subset of $\R\PP^d$, $\Gamma\subset\Aut(\Omega)$ discrete. If $p \in \partial \Dc \cap \partial \O$ then $p$ is not a conical limit point.
\end{lemma}

\begin{proof}[Proof]
	There exists $(x_n)_n \in \Dc^\N$ such that $x_n \to p$. Assume $p$ is a conical limit point. Then, there exists also $(\g_m)_m \in \G^\N$ such that $\g_m (o)$ converges conically to $p$, i.e.\ there exists $(y_m)_m\subset [o,p)$ tending to $p$ such that $(d_\O(\g_mo,y_m))_m$ is bounded.
	Since for any $m$ we have 
	$$b_p(o,y_m)=\lim_{y\in[y_m,p)\to p}d_\O(o,y)-d_\O(y_m,y)=d_\O(o,y_m),$$
	thus
	$$
	b_p (o, \g_m (o) )= b_p(o,y_m)+b_p(y_m,\g_m (o))\geq d_\O(o,y_m) - d_\O(y_m,\g_m (o)) \underset{m\to\infty}{\longrightarrow} + \infty 
	$$
	However, for any $m$, since $x_n \to p$, we also have:
	$$
	b_p (o, \g_m (o) ) = \lim_{n \to + \infty} d_\O (o,x_n) - d_\O (\g_m (o), x_n).
	$$
	Since $x_n \in \Dc$ one has:
	$$
	d_\O (o,x_n) - d_\O (\g_m (o), x_n) \leqslant 0.
	$$
	So $b_p (o, \g_m (o) ) \leqslant 0$ for any $m$, absurd.
\end{proof}

\subsection{Cocompactness at parabolic points}\label{sec:star}

\subsubsection{Strongly star-shaped domains}

We will need a class of well-behaved domains of $\O$ centered at parabolic points that encompasses both horoballs (see Section~\ref{sec:cusp_uniform}) and components of the thin part (see Section~\ref{sec:non_cuspidal_part}).
Since the components of the thin part are not necessarily convex, we will use the larger class of star domains.
Unfortunately star-shapedness alone will be too weak for our purposes: we will need an important extra convexity assumption which will be stated directly inside Lemma~\ref{lemm:key_ameliorationbisbis}.
Roughly, a star domain $B$ satisfies this condition if it contains the convex hull of a smaller star domain.


\begin{definition_english}\label{def:strongly starshaped}
	Let $\Omega$ be a round convex subset of $\R\PP^d$ and $p\in\partial\O$.
	An open subset $B\subset \O$ is called \emph{strongly star-shaped} at $p$ if for every $x\in\partial\O\smallsetminus\{p\}$, the interval $(x,p)$ intersects $\partial B$ at exactly one point $y\in\O$, the interval $(y,p)$ is contained in $B$, and the interval $(x,y)$ is outside of $\overline B$.
	
	Observe that this implies that
	\begin{enumerate}[i.]
		\item $B$ is star-shaped at $p$,
		\item \label{item:dB in Omega} $\partial B\smallsetminus\{p\}\subset\Omega$,
		\item \label{item:stereo} $\partial \O \smallsetminus \{ p \}$ maps homeomorphically onto $\partial B$ via the stereographic projection (in a $G$-equivariant way if $B$ is invariant under some $G\subset\Stab(p)\subset\Aut(\O)$), and
		\item \label{item:stereobis} the stereographic projection from $\overline\O\smallsetminus (B\cup\{p\})$ to $\partial B$ is surjective, continuous, $G$-equivariant and proper.
	\end{enumerate}
\end{definition_english}

In other words, a strongly star-shaped open subset of $\O$ at $p$ is the ``interior'' of a hypersurface of $\O$ that maps homeomorphically onto $\partial \O \smallsetminus \{ p \}$ via the stereographic projection.

\subsubsection{A key cocompactness lemma about parabolic subgroups}

In this section we prove a cocompactness result for the parabolic subgroups,
inspired by the argument in \cite[Prop.\,8.12]{blayac_zhu_ergo}.

First we recall the following more classical properness result about parabolic subgroup.
Note that in the reference we are using there is a typo: they define $\mathcal{O}_\G:=\O\smallsetminus \LG$ whereas it should be $\mathcal{O}_\G:=\overline\O\smallsetminus\LG$.

\begin{fact}[{\cite[Lem.4.5]{CM2014finitude}}]\label{lem:parabolic_transf_push_to_p}
	Let $\Omega$ be a round convex subset of $\R\PP^d$ and $\Gamma\subset\Aut(\Omega)$ discrete.
	Then $\G$ acts properly discontinuously on $\overline\O\smallsetminus\LG$.
	
	In particular, applying this to a parabolic subgroup $\G_p$ fixing $p\in\LG$ we get that $\G_p$ acts properly discontinuously on $\overline\O\smallsetminus\{p\}$.
\end{fact}

	%

Now comes the key cocompactness lemma.
The formulation involving finite-index subgroups of the stabiliser of the parabolic point is an unfortunate necessary technicality.
It will be used in Section~\ref{sec:non_cuspidal_part}.

\begin{lemma}\label{lemm:key_ameliorationbisbis}
	Let $\Omega$ be a round convex subset of $\R\PP^d$,
	$\Gamma\subset\Aut(\Omega)$ discrete non-elementary,
	and $p\in \Lambda_\G$ be a bounded parabolic fixed point with stabilizer $\G_p$.
	Consider $G\subset \G_p$ a finite index subgroup and $B^-\subset B^+\subset \O$ two $G$-invariant strongly star-shaped open subsets at $p$ such that the convex hull of $B^-$ is contained in $B^+$.
	
	Then the action of $G$ on $\partial B^+ \cap \Cc(\LG)$ is cocompact.
\end{lemma}

	\begin{figure}
		\centering
		\definecolor{uququq}{rgb}{0.25,0.25,0.25}
		\begin{tikzpicture}[line cap=round,line join=round,>=triangle 45,x=1.0cm,y=1.0cm,scale=2]
			\clip(-2.6,-2.01) rectangle (2.49,1.92);
			\draw [rotate around={0:(0,0)}] (0,0) ellipse (2cm and 1.73cm);
			\draw [rotate around={0:(-1.3,0)}] (-1.3,0) ellipse (0.7cm and 0.63cm);
			\draw (-1.73,0.87)-- (-1.2,-1.39);
			\draw [dotted] (-1.99,-0.2)-- (1.89,0.57);
			\begin{scriptsize}
				\fill [color=black] (-2,0) circle (.8pt);
				\draw[color=black] (-2.1,0) node {$p$};
				\fill [color=uququq] (1.89,0.57) circle (.8pt);
				\draw[color=uququq] (2.2,0.66) node {$\g(\eta_{1})$};
				\fill [color=black] (-1.65,0.54) circle (.8pt);
				\draw[color=black] (-1.5,0.5) node {$x$};
				\fill [color=uququq] (-1.73,0.87) circle (.8pt);
				\draw[color=uququq] (-1.75,1) node {$\eta_{1}$};
				\fill [color=uququq] (-1.2,-1.39) circle (.8pt);
				\draw[color=uququq] (-1.14,-1.6) node {$\eta_{2}$};
				\fill [color=black] (-1.99,-0.2) circle (.8pt);
				\draw[color=black] (-2.3,-0.2) node {$\g(\eta_{2})$};
				\fill [color=uququq] (-0.605,0.07) circle (.8pt);
				\draw[color=uququq] (-0.9,0.15) node {$\g (x)$};
			\end{scriptsize}
		\end{tikzpicture}
		\caption{Illustration of the proof of Lemma~\ref{lemm:key_ameliorationbisbis}\\
			(For simplicity, $x$ is here in the convex hull of only two points of the limit set)}
		\label{fig:gf_cocompactness_on_horo}
	\end{figure}

	\begin{proof}[Proof]
		Since $p$ is bounded parabolic and $G$ has finite index in $\G_p$, there exists $K\subset \Lambda_\Gamma \smallsetminus \{ p \}$ compact such that $G\cdot K=\Lambda_\Gamma \smallsetminus \{ p \}$.
		Let $L$ be the set of points $x\in \overline\O$ such that $[x,q]$ does not intersect $B^-$ for some $q\in K$.
		To finish this proof, it suffices to check that $L'=L\cap \partial B^+ \cap \Cc(\LG)$ is compact and that $G\cdot L'=\partial B^+ \cap \Cc(\LG)$.
		
		First we check $L'$ is compact.
		It is clear that $L$ is closed in $\overline\O$, and hence compact: Let $(x_n)_n \in L^\N$ such that $x_n \to x$, hence $[x_n,q_n]\cap B^-=\emptyset$ for some $q_n\in K$. Up to extracting, we can assume $q_n\to q\in K$, and to conclude that $x \in L$, we note that $[x,q]$ cannot intersect the open set $B^-$, otherwise the segments $[x_n,q_n]$ would also intersect it for $n$ large enough.
		
		Note that $\partial B^+\cap \overline{\Cc(\Lambda_\G)}=\partial B^+\cap \Cc (\Lambda_\G)\cup\{p\}$ since $B^+$ is strongly star-shaped (see \ref{item:dB in Omega} in Definition~\ref{def:strongly starshaped}), and $p$ is not in $L$ since $B^-$ is strongly star-shaped.
		Thus 
		\[ L'=L\cap \partial B^+ \cap \Cc(\LG)=L\cap \partial B^+\cap \overline{\Cc (\Lambda_\G)}\]
		is compact.
		
		It remains to check that for any  $x\in \partial B^+ \cap \Cc(\LG)$ there exists $g\in G$ such that $gx\in L'$.
		Since $x \in \Cc (\LG)$, there exists $(\eta_i)_{i=1,...,d+1}$ in $\Lambda_\Gamma \smallsetminus \{ p \}$ such that $x$ is in the convex hull of $p$ and the $(\eta_{i})_{i=1,...,d+1}$.
		We claim that there exists $i$ such that $[x,\eta_{i}] \cap B^- = \varnothing$.
		By contradiction, if this is not the case, then for each $i$, there exists $x_{i}\in [x,\eta_{i}] \cap B^-$.
		Since $B^-$ is strongly star-shaped at $p$, there also exists $y\in[x,p] \cap B^-$.
		One can then check that $x$ is in the convex hull of $\{x_{i}\}_i\cup\{y\}$, and hence that $x$ lies in $B^+$ by our assumption that the convex hull of $B^-$ is contained in $B^+$.
		This contradicts $x\in\partial B^+$.
		
		Since $G\cdot K=\Lambda_\Gamma \smallsetminus \{ p \}$, there is $g\in G$ such that $g\eta_i\in K$.
		Then $[gx,g\eta_i]$ does not intersect $B^-$, so $gx\in L'$, which concludes the proof.
	\end{proof}
	
	
	\begin{lemma}\label{lemm:key_ameliorationbisbisbis}
		Let $\Omega$ be a round convex subset of $\R\PP^d$,
		$\Gamma\subset\Aut(\Omega)$ discrete non-elementary,
		and $p\in \Lambda_\G$ be a bounded parabolic fixed point with stabilizer $\G_p$.
		Consider $G\subset \G_p$ a finite index subgroup and $B\subset\O$ a $G$-invariant strongly star-shaped open subset at $p$ such that the action of $G$ on $\partial B \cap \Cc(\LG)$ is cocompact.
		
		Then the action of $G$ on $\overline{\Cc(\LG)}\smallsetminus (B\cup\{p\})$ is cocompact.
	\end{lemma}
	\begin{proof}[Proof]
		This is an immediate consequence of \ref{item:stereobis} and the fact that the image of $\overline{\Cc(\LG)}\smallsetminus (B\cup\{p\})$ under the stereographic map from $\overline\O\smallsetminus (B\cup\{p\})$ to $\partial B$ is exactly $\partial B \cap \Cc(\LG)$.
	\end{proof}

\subsection{The general result}\label{sec:general}

In this section we prove a general result that \eqref{item:GF_on_boundary} is equivalent to a whole family of properties which encompasses \eqref{item:cusp_uniform} and \eqref{item:non_cusp_part}.
As a consequence, the equivalences \eqref{item:GF_on_boundary}$\Leftrightarrow$\eqref{item:cusp_uniform} and \eqref{item:GF_on_boundary}$\Leftrightarrow$\eqref{item:non_cusp_part} will be particular cases of the results of this section.

Let us recall the definition of $(\G,(\G_p)_p)$-precisely equivariant.

\begin{definition_english}\label{def:precisely equivariant}
	Let $\Omega\subset\R\PP^d$ be a round convex subset, $\Gamma\subset\Aut(\Omega)$ a discrete subgroup and $\Pc \subset\partial\O$ a $\G$-invariant subset.
	A \emph{$(\G,(\G_p)_p)$-equivariant} family $(B_p)_{p\in\Pc}$ of domains of $\O$ is a family of domains such that  $\g B_p = B_{\g p}$ for all $p\in\Pc$ and $\g\in\G$.
	
	It is called \emph{$(\G,(\G_p)_p)$-precisely equivariant} if moreover $\overline B_p \cap \overline B_q = \varnothing$ for all distinct $p\neq q\in\Pc$.
\end{definition_english}

\subsubsection{Cocompactness consequences of \eqref{item:GF_on_boundary}}

We can now state and prove one of the two main results of this section.
The proof is standard, see for instance \cite[Prop.\,1.10]{roblin_smf}.

\begin{proposition_english}\label{prop:gf implies cocpct}
	Let $\Omega$ be a round convex subset of $\R\PP^d$ and
	$\Gamma\subset\Aut(\Omega)$ discrete, non-elementary, and geometrically finite on $\partial\O$ (Assumption~\eqref{item:GF_on_boundary}).
	Let $\Pc \subset\Lambda_\G$ be the set of parabolic points, and denote by $\G_p\subset\G$ the stabilizer of each $p\in \Pc$.
	Consider a $(\G,(\G_p)_p)$-equivariant family $(B_p)_{p\in \Pc}$ of domains.
	Suppose that $\G_p$ acts cocompactly on $\overline{\Cc(\LG)}\smallsetminus(B_p\cup\{p\})$ for every $p\in\Pc$.
	Then the action of $\G$ on $\Cc(\LG) \smallsetminus \bigcup_p B_p$ is cocompact.
\end{proposition_english}

\begin{proof}[Proof]
	We fix a point $o \in \Omega$ and consider the (Dirichlet) domain:
	$$
	\Dc = \{ x \in \O \, |\, \forall \g \in \G, \,  d_\O (x,o)  \leqslant d_\O(x,\gamma o) \}.
	$$
	Recall that it is a closed subset of $\O$, and that the translates of $\Dc$ by $\G$ cover $\Omega$. Consider the closed subset $X = \Dc \cap \Cc(\LG) \smallsetminus  \bigcup_p B_p$ of $\O$.
	Let us show that $X$ is bounded.
	
	Assume it is not, then there exists a sequence $(x_n)_{n\in \N} \in X ^\N$ such that $x_n \to p \in \Lambda_\G$. 
	Lemma~\ref{lem:boundary_pt_of_fund_dom_are_not_conical} shows that $p$ is not a conical limit point.
	Hence,  $p$ is a bounded parabolic point since $\G \curvearrowright \partial \O$ is geometrically finite.

	\medskip
	
	By our assumption, up to extracting a subsequence, there exists $\g_n \in \G_p$ such that $\g_n (x_n) \to z \in \overline{\O}\smallsetminus\{p\}$, see Figure~\ref{fig:gf_implies_stuff}.
	In particular $\g_n \to + \infty$ and so $\g_n(o) \to p$, by Fact~\ref{lem:parabolic_transf_push_to_p}.
	\medskip
	
	Pick any $y$ in the interval $(z,p)$, which is contained in $\O$ since it is strictly convex.
	Since $\g_n[x_n,o]\to[z,p]$, we can find $y_n\in \g_n(x_n,o)\subset\O$ that converge to $y$.
	
	\begin{align*}
		d_\O (x_n , o ) & = d_\O (\g_n(x_n) , \g_n(o) ))\\
		& = d_\O (\g_n(x_n) , y_n ) + d_\O (y_n , \g_n(o) )\\
		& \geqslant d_\O (\g_n(x_n) , o ) - d_\O (o , y_n ) +
		d_\O (\g_n(o) , o )- d_\O (o , y_n ) 
	\end{align*}
	So:
	\begin{equation*}
		\underbrace{d_\O (x_n , o ) -  d_\O (\g_n(x_n) , o )}_{\leqslant 0 \,\,\, \textrm{since } x_n \in \Dc}   \geqslant
		- \underbrace{2d_\O (o , y_n )}_{\to d_\O(o,y)} + \underbrace{d_\O (\g_n(o) , o )}_{\to + \infty}.
	\end{equation*}
	Absurd.
\end{proof}

\begin{figure}
	\centering
	\definecolor{dcrutc}{rgb}{0.86,0.08,0.24}
	\begin{tikzpicture}[line cap=round,line join=round,>=triangle 45,x=1.0cm,y=1.0cm,scale=2.2]
		\clip(-2.85,-2.12) rectangle (2.86,1.95);
		\draw [rotate around={0:(0,0)}] (0,0) ellipse (2cm and 1.73cm);
		\draw [dotted] (-1.8,-0.43)-- (1.34,0.53);
		\draw [dotted] (-2,0)-- (1.26,0.3);
		\draw [dotted] (-1.73,0.36)-- (0.04,0.87);
		\begin{scriptsize}
			\fill [color=black] (-2,0) circle (0.5pt);
			\draw[color=black] (-2.1,0) node {$p$};
			\fill [color=black] (-1.73,0.36) circle (0.5pt);
			\draw[color=black] (-1.65,0.45) node {$x_n$};
			\fill [color=black] (0.04,0.87) circle (0.5pt);
			\draw[color=black] (0.09,0.95) node {$o$};
			\fill [color=black] (1.34,0.53) circle (0.5pt);
			\draw[color=black] (1.6,0.55) node {$\g_n(x_n)$};
			\fill [color=black] (1.26,0.3) circle (0.5pt);
			\draw[color=black] (1.31,0.2) node {$z$};
			\fill [color=black] (-1.8,-0.43) circle (0.5pt);
			\draw[color=black] (-1.55,-0.45) node {$\g_n(o)$};
			\fill [color=black] (-1.11,0.08) circle (0.5pt);
			\draw[color=black] (-1.07,0.17) node {$y$};
			\fill [color=black] (-0.97,-0.18) circle (0.5pt);
			\draw[color=black] (-0.92,-0.09) node {$y_n$};
		\end{scriptsize}
	\end{tikzpicture}
	\caption{Illustration of the proof of Proposition~\ref{prop:gf implies cocpct}}
	\label{fig:gf_implies_stuff}
\end{figure}

\subsubsection{\eqref{item:GF_on_boundary} as a consequence of cocompactness}

We now state the second main result of this section, which can be described as a converse to the first main result Proposition~\ref{prop:gf implies cocpct}.

\begin{proposition_english}\label{prop:cocpct implies gf}
	Let $\Omega$ be a round convex subset of $\R\PP^d$ and
	$\Gamma\subset\Aut(\Omega)$ discrete non-elementary.
	Consider a $\G$-invariant subset $\Pc\subset\LG$ and denote by $\G_p\subset\G$ the stabilizer of any $p\in\Pc$.
	Consider a $(\G,\G_p)$-precisely equivariant family $(B_p)_{p\in\Pc}$ of domains with $B_p$ strongly star-shaped at $p$.
	
	Suppose that the action of $\G$ on $\Cc(\LG)\smallsetminus \bigcup_p B_p$ is cocompact.
	
	Then $\G$ acts geometrically finitely on $\partial\O$ (Assumption~\eqref{item:GF_on_boundary}), $\Pc$ is the set of bounded parabolic points in $\LG$, and $\G_p$ acts cocompactly on $\partial B_p\cap\Cc(\LG)$ for every $p\in\Pc$.
\end{proposition_english}

\begin{proof}[Proof]
	Let $q \in \mathcal{P}$.
	Since $(B_p)_{p\in\Pc}$ is $(\G,(\G_p)_p)$-precisely equivariant,
	$$\left(\partial B_q \cap \Cc (\LG)\right)/\G_q\hookrightarrow\left(\Cc(\LG)\smallsetminus \bigcup_p B_p\right)/\G$$
	is an embedding with closed image.
	In particular, $(\partial B_q \cap \Cc (\LG))/\G_q$ is compact, in other words
	the action of $\G_q$ on $\partial B_q \cap \Cc (\LG)$ is cocompact.
	
	Moreover, $\LG \smallsetminus \{ q \}$ embeds $\G_q$-equivariantly in $\partial B_q\cap \Cc (\LG)$ since $B_q$ is strongly star-shaped at $q$ (see \ref{item:stereo}), so the action of $\G_q$ on $\LG \smallsetminus \{ q \}$ is proper and cocompact.
	This implies that $q$ is bounded parabolic.

	Let $q \in\LG\smallsetminus \mathcal{P}$.
	Consider $o\in \Cc (\LG)$.
	Note that the geodesic ray $[o,q[$ is not eventually contained in any $B_p$, for any $p \in \mathcal{P}$, since $\overline B_p\cap \partial\O=\{p\}$ as $B_p$ is star-shaped at $p$ (see \ref{item:dB in Omega}).
	Hence there exists $x_n \in [o,q[$ such that $x_n \to q$ and $x_n \in \Cc(\LG)\smallsetminus \bigcup_p B_p$. 
	So, by cocompactness of the action, up to extracting a subsequence, there exists $\g_n \in \G$ such that $\g_n(x_n) \to z \in \O$, which implies that $q$ is conical ($(\g_n^{-1}z)_n$ converge to $q$ while remaining at bounded distance from $[o,q)$).
\end{proof}

\subsection{\eqref{item:GF_on_boundary}$\Longleftrightarrow$ \eqref{item:cusp_uniform}}\label{sec:cusp_uniform}

Let $\Omega$ be a round convex subset of $\R\PP^d$.
In this section we recall the definition of horoballs and some basic facts, e.g.\ that horoballs are the images of $\Omega$ under a projective transformation, and hence are round convex subsets of $\R\PP^d$.
We then use Section~\ref{sec:star} to prove \eqref{item:GF_on_boundary}$\Longleftrightarrow$ \eqref{item:cusp_uniform}.

\subsubsection{Horoballs}\label{sec:horoballs}

We first give a very algebraic definition of horoballs, and then describe them more geometrically via Busemann functions, using
a result of Benoist.
See also \cite[\textsection 2.2]{CM2014finitude} and \cite[p.16]{CLT2015cvxisom}

\begin{definition_english}\label{def:horoball}
	Let $p\in\partial\Omega$ and $x\in\Omega$.
	Let $q\in\partial\Omega$ be such that $x\in[p,q]$.
	Consider a basis $v_1,\dots,v_{d+1}\in \R^{d+1}$ such that $p=[v_1]$, $q=[v_2]$, $x=[v_1+v_2]$, and $[v_i]\in T_p\partial\Omega$ for each $i\geq 3$.
	Then the horosphere $W\subset\Omega$ centered at $p$ passing through $x$ is the image $g\partial\Omega\smallsetminus\{p\}$ of $\partial\Omega\smallsetminus\{p\}$ under the projective transformation 
	$$g=\begin{pmatrix} 1 & 1 & 0 \\ 0 & 1 & 0 \\ 0 & 0 & \mathrm I_{d-1} \end{pmatrix}.$$
	Note that $g$ fixes any affine chart not containing $T_p\partial\O$, in which it acts as a translation in the direction of the line through $p$ and $q$, sending $q$ to $x$.
	
	The open horoball $H$ with boundary $W\cup\{p\}$ is $g\Omega$, which is a round convex subset of $\Omega$; in particular it is strongly star-shaped at $p$.
	Note that $T_p\partial H=T_p\partial\Omega$.
	
	This does not depend on the choice of $v_1,\dots,v_{d+1}$.
	Indeed, one can check that $\partial H\smallsetminus\{p\}$ is the set of $x'\in\Omega$ such that, if $x'\in[p,q']$ for  $q'\in\partial\Omega$, then the two lines $\overline{qq'}$ and $\overline{xx'}$ intersect in the hyperplane $T_p\partial\Omega$. 
\end{definition_english}

\begin{fact}[{\cite[\textsection 3.2.3-4 \& Fig.7]{CD1}}]
	For all $p\in\partial\O$ and $x,y \in \Omega$, 
	$$b_p(x,y):=\underset{z\to p}{\lim} \,\, d_\O(x,z) - d_\O (y,z)$$
	is well defined.
	
	Moreover, the horosphere centered at $p$ through any given $x\in\Omega$ is $\{y\in\O:b_p(x,y)=0\}$.
	The associated open horoball is $\{y\in\O:b_p(x,y)>0\}$.
\end{fact}

It is clear that projective transformations map horoballs to horoballs.
The following states that parabolic groups preserve each horoball centered at the point they fix.

\begin{fact}[{\cite[Th.\,3.3]{CM2014finitude},\ \cite[Prop.\,3.3]{CLT2015cvxisom}}]
	For all $p\in\partial\O$, $x\in\O$ and $\g\in\Aut(\O)$ preserving $p$, the translation length of $\g$ is exactly $|b_p(x,\g x)|$.
	
	In particular, if $\g$ is parabolic (or elliptic) then it preserves each horoball centered at $p$.
\end{fact}
\begin{proof}[Proof]
	Note that for every $y\in\O$ we have
	$$b_p(y,\g y) = b_p(y,x) + b_p(x,\g x) + b_p(\g x,\g y) = b_p(x,\g x) + b_p(y,x) + b_p(x,y) = b_p(x,\g x).$$
	Morever, by the triangle inequality $|b_p(y,\g y)|\leq d_\O(y,\g y)$ for every $y\in\O$.
	
	As a consequence, $|b_p(x,\g x)|$ is bounded from above by the translation length.
	
	If the translation length is zero then we are done.
	Otherwise, $\g$ is hyperbolic and fixes exactly two points of $p,q\in\partial\O$ (see \cite[\textsection 3.1]{CM2014flot}). 
	Then one can check that if $x\in[p,q]$ then $|b_p(x,\g x)|$ equals $d_\O(x,\g x)$, and hence equals the translation length of $\g$.
\end{proof}


%


Finally, the following result says that geometrically finite groups always admit precisely equivariant families of horoballs.
The result is not stated the same way in the reference, but the link is not hard to make.

\begin{fact}[{\cite[Lem.\,8.11]{blayac_zhu_ergo}}]\label{fact:precisely inv exist}
	Let $\Omega$ be a round convex subset of $\R\PP^d$.
	Let $\Gamma\subset\Aut(\Omega)$ be discrete non-elementary and act geometrically finitely on $\partial\O$.
	Then there exists a $(\G,(\G_p)_p)$-precisely equivariant family of horoballs centered at the parabolic points of $\Lambda_\G$.
\end{fact}

\subsubsection{Applications of Sections~\ref{sec:star} and \ref{sec:general}}

\begin{lemma}\label{lemm:key_amelioration}
	Let $\Omega$ be a round convex subset of $\R\PP^d$.
	Let $\Gamma\subset\Aut(\Omega)$ be discrete non-elementary.
	Let $p\in \Lambda_\G$ be a bounded parabolic fixed point. For any open horoball $H_p$ centered at $p$, the action of $\G_p$ on $\overline{\Cc(\LG)}\smallsetminus(H_p\cup\{p\})$ is cocompact.
\end{lemma}

\begin{proof}[Proof]
	This is an immediate corollary of Lemmas~\ref{lemm:key_ameliorationbisbis} and \ref{lemm:key_ameliorationbisbisbis}, using the fact that horoballs are round convex with their center in their boundary (Definition~\ref{def:horoball}) and invariant under the associated parabolic subgroups.
\end{proof}


\begin{proof}[Proof of \eqref{item:GF_on_boundary}$\Leftrightarrow$ \eqref{item:cusp_uniform}]
	This is an immediate corollary of  Lemma~\ref{lemm:key_amelioration}, Propositions~\ref{prop:gf implies cocpct} and \ref{prop:cocpct implies gf}, and Fact~\ref{fact:precisely inv exist}.
\end{proof}

\subsection{\eqref{item:GF_on_boundary} $\Longleftrightarrow$ \eqref{item:non_cusp_part}} \label{sec:non_cuspidal_part}

In this section we recall the definition of the thin part of convex projective manifolds and some basic facts, e.g.\ that the components of the thin part in $\O$ are star-shaped.
We then prove that they also satisfy the extra convexity condition of Lemma~\ref{lemm:key_ameliorationbisbis}.
We then use Sections~\ref{sec:star} and \ref{sec:general} to prove \eqref{item:GF_on_boundary} $\Longleftrightarrow$ \eqref{item:non_cusp_part}.

\subsubsection{Thick-thin decomposition}\label{sec:thickthin}

Let $\Omega$ be a  convex domain of $\R\PP^d$, $\G\subset \Aut(\O)$ discrete and $\epsilon>0$.
We use the following notation.
\begin{enumerate}
	\item $S_\varepsilon (x) := \{ \g \in \G \, |\, d_\O (x, \g x) < \varepsilon \}$ for $x\in\Omega$;
	\item $\G_\varepsilon (x) := \langle S_\varepsilon (x) \rangle$ (the subgroup generated by $S_\varepsilon (x)$) for $x\in\Omega$;
	\item $\O_\varepsilon (\G) := \{ x \in \O \, | \, \G_\varepsilon(x) \textrm{ is infinite} \}$ is the $\varepsilon$-thin part of $\O$, its complement is the $\varepsilon$-thick part;
ilon	\item $\mathcal{O}_\epsilon(A) := \{x\in \O \, | \, d_\O(x,\gamma x)<\epsilon, \ \forall \gamma\in A\}$ for $A\subset \G$;
	%
	%
	%
	%

	%
	
\end{enumerate}

Let us recall the Margulis lemma for convex projective geometry.

\begin{fact}[\cite{Margu_ludo_mikl} \& \cite{CLT2015cvxisom}]\label{lem:margulis}
	There exists $\varepsilon_0 >0$ which only depends on the dimension $d$, such that for every convex domain $\O$ of $\R\PP^d$, any $\G\subset \Aut(\O)$ discrete, any $0 < \epsilon \leqslant \epsilon_0$, any $x \in \O$, the group $\G_\varepsilon (x)$ is virtually nilpotent.
\end{fact}

Assuming $\Omega$ is a round convex domain, one can use the previous Margulis lemma to obtain a thick-thin decomposition, and more precisely a nice decomposition of the thin part (see \cite[Lem.~6.2]{CM2014finitude}).

If $\varepsilon< \varepsilon_0$, then the thin part $\O_\varepsilon (\G)$ is the disjoint union of $(\O_\varepsilon (G))_G$ (in fact the \emph{closures} are pairwise disjoint), where $G$ runs over the maximal parabolic subgroups of $\G$ and the centralizers of hyperbolic elements of translation length less than $\varepsilon$.

The \emph{$\varepsilon$-noncuspidal part} is the complement of the union of $(\O_\varepsilon (G))_G$, where $G$ runs over the parabolic subgroups of $\G$.

\begin{fact}[{\cite[Lem.\,6.2.1 \& Cor.\,3.16]{CM2014finitude}}]\label{lem:precisely_equivariant}
	If $\varepsilon< \varepsilon_0$ and $\Pc\subset \LG$ is the set of parabolic points, then $({\O_\epsilon(\G_p)})_{p\in\Pc}$ is $(\G,(\G_p)_p)$-precisely equivariant.
\end{fact}
%

\subsubsection{Star-shapedness and the weak convexity condition}

Here we check that the components of the thin part, as well as the domains of the form $\mathcal{O}_\varepsilon(A)$ defined in the previous section, are strongly star-shaped and satisfy the extra weak convexity condition in Lemma~\ref{lemm:key_ameliorationbisbis}.

\begin{fact}\label{fact:thinpart starshaped}
	Let $\Omega$ be a round convex subset of $\R\PP^d$ and $\G_p\subset\Aut(\Omega)$ a discrete infinite parabolic subgroup fixing $p\in\partial\O$.
	
	Then  $\O_\varepsilon(\G_p)$ is strongly star-shaped at $p$ for any $\varepsilon$ (see Definition~\ref{def:strongly starshaped}).
	
	Moreover, $\mathcal{O}_\varepsilon(A)$ is also strongly star-shaped at $p$ for any finite $A\subset\G_p$ that generates an infinite group. 
\end{fact}

Note also that $(\partial\O_\varepsilon(\G_p))_\varepsilon$ and  $(\partial\mathcal{O}_\varepsilon(A))_\varepsilon$ foliate $\O$.

The above fact is a consequence of the following elementary result (which uses the fact that $\O$ is round).

\begin{fact}[{\cite[Lem.\,3.4]{CD1}}]\label{fact:moving_goes_to_zero}
	Let $\Omega$ be a round convex subset of $\R\PP^d$ and $\g\in\Aut(\O)$ a parabolic or elliptic transformation
	fixing $p\in\partial\O$.
	Consider a straight geodesic $(p_t)_{t\in\R}\subset\O$ going to $p$ as $t\to\infty$.
	
	Then either $\g$ fixes the geodesic or $t\mapsto d_\O(p_t,\g p_t)$ is decreasing from $\infty$ to $0$.
\end{fact}

%

%

Before we discuss the weak convexity condition needed in Lemma~\ref{lemm:key_ameliorationbisbis}, let us discuss briefly the link between $\O_\varepsilon(\G_p)$ and $\mathcal{O}_\varepsilon(A)$, where $\Omega\subset\R\PP^d$ is round convex, $\G_p\subset\Aut(\Omega)$ is discrete infinite parabolic and fix $p\in\partial\O$, the subset $A\subset\G_p$ is finite and generates an infinite group, and $\varepsilon>0$.

\begin{enumerate}
	\item $\mathcal{O}_\varepsilon(A)\subset\O_\varepsilon(\G_p)$.
	\item $\mathcal{O}_\varepsilon(A)$ is not necessarily $\G_p$-invariant; it is if $A$ is invariant under conjugacy.
	\item $\G_p$ being virtually nilpotent, it admits a torsion-free nilpotent finite-index subgroup $G\subset\G_p$, whose center has a nontrivial element $g\in G$; then $\mathcal{O}_\varepsilon(g)$ is $G$-invariant.
\end{enumerate}

Let us now turn to the weak convexity condition needed in Lemma~\ref{lemm:key_ameliorationbisbis}.
We will need the following estimate on the Hilbert metric, which gives control on the distance between two segments via the distance between the endpoints.

\begin{fact}[{\cite[Lem.\,8.3]{crampon} \& \cite[Lem.\,5.2]{topmixing}}]\label{fact:crampon}
	Let $\O$ be a convex domain.
	Consider two segments $[x,y],[x',y']\subset\O$ and two points $z\in[x,y]$ and $z'\in[x',y']$ such that $\tfrac{d_\O(x,z)}{d_\O(x,y)}=\tfrac{d_\O(x',z')}{d_\O(x',y')}$.
	Then
	$$ d_\O(z,z') \leq d_\O(x,x') + d_\O(y,y'). $$
\end{fact}

\begin{corollary}\label{coro:crampon}
	Let $\O$ be a convex domain and $y \in \partial \Omega$. Consider two segments $[x,y),[x',y)\subset\O$ and two points $z\in[x,y)$ and $z'\in[x',y)$ such that $d_\Omega (x,z) = d_\Omega (x',z')$. Then
	$$ d_\O(z,z') \leq d_\O(x,x'). $$
\end{corollary}

\begin{proof}[Proof]
	Let $(y_n)_n$ a sequence of points of the segment $(z,y)$ converging to $y$. Let $(z'_n)_n$ be the sequence of points of $(y_n,x')$ such that: 
	$\tfrac{d_\O(x,z)}{d_\O(x,y_n)}=\tfrac{d_\O(x',z'_n)}{d_\O(x',y_n)}$. By Fact~\ref{fact:crampon}, $ d_\O(z,z'_n) \leq d_\O(x,x')$. Hence, it is enough to show that $(z'_n)_n$ converges to $z'$.
	
	The ratio $\tfrac{d_\O(x,y_n)}{d_\O(x',y_n)}$ converges to $1$ since $| d_\O(x,y_n) - d_\O(x',y_n) | \leqslant d_\O(x,x')$. So, $d_\O(x',z'_n)$ converges to $d_\O(x,z) = d_\O(x',z')$, giving that $z'_n \to z'$ since $z'_n$ is on the segment $(y_n,x')$ which converges to the segment $(y,x')$.
\end{proof}

\begin{corollary}\label{cor:Oeps almost convex}
	Let $\Omega$ be a convex domain of $\R\PP^d$.
	For all $\epsilon>0$ and $A\subset \Aut(\O)$,
	the convex hull of $\mathcal{O}_\epsilon (A)$ is contained in $\mathcal{O}_{(d+1)\epsilon}(A)$.
\end{corollary}
\begin{proof}[Proof]
	It suffices to prove by induction on $k\geq 1$ that any convex combination of $k$ points of $\mathcal{O}_\epsilon (A)$ is in $\mathcal{O}_{k\epsilon}(A)$.
	
	If $k=1$ then this is obvious.
	
	Suppose $k\geq 2$ and the property we want to prove for convex combinations of fewer than $k$ points.
	Let $z$ be a convex combination of $k$ points of $\mathcal{O}_\epsilon(A)$.
	Then $z\in[x,y]$ where $x\in \mathcal{O}_\epsilon(A)$ and $y$ is a convex combination of $k-1$ points of $\mathcal{O}_\epsilon(A)$.
	By the inductive hypothesis we have $y\in \mathcal{O}_{(k-1)\epsilon}(A)$.
	
	Consider $\gamma \in A$, and let us check that $d_\O(z,\gamma z)<k\epsilon$.
	We have $\gamma z\in[\gamma x,\gamma y]$ and $\tfrac{d_\O(x,z)}{d_\O(x,y)}=\tfrac{d_\O(\gamma x,\gamma z)}{d_\O(\gamma x,\gamma y)}$, so by Fact~\ref{fact:crampon}
	\begin{equation*} d_\O(z,\gamma z) \leq d_\O(x,\gamma x) + d_\O(y,\gamma y) < \epsilon + (k-1)\epsilon = k\epsilon.\qedhere \end{equation*}
\end{proof}

\subsubsection{Applications of Sections~\ref{sec:star} and \ref{sec:general} bis}

We now apply the results from previous sections to establish  \eqref{item:GF_on_boundary} $\Leftrightarrow$ \eqref{item:non_cusp_part}.

\begin{lemma}\label{lemm:key_ameliorationbis}
	Let $\Omega$ be a round convex subset of $\R\PP^d$,
	$\Gamma\subset\Aut(\Omega)$ discrete non-elementary,
	and $p\in \Lambda_\G$ be a bounded parabolic fixed point with stabilizer $\G_p$.
	Consider $G\subset \G_p$ a finite index subgroup and $A\subset G$ finite, that generates an infinite subgroup, and { invariant under conjugation by elements of $G$}.
	Then the action of $G$ on $\overline{\Cc(\LG)}\smallsetminus(\mathcal{O}_\epsilon(A)\cup\{p\})$ is cocompact for any $\epsilon$.
\end{lemma}
Note that $G$ preserves $\mathcal{O}_\epsilon(A)$ because $A$ is invariant under conjugation.
\begin{proof}[Proof]
	This is an immediate corollary of Fact~\ref{fact:thinpart starshaped}, Corollary~\ref{cor:Oeps almost convex} and Lemmas~\ref{lemm:key_ameliorationbisbis} and \ref{lemm:key_ameliorationbisbisbis}.
\end{proof}

\begin{corollary}\label{cor:key amelioration}
	Let $\Omega$ be a round convex subset of $\R\PP^d$,
	$\Gamma\subset\Aut(\Omega)$ discrete non-elementary,
	and $p\in \Lambda_\G$ be a bounded parabolic fixed point with stabilizer $\G_p$.
	Then the action of $\G_p$ on $\overline{\Cc(\LG)}\smallsetminus(\O_\epsilon(\G_p)\cup\{p\})$ is cocompact for any $\epsilon$.
\end{corollary}

\begin{proof}[Proof]
	By Fact~\ref{lem:margulis}, we can find  a finite-index torsion-free nilpotent subgroup $G\subset\G_p$.
	Since $G$ is nilpotent it has a nontrivial element $g$ in the center.
	By definition, $\mathcal{O}_\epsilon (g)\subset \O_\epsilon (\G_p)$ and hence 
	$$\overline{\Cc(\LG)}\smallsetminus(\O_\epsilon(\G_p)\cup\{p\})\subset \overline{\Cc(\LG)}\smallsetminus(\mathcal{O}_\epsilon(g)\cup\{p\}).$$
	We conclude using Lemma~\ref{lemm:key_ameliorationbis}.
\end{proof}

\begin{proof}[Proof of \eqref{item:GF_on_boundary} $\Leftrightarrow$ \eqref{item:non_cusp_part}]
	This is a corollary of Corollary~\ref{cor:key amelioration}, Fact~\ref{lem:precisely_equivariant} and Propositions~\ref{prop:gf implies cocpct} and \ref{prop:cocpct implies gf}.
\end{proof}

\subsection{Counterexample to \eqref{item:GF_on_Omega} $\Rightarrow$ \eqref{item:gf_cusps_lobat}}\label{sec:contrex}


In this section we use a reducible representation of $\SL_2(\R)$ to construct an example of group $\G$ satisfying \eqref{item:GF_on_Omega} but not  \eqref{item:gf_cusps_lobat}.
Let $\tau:\SL_2 (\R)\to\SL_3(\R)$ be an irreducible representation.
Consider the reducible semisimple representation $\rho:\SL_2(\R)\to\SL_5(\R)$ such that for any $g\in \SL_2(\R)$ we have
$$\rho(g)=\begin{pmatrix}\tau(g)&0\\0&g\end{pmatrix}.$$

Note that by definition $\rho(\SL_2(\R))$ preserves the supplementary subspaces $\R^3\times\{0\}$ and $\{0\}\times\R^2$ of $\R^5$.
Moreover in $\R^3\times\{0\}$ it preserves a properly convex (relatively) open cone $C=C\times\{0\}$, and of course it also preserves the open convex cone $C\times\R^2$ which is not properly convex.

The projectivisation $D=\PP(C)$ is a 2-dimensional properly convex disc and $\O_{\max}=\PP(C\times\R^2)$ is an open  convex subset of $\R\PP^4$ which is contained in some affine chart, where it is $D\times \R^2$.
Their relative boundaries are denoted by $\partial D$ and $\partial\O_{\max}$.

The following result describes all $\rho(\SL_2(\R))$-invariant convex domains.

\begin{fact}\label{fait:cvx sl2inv}
	We have the following.
	\begin{enumerate}
		\item  The proximal limit set of $\rho(\SL_2(\R))$ is $\partial D$.
		\item $\rho(\SL_2(\R))$ acts properly discontinuously on $\O_{\max}$; more precisely the orbit of any compact set accumulates on all of $\partial D$ (and only there).
		
		\item  For any $x\in \O_{\max}\smallsetminus D$, the stabilizer is trivial.
		
		\item For any $x\in \O_{\max}\smallsetminus D$, the disjoint union $\partial D\sqcup \rho(\SL_2(\R))\cdot x$ is the boundary of an invariant round convex domain $\O\subset\O_{\max}$. Moreover, every invariant convex domain $\O'$ is obtained in this way. 
	\end{enumerate}
\end{fact}

\begin{proof}[Proof]
	\begin{enumerate}
		\item Let  $g \in \SL_2 (\R)$ be proximal whose (real) eigenvalues have norm $\lambda>\nicefrac1\lambda$.
		Then the norms of the eigenvalues of $\tau(g)$ are $\lambda^2>1>\nicefrac1{\lambda^2}$.
		Thus the biggest norm of eigenvalues of $\rho(g)$ is $\lambda^2$, and the corresponding eigenline is exactly the eigenline of $\tau(g)$, embedded in $\R^5$ via $\R^3\to\R^3\times\{0\}$.
		This concludes the proof since it is well known that the proximal limit set of $\tau(\SL_2(\R))$ in $\R\PP^2$ is $\partial D$.
		
		\item Let $x \in \O_\mathrm{max}$ and $(g_n)_n$ a sequence of element of $\SL_2 (\R)$ such that $g_n \to \infty$. Let $\|\cdot \|$ be the induced norm on the space of $m \times m$ real matrix, by the canonical scalar product on $\R^m$, there exists $C > 1$ such that:
		$$
		\forall g \in \SL_2 (\R), \qquad
		C^{-1} \| g \|^2 \leqslant \|\tau (g) \| \leqslant C \| g \|^2
		$$
		For another constant $C_2 > 1$, one has:
		$$
		\forall g \in \SL_2 (\R), \qquad
		C_2^{-1} \| g \|^2 \leqslant \|\rho (g) \| \leqslant C_2 \| g \|^2
		$$
		Hence, 
		up to extraction, we may assume that $\tau(g_n)/\| \tau (g_n) \|$ converge to a rank-one matrix $T \in \mathcal{M}_3 (\R)$ such that $\mathrm{Im} (T) \subset \partial D$ and $\ker (T) \cap \partial D$ is a singleton. Thanks to the estimate, the matrix $\rho(g_n)/\|\rho (g_n) \|$ converges to the matrix:
		$$
		\begin{psmallmatrix}
			T & & \\
			& 0 & 0 \\
			& 0 & 0\\
		\end{psmallmatrix}
		$$
		Hence, $\rho (g_n) \cdot x$ converges to $T (x) \in \partial D$.
		
		\item Let $[(x,y)] \in \O_{\max} \smallsetminus D$ with $x\in \R^3$ and $y \in \R^2$ and consider $g \in \SL_2 (\R)$ such that $\rho (g) \cdot [(x,y)] = [(x,y)]$. 
		We may assume that $\Stab_{\tau(\SL_2(\R)}([x]) = \tau(\mathrm{SO}_2 (\R))$, hence we get that $g \in \mathrm{SO}_2 (\R)$. 
		The only two rotations of $\R^2$ that preserves a line are $\mathrm{Id}$ and $-\mathrm{Id}$, so we get that $g = \pm \mathrm{Id}$. 
		But, the fixed point set of $\rho (- \mathrm{Id})$ is $\PP(\R^3 \times \{ 0\})  \cup \PP(\{ 0\} \times \R^2)$, hence $g = \mathrm{Id}$.
		
		\item Take $x \in \O_{\max} \smallsetminus D$.
		The interior of the convex hull of $\rho(\SL_2(\R))\cdot x$ in $\O_{\max}$ is an invariant convex domain $\O$.
		The orbit $\rho(\SL_2(R))\cdot x$ of $x$  accumulates on $\partial D$ and only there (by (2.)), which implies $\rho(\SL_2(R))\cdot x \sqcup \partial D$ is compact and $\overline\O$ is the convex hull of $\rho(\SL_2(R))\cdot x \sqcup \partial D$ and is properly convex.
		
		Thus the extremal points of $\overline\O$ are in $\rho(\SL_2(R))\cdot x \sqcup \partial D$.
		They cannot be all in $\partial D$, otherwise $\overline\O\subset\overline D$ which contradicts $x\not\in \overline D$.
		Thus at least one point of $\rho(\SL_2(R))\cdot x$ must be extremal, and then all points of $\rho(\SL_2(R))\cdot x$ are extremal since $\rho(\SL_2(R))$ maps extremal points to extremal points.
		Moreover one can also check that any point $p\in\partial D$ is extremal.
		(Otherwise there would be $a,b\in\partial\O$ such that $p\in (a,b)$: if $a,b\in\partial D$ then $p\in D$, absurd, and if one of $a,b$ is in $\rho(\SL_2(R))\cdot x$ then $p\in\O_{\max}$, absurd too.)
		
		We proved that the set of extremal points is exactly $\rho(\SL_2(R))\cdot x \sqcup \partial D$, which is in particular contained in $\partial \O$.
		The orbit $\rho(\SL_2(R))\cdot x$ of $x$ is open in $\partial \O$ by Brouwer's invariance of the domain theorem, thanks to (3.).
		The orbit accumulates on $\partial D$ and only there (by (2.)), hence is closed in $\partial \O \smallsetminus \partial D$.
		A classical result of topology shows that $\partial \O \smallsetminus \partial D$ is connected (see e.g.\ \cite[Prop.\,2.B.1.b]{Hatcher}).
		Hence, $\partial \O = \rho(\SL_2(R))\cdot x \sqcup \partial D$, and $\O$ is strictly convex since all points of the boundary are extremal.
		
		Let $\O'\subset\R\PP^4$ be an invariant properly convex open set.
		Then $\partial\O'$ contains the proximal limit set of $\rho(\SL_2(\R))$, i.e.\ $\partial D$.
		By convexity $\overline\O'$ must then contain $D$, and $\O'$ intersects $\O_{\max}$.
		However, $\overline\O'$ cannot intersect $\partial\O_{\max} \smallsetminus \partial D$.
		If it did then by applying powers of a suitable element of $\rho(\SL_2(\R))$ there would be a point of $\PP(\{0\}\times\R^2)$ in $\overline\O'$, and hence there would in fact be the whole $\PP(\{0\}\times\R^2)$ inside $\overline \O'$, which would compromise $\O'$'s proper convexity.
		As a consequence, $\partial\O'$ intersects $\O_{\max}$ at some point, say at the point $x$.
		Then $\O\subset\O'$, and by our results above we get $\partial \O= \rho(\SL_2(R))\cdot x \sqcup \partial D\subset\partial \O'$, which implies at once that $\O'=\O$.
		
		
		
		Finally, the dual representation $\rho^\ast$ is conjugated to $\rho$, hence the dual convex of $\O$ is strictly convex too, hence $\O$ has $\Cc^1$-boundary.\qedhere
	\end{enumerate}
\end{proof}

Next we prove that the image under $\rho$ of a geometrically finite subgroup of $\SL_2(\R)$ satisfies \eqref{item:GF_on_Omega} but not  \eqref{item:gf_cusps_lobat}.

\begin{proposition_english}
	For any $\rho(\SL_2(\R))$-invariant round convex domain $\O\subset\O_{\max}$, for any discrete subgroup  $\G\subset \SL_2(\R)$, if $\G$ is finitely generated (which means geometrically finite in the classical sense), then $\rho(\G)$ acts geometrically finitely on $\O$, but no parabolic subgroup is conjugate into $\mathrm{O}_{4,1}(\R)$ (even though all parabolic points are uniformly bounded).
\end{proposition_english}

\begin{proof}[Proof]
	By Fact~\ref{fait:cvx sl2inv}, the proximal limit set $\LG$ of $\rho(\G)$ is contained in $\partial D$, and the convex hull $\Cc (\LG)$ is contained in $D$, which is, we recall, isometric to the Poincar\'e disc.
	This implies $\G$ acts geometrically finitely on $\O$.
	
	Indeed every point $p$ of $\LG$ corresponds to a point $q$ of the limit set of $\G$ acting on $\Hb^2$.
	If $q$ is conical (there is $(\g_n)_n\subset \G$ such that $(\g_n o)_n$ converges to $q$ while remaining at bounded distance from $[o,q)$, for $o\in \Hb^2$) then $p$ is conical too.
	If $q$ is bounded parabolic for the action of $\G$ on $\Hb^2$ then the stabiliser $\G_q$ acts cocompactly on $\partial \Hb^2 \smallsetminus \{q\}$, hence $\rho(\G_q)$, which is the stabiliser of $p$, acts cocompactly on $\partial D \smallsetminus \{p\}$, which contains the stereographic projection of $\Cc(\LG)$, hence $p$ is uniformly bounded parabolic.
	
	Parabolic subgroups of $\rho(\G)$ are virtually conjugate to the group generated by the following matrix,
	$$ 
	\begin{psmallmatrix}
		1 & 1 & \nicefrac12 &&\\
		& 1 & 1 &&\\
		&& 1 &&\\
		&&& 1 & 1 \\
		&&&& 1
	\end{psmallmatrix}
	$$
	 which is not conjugate into $\mathrm{O}_{4,1}(\R)$.\qedhere
\end{proof}

\subsection{Under \eqref{item:generic}, uniformly bounded cusp groups are conjugate into $\mathrm{O}_{d,1}(\R)$}\label{sec:GF imp HC}

In this section, we prove that under the genericity assumption \eqref{item:generic}, the stabilisers of uniformly parabolic points are conjugate into  $\mathrm{O}_{d,1}(\R)$.
In particular, this establish the implication  (\eqref{item:GF_on_Omega}\&\eqref{item:generic})$\Rightarrow$\eqref{item:gf_cusps_lobat}.

\begin{proposition_english}\label{prop:oscultation}
	Let $\Omega$ be a round convex subset of $\R\PP^d$,
	$\Gamma\subset\Aut(\Omega)$ discrete non-elementary,
	and $p\in \Lambda_\G$ be a uniformly bounded parabolic fixed point with stabilizer $\G_p$.
	
	Suppose that the limit set $\LG$ spans the whole $\R\PP^d$, or that its dual spans the  dual of $\R\PP^d$.
	
	Then $\G_p$ is conjugate to a parabolic subgroup of $\mathrm{O}_{d,1}(\R)$.
	
	Moreover it preserves a projective subspace $\R\PP^{r+1}\subset \R\PP^d$ where $r$ is the rank of $\G_p$, that contains $p$ and intersects $\O$, and preserves ellipsoids $\Ec^{\mathrm{int}}\subset \Ec^{\mathrm{ext}}$ such that
	$$\Ec^{\mathrm{int}}\cap \mathrm{Cone}(p,\Cc(\LG)) \subset \O\cap \mathrm{Cone}(p,\Cc(\LG))\subset \Ec^{\mathrm{ext}}\cap \mathrm{Cone}(p,\Cc(\LG)),$$
	where $\mathrm{Cone}(p,\Cc(\LG))$ is the union of the lines through $p$ and a point of $\Cc(\LG)$.
\end{proposition_english}
\begin{proof}[Proof]
	We can assume that $\LG$ spans $\R\PP^d$ since the other case is dual.
	
	Let $\Ab^{d-1}$ be the affine chart of $\PP(\R^{d+1}/p) \smallsetminus \PP(T_p\partial\O/p)$, on which $\G_p$ acts properly discontinuously by affine transformation (see Fact~\ref{lem:parabolic_transf_push_to_p}), and preserves and acts cocompactly on the closed convex projection $K$ of $\Cc(\LG)$ (since $p$ is uniformly bounded).
	
	Let $\Ab^{r}\subset \Ab^{d-1}$ be a maximal affine subspace contained in $K$ and $K'\subset \Ab^{d-1}/\Ab^r$ the projection of $K$, which can be thought as the set of maximal affine subspaces contained in $K$.
	Note that $K'\subset \Ab^{d-1}/\Ab^r$ does not contain any line, and
	that $K$ is isomorphic to $\Ab^r\times K'$.
	
	Observe that $\G_p$ acting cocompactly on $K$ implies that $K'$ must be compact: indeed if it were not then $K'$ would be homeomorphic to a halfspace, and so would be $K$, but no halfspace can be acted on properly discontinuously and cocompactly by a discrete group.
	(If a group $G$ acts properly discontinuously and cocompactly on $\R^n\times \R_{\geq 0}$ then it acts properly discontinuously and cocompactly on both the boundary $\R^n\times\{0\}$ and the double $\R^{n+1}$, which is impossible.)
	
	Moreover, $K'$ has nonempty interior by our assumption that $\LG$ spans $\R\PP^d$.
	
	The group $\G_p$ acts on $\Ab^{d-1}/\Ab^r$ by affine transformations and preserves the compact convex subset $K'$ with nonempty interior, so $\G_p$ must fix the barycenter of $K'$ which is in its interior, and $\G_p$ must preserve some Euclidean structure on $\Ab^{d-1}/\Ab^r$.
	
	This barycenter lifts to a $\G_p$-invariant maximal affine subspace of $K$, which we assume to be $\Ab^r$ without loss of generality, and on which the action of $\G_p$ is cocompact.
	Then $\Ab^r$ lifts to a $(r+1)$-dimensional $\G_p$-invariant subspace of $\R\PP^d$ which contains $p$ and intersects $\O$.
	Up to changing basis we assume that this subspace is $\R\PP^{r+1}=\PP(\R^{r+2}\times\{0\})\subset \R\PP^d=\PP(\R^{d+1})$.
	
	The intersection $\O'=\O\cap \R\PP^{r+1}$ is $\G_p$-invariant, and $\G_p$ acts properly discontinuously and cocompactly on $\partial \O'\smallsetminus\{p\}$ since we have an equivariant identification with $\Ab^r$ via the stereographic projection.
	
	By \cite[Th.\,7.14]{CM2014finitude} this implies that the restriction of $\G_p$ to $\R^{r+2}=\R^{r+2}\times\{0\}\subset\R^{d+1}$ is conjugate to a parabolic subgroup of $\mathrm{O}_{r+1,1}(\R)$ of rank $r$; up to conjugating everything we assume that this restriction is contained in $\mathrm{O}_{r+1,1}(\R)$.
	By Bieberbach's Theorem (see e.g.\ \cite[Th.\,5.4.4]{Ratcliffe}), up to changing the basis $\mathcal{B}$ of $\R^{r+2}$, the group $\G_p$ has a finite-index normal subgroup isomorphic to $\Z^r$ such that the restriction of any $k\in\Z^r$ to $\R^{r+2}$ acts by
	$$
	\begin{pmatrix}
		1 & k^t & \frac {||k||^2}2 \\
		0 & I_{r-1} & k\\
		0 & 0 & 1
	\end{pmatrix}.
	$$
	
	The fact that $\G_p$ preserves an Euclidean structure on $\Ab^{d-1}/\Ab^r$ means that its action on $\R^{d-r-1}=\R^{d+1}/\R^{r+2}$ preserves an inner product, say the standard one.
	
	To prove that $\G_p$ is conjugate to a parabolic subgroup of $\mathrm{O}_{d,1}(\R)$ it suffices to find a $\G_p$-invariant subspace of $\R^{d+1}$ which is supplementary to $\R^{r+2}$.
	The set $E$ of subspaces supplementary to $\R^{r+2}$ is an affine space on which $\G_p$ acts by affine transformations.
	Thus it suffices to check that $\Z^r\subset \G_p$ preserves such a subspace, i.e.\ fixes a point of $E$.
	Indeed, since $\Z^r$ is a normal subgroup of $\G_p$, the subspace $E'\subset E$ of $\Z^r$-fixed points is $\G_p$-invariant.
	As $\Z^r$ acts trivially on $E'$, the $\G_p$-action descends to an affine action of $\G_p/\Z^r$ which is a finite group, and hence has a fixed point.
	
	To write the matrices, we first choose $\mathcal{B}$ for the $(r+2)$ first elements of our basis. 
	Then we choose the remaining $(d-r-1)$ elements of the basis of $\R^{d+1}$ in a lift of $T_p \partial \O$ in such a way that: an element $k\in\Z^r$ acts on $\R^{d+2}$ by
	$$
	\begin{pmatrix}
		1 & k^t & \frac {||k||^2}2  & D_k\\
		0 & I_{r-1} & k & C_k\\
		0 & 0 & 1 & B_k\\
		0 & 0 & 0 & A_k
	\end{pmatrix},
	$$
	where $A_k$ is an orthogonal matrix.
	Since those last $(d-r-1)$ elements were chosen in a lift of $T_p \partial \O$, we get that $B_k=0$.


	Now if we put the $(r+2)$-th element of the basis of $\R^{d+1}$ into last position, then the first $d$ vectors form a basis of the lift $\R^d$ of $T_p\partial\O$, and the new action of $k\in\Z^r$ will be given by
	$$
	\begin{pmatrix}
		1 & k^t   & D_k & \frac {||k||^2}2\\
		0 & I_{r-1} & C_k & k\\
		0 & 0 & A_k&0\\
		0 & 0 & 0 & 1
	\end{pmatrix},
	$$
	and our goal is to arrange the elements of the basis from $(r+2)$-th to $d$-th so that $C_k=0$ and $D_k=0$ (and so that those vectors are still in $\R^{d}$, which is the lift of $T_p\partial\O$).
	
	We can diagonalise simultaneously all $A_k$ for $k\in\Z^r$, in the complex field.
	This gives us 
	\begin{itemize}
		\item  $v_1,\dots,v_\alpha\in \R^d/\R^{r+1}$ which are real-eigenvectors for all $A_k$'s with eigenvalue $1$,
		\item $w_1,\dots,w_\beta\in\R^d/\R^{r+1}$ eigenvectors such that the eigenvalue of $A_k$ for $w_j$ is $(-1)^{k\cdot \epsilon_j}$ for some $\epsilon_j\in \Z^r\smallsetminus 2\Z^r$,
		\item and  $P_1,\dots ,P_\gamma\subset \R^d/\R^{r+1}$ invariant planes such that each $A_k$ acts as a rotation on $P_j$ with angle $k\cdot \theta_j$ for some $\theta_j\in\R^r\smallsetminus \pi \Z^r$.
	\end{itemize}
	Using this basis, the new action of $k\in\Z^r$ is given by
	$$\left(
	\begin{array}{*{12}c}
		1 & k^t     & d^1_k & \cdots & d^\alpha_k & \bar d^1_k               & \cdots     & \bar d^\beta_k               & \tilde d^1_k        & \cdots & \tilde d^\gamma_k        & \frac {||k||^2}2 \\
		& I_{r-1} & c^1_k & \cdots & c^\alpha_k & \bar c^1_k               & \cdots     & \bar c^\beta_k               & \tilde c^1_k        & \cdots & \tilde c^\gamma_k        & k                \\
		&         & 1     &        &            &                          &            &                              &                     &        &                          &                  \\
		&         &       & \ddots &            &                          &            &                              &                     &        &                          &                  \\
		&         &       &        & 1          &                          &            &                              &                     &        &                          &                  \\
		&         &       &        &            & (-1)^{k\cdot \epsilon_1} &            &                              &                     &        &                          &                  \\
		&         &       &        &            &                          & \ddots     &                              &                     &        &                          &                  \\
		&         &       &        &            &                          &            & (-1)^{k\cdot \epsilon_\beta} &                     &        &                          &                  \\
		&         &       &        &            &                          &            &                              & R_{k\cdot \theta_1} &        &                          &                  \\
		&         &       &        &            &                          &            &                              &                     & \ddots &                          &                  \\
		&         &       &        &            &                          &            &                              &                     &        & R_{k\cdot \theta_\gamma} &                  \\
		&         &       &        &            &                          &            &                              &                     &        &                          & 1
	\end{array}
	\right),$$
	%
	
	By dealing with each column (of width 1 or 2) independently, we can assume that we are in one of the three following elementary cases:
	\begin{enumerate}
		\item $A_k=1$ for every $k$;
		\item $A_k=(-1)^{k\cdot \epsilon}$ for every $k$, where $\epsilon\in\Z^r\smallsetminus 2\Z^r$;
		\item $A_k=R_{k\cdot \theta}$ for every $k$, where $\theta\in\R^r\smallsetminus 2\pi \Z^r$.
	\end{enumerate}
	
	{\bf Case 2.}
	Since $\epsilon\not\in 2\Z^r$ we can find $k$ such that $(-1)^{k\cdot \epsilon}=-1$, and the action of $k$ has a unique $-1$-eigenvector, which is invariant under the whole group $\Z^r$, and which makes $C_\ell$ and $D_\ell$ zero for every $\ell\in\Z^r$.
	
	{\bf Case 3.}
	Since $\theta\not\in \pi\Z^r$ we can find $k$ such that $R_{k\cdot \theta}$ is a nontrivial rotation, and the action of $k$ has a unique invariant plane in $\R^{d+1}$ where it acts as this rotation, which is invariant under the whole group $\Z^r$, and which hence makes $C_\ell$ and $D_\ell$ zero for every $\ell\in\Z^r$.
	
	{\bf Case 1.}
	Let us show that $C_k$ has to be zero for any $k$, no matter what choices for the basis have been made before.
	
	The action of $k$ is given by
	$$
	\begin{pmatrix}
		1 & k^t   & D_k & \frac {||k||^2}2\\
		0 & I_{r-1} & C_k & k\\
		0 & 0 & 1&0\\
		0 & 0 & 0 & 1
	\end{pmatrix}.
	$$
	The fact that we have a group action implies that for all $k,\ell\in\Z^r$ we have $C_{k+\ell}=C_k+C_{\ell}$ and $D_{k+\ell}=D_k+D_\ell+k^t \cdot C_\ell$.
	
	Hence there is a matrix $C$ such that $C_k=C\cdot k$ for every $k$.
	
	Since $D_{k+\ell}=D_{\ell+k}$ we have $k^t C\ell=\ell^t Ck$ for all $k,\ell$ and hence $C$ is symmetric.
	Set $D'_k:=D_k-\tfrac12 k^t Ck$ and note that $D'_{k+\ell}=D'_k+D'_\ell$ for all $k,\ell$, so there is a row vector $D$ such that $D_k=Dk + \tfrac12 k^t Ck$ for any $k$.
	
	The affine action of $k$ on the affine horizontal hyperplane with height $1$ is given by 
	$$
	v\mapsto k\cdot v=
	\begin{pmatrix}
		1 & k^t   & Dk+\tfrac12 k^t C k\\
		0 & I_{r-1} & Ck\\
		0 & 0 & 1
	\end{pmatrix}
	v+
	\begin{pmatrix}
		\frac {||k||^2}2\\
		k\\
		0
	\end{pmatrix}
	$$
	We know that for every $v$ the first entry of $k\cdot v$ must go to $+\infty$ as $k$ leaves every compact set.
	This implies that $C$ has to be zero.
	Indeed, if $x^t Cx\neq 0$ for some unit vector $x$ then we can find a diverging sequence $(k_n)_n$ with direction converging to $x$ such that $\tfrac12 k_n^t C k_n\sim \tfrac12 x^t C x||k_n||^2$ and taking $v$ with last entry $-2/x^t Cx$ and all other entries zero, the first entry of $k_n\cdot v$ is 
	$$O(||k_n||)+\frac12 x^t C x||k_n||^2\cdot (-2/x^t Cx)+\frac12||k_n||^2=O(||k_n||)-\frac12||k_n||^2\to-\infty,$$
	which is absurd.
	
	The action of $k$ on $\R^{d+1}$ is now given by 
	$$
	\begin{pmatrix}
		1 & k^t   & Dk & \frac {||k||^2}2\\
		0 & I_{r-1} & 0 & k\\
		0 & 0 & 1&0\\
		0 & 0 & 0 & 1
	\end{pmatrix}.
	$$
	Replacing the $(r+2)$-th vector of the basis by 
	$$
	\begin{pmatrix}
		0\\
		-D^t\\
		1\\
		0
	\end{pmatrix}
	$$
	yields a new action of the form
	$$
	\begin{pmatrix}
		1 & k^t   & 0 & \frac {||k||^2}2\\
		0 & I_{r-1} & 0 & k\\
		0 & 0 & 1&0\\
		0 & 0 & 0 & 1
	\end{pmatrix},
	$$
	which is what we needed to conclude the proof.
\end{proof}

\subsection{A correction on the proof of
	\eqref{item:GF_on_Omega} $\Rightarrow$ \eqref{item:convex_core}${}_1$}\label{sec:GF imp VF}

The implication \eqref{item:GF_on_Omega} $\Rightarrow$ \eqref{item:convex_core}${}_1$ is a consequence of  \eqref{item:GF_on_Omega} $\Rightarrow$\eqref{item:cusp_uniform} and the following lemma.

We adopt here a slightly different strategy than in \cite{CM2014finitude}.
There the crucial ingredient were estimates on the Hilbert volumes of cones in convex domains, with apex in the boundary, obtained in collaboration with Vernicos, using the Busemann volumes.
We think it is possible to adapt this strategy to prove  \eqref{item:convex_core}${}_1$ (instead of just  \eqref{item:convex_core}${}_0$), but it would be much more complicated.
Here upper estimates on volumes are obtained by covering our set with a well chosen collection of balls with the same radius.

\begin{lemma}[{\cite[p.\,48]{CM2014finitude}}]\label{lem:finite volume}
	Let $\G$ be a discrete group preserving a round convex open subset $\O$, and $p\in\LG$ a uniformly bounded parabolic point with stabiliser $\G_p$.
	Fix a closed horoball $H\subset \O$ at $p$ and let $N$ be the $1$-neighborhood of $H\cap \Cc(\LG)$ in $\O$.
	Then $N/\G_p$ has finite Hilbert volume.
\end{lemma}

\begin{proof}[Proof]
	Since $p$ is uniformly bounded parabolic, $\G_p$ acts cocompactly on $\Cone(p,\Cc(\LG))\cap \partial H \smallsetminus \{p\}$.
	Fixing a point $x_0$ in this set, there exists $R>0$ such that all the other points are at Hilbert distance at most $R$ from the $\G_p$-orbit of $x_0$.
	
	For each $t>0$ let $x_t$ be the point of $[x_0,p)$ at distance $t$ from $x_0$ and $H_t$ the horoball with $x_t$ in its boundary.
	Note that  $\Cone(p,\Cc(\LG))\cap \partial H_t \smallsetminus \{p\}$ is contained in $\G_p\cdot B_\O(x_t,R)$.
	Indeed if $z_t\in \Cone(p,\Cc(\LG))\cap \partial H_t \smallsetminus \{p\}$ then the line through $p$ and $z_t$ crosses $\partial H$ at some point $z_0$.
	Then there is $\g\in\G_p$ such that $\g z_0 \in  B_\O(x_0,R)$, and $d_\O(\g z_t,x_t)\leq d_\O(\g z_0,x_0)<R$ by Corollary~\ref{coro:crampon}.
	
	This implies that $N$ is contained in
	$$N\subset \bigcup_{n\in\N}\bigcup_{ \g\in\G_p}\g B_\O(x_n,R+2)=\bigcup_{n\in\N} B_\O(\G_p\cdot x_n,R+2).$$
	Let $\pi:\O\to\O/\G_p$ be the projection map and $\Vol$ the quotient measure on $\O/\G_p$.
	Then
	$$\Vol(\pi(N))\leq \sum_{n\in\N}\Vol(\pi(B_\O(x_n,R+2)))$$
	By definition of the quotient measure on $\O/\G_p$, to compute the volume of the quotient of $B_\O(x_n,R+2)$ one can either find a fundamental region for the action of $\G_p$ or one can consider all the points of $B_\O(x_n,R+2)$ and then divide by the number of orbit points in $B_\O(x_n,R+2)$:
	$$\Vol(\pi(B_\O(x_n,R+2)))=\int_{x\in B_\O(x_n,R+2))}\frac1{\#\{\g\in\G_p:\g x\in B_\O(x_n,R+2)\}}d\Vol_\O(x).$$
	Here we use this second idea, except that we apply it to $V=B_\O(\G_p\cdot x_n,R+2)\cap B_\O(x_n,R+3)$ instead of $B_\O(x_n,R+2)$, both have the same projection under $\pi$.
	Note that if $x\in V$ then $\gamma_0x \in B_\Omega(x_n,R+2)$ for some $\gamma_0$, and then for each $\gamma'\in \Gamma$, if $\g' x_n\in B_\O(x_n,1)$ then $\g' \g_0x\in B_\O(x_n,R+3)$.
	Using this we observe the following.
	\begin{align*}
		\Vol(\pi(B_\O(x_n,R+2))) = \Vol(\pi(V))
		&= \int_{x\in V}\frac1{\#\{\g\in\G_p:\g x\in V\}}d\Vol_\O(x)\\
		& \leq \int_{x\in V}\frac1{\#\{\g\in\G_p:\g x\in B_\O(x_n,R+3)\}}d\Vol_\O(x)\\
		&\leq \frac{\Vol_\O(B_\O(x_n,R+3))}{\#\{\g'\in\G_p:\g' x_n\in B_\O(x_n,1)\}} \\
		& \leq \frac{C}{\#\{\g\in\G_p:\g x_n\in B_\O(x_n,1)\}},
	\end{align*}
	where $C$ is a constant that depends on $R$ (see for instance \cite[Th.\,12]{colbois_vernicos_spectre}).
	
	From Proposition~\ref{prop:oscultation} we know there is a $\G_p$-invariant ellipsoid $\Ec\subset \Cc(\LG)$ of dimension 1 plus the rank of $\G_p$ tangent to $p$.
	
	Up to shrinking $H$ and making a different choice of $x_0$, we can assume that $x_0\in \Ec$.
	Then  the Hilbert distance in $\Ec$ from $x_t$ to $x_0$ is $t$ plus some constant.
	With this it is classical to deduce that that $\#\{\g\in\G_p:\g x_n\in B_{\Ec}(x_n,1)\}$ increases exponentially fast with $n$, and hence so does the bigger number $\#\{\g\in\G_p:\g x_n\in B_\O(x_n,1)\}$ (recall that distances in $\O$ are smaller than in $\Ec$ since $\Ec\subset\O$, see e.g.\,\cite[\textsection 2.1]{CM2014finitude}).
	This makes $\sum_{n\in\N} \mathrm{Vol} (\pi(N\cap B_\O(\G_p\cdot x_n,R+2)))$ summable and concludes the proof.
\end{proof}

\subsection{A correction on the proof of \eqref{item:GF_on_Omega} $\Rightarrow$ (\eqref{item:GF_on_boundary}$\&$\eqref{item:convex_core_ghyp})}\label{sec:GF imp Hyp}

\begin{lemma}[{\cite[Lem.\,9.3]{CM2014finitude}}]
	Let $\G$ be a discrete group preserving a round convex open subset $\O$. If $\G$ acts \eqref{item:GF_on_Omega} on $\Omega$ then the metric space $(\Cc(\LG),d_\O)$ is Gromov-hyperbolic.
\end{lemma}

The lemma is correct but there is a mistake at the end of the proof in \cite[Lem.\,9.3]{CM2014finitude}. 
Let us reproduce the proof, with some minor modifications, to exhibit the mistake and explain how to fix it.
One can assume that $\LG$ spans $\R\PP^d$ (up to restricting to the span).
The proof works by contradiction: one assumes there is a sequence of fatter and fatter triangles in the convex core $C=\Cc(\LG)$, with vertices $x_n,y_n,z_n$ and a point $u_n$ on the side $[x_n,y_n]$ whose Hilbert distance to the other sides goes to infinity.

If the projection of $u_n$ in $C/\G$ stayed in a compact set, then up to translating the sequence of triangles and extracting a subsequence we could assume $u_n$ converges to a point $u\in\O$ while $x_n,y_n,z_n$ converge to points $x,y,z\in\partial\O$, but then by strict convexity of $\O$, the point $u$ would be at finite distance from one of the sides of the (possibly degenerate) triangle $(x,y,z)$.
Thus the projection of $u_n$ in $C/\G$ does not stay in a compact set, and up to extraction we may assume it is contained in a single cusp and leaves every compact set (using that the action is geometrically finite).

Up to translating the triangles we can then assume that $u_n$ lies in a fixed horoball $H$ of $\O$ about a uniformly bounded parabolic point $u\in\partial\O$.
Then $u_n\to u$.
Up to translating again with elements of $\Gamma_u$, we may also assume that the intersection point $h_n\in\partial H\cap (u\,u_n)$ converges to a point $h\in\O$ (here $(u\,u_n)$ denotes the line spanned by $u$ and $u_n$).

Letting ${\rm Co}$ be the cone at $u$ spanned by $C$, by Proposition~\ref{prop:oscultation} there are $\G_u$-invariant osculating ellipsoids $\Ec^{\rm int}\subset\Ec^{\rm ext}$ such that ${\rm Co}\cap \Ec^{\rm{int}} \subset {\rm Co}\cap \O \subset {\rm Co}\cap \Ec^{\rm{ext}}$, and there is a $\G_u$-invariant subspace $S\subset\R\PP^d$ of dimension one plus the rank of $\G_u$, such that it intersects $\O$ and $S\cap \O\subset {\rm Co}$ (see Proposition~\ref{prop:oscultation} and its proof).
One can check that the Hilbert distance from $u_n$ to $S\cap \O$ tends to zero: fix $o\in S\cap C$ and recall that, because $u$ is a $\Cc^1$ point of $\partial\O$, the Hilbert distance between the rays $[h_n,u)$ and $[o,u)$ tends to zero as we get closer to $u$, so there is $o_n\in[o,u)\subset S\cap C$ whose distance to $u_n$ tends to zero (see for instance \cite[Lem.\,3.4]{CD1}).

Now we fix a one-parameter subgroup $(\g^t)_t$ of hyperbolic automorphisms of $\Ec^{\rm ext}$ that preserve $S$, $u$ and the line $(o\,u)$ and use it to recenter the whole picture: we select times $k_n$ such that $\g^{k_n}o_n=o$.




Now comes the small error: The sentence ``Comme $\g^{k_n}\O\cap\Ec^{\rm ext}$ est coinc\'e entre $\gamma^{k_n} (\Ec^{\rm{int}} )$ et $\Ec^{\rm{ext}}$'' is incorrect (only when restricting to ${\rm Co}$ does the inclusion $\g^{k_n}({\rm Co}\cap \Ec^{\rm{int}}) \subset \g^{k_n}(\O)$ become true), hence the conclusion ``la suite de convexes $(\g^{k_n}(\O)\cap\Ec^{\rm ext})$ tend, tout comme $(\gamma^{k_n} (\Ec^{\rm{int}} ))$, vers $\Ec^{\rm{ext}}$'' is incorrect too, and there are examples where $\g^{k_n}(\O)$ can converge to a convex with empty interior.
To make the proof work we must recenter the picture via a sequence slightly different than $(\gamma^{k_n})_n$.


By \cite[Lem.~2.8]{benoist_conv_hyp} of Benoist (following Benz\'ecri), and up to extracting a subsequence, there exists a sequence $(g_n)_n$ of projective transformations such that $\O_n := g_n (\O) \to \O_{\infty}$ and $g_n$  equal to $\gamma^{k_n}$ in restriction to $S$, and also such that $\O_\infty$ intersects $S$.
Note that $g_n(\O\cap S)$ converges to $\Ec_{\rm ext}\cap S$ and also to $\O_\infty\cap S$, which is therefore an ellipsoid.
Note also that since $d_\O(u_n,o_n)\to 0$ and $g_n(o_n)=o$, we have $g_n(u_n)\to o$.

Suppose, up to extracting again, that the closures of the images of the convex core $C_n=g_n(C)$ converge to a closed convex set $\overline C_\infty$.
To conclude, it suffices to show that $\overline C_\infty\subset S$.
Indeed we can then conclude as earlier: we know $u_n':=g_n(u_n)\to o$ and up to extracting we also have $x_n':=g_n(x_n)\to x'$ and $y_n':=g_n(y_n)\to y'$ and $z_n':=g_n(z_n)\to z'$, all three contained in $\overline C_\infty\cap \partial\O_\infty$, hence $S\cap \partial\O_\infty$ which is an ellipsoid.
By strict convexity of ellipsoids this means $u'$ is at finite Hilbert distance from $[x',z']$ or $[y',z']$, which contradicts that the Hilbert distance from $u'_n$ to $[x'_n,z'_n]\cup[y'_n,z'_n]$ goes to infinity.


Assume that there exists $(\simple'_n)_n$ in $C_n$ such that $\simple'_n \to \simple'_\infty \in \overline C_\infty$ but $\simple'_\infty \notin S$.
In particular, $d_{\O_n} (\simple'_n,u'_n)$ is bounded from below by some constant $\epsilon>0$ (since $u_n'\to o\in S$).
Let $\simple''_n$ be the intersection point of $[\simple'_n,u'_n]$ with the Hilbert sphere of radius $\epsilon$ around $u_n'$.
Then $(\simple''_n)_n$ is a sequence in $C_n$ converging to a point of $[o,\simple'_\infty]\subset \overline C_\infty$, which is not $o$, so this limit point is not in $S$ (otherwise $\simple'_\infty$ would be too).
To simplify the notations we can assume that $\simple'_n=\simple''_n$, so that $d_{\O_n} (\simple'_n,u'_n)=\epsilon$ for all $n$.

Let $\simple_n = g_n^{-1} (\simple'_n)$, which remains at bounded Hilbert distance from $u_n$, and hence like $u_n$ converges to $u$.
Denote by $a_n$ the intersection point of the line $(u \, \simple_n)$ with $\partial \O$, which is not $u$. 
Using that $u$ is uniformly bounded parabolic we can find $\pi_n \in \G_u$ such that $b_n := \pi_n (a_n) \to b_\infty\neq u$ after possibly extracting.

Let $\double_n := \pi_n (\simple_n)$, and observe it also converges to $u$.
Recall $o \in S \cap \O$.
Since $u$ is a $\Cc^1$-point of the boundary of $\O$, there exists a $\triple_n \in [o,u)$ such that $d_\O (\double_n, \triple_n)  \to 0$ (see again \cite[Lem.\,3.4]{CD1}). 

Hence, the Hilbert distance $d_{\O_n} (g_n \circ \pi_n^{-1} (\triple_n) , \simple'_n)$ tends to zero, whereas $g_n \circ \pi_n^{-1} (\triple_n)$ is in $S$. Contradiction. 

\subsection{Counterexample to \eqref{item:convex_core}${}_1$ $\Rightarrow$  \eqref{item:GF_on_Omega} and (\eqref{item:GF_on_boundary}$\&$\eqref{item:convex_core_ghyp}) $\Rightarrow$ \eqref{item:GF_on_Omega}: an overview}\label{sec:overview}

The counterexamples to the implications \eqref{item:convex_core}${}_1$ $\Rightarrow$  \eqref{item:GF_on_Omega} and (\eqref{item:GF_on_boundary}$\&$\eqref{item:convex_core_ghyp}) $\Rightarrow$ \eqref{item:GF_on_Omega} are similar to the example in Section~\ref{sec:contrex}, in the sense that they also come from a representation $\rho$ of $\SL_2(\R)$ into $\SL_5(\R)$ that preserves round convex domains of $\R\PP^4$, except that this time we use an irreducible representation of $\SL_2(\R)$, following \cite[\textsection 10.3]{CM2014finitude}.

As in Section~\ref{sec:contrex}, these $\rho(\SL_2(\R))$-invariant domains $\O$ can be described explicitly, as well as the convex hull $\Cc$ of the proximal limit set, and $\SL_2(\R)$ acts cocompactly (but not transitively) on $\Cc$.

If $\G\subset\SL_2(\R)$ is a noncocompact lattice, then $\rho(\G)$ does not act geometrically finitely on $\O$, because the maximal parabolic subgroups are not conjugate into ${\rm O}_{4,1}(\R)$ and the limit set spans the whole $\R\PP^4$ (see Proposition~\ref{prop:oscultation}).
However, that $\SL_2(\R)$ acts cocompactly on $\Cc$ implies that $\Cc$ is quasi-isometric to $\SL_2(\R)$, and hence is Gromov-hyperbolic.
Moreover, using the ideas from the proof of Lemma~\ref{lem:finite volume}, one can further prove that the quotient under $\rho(\G)$ of the $1$-neighborhood of $\Cc$ has finite volume.

All the above will be written in a forthcoming paper \cite{Next}.
We will also include other kinds of counterexamples, to  \eqref{item:GF_on_boundary} $\Rightarrow$ \eqref{item:convex_core}${}_1$ and  \eqref{item:GF_on_boundary} $\Rightarrow$ \eqref{item:convex_core_ghyp}.
In fact these examples will involve the same groups $\rho(\G)$ (where $\rho$ is the irreducible representation of $\SL_2(\R)$ in $\SL_5(\R)$ and $\G$ a noncocompact lattice of $\SL_2(\R)$), but with different more subtle $\rho(\G)$-invariant round domains, which are not invariant under the whole $\rho(\SL_2(\R))$.

\bibliographystyle{alpha}

\renewcommand\refname{Bibliography}

\backmatter
\end{document}